\newcommand \Dotfill {\leavevmode \cleaders \hb@xt@ .77em{\hss .\hss }\hfill \kern \z@}
\def\l@section{\@tocline{1}{8pt}{0pc}{0pc}{\bfseries}}
 \def\l@subsection{\@tocline{2}{0pt}{1.7pc}{6pc}{}}
\def\l@subsubsection{\@tocline{3}{0pt}{8pc}{8pc}{}}
\theoremstyle{plain}
\newtheorem{theorem}{Theorem}[section]
\theoremstyle{plain}
\newtheorem{proposition}{Proposition}[section]
\newtheorem{lemma}[proposition]{Lemma}
\newtheorem{corollary}[proposition]{Corollary}
\newtheorem{definition}[proposition]{Definition}
\newtheorem{remark}[proposition]{Remark}
\newcommand{\isEquivTo}[1]{\underset{#1}{\sim}}
\newcommand{\vertiii}[1]{{\left\vert\kern-0.25ex\left\vert\kern-0.25ex\left\vert #1 
    \right\vert\kern-0.25ex\right\vert\kern-0.25ex\right\vert}}
\def\R{{\mathbb{R}}}
\def\z{{\zeta}}
\def\T{{\mathbb{X}}}
\def\H{{\mathcal{H}^+}}
\def\dr{{\mathrm{d}}}
\begin{document}

\title[Relatively non-degenerate estimates for massless Vlasov fields on Schwarzschild]{Relatively non-degenerate integrated decay estimates for massless Vlasov fields on Schwarzschild spacetimes}

\author[L\'eo Bigorgne]{L\'eo Bigorgne} \address{Univ Rennes, CNRS, IRMAR - UMR 6625, F-35000 Rennes, France.}
\email{leo.bigorgne@univ-rennes.fr}

\author[Renato Velozo Ruiz]{Renato Velozo Ruiz} \address{Department of Mathematics, University of Toronto, 40 St. George Street, Toronto, ON, Canada.}
\email{renato.velozo.ruiz@utoronto.ca}

\begin{abstract}
In this article, we make use of a weight function capturing the concentration phenomenon of unstable future-trapped causal geodesics. A projection $V_+$, on the tangent space of the null-shell, of the associated symplectic gradient turns out to enjoy good commutation properties with the massless Vlasov operator. This implies that $V_+f$ remains bounded, for any smooth solution $f$ to the massless Vlasov equation.

By identifying a well-chosen modification of $V_+$, we are able to construct a $W_{x,p}^{1,1}$ weighted norm for which any smooth solution to the massless Vlasov equation verifies an integrated local energy decay estimate without relative degeneration. Together with the $r^p$-weighted energy method of Dafermos--Rodnianski, we establish time decay for the energy norm. This norm allows for the control of the energy-momentum tensor $\mathrm{T}[f]$ as well as all its first order derivatives.

The method developed in this paper is in particular compatible with the approach of \cite{Mavq, DHRTq} used to study quasi-linear wave equations on black hole spacetimes. 

\end{abstract}

\keywords{Schwarzschild spacetime, massless Vlasov fields, decay estimates, relativistic kinetic theory}\subjclass{35Q70, 35Q83}

\maketitle

\tableofcontents

\section{Introduction}

In this paper, we study the boundedness and decay properties of massless collisionless systems on the exterior $(\mathcal{S},g_M)$ of a Schwarzschild black hole background of mass $M>0$. We describe the evolution of such systems of free falling particles with zero rest mass by a distribution function satisfying a transport equation along the null geodesic flow. More precisely, we study the solutions $f$ to the \emph{massless Vlasov equation on a fixed Schwarzschild spacetime}  
\begin{align}\label{vlasov_eqn_massless_intro}
\T_{g_M}(f)=0,
\end{align}
where $\T_{g_M}$ is the geodesic spray vector field. The distribution function $f \colon \mathcal{P }\to \R_+$ is a nonnegative function defined on the \emph{null-shell}   
\begin{equation}\label{def_first_intro_null_shell}
\mathcal{P} \coloneqq \Big\{ (x,p)\in T^*\mathcal{S} \; \big| \; g_x^{-1}(p,p)=0, \; \text{ $p$ is future-directed} \Big\}.
\end{equation}
The analysis of the asymptotic behaviour of the solutions to \eqref{vlasov_eqn_massless_intro} is motivated by the study of self-gravitating massless systems in relativistic kinetic theory \cite{A11,AGS}. In the framework of general relativity, they are modeled by the solutions $(\mathcal{M},g,f)$ of the \emph{massless Einstein--Vlasov system} \eqref{EV}
\begin{align} \tag{EV}
\begin{aligned}\label{EV}
\mathrm{Ric}(g)-\frac{1}{2}\mathrm{R}(g)\cdot g&=8\pi \mathrm{T}[f],\\
\T_g(f)&=0,
\end{aligned}
\end{align}
where $\mathrm{Ric}(g)$, $\mathrm{R}(g)$, and $\T_g$ are respectively the Ricci curvature tensor, the scalar curvature and the geodesic spray of the Lorentzian manifold $(\mathcal{M},g)$. The \emph{energy-momentum tensor} $\mathrm{T}[f]$ of the Vlasov field $f$ is given by
\begin{align} \label{energy_momentum_local_geometry_intro}
 \mathrm{T} [f]_{ \mu \nu } \coloneqq \int_{\mathcal{P}_x} f(x,p) p_\mu p_\nu \dr \mu_{\mathcal{P}_x},
\end{align}
where $\dr \mu_{\mathcal{P}_x}$ is the volume form induced by the metric $g$ on the fibers $\mathcal{P}_x$ of the null-shell. We refer to \cite{AR08,CB09,A11} for more information about this geometric PDE system. When the distribution function $f$ vanishes identically, the massless Einstein--Vlasov system reduces to the \emph{vacuum Einstein equations} $\mathrm{Ric}(g)\!=\!0$. Consequently, the members of the one-parameter family of Schwarzschild black holes can be viewed as stationary spherically symmetric solutions to \eqref{EV}. In the perspective of studying the stability of the Schwarzschild spacetimes as solutions to the massless Einstein--Vlasov system, we investigate the asymptotic properties of massless Vlasov fields on the exterior of a fixed Schwarzschild background. 

In the last decades, an intensive amount of research \cite{DR09, BS03, AB, TT11, DRSR, Aret11, Mos16, MS24} has been carried out to show integrated local energy decay estimates (ILEDs) for wave equations on the exterior of black hole backgrounds. These works are motivated by the stability problem of black hole spacetimes for the Einstein vacuum equations. ILEDs for wave equations on black holes capture key geometric properties of these spacetimes, including the well-known redshift effect and the existence of trapped null geodesics. In view of this line of research, a program of deriving ILEDs for massless Vlasov fields on black hole exteriors was initiated in \cite{ABJ,L23}. In addition, in the latter work, inverse polynomial decay of a non-degenerate energy flux has been shown by adapting the $r^p$-weighted energy method of Dafermos--Rodnianski \cite{DR10}. However, the first and high order radial derivatives of the energy-momentum tensor are not controlled in these works. This non-trivial task has been difficult to show due to the convoluted structure of the system of commuted equations for solutions of the massless Vlasov equation \eqref{vlasov_eqn_massless_intro}. Most of the derivatives in the span of $\{\partial_{r^*} , \partial_{p_{r^*}}\}$ grow exponentially on bounded regions of $\mathcal{S}$. We deal with this issue by using well-chosen vector fields that arise from the expansion and contraction properties of the null geodesic flow. In this article, we develop a commutation approach for massless Vlasov fields on a Schwarzschild black hole background. As a result, we control the radial derivative of the energy-momentum tensor $\mathrm{T}[f]$.

Recent works on decay estimates for wave equations on black hole backgrounds by Holzegel--Kauffman \cite{GK23}, and Mavrogiannis \cite{Mavr23}, have shown novel \emph{relatively non-degenerate integrated (local) energy decay estimates}. In these integrated estimates, the bulk term is not degenerate with respect to its boundary term.\footnote{The integrated energy estimate shown in \cite{GK23} has an unavoidable degeneracy at spatial infinity $r=+\infty$. This is why these integrated estimates are \textit{local}.} A key ingredient in these estimates is a commutation vector field $G \in \Gamma (T\mathcal{S})$, given below in \eqref{defG}, such that one can control, up to lower order terms, non-degenerately the spacetime integral of the first order derivatives of $G$ acting on the scalar field. Specifically, \cite{GK23} and \cite{Mavr23} studied linear wave equations on the exterior of Schwarzschild and Schwarzschild--de Sitter spacetimes, respectively. See the extensions of these works to the full subextremal Kerr family \cite{HKa23} and, conditionally to mode stability, to subextremal Kerr--de Sitter spacetimes\footnote{In particular, the results in \cite{mavrogiannis_2022} hold for very slowly rotating Kerr--de Sitter black holes.} \cite{mavrogiannis_2022}.

In this article, we develop a commutation approach to show decay for massless Vlasov fields on the exterior of Schwarzschild spacetimes. We make use of a weight function $\varphi_-\colon \mathcal{P}\to \R$ that captures the concentration phenomenon of future-trapped null geodesics in $\mathcal{P}$. The projection $V_+\in \Gamma(T\mathcal{P})$ of its associated symplectic gradient, parallel to a direction transverse to $T\mathcal{P}$, turns out to enjoy good commutation properties with the massless Vlasov operator. By identifying a well-chosen modification $\pmb{V}_+^{\mathrm{mod}}$, we construct a suitable $W_{x,p}^{1,1}$ weighted norm for which any finite energy solution to the massless Vlasov equation verifies an ILED without relative degeneration. Together with the $r^p$-weighted energy method, we establish time decay for the energy flux. This approach is compatible with previous works \cite{Mavq,DHRTq,DHRTq2} used to study quasi-linear wave equations on black hole spacetimes. By compatible we mean that
\begin{itemize}
\item As for the solutions to wave equations in these works, our method to derive decay estimates for the solutions to the massless Vlasov equation relies on the boundedness of an energy flux, an ILED (which as in \cite{Mavq} is relatively non-degenerate), and a hierarchy of $r^p$-weighted energy estimates (as in \cite{DHRTq, DHRTq2}).
\item The vector field used as a multiplier and capturing the redshift effect, used to obtain improved estimates near the future event horizon in this paper, is the same than the one used in the analysis of wave equations on Schwarzschild spacetimes.
\item Let $\Pi \colon \mathcal{P} \to \mathcal{S}$ be the canonical projection from the null-shell to the base manifold $\mathcal{S}$ and $\mathbf{C}_{\mathrm{Vlasov}}$ be the set of the commutators used in this paper to commute the massless Vlasov equation. Then the span of $\mathrm{d} \Pi \big( \mathbf{C}_{\mathrm{Vlasov}} \big) $ equals $T\mathcal{S}$, which allows to estimate all the derivatives of $\mathrm{T}[f]$ (the source term in the Einstein equations for \eqref{EV}).
\item Moreover, \cite{Mavq} makes crucial use of energy norms in which the top-order derivatives include the vector field $G$, introduced below in \eqref{defG}, which is colinear to $\mathrm{d} \Pi (V_+^{\mathrm{mod}})$.
\end{itemize}

Let us briefly mention some advantages of non-relatively degenerate ILEDs. First, they allow one to derive exponential decay for the energy flux through purely energy-based methods (in contrast, a relatively degenerate ILED yields only superpolynomial decay \cite{DRSdS, L23}). Moreover, polynomial decay follows easily from such estimates together with the $r^p$-weighted energy method (see Section \ref{Sec7}). By contrast, in \cite{L23}, a significantly more involved hierarchy of weighted energy norms had to be introduced in order to obtain the $\tau^{-p}$ decay of the energy flux. These features should lead to much less technical nonlinear analysis (see already \cite{Mavq}). Finally, non-relatively degenerate ILEDs also make it possible to obtain sharp decay estimates.

\subsection{State of the art on decay for massless Vlasov fields on black hole exteriors}

Decay estimates for solutions to the massless Vlasov equation on black hole exteriors have been obtained using either \emph{weighted energy methods} or \emph{phase space volume estimates}. Let us discuss these contributions:

\subsubsection{Based on weighted energy methods} First, Andersson-Blue-Joudioux \cite{ABJ} studied the solutions to the massless Vlasov equation on very slowly rotating Kerr black holes. They proved boundedness for a weighted energy flux and an ILED, which degenerates at the future event horizon $\H$ and on the trapped set. Note that they do not obtain pointwise decay estimates for $\mathrm{T}[f]$. Later, the first author \cite{L23} adapted the $r^p$-weighted energy method \cite{DR10} for massless Vlasov fields on Schwarzschild to derive inverse polynomial decay of non-degenerate energy fluxes, through suitable spacelike-null foliations, and also for momentum averages. For this, an ILED (\underline{with} a relative degeneration) allowing for a control of the solutions near $\H$ and the photon sphere was obtained. We point out that $\partial_r \mathrm{T}[f]$ was not controlled and that the method used to derive pointwise decay estimates relied crucially on the exact spherical symmetry of Schwarzschild black holes. We note that for strongly decaying data, this article shows superpolynomial decay of $\mathrm{T}[f]$.

\subsubsection{Based on phase space volume estimates} Recently, the second author \cite{VR23} proved that Schwarzschild is non-linearly stable for the massless Einstein--Vlasov system in spherical symmetry. This result uses the normal hyperbolicity of the trapped set for the null geodesic flow. When restricted to Schwarzschild, this hyperbolic behaviour is captured by weight functions $\varphi_{\pm}\colon \mathcal{P}\to \R$ that define the subsets $\{\varphi_{+}=0\}$ and $\{\varphi_{-}=0\}$, where future-trapped and past-trapped orbits are contained, respectively. Assuming compact support for the initial data, $\mathrm{T}[f]$ was proved to decay exponentially in the bounded region of the exterior of the black hole. Derivatives of the distribution function were estimated by studying Jacobi fields on $\mathcal{P}$ in terms of the Sasaki metric. Suitable derivatives of the energy-momentum tensor were also shown to decay.

Around the same time, Weissenbacher \cite{W23} proved decay estimates for the components of $\mathrm{T}[f]$ for solutions to the massless Vlasov equation on Reissner--Nordström spacetimes for compactly supported initial data. In the extremal case, if the initial support of $f$ or its first order derivatives intersect a neighborhood of the future event horizon, the transversal derivatives of certain components of $\mathrm{T}[f]$ do not decay along $\H$.

\subsection{Vlasov equations on backgrounds with hyperbolic flows}

A well-known property of Schwarzschild is the existence of \emph{trapped null geodesics} at $\{r=3M\}$. These orbits are \emph{unstable}, in the sense that the trapped set, where trapped geodesics are located, is \emph{normally hyperbolic}.\footnote{As proved in \cite[Section~$2$]{WZ11}, the trapped set of Schwarzschild spacetime is \textit{eventually absolutely $r$-normally hyperbolic} for all $r$.} We discuss previous works on decay for Vlasov fields on backgrounds with hyperbolic flows.

In the exterior of Schwarzschild, the radial geodesic equation defines a non-linear ODE system with a hyperbolic fixed point corresponding to the trapped set. The linearisation of this non-linear ODE with respect to its fixed point, defines a Hamiltonian flow on $\R_x\times \R_p$ with Hamiltonian $H(x,p)=\frac{1}{2}p^2-\frac{1}{2}x^2$ (up to normalisation). Unstable trapping holds for this flow, and with this motivation, small data global existence for the Vlasov--Poisson system with the external potential $\frac{-|x|^2}{2}$ was shown in \cite{VV24} in dimension $n\geq 2$. Modified vector field techniques were used in dimension 2. See \cite{BVV23} for the modified scattering dynamics of this system in dimension 2. 

Recently \cite{VVel23} studied decay estimates for Vlasov fields on non-trapping asymptotically hyperbolic manifolds. By a commuting vector field approach, the spatial density and its first order derivatives were shown to decay exponentially, when the initial data is away from the zero velocity set. For general compactly supported data on hyperbolic space, the decay is only inverse polynomial.

\subsection{The main results}

Before stating our main results, we briefly introduce some notations. See Section \ref{Sec2} for a more thorough presentation. We will use the notation $A \lesssim B$ to specify that there exists a constant $C>0$, depending only on the black hole mass $M$ and possibly other parameters quantifying the decay rate of the initial data, such that $A \leq CB$. If $A \lesssim B $ and $B \lesssim A$ we will write $A \sim B$.

\subsubsection{The coordinate system induced by the tortoise coordinates}

In the strict exterior $\mathring{\mathcal{S}}$ of the Schwarzschild black hole $(\mathcal{S},g_M)$ of mass $M>0$, the metric reads
$$g_M=-\Omega^2(r)\dr t^2+\frac{1}{\Omega^2(r)}\dr r^2+r^2 \dr  \theta^2+r^2\sin^2(\theta)  \dr \phi^2, \qquad \quad \Omega^2(r)\coloneqq1-\frac{2M}{r}, $$ 
where $(t,r,\theta , \phi ) \in \R \times (2M,+ \infty ) \times (0,\pi) \times (0,2 \pi)$. In this article, we will mostly work with the tortoise coordinate system $(t,r^*,\theta , \phi )$, where $\frac{\dr r^*}{\dr r}=\Omega^{-2}$ and $r^*(3M)=0$. It induces the coordinate system $(t,r^*,\theta,\phi,p_t,p_{r^*},p_{\theta}, p_{\phi})$ on the cotangent bundle $T^* \mathring{\mathcal{S}}$. Then, the null-shell $\mathcal{P}$ can be parametrised by $(t,r^*,\theta,\phi,p_{r^*},p_{\theta}, p_{\phi})$, where $p_t$ is given by
 \begin{equation*}
p_t = - \bigg| \,p_{r^*}^2 + \Omega^2 \frac{|\slashed{p}|^2}{r^2}  \bigg|^{\frac{1}{2}}, \qquad \qquad |\slashed{p}| \coloneqq \bigg|p_{\theta}^2+\frac{ p_{\phi}^2}{\sin^2 (\theta ) }\bigg|^{\frac{1}{2}}.
\end{equation*}
The expression of the geodesic spray $\T_{g_M}$ is written below in \eqref{eq:defT}.

\subsubsection{Weight functions and commutation vector fields}

We now define the quantities that we will use to set up the main weighted energy norm of the article. 

Let $N\in \Gamma(T\mathcal{S})$ be the future-directed timelike vector field
$$ N \coloneqq   \partial_t+  \chi_N (r)  \frac{r^2}{M^2\Omega^2} (\partial_t-\partial_{r^*}),$$
where $\chi_N \in C^\infty (\R)$ is a cutoff function such that $\chi_N (r)=1$ for $r \leq 2.5M$ and $\chi_N (r) =0$ for $r \geq 2.7M$. We define the \emph{weight function associated to} $N$, by the contraction 
$$p_N\coloneqq p(N)=p_t+\chi_N (r) r^2 \, \frac{p_t-p_{r^*}}{M^2\Omega^2} .$$
Note that contrary to $|p_t|$, $|p_N|$ controls all the components of $p$ near $\H$ (see Section \ref{Rqintrored}). Let $G\in \Gamma(T\mathcal{S})$ be the spacelike vector field 
\begin{equation}\label{defG}
G\coloneqq \frac{r}{\Omega}\Big(1+\frac{6M}{r}\Big)^{\frac{1}{2}}\Big(1-\frac{3M}{r}\Big)\partial_t + \frac{r }{\Omega} \partial_{r^*} .
\end{equation}
We define the \emph{weight function $\varphi_- (x,p)$ associated to} $G$ by
\begin{equation}\label{eq:defvarphiminusintro}
\varphi_-(x,p)\coloneqq p(G)
\end{equation} 
as well as the rescaled weight $\pmb{\varphi}_-\coloneqq\Omega^{-1}\varphi_-$. To the authors knowledge, the weight \(\varphi_-\) (with a different normalisation) was introduced for the first time in the study of trapping on Kerr black holes in \cite{D15}. The vector field \(G\) instead, appeared first in the study of decay for the wave equation on Schwarzschild in \cite{GK23}.

Next, we define the vector fields that we will use to commute the Vlasov equation \eqref{vlasov_eqn_massless_intro}. They arise as projections on $T\mathcal{P}$ of the symplectic gradients of suitable weight functions (see Section \ref{Sec5} for more details).

Let $\T_{|\slashed{p}|} \in \Gamma(TT^*\mathcal{S})$ be the symplectic gradient of weight function $|\slashed{p}|$. The vector field $\T_{|\slashed{p}|}$ is tangent to $T\mathcal{P}$ and it can be expressed in terms of the complete lifts of the rotational vector fields. 

Let $V_+\in \Gamma(T\mathcal{P})$ be the projection on $T \mathcal{P}$ parallel to $\partial_{p_t}$ of the symplectic gradient of $\varphi_-$. Let us also consider the vector fields $\pmb{V}_+ , \pmb{V}_+^{\mathrm{mod}} \in \Gamma(T \mathcal{P})$, obtained as rescalings and modifications of $V_+$, given by  
$$\pmb{V}_{\! +}\coloneqq \Omega^{-1} V_+, \qquad \qquad \pmb{V}_+^{\mathrm{mod}} \coloneqq \pmb{V}_{\! +}+\Phi r \pmb{\varphi}_- \partial_{p_{r^*}},$$
where $\Phi(x,p)$ is determined by a suitable transport equation along the geodesic flow (see Definition \ref{DefVminusmod}). The modified vector field $\pmb{V}_+^{\mathrm{mod}}$ plays a central role in the energy estimates we perform. 

\subsubsection{The energy fluxes}

We will study the solutions to \eqref{vlasov_eqn_massless_intro} using a spherically symmetric spacelike-null foliation $(\Sigma_\tau)_{\tau \geq 0}$ crossing $\H$ and terminating at future null infinity $\mathcal{I}^+$. More precisely, for a fixed constant $R_0 >3M$ and a well-chosen $u_0 \in \R$,
$$ \Sigma_\tau \coloneqq \big\{ t^* = \tau , \, r \leq R_0 \big\} \sqcup \big\{ t-r^*=\tau+u_0, \, r > R_0  \big\}, \qquad \qquad t^*=t+2M\log (r-2M).$$
In this framework, we pose a sufficiently regular initial data on $\pi^{-1}(\Sigma_0) \subset \mathcal{P}$, where $\pi\colon \mathcal{P}\to \mathcal{S}$ is the canonical projection. For any distribution function $g : \mathcal{P} \to \R$ and all $\tau \geq 0$, we define the norm
$$ \mathbb{E} [g](\tau) \coloneqq \int_{\pi^{-1}(\Sigma_\tau)}  \big| p_{n_{\Sigma_\tau}} g \big|   \dr \mu_{\pi^{-1}(\Sigma_\tau)},$$ 
with respect to the induced volume form $\dr \mu_{\pi^{-1}(\Sigma_\tau)}$ on $\pi^{-1}(\Sigma_\tau)$, where $n_{\Sigma_\tau}$ is the corresponding normal to $\Sigma_\tau$ and $p_{n_{\Sigma_\tau}}=p(n_{\Sigma_\tau})$ its contraction with $p$. We will study the evolution in time of massless Vlasov fields by using the $W^{1,1}_{x,p}$ weighted energy flux
\begin{align*}
 \mathcal{E}[g]& \coloneqq \mathbb{E} \big[  p_N g \big]+\mathbb{E} \big[  \pmb{\varphi}_- \partial_t g \big] +\mathbb{E} \big[  \pmb{\varphi}_- \T_{|\slashed{p}|} g \big] + \mathbb{E} \big[ p_N \, \pmb{V}_+^{  \mathrm{mod}} g \big] \\
 & \quad \, +\mathbb{E} \big[ |p_N|^{\frac{3}{4}} |r^{-1} \pmb{\varphi}_-|^{\frac{1}{4}}  \pmb{V}_{\! +} g \big]  +\mathbb{E} \big[\Omega^{\frac{1}{2}}|p_t|^{\frac{3}{4}} |r^{-1} \pmb{\varphi}_-|^{\frac{5}{4}} \partial_{p_{r^*}} g \big] .
\end{align*}

\subsubsection{Statement of the main results}

We are now ready to state the main result of this article and its consequences.

\begin{theorem}[ILED without relative degeneration]\label{thm_main_intro}
Let $f $ be a sufficiently regular solution to the massless Vlasov equation \eqref{vlasov_eqn_massless_intro} such that $\mathcal{E}[f](0) <+\infty$. Then, there exists $C>0$, depending only on $M$, such that
$$ \sup_{\tau \geq 0} \,  \mathcal{E}[f](\tau ) +\int_{\tau=0}^{+\infty} \mathcal{E}\big[r^{-1} \log^{-2}(2+r)f \big](\tau ) \dr \tau \leq C \mathcal{E}[f](0 ).$$
\end{theorem}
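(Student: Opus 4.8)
The plan is to prove the estimate by deriving, for each of the six constituents of $\mathcal{E}[f]$ separately, a flux (boundedness) identity together with a coercive spacetime bulk bound, and then adding the resulting inequalities so that all error terms are mutually absorbed. The starting observation is that $\T_{g_M}$ commutes with the lifts of the Killing field $\partial_t$ and of the rotations, and that $\{|\slashed{p}|,H\}=0$; hence both $\partial_t f$ and $\T_{|\slashed{p}|} f$ again solve the homogeneous equation \eqref{vlasov_eqn_massless_intro}, so the terms $\mathbb{E}[\pmb{\varphi}_- \partial_t f]$ and $\mathbb{E}[\pmb{\varphi}_- \T_{|\slashed{p}|} f]$ are handled by the same scheme as the zeroth order flux. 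The genuinely new quantities are those built from $\pmb{V}_+^{\mathrm{mod}} f$, $\pmb{V}_+ f$ and $\partial_{p_{r^*}} f$, which satisfy \emph{inhomogeneous} transport equations whose sources are the commutators computed in Section~\ref{Sec5}.

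First I would dispose of the zeroth order term $\mathbb{E}[p_N f]$. Since $f$ is transported along the geodesic flow, $\T_{g_M}(p_N f)=f\,\T_{g_M}(p_N)$, where $\T_{g_M}(p_N)$ is the double contraction of $p$ with the deformation tensor of $N$, so the $L^1$ energy identity for $\mathbb{E}[p_N f]$ is governed by this quantity and the geometry of the foliation. The cutoff in $N$ is arranged so that near $\H$ the redshift yields a favourable sign, while away from the horizon one invokes the degenerate Morawetz estimate for massless Vlasov fields of \cite{L23}. This produces both boundedness $\sup_\tau \mathbb{E}[p_N f](\tau)\lesssim \mathbb{E}[p_N f](0)$ and a spacetime bound $\int_0^{+\infty}\mathbb{E}[w\,f]\,\dr\tau \lesssim \mathbb{E}[p_N f](0)$ for a weight $w$ that degenerates at the photon sphere $\{r=3M\}$. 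This degenerate bound is the reservoir absorbing lower order errors in the later steps.

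The heart of the argument is the commutation with $\pmb{V}_+^{\mathrm{mod}}$. Because $V_+$ is the projection of the symplectic gradient of the trapping weight $\varphi_-$, the commutator $[\T_{g_M},\pmb{V}_+^{\mathrm{mod}}]$ reproduces, to leading order near $\{r=3M\}$, a friction term $\kappa\,\pmb{V}_+^{\mathrm{mod}} f$ with $\kappa>0$ equal after normalisation to the Lyapunov exponent of the normally hyperbolic trapped set; this is exactly the mechanism behind the exponential decay of $\pmb{V}_+ f$ near the photon sphere. The role of the modification $\Phi r\pmb{\varphi}_-\partial_{p_{r^*}}$ is to cancel the single dangerous source term, namely the $\partial_{p_{r^*}}$-component of the commutator which would otherwise grow exponentially on bounded $r$-regions, and this is precisely why $\Phi$ is prescribed through a transport equation along the geodesic flow in Definition~\ref{DefVminusmod}. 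Feeding this structure into the energy identity for $\mathbb{E}[p_N \pmb{V}_+^{\mathrm{mod}} f]$ produces a coercive, \emph{non-degenerate} spacetime term supported near trapping, which is what upgrades the degenerate bound of the previous step to a non-degenerate control of the full $W^{1,1}_{x,p}$ norm.

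It remains to control the leftover commutator errors, and this is where I expect the main obstacle. The surviving sources are proportional to $\pmb{V}_+ f$ and $\partial_{p_{r^*}} f$, and the fractional weights $|p_N|^{3/4}|r^{-1}\pmb{\varphi}_-|^{1/4}$ and $\Omega^{1/2}|p_t|^{3/4}|r^{-1}\pmb{\varphi}_-|^{5/4}$ attached to these two quantities in $\mathcal{E}[f]$ are tuned so that a single application of Young's inequality splits each cross term into a piece absorbable by the coercive $\pmb{V}_+^{\mathrm{mod}}$-bulk and a piece controlled by the degenerate zeroth order reservoir. Getting all six estimates to close simultaneously, that is, verifying that $\Phi$ genuinely removes the exponentially growing error while the remaining terms admit a consistent Young splitting with these exact exponents, and that the weights degenerate compatibly at $\H$, at $\{r=3M\}$ and at $\mathcal{I}^+$, is the delicate bookkeeping at the core of the proof. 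Finally, layering the $r^p$-weighted energy method of Dafermos--Rodnianski \cite{DR10} over the region $\{r>R_0\}$ converts the control near $\mathcal{I}^+$ into the stated $r^{-1}\log^{-2}(2+r)$ weight in the spacetime integral, which completes the argument.
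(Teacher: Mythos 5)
Your outline captures the right cast of characters (commuting with $\partial_t$, $\T_{|\slashed{p}|}$, and $\pmb{V}_+^{\mathrm{mod}}$, using the damping produced by the trapping weight $\varphi_-$, and treating the modification $\Phi$ as the device cancelling the dangerous $\varphi_-\partial_{p_{r^*}}$ commutator term), but it has two genuine gaps. The first and most serious one concerns the zeroth order term: the bulk integral in the statement contains $\mathbb{E}\big[p_N\, r^{-1}\log^{-2}(2+r) f\big]$ \emph{without} degeneration at the photon sphere, and you never supply a mechanism for removing that degeneration. Your step for $\mathbb{E}[p_Nf]$ only yields the degenerate Morawetz bound (the analogue of Proposition \ref{ProremainderILED}), and your claim that the coercive $\pmb{V}_+^{\mathrm{mod}}$-bulk ``upgrades'' it is not an argument: a non-degenerate bulk for $\pmb{V}_+^{\mathrm{mod}}f$ says nothing, by itself, about a non-degenerate bulk for $f$. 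Indeed, by \cite[Proposition 3.12]{L23} such an upgrade is impossible without genuinely using the derivative information, and the paper converts derivative control into zeroth-order control through a separate, essential step: a one-dimensional local Sobolev inequality in $r$ near $\{r=3M\}$, exploiting that $\dr\pi\big(\pmb{V}_+^{\mathrm{mod}}\big)=r\overline{\partial}_r$ in the hyperboloidal coordinates of Definition \ref{Defhypcoor}, an integration by parts in $p_{r^*}$ to trade the correction $\Phi r\pmb{\varphi}_-\partial_{p_{r^*}}$ for the $W^{1,\infty}_{x,p}$ bounds on $\Phi$ of Proposition \ref{ProboundPsi}, and a smallness parameter $\epsilon$ to absorb the resulting $\mathcal{J}[f]$ term. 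Without this (or an equivalent) step your proof does not reach the stated conclusion.

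The second gap is the closure of the estimates for the fractionally weighted derivatives. Your plan is to control them by ``a single application of Young's inequality,'' splitting each cross term into a piece absorbed by the $\pmb{V}_+^{\mathrm{mod}}$-bulk and a piece absorbed by the degenerate zeroth-order reservoir. This fails in two ways. First, the logical order is backwards: the commutator of Corollary \ref{corVminusmod} produces the source $|\pmb{\varphi}_-|^{1/4}|p_t|^{3/4}r^{-7/4}|\pmb{V}_{\!+}f|$, so the estimate for $p_N\pmb{V}_+^{\mathrm{mod}}f$ \emph{consumes} a prior non-degenerate bulk bound for $\mathrm{D}_{\pmb{V}_{\!+}}f$; it cannot be the absorbing term for it. Second, the quantities $\mathrm{D}_{\pmb{V}_{\!+}}f$ and $\mathrm{D}_{p_{r^*}}f$ satisfy a genuinely coupled, non-triangular system (Lemma \ref{Propartialprstarbisbis}), whose cross terms carry the weight $\pmb{\varphi}_-$ — which does \emph{not} vanish identically at $r=3M$ (it vanishes only on past-trapped directions) — so they cannot be dominated by the zeroth-order reservoir, which degenerates quadratically in $(r-3M,p_{r^*})$ there. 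In the $L^1$-based framework the absorption is a pointwise comparison of coefficients, and making it work requires the additional ingredients of the paper: the multiplier $\z^A$ of Definition \ref{Defz} with $A$ large (whose degeneracy at trapping is \emph{linear}, matching that of $\pmb{\varphi}_-$), the rescaling weights $\overline{\omega}_{1/4},\overline{\omega}_{9/4}$ taming the extra $r^{1/4}$ and $r^{9/4}$ growth, the independent treatment of the far-away incoming region $\{p_{r^*}\leq 0,\,r\geq R\}$ via the Minkowskian vector field $S$, and Corollary \ref{CorBoundpartialpr} to handle the $|p_u|\Omega^{-2}$ loss near $\H$. A minor further inaccuracy: the $r^{-1}\log^{-2}(2+r)$ weight at infinity comes from the multiplier $\xi$ of Proposition \ref{Promultiplierm}, not from the $r^p$-method, which enters only in the proof of the decay corollaries, not of Theorem \ref{thm_main_intro}.
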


As a corollary of Theorem \ref{thm_main_intro}, we show inverse polynomial decay of the energy norm $\mathcal{E}[f]$ by using the $r^p$-weighted energy method. For this, we use a suitable hierarchy $\mathcal{E}^p[f]$ of weighted energy fluxes with $p\in \mathbb{N}$. See Section \ref{subsect_decay_fluxes} for the precise form of $\mathcal{E}^p[f]$. We also prove pointwise decay for the components of $\mathrm{T}[f]$.

\begin{corollary}[Inverse polynomial decay of the energy-flux]\label{Corintro}
Let $p\in \mathbb{N}$ and $f $ be a solution to the massless Vlasov equation such that $\mathcal{E}^p[f](0) < +\infty$. Then, there holds
\begin{equation}
 \forall \, \tau \geq 0, \qquad \mathcal{E}[f](\tau) \lesssim \frac{1}{\langle \tau \rangle^{p}} \mathcal{E}^{p}[f](0),
\end{equation}
and 
\begin{equation}
 \forall \, \tau \geq 0, \qquad \sup _{\Sigma_\tau}  \, r^2 \mathrm{T}_{NN}\big[f\big]  \lesssim  \frac{1}{\langle \tau \rangle^{p}} \mathcal{E}^{p}_{4} [f](0),
\end{equation}
where $\mathcal{E}^{p}_{4} [f](0)$ is an appropriate fourth-order energy of the initial data. 
\end{corollary}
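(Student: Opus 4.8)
The plan is to combine Theorem~\ref{thm_main_intro} with the $r^p$-weighted energy method of Dafermos--Rodnianski \cite{DR10}, in the form adapted to massless Vlasov fields in \cite{L23}, and then to extract pointwise control of $\mathrm{T}[f]$ by a weighted Sobolev embedding on the leaves $\pi^{-1}(\Sigma_\tau)$. I would first establish a graded hierarchy of weighted flux estimates: applying the $r^p$-multiplier to the commuted transport equations over the slab between $\Sigma_{\tau_1}$ and $\Sigma_{\tau_2}$ should yield, for the fluxes $\mathcal{E}^p[f]$, an inequality of the schematic form
$$ \mathcal{E}^{p}[f](\tau_2) + \int_{\tau_1}^{\tau_2} \mathcal{E}^{p-1}[f](\tau)\,\dr\tau \;\lesssim\; \mathcal{E}^{p}[f](\tau_1), \qquad p \geq 1, $$
whose bottom rung is furnished by the non-degenerate ILED of Theorem~\ref{thm_main_intro}. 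It is precisely the absence of relative degeneracy there (in particular at the photon sphere $r=3M$ and at $\H$) that allows the bulk term to control the full flux $\mathcal{E}$, rather than a trapping-degenerate version of it. The commutators $\pmb{V}_+^{\mathrm{mod}}$, $\T_{|\slashed{p}|}$ and $\partial_t$ have good commutation with $\T_{g_M}$, so the $r^p$-estimate can be run on the commuted quantities with error terms that are either lower order or absorbable into the bulk.

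Granting this hierarchy, the decay of the flux follows from the standard dyadic pigeonhole argument: on each dyadic interval $[\tau, 2\tau]$ the integrated bound forces the existence of a leaf on which $\mathcal{E}^{p-1}[f] \lesssim \tau^{-1}\mathcal{E}^{p}[f](0)$, and boundedness of the higher rung together with the near-monotonicity of the fluxes upgrades this to a bound valid at every later time. Iterating down the hierarchy from rung $p$ to rung $0$ yields
$$ \mathcal{E}[f](\tau) \;\lesssim\; \frac{1}{\langle \tau\rangle^{p}}\,\mathcal{E}^{p}[f](0), $$
which is the first assertion of the corollary.

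For the pointwise statement, I would write $r^2\,\mathrm{T}_{NN}[f](x) = \int_{\mathcal{P}_x} r^2 |p_N|^2 f \,\dr\mu_{\mathcal{P}_x}$ and verify that, after accounting for the fiber volume form, its value at a fixed $x\in\Sigma_\tau$ is dominated by the fiber density of $\mathbb{E}[p_N f]$. To pass from this $L^1_x$ control to an $L^\infty_x$ bound, I would commute the equation with the complete lifts of the rotations (contained in $\T_{|\slashed{p}|}$) to gain angular regularity, apply the borderline embedding $W^{2,1}(S^2)\hookrightarrow L^\infty(S^2)$ on each orbit sphere, and use the fundamental theorem of calculus in $r^*$ --- integrating inward from $\mathcal{I}^+$, where the weighted flux already decays --- to handle the radial direction via the first-order control built into $\mathcal{E}$. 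The number of commutations needed for the sphere, together with the radial derivative, accounts for the fourth-order data energy $\mathcal{E}^p_4[f](0)$; combined with the flux decay of the previous step this gives the claimed $\langle\tau\rangle^{-p}$ rate for $\sup_{\Sigma_\tau} r^2\mathrm{T}_{NN}[f]$.

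The main obstacle I anticipate is this last upgrade to a pointwise bound, uniformly across the trapping region. Since the bare radial phase-space derivatives $\partial_{r^*},\partial_{p_{r^*}}$ grow exponentially near $r=3M$, the fundamental-theorem-of-calculus step cannot be performed with $\partial_{r^*}$ directly; it must be routed through the modified field $\pmb{V}_+^{\mathrm{mod}}$ and its rescaling $\pmb{V}_+$. Concretely, one must re-express the radial variation of the fiber integrand in terms of the good commutators and check that the coefficients relating them remain bounded, with the correct $\Omega$ and $r$ weights, exactly where the naive derivatives degenerate. Ensuring that the $r^p$-hierarchy and the commutation error terms stay compatible with these weights near $r=3M$ is the delicate part; the remainder is a routine adaptation of the Dafermos--Rodnianski scheme.
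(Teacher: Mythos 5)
Your treatment of the first assertion is essentially the paper's own proof: the hierarchy $\mathcal{E}^p[f](\tau_2)+\int_{\tau_1}^{\tau_2}\mathcal{E}^{p-1}[f](\tau)\,\dr\tau\lesssim\mathcal{E}^p[f](\tau_1)$ is Proposition \ref{Propdecayrp}, obtained by running the $r^p$-multiplier on the good-sign \emph{combination} of commuted quantities (Proposition \ref{Rksystemderiv} fed into Proposition \ref{ProILEDrpbis}; this is the one point you gloss over, since the individual commuted derivatives do not themselves solve the Vlasov equation), and your dyadic pigeonhole argument is Lemma \ref{Lemrpmethod}. Up to that detail, this half of the corollary is fine.

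The pointwise statement is where your plan has a genuine gap. You propose to control the radial direction by the fundamental theorem of calculus along $\Sigma_\tau$, integrating inward from $\mathcal{I}^+$. But for $r>R_0$ the leaf $\Sigma_\tau$ is the outgoing null cone $\{u=\tau+u_0\}$, and every flux entering $\mathcal{E}$ and $\mathcal{E}^p$ is of the form $\mathbb{E}[\,\cdot\,]$, so on that portion its integrand carries the normal weight $p_{n_{\Sigma_\tau}}=p_v$. This weight is degenerate in momentum: for radially outgoing particles ($|\slashed{p}|=0$, $p_{r^*}=-p_t$) one has $p_v=0$, while the density you must bound, $|p_N|^2|f|=|p_t|^2|f|$, does not vanish. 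Since all norms here are $L^1$-based, the radial integral produced by the fundamental theorem of calculus cannot be dominated by fluxes whose integrands carry a factor $|p_v|$ that is arbitrarily small compared with $|p_t|$; schematically you would need $|p_t|^2\lesssim |p_v||p_t|$, which is false. Commuting by $\partial_t$, $\widehat{\mathbf{\Omega}}_i$ or $\pmb{V}_+^{\mathrm{mod}}$ does not change this $p_v$ factor, and the extra $r$-weights in $\mathcal{E}^p$ cannot repair a momentum degeneracy. (Justifying the vanishing of the boundary term at $\mathcal{I}^+$ is a further, but secondary, issue.)

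This is precisely why the paper does not integrate along $\Sigma_\tau$. Proposition \ref{ProdecaypointBoundedregion} proves a \emph{local}, unit-length one-dimensional Sobolev inequality in $r$ along the uniformly spacelike hypersurfaces $\{t^*=\mathrm{const}\}$, whose normal momentum weight satisfies $p_{t^*}\sim p_N$ and is therefore non-degenerate; the $\partial_{p_{r^*}}$-components of the rewritten radial derivative are handled by integration by parts in $p_{r^*}$, which is where the bounds on $\Phi$ and $\pmb{\varphi}_-\partial_{p_{r^*}}\Phi$ of Proposition \ref{ProboundPsi} enter. Since for $r(x)\geq R_0$ these constant-$t^*$ slices are not contained in $\Sigma_\tau$, a separate divergence-theorem argument, Lemma \ref{LemSobVla} (again powered by the good-sign system of Proposition \ref{Rksystemderiv}), converts the resulting slice integrals into $\mathcal{E}^p[f](\tau)$; combining with the sphere Sobolev inequality of Lemma \ref{Sobsphere} and the flux decay gives Corollary \ref{Cordecay}. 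Note finally that the obstacle you single out as the delicate one — the exponential growth of $\partial_{r^*},\partial_{p_{r^*}}$ near $r=3M$ — is already neutralised by $\pmb{V}_+^{\mathrm{mod}}$ and the $\Phi$-bounds; it is the null-region weight degeneracy above, which you do not address, that forces the change of strategy, and your proposal cannot close without it.
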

\begin{remark}
As in \cite{L23,VR23, W23}, we also obtained improved decay estimates for the good null components of $\mathrm{T}[f]$. See Corollary \ref{Cordecay} for more information.
\end{remark}
\begin{remark}
By working with the foliation $(\{\overline{t}=\tau\})_{\tau \geq 0}$, where $(\overline{t},r,\theta , \phi)$ are the hyperboloidal coordinates defined in Section \ref{Subsec75}, we could obtain pointwise decay estimates by merely assuming the finiteness of a third order energy norm.
\end{remark}

Additionally, we show exponential decay of the energy norm $\mathcal{E}[f]$. For this, we use a suitable exponentially $r$-weighted energy flux $\mathcal{E}^b_{\mathrm{exp}}[f]$, which is introduced in Section \ref{subsect_decay_fluxes} and where $b>0$ is related to the base of the exponential weight. 

\begin{corollary}[Exponential decay of the energy-flux]\label{Corintro2}
Let $b>0$ and $f $ be a solution to the massless Vlasov equation such that $\mathcal{E}^b_{\mathrm{exp}}[f](0) <+\infty$. Then, there exists $b_0 \in(0,b]$, depending on $M$, such that
$$ \forall \, \tau \geq 0, \qquad \mathcal{E}[f](\tau) \lesssim e^{-b_0 \tau} \mathcal{E}_{\mathrm{exp}}^b[f](0),$$
and 
\begin{equation}
 \forall \, \tau \geq 0, \qquad \sup _{\Sigma_\tau} \, r^2 \mathrm{T}_{NN}\big[f\big]  \lesssim e^{-b_0 \tau}\mathcal{E}^{b}_{\mathrm{exp},4} [f](0),
\end{equation}
in terms of an appropriate fourth-order energy $\mathcal{E}^{b}_{\mathrm{exp},4} [f](0)$ of the initial data. 
\end{corollary}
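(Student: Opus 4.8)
The plan is to deduce Corollary~\ref{Corintro2} from the relatively non-degenerate integrated decay estimate of Theorem~\ref{thm_main_intro} by running the same hierarchy argument as for the polynomial case in Corollary~\ref{Corintro}, but replacing the inverse-polynomial $r^p$-weights by an exponential weight in $r$. The key structural input is that Theorem~\ref{thm_main_intro} produces a spacetime bulk integral $\int_0^\infty \mathcal{E}[r^{-1}\log^{-2}(2+r)f](\tau)\,\dr\tau$ that is \emph{non-degenerate} relative to its boundary flux $\mathcal{E}[f](\tau)$; the only loss is the $r^{-1}\log^{-2}(2+r)$ weight, which degenerates solely at spatial infinity. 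The point of the exponentially weighted flux $\mathcal{E}^b_{\mathrm{exp}}[f]$ is precisely to carry enough $r$-decay in the initial data that, after the $r^p$-type (here exponential) energy method, one gains a full copy of $\mathcal{E}[f](\tau)$ in the bulk, not merely a weighted version of it.

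The first step is to establish an exponential version of the Dafermos--Rodnianski $r^p$-weighted hierarchy adapted to the $W^{1,1}_{x,p}$ norm. Concretely, I would run the weighted energy identity on the null part $\{r>R_0\}$ of the foliation $(\Sigma_\tau)$ with an exponential weight $e^{br}$ (or $e^{br^*}$) in place of $r^p$, for each of the six constituent fluxes making up $\mathcal{E}[f]$, namely $p_N f$, $\pmb{\varphi}_-\partial_t f$, $\pmb{\varphi}_-\T_{|\slashed{p}|}f$, $p_N\pmb{V}_+^{\mathrm{mod}}f$, and the two fractional-weight terms. As in \cite{DR10,L23}, integrating the transport equation $\T_{g_M}(f)=0$ (and its commuted analogues) against the exponential weight produces, modulo controllable error terms, a coercive spacetime bulk of the same exponential weight, bounded by the boundary flux $\mathcal{E}^b_{\mathrm{exp}}[f](0)$. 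The commutation properties of $\T_{|\slashed p|}$, $\partial_t$, and the modified vector field $\pmb V_+^{\mathrm{mod}}$ with $\T_{g_M}$ established earlier in the paper are what guarantee that the commuted equations close within this norm.

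The second step is the pigeonhole/interpolation mechanism. Combining the exponential hierarchy from Step~1 with Theorem~\ref{thm_main_intro}, one obtains that $\int_0^\infty \mathcal{E}[f](\tau)\,\dr\tau \lesssim \mathcal{E}^b_{\mathrm{exp}}[f](0)$: the exponential $r$-weight upgrades the degenerate bulk weight $r^{-1}\log^{-2}(2+r)$ in Theorem~\ref{thm_main_intro} back to a non-degenerate $\mathcal{E}[f]$, at the price of the stronger initial norm. A mean-value/dyadic argument then yields a sequence of times along which $\mathcal{E}[f]$ decays, and the non-degenerate integrated estimate upgrades this to an exponential-in-$\tau$ bound $\mathcal{E}[f](\tau)\lesssim e^{-b_0\tau}\mathcal{E}^b_{\mathrm{exp}}[f](0)$ for some $b_0\in(0,b]$ determined by the spectral gap of the normally hyperbolic trapping at $\{r=3M\}$. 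The pointwise bound on $r^2\mathrm{T}_{NN}[f]$ follows by applying the flux decay to the commuted field: one expresses $\mathrm{T}_{NN}[f]$ via the fiber integral \eqref{energy_momentum_local_geometry_intro}, controls it by a first-order energy density through a Sobolev-type inequality in $(x,p)$ on $\pi^{-1}(\Sigma_\tau)$, and hence by $\mathcal{E}^b_{\mathrm{exp},4}[f]$ at fourth order, the extra derivatives being consumed by the Sobolev embedding.

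The main obstacle I anticipate is controlling the error terms generated by the exponential weight in the commuted equations near the photon sphere $\{r=3M\}$. Unlike the polynomial case, where weights grow mildly, the exponential weight amplifies any term that is not uniformly controlled by the non-degenerate bulk; one must verify that the dangerous derivatives in $\mathrm{span}\{\partial_{r^*},\partial_{p_{r^*}}\}$ — which the introduction notes grow exponentially on bounded regions — are genuinely absorbed by the good commutator $\pmb V_+^{\mathrm{mod}}$ and the transport structure of $\Phi$, so that no net exponential growth survives in the bulk. Checking that the constant $b_0$ can be chosen positive (i.e.\ that the rate of exponential energy decay is compatible with, and limited by, the surface gravity/trapping rate) is the delicate quantitative point, and it is here that the relative non-degeneration of Theorem~\ref{thm_main_intro} is essential.
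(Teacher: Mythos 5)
There are two genuine gaps in your proposal, one in each step.

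\textbf{The weight in Step 1 is wrong.} With $w=e^{br}$ one has $\T_g(w)=b\,p_{r^*}e^{br}$ (and similarly $\T_g(e^{br^*})=b\,p_{r^*}\Omega^{-2}e^{br^*}$), which is \emph{positive} on the outgoing half $\{p_{r^*}>0\}$ of the null-shell. The weighted energy identity therefore produces an exponentially weighted bulk term with the wrong sign, which cannot be absorbed; this failure occurs already for the uncommuted equation and far away from the photon sphere, so it is not the trapping issue you flag in your last paragraph. The weight must carry a momentum ratio: the paper (Section \ref{subsect_decay_fluxes}, Proposition \ref{Proexpodec}) uses $e^{br\frac{p_v}{p_t}}$, regularised as $e^{b\,\overline{\chi}(r)\frac{rp_v}{\Omega^2p_t}}$, for which Corollary \ref{Corrp} with $p=1$ gives $\T_g\big(e^{b\,\overline{\chi}(r)\frac{rp_v}{\Omega^2p_t}}\big)\lesssim |p_N|\,\mathds{1}_{r\leq 7M}-|p_v|\,e^{b\frac{rp_v}{p_t}}\mathds{1}_{r\geq 7M}$, i.e.\ a good sign for large $r$ and a compactly supported error that the relatively non-degenerate ILED of Theorem \ref{thm_main_intro} absorbs. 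This mirrors the fact that the paper's polynomial hierarchy is built on $r^p|p_v/p_t|^{\lceil p/2\rceil}$, not on bare $r^p$.

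\textbf{The decay mechanism in Step 2 cannot give an exponential rate.} Your key estimate, $\int_{\tau_1}^{\infty}\mathcal{E}[f](\tau)\,\dr\tau\lesssim \mathcal{E}^b_{\mathrm{exp}}[f](\tau_1)$, has a bulk ($\mathcal{E}$) strictly weaker than its right-hand side ($\mathcal{E}^b_{\mathrm{exp}}$). Together with energy boundedness, a pigeonhole/dyadic argument then yields only $\mathcal{E}[f](\tau)\lesssim \langle\tau\rangle^{-1}\mathcal{E}^b_{\mathrm{exp}}[f](0)$, and the argument cannot be iterated because nothing tells you that $\mathcal{E}^b_{\mathrm{exp}}[f](\tau_1)$ itself decays. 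Exponential decay requires a \emph{closed} inequality for a single quantity, in which the spacetime bulk controls the same exponentially weighted flux as the data: $\sup_{\tau\geq\tau_1}\mathcal{E}^b_{\mathrm{exp}}[f](\tau)+\int_{\tau_1}^{\infty}\mathcal{E}^b_{\mathrm{exp}}[f](\tau)\,\dr\tau\leq C\,\mathcal{E}^b_{\mathrm{exp}}[f](\tau_1)$ for every $\tau_1\geq 0$; then $G(\tau_1)\coloneqq\int_{\tau_1}^{\infty}\mathcal{E}^b_{\mathrm{exp}}[f]$ satisfies $G'\leq -G/C$ and exponential decay of $\mathcal{E}[f]\leq\mathcal{E}^b_{\mathrm{exp}}[f]$ follows with $b_0\sim 1/C$ (so $b_0$ comes from the constant in the integrated estimate, depending on $M$ and $b$, rather than directly from the trapping Lyapunov exponent). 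This is precisely what Proposition \ref{Proexpodec} does: the combination $\sum_k c_k|g_k|$ of commuted quantities from Proposition \ref{Rksystemderiv} is multiplied by $A+e^{b\,\overline{\chi}(r)\frac{rp_v}{\Omega^2p_t}}$ with $A$ large, the energy estimate of Proposition \ref{Proenergy} is applied (using Proposition \ref{ProILEDwrd2} to control the compact-region error in the $\mathbb{E}^b_{\mathrm{exp}}[p_tf]$ estimate), and time-translation invariance converts the uniform-in-$\tau_1$ bound into exponential decay — no pigeonhole is involved. Your reduction of the pointwise bound on $r^2\mathrm{T}_{NN}[f]$ to flux decay for $\partial_t^q\widehat{\mathbf{\Omega}}^I f$, $q+|I|\leq 3$, via a phase-space Sobolev inequality is consistent with Section \ref{Sec8} and is fine in outline.
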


Theorem \ref{thm_main_intro} is the first ILED without relative degeneration proved for massless Vlasov fields on the exterior of black hole spacetimes. In particular, it improves the results of \cite{ABJ, L23} and allows to derive exponential decay through energy methods. The control of $\pmb{V}_+^{\mathrm{mod}} f$ allows to control $\partial_r \mathrm{T}[f]$, which improves the results obtained in \cite{ABJ, L23, W23}. Finally, Corollaries \ref{Corintro} and \ref{Corintro2} are the first sharp pointwise decay estimates obtained for \eqref{vlasov_eqn_massless_intro}.

The vector field approach we develop is also suitable to upgrade the previous results to higher order statements. However, for the sake of clarity we do not pursue this here.

\subsection{Difficulties of the problem}
The main goal of this paper consists in developing a commutation approach which allows us to address the next two problems:
\begin{enumerate}[label = (\alph*)]
\item Prove an ILED without relative degeneration for finite energy solutions $f$ to the massless Vlasov equation \eqref{vlasov_eqn_massless_intro}. One can easily prove (see for instance Proposition \ref{ProremainderILED}) an inequality of the form
\begin{equation}\label{ILEDintro}
 \forall \, \tau_2 \geq \tau_1 \geq 0, \qquad  \mathfrak{E}[ f](\tau_2)+\int_{\tau = \tau_1}^{\tau_2}  \mathfrak{E} \bigg[ \frac{(r-3M)^2}{r^4}  f\bigg](\tau) \dr \tau \lesssim \mathfrak{E}[ f](\tau_1), \tag{deg-ILED}
 \end{equation}
where $\mathfrak{E}[f]$ is a non-degenerate energy flux such as $\mathbb{E}[p_Nf]$. There are two degeneracies in the flux of the bulk term compared to the flux in the RHS. One at infinity, which is not problematic since the decay estimates will be obtained by using the $r^p$-weighted energy method that exploits the asymptotic flatness of Schwarzschild spacetime.\footnote{ This degeneracy can be removed for exponentially decaying initial data. See Proposition \ref{Proexpodec}.} The other degeneracy occurs at the photon sphere $\{r=3M\}$, which is related to the difficulties associated to the existence of trapped null geodesics, and has to be removed in order to derive decay estimates.

We would like to prove an ILED without relative degeneration, that is, an estimate such as
\begin{equation}\label{ILEDintro2}
 \forall \, \tau_2 \geq \tau_1 \geq 0, \qquad  \mathfrak{F}[ f](\tau_2)+\int_{\tau = \tau_1}^{\tau_2}  \mathfrak{F} \bigg[ \frac{1}{r^2}  f\bigg](\tau) \dr \tau \lesssim \mathfrak{F}[ f](\tau_1), \tag{ILED-wrd}
 \end{equation}
for an energy flux $\mathfrak{F}[f]$ controlling $\mathbb{E}[p_Nf]$. Contrary to \eqref{ILEDintro}, apart from the factor vanishing at infinity as $r^{-2}$, the flux in the time integral in the LHS is equivalent to the one in the RHS. In conjunction with a well-adapted $r^p$-weighted energy method, \eqref{ILEDintro2} will allow us to derive decay for the energy flux $\mathfrak{F}[f]$.
\item Prove boundedness for all the derivatives of the energy-momentum tensor $\mathrm{T}[f]$. In order to control 
$$ \mathcal{L}_{\partial_t}\mathrm{T}[f], \qquad \qquad \mathcal{L}_{\mathbf{\Omega}_i}\mathrm{T}[f],$$
where $(\mathbf{\Omega}_i)_{1 \leq i \leq 3}$ are the generators of $SO_3(\R)$, one can exploit the time and spherical symmetries of Schwarzschild spacetime. However, for the radial derivative, the difficulty is that contrary to the case of Minkowski spacetime, we do not have $\partial_r = \frac{x^i}{r} \partial_{x^i}$ with $\partial_{x^i}$ being generators of space translation symmetries letting the Minkowski metric invariant. 

In the case of the wave equation $\Box_g \psi =0$ on Schwarzschild or Kerr spacetimes, where the analogous problem consists in proving boundedness for $\psi$ in $\dot{H}^2(\Sigma_\tau)$, one can proceed as follows. First, one can control $\partial_t \psi$ in $\dot{H}^1(\Sigma_\tau)$ by exploiting $[\Box_g , \partial_t ]=0$. Then, the other second order derivatives can be controlled through a manifestation of the redshift effect and elliptic estimates. In the case of the massless Vlasov equation, one could hope to take advantage of the hypoelliptic properties of the transport operator $\T_g$ through the so-called \textit{averaging lemmata} \cite{Ag84, averaginglemmas}. Nevertheless, the optimal regularity $\dot{H}^{1/2}(\Sigma_\tau)$ one could expect from them, together with bounds on $f$ and $\partial_t f$, is far from $W^{1,1}(\Sigma_\tau)$. It is even too weak to get pointwise estimates through Sobolev embeddings. 

\begin{remark}
In Schwarzschild one can simply use the symmetries of spacetime and control $\partial_r^2 \psi$ through $\Box_g \psi =0$. The analogous strategy for the massless Vlasov equation \eqref{vlasov_eqn_massless_intro} would merely allow us to control $\partial_{r^*} \mathrm{T}\big[\frac{p_{r^*}}{p_t}f \big]$.
\end{remark}
\end{enumerate}

It turns out that we will solve these two problems together. Concerning the issue of the degeneracy at the photon sphere for \eqref{ILEDintro}, we recall:
\begin{itemize}
\item From \cite[Proposition 3.12]{L23}, the estimate \eqref{ILEDintro2} does not hold for $\mathfrak{F}[f]=\mathbb{E}[p_Nf]$.
\item One can prove an estimate similar to \eqref{ILEDintro2} with $\mathfrak{F}[f]=\mathbb{E}[p_Nf]$, but where the RHS is replaced by a flux $\overline{\mathfrak{F}}[f]$, where $\mathfrak{F} \lesssim \overline{\mathfrak{F}}$ even though $\mathfrak{F}$ and $\overline{\mathfrak{F}}$ are \underline{not} equivalent. See \cite[Proposition 3.2]{L23} or Proposition \ref{ProILED} below.
\end{itemize} 

\subsection{Ideas and strategy of the proof}\label{Subsecideas}

We now present the key ideas of the proof of Theorem \ref{thm_main_intro}, which address the problems $(a)$ and $(b)$. The main part of the analysis will consist in performing an estimate similar to \eqref{ILEDintro2} for the $\dot{W}^{1,1}_{x,p}$ weighted semi-norm $\mathcal{E}[f]-\mathbb{E}[p_Nf]$, which will allow us to solve the problem $(b)$. Then, by a local Sobolev inequality and \eqref{ILEDintro}, we will derive the estimate \eqref{ILEDintro2} for $\mathfrak{F}[f]= \mathcal{E}[f]$.


\subsubsection{Step $0$: Linearisation around the photon sphere.} \label{linearised_ph_sph} The linearisation of $\T_g$ in the $(t,r^*,p_{r^*})$ variables around the fixed point $(r-3M=0,\,p_{r^*}=0)$ yields, up to a constant\footnote{The actual linearisation yields the operator $\partial_t +p \partial_x+\frac{1}{27M^2}x \partial_p$, where the additional constant is the square of the Lyapunov exponent $\frac{1}{3\sqrt{3}M}$ associated to the unstable trapping at the photon sphere. }, the operator
$$ \T_{\mathrm{lin}}\coloneqq\partial_t +p \partial_x+x \partial_p, \qquad \quad (t,x,p) \in \R_t \times \R_x \times \R_p,$$
which has been studied in \cite{VV24}. The characteristics of $\T_{\mathrm{lin}}$ either escape to infinity or converge to $(x=0,\, p=0)$. Moreover, the weights $s\coloneqq e^t(p-x)$ and $u\coloneqq e^{-t}(p+x)$ are conserved along the flow of $\T_{\mathrm{lin}}$ and are closely related to the trapping phenomenon. In particular, $p-x$ decays exponentially along the flow of $\T_{\mathrm{lin}}$ and measures how close is the trajectory starting at $(x,p)$ at $t=0$ to be past-trapped. 
The derivatives of a solution $g$ to $\T_{\mathrm{lin}}( g)=0$ can be well-understood by the analysis of the symplectic gradients $U$ and $S$ of $s$ and $u$, respectively. Alternatively, one can study the symplectic gradients $W_+$ and $W_-$ of $\psi_-\coloneqq p-x$ and $\psi_+\coloneqq p+x$, respectively. These vector fields are given by
$$ U \coloneqq e^tW_+, \qquad S \coloneqq e^{-t}W_-, \qquad W_+=\partial_x+ \partial_p, \qquad W_- = \partial_x - \partial_p$$
and satisfy the commutation properties
$$ \big[\T_{\mathrm{lin}}, U \big]= \big[\T_{\mathrm{lin}} , S \big]=0, \qquad \qquad \big[ \T_{\mathrm{lin}},W_+ \big]=-W_+, \qquad \qquad \big[ \T_{\mathrm{lin}},W_- \big]=W_-.$$
One can see in particular that:
\begin{itemize}
\item $Ug$ is conserved along the flow of $\T_{\mathrm{lin}}$. Equivalently, $W_+g$ decays as $e^{-t}$.
\item By integration by parts $\big|\partial_x \int_p g \dr p\big| \lesssim e^{-2t}$, whereas $\big| \int_p g \dr p\big| \lesssim e^{-t}$ by a suitable change of variables.
\item For any $Z_{a,b}=a\partial_x+b\partial_p$ with $a \neq b$, then $Z_{a,b}g$ grows exponentially along the flow of $\T_{\mathrm{lin}}$.
\end{itemize}
Thus, despite that derivatives of macroscopic quantities associated to $g$ decay exponentially, there is merely one (direction of) microscopic derivative $(a\partial_x+b\partial_p)g$ which decays exponentially, whereas the others \emph{grow exponentially}. In the case of Schwarzschild, this suggests that one way to derive decay estimates for $\partial_r \mathrm{T}[f]$ is to identify a good vector field on the null-shell which is analogous to either $U$ or $W_+$.

\subsubsection{Step $1$: Finding a good weight function.} The weight $\varphi_-$ introduced in \eqref{eq:defvarphiminusintro}, which measures how close is a null geodesic to be past-trapped, is exponentially decaying along the flow on the bounded region of spacetime. Specifically, it satisfies
\begin{equation}\label{equa:phiintro}
 \T_g (\varphi_-)=-\frac{|p_t|}{r^{\frac{1}{2}} |r+6M|^{\frac{1}{2}}\Omega^2} \varphi_-.
\end{equation}
By integrating this relation, one obtains a conserved quantity analogous to $s$. Let, for $(x,p) \in \mathcal{P}$, $\tau \mapsto \Phi_{\tau}(x,p)$ be the flow map of $\T_g$ parametrised by $t^*$ with data $\Phi_{t^*(x)}(x,p)=(x,p)$. Then
$$ \mathfrak{s}(x,p)\coloneqq \varphi_- (x,p) e^{\alpha(x,p)}, \qquad \alpha(x,p)\coloneqq  \int_{s=0}^{t^*(x)} \frac{|p_t|}{r^{\frac{1}{2}} |r+6M|^{\frac{1}{2}}\Omega^2} \circ \Phi_{s}(x,p) \dr s ,  $$
is conserved along the null geodesic flow. From \eqref{equa:phiintro}, we will show that the \emph{degenerate} norm $\mathbb{E}[\varphi_- f]$ verifies an \eqref{ILEDintro2}. See Proposition \ref{ProILED000} for more information. 

\subsubsection{Step $2$: Identifying a vector field commuting well with $\T_g$.}\label{Subsubsecstart} By a standard abuse of terminology, we denote by $\T_w$ the symplectic gradient of a function $w(x,p)$ defined on an open subset of $T^* \mathcal{S}$. It turns out, in view of the analysis in Appendix \ref{SecCotan}, that $\varphi_-$ can be extended to the set
$$\mathcal{P}_{\mathrm{causal}}\coloneqq \big\{ (x,p) \in T^* \mathcal{S} \, : \, g^{-1}_x(p,p) \leq 0, \, \,p_t <0 \big\},$$ containing all causal geodesics. Its extension, denoted $\varphi_-^{m(x,p)}(x,p)$, where $m(x,p)=|-g_x^{-1}(p,p)|^{1/2}$ and $\varphi_-^0=\varphi_-$, verifies an identity similar to \eqref{equa:phiintro}. Consequently, one can also extend $\alpha$ and $\mathfrak{s}$, so that this last quantity is conserved along causal geodesics. Therefore, its symplectic gradient $\T_{\mathfrak{s}}$ verifies
$$ \big[ \T_g, \T_{\mathfrak{s}} \big]=0, \qquad  \quad \T_{\mathfrak{s}}=\T_{\varphi_-^m} e^{\alpha(x,p)}+ \varphi_-^m (x,p) e^{\alpha(x,p)} \T_\alpha \qquad \quad \text{on\,\, $\accentset{\circ}{\mathcal{P}}_{\mathrm{causal}}$.}$$
Compared to the linearised case previously discussed, $\mathfrak{s}$ and $\T_{\mathfrak{s}}$ should be compared with $s$ and $U$. In view of the regularity of the geodesic flow in Schwarzschild and these last quantities, one can smoothly extend $\T_{\mathfrak{s}}$, $\T_{\varphi_-^m}$, and $\T_\alpha$ up to $\mathcal{P}$. We can then obtain boundedness for $\T_{\mathfrak{s}} f$, where $f$ is a solution to the massless Vlasov equation, that is $f$ is defined on $\mathcal{P}$ and $\T_g (f)=0$. However, the function $\alpha(x,p)$ is not given as an explicit function of the coordinates, which makes the expression of $\T_\alpha$ particularly involved. Deriving estimates for $\partial_r \mathrm{T}[f]$ from $\T_{\mathfrak{s}} f$ is then a non-trivial task.

With this in mind, we decided to look for a vector field analogous to $W_+$ instead. As $e^{-\alpha (x,p)}\T_{\mathfrak{s}}$ suffers from the same issue as $\T_{\mathfrak{s}}$, a good candidate for that could be $\T_{\varphi_-^m}$ which has the advantage of having the simple expression
\begin{equation}\label{eq:expTphi}
 \T_{\varphi_-^m} = \frac{|r+6M|^{\frac{1}{2}}}{r^{\frac{1}{2}}\Omega}(r-3M) \partial_t+\frac{r}{\Omega} \partial_{r^*}-\bigg( \frac{r-3M}{r\Omega}p_{r^*}+\frac{r^{\frac{1}{2}}}{|r+6M|^{\frac{1}{2}} \Omega}p_t \bigg) \partial_{p_{r^*}}+\pmb{x}_g \T_g \quad \text{on \,$\mathcal{P}$,}
 \end{equation}
in the coordinate system $(t,r^*,\theta, \phi, p_t,p_{r^*},p_\theta , p_\phi )$ on $T^* \mathcal{S}$ and where $\pmb{x}_g(r,p_{r^*},p_t)$ is a smooth function. We stress that the last term, which is tangent to $\mathcal{P}$, is irrelevant for the study of solutions to the massless Vlasov equation since $\pmb{x}_g \T_g(f)=0$. Nevertheless, as $\T_{\varphi_-^m} \notin T \mathcal{P}$, this vector field cannot be used to study the solutions to the massless Vlasov equation. Instead, we will make a non-canonical choice, that we will justify in Remark \ref{Rqproj}, and work with 
$$V_+ \coloneqq \mathrm{Proj}_{\parallel \partial_{p_t}}\big( \T_{\varphi_-^m} \big)-\pmb{x}_g \T_g, \qquad \qquad \mathrm{Proj}_{\parallel \partial_{p_t}} :  \bigcup_{(x,p) \in \mathcal{P}} T_{(x,p)} T^* \mathcal{S} \to T \mathcal{P}, $$
where $\mathrm{Proj}_{\parallel \partial_{p_t}} $ is the projection parallel to $\partial_{p_t}$. Then, in the coordinate system $(t,r^*,\theta, \phi,p_{r^*},p_\theta , p_\phi )$ on $\mathcal{P}$, the vector field $V_+$ has formally the same expression as $\T_{\varphi_-^m}-\pmb{x}_g \T_g$, allowing for the (schematic) estimate 
\begin{equation}\label{eq:Gderiv}
 \big| G \mathrm{T}[f] \big| \lesssim \big|\mathrm{T}[V_+ f]\big|+\big|\mathrm{T}[f] \big|.
 \end{equation}
Moreover, $V_+$ has good commutation properties with $\T_g$,
\begin{align}\label{eq:ComVplusinto}
[\T_g,V_+] &=  - \frac{|p_t|}{r^{\frac{1}{2}}|r+6M|^{\frac{1}{2}}\Omega^2} V_+- \frac{r+3M}{r^{\frac{3}{2}}|r+6M|^{\frac{3}{2}}}|p_t|\varphi_- \partial_{p_{r^*}}  + 2\Omega \T_g.
\end{align}
However, compared with $[\T_{\mathrm{lin}},W_+]=-W_+$, there are two additional error terms. The last one vanishes when applied to $f$, but the second one is in fact problematic in the perspective of proving an ILED without relative degeneration. Indeed, by analogy with the linearised problem, we schematically have around the photon sphere
\begin{equation}\label{eq:heuri} 
2\varphi_- p_t \partial_{p_{r^*}} f \sim \psi_- W_+g- \psi_- W_-g .
\end{equation}
Recall then that $\psi_-$ and $W_+ g$ decay as $e^{-t}$ but $W_- g$ grows as $e^t$. Hence, $ \psi_- W_+g- \psi_- W_-g $ is of order $1$ and, roughly speaking, its asymptotic behaviour is then comparable with the one of $g$. It turns out that one may indeed prove boundedness for $\mathfrak{G}[f]=\mathbb{E} [ r^{-1}\varphi_- p_t \partial_{p_{r^*}} f ]+\mathbb{E} [ p_t V_+ f ]+\mathbb{E}[\varphi_- \partial_t f]$ but, exactly as for $\mathbb{E}[p_N f]$, one cannot obtain \eqref{ILEDintro2} with $\mathfrak{F}[f]=\mathfrak{G}[f]$. 

\begin{remark}\label{Rqproj}
As explained in Appendix \ref{Subsecsympgrad}, if $W$ is a vector field transverse to $T \mathcal{P}$, we have 
$$V_+-\mathrm{Proj}_{\parallel W}(\T_{\varphi_-^m})-\pmb{x}_g \T_g=\varphi_- Z,$$ where $Z$ is a smooth vector field on $\{ r>2M\}$. If $W$ is chosen to be $\partial_{p_{t^*}}$, the derivatives according to $p_{t^*}$ in the coordinate system of $T^* \mathcal{S}$ induced by $(t^*,r,\theta,\phi)$, it turns out that $Z$ is collinear to $\partial_{p_{r^*}}$. One can check using Proposition \ref{Propartialprstar} that for any vector field of the form $\widetilde{V}_+=V_++\varphi_- z\big(r,\frac{p_{r^*}}{p_t} \big) \partial_{p_{r^*}}$, where $z$ is a smooth function, the commutator $[\T_g,\widetilde{V}_+]$ verifies the next properties:
\begin{itemize}
\item The first term is the same as in the RHS of \eqref{eq:ComVplusinto}, with $V_+$ replaced by $\widetilde{V}_+$.
\item There is an error term of the form $b\big(r,\frac{p_{r^*}}{p_t} \big)|p_t|\varphi_-  \partial_{p_{r^*}}$, similar to the second one in the RHS of \eqref{eq:ComVplusinto}.
\item There are extra error terms, proportional to $\varphi_- \partial_t$ and $\varphi_- \widetilde{V}_+$, which can then be handled.
\end{itemize}
We could then use $\widetilde{V}_+$ instead of $V_+$, but choosing $V_+$ provides a more convenient commutation formula.
\end{remark}

\subsubsection{Step $3$. Finding an improved commutator.} We recall the two next properties:
\begin{itemize}
\item $V_++\varphi_- \T_\alpha$ satisfies good commutation properties with $\T_g$ but the coefficients of $\T_\alpha$ are not explicit functions of the coordinates. Even worse, they are not even invariant by the flow of $\partial_t$. Thus, the components of $V_++\varphi_- \T_\alpha$ in the basis $\{\partial_t,\partial_{r^*},\dots , \partial_{p_\phi}\}$ cannot be easily estimated and could even vanish, so that estimating $\partial_r\mathrm{T}[f]$ through $V_++\varphi_- \T_\alpha$ looks complicated. 
\item $V_+$ is given through the simple expression \eqref{eq:expTphi} but $[\T_g,V_+]$ contains an error term which cannot be controlled sufficiently well for our purposes. 
\end{itemize}
However, we can observe that the difference $\varphi_- \T_\alpha$ of these two vector fields has the weight $\varphi_-$, which motivates us to look for a correction $V_{\, +}^{\mathrm{mod}}$ of $V_+$ verifying the following properties:
 \begin{itemize}
 \item $V_{\, +}^{\mathrm{mod}} = V_++\Lambda  \varphi_- \partial_{p_{r^*}}$, where $\Lambda$ is a suitable function that can be controlled in $W^{1,\infty}_{x,p}$. In such a way, by performing integration by parts, one can prove that \eqref{eq:Gderiv} holds true as well when $V_+$ is formally replaced by $V_{\, +}^{\mathrm{mod}}$.
 \item There exists a vector field $Z$ such that
 $$ \color{white} \square \qquad \color{black} [\T_g,V_{\, +}^{\mathrm{mod}}]= - \frac{|p_t|}{r^{\frac{1}{2}}|r+6M|^{\frac{1}{2}}\Omega^2} V_{\, +}^{\mathrm{mod}}+\frac{\Omega}{ r^{\frac{7}{4}}}|\varphi_-|^{\frac{1}{4}}|p_N|^{\frac{3}{4}} Z+2\Omega \T_g, \qquad \quad \sup_{\tau \geq 0} \mathbb{E}[p_N Zf](\tau) < +\infty .$$
Then, thanks to the factor $|\varphi_-|^{1/4}$, we will be able to prove an ILED without relative degeneration  \color{white}f\color{black} for\footnote{Note that $\varphi_-$ grows as $r$, which explains why the derivative is in fact weighted by $r^{-1}\varphi_-$.} $|r^{-1}\varphi_-|^{1/4}|p_N|^{3/4} Z f$, and then for the flux $\mathbb{E}[p_N \,V_{\, +}^{\mathrm{mod}} f]$.
 \end{itemize}
 \begin{remark}
 The analysis in \textit{Step 2} suggests that the asymptotic behavior of both $V_+f$ and $\varphi_- \partial_{p_{r^*}}f$ are comparable to the one of $g$, which satisfies $\mathbb{X}_{\mathrm{lin}}(g)=0$. In order to construct an improved commutator from $V_+$, it is then consistent to look for a correction $V_++C$, where the asymptotic behaviour of $Cf$ is also comparable to $g$. As the bad error term in $[\mathbb{X}_g,V_+]$ is proportional to $\varphi_- \partial_{p_{r^*}}$, one can expect to look for an improved commutator of the form $V_+^{\mathrm{mod}}= V_++\Lambda \varphi_- \partial_{p_{r^*}}$.
 \end{remark}
 
 Finally, in order to fully capture the redshift effect and obtain a stronger control of the solutions near the future event horizon $\mathcal{H}^+$, we will work with a rescaled version of these vector fields, $\pmb{V}_{\! +}\coloneqq\Omega^{-1}V_+$ and $\pmb{V}_{\, +}^{ \mathrm{mod}}\coloneqq \Omega^{-1}V_{\, +}^{ \mathrm{mod}}$, which are related to the weight $\pmb{\varphi}_-=\Omega^{-1} \varphi_-$ discussed in Section \ref{Rqintrored}.

\subsubsection{Step 4: Proving an ILED without relative degeneration}

Once $\pmb{V}_+^{ \mathrm{mod}}$ is identified, the main difficulty is to control appropriately $|r^{-1}\varphi_-|^{1/4}|p_N|^{3/4} Z f $. For this, the goal consists in proving boundedness and an ILED without relative degeneration for
$$ \pmb{\varphi}_- \partial_t f , \qquad \qquad \mathrm{D}_{p_{r^*}}f:= \Omega^{\frac{1}{2}}|p_t|^{\frac{3}{4}} |r^{-1} \pmb{\varphi}_-|^{\frac{5}{4}} \partial_{p_{r^*}} f  , \qquad \qquad \mathrm{D}_{\pmb{V}_{\! +}}f:=|p_N|^{\frac{3}{4}} |r^{-1} \pmb{\varphi}_-|^{\frac{1}{4}}  \pmb{V}_{\! +} f .$$
The first derivative can be easily treated since $\T_g(\partial_t f)=0$. Moreover, we can expect to be able to control well the last two derivatives since they are weighted by a sufficiently large power of $\varphi_-$. However, it turns out that the system of the commuted equations is \textit{not triangular}. More precisely, we have
\begin{align*}
\T_g \big(r^{\frac{1}{4}} \mathrm{D}_{\pmb{V}_{\!+}} f \big) & \leq - \mathbf{b}_{\mathrm{good}}^1(x,p) r^{\frac{1}{4}}\mathrm{D}_{\pmb{V}_{\!+}} f + |p_N|r^{-3} \big|r^{\frac{9}{4}} \mathrm{D}_{p_{r^*}}f \big| , \\
 \T_g \big( r^{\frac{9}{4}} \mathrm{D}_{p_{r^*}}f \big)& \leq -\mathbf{b}_{\mathrm{good}}^2(x,p) r^{\frac{9}{4}} \mathrm{D}_{p_{r^*}}f +C|p_t/p_N|^{\frac{3}{4}}|\pmb{\varphi}_-| \big|r^{\frac{1}{4}} \mathrm{D}_{\pmb{V}_{\!+}} f \big| +\textrm{good term},
 \end{align*}
with $C>0$ and where the good term is a combination of $\pmb{\varphi}_- \partial_t f$ and $\pmb{\varphi}_- \T_{|\slashed{p}|}f$. Moreover, $|\pmb{\varphi}_-| \lesssim r|p_N|$ and $\mathbf{b}_{\mathrm{good}}^1(x,p) , \,\mathbf{b}_{\mathrm{good}}^2(x,p) \gtrsim r^{-1}|p_N|$. One can observe that there are two problems.
\begin{itemize}
\item First, $r^{\frac{1}{4}} \mathrm{D}_{\pmb{V}_{\!+}} f$ and $r^{\frac{9}{4}} \mathrm{D}_{p_{r^*}}f$ carry respectively extra $r^{\frac{1}{4}}$ and $r^{\frac{9}{4}}$ factors compared to the quantities we would like to control. 
\item The functions $\mathbf{b}_{\mathrm{good}}^1$ and $\mathbf{b}_{\mathrm{good}}^2$ are not strong enough to absorb the bad error terms. There does not exist $c>0$ such that $ c\mathbf{b}_{\mathrm{good}}^1(x,p) > Cr^{-2 }|p_t/p_N|^{\frac{3}{4}}|\pmb{\varphi}_-|$ and $ \mathbf{b}_{\mathrm{good}}^2(x,p) >  c r^{-1} |p_N|$.
\end{itemize}

We deal with the first issue by splitting the null-shell $\mathcal{P}$ into the three regions $\{ p_{r^*} \leq 0, \; r \geq R \}$, $\{ p_{r^*} \geq 0, \; r \geq R \}$ and $\{ r \leq R \}$, where $R > 3M$ is sufficiently large\footnote{ Considering this splitting of the null-shell is the only way we found to deal with the two problems mentioned above. Let us however mention that $\mathrm{D}_{\pmb{V}_{\!+}}f$ and $ \mathrm{D}_{p_{r^*}}f$ carry weights related to the $r^p$ energy method (see Section 1.5.6 below), so that it is natural to at least consider separatly the subregions $\{ r \leq R \}$ and $\{ r \geq R \}$.}.
\begin{enumerate}[label = (\alph*)]
\item Incoming particles in the far-away region $\{ p_{r^*} \leq 0, \; r \geq R \}$. This domain can be treated independently of the rest of the null-shell. The reason behind that is that future-directed null geodesics never enter this region. The difficulties related to trapping do not appear here and, if $R$ is chosen large enough, we are in fact on a perturbation of Minkowski. We are then able to control the derivatives of the distribution function by a method inspired by the flat case. 
\item Escaping particles in the far-away region $\{ p_{r^*} \geq 0, \; r \geq R \}$. The key observation is that $\T_g(r^{-1})= -p_{r^*} r^{-2} \leq 0$, so we can rescale the vector fields by generating error terms with a good sign.
\item The bounded region $\{ r \leq R \}$, where the first problem is irrelevant.
\end{enumerate}
We address the second issue by taking advantage of the weight function $\zeta(x,p)$ introduced in Definition \ref{Defz}, which verifies $\zeta \sim 1$ and $\T_g (\zeta) \lesssim -r^{-2}|p_{r^*}|-r^{-3}|r-3M| |p_t|$. This weight can be exploited in order to deal with the bounded region, when away from a neighborhood of $\mathcal{H}^+$ and the photon sphere. In fact, it allows to handle the error term $C|p_t/p_N|^{\frac{3}{4}}|\pmb{\varphi}_-| \big|r^{\frac{1}{4}} \mathrm{D}_{\pmb{V}_{\!+}} f \big|$, which degenerates at $r=2M$ and at $r=3M$, on $\{ r \leq R \}$.

\subsubsection{About the redshift effect and the $r^p$-method}\label{Rqintrored}

We note that $\varphi_-$ can also be used to capture the redshift effect \cite{DR09} and the $r^p$-weighted phenomenon \cite{DR10}. This explains why, as it can be observed in \eqref{equa:phiintro}, the weight $|\varphi_-|$ is a Lyapunov function in the full exterior of Schwarzschild and not merely around $\{r=3M\}$. Let us discuss these properties:
\begin{itemize}
\item The redshift effect is the phenomenon that allows for massless Vlasov fields supported in a neighborhood of $\mathcal{H}^+$ to decay exponentially. In any region $\{r \geq 2M+\delta\}$ with $\delta>0$, the vector field $\partial_t$ is strictly timelike, and $|p_t|$ controls any component of $p$. However, since $\partial_t$ is null on $\mathcal{H}^+$, this property does not hold up to $\H$. This is why we control $p_N f$, where
$$ p_N \sim p_t+\frac{p_u}{\Omega^2} \qquad\qquad \text{near \,$r=2M$,}$$
instead of $p_t f$. At a first glance, this could seem problematic since the stationarity of Schwarzschild merely allows for a direct control of $p_t f$. However, this can be shown as $\frac{p_u}{\Omega^2}$ eventually decays exponentially along any null geodesic that enters the black hole region. 
\item The $r^p$-weighted phenomenon is related to the fact that most of the particles are escaping in the region $r > 3M$ after a large time. In fact, $r^p |p_v|$ eventually decays for all $0 \leq p <2$, along any null geodesic terminating at future null infinity $\mathcal{I}^+$.
\end{itemize}

The redshift effect and the $r^p$-weighted phenomenon are fully captured by the identities
\begin{align*}
\T_g \bigg(  \frac{r^2}{\Omega^2} |p_u| \bigg)  = 2\frac{r-3M}{\Omega^4}|p_u|^2  , \qquad \qquad  \T_g \bigg(  \frac{r^2}{\Omega^2} |p_v| \bigg)  = -2\frac{r-3M}{\Omega^4}|p_v|^2  ,
 \end{align*}
respectively. Moreover, the function $\varphi_-$ is related to these two weight functions through
\begin{align*}
\Big|\varphi_-- \frac{2r}{\Omega}|p_u| \Big| \lesssim \Omega |p_t| \quad \text{for \,$r \leq 2.5M$}, \qquad \qquad \Big| \varphi_-+\frac{2r}{\Omega}|p_v|\Big| \lesssim \frac{|p_t|}{r} \quad \text{for \,$r \geq 4M$}.
 \end{align*}
 These estimates justify why we will often work with the rescaled quantity $\pmb{\varphi}_- \coloneqq \Omega^{-1} \varphi_-$.

\subsection{The non-linear stability of the Minkowski space}

The global asymptotic stability of Minkowski spacetime for the massless Einstein-Vlasov \eqref{EV} system was first obtained by Taylor \cite{T}, when the initial distribution function is compactly supported. Under this assumption, the problem becomes a small data semi-global existence result in the wave zone. The estimates for the distribution function were performed by studying Jacobi fields on $\mathcal{P}$. Later, Bigorgne--Fajman--Joudioux--Smulevici--Thaller \cite{BFJST} proved the non-linear stability of Minkowski for \eqref{EV} without assuming any compact support for the initial data. The estimates for the distribution function here were obtained by using weighted commuting vector field techniques. The case of spherically symmetric perturbations was previously addressed by Dafermos \cite{D06}.

\subsection{The massive Vlasov equation on black hole exteriors}

The energy-momentum tensor $\mathrm{T}[f]$ for massive Vlasov fields on Schwarzschild spacetime in general \emph{does not decay}. The massive Vlasov equation admits many non-trivial finite energy stationary solutions. One can circumvent this obstruction to decay, by considering massive Vlasov fields supported in the closure of the largest domain of the mass-shell where timelike geodesics either cross $\mathcal{H}^+$, or escape to infinity. For this class of distribution functions, quantitative decay estimates for the energy-momentum tensor have been shown by the second author \cite{V24}. On the other hand, in the region where Vlasov fields do not decay, a phase mixing result without a rate of convergence has been proved by Rioseco-Sarbach \cite{RS20}. In the perspective of addressing the problem of quantitative phase mixing, Chaturvedi--Luk \cite{CL24} have recently shown these estimates for a linear Vlasov equation under an external Kepler potential. Even further, they have obtained in spherical symmetry a long-time nonlinear phase mixing result for the Vlasov--Poisson system under an external Kepler potential. 

In a different research line, Kehle--Unger \cite{KU24} constructed one-parameter families of smooth spherically symmetric solutions of the Einstein--Maxwell--Vlasov system, interpolating between dispersion and collapse. The solution corresponding to the threshold of gravitational collapse turns out to be an extremal black hole.

\subsection{Structure of the article}
We end with an outline of the remainder of the paper.

\begin{itemize}
\item \textbf{Section \ref{Sec2}.} We recall the framework to study massless Vlasov fields from the point of view of the initial value problem. We recall the general form of the energy flux of a distribution function.
\item \textbf{Section \ref{Sec3}.} We introduce a class of weight functions, and show their basic properties. In particular, we set the weights $\varphi_{\pm}$ that capture the expansion and concentration properties of the geodesic flow.
\item \textbf{Section \ref{Sec4}.} We show zeroth order energy boundedness and ILEDs.
\item \textbf{Section \ref{Sec5}.} We introduce a class of vector fields that are used to commute the Vlasov equation. In particular, we introduce the symplectic gradients $V_{\pm}$, and the modified vector field $\pmb{V}_+^{\mathrm{mod}}$.
\item \textbf{Section \ref{SecILEDwrd}.} We prove an ILED without relative degeneration for the first order energy norm $\mathcal{E}[f]$.
\item \textbf{Section \ref{Sec7}.} We show time decay for the first order energy-norm $\mathcal{E}[f]$, by using the $r^p$- energy method.
\item \textbf{Section \ref{Sec8}.} We prove pointwise decay estimates for the components of the energy-momentum tensor $\mathrm{T}[f]$ using Sobolev inequalities.
\item \textbf{Appendix \ref{Append}.} We show pointwise bounds for the correction term of the modified vector field $\pmb{V}_{\, +}^{\mathrm{mod}}$, including its first order derivatives. 
\item \textbf{Appendix \ref{SecCotan}.} We discuss the commutator associated to the conserved quantity arising from trapping by using the symplectic structure of the cotangent bundle.  
\end{itemize}

\subsection{Acknowledgements}
LB conducted this work within the France 2030 framework programme, the Centre Henri Lebesgue ANR-11-LABX-0020-01. RVR thanks Jacques Smulevici for several stimulating discussions. RVR also thanks Georgios Mavrogiannis for insightful discussions about \cite{Mavr23, Mavq}.

\section{Preliminaries}\label{Sec2}

\subsection{Schwarzschild spacetimes and their properties}

 The Schwarzschild black holes $(\mathcal{S},g_M)_{M>0}$ is a one-parameter family of spherically symmetric and stationary Lorentzian manifolds. From now on, we fix a mass $M>0$ and we drop the subscript $M$ of the metric, so that we write $g$ for $g_M$. . 

\subsubsection{Coordinate systems}

Let us define the Schwarzschild metric in terms of $(t^*,r,\theta,\phi)$ coordinates. We equip $\R \times \R_+^* \times \mathbb{S}^2$ with the metric
 $$g=-\Omega^2(r) |\dr t^*|^2+\frac{4M}{r} \dr t^* \dr r+\Big(1+\frac{2M}{r}\Big)\dr r^2+r^2g_{\mathbb{S}^2}, \qquad \qquad \Omega^2(r)\coloneqq1-\frac{2M}{r},$$
 where $g_{ \mathbb{S}^2} = \dr \theta^2+\sin^2(\theta ) \dr\phi^2$ is the round metric on the unit sphere $\mathbb{S}^2$. This coordinate system has the usual degeneration of the spherical coordinates $(\theta , \phi) \in (0,\pi) \times (0,2\pi)$. The \textit{exterior region} of the black hole $\mathcal{S}$ and the \textit{future event horizon} $\mathcal{H}^+$ are given by
 $$ \mathcal{S} \coloneqq \R \times [2M,\infty) \times \mathbb{S}^2, \qquad \qquad \mathcal{H}^+ \coloneqq \R \times \{2M\} \times \mathbb{S}^2.$$
The subset $\{ r < 2M \}$, which corresponds to the interior of the black hole, will not be studied in this article.

The vector field $\partial^*_{t^*}$, the derivative with respect to $t^*$ in these coordinates, is timelike on $\mathring{\mathcal{S}}$ and null on $\mathcal{H}^+$. The future event horizon is then a null hypersurface normal to $\partial^*_{t^*}$. We fix the time orientation in the Lorentzian manifold $(\R \times \R_+^* \times \mathbb{S}^2,g)$ by requiring $\partial^*_{t^*}$ to be future-directed in $\mathcal{S}$.

We will mostly work in the tortoise coordinate system $(t,r^*,\theta , \phi) \in \R \times \R \times (0,\pi)\times (0,2\pi)$, which covers $\mathring{\mathcal{S}} = \{ r >2M \}$ and where $(t,r^*)$ are given by
$$t \coloneqq t^*-2M\log (r-2M), \qquad \qquad r^*(r)\coloneqq r-3M+2M\log (r-2M)-2M \log(M),$$
so that $\dr r^*= \Omega^{-2} \dr r$ and $r^*(3M)=0$. The metric then reads
\begin{equation}\label{eq:defmetric}
g=-\Omega^2(r)\mathrm{d} t^2+\Omega^2(r)|\mathrm{d} r^*|^2+r^2 g_{\mathbb{S}^2}.
\end{equation}
We will denote by $\partial_t$, $\partial_{r^*}$, $\partial_\theta$ and $\partial_\phi$ the derivatives with respect to $t$, $r^*$, $\theta$ and $\phi$ in this coordinate system and we consistently define $\partial_r \coloneqq \Omega^{-2} \partial_{r^*}$. 

Finally, we also introduce the outgoing and ingoing Eddington--Finkelstein null coordinates. They are respectively given by
$$u=t-r^*, \qquad \quad v=t+r^*,$$
so that we define
$$  \partial_u \coloneqq \frac{1}{2} \big( \partial_t - \partial_{r^*} \big), \qquad \quad  \partial_v \coloneqq \frac{1}{2} \big( \partial_t + \partial_{r^*} \big) .$$
As one can check, by exploiting for instance the coordinate system $(t^*,r,\theta,\phi)$, $\Omega^{-2} \partial_u$ can be extended as a smooth vector field up to $\mathcal{H}^+$. In the double null coordinate system $(u,v,\theta,\phi)$, the metric takes the form $g=-\Omega^2 \dr u \dr v +r^2 g_{\mathbb{S}^2}$. We note that the level sets of $u$ and $v$ are respectively outgoing and ingoing null cones. In view of the relation between $t^*$ and $v$, we can abusively view the future event horizon $\mathcal{H}^+$ as $\{(u=+\infty, v,\theta,\phi)\}$. Finally, \textit{future null infinity} $\mathcal{I}^{+}$, which can be rigorously defined in a conformal compactification of Schwarzschild spacetime, can be viewed as the limit of $\{v=v_0\}$ as $v_0 \to +\infty$. It then abusively corresponds to $\{(u,v=+\infty,\theta,\phi)\}$.

We refer to \cite[Chapter~$13$]{O83} for more information about the family of Schwarzschild black holes.

\subsubsection{Killing fields of Schwarzschild spacetime}

As it can be checked in \eqref{eq:defmetric}, $\partial_t$ is a timelike Killing vector field for $r>2M$ and $(\mathring{\mathcal{S}},g)$ is a static spacetime. Schwarzschild black hole is also spherically symmetric since
\begin{equation}\label{killing_sph_symm}
\mathbf{\Omega}_1=-\sin \phi\,\partial_{\theta}-\cos \phi\cot\theta\,\partial_{\phi}, \qquad   \mathbf{\Omega}_2=\cos \phi \,\partial_{\theta}-\sin\phi \cot \theta \,\partial_{\phi},\qquad \mathbf{\Omega}_3=\partial_{\phi},
\end{equation}
are Killing vector fields generating an action by isometries of $SO_3(\R)$.

\subsubsection{The timelike vector field $N$} We will use a timelike future-directed vector field $N \in \Gamma(T\mathcal{S})$ to control sufficiently well massless Vlasov fields. It is defined by
$$ N \coloneqq   \partial_t+  \chi_N (r)  \frac{2r^2}{M^2\Omega^2} \partial_u,$$
where $\chi_N \in C^\infty (\R)$ is a cutoff function such that $\chi_N (r)=1$ for all $r \leq 2.5M$ and $\chi_N (r) =0$ for all $r \geq 2.7M$. In particular, $N=\partial_t$ on $\{ r \geq 2.7M \}$ and, contrary to $\partial_t$, $N$ is timelike on $\{ r \geq 2M \}$ since 
$$g(N,N)=-\Omega^2(r)-2M^{-2}r^2 \chi_N (r).$$

\subsubsection{The hypersurfaces $\Sigma_\tau$}\label{Subsubsecvolume}

Let us define the spacelike-null foliation $(\Sigma_\tau)_{\tau \geq 0}$ that we will use in order to study the solutions to $\T_g (f)=0$. We set the constants 
$$R_0>3M, \qquad \quad t_0 \coloneqq -2M \log (R_0-M), \qquad \quad  u_0\coloneqq t_0-r^*(R_0).$$ 

\begin{definition}
Let, for all $\tau\in \R_+$, $\Sigma_\tau$ be the hypersurface 
$$ \Sigma_\tau \coloneqq \{ t^* = \tau , \, r \leq R_0 \} \sqcup \{ u=\tau+u_0, \, r > R_0  \}.$$
\end{definition}
\begin{remark}
We have $\Sigma_\tau=\varphi_{\tau}(\Sigma_0)$ for all $\tau\geq 0$, where $\tau \mapsto \varphi_\tau$ is the flow-map generated by the Killing field $\partial_t$. Moreover, $\Sigma_\tau$ is composed by a piece of the spacelike hypersurface $\{ t^* = \tau \}$ and the piece of the outgoing null cone $\{u = \tau +u_0 \}$ located in the future of $\{t^* = \tau \}$. They intersect at the sphere $\{(t_0+\tau,r^*(R_0))\}\times \mathbb{S}^2$.
\end{remark}

Let us also introduce the following notation. For $0\leq \tau_1<\tau_2\leq +\infty$, we denote the sets 
$$ \mathcal{R}^{\tau_2}_{\tau_1} \coloneqq \bigcup_{\tau_1\leq\tau\leq \tau_2} \Sigma_\tau,\qquad \qquad \mathcal{R} \coloneqq \mathcal{R}^{+\infty}_0.$$ 

These subsets are represented in the following piece of the Penrose diagram of the exterior of Schwarzschild spacetime.
\begin{figure}[!ht]
\begin{center}
\begin{tikzpicture}[scale=1]
\fill[color=gray!14] (-7.13,0.87) .. controls  (-6.13,-0.13) and   (-2.84,0).. (-2.82,0)--(-1.41,1.41)--(-4,4);

\node (II)   at (-4,0)   {};
\path  
  (II) +(90:4)  coordinate[label=90:$i^+$]  (IItop)
       +(180:4) coordinate (IIleft)
       +(0:4)   coordinate[label=0:$i^0$]  (IIright)
       ;
\draw[thick] (IIleft) -- 
        node[midway, above=1pt, sloped] {$\mathcal{H}^+$ \, \small{($u=+\infty$)}}
      (IItop);
 \draw[dashed]     (IItop)--    node[midway, above=1pt, sloped] {$\mathcal{I}^+$ \, \small{($v=+\infty$)}}     (IIright);

\fill[color=gray!34] (-6.37,1.63) .. controls (-5.87,1.13)  and (-4,1.7)..  (-3,1.4)--(-2.2,2.2) -- (-2.88,2.88)-- (-3.29,2.47) .. controls (-4,3.1) and (-5.29,2.11)  ..  (-5.59,2.41)  ;
\draw[dashed] (0,0)--(-1,-1); 
\draw[thick] (-8,0)--(-7,-1); 
\draw[very thin,blue]  (-7.13,0.87) .. controls  (-6.13,-0.13) and   (-2.84,0).. node[below]{\small{$\Sigma_0$}}   (-2.82,0)--(-1.41,1.41)    ;
\draw (-6.37,1.63) .. controls (-5.87,1.13)  and (-4,1.7).. node[below]{\small{$\Sigma_{\tau}$}}  (-3,1.4)--(-2.2,2.2) ;
\draw (-5.59,2.41) .. controls (-5.29,2.11)   and (-4,3.1) .. node[above]{\small{$\Sigma_{\tau_2}$}} (-3.29,2.47)--(-2.88,2.88) ;

\draw (-4,2) node{\small{$\mathcal{R}_{\tau}^{\tau_2}$}};
\end{tikzpicture}
\caption{The foliation $(\Sigma_{\tau})_{\tau \geq 0}$. }\label{fig2}
\end{center}
\end{figure}
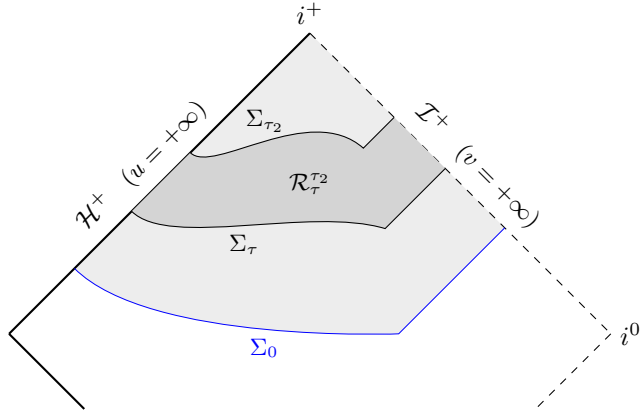

\subsubsection{Volume forms}

The volume form in the spacetime region $\mathcal{R}$ is given by 
$$\dr \mu_{\mathcal{R}}=\Omega^2(r)r^2 \dr t\wedge \dr r^* \wedge \dr\mu_{\mathbb{S}^2}=r^2 \dr t\wedge \dr r \wedge \dr\mu_{\mathbb{S}^2},$$
where $\dr \mu_{\mathbb{S}^2} =  \sin(\theta) \dr \theta \wedge \dr \phi$ is the volume form on the unit round sphere $\mathbb{S}^2$. We define the future-directed normal $n_{\Sigma_\tau}$ to $\Sigma_\tau$ as
\begin{equation*}
 n_{\Sigma_\tau}\big|_{\{r<R_0\}}\coloneqq \Big(1+\frac{2M}{r} \Big)^{-\frac{1}{2}}\partial_v+\Big(1+\frac{2M}{r}\Big)^{\frac{1}{2}}\frac{1}{\Omega^2(r)}\partial_u,\qquad \quad n_{\Sigma_\tau}\big|_{\{r\geq R_0\}}\coloneqq\partial_v.
\end{equation*} 
Note that $n_{\Sigma_\tau}$ is unitary on $\{ r < R_0 \}$. Pulling back the spacetime volume form into the hypersurfaces $\Sigma_\tau$, in accordance with the choice of $n_{\Sigma_\tau}$ as normal vector field, we obtain the volume forms
 \begin{equation*}
 \dr \mu_{\Sigma_\tau}\big|_{\{r<R_0\}}=\Big(1+\frac{2M}{r}\Big)^{\frac{1}{2}} \,r^2 \dr r \wedge \dr\mu_{\mathbb{S}^2},\qquad \quad \dr \mu_{\Sigma_\tau}\big|_{\{r\geq R_0\}}=r^2 \dr v \wedge \dr\mu_{\mathbb{S}^2},
 \end{equation*}

The null hypersurface $\H$ is equipped with the volume form and the normal vector $$ \dr \mu_{\mathcal{H}^+}=r^2 \dr v \wedge \dr\mu_{\mathbb{S}^2},\qquad\quad n_{\H}=\partial_v.$$ On the other hand, even though future null infinity $\mathcal{I}^+$ is not part of Schwarzschild spacetime, we can view this set as the level set $\{v=+\infty\}$ equipped with the volume form and the normal vector $$\dr \mu_{\mathcal{I}^+}= \dr u \wedge \dr\mu_{\mathbb{S}^2}\qquad\quad n_{\mathcal{I}^+}=\partial_u.$$
Finally, we will often use the following consequence of the coarea formula. For all measurable function $h_+ \colon \mathcal{R} \to \R_+$ and all $\tau_2 \geq \tau_1 \geq 0$,
\begin{equation}\label{eq:coarea}
 \int_{\mathcal{R}_{\tau_1}^{\tau_2}} h_+ \dr \mu_{\mathcal{R}} \sim \int_{\tau = \tau_1}^{\tau_2} \int_{\Sigma_\tau} h_+ \dr \mu_{\Sigma_\tau} \dr \tau \tag{coarea}.
 \end{equation}

\subsection{The cotangent bundle}\label{Subseccotan} We recall geometric properties of the cotangent bundle
$$T^* \mathcal{S} \coloneqq \{ (x,p) \, |  \, x \in \mathcal{S}, \, p \in T^*_x\mathcal{S} \}$$
which will be useful for our study of massless Vlasov fields. 

\begin{definition}\label{Defcoord}
Let $\mathscr{C}=(x^{\mu})_{0 \leq \mu \leq 3}$ be a local coordinate system on $\mathcal{S}$ defined on an open subset $\mathcal{U}$.
\begin{itemize}
\item The coordinates $(p_\mu)_{0 \leq \mu \leq 3}$ on the fibers $T_x^* \mathcal{S}$ are referred to as the \emph{conjugate momenta} to $\mathscr{C}$ if $$ \forall \, (x,p) \in \mathcal{U} \times T_x^* \mathcal{S}, \qquad \qquad p=p_\mu \dr x^\mu .$$
\item Then, $(x^\mu,p_\mu)_{0 \leq \mu \leq 3}$ is a local coordinate system on $T^* \mathcal{S}$, referred to as the one \emph{induced} by $\mathscr{C}$. We say that such coordinates are \emph{canonical}.
\end{itemize}
\end{definition}

Our analysis relies on the canonical symplectic structure of the cotangent bundle. The associated antisymmetric bilinear form arises as the exterior differential of the Poincaré $1$-form, which reads $p_\mu \dr x^\mu$ in a canonical coordinate system.

\begin{definition}
Let $(x^\mu,p_\mu)_{0 \leq \mu \leq 3}$ be a canonical coordinate system on $T^* \mathcal{S}$. Then,
$$\Omega_s \coloneqq \dr p_\mu \wedge \dr x^\mu $$
defines a symplectic form. We associate to any function $w (x,p)$, defined on an open subset of $T^* \mathcal{S}$, its \emph{symplectic gradient} $\T_w$. It is uniquely determined by
$$ \dr w = \Omega_s \big(\cdot , \T_w \big),$$
so that
$$ \T_w = \partial_{p_\mu}(w) \partial_{x^\mu}-\partial_{x^\mu}(w) \partial_{p_\mu}.$$
\end{definition}

In particular, the geodesic spray $\T_g$ is the symplectic gradient of the one-particle Hamiltonian
 $$ H:(x,p) \mapsto \frac{1}{2}g^{-1}_x(p,p),$$
 that is $\T_g := \T_H$. The images by the canonical projection $(x,p) \mapsto x$ of the integral curves of $\T_g$,
$$
\frac{\dr x^{\alpha}}{\dr s}=p_{\beta}g^{\alpha\beta}, \qquad  \frac{\dr p_{\alpha}}{\dr s}=-\frac{1}{2}\frac{\partial g^{\beta\gamma}}{\partial x^{\alpha}}p_{\beta}p_{ \gamma},
$$ 
are the geodesics of $(\mathcal{S},g)$. In the coordinate system $(t,r^*,\theta,\phi,p_t,p_{r^*},p_\theta,p_\phi)$, induced by $(t,r^*,\theta,\phi)$, the geodesic spray is given by
\begin{equation*}
\mathbb{X}_{g}=-\frac{p_t}{\Omega^2} \partial_t + \frac{p_{r^*}}{\Omega^2} \partial_{r^*}+\frac{p_{\theta}}{r^2} \partial_{\theta} + \frac{p_{\phi}}{r^2 \sin^2 ( \theta )} \partial_{\phi} -\bigg(\frac{M}{r^2\Omega^2}(|p_t|^2-|p_{r^*}|^2)-\frac{\Omega^2}{r^3}|\slashed{p}|^2 \bigg) \partial_{p_{r^*} }+\frac{\cot (\theta )}{r^2 \sin^2 (\theta)}  |p_{\phi}|^2 \partial_{p_{\theta} } .
 \end{equation*} 
The following result is central to our approach to derive decay estimates for massless Vlasov fields and their derivatives.

\begin{proposition}
Let $\mathcal{O} \subset T^* \mathcal{S}$ be an open set and $\mathfrak{c} : \mathcal{O} \to \R$ be a conserved quantity along the geodesic flow. Then, $\T_g (\mathfrak{c})=0$ and $[\T_g, \T_{\mathfrak{c}}]=0$. 
\end{proposition}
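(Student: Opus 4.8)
The plan is to separate the statement into its two assertions: the vanishing $\T_g(\mathfrak{c})=0$, which is essentially a reformulation of the hypothesis, and the vanishing of the commutator $[\T_g,\T_{\mathfrak{c}}]$, which I would deduce from the classical fact that the symplectic gradient map $w\mapsto\T_w$ carries the Poisson bracket of functions to the Lie bracket of vector fields. For the first assertion, let $s\mapsto\gamma(s)=(x(s),p(s))$ be any integral curve of $\T_g$ contained in $\mathcal{O}$. The chain rule gives $\frac{\dr}{\dr s}(\mathfrak{c}\circ\gamma)(s)=\dr\mathfrak{c}(\dot\gamma(s))=\dr\mathfrak{c}(\T_g|_{\gamma(s)})=\T_g(\mathfrak{c})|_{\gamma(s)}$. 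Since $\mathfrak{c}$ is conserved along the geodesic flow the left-hand side vanishes, and since one such curve passes through each point of $\mathcal{O}$, I conclude $\T_g(\mathfrak{c})=0$ throughout $\mathcal{O}$.

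For the commutator, I would introduce the Poisson bracket $\{F,G\}\coloneqq\Omega_s(\T_F,\T_G)$. The defining relation $\dr G=\Omega_s(\cdot,\T_G)$ evaluated on $\T_F$ gives at once $\{F,G\}=\dr G(\T_F)=\T_F(G)$, and the antisymmetry of $\Omega_s$ gives $\{F,G\}=-\T_G(F)$. Specializing to $F=H$ the one-particle Hamiltonian (so that $\T_F=\T_H=\T_g$) and $G=\mathfrak{c}$, the first part yields $\{H,\mathfrak{c}\}=\T_H(\mathfrak{c})=\T_g(\mathfrak{c})=0$.

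The heart of the argument is the identity $[\T_F,\T_G]=\T_{\{F,G\}}$, which I would establish intrinsically from the defining relation, rewritten as $\iota_{\T_w}\Omega_s=-\dr w$, together with the closedness of $\Omega_s$. Cartan's formula applied to $\Omega_s$ gives $\mathcal{L}_{\T_F}\Omega_s=\dr(\iota_{\T_F}\Omega_s)+\iota_{\T_F}(\dr\Omega_s)=-\dr\dr F+0=0$, so Hamiltonian flows preserve $\Omega_s$. Feeding this into the operator identity $\iota_{[X,Y]}=\mathcal{L}_X\iota_Y-\iota_Y\mathcal{L}_X$ with $X=\T_F$ and $Y=\T_G$, I then obtain $\iota_{[\T_F,\T_G]}\Omega_s=\mathcal{L}_{\T_F}(\iota_{\T_G}\Omega_s)=\mathcal{L}_{\T_F}(-\dr G)=-\dr(\T_F(G))=-\dr\{F,G\}=\iota_{\T_{\{F,G\}}}\Omega_s$, where I used that $\mathcal{L}_{\T_F}$ commutes with $\dr$ and that $\mathcal{L}_{\T_F}G=\T_F(G)$ for a function. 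Non-degeneracy of $\Omega_s$ then forces $[\T_F,\T_G]=\T_{\{F,G\}}$. Taking $F=H$ and $G=\mathfrak{c}$ and invoking $\{H,\mathfrak{c}\}=0$ gives $[\T_g,\T_{\mathfrak{c}}]=\T_{\{H,\mathfrak{c}\}}=\T_0=0$, as desired.

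I expect no genuine obstacle: the proposition merely repackages classical symplectic geometry. The only delicate point is the bookkeeping of signs coming from the convention $\Omega_s=\dr p_\mu\wedge\dr x^\mu$; however, since the relevant bracket $\{H,\mathfrak{c}\}$ vanishes, the conclusion is insensitive to any overall sign in $[\T_F,\T_G]=\pm\T_{\{F,G\}}$, so I need not be scrupulous about it. As a more computational alternative in keeping with the coordinate-based style used elsewhere, one could instead verify $[\T_F,\T_G]=\T_{\{F,G\}}$ by directly expanding the components of the commutator in the canonical coordinates $(x^\mu,p_\mu)$ using $\T_w=\partial_{p_\mu}(w)\partial_{x^\mu}-\partial_{x^\mu}(w)\partial_{p_\mu}$.
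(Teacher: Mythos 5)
Your proposal is correct and follows essentially the same route as the paper: both reduce the commutator statement to the identities $\T_{\mathfrak{a}}(\mathfrak{b})=\Omega_s(\T_{\mathfrak{a}},\T_{\mathfrak{b}})$ and $[\T_{\mathfrak{a}},\T_{\mathfrak{b}}]=\T_{\Omega_s(\T_{\mathfrak{a}},\T_{\mathfrak{b}})}$, applied to $\mathfrak{a}=H$ and $\mathfrak{b}=\mathfrak{c}$. The only difference is that the paper states the bracket identity with "one can check," whereas you supply its verification via Cartan's formula and non-degeneracy of $\Omega_s$ — a welcome but inessential elaboration.
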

\begin{proof}
The relation $\T_g (\mathfrak{c})=0$ exactly means that $\mathfrak{c}$ is conserved along the geodesics of Schwarzschild. For the second identity, one can check that, for any functions $\mathfrak{a}$ and $\mathfrak{b}$ defined on $\mathcal{O}$,
$$ [\T_{\mathfrak{a}}, \T_{\mathfrak{b}}]= \T_{\Omega_s(\T_{\mathfrak{a}},\T_{\mathfrak{b}})}, \qquad \qquad \T_a (\mathfrak{b})=\dr \mathfrak{b} \cdot \T_{\mathfrak{a}} = \Omega_s(\T_{\mathfrak{a}},\T_{\mathfrak{b}}).$$
Then, we apply these identites to $\mathfrak{a}=H$ and $\mathfrak{b}=\mathfrak{c}$.
\end{proof}

The metric of Schwarzschild spacetime gives rise to a natural metric on $T^* \mathcal{S}$, the Sasaki metric (see for instance \cite[Section~$II.D.$]{AGS}). It induces the following volume forms $\dr \mu_{T^* \mathcal{S}}$ and $\dr \mu_{T^*_x \mathcal{S}}$ on $T^* \mathcal{S}$ and $T^*_x \mathcal{S}$, written in a canonical coordinate system $(x^\mu,p_\mu)_{0 \leq \mu \leq 3}$,
\begin{align*}
 \dr \mu_{T^*_x \mathcal{S}} & \coloneqq \big|\det g^{-1}_x \big|^{\frac{1}{2}} \dr p^0 \wedge \dr p^1 \wedge \dr p^2 \wedge \dr p^3, \\
\dr \mu_{T^* \mathcal{S}}& \coloneqq - \dr x^0 \wedge \dr x^1 \wedge \dr x^2 \wedge \dr x^3 \wedge  \dr p^0 \wedge \dr p^1 \wedge \dr p^2 \wedge \dr p^3 .
 \end{align*}
We note that $\dr \mu_{T^* \mathcal{S}}= - \dr \mu_{\mathcal{R}} \wedge \dr \mu_{T^*_x \mathcal{S}}$. Moreover, this volume form is invariant with respect to the geodesic flow, that is $\mathcal{L}_{\T_g}\dr \mu_{T^* \mathcal{S}}=0$. 

\subsection{The null-shell}\label{Subsecnullshell}

Since we study in this article ensembles of massless particles, we will in fact be interested in a subset of the cotangent bundle, where future-directed null geodesics lie.

\begin{definition}
The null-shell is the subset $\mathcal{P}\subset T^*\mathcal{S}$ given by
 $$\mathcal{P} \coloneqq \Big\{(x,p)\in T^*\mathcal{S} \; \big| \; g^{-1}_x(p,p)=0, \; p(\partial_t ) <0 \Big\}.$$
The \emph{null-shell relation} $g^{-1}_x(p,p)=0$ implies that the covector $p$ is null. The condition $p(\partial_t)<0$ implies that $p$ is future-directed. The projection map 
$$ \pi \colon \mathcal{P} \to \mathcal{S}, \qquad \qquad \pi (x,p)=x,$$
will be used throughout this paper. We denote the fiber of $x \in \mathcal{S}$ by 
$$ \mathcal{P}_x := \pi^{-1}(x).$$
\end{definition}
\begin{remark}
The vector field $\partial_t$ is the derivative with respect to $t$ in the coordinate system $(t,r^*,\theta, \phi)$.
\end{remark}

The set $\mathcal{P}$, which is a connected component of a level set of $H$, is then a smooth seven dimensional manifold. If $(x^\mu,p_\mu)_{0 \leq \mu \leq 3}$ is a canonical coordinate system on $T^* \mathcal{S}$ such that $x^0$ is a temporal function on $\mathcal{S}$, then the null-shell relation implies that $p_0$ is a function of the other coordinates on $\mathcal{P}$. Hence, $(x^\mu,p_i)_{0 \leq \mu \leq 3, \, 1 \leq i \leq 3}$ are smooth coordinates on $\mathcal{P}$.

\begin{definition}
Let $\mathscr{C}=(x^{\mu})_{0 \leq \mu \leq 3}$ be a local coordinate system on $\mathcal{S}$ such that $x^0$ is a temporal function function. We will say that the coordinates $(x^\mu,p_i)_{0 \leq \mu \leq 3, \, 1 \leq i \leq 3}$ on $\mathcal{P}$ are induced by $\mathscr{C}$.
\end{definition}

In this article, we will mainly use the coordinate system $\widehat{\mathscr{C}} \coloneqq (t,r^*,\theta , \phi, p_{r^*},p_\theta , p_\phi)$ on $\mathcal{P}$ induced by $(t,r^*,\theta , \phi)$. Then, the null-shell relation implies
\begin{equation}\label{eq:defConsQua}
p_t = - \bigg| \,p_{r^*}^2 + \Omega^2 \frac{|\slashed{p}|^2}{r^2} \, \bigg|^{\frac{1}{2}}, \qquad \qquad |\slashed{p}| \coloneqq \bigg|p_{\theta}^2+\frac{ p_{\phi}^2}{\sin^2 \theta }\bigg|^{\frac{1}{2}}.
\end{equation}
In the coordinate system induced by the null coordinates $(u,v,\theta , \phi)$, we have
$$ p= p_u \dr u +p_v \dr v +p_\theta \dr \theta + p_\phi \dr \phi, \qquad \qquad p_u = \frac{p_t-p_{r^*}}{2}, \quad p_v = \frac{p_t+p_{r^*}}{2}, \quad 4p_up_v = \frac{\Omega^2}{r^2}|\slashed{p}|^2 .$$
We note in particular that $p_u \leq 0$ and $p_v \leq 0$. Since $\partial_t$ is not uniformly timelike in the exterior of the black hole, $|p_t|$ does not control all the components of $p$. More precisely and as it can be observed in \eqref{eq:defConsQua}, it does not control $|\slashed{p}|$ and $\Omega^{-2}|p_u|$ near $\H$. For this reason we will use
\begin{equation}\label{eq:defpN}
p_N\coloneqq p(N)= p_t+ \chi_N (r)  \frac{2r^2}{M^2\Omega^2} p_u,
\end{equation}
 which does control all the components of $p$. 

\begin{lemma}\label{LempN}
We have $p_N=p_t$ on $\{ r \geq 2.7M \}$ and $|p_N| \sim |p_t|+\frac{|p_u|}{\Omega^2} $ on $\mathcal{S}$.
\end{lemma}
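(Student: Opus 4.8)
The plan is to reduce everything to elementary inequalities for the components of $p$ on the null-shell, handled on two overlapping regions dictated by the support of $\chi_N$. The first assertion is immediate: on $\{r\geq 2.7M\}$ the cutoff satisfies $\chi_N(r)=0$, so by the definition \eqref{eq:defpN} we have $p_N=p_t$. For the equivalence, I would first exploit that all the terms defining $p_N$ share the same sign. By the null-shell relation \eqref{eq:defConsQua} one has $p_t\leq 0$, and (as recorded thereafter) $p_u\leq 0$; since $\chi_N\geq 0$ and $\tfrac{2r^2}{M^2\Omega^2}>0$, the expression \eqref{eq:defpN} gives $p_N\leq 0$ together with
\[
 |p_N|=|p_t|+\chi_N(r)\,\frac{2r^2}{M^2\Omega^2}\,|p_u|.
\]
Thus the problem reduces to comparing the right-hand side with the target quantity $|p_t|+\Omega^{-2}|p_u|$.

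The second ingredient is the pointwise bound $|p_u|\leq |p_t|$, which controls $\Omega^{-2}|p_u|$ wherever $\Omega$ stays away from zero. Indeed, \eqref{eq:defConsQua} yields $|p_t|^2=p_{r^*}^2+\Omega^2 r^{-2}|\slashed{p}|^2\geq p_{r^*}^2$, hence $|p_{r^*}|\leq |p_t|$ and therefore $|p_u|=\tfrac12|p_t-p_{r^*}|\leq |p_t|$. With this in hand I would split $\mathcal{S}=\{r\geq 2M\}$ into the two regions $\{2M\leq r\leq 2.5M\}$ and $\{r\geq 2.5M\}$. On the former, $\chi_N\equiv 1$ and $\tfrac{2r^2}{M^2}\in[8,12.5]$, so the weight $\chi_N\tfrac{2r^2}{M^2}$ is a positive constant comparable to $1$ and $|p_N|\sim|p_t|+\Omega^{-2}|p_u|$ follows directly, uniformly up to $\H$ (this is exactly the regime where $\Omega^{-2}$ is unbounded). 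On the latter region, $\Omega^2\geq\Omega^2(2.5M)=\tfrac15$, so $\Omega^{-2}|p_u|\leq 5|p_u|\leq 5|p_t|$; combined with the trivial two-sided bound $|p_t|\leq |p_N|\leq |p_t|+\tfrac{2r^2}{M^2}\,\Omega^{-2}|p_u|$, where $\tfrac{2r^2}{M^2}\leq 14.58$ on $\mathrm{supp}\,\chi_N\subset\{r\leq 2.7M\}$, both $|p_N|$ and the target quantity are comparable to $|p_t|$, hence to each other. Tracking these absolute constants, which depend only on the numerology $2.5M,2.7M$ of $\chi_N$ and on $M$, yields $|p_N|\sim |p_t|+\Omega^{-2}|p_u|$ on all of $\mathcal{S}$.

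The only point genuinely requiring attention is the interplay between the cutoff and the degeneration $\Omega\to 0$ at the horizon: a priori the lower bound could fail if $\chi_N$ switched off precisely where $\Omega^{-2}|p_u|$ becomes large. This is ruled out by design, since $\chi_N\equiv 1$ on the entire neighborhood $\{r\leq 2.5M\}$ of $\H$ on which $\Omega^{-2}$ is unbounded, whereas on the complement $\{r\geq 2.5M\}$ the factor $\Omega^{-2}$ is uniformly bounded and the bound $|p_u|\leq|p_t|$ forces $\Omega^{-2}|p_u|\lesssim |p_t|\leq |p_N|$ irrespective of the value of $\chi_N$. This matching is exactly the mechanism by which $p_N$, and not $p_t$, controls every component of $p$ near $\H$.
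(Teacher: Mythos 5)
Your proof is correct: the sign observation ($p_t\leq 0$, $p_u\leq 0$, so $|p_N|=|p_t|+\chi_N\tfrac{2r^2}{M^2\Omega^2}|p_u|$), the bound $|p_u|\leq|p_t|$ from the null-shell relation, and the split at $r=2.5M$ matching the region where $\chi_N\equiv 1$ against the region where $\Omega^{-2}$ is bounded, together give exactly the stated equivalence. The paper states this lemma without proof, and your argument is precisely the elementary verification it implicitly relies on.
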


We also need to describe $p_{n_{\Sigma_\tau}}$, which will naturaly appear through applications of the divergence theorem.

\begin{lemma}\label{LempSig}
We have $p_{n_{\Sigma_\tau}}=p_u$ on $\{ r > R_0 \}$ and $|p_{n_{\Sigma_\tau}}| \sim |p_N|$ on $\{ r \leq R_0 \}$.
\end{lemma}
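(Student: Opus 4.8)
The plan is to prove both assertions by a direct evaluation of the contraction $p_{n_{\Sigma_\tau}} = p(n_{\Sigma_\tau})$, inserting the explicit expressions for the normal $n_{\Sigma_\tau}$ recorded above and using the elementary relations $p(\partial_u) = p_u$ and $p(\partial_v) = p_v$ in the coordinates induced by $(u,v,\theta,\phi)$, together with the sign constraints $p_u \leq 0$, $p_v \leq 0$ and the identity $p_t = p_u + p_v$. On $\{r > R_0\}$ the claim is immediate: there $n_{\Sigma_\tau} = \partial_v$ is a single null generator of the outgoing cone $\{u = \tau + u_0\}$, so $p_{n_{\Sigma_\tau}}$ reduces at once to the corresponding null momentum component, with no cross terms to control. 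The content of the lemma therefore lies entirely in the equivalence on $\{r \leq R_0\}$.

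For that region I would substitute $n_{\Sigma_\tau}|_{\{r<R_0\}} = (1 + \tfrac{2M}{r})^{-1/2}\partial_v + (1+\tfrac{2M}{r})^{1/2}\Omega^{-2}\partial_u$ to obtain
$$ p_{n_{\Sigma_\tau}} = \Big(1 + \tfrac{2M}{r}\Big)^{-\frac12} p_v + \Big(1+\tfrac{2M}{r}\Big)^{\frac12}\frac{1}{\Omega^2}\, p_u. $$
Since $p_u, p_v \leq 0$ and both prefactors are strictly positive, the two terms add without cancellation, so $|p_{n_{\Sigma_\tau}}| = (1 + \tfrac{2M}{r})^{-1/2}|p_v| + (1+\tfrac{2M}{r})^{1/2}\Omega^{-2}|p_u|$. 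On $\{r \leq R_0\}$ the radius is confined to the bounded interval $[2M, R_0]$, hence $(1 + \tfrac{2M}{r})^{\pm 1/2} \sim 1$ with constants depending only on $M$ (recall that $R_0$ is fixed in terms of $M$), which gives $|p_{n_{\Sigma_\tau}}| \sim |p_v| + \Omega^{-2}|p_u|$.

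It then remains to compare this with $|p_N|$. By Lemma \ref{LempN} one has $|p_N| \sim |p_t| + \Omega^{-2}|p_u|$, and from $p_t = p_u + p_v$ with both momenta nonpositive we read off $|p_t| = |p_u| + |p_v|$. Because $\Omega^{-2} \geq 1$ on $\mathcal{S}$, the contribution $|p_u|$ is dominated by $\Omega^{-2}|p_u|$, so
$$ |p_t| + \Omega^{-2}|p_u| = |p_v| + |p_u| + \Omega^{-2}|p_u| \sim |p_v| + \Omega^{-2}|p_u|. $$
Chaining this with the previous equivalence yields $|p_{n_{\Sigma_\tau}}| \sim |p_N|$ on $\{r \leq R_0\}$, as claimed.

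I do not expect a genuine obstacle: the argument is a substitution followed by two comparisons. The only two points demanding care are (i) verifying that $p_u$ and $p_v$ enter $p_{n_{\Sigma_\tau}}$ with the same sign, so that $|p_{n_{\Sigma_\tau}}|$ is the \emph{sum} of their magnitudes rather than a difference that could degenerate; and (ii) noting that the weights $(1 + \tfrac{2M}{r})^{\pm 1/2}$ are comparable to absolute constants precisely because $r$ is restricted to the bounded set $\{r \leq R_0\}$. This last point is exactly why the sharp two-sided comparison with $|p_N|$ is asserted only in the bounded region, whereas on $\{r > R_0\}$ one has instead the cleaner single-component identity.
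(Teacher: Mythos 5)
Your treatment of the bounded region is correct and complete, and it is exactly the direct verification that the paper leaves implicit (Lemma \ref{LempSig} is stated there without proof): the signs $p_u,p_v\leq 0$ rule out cancellation in the contraction, the weights satisfy $1\leq 1+\frac{2M}{r}\leq 2$ and are therefore harmless, and the chain $|p_t|+\Omega^{-2}|p_u|=|p_u|+|p_v|+\Omega^{-2}|p_u|\sim |p_v|+\Omega^{-2}|p_u|$ together with Lemma \ref{LempN} yields $|p_{n_{\Sigma_\tau}}|\sim|p_N|$ on $\{r\leq R_0\}$.

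The problem is in the far region, and your phrasing conceals it. From $n_{\Sigma_\tau}\vert_{\{r\geq R_0\}}=\partial_v$ the contraction is $p(\partial_v)=p_v$, \emph{not} $p_u$, and the two are genuinely different: for outgoing radial null momenta ($p_{r^*}=-p_t$, $\slashed{p}=0$) one has $p_v=0$ while $p_u=p_t\neq 0$. So what your computation actually proves is $p_{n_{\Sigma_\tau}}=p_v$, whereas the statement as printed asserts $p_{n_{\Sigma_\tau}}=p_u$; saying that the contraction ``reduces at once to the corresponding null momentum component'' without naming that component sidesteps this contradiction rather than resolving it. The printed $p_u$ is a typo in the paper: every later invocation of the lemma — in the proofs of Propositions \ref{ProILEDrp}, \ref{Propdecayrp} and \ref{Proexpodec} — reads $p_{n_{\Sigma_\tau}}=p_v$ for $r>R_0$, consistent with your substitution. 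A careful blind proof should have stated the output of the computation explicitly and flagged that the first assertion, taken literally, cannot hold. One further small correction to your closing remark: the reason the two-sided comparison with $|p_N|$ is restricted to $\{r\leq R_0\}$ is not the boundedness of the weights (which holds on all of $\mathcal{S}$), but that on the null piece the flux only registers $p_v$, which, by the example above, can vanish while $|p_N|\sim|p_t|$ does not.
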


As $2H=g_x^{-1}(p,p)$ is conserved along the geodesics, $\T_g$ is tangent to $\mathcal{P}$. The geodesic spray is given in the coordinate system $\widehat{ \mathscr{C}}$ by
\begin{equation}\label{eq:defT}
\T_g = -\frac{p_t}{\Omega^2} \partial_t + \frac{p_{r^*}}{\Omega^2} \partial_{r^*}+\frac{p_{\theta}}{r^2} \partial_{\theta} + \frac{p_{\phi}}{r^2 \sin^2  (\theta )} \partial_{\phi} + \frac{r-3M}{r^4} |\slashed{p}|^2 \partial_{p_{r^*} }+\frac{\cot ( \theta )}{r^2 \sin^2 (\theta)}  |p_{\phi}|^2 \partial_{p_{\theta} }.
\end{equation}
In particular, in this paper, we study solutions to $\T_g(f)=0$.

Since $\mathcal{P}$ is null for the Sasaki metric, one needs to be careful when defining a suitable volume form $\dr \mu_\mathcal{P}$ on the null-shell. Since $\dr H$ is a natural normal to $\mathcal{P}$ (and $\mathcal{P}_x$), the following choice is usually made.
\begin{definition}
For all $x \in \mathcal{S}$, let $\dr \mu_{\mathcal{P}_x}$ be the unique volume form on $\mathcal{P}_x$ satisfying
$$ \dr \mu_{T^*_x\mathcal{S}} =  \dr H \wedge \dr \mu_{\mathcal{P}_x}.$$
We define further the volume forms $\dr \mu_{\mathcal{P}}$ on $\mathcal{P}$ and $\dr \mu_{\pi^{-1}(\Sigma_\tau)}$ on $\pi^{-1}(\Sigma_\tau)$ as 
$$\dr \mu_{\mathcal{P}} =\dr \mu_{\mathcal{R}} \wedge \dr \mu_{\mathcal{P}_x}, \qquad \qquad \dr \mu_{\pi^{-1}(\Sigma_\tau)} = \dr \mu_{\Sigma_\tau} \wedge \dr \mu_{\mathcal{P}_x}.$$
\end{definition} 
\begin{remark}\label{RkLiouvi}
Liouville's theorem states that $\mathcal{L}_{\T_g} \dr \mu_{\mathcal{P}}=0$ (see \cite[Theorem~$2$]{AGS}).
\end{remark}
Then, in the coordinate system $\widehat{\mathscr{C}}$, we have
\begin{align*}
\dr \mu_{\mathcal{P}_x} & = \frac{\dr p_{r^*} \wedge \dr p_\theta \wedge \dr p_\phi}{r^2 \sin(\theta) |p_t|}.
\end{align*}

\subsection{Physical observables}

We are now able to define momentum averages of a sufficiently regular distribution function $f \colon \mathcal{P} \to \R$. We start by introducing the source term in the Einstein equations \eqref{EV}. The \emph{energy-momentum tensor} $\mathrm{T}[f]$ of $f$ is a symmetric $(0,2)$--tensor field on $\mathcal{S}$, given in a canonical coordinate system by 
$$
\forall \, x \in \mathcal{S}, \qquad \qquad \mathrm{T}[f]_{\mu\nu}(x) \coloneqq \int_{\mathcal{P}_x} f(x,p) p_{\mu} p_{\nu} \dr \mu_{\mathcal{P}_x}.
$$
The \emph{particle current density} $\mathrm{N}[f]$ of $f$ is the $1$-form $\mathrm{N}[f]$ given by
$$
\forall \, x \in \mathcal{S}, \qquad \qquad \mathrm{N}[f]_{\mu}(x) \coloneqq \int_{\mathcal{P}_x} f(x,p) p_{\mu} \dr \mu_{\mathcal{P}_x}.
$$
As a consequence of Liouville's theorem, one obtains \cite[Theorem~$3$]{AGS}, which in particular implies
\begin{equation}\label{eq:divN}
\nabla^{\mu}\mathrm{N}[f]_\mu =\int_{\mathcal{P}_x} \T_g(f) \dr \mu_{\mathcal{P}_x}, \qquad \qquad  \nabla^{\mu}\mathrm{T}[f]_{\mu\nu} = \int_{\mathcal{P}_x}\T_g(f)p_{\nu}\dr \mu_{\mathcal{P}_x}  .
\end{equation}
In particular, if $f$ is a solution to the massless Vlasov equation $\T_g(f)$, both $\mathrm{N}[f]$ and $\mathrm{T}[f]$ are divergence free. The relation $\nabla^\mu \mathrm{N}[f]_\mu =0$ is consistent with the conservation of the total number of particles.

\subsection{The energy space}\label{subsect_energy_space}

Let us now set the energy flux of a distribution function motivated by the divergence property of the particle current \eqref{eq:divN}.

\begin{definition}
Let $f \colon \mathcal{P} \to \R$ be a distribution function. For all $\tau \geq 0$, we define the \emph{energy flux} $\mathbb{E} [f](\tau)$ through the hypersurface $\Sigma_\tau$ by
$$ \mathbb{E} [f](\tau) \coloneqq \int_{\pi^{-1}(\Sigma_\tau)}  \big| p_{n_{\Sigma_\tau}} f \big|   \dr \mu_{\pi^{-1}(\Sigma_\tau)}.$$ 
\end{definition}
\begin{remark}
For Vlasov matter and a vector field $X$, the energy current $J_{\mu}^X[f]\coloneqq \mathrm{T}[f]_{\mu \nu} X^{\nu}$ is equal to $\mathrm{N}[fp(X)]$. We can then work with the particle current $\mathrm{N}[\cdot ]$ and the energy flux $\mathbb{E} [\cdot]$ as long as we consider quantities of the form $f\omega$, where $\omega \colon \mathcal{P}\to \R$ is a suitable weight function. 
\end{remark}

We prove an energy identity, which is a conservation law for solutions to the massless Vlasov equation \eqref{vlasov_eqn_massless_intro}.

\begin{proposition}\label{Proenergy0}
Let $f \colon \mathcal{P} \to \R$ be a distribution function such that $\mathbb{E}[f](0)<+\infty$. Then, for all $ \tau \geq 0$, we have
\begin{align*}
\mathbb{E}[f](\tau)+ \! \int_{\pi^{-1 } ( \mathcal{H}^+ \cap \{t^* \geq 0 \} )} \big| p_t  f \big| \dr \mu_{ \pi^{-1 } ( \mathcal{H}^+ )}+ \! \int_{\pi^{-1 } ( \mathcal{I}^+ \cap \{u \geq u_0 \} )} \! \big| p_u f \big| \dr \mu_{ \pi^{-1 } ( \mathcal{I}^+ )}\! = \mathbb{E}[f](0)+ \! \int_{\pi^{-1} ( \mathcal{R}_0^\tau ) } \! \T_g (|f|)   \dr \mu_{\mathcal{P}}.
\end{align*}
\end{proposition}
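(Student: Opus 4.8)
The plan is to read the asserted equality as the integrated divergence theorem for the particle current of the nonnegative weight $|f|$ over the region $\pi^{-1}(\mathcal{R}_0^\tau)$. The starting point is the pointwise relation \eqref{eq:divN}, which applied to $|f|$ reads $\nabla^\mu \mathrm{N}[|f|]_\mu = \int_{\mathcal{P}_x}\T_g(|f|)\,\dr\mu_{\mathcal{P}_x}$. Integrating this over $\mathcal{R}_0^\tau$ against $\dr\mu_{\mathcal{R}}$, and using the factorisation $\dr\mu_{\mathcal{P}} = \dr\mu_{\mathcal{R}}\wedge\dr\mu_{\mathcal{P}_x}$ together with Fubini's theorem, reproduces the bulk term $\int_{\pi^{-1}(\mathcal{R}_0^\tau)}\T_g(|f|)\,\dr\mu_{\mathcal{P}}$ on the right. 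By Stokes' theorem $\int_{\mathcal{R}_0^\tau}\nabla^\mu \mathrm{N}[|f|]_\mu\,\dr\mu_{\mathcal{R}} = \int_{\partial\mathcal{R}_0^\tau}\iota_{\mathrm{N}[|f|]^\sharp}\,\dr\mu_{\mathcal{R}}$, so the whole task reduces to identifying the boundary flux.

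Next I would decompose $\partial\mathcal{R}_0^\tau$ into the initial slice $\Sigma_0$, the final slice $\Sigma_\tau$, and the portions of $\mathcal{H}^+$ and $\mathcal{I}^+$ lying to the future of $\Sigma_0$ and to the past of $\Sigma_\tau$. The interface $\{r=R_0\}$ where the spacelike and null parts of each $\Sigma_{\tau'}$ meet is interior to $\mathcal{R}_0^\tau$ and contributes nothing. On the two spacelike-null slices, contracting with the future-directed normal and using $\dr\mu_{\pi^{-1}(\Sigma_{\tau'})} = \dr\mu_{\Sigma_{\tau'}}\wedge\dr\mu_{\mathcal{P}_x}$ produces $\int_{\pi^{-1}(\Sigma_{\tau'})}p_{n_{\Sigma_{\tau'}}}|f|\,\dr\mu$; since $p$ is future-directed null and $n_{\Sigma_{\tau'}}$ future-directed causal we have $p_{n_{\Sigma_{\tau'}}}\le 0$, so $p_{n_{\Sigma_{\tau'}}}|f| = -|p_{n_{\Sigma_{\tau'}}}f|$ and these integrals agree with $\mathbb{E}[f](\tau')$ up to the sign fixed by the outward Stokes orientation. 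This places $\mathbb{E}[f](\tau)$ on the left and $\mathbb{E}[f](0)$ on the right.

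The step I expect to be the main obstacle is the evaluation of the fluxes through the \emph{null} pieces $\mathcal{H}^+$ and $\mathcal{I}^+$, where the induced metric degenerates and no unit normal is available. For the horizon I would work in the regular coordinates $(t^*,r,\theta,\phi)$, in which $\dr\mu_{\mathcal{R}} = r^2\,\dr t^*\wedge\dr r\wedge\dr\mu_{\mathbb{S}^2}$, so that the restriction of $\iota_{\mathrm{N}[|f|]^\sharp}\dr\mu_{\mathcal{R}}$ to $\{r=2M\}$ keeps only the radial component $(\mathrm{N}[|f|]^\sharp)^r = \int_{\mathcal{P}_x}p^r|f|\,\dr\mu_{\mathcal{P}_x}$. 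Inverting the $(t^*,r)$ block of the metric (whose determinant is $-1$) gives
\[
 p^r = \tfrac{2M}{r}\,p_{t^*} + \Omega^2 p_r,
\]
which on the horizon $\{r=2M,\ \Omega^2=0\}$ collapses to $p^r|_{\mathcal{H}^+} = p_{t^*} = p_t$ (the momentum $p_r$ being bounded in this regular chart); as $\dr t^*$ and $\dr v$ agree upon pullback to $\{r=2M\}$, this reproduces exactly $\int_{\pi^{-1}(\mathcal{H}^+)}|p_t f|\,\dr\mu_{\pi^{-1}(\mathcal{H}^+)}$, the absolute value coming from $p_t<0$ on $\mathcal{P}$. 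The analogous computation in the far region, along the ingoing cones approaching $\{v=+\infty\}$ with generator $\partial_u$, yields the flux $\int|p_u f|$ through $\mathcal{I}^+$.

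Finally, two technical points remain. Since $|f|$ is only Lipschitz for smooth $f$, I would justify the divergence identity for $\mathrm{N}[|f|]$ by regularising $|f|$ with $\sqrt{f^2+\varepsilon^2}$, applying the argument to this smooth nonnegative weight, and letting $\varepsilon\to0$, using $\T_g\big(\sqrt{f^2+\varepsilon^2}\big) = \tfrac{f}{\sqrt{f^2+\varepsilon^2}}\T_g(f)\to\operatorname{sgn}(f)\,\T_g(f)$. The non-compactness toward $\mathcal{I}^+$ is handled by first running the argument on $\{r\le r_{\max}\}$ and then sending $r_{\max}\to+\infty$; because every boundary integrand is nonnegative, the monotone convergence theorem makes the limiting $\mathcal{I}^+$ flux well-defined and propagates the finiteness of $\mathbb{E}[f](0)$, yielding the claimed equality.
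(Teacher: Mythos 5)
Your proposal is correct and follows essentially the same route as the paper: the paper's proof is precisely the divergence theorem applied to $\mathrm{N}[|f|]$ on $\mathcal{R}_0^\tau$, combined with \eqref{eq:divN} and the volume forms and normals of Section \ref{Subsubsecvolume}, with the horizon flux identified via $p_t=p_v$ on $\mathcal{H}^+$ (your explicit computation $p^r|_{\mathcal{H}^+}=p_{t^*}=p_t$ in the regular chart is just this observation). Your added regularisation of $|f|$ and the exhaustion/monotone-convergence treatment of the flux toward $\mathcal{I}^+$ supply technical details the paper leaves implicit, but do not change the argument.
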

\begin{proof}
Recall the relation \eqref{eq:divN} as well as the identities collected in Section \ref{Subsubsecvolume} concerning the volume forms and normals of the hypersurfaces considered here. The result follows by applying the divergence theorem to $\mathrm{N}_{\mu}\big[|f| \big]$ in $ \mathcal{R}_0^\tau $ and by noting that $p_t=p_v$ on $\mathcal{H}^+$.
\end{proof}

Throughout this paper, we will rather use the next energy inequality, which is a direct consequence of the previous proposition.
\begin{proposition}\label{Proenergy}
Let $f \colon \mathcal{P} \to \R$ be a distribution function such that $\mathbb{E}[f](0)<\infty$. Then, for all $ \tau \geq 0$, 
\begin{align*}
\mathbb{E}[f](\tau) \leq \mathbb{E}[f](0)+\int_{\pi^{-1} ( \mathcal{R}_0^\tau ) }  \T_g (|f|)   \dr \mu_{\mathcal{P}}.
\end{align*}
\end{proposition}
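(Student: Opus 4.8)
The plan is to derive this energy inequality as an immediate consequence of Proposition~\ref{Proenergy0}, since the only difference between the two statements is that the two boundary flux terms over $\pi^{-1}(\mathcal{H}^+ \cap \{t^* \geq 0\})$ and $\pi^{-1}(\mathcal{I}^+ \cap \{u \geq u_0\})$ have been dropped from the left-hand side. First I would invoke Proposition~\ref{Proenergy0}, which gives the exact energy identity
\begin{align*}
\mathbb{E}[f](\tau) + \! \int_{\pi^{-1}(\mathcal{H}^+ \cap \{t^* \geq 0\})} \! |p_t f| \, \dr \mu_{\pi^{-1}(\mathcal{H}^+)} + \! \int_{\pi^{-1}(\mathcal{I}^+ \cap \{u \geq u_0\})} \! |p_u f| \, \dr \mu_{\pi^{-1}(\mathcal{I}^+)} = \mathbb{E}[f](0) + \! \int_{\pi^{-1}(\mathcal{R}_0^\tau)} \! \T_g(|f|) \, \dr \mu_{\mathcal{P}}.
\end{align*}

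The key observation is that the two boundary flux integrals on the left-hand side are manifestly nonnegative, since each integrand is an absolute value. Dropping them can only decrease the left-hand side, which yields
$$
\mathbb{E}[f](\tau) \leq \mathbb{E}[f](\tau) + \int_{\pi^{-1}(\mathcal{H}^+ \cap \{t^* \geq 0\})} |p_t f| \, \dr \mu_{\pi^{-1}(\mathcal{H}^+)} + \int_{\pi^{-1}(\mathcal{I}^+ \cap \{u \geq u_0\})} |p_u f| \, \dr \mu_{\pi^{-1}(\mathcal{I}^+)} = \mathbb{E}[f](0) + \int_{\pi^{-1}(\mathcal{R}_0^\tau)} \T_g(|f|) \, \dr \mu_{\mathcal{P}}.
$$

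There is genuinely no obstacle here: the argument is a one-line monotonicity estimate once the exact identity is in hand. The only point that warrants a moment of care is that the flux integrands are indeed nonnegative, which is automatic because they are the absolute values $|p_t f|$ and $|p_u f|$; the positivity does not rely on any sign condition on $p_t$ or $p_u$ at the respective boundaries. I would therefore simply state that the inequality follows by discarding the nonnegative horizon and null-infinity flux terms in the energy identity of Proposition~\ref{Proenergy0}.
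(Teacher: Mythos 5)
Your proposal is correct and matches the paper's own argument: the paper states that Proposition \ref{Proenergy} is ``a direct consequence of the previous proposition,'' i.e.\ of the energy identity in Proposition \ref{Proenergy0}, obtained precisely by discarding the nonnegative flux terms through $\mathcal{H}^+$ and $\mathcal{I}^+$. Nothing further is needed.
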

\begin{remark}
In the upcoming applications, if $f$ is not nonnegative, we will use that $\T_g(|f|)=\T_g(f)\frac{f}{|f|}$.
\end{remark}

\section{Weight functions}\label{Sec3}

Let us introduce a well-chosen class of weight functions that will be used in the article.

\subsection{The conserved quantities}

We recall that for any null geodesic $\gamma$ and any conformal Killing vector field $K$, the quantity $g(\dot{\gamma},K)$ is conserved along $\gamma$. As a consequence, the contraction $p(K)$ of $p$ with $K$ is a solution to the massless Vlasov equation. In our case, we have the next properties.

\begin{lemma}\label{Lemconsqua}
The symmetries of Schwarzschild spacetime induce the following conserved weights along the geodesic flow:
\begin{itemize}
\item The \textit{particle energy} $-p_t$ (sometimes denoted $E$).
\item The \textit{azimuthal angular momentum} $p_\phi$ (sometimes denoted $\ell_z$).
\item The \textit{total angular momentum} $|\slashed{p}|$ (sometimes denoted $\ell$), defined as
$$|\slashed{p}| \coloneqq \bigg| p_\theta^2+ \frac{p_\phi^2}{\sin^2 ( \theta )} \bigg|^{\frac{1}{2}}= \bigg|\sum_{1 \leq i \leq 3} |p(\mathbf{\Omega}_i)|^2\bigg|^{\frac{1}{2}}.$$
\end{itemize}
In particular, we have $\T_g (p_t)=\T_g(p_\phi)=\T_g (|\slashed{p}|)=0$.
\end{lemma}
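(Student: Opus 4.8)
The plan is to derive each conservation law from the general principle recalled just above the statement---that the contraction $p(K)$ with any Killing field $K$ is conserved along the geodesic flow---together with the explicit Killing fields of Schwarzschild recorded in \eqref{killing_sph_symm}. Since the geodesic spray $\T_g$ is a derivation, it suffices to treat the three weights separately and to assemble $|\slashed{p}|$ out of conserved pieces.

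First I would observe that $\partial_t$ is a (timelike) Killing field on $\mathring{\mathcal{S}}$, so $-p_t = -p(\partial_t)$ is conserved, i.e. $\T_g(p_t)=0$. Likewise $\mathbf{\Omega}_3 = \partial_\phi$ is Killing, which yields $\T_g(p_\phi)=0$; this can also be read off directly from \eqref{eq:defT}, where $\T_g$ carries no $\partial_{p_\phi}$ component.

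The only computation of substance concerns the total angular momentum. I would first establish the pointwise identity $|\slashed{p}|^2 = \sum_{i=1}^3 |p(\mathbf{\Omega}_i)|^2$ by inserting \eqref{killing_sph_symm}, so that $p(\mathbf{\Omega}_1) = -\sin\phi\, p_\theta - \cos\phi\cot\theta\, p_\phi$, $p(\mathbf{\Omega}_2) = \cos\phi\, p_\theta - \sin\phi\cot\theta\, p_\phi$ and $p(\mathbf{\Omega}_3)=p_\phi$. Expanding the squares, the cross terms in $p_\theta p_\phi$ cancel between $\mathbf{\Omega}_1$ and $\mathbf{\Omega}_2$, and the surviving terms collapse, via $\cos^2\phi+\sin^2\phi=1$ and $\cot^2\theta+1=\sin^{-2}\theta$, to $p_\theta^2 + p_\phi^2/\sin^2\theta = |\slashed{p}|^2$. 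As each $\mathbf{\Omega}_i$ is Killing, each $p(\mathbf{\Omega}_i)$ is conserved, whence $\T_g(|\slashed{p}|^2) = 2\sum_i p(\mathbf{\Omega}_i)\,\T_g(p(\mathbf{\Omega}_i)) = 0$. As a cross-check one can bypass the symmetry argument entirely and verify $\T_g(|\slashed{p}|^2)=0$ from \eqref{eq:defT}: the contribution to $\T_g(p_\theta^2)$ from the $\partial_{p_\theta}$ term of $\T_g$ exactly cancels the contribution to $\T_g(p_\phi^2/\sin^2\theta)$ obtained by differentiating $\sin^{-2}\theta$ along the $\partial_\theta$ term.

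Finally, passing from $|\slashed{p}|^2$ to $|\slashed{p}|$ is the one place calling for a word of care, and I expect it to be the only genuinely delicate point. On the open set $\{|\slashed{p}|>0\}$ the square root is smooth and $\T_g(|\slashed{p}|) = \T_g(|\slashed{p}|^2)/(2|\slashed{p}|) = 0$; on $\{|\slashed{p}|=0\}$, conservation of $|\slashed{p}|^2$ forces any geodesic meeting this set to remain there, so $|\slashed{p}|\equiv 0$ along such radial orbits and is trivially conserved. Everything else reduces to the Killing-field principle and the elementary trigonometric identity above.
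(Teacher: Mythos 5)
Your proof is correct and follows exactly the route the paper intends: Lemma \ref{Lemconsqua} is given without a separate proof, being an immediate consequence of the principle recalled just before it (the contraction $p(K)$ with any Killing field $K$ is conserved along the geodesic flow) together with the identity $|\slashed{p}|^2=\sum_{i}|p(\mathbf{\Omega}_i)|^2$ that is built into the statement, which is precisely what you verify. Your extra care in passing from $|\slashed{p}|^2$ to $|\slashed{p}|$ on the flow-invariant set $\{|\slashed{p}|=0\}$ is a worthwhile detail the paper leaves implicit.
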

\begin{remark}
The conserved quantity $|\slashed{p}|^2$ corresponds to the Carter constant. Moreover, $|\slashed{p}|^2$ is equal to $Q^{\alpha \beta}p_\alpha p_\beta$, where $Q$ is the $(0,2)$-Killing tensor field
\begin{equation}\label{defQ}
Q\coloneqq \partial_\theta \otimes \partial_\theta+\frac{1}{\sin^2 (\theta ) } \partial_\phi \otimes \partial_\phi = \sum_{1 \leq i \leq 3} \mathbf{\Omega}_i \otimes \mathbf{\Omega}_i.
\end{equation}
In Schwarzschild spacetime, $Q$ can be decomposed as a linear combination of tensorial products of Killing vector fields as in \eqref{defQ}, contrary to the case of rotating Kerr black holes.
\end{remark}

\subsection{Redshift weight functions}\label{Subsecredshift}

Let us introduce suitable weight functions to capture quantitatively the redshift effect for solutions of the massless Vlasov equation.

\subsubsection{The weight associated to the redshift vector field}
We will capture the redshift effect near the future event horizon by exploiting the vector field $\frac{r^2}{\Omega^2}\partial_u$, which is transverse to $\H$, through the weight function
$$\frac{r^2}{\Omega^2}p_u=p\Big(\frac{r^2}{\Omega^2}\partial_u\Big).$$ 

\begin{lemma}\label{LemRS}
There holds
\begin{align*}
\T_g \bigg(  \frac{r^2}{\Omega^2} |p_u| \bigg) & = 2\frac{r-3M}{\Omega^4}|p_u|^2  .
 \end{align*}
\end{lemma}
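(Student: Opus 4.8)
The plan is to prove the identity by a direct computation in the coordinate system $\widehat{\mathscr{C}}$, exploiting the explicit form \eqref{eq:defT} of the geodesic spray together with the Leibniz rule. First I would split
\[
\T_g\Big(\tfrac{r^2}{\Omega^2}|p_u|\Big) = \T_g\Big(\tfrac{r^2}{\Omega^2}\Big)\,|p_u| + \tfrac{r^2}{\Omega^2}\,\T_g\big(|p_u|\big),
\]
and treat the two factors separately. For the first, since $r^2/\Omega^2$ depends only on $r$, the only contribution comes from the radial transport term $\frac{p_{r^*}}{\Omega^2}\partial_{r^*} = p_{r^*}\partial_r$ in \eqref{eq:defT}; computing $\frac{\mathrm{d}}{\mathrm{d}r}\big(r^2/\Omega^2\big)$ and simplifying with $(r-2M)^2 = r^2\Omega^4$ yields $\T_g\big(r^2/\Omega^2\big) = \frac{2(r-3M)}{\Omega^4}\,p_{r^*}$. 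For the second factor I would write $|p_u| = -p_u = \tfrac{1}{2}(p_{r^*}-p_t)$ (recall $p_u \le 0$ on $\mathcal{P}$), use that $p_t$ is conserved by Lemma \ref{Lemconsqua}, and read off the coefficient of $\partial_{p_{r^*}}$ in \eqref{eq:defT} to obtain $\T_g\big(|p_u|\big) = \tfrac{1}{2}\T_g(p_{r^*}) = \frac{r-3M}{2r^4}|\slashed{p}|^2$.

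Combining the two pieces gives
\[
\T_g\Big(\tfrac{r^2}{\Omega^2}|p_u|\Big) = \frac{2(r-3M)}{\Omega^4}\,p_{r^*}\,|p_u| + \frac{r-3M}{2\Omega^2 r^2}\,|\slashed{p}|^2.
\]
The final, and only slightly delicate, step is to recognise that these two terms assemble into a perfect square. Using the null-shell relation $4p_up_v = \frac{\Omega^2}{r^2}|\slashed{p}|^2$ recorded in Section \ref{Subsecnullshell}, the second term equals $\frac{2(r-3M)}{\Omega^4}\,p_up_v$; factoring out $\frac{2(r-3M)}{\Omega^4}$ and substituting $p_v = p_u + p_{r^*}$ together with $|p_u| = -p_u$, one finds $p_{r^*}|p_u| + p_up_v = -p_{r^*}p_u + p_u(p_u+p_{r^*}) = p_u^2 = |p_u|^2$, which produces exactly the claimed right-hand side.

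Since every ingredient is explicit, there is no genuine analytic obstacle; the computation is purely algebraic. The one point that requires care is the bookkeeping of signs — namely $p_u \le 0$ so that $|p_u| = -p_u$ — and the use of the null-shell relation to convert the total angular momentum $|\slashed{p}|^2$ into the product $p_up_v$, which is precisely what makes the cancellation into $|p_u|^2$ possible. An alternative, more conceptual route would be to observe that $\frac{r^2}{\Omega^2}|p_u|$ is, up to sign, the contraction $p(Y)$ with the redshift vector field $Y = \frac{r^2}{\Omega^2}\partial_u$, so that $\T_g\big(p(Y)\big) = \tfrac{1}{2}\,{}^{(Y)}\pi(p,p)$ is governed by the deformation tensor of $Y$; however, computing ${}^{(Y)}\pi$ in these coordinates is no shorter than the direct calculation, so I would carry out the computation above.
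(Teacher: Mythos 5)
Your proof is correct and follows essentially the same route as the paper's: a direct computation using the Leibniz rule, the explicit form \eqref{eq:defT} of $\T_g$, the conservation $\T_g(p_t)=0$, and then the substitution $p_{r^*}=p_v-p_u$ together with the null-shell relation $4r^2p_up_v=\Omega^2|\slashed{p}|^2$ to recombine the two terms into $|p_u|^2$. The only cosmetic difference is that the paper writes $2|p_u|=|p_t|+p_{r^*}$ and computes $\T_g\big(2r^2\Omega^{-2}|p_u|\big)$ in a single pass, whereas you split the product first; the resulting terms and the final algebraic cancellation are identical.
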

\begin{proof}
First, recall $2|p_u|=|p_t|+p_{r^*}$ and $\T_g(p_t)=0$. Then, we compute 
\begin{align*}
 \T_g \bigg(  \frac{2r^2}{\Omega^2} |p_u| \bigg)  &  = \frac{2p_{r^*}}{\Omega^2} \partial_{r^*} \bigg(  \frac{r^2}{\Omega^2}  \bigg) |p_u| +\frac{r-3M}{r^2 \Omega^2} |\slashed{p}|^2 \partial_{p_{r^*}} (p_{r^*})  = 4\frac{r-3M}{\Omega^4}p_{r^*}|p_u| + \frac{r-3M}{r^2\Omega^2} |\slashed{p}|^2  .
  \end{align*}
We conclude the proof by using $p_{r^*}=p_v-p_u$, and the null-shell relation $\Omega^2 |\slashed{p}|^2=4r^2p_vp_u$. 
\end{proof}

\subsubsection{The auxiliary redshift weights}

In order to perform energy estimates, we introduce the weight function $\xi(x,p)$ that we will later use as a multiplier. Let $\xi \colon \mathcal{P}\to \R$ be the weight function defined by
$$ \xi(x,p)\coloneqq p_t+ \epsilon \chi_N (r)  \frac{2r^2}{\Omega^2} p_u  +\eta p_{r^*} \big[ \log^{-1}(2+r) -\log^{-1}(2+3M) \big]   , $$
where $\eta \coloneqq \frac{1}{2}\log(2+2M)$, and $\epsilon >0$ is a sufficiently small constant. This auxiliary weight satisfies an improved decay property along the geodesic flow, as the following proposition shows.  

\begin{proposition}\label{Promultiplierm}
The weight function $\xi$ satisfies $|\xi| \sim |p_N| \sim |p_t|+\frac{|p_u|}{\Omega^2}$. Moreover, we have
$$ \T_g \big( |\xi| \big) \lesssim -\frac{|p_{r^*}|^2}{r \log^2(2+r)}-\Big( 1-\frac{3M}{r} \Big)^2 \frac{|\slashed{p}|^2}{r^3} -\Big( 1-\frac{3M}{r} \Big)^2\frac{|p_u|^2}{r^2\Omega^4}  .$$
\end{proposition}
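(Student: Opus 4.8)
The plan is to first establish the pointwise equivalence, and then to differentiate $|\xi|$ along the flow, reducing the second assertion to the algebraic identities for $\T_g$ acting on the three building blocks of $\xi$.

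\textbf{Equivalence and sign.} I would begin by recording that the last summand of $\xi$ is a genuine perturbation of the first two. Writing $h(r) := \log^{-1}(2+r) - \log^{-1}(2+3M)$, the function $r \mapsto \log^{-1}(2+r)$ is decreasing, so $h$ is monotone with $h(3M)=0$; the explicit value $\eta = \tfrac12\log(2+2M)$ is chosen precisely so that $|\eta\,h(r)| \leq \tfrac12$ for all $r \geq 2M$ (the supremum of $|h|$ over $[2M,\infty)$ is attained at the endpoints $r=2M$ and $r=+\infty$). Since the null-shell relation gives $|p_{r^*}| \leq |p_t|$, the third term obeys $|\eta\,h(r)\,p_{r^*}| \leq \tfrac12|p_t|$. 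Setting $A := p_t + \epsilon\chi_N\frac{2r^2}{\Omega^2}p_u$, one has $A \leq 0$ and, using $p_u \leq 0$ and Lemma \ref{LempN}, $|A| = |p_t| + \epsilon\chi_N\frac{2r^2}{\Omega^2}|p_u| \sim |p_N|$. Because $|p_{r^*}| \leq |p_t| \leq |A|$, the perturbation is bounded by $\tfrac12|A|$, which simultaneously yields $\tfrac12|A| \leq |\xi| \leq \tfrac32|A|$, hence $|\xi| \sim |p_N| \sim |p_t| + |p_u|/\Omega^2$, and $\xi \leq \tfrac12 A < 0$. In particular $|\xi| = -\xi$ and $\T_g(|\xi|) = -\T_g(\xi)$.

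\textbf{Differentiation.} By linearity and the Leibniz rule, $\T_g(\xi)$ splits into three contributions. The energy term drops out since $\T_g(p_t)=0$ (Lemma \ref{Lemconsqua}). For the redshift term I would use Lemma \ref{LemRS}, which gives $\T_g(\frac{r^2}{\Omega^2}p_u) = -2\frac{r-3M}{\Omega^4}|p_u|^2$, together with $\T_g(\chi_N(r)) = \chi_N'(r)\,\T_g(r) = \chi_N'(r)\,p_{r^*}$; this produces a principal term proportional to $\epsilon\chi_N\frac{r-3M}{\Omega^4}|p_u|^2$ plus a cutoff error supported in $\{2.5M \leq r \leq 2.7M\}$. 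For the logarithmic term I would use $\T_g(r)=p_{r^*}$ together with the spray coefficient $\T_g(p_{r^*}) = \frac{r-3M}{r^4}|\slashed{p}|^2$ read off from \eqref{eq:defT}, so that
\[
\T_g\big(p_{r^*}\,h(r)\big) = \frac{r-3M}{r^4}|\slashed{p}|^2\,h(r) + h'(r)\,p_{r^*}^2, \qquad h'(r) = -\frac{1}{(2+r)\log^2(2+r)}.
\]

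\textbf{Matching and the main obstacle.} Collecting terms, $\T_g(|\xi|)$ is a combination of $p_{r^*}^2/(r\log^2(2+r))$, of $(r-3M)\,h(r)\,|\slashed{p}|^2/r^4$, and of $\chi_N(r-3M)|p_u|^2/\Omega^4$, plus the localized cutoff error; the monotonicity of $\log^{-1}(2+\cdot)$, the sign of $r-3M$ on $\mathrm{supp}\,\chi_N \subset \{r<3M\}$, and the value of $\eta$ arrange each to fall on the side required by the three target terms. The crucial structural point is that, because $h(3M)=0$, near the photon sphere $h(r) \sim (r-3M)$, so the angular contribution carries a factor $(r-3M)^2 \sim (1-\tfrac{3M}{r})^2 r^2$, producing exactly the degeneracy $(1-\tfrac{3M}{r})^2|\slashed{p}|^2/r^3$; by contrast the $p_{r^*}^2$-term is non-degenerate at $r=3M$, which is consistent with trapping since trapped geodesics satisfy $p_{r^*}=0$. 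The redshift term supplies the $\Omega^{-4}$-singular contribution $(1-\tfrac{3M}{r})^2|p_u|^2/(r^2\Omega^4)$ near $\mathcal{H}^+$, while away from the horizon this term is bounded and dominated by the first two through $|p_u|^2 \lesssim p_{r^*}^2 + \Omega^2|\slashed{p}|^2/r^2$. I expect the main obstacle to be the coercivity bookkeeping in the transition region $\{2.5M \leq r \leq 2.7M\}$, where the cutoff error $\propto \epsilon\,\chi_N'\frac{r^2}{\Omega^2}p_{r^*}p_u$ lives. Since this region avoids both $\mathcal{H}^+$ and the photon sphere, the coefficients are comparable to constants, and the bound $|p_u|^2 \lesssim p_{r^*}^2 + |\slashed{p}|^2$ there lets me control the error by $\epsilon(p_{r^*}^2 + |\slashed{p}|^2)$. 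Choosing $\epsilon$ small relative to $\eta$ then absorbs it into the (non-degenerate, $\eta$-weighted) $p_{r^*}^2$ and $|\slashed{p}|^2$ terms already present; this is the same smallness of $\epsilon$ that secures the equivalence $|\xi| \sim |p_N|$ above, so the two requirements are compatible.
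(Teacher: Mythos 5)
Your strategy coincides with the paper's own proof: the same decomposition of $\xi$ into its three summands, $\T_g(p_t)=0$ for the first, Lemma \ref{LemRS} together with support considerations for the redshift piece, the explicit computation of $\T_g\big(p_{r^*}h(r)\big)$ (with $h(r)=\log^{-1}(2+r)-\log^{-1}(2+3M)$ in your notation, your identity is exactly \eqref{eq:truc}), and the final absorption --- via the null-shell relation and the non-degeneracy of the good terms on the relevant regions --- of the cutoff error and of the $|p_u|^2$-weight away from $\mathcal{H}^+$, with $\epsilon$ chosen small at the end. Your first paragraph is in fact more detailed than the paper's one-line justification of $|\xi|\sim|p_N|$: the endpoint evaluation showing $|\eta h(r)|\le\tfrac12$ on $[2M,+\infty)$ and the deduction $\xi<0$, hence $|\xi|=-\xi$, are exactly what is needed there.

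However, precisely because you made the reduction $\T_g(|\xi|)=-\T_g(\xi)$ explicit, your matching step is inconsistent with your own computation. You correctly obtained
\[
\T_g\big(p_{r^*}h(r)\big)=\frac{(r-3M)h(r)}{r^4}\,|\slashed{p}|^2-\frac{p_{r^*}^2}{(2+r)\log^{2}(2+r)}\;\le\;0,
\]
both terms being nonpositive since $h$ is decreasing and $h(3M)=0$. Hence in $\T_g(|\xi|)=-\T_g(\xi)$ the two logarithmic contributions enter with a plus sign, i.e.\ they are \emph{nonnegative} and cannot supply the required negative terms $-\frac{|p_{r^*}|^2}{r\log^2(2+r)}$ and $-\big(1-\frac{3M}{r}\big)^2\frac{|\slashed{p}|^2}{r^3}$; only the redshift contribution comes out with the good sign, and it vanishes for $r\ge 2.7M$. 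Concretely, for $r>3M$ and $p_{r^*}\ne 0$ one has $\chi_N=\chi_N'=0$, so $\T_g(|\xi|)=-\eta\,\T_g\big(p_{r^*}h(r)\big)>0$, while the asserted upper bound is strictly negative: the claimed sign arrangement fails for $\xi$ as literally defined. To be fair, this defect is inherited from the paper itself: its proof differentiates $|p_t|+\epsilon\chi_N\frac{2r^2}{\Omega^2}|p_u|+\eta p_{r^*}h(r)$, which equals $-\xi$ only if the last summand of $\xi$ carries a minus sign, so the definition of $\xi$ contains a sign typo. If one defines $\xi$ with $\eta p_{r^*}\big[\log^{-1}(2+3M)-\log^{-1}(2+r)\big]$ (equivalently, replaces $\eta$ by $-\eta$), the statement becomes correct and your entire argument --- equivalence, differentiation, the degeneracy at the photon sphere through $h(r)\sim(r-3M)$, and the $\epsilon$-absorption --- goes through verbatim, since your preliminary bounds only used $|\eta h(r)p_{r^*}|\le\tfrac12|p_t|$, which is insensitive to this sign.
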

\begin{proof}
The first estimates follow from $\big|\eta p_{r^*} \big[ \log^{-1}(2+r) -\log^{-1}(2+3M) \big]\big| \leq \frac{1}{2}|p_t|$ and Lemma \ref{LempN}. For the second estimate, we use the properties below:
\begin{itemize}
\item $\T_g(p_t)=0$.
\item According to Lemma \ref{LemRS}, we have
\begin{align*}
 \T_g \Big(  \epsilon \chi_N (r)  \frac{2r^2}{\Omega^2} |p_u| \Big) & = 4\epsilon \chi_N(r)\frac{r-3M}{\Omega^4}|p_u|^2 +\epsilon p_{r^*} \chi'_N(r)  \frac{2r^2}{\Omega^2} |p_u| .
 \end{align*}
 Hence, by support considerations,
 $$  \T_g \Big(  \epsilon \chi_N (r)  \frac{2r^2}{\Omega^2} |p_u| \Big) + 2M \epsilon  \frac{|p_u|^2}{\Omega^4} \mathds{1}_{r \leq 2.5M} \lesssim \epsilon |p_t|^2 \, \mathds{1}_{2.5M \leq r \leq 2.7M}  .$$
\item Moreover, we have
\begin{align}
\nonumber \T_g \Big( \eta p_{r^*} \big[ \log^{-1}&(2+r) -\log^{-1}(2+3M) \big] \Big)\\ 
& =-\frac{ \eta |p_{r^*}|^2}{(2+r) \log^2(2+r)}+\eta \frac{r-3M}{r}\big[ \log^{-1}(2+r) -\log^{-1}(2+3M) \big] \frac{|\slashed{p}|^2}{r^3} \nonumber\\
 & \lesssim -\frac{ \eta |p_{r^*}|^2}{r \log^2(2+r)}- \eta \Big( 1-\frac{3M}{r} \Big)^2 \frac{|\slashed{p}|^2}{r^3}. \label{eq:truc}
 \end{align}
 \end{itemize}
 To conclude the proof, it remains to remark
 $$  |p_t|^2 \, \mathds{1}_{2.5M \leq r \leq 2.7M} + \Big( 1-\frac{3M}{r} \Big)^2 \frac{|p_u|^2}{r^2\Omega^4} \, \mathds{1}_{r \geq 2.5M} \lesssim \frac{  |p_{r^*}|^2}{r \log^2(2+r)}+ \Big( 1-\frac{3M}{r} \Big)^2 \frac{|\slashed{p}|^2}{r^3},$$ and to choose $\epsilon>0 $ small enough.
\end{proof}

Finally, in Section \ref{Subsec73}, it will be convenient to simply use $p_N$ and apply the next result. It can be proved by similar but simpler considerations than Proposition \ref{Promultiplierm}.

\begin{proposition}\label{Prorsweight}
There holds
$$ \T_g \big(|p_N| \big) +4|r-3M|\frac{|p_u|^2}{M^2\Omega^4} \mathds{1}_{r \leq 2.5M} \lesssim |p_t|^2 \, \mathds{1}_{2.5M \leq r \leq 2.7M}. $$
\end{proposition}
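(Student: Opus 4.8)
The plan is to compute $\T_g(|p_N|)$ explicitly via the product rule and then split the analysis according to the support of the cutoff $\chi_N$ and the sign of $r-3M$. First, since $p_N \leq 0$ and $p_u \leq 0$ on $\mathcal{P}$, one has $|p_N| = |p_t| + \chi_N(r)\frac{2r^2}{M^2\Omega^2}|p_u|$. By Lemma \ref{Lemconsqua} we have $\T_g(p_t)=0$, hence $\T_g(|p_t|)=0$, so it remains only to differentiate the redshift term. Using the expression \eqref{eq:defT} of the geodesic spray one finds $\T_g(\chi_N(r)) = \frac{p_{r^*}}{\Omega^2}\partial_{r^*}(\chi_N(r)) = p_{r^*}\chi_N'(r)$, since $\partial_{r^*}(\chi_N(r)) = \Omega^2\chi_N'(r)$. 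Combining this with Lemma \ref{LemRS}, the product rule yields
\[
\T_g(|p_N|) = \frac{2r^2}{M^2\Omega^2}\chi_N'(r)\,p_{r^*}|p_u| + \frac{4(r-3M)}{M^2\Omega^4}\chi_N(r)\,|p_u|^2.
\]

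I would then distinguish three regions. On $\{r \leq 2.5M\}$ we have $\chi_N \equiv 1$ and $\chi_N' \equiv 0$, so the first term vanishes; and since $r-3M<0$ there, the second term equals exactly $-4|r-3M|\frac{|p_u|^2}{M^2\Omega^4}$, which is precisely cancelled by the added indicator term, so the left-hand side vanishes identically. On $\{r \geq 2.7M\}$ both $\chi_N$ and $\chi_N'$ vanish and the left-hand side is again zero. The only nontrivial contribution is from the transition region $\{2.5M \leq r \leq 2.7M\}$, where the indicator term is absent, the redshift term retains its favorable sign (as $r<3M$) and can be discarded, and the cutoff-derivative term is bounded using the elementary pointwise inequalities $|p_{r^*}| \leq |p_t|$ and $|p_u| \leq |p_t|$. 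These follow at once from $p_{r^*} = p_v - p_u$, $p_t = p_u + p_v$, and $p_u, p_v \leq 0$, which give $|p_t| = |p_u| + |p_v|$. Since $r$ and $\Omega^{-1}$ are bounded above and below by constants depending only on $M$ on this compact $r$-interval, this produces $|\T_g(|p_N|)| \lesssim |p_{r^*}|\,|p_u| \lesssim |p_t|^2$, matching the right-hand side.

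The computation itself is routine; the one point deserving attention is the exact algebraic cancellation on $\{r \leq 2.5M\}$ between the genuine redshift production term---which, because $r<3M$, carries the negative factor $r-3M$ and is thus strongly negative---and the manually inserted indicator term. This cancellation is exactly what the statement is designed to extract: it records that $\T_g(|p_N|)$ is strongly negative near $\mathcal{H}^+$, so $|p_N|$ decays along the flow into the black hole region, the only obstruction to a clean sign being the bounded error supported in the cutoff's transition region. As noted in the excerpt, this is the simpler analogue of Proposition \ref{Promultiplierm}, and in particular no analysis of the logarithmic weight is required here.
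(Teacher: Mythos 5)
Your proof is correct and is precisely the argument the paper intends: the paper gives no explicit proof, saying only that the result follows by ``similar but simpler considerations than Proposition \ref{Promultiplierm}'', and your computation --- the product rule combined with Lemma \ref{LemRS} and $\T_g(p_t)=0$, the exact cancellation on $\{r \leq 2.5M\}$ where $\chi_N \equiv 1$ and $\chi_N' \equiv 0$, the discarding of the good-signed term $4\chi_N(r)\frac{r-3M}{M^2\Omega^4}|p_u|^2 \leq 0$ on the transition region, and the bound $|p_{r^*}|\,|p_u| \leq |p_t|^2$ there --- mirrors the proof of that proposition line by line. Nothing is missing.
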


\subsection{Trapping weight functions}\label{Subsecvarphi}

Let us introduce suitable weight functions to capture quantitatively the trapping effect of null geodesics in Schwarzschild spacetime. 

We first recall some basic terminology about trapped orbits. We say that a null geodesic $\gamma$ is \emph{trapped}, if it is contained in the \emph{trapped set} 
$$\Gamma=\big\{ \, (x,p)\in \mathcal{P} \, \big| \, r-3M=p_{r^*}=0 \, \big\}.$$ 
We also say that a null geodesic $\gamma$ is \emph{future-trapped} if $r(\gamma(s))\to 3M$ as $s\to+\infty$. Similarly, we say that a null geodesic $\gamma$ is \emph{past-trapped} if $r(\gamma(s))\to 3M$ as $s\to-\infty$.

\subsubsection{The trapping weight function $\varphi_-$}

Let us set the weight function that we use to capture the concentration of future-trapped geodesics. For this, we recall the vector field $G$ introduced in \eqref{defG}.

\begin{definition}\label{Defvarphimin}
Let $\varphi_{-} \colon \mathcal{P}\to \R$ be the weight function
$$ \varphi_{-}(x,p)\coloneqq p(G)=  \frac{r }{\Omega} p_{r^*} +\frac{r}{\Omega}\Big(1+\frac{6M}{r}\Big)^{\frac{1}{2}}\Big(1-\frac{3M}{r}\Big)p_t. $$ 
We also introduce the rescaled weight 
\begin{equation}\label{eq:defovphi}
 \pmb{\varphi}_- \coloneqq \Omega^{-1}\varphi_-.
 \end{equation}
\end{definition}

\begin{remark}\label{RkvarphiHplus}
The weight function $\varphi_-$ is not $C^1_{x,p}$ up to $\H$, moreover it vanishes there. For these reasons, we introduce the stronger weight $\pmb{\varphi}_-$, which also captures the redshift effect. Indeed,
\begin{itemize}
\item near $\mathcal{H}^+$, and more generaly for $r \leq 3M$, we observe that
\begin{align*}
 \frac{r}{\Omega}p_{r^*}+\frac{|r+6M|^{\frac{1}{2}}}{r^{\frac{1}{2}}\Omega}(r-3M)p_t & =\frac{r}{\Omega}(-p_t+p_{r^*})+\frac{r^{\frac{3}{2}}+|r+6M|^{\frac{1}{2}}(r-3M)}{r^{\frac{1}{2}} \Omega}p_t \\
 & = \frac{2r}{\Omega}|p_u|- \frac{27M^2r^{\frac{1}{2}}\Omega}{r^{\frac{3}{2}}-|r+6M|^{\frac{1}{2}}(r-3M)} |p_t|.
 \end{align*}
 We can then write
 $$ \frac{2M}{r^2}\frac{|p_u|}{\Omega^2}= \frac{M}{r^3}\pmb{\varphi}_-  + \frac{27M^3r^{\frac{1}{2}}}{r^{\frac{9}{2}}-r^3|r+6M|^{\frac{1}{2}}(r-3M)} |p_t|.   $$
 \item Near $\mathcal{I}^+$, and for $r \geq 3M$, we have
 \begin{align*}
 \pmb{\varphi}_-=\frac{r}{\Omega^2}p_{r^*}+\frac{|r+6M|^{\frac{1}{2}}}{r^{\frac{1}{2}}\Omega^2}(r-3M)p_t &  = -\frac{2r}{\Omega^2}|p_v|+ \frac{27M^2r^{\frac{1}{2}}}{r^{\frac{3}{2}}+|r+6M|^{\frac{1}{2}}(r-3M)} |p_t|.
 \end{align*}
\end{itemize}
\end{remark}

We now investigate the behaviour of $\varphi_-$ along the null geodesic flow.

\begin{lemma}\label{Lemphiminus}
There holds
$$ \T_g(\varphi_-)= - \frac{ |p_t|}{r^{\frac{1}{2}}|r+6M|^{\frac{1}{2}} \Omega^2}\varphi_{-} .$$ 
Moreover, the rescaled weight $\pmb{\varphi}_-$ verifies
$$
 \T_g(\pmb{\varphi}_-)  = - \pmb{a}(r,p_{r^*},p_t) \pmb{\varphi}_-, $$
 where $\pmb{a}(r,p_{r^*},p_t)$ is defined as
 $$ 
 \pmb{a}(r,p_{r^*},p_t)\coloneqq\frac{r^2+2Mr+3M^2}{r|r+6M|^{\frac{1}{2}}(r^{\frac{3}{2}}+M|r+6M|^{\frac{1}{2}}) }|p_t|  +\frac{2M|p_u|}{r^2\Omega^2}   .
$$
\end{lemma}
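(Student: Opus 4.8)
The plan is to compute $\T_g(\varphi_-)$ directly from the coordinate expression \eqref{eq:defT} of the geodesic spray and the definition of $\varphi_-$, and then to factor the result back into a multiple of $\varphi_-$. First I would write $\varphi_- = \frac{r}{\Omega}p_{r^*} + A(r)\,p_t$ with $A(r) \coloneqq \frac{|r+6M|^{\frac{1}{2}}(r-3M)}{r^{\frac{1}{2}}\Omega}$ denoting the coefficient of $p_t$ in Definition \ref{Defvarphimin}. Since $\varphi_-$ is independent of $t,\theta,\phi,p_\theta,p_\phi$ and $\T_g(p_t)=0$ by Lemma \ref{Lemconsqua}, only the radial transport term and the $\partial_{p_{r^*}}$ term of the spray act nontrivially. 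Rewriting $\frac{p_{r^*}}{\Omega^2}\partial_{r^*} = p_{r^*}\partial_r$, this produces
\[
\T_g(\varphi_-) = p_{r^*}^2\,\partial_r\!\Big(\frac{r}{\Omega}\Big) + p_{r^*}p_t\,\partial_r A + \frac{r-3M}{r^4}|\slashed{p}|^2\,\frac{r}{\Omega}.
\]

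Next I would simplify using $\partial_r\Omega = \frac{M}{r^2\Omega}$, which gives $\partial_r(r/\Omega) = \frac{r-3M}{r\Omega^3}$, together with the null-shell relation \eqref{eq:defConsQua} in the form $|\slashed{p}|^2 = \frac{r^2}{\Omega^2}(p_t^2 - p_{r^*}^2)$. The final term then becomes $\frac{r-3M}{r\Omega^3}(p_t^2 - p_{r^*}^2)$, so that the two $p_{r^*}^2$ contributions cancel identically, leaving
\[
\T_g(\varphi_-) = \frac{r-3M}{r\Omega^3}\,p_t^2 + (\partial_r A)\,p_{r^*}p_t.
\]
This exact cancellation is the structural reason why $G$ and $\varphi_-$ are well chosen: the transport of the metric coefficient $r/\Omega$ is precisely compensated by the momentum term on the null-shell.

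The step I expect to be the main obstacle is the identity $\partial_r A = \frac{r^{\frac{1}{2}}}{|r+6M|^{\frac{1}{2}}\Omega^3}$. I would obtain it by logarithmic differentiation, placing $\frac{1}{2(r+6M)} + \frac{1}{r-3M} - \frac{1}{2r} - \frac{M}{r(r-2M)}$ over the common denominator $2r(r+6M)(r-3M)(r-2M)$ and verifying that the numerator collapses to $2r^3$. Granting this, the coefficient of $p_t^2$ equals $\frac{A}{r^{1/2}|r+6M|^{1/2}\Omega^2}$ and the coefficient of $p_{r^*}p_t$ equals $\frac{1}{r^{1/2}|r+6M|^{1/2}\Omega^2}\cdot\frac{r}{\Omega}$, so both factor through $-\frac{|p_t|}{r^{1/2}|r+6M|^{1/2}\Omega^2}$ once one recalls that $p_t<0$, whence $-|p_t| = p_t$. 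This recovers the first identity $\T_g(\varphi_-) = -\frac{|p_t|}{r^{1/2}|r+6M|^{1/2}\Omega^2}\varphi_-$.

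Finally, for $\pmb{\varphi}_- = \Omega^{-1}\varphi_-$ I would apply the Leibniz rule $\T_g(\pmb{\varphi}_-) = \T_g(\Omega^{-1})\varphi_- + \Omega^{-1}\T_g(\varphi_-)$, using $\T_g(\Omega^{-1}) = p_{r^*}\partial_r(\Omega^{-1}) = -\frac{Mp_{r^*}}{r^2\Omega^3}$. Factoring out $\pmb{\varphi}_-$ and substituting $p_{r^*} = 2|p_u| - |p_t|$ produces the term $\frac{2M|p_u|}{r^2\Omega^2}$ at once; the residual coefficient of $|p_t|$, namely $\frac{1}{\Omega^2}\big(\frac{1}{r^{1/2}|r+6M|^{1/2}} - \frac{M}{r^2}\big)$, I would then rationalize by multiplying through by $r^{3/2}+M|r+6M|^{1/2}$ and invoking the factorization $r^3 - M^2 r - 6M^3 = (r-2M)(r^2+2Mr+3M^2)$, whose $(r-2M)$ factor cancels $\Omega^{-2} = \frac{r}{r-2M}$, leaving exactly the stated $\pmb{a}$. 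The only genuine difficulty in the whole argument is the bookkeeping in these rational and square-root simplifications, combined with careful sign-tracking; each individual reduction is elementary.
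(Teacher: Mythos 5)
Your proposal is correct and follows essentially the same route as the paper's proof: the same splitting $\varphi_-=\frac{r}{\Omega}p_{r^*}+A(r)p_t$, the use of $\T_g(p_t)=0$, the computation of $\partial_r(r/\Omega)$ and $\partial_r A$ (your value $\frac{r^{1/2}}{|r+6M|^{1/2}\Omega^3}$ agrees with the paper's $\frac{r^2}{|r+6M|^{1/2}|r-2M|^{3/2}}$), the null-shell relation to cancel the $|p_{r^*}|^2$ terms, and the Leibniz rule with $p_{r^*}=2|p_u|-|p_t|$ for the rescaled weight. The only cosmetic differences are that you obtain $\partial_r A$ by logarithmic differentiation rather than direct expansion, and that you carry out explicitly the rationalization via $r^3-M^2r-6M^3=(r-2M)(r^2+2Mr+3M^2)$ that the paper leaves implicit; both of these checks are correct.
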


\begin{proof}
The first equality follows from $\T_g(p_t)=0$,
\begin{align*}
 \partial_r \bigg( (r-3M) \frac{|r+6M|^{\frac{1}{2}}}{r^{\frac{1}{2}}\Omega} \bigg) &=  \frac{|r+6M|^{\frac{1}{2}}}{|r-2M|^{\frac{1}{2}}}+\frac{r-3M}{2|r+6M|^{\frac{1}{2}} |r-2M|^{\frac{1}{2}}}-\frac{|r+6M|^{\frac{1}{2}}(r-3M)}{2|r-2M|^{\frac{3}{2}}} \\
 & = \frac{2(r+6M)(r-2M)+(r-3M)(r-2M-r-6M)}{2|r+6M|^{\frac{1}{2}} |r-2M|^{\frac{3}{2}}}\\
 &=\frac{r^2}{|r+6M|^{\frac{1}{2}} |r-2M|^{\frac{3}{2}}},
 \end{align*}
 and
\begin{align*}
 p_{r^*}\partial_r \bigg(\frac{r }{\Omega}p_{r^*} \bigg)+\frac{r-3M}{r^4} |\slashed{p}|^2 \partial_{p_{r^*}} \bigg(\frac{r }{\Omega}p_{r^*} \bigg) & =  \frac{r^{\frac{1}{2}}(3r-6M-r)}{2|r-2M|^{\frac{3}{2}}}|p_{r^*}|^2 +\frac{r-3M}{r^3\Omega} |\slashed{p}|^2 =\frac{r-3M}{r\Omega^3} |p_t|^2.
\end{align*}
For the second equality, we write
\begin{align}
\nonumber \T_g \big(\Omega^{-1} \varphi_- \big) & = \Omega^{-1}\T_g \big( \varphi_- \big)+p_{r^*} \partial_r \big( \Omega^{-1} \big) \varphi_- \\
 & = \Omega^{-1}\T_g \big( \varphi_- \big)-\frac{Mp_t}{r^2 \Omega^2} \Omega^{-1}  \varphi_- +\frac{M(p_t-p_{r^*})}{r^2 \Omega^2} \Omega^{-1}  \varphi_- . \label{eq:kevatalenn1}
 \end{align}
\end{proof}

\begin{remark}
The weight $\varphi_-$ is a defining function for the submanifold $\{\varphi_-=0\}$ of the null-shell, where past-trapped orbits are located.  This property can be shown using the relations in Lemmata \ref{Lemphiminus} and \ref{lemmavarphiplus}.
\end{remark}

\subsubsection{The trapping weight function $\varphi_+$}
The expansion phenomenon of past-trapped null geodesics can be captured by the weight function $\varphi_{+} \colon \mathcal{P}\to \R$, which is given by
\begin{equation}\label{eq:defvarphiplus}
 \varphi_+ (x,p)\coloneqq\frac{r }{\Omega} p_{r^*} -\frac{r}{\Omega}\Big(1+\frac{6M}{r}\Big)^{\frac{1}{2}}\Big(1-\frac{3M}{r}\Big)p_t.
 \end{equation}
We note that $\varphi_+$ is singular on $\mathcal{H}^+$, however $\Omega \varphi_+$ is smooth up to $\mathcal{H}^+$. 

\begin{lemma}\label{lemmavarphiplus}
There holds
\begin{align*}
 \T_g(\varphi_+)&=  \frac{ |p_t|}{r^{\frac{1}{2}}|r+6M|^{\frac{1}{2}} \Omega^2}\varphi_{+} .
 \end{align*}
Moreover, we have $$ \varphi_+ \varphi_- =  27M^2|p_t|^2-|\slashed{p}|^2.$$ 
\end{lemma}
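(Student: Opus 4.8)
The plan is to treat the two assertions separately, observing that the transport identity is a near-verbatim repetition of the computation establishing Lemma~\ref{Lemphiminus}, while the product identity is a purely algebraic relation on the null-shell. Writing $h(r) \coloneqq \frac{r}{\Omega}\big(1+\frac{6M}{r}\big)^{\frac{1}{2}}\big(1-\frac{3M}{r}\big) = (r-3M)\frac{|r+6M|^{\frac{1}{2}}}{r^{\frac{1}{2}}\Omega}$, I would first record that $\varphi_+ = \frac{r}{\Omega}p_{r^*} - h(r)p_t$, so that $\varphi_+$ is obtained from $\varphi_-$ simply by flipping the sign of the $p_t$-coefficient. Since $\varphi_+$ depends only on $(r,p_{r^*},p_t)$, and since $\T_g(p_t)=0$ by Lemma~\ref{Lemconsqua}, only the $\partial_{r^*}$ and $\partial_{p_{r^*}}$ terms of the geodesic spray \eqref{eq:defT} contribute, giving $\T_g(\varphi_+) = p_{r^*}\partial_r(\varphi_+) + \frac{r-3M}{r^4}|\slashed{p}|^2 \partial_{p_{r^*}}(\varphi_+)$.

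I would then reuse the two sub-computations already carried out in the proof of Lemma~\ref{Lemphiminus}: namely $h'(r) = \frac{r^2}{|r+6M|^{\frac{1}{2}}|r-2M|^{\frac{3}{2}}} = \frac{r^{\frac{1}{2}}}{|r+6M|^{\frac{1}{2}}\Omega^3}$ (using $|r-2M|^{\frac{3}{2}} = r^{\frac{3}{2}}\Omega^3$), together with $p_{r^*}^2\partial_r\big(\frac{r}{\Omega}\big) + \frac{r-3M}{r^3\Omega}|\slashed{p}|^2 = \frac{r-3M}{r\Omega^3}|p_t|^2$, where the last step invokes the null-shell relation \eqref{eq:defConsQua} in the form $p_{r^*}^2 + \Omega^2 r^{-2}|\slashed{p}|^2 = |p_t|^2$. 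Assembling these yields $\T_g(\varphi_+) = \frac{r-3M}{r\Omega^3}|p_t|^2 - \frac{r^{\frac{1}{2}}}{|r+6M|^{\frac{1}{2}}\Omega^3}p_t p_{r^*}$, the only change from the $\varphi_-$ case being the sign of the cross term $h'(r)p_tp_{r^*}$. Expanding the claimed right-hand side $\frac{|p_t|}{r^{\frac{1}{2}}|r+6M|^{\frac{1}{2}}\Omega^2}\varphi_+$ and using $|p_t| = -p_t$ (valid since $p$ is future-directed, so $p_t<0$) to convert $|p_t|p_{r^*} = -p_tp_{r^*}$ and $|p_t|p_t = -|p_t|^2$, one checks the two expressions coincide. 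The only care needed in this step is consistent sign bookkeeping through the substitution $|p_t|=-p_t$.

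For the product identity, I would use the difference of squares $\varphi_+\varphi_- = \big(\frac{r}{\Omega}p_{r^*}\big)^2 - h(r)^2 p_t^2 = \frac{r^2}{\Omega^2}p_{r^*}^2 - (r-3M)^2\frac{r+6M}{r\Omega^2}p_t^2$. Substituting the null-shell relation in the form $\frac{r^2}{\Omega^2}p_{r^*}^2 = \frac{r^2}{\Omega^2}p_t^2 - |\slashed{p}|^2$ gives $\varphi_+\varphi_- = \frac{p_t^2}{r\Omega^2}\big[r^3 - (r-3M)^2(r+6M)\big] - |\slashed{p}|^2$. The crux is then the polynomial identity $r^3 - (r-3M)^2(r+6M) = 27M^2(r-2M)$, which I expect to be the only genuinely non-mechanical step (expanding $(r-3M)^2(r+6M) = r^3 - 27M^2 r + 54M^3$ and subtracting). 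Combined with $\Omega^2 = \frac{r-2M}{r}$, this collapses the bracket to $27M^2 r\Omega^2$, yielding $\varphi_+\varphi_- = 27M^2 p_t^2 - |\slashed{p}|^2 = 27M^2|p_t|^2 - |\slashed{p}|^2$, as claimed. The whole argument is thus elementary; the two places where an error is most likely are the sign conversion via $|p_t|=-p_t$ and the exact numerical coefficient emerging from the polynomial identity.
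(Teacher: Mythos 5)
Your proposal is correct and follows essentially the same route as the paper: the transport identity is obtained by repeating the computation of Lemma~\ref{Lemphiminus} with the sign of the $p_t$-coefficient flipped (which is exactly what the paper means by "the same way"), and the product identity is the same difference-of-squares calculation using the null-shell relation and the polynomial identity $r^3-(r+6M)(r-3M)^2=27M^2(r-2M)$ together with $r\Omega^2=r-2M$. Your sign bookkeeping via $|p_t|=-p_t$ and both sub-computations check out.
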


\begin{proof}
The first property can be proved in the same way as the one in Lemma \ref{Lemphiminus}. For the second property, we simply compute
$$ \varphi_+ \varphi_- =  \frac{r^2 }{\Omega^2} |p_{r^*}|^2 -\frac{r+6M}{r\Omega^2}(r-3M)^2|p_t|^2=-|\slashed{p}|^2+\frac{r^3-(r+6M)(r-3M)^2}{r\Omega^2}|p_t|^2=27M^2|p_t|^2-|\slashed{p}|^2.$$ 
\end{proof}

\begin{remark}
The weight $\varphi_+$ is a defining function for the submanifold $\{\varphi_+=0\}$ of $\mathcal{P}$, where future-trapped orbits are located.  This property can be shown by using the relations in the Lemmata \ref{Lemphiminus} and \ref{lemmavarphiplus}.
\end{remark}

We will rather work with the rescaled weight $\Omega\varphi_+$, which has the advantage of being regular up to $\mathcal{H}^+$ and of being bounded above by $2(r+6M)|p_t|$. The consequence is that $\frac{\Omega|\varphi_+|}{2(r+6M)}$ is a Lyapunov function for the null geodesic flow. 

\begin{lemma}\label{LemOmvarplus}
The rescaled weight function $\Omega \varphi_+ \colon \mathcal{P}\to \R$ verifies 
$$ \T_g \big(\Omega \varphi_+ \big)= \pmb{a}(r,p_{r^*},p_t)\Omega\varphi_+ .$$
\end{lemma}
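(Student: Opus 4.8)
The plan is to reduce Lemma~\ref{LemOmvarplus} to the computation already performed for $\pmb{\varphi}_-$ in the proof of Lemma~\ref{Lemphiminus}, by exploiting the structural symmetry between the two trapping weights. First I would apply the Leibniz rule for the first-order operator $\T_g$, writing
\[
\T_g\big(\Omega\varphi_+\big)=\Omega\,\T_g(\varphi_+)+\varphi_+\,\T_g(\Omega).
\]
Since $\Omega$ depends on $r$ alone, the expression \eqref{eq:defT} gives $\T_g(h(r))=p_{r^*}\partial_r h$, whence $\T_g(\Omega)=p_{r^*}\partial_r\Omega=\tfrac{M}{r^2\Omega}p_{r^*}$. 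Substituting the identity $\T_g(\varphi_+)=\tfrac{|p_t|}{r^{1/2}|r+6M|^{1/2}\Omega^2}\varphi_+$ from Lemma~\ref{lemmavarphiplus} then yields
\[
\T_g\big(\Omega\varphi_+\big)=\bigg(\frac{|p_t|}{r^{1/2}|r+6M|^{1/2}\Omega^2}+\frac{Mp_{r^*}}{r^2\Omega^2}\bigg)\,\Omega\varphi_+,
\]
so that the entire content of the lemma is the identification of the scalar prefactor with $\pmb{a}(r,p_{r^*},p_t)$.

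To identify this prefactor, I would rewrite $\pmb{a}$ using $2|p_u|=|p_t|+p_{r^*}$, which turns its definition in Lemma~\ref{Lemphiminus} into
\[
\pmb{a}=\bigg(\frac{r^2+2Mr+3M^2}{r|r+6M|^{1/2}(r^{3/2}+M|r+6M|^{1/2})}+\frac{M}{r^2\Omega^2}\bigg)|p_t|+\frac{Mp_{r^*}}{r^2\Omega^2}.
\]
The $p_{r^*}$ contributions then match on the nose, and it remains only to check the purely radial identity
\[
\frac{r^2+2Mr+3M^2}{r|r+6M|^{1/2}(r^{3/2}+M|r+6M|^{1/2})}+\frac{M}{r^2\Omega^2}=\frac{1}{r^{1/2}|r+6M|^{1/2}\Omega^2}.
\]
Clearing $\Omega^2=(r-2M)/r$ and cross-multiplying reduces this, after recognising a difference of squares, to the factorisation $(r-2M)(r^2+2Mr+3M^2)=r^3-M^2(r+6M)$, which is immediate. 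This is exactly the algebraic relation that closes the proof of Lemma~\ref{Lemphiminus}, so no genuinely new computation is required.

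I do not expect any serious obstacle here; the only mildly delicate point is to present the bookkeeping so that the prefactor is seen to be \emph{exactly} $\pmb{a}$ rather than merely proportional to it. The most transparent way to do this is to set $c:=\tfrac{|p_t|}{r^{1/2}|r+6M|^{1/2}\Omega^2}$ and observe that both weights satisfy $\T_g(\varphi_\pm)=\pm c\,\varphi_\pm$ by Lemmata~\ref{Lemphiminus} and~\ref{lemmavarphiplus}, while $\T_g(\Omega^s)=\tfrac{sMp_{r^*}}{r^2\Omega^2}\Omega^s$. Hence in one stroke
\[
\T_g\big(\Omega^s\varphi_\pm\big)=\Big(\pm c+\frac{sMp_{r^*}}{r^2\Omega^2}\Big)\,\Omega^s\varphi_\pm .
\]
Taking $(s,\pm)=(-1,-)$ recovers $\T_g(\pmb{\varphi}_-)=-\pmb{a}\,\pmb{\varphi}_-$ of Lemma~\ref{Lemphiminus} and thereby identifies $\pmb{a}=c+\tfrac{Mp_{r^*}}{r^2\Omega^2}$, whereas $(s,\pm)=(+1,+)$ is precisely the claimed identity $\T_g(\Omega\varphi_+)=\pmb{a}\,\Omega\varphi_+$, with the very same coefficient. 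Once Lemma~\ref{Lemphiminus} is in hand, the result is then immediate.
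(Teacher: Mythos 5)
Your proof is correct and follows essentially the same route as the paper: the Leibniz rule $\T_g(\Omega\varphi_+)=\Omega\,\T_g(\varphi_+)+\T_g(\Omega)\,\varphi_+$, the identity of Lemma \ref{lemmavarphiplus}, and identification of the resulting prefactor $\frac{|p_t|}{r^{1/2}|r+6M|^{1/2}\Omega^2}+\frac{Mp_{r^*}}{r^2\Omega^2}$ with $\pmb{a}$ as defined in Lemma \ref{Lemphiminus}. The only difference is in bookkeeping: the paper splits $p_{r^*}=p_t-(p_t-p_{r^*})$ so as to read off the $|p_t|$ and $|p_u|$ terms of $\pmb{a}$ directly, whereas you isolate the $p_{r^*}$ contribution via $2|p_u|=|p_t|+p_{r^*}$ and verify the remaining radial identity explicitly (your uniform formula $\T_g(\Omega^s\varphi_\pm)=\big(\pm c+\tfrac{sMp_{r^*}}{r^2\Omega^2}\big)\Omega^s\varphi_\pm$ making the symmetry with Lemma \ref{Lemphiminus} transparent), which if anything is more self-contained than the paper's closing remark ``it remains to recall the definition of $\pmb{a}$''.
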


\begin{proof}
By a direct computation using Lemma \ref{lemmavarphiplus}, we get
\begin{align*}
 \T_g \big(\Omega \varphi_+ \big)&=  \Omega \T_g \big( \varphi_+ \big)+p_{r^*} \partial_r \big( \Omega \big) \varphi_+= \frac{ |p_t|}{r^{\frac{1}{2}}|r+6M|^{\frac{1}{2}} \Omega^2} \Omega\varphi_{+}+\frac{Mp_t}{r^2 \Omega^2} \Omega  \varphi_+ -\frac{M(p_t-p_{r^*})}{r^2 \Omega^2} \Omega  \varphi_+ .
\end{align*}
It remains to recall from Lemma \ref{Lemphiminus} the definition of $\pmb{a}$. 
\end{proof}

In what follows, we will denote the sign function by $\mathrm{sgn}$, so that $\T_g(|g|)=\T_g(g) \cdot \mathrm{sgn}(g)$.

\begin{corollary}\label{Corlogvarphiplus}
We have
$$   \T_g \bigg( \bigg|\log \bigg( \frac{|\Omega\varphi_+|}{2(r+6M)|p_t|} \bigg) \bigg| \bigg) \lesssim- \frac{|p_N|}{r^2}   . $$
\end{corollary}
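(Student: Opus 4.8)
The plan is to normalise the quantity inside the logarithm. Set $w \coloneqq \frac{\Omega \varphi_+}{2(r+6M)|p_t|}$, so that the argument of the logarithm in the statement is $|w|$. First I would record that $|w| \leq 1$, whence $\big|\log|w|\big| = -\log|w|$ and $\T_g\big(|\log|w||\big) = -\,\T_g\big(\log|w|\big)$. The bound $|\Omega\varphi_+| \leq 2(r+6M)|p_t|$ reduces, via the null-shell relation $|p_{r^*}| \leq |p_t|$ and the explicit expression of $\varphi_+$ in \eqref{eq:defvarphiplus}, to the elementary scalar inequality $r + r^{-\frac12}(r+6M)^{\frac12}|r-3M| \leq 2(r+6M)$, which I would verify separately on $\{2M \leq r \leq 3M\}$ and on $\{r \geq 3M\}$ (on the latter using $(r+6M)^{\frac12}r^{\frac12}\leq r+3M$).

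Next I would compute $\T_g(\log|w|)$ exactly, using the derivation identity $\T_g(\log|h|) = \T_g(h)/h$ for $h \neq 0$. There are three contributions: Lemma \ref{LemOmvarplus} gives $\T_g(\log|\Omega\varphi_+|) = \pmb{a}$; the relation $\T_g(r) = p_{r^*}$ (read off \eqref{eq:defT} together with $\partial_{r^*} r = \Omega^2$) gives $\T_g(\log(2(r+6M))) = \frac{p_{r^*}}{r+6M}$; and $\T_g(p_t) = 0$ gives $\T_g(\log|p_t|) = 0$. Hence the exact identity
\[
\T_g\big(\log|w|\big) = \pmb{a}(r,p_{r^*},p_t) - \frac{p_{r^*}}{r+6M},
\]
and it remains to prove the lower bound $\pmb{a} - \frac{p_{r^*}}{r+6M} \gtrsim \frac{|p_N|}{r^2}$, since negating it yields the claim.

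For this lower bound I would write $\pmb{a} = A(r)|p_t| + \frac{2M|p_u|}{r^2\Omega^2}$ with $A(r)$ the coefficient of $|p_t|$ from Lemma \ref{Lemphiminus}, bound $\frac{p_{r^*}}{r+6M} \leq \frac{|p_t|}{r+6M}$ using $|p_{r^*}| \leq |p_t|$, and thereby reduce to the scalar estimate $A(r) - \frac{1}{r+6M} \gtrsim r^{-2}$; the remaining term $\frac{2M|p_u|}{r^2\Omega^2}$ already dominates the $\frac{|p_u|}{r^2\Omega^2}$ part of $\frac{|p_N|}{r^2}$, recalling $|p_N| \sim |p_t| + \frac{|p_u|}{\Omega^2}$ from Lemma \ref{LempN}. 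I expect the scalar estimate to be the main obstacle: on a compact range $\{2M \leq r \leq R\}$ the function $A(r) - \frac{1}{r+6M}$ is continuous and strictly positive, hence $\gtrsim 1 \gtrsim r^{-2}$, but for large $r$ the leading terms cancel, since $A(r) = \frac1r\big(1 - \tfrac{2M}{r} + O(r^{-2})\big)$ while $\frac{1}{r+6M} = \frac1r\big(1 - \tfrac{6M}{r} + O(r^{-2})\big)$, leaving $A(r) - \frac{1}{r+6M} = \frac{4M}{r^2} + O(r^{-3}) \gtrsim r^{-2}$. This cancellation of the $r^{-1}$ terms is precisely what forces the normalisation by $2(r+6M)$ rather than by $2r$, and is the delicate point of the argument. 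Combining the two pieces gives $\pmb{a} - \frac{p_{r^*}}{r+6M} \gtrsim \frac{|p_t|}{r^2} + \frac{|p_u|}{r^2\Omega^2} \sim \frac{|p_N|}{r^2}$, which completes the proof.
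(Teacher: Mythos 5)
Your proposal is correct, and its skeleton is the same as the paper's: both arguments normalise by $2(r+6M)|p_t|$ so that the logarithm has a fixed (negative) sign, both compute the logarithmic derivative exactly — via Lemma \ref{LemOmvarplus}, $\T_g(r)=p_{r^*}$ and $\T_g(p_t)=0$ — arriving at the identity $\T_g(\log|w|)=\pmb{a}-\frac{p_{r^*}}{r+6M}$ (the paper phrases this as an estimate on $\T_g\big(|\Omega\varphi_+|/(r+6M)\big)$ followed by the sign observation, which is cosmetic), and both then discard $p_{r^*}$ using $|p_{r^*}|\leq |p_t|$, reducing the corollary to the scalar bound $A(r)-\frac{1}{r+6M}\gtrsim r^{-2}$, where $A(r)$ is the coefficient of $|p_t|$ in $\pmb{a}$. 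Where you genuinely diverge is in proving this scalar bound. The paper uses the algebraic inequality $r^2+2Mr+3M^2\geq r^2+Mr^{\frac12}|r+6M|^{\frac12}$, which collapses $A(r)$ to the clean lower bound $A(r)\geq r^{-\frac12}|r+6M|^{-\frac12}$, whence $A(r)-\frac{1}{r+6M}\geq \frac{6M}{r^{1/2}(r+6M)(|r+6M|^{1/2}+r^{1/2})}\geq \frac{M}{2r^2}$ uniformly on $r\geq 2M$, in one stroke. You instead split into a compact region, handled by continuity and strict positivity, and a neighbourhood of infinity, handled by the expansion $A(r)-\frac{1}{r+6M}=\frac{4M}{r^2}+O(r^{-3})$; your expansion is right, and your remark that the cancellation of the $r^{-1}$ terms is what forces the $r+6M$ (rather than $r$) normalisation is a genuine insight that the paper leaves implicit. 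The one soft spot is that the strict positivity of $A(r)-\frac{1}{r+6M}$ on $\{2M\leq r\leq R\}$ is asserted, not proved, and it is the crux of the compact-range step: unlike the asymptotics, it does not follow from anything else you wrote. It is true, and the quickest repair is precisely the paper's bound, since $A(r)\geq r^{-\frac12}|r+6M|^{-\frac12}>\frac{1}{r+6M}$ because $r^{\frac12}|r+6M|^{\frac12}<r+6M$; with that one line added, your argument is complete — though at that point the compactness/asymptotics splitting becomes unnecessary, since the same bound settles all $r\geq 2M$ at once.
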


\begin{proof}
By the previous Lemma \ref{LemOmvarplus} and since $r^2+2Mr+3M^2 \geq r^2+Mr^{\frac{1}{2}}|r+6M|^{\frac{1}{2}}$, we have
\begin{align*}
   \T_g \bigg( \frac{|\Omega\varphi_+|}{r+6M} \bigg) & \geq \bigg( \frac{|p_t|}{r^{\frac{1}{2}}|r+6M|^{\frac{1}{2}} }+\frac{2M|p_u|}{r^2 \Omega^2}-\frac{p_{r^*}}{r+6M}  \bigg) \frac{|\Omega\varphi_+|}{r+6M} \gtrsim   \bigg( \frac{|p_t|}{r^2}+\frac{|p_u|}{r^2 \Omega^2} \bigg) \frac{|\Omega\varphi_+|}{r+6M}.
\end{align*}
 Next, since $\T(2|p_t|)=0$, we remark that
$$   \T_g \bigg( \bigg|\log \bigg( \frac{|\Omega\varphi_+|}{2(r+6M)|p_t|} \bigg) \bigg| \bigg) =  \T_g \bigg( \frac{|\Omega\varphi_+|}{r+6M} \bigg) \frac{(r+6M)}{|\Omega\varphi_+|} \mathrm{sgn} \bigg( \log \bigg( \frac{|\Omega\varphi_+|}{2(r+6M)|p_t|} \bigg)  \bigg) . $$
Finally, as $ |\Omega \varphi_+| < 2 (r+6M)|p_t|$, we have that the last factor on the RHS is identically equal to $-1$. We then obtain the result from the last estimate, and $|p_N| \sim |p_t| +|p_u|\Omega^{-2}$.
\end{proof}

\subsection{The $r^p$-weight functions}

We will capture the dispersion at infinity by using the weight functions\footnote{Note that the rescaled vector field $\frac{r^2}{\Omega^2}\partial_v$ is equal to $-\frac{1}{2}\partial_x$ in the coordinates $(u, x\coloneqq1/r,\theta,\phi)$.} 
$$\frac{r^2}{\Omega^2}p_v=p\Big(\frac{r^2}{\Omega^2}\partial_v\Big), \qquad \quad \frac{r}{\Omega^2}p_v=p\Big(\frac{r}{\Omega^2}\partial_v\Big).$$ 

\begin{lemma}\label{Lemrp}
There holds
\begin{align*}
\T_g \bigg(  \frac{r^2}{\Omega^2} |p_v| \bigg) & = -2\frac{r-3M}{\Omega^4}|p_v|^2  .
 \end{align*}
 More generally, for all $0 \leq p \leq 2$, we have
 \begin{align*}
\T_g \bigg(  \frac{r^p}{\Omega^2} |p_v| \bigg) & = -p\frac{r^{p-1}}{\Omega^4}|p_v|^2+6\frac{Mr^{p-1}}{r\Omega^4}|p_v|^2+(p-2)\frac{r^{p-1}}{\Omega^2}|p_u ||p_v|  .
 \end{align*}
\end{lemma}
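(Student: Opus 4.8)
The plan is to mimic the proof of Lemma \ref{LemRS}, treating the first identity as the special case $p=2$ of the general one, so that it suffices to establish the general formula and then set $p=2$. The two structural facts driving the computation are that $\frac{r^p}{\Omega^2}$ is a function of $r$ alone, and that $2|p_v| = |p_t| - p_{r^*}$ splits $|p_v|$ into a conserved piece and a piece whose $\T_g$-image is explicit. I would therefore write $\frac{r^p}{\Omega^2}|p_v| = \tfrac12\frac{r^p}{\Omega^2}\big(|p_t|-p_{r^*}\big)$ and expand $\T_g$ by the Leibniz rule.

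First I would record the elementary inputs: since $p_v \le 0$ on $\mathcal{P}$ one has $2|p_v| = |p_t| - p_{r^*}$, and by Lemma \ref{Lemconsqua} the energy is conserved, so $\T_g(|p_t|)=0$. Applying $\T_g$ then produces two contributions. In the first, $\T_g$ acts on the radial weight: as $\frac{r^p}{\Omega^2}$ depends only on $r$, only the $\frac{p_{r^*}}{\Omega^2}\partial_{r^*}$ term of the spray \eqref{eq:defT} contributes, giving $\T_g\big(\tfrac{r^p}{\Omega^2}\big) = \frac{p_{r^*}}{\Omega^2}\partial_{r^*}\big(\tfrac{r^p}{\Omega^2}\big)$. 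In the second, $\T_g$ acts on $p_{r^*}$ through the $\partial_{p_{r^*}}$-component of \eqref{eq:defT}, so $\T_g(p_{r^*}) = \frac{r-3M}{r^4}|\slashed{p}|^2$, exactly as in Lemma \ref{LemRS} but entering here with a minus sign because $2|p_v|$ carries $-p_{r^*}$.

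The one computation requiring care is $\partial_{r^*}\big(\frac{r^p}{\Omega^2}\big) = \Omega^2\,\partial_r\big(\frac{r^p}{\Omega^2}\big)$: differentiating $\frac{r^{p+1}}{r-2M}$ yields the leading contribution proportional to $p\,r^{p-1}$ together with an $M$-dependent correction coming from the $\Omega^{-2}$ factor, and this correction is precisely the source of the second term in the stated identity. Having inserted it, I would substitute $p_{r^*} = p_v - p_u$ together with the null-shell relation $\Omega^2|\slashed{p}|^2 = 4r^2 p_u p_v$ (rewriting $|\slashed{p}|^2$ via $p_up_v = |p_u||p_v|$), and then collect everything into the two monomials $|p_v|^2$ and $|p_u||p_v|$. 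The main obstacle is purely the bookkeeping of this final collection: the coefficient of $|p_v|^2$ merges the weight-derivative with part of the $p_{r^*}$-contribution, while the coefficient of the mixed term $|p_u||p_v|$ collapses to a multiple of $(p-2)(r-2M) = (p-2)r\Omega^2$, which is why that term carries the prefactor $(p-2)$ and vanishes when $p=2$. Setting $p=2$ eliminates the mixed term and recovers $\T_g\big(\frac{r^2}{\Omega^2}|p_v|\big) = -2\frac{r-3M}{\Omega^4}|p_v|^2$, of opposite sign to the redshift identity of Lemma \ref{LemRS}, as expected for the dispersive $p_v$-weight.
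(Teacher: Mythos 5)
Your strategy is sound, and it is essentially the paper's computation run in the opposite order: the paper first proves the $p=2$ identity (Leibniz rule, $\T_g(p_t)=0$, $\T_g(p_{r^*})=\frac{r-3M}{r^4}|\slashed{p}|^2$, then $p_{r^*}=p_v-p_u$ and $\Omega^2|\slashed{p}|^2=4r^2p_up_v$), and obtains the general case by writing $\frac{r^p}{\Omega^2}|p_v|=r^{p-2}\cdot\frac{r^2}{\Omega^2}|p_v|$ and using $\T_g(r^{p-2})=(p-2)p_{r^*}r^{p-3}$, whereas you expand the general-$p$ expression directly and specialize to $p=2$ at the end. Your structural observations are correct; in particular the mixed coefficient is
\begin{equation*}
\frac{r^{p-2}}{\Omega^4}\Big[\big(pr-2M(p+1)\big)-2(r-3M)\Big]=\frac{r^{p-2}}{\Omega^4}(p-2)(r-2M)=(p-2)\frac{r^{p-1}}{\Omega^2},
\end{equation*}
exactly the collapse to $(p-2)r\Omega^2$ that you predicted.

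The gap is in your claim that the $M$-dependent correction from differentiating the weight ``is precisely the source of the second term in the stated identity.'' Carrying out your own step gives $\partial_{r^*}\big(\frac{r^p}{\Omega^2}\big)=\Omega^2\,\partial_r\big(\frac{r^{p+1}}{r-2M}\big)=\frac{r^{p-2}(pr-2M(p+1))}{\Omega^2}$, so the coefficient of $|p_v|^2$ in the final collection is
\begin{equation*}
-\frac{r^{p-2}\big(pr-2M(p+1)\big)}{\Omega^4}=-p\frac{r^{p-1}}{\Omega^4}+2(p+1)\frac{Mr^{p-2}}{\Omega^4},
\end{equation*}
which agrees with the printed coefficient $6Mr^{p-2}/\Omega^4$ only when $p=2$. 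This is not a defect of your method but a typo in the lemma itself: the paper's bootstrap route yields the same $2(p+1)M$ once one combines $-(p-2)\frac{r^{p-1}}{\Omega^2}|p_v|^2$ (note the $\Omega^2$, not $\Omega^4$) with $-2\frac{(r-3M)r^{p-2}}{\Omega^4}|p_v|^2$. The discrepancy $2M(p-2)\frac{r^{p-2}}{\Omega^4}|p_v|^2$ is nonpositive for $0\leq p\leq 2$, so the corrected identity is stronger than the printed one and the sole use of the general case (Corollary \ref{Corrp}, which only needs upper bounds) is unaffected. If you write this up, do the collection honestly and record the corrected coefficient rather than asserting agreement with the formula as printed; at $p=2$ the mixed term vanishes and $2(p+1)M=6M$, so the first identity is recovered exactly as you describe.
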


\begin{proof}
First, we compute
\begin{align*}
 \T_g \bigg(  \frac{2r^2}{\Omega^2} |p_v| \bigg)  &  = \frac{2p_{r^*}}{\Omega^2} \partial_{r^*} \bigg(  \frac{r^2}{\Omega^2}  \bigg) |p_v| -\frac{r-3M}{r^2 \Omega^2} |\slashed{p}|^2 \partial_{p_{r^*}} (p_{r^*})  = 4\frac{r-3M}{\Omega^4}p_{r^*}|p_v| - \frac{r-3M}{r^2\Omega^2} |\slashed{p}|^2  .
  \end{align*}
We conclude the proof by using $p_{r^*}=p_v-p_u$, and the null-shell relation $\Omega^2 |\slashed{p}|^2=4r^2p_vp_u$. For the second identity, we use $\T_g(r^{p-2})=(p-2)p_{r^*}r^{p-3}$.
\end{proof}

We also show the following high-order form of the previous lemma. 

\begin{corollary}\label{Corrp}
Let $\overline{\chi} \in C^\infty (\R)$ be a cutoff function such that $\overline{\chi} (s) =0$ for $s \leq 4M$ and $\overline{\chi} (s) = 1$ for $s \geq 7M$. For any $p \in \mathbb{N}^*$, we have
\begin{align*}
\T_g \bigg( \overline{\chi}(r) \frac{r^{2p}}{\Omega^{2p}} |p_v|^p \bigg) & \lesssim_p -\frac{r^{2p-1}}{\Omega^{2p}}|p_v|^{p+1} \mathds{1}_{r \geq 7M}+|p_t|^{p+1} \mathds{1}_{4M \leq r \leq 7M} , \\
\T_g \bigg( \overline{\chi}(r) \frac{r^{2p-1}}{\Omega^{2p}} |p_v|^p \bigg) & \lesssim_p -\frac{r^{2p-2}}{\Omega^{2p}}|p_v|^{p}|p_t| \mathds{1}_{r \geq 7M}+|p_t|^{p+1} \mathds{1}_{4M \leq r \leq 7M} .
 \end{align*}
\end{corollary}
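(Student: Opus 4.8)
The plan is to reduce both estimates to the $p=2$ case of Lemma \ref{Lemrp} by writing the two weights as powers of the single quantity $w \coloneqq \frac{r^2}{\Omega^2}|p_v|$, namely $\frac{r^{2p}}{\Omega^{2p}}|p_v|^p = w^p$ and $\frac{r^{2p-1}}{\Omega^{2p}}|p_v|^p = r^{-1}w^p$, and then exploiting that $\T_g$ is a derivation. Recall from \eqref{eq:defT} that $\T_g(r) = p_{r^*}$, so that $\T_g(r^{-1}) = -p_{r^*}r^{-2}$ and $\T_g(\overline{\chi}(r)) = \overline{\chi}'(r)p_{r^*}$, the latter being supported in $\{4M \leq r \leq 7M\}$ since $\overline{\chi}$ is constant outside this range. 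I would treat the contribution of $\T_g(\overline{\chi}(r))$ separately: on the compact $r$-region $\{4M \leq r \leq 7M\}$ every factor $r^{\pm 1}$, $\Omega^{\pm 1}$ is bounded above and below, and $|p_v|, |p_u|, |p_{r^*}| \leq |p_t|$, so the resulting term is $\lesssim_p |p_t|^{p+1}$, matching the second term on the right-hand side of both inequalities.

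For the first inequality, the derivation property together with the $p=2$ case of Lemma \ref{Lemrp} gives $\T_g(w^p) = p\, w^{p-1}\T_g(w) = -2p\frac{r^{2p-2}(r-3M)}{\Omega^{2p+2}}|p_v|^{p+1}$. Using $r\Omega^2 = r-2M$ I rewrite the coefficient as $\frac{r^{2p-1}}{\Omega^{2p}}\cdot\frac{r-3M}{r-2M}$, and since $\frac{r-3M}{r-2M} \in [\frac{4}{5},1)$ for $r \geq 7M$ this yields $\T_g(w^p) \lesssim_p -\frac{r^{2p-1}}{\Omega^{2p}}|p_v|^{p+1}$ there. Moreover $\T_g(w^p) \leq 0$ on all of $\{r \geq 3M\}$, so on $\{4M \leq r \leq 7M\}$ the term $\overline{\chi}(r)\T_g(w^p)$ carries a favorable sign and may be discarded; together with the cutoff contribution above, the first estimate follows.

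The harder point is the second inequality, where $r^{-1}w^p$ is \emph{not} a pure power and picks up the extra term $\T_g(r^{-1})w^p = -p_{r^*}r^{-2}w^p$ of indefinite sign. Writing $\T_g(r^{-1}w^p) = -p_{r^*}r^{-2}w^p + r^{-1}\T_g(w^p)$ and factoring out $\frac{r^{2p-2}}{\Omega^{2p}}|p_v|^p$, I obtain the bracketed coefficient $-p_{r^*} - 2p\frac{r-3M}{r-2M}|p_v|$. Here I would use the null-shell identities $p_{r^*} = |p_u| - |p_v|$ and $|p_t| = |p_u| + |p_v|$ (both $p_u,p_v \leq 0$) to rewrite this bracket as $\big(1 - 2p\frac{r-3M}{r-2M}\big)|p_v| - |p_u|$. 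The crux is that for $p \in \mathbb{N}^*$ and $r \geq 7M$ one has $2p\frac{r-3M}{r-2M} \geq \frac{8}{5} > 1$, so the coefficient of $|p_v|$ is $\leq -\frac{3}{5}$ while that of $|p_u|$ is $-1$; hence the whole bracket is $\leq -\frac{3}{5}(|p_v| + |p_u|) = -\frac{3}{5}|p_t|$ on $\{r \geq 7M\}$, giving $\T_g(r^{-1}w^p) \lesssim_p -\frac{r^{2p-2}}{\Omega^{2p}}|p_v|^p|p_t|$ there.

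The main obstacle is thus the sign bookkeeping in this last step: it is precisely the competition between the indefinite-sign first-order term $-p_{r^*}r^{-2}w^p$ and the strictly negative bulk term $r^{-1}\T_g(w^p)$ that forces the restriction to integer $p \geq 1$ (the inequality $2p\frac{r-3M}{r-2M} > 1$ would fail for small fractional $p$) and that makes it essential to split $|p_t| = |p_u| + |p_v|$ rather than bound by $|p_v|$ alone. Finally, on $\{4M \leq r \leq 7M\}$ I would bound $|\T_g(r^{-1}w^p)| \lesssim_p |p_t|^{p+1}$ crudely as in the first inequality, and combine this with the cutoff term to complete the proof.
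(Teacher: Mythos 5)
Your proof is correct. For the first estimate you argue exactly as the paper does: the cutoff term is crude-bounded by $|p_t|^{p+1}$ on $\{4M\leq r\leq 7M\}$, and the power rule applied to $w=\tfrac{r^2}{\Omega^2}|p_v|$ together with the exact identity $\T_g(w)=-2\tfrac{r-3M}{\Omega^4}|p_v|^2$ of Lemma \ref{Lemrp} gives the good bulk for $r\geq 7M$. For the second estimate, however, your route genuinely differs from the paper's. The paper factors the weight as $\tfrac{r}{\Omega^2}|p_v|\cdot w^{p-1}$ and invokes the \emph{general} identity of Lemma \ref{Lemrp} at exponent $1$, whose term $-\tfrac{1}{\Omega^2}|p_u||p_v|$ (originating from the null-shell relation $\Omega^2|\slashed p|^2=4r^2p_up_v$) immediately yields $\T_g\big(\tfrac{r}{\Omega^2}|p_v|\big)\lesssim -|p_t||p_v|$ for $r\geq 7M$, while $\tfrac{r}{\Omega^2}|p_v|\,\T_g(w^{p-1})\leq 0$ is simply discarded; no absorption is needed. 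You instead factor as $r^{-1}w^p$, so the entire good bulk comes from $\T_g(w^p)$, and the indefinite-sign term $-p_{r^*}r^{-2}w^p$ must be absorbed: splitting $p_{r^*}=|p_u|-|p_v|$, the $|p_u|$ part is favorable and the $+|p_v|$ part is dominated because $2p\tfrac{r-3M}{r-2M}\geq \tfrac85>1$ for $p\geq 1$, $r\geq 7M$, after which $|p_t|=|p_u|+|p_v|$ reassembles the claimed bulk. Both mechanisms ultimately exploit the same splitting of $|p_t|$ into $|p_u|+|p_v|$, but your version has two small advantages: it only uses the first (exact, exponent-$2$) identity of Lemma \ref{Lemrp}, bypassing the general formula entirely, and the absorption condition makes transparent where positivity of $p$ enters. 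One small imprecision in your closing remark: your method does not really force $p$ to be a positive integer — the absorption inequality $2p\tfrac{r-3M}{r-2M}>1$ holds for all real $p>\tfrac58$ on $\{r\geq 7M\}$ — so the restriction to $p\in\mathbb{N}^*$ in the statement is not an artifact of your argument, and indeed the paper notes elsewhere (Remark \ref{Rkrpgen}) that such estimates extend to non-integer exponents.
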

\begin{proof}
For $r \geq 7M$, we have
\begin{align*}
\T_g \bigg(  \frac{r}{\Omega^2} |p_v| \bigg) & = -\frac{r-6M}{r\Omega^4}|p_v|^2-\frac{1}{\Omega^2}|p_u ||p_v|  \lesssim -|p_t||p_v|.
 \end{align*}
 It then remains, in view of $\T_g(\overline{\chi}(r))=p_{r^*} \overline{\chi}'(r)$ and the support of $\overline{\chi}$, to combine this last estimate with Lemma \ref{Lemrp} and $|p_v| \leq |p_t|$.
\end{proof}

\section{Energy boundedness and integrated energy decay estimates}\label{Sec4}

Let us show the main zeroth order energy boundedness and integrated local energy decay estimates that we use. Recall that we often write \emph{ILED} in short for integrated local energy decay estimate.

\subsection{Degenerate ILEDs}

We start by proving a boundedness statement together with an ILED that only degenerates at the photon sphere and at spatial infinity.

\begin{proposition}\label{ProremainderILED}
Let $f$ be a solution to the massless Vlasov equation $\T_g(f)=0$. Then,
\begin{equation*}
\sup_{\tau \geq 0} \,\mathbb{E} \big[ p_N f \big](\tau)+\int_{\pi^{-1}(\mathcal{R})}  \frac{|r-3M|^2|p_N|^2}{r^3 \log^2(2+r)}|f|+ \frac{|p_{r^*}|^2 }{r \log^2(2+r)} |f|+\bigg|1-\frac{3M}{r} \bigg|^2 \frac{|\slashed{p}|^2}{r^3} |f| \dr \mu_{\mathcal{P}} \lesssim \mathbb{E} \big[ p_Nf \big](0). 
\end{equation*}
\end{proposition}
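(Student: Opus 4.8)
The plan is to use the weight function $\xi$ constructed in Proposition \ref{Promultiplierm} as a multiplier, exploiting that it simultaneously satisfies $|\xi| \sim |p_N|$ and a coercive transport estimate for $\T_g(|\xi|)$. First I would observe that, since $f$ solves $\T_g(f)=0$, one has $\T_g(|f|)=\mathrm{sgn}(f)\,\T_g(f)=0$, so that writing $|\xi f| = |\xi|\,|f|$ and using that $\T_g$ is a derivation,
$$\T_g(|\xi f|) = \T_g(|\xi|)\,|f| + |\xi|\,\T_g(|f|) = \T_g(|\xi|)\,|f|.$$
Applying the energy inequality of Proposition \ref{Proenergy} to the distribution function $\xi f$ then yields
$$\mathbb{E}[\xi f](\tau) \leq \mathbb{E}[\xi f](0) + \int_{\pi^{-1}(\mathcal{R}_0^\tau)} \T_g(|\xi|)\,|f|\,\dr\mu_{\mathcal{P}}.$$
Since Proposition \ref{Promultiplierm} provides the nonpositive coercive bound on $\T_g(|\xi|)$, I would move the bulk integral to the left-hand side and use $|\xi| \sim |p_N|$, hence $\mathbb{E}[\xi f] \sim \mathbb{E}[p_N f]$, to arrive at
$$\mathbb{E}[p_N f](\tau) + \int_{\pi^{-1}(\mathcal{R}_0^\tau)} \left[ \frac{|p_{r^*}|^2}{r\log^2(2+r)} + \Big(1-\frac{3M}{r}\Big)^2\frac{|\slashed{p}|^2}{r^3} + \Big(1-\frac{3M}{r}\Big)^2\frac{|p_u|^2}{r^2\Omega^4}\right]|f|\,\dr\mu_{\mathcal{P}} \lesssim \mathbb{E}[p_N f](0).$$
Letting $\tau \to +\infty$ and invoking monotone convergence, as $\mathcal{R}_0^\tau \uparrow \mathcal{R}$, would deliver the supremum bound together with the full spacetime integral.

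The only genuinely computational task that remains is to show that the first bulk term in the statement, namely $\frac{|r-3M|^2|p_N|^2}{r^3\log^2(2+r)}$, is pointwise controlled by the three nonnegative densities produced above; the other two terms, $\frac{|p_{r^*}|^2}{r\log^2(2+r)}$ and $(1-\tfrac{3M}{r})^2\frac{|\slashed{p}|^2}{r^3}$, already match two of them exactly. For this I would split into $\{r \geq 2.7M\}$ and $\{r \leq 2.7M\}$. On $\{r \geq 2.7M\}$ one has $p_N = p_t$ by Lemma \ref{LempN}, so the null-shell relation \eqref{eq:defConsQua}, $|p_t|^2 = |p_{r^*}|^2 + \Omega^2|\slashed{p}|^2/r^2$, reduces the term to a multiple of $\frac{|p_{r^*}|^2}{r\log^2(2+r)}$ (using $|r-3M|^2/r^2 \leq 1$) plus a multiple of $(1-\tfrac{3M}{r})^2\frac{|\slashed{p}|^2}{r^3}$ (using $\Omega^2/\log^2(2+r) \lesssim 1$). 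On the bounded region $\{r \leq 2.7M\}$ the factors $|r-3M|^2$, $r$, $\log^2(2+r)$ and $(1-\tfrac{3M}{r})^{-2}$ are all comparable to $1$, so there $\frac{|r-3M|^2|p_N|^2}{r^3\log^2(2+r)} \sim |p_N|^2 \sim |p_t|^2 + |p_u|^2/\Omega^4$; the contribution $|p_u|^2/\Omega^4$ is absorbed by the third density, while $|p_t|^2 \lesssim |p_{r^*}|^2 + |\slashed{p}|^2$, again by \eqref{eq:defConsQua}, is absorbed by the first two.

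I expect the main subtlety to be precisely this absorption near the horizon. The global relation $|p_N|^2 \sim |p_t|^2 + |p_u|^2/\Omega^4$ is \emph{not} sufficient on its own: the $|p_u|^2/\Omega^4$ part of $\frac{|r-3M|^2|p_N|^2}{r^3\log^2(2+r)}$ would exceed the available redshift density $(1-\tfrac{3M}{r})^2\frac{|p_u|^2}{r^2\Omega^4}$ by a factor $r/\log^2(2+r)$ as $r \to +\infty$. It is therefore essential to use that $p_N$ reduces to $p_t$ away from the horizon, so that no $|p_u|/\Omega^2$ component is present there, and to confine the genuinely $\Omega$-degenerate estimate to the compact region $\{r \leq 2.7M\}$, where every coefficient is harmless. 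Everything else is a direct consequence of Propositions \ref{Proenergy} and \ref{Promultiplierm}.
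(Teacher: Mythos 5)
Your proposal is correct and is exactly the paper's argument: apply the energy inequality of Proposition \ref{Proenergy} to $\xi f$, use $\T_g(|\xi f|)=\T_g(|\xi|)|f|$ together with the coercive bound and the equivalence $|\xi|\sim|p_N|$ from Proposition \ref{Promultiplierm}, and conclude. The only difference is that you spell out the pointwise absorption of the bulk density $\frac{|r-3M|^2|p_N|^2}{r^3\log^2(2+r)}$ (splitting at $r=2.7M$, where $p_N=p_t$ and the null-shell relation applies), a step the paper leaves implicit — and your remark that this absorption must be localized near the horizon, since the $|p_u|^2/\Omega^4$ component cannot be controlled by the redshift density for large $r$, is accurate.
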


\begin{proof}
From Proposition \ref{Promultiplierm}, we recall the estimate for $\T_g(|\xi|)$ and that $|\xi|\sim |p_N|$. The result then follows from the energy estimate of Proposition \ref{Proenergy} applied to $\xi f$.
\end{proof}

A similar degenerate ILED for massless Vlasov fields was previously derived in \cite[Proposition 3.4]{L23}. In particular, the degeneracy at $\{p_{r^*}=0, \, r=3M \}$ can be weakened (see also Lemma \ref{LemPropz}). We observe that if we do not assume the finiteness of a stronger energy norm than $\mathbb{E}[p_N f](0)$, the degeneracy at the photon sphere is necessary as proved in \cite[Proposition 3.12]{L23}. Nonetheless, one can remove the degeneracy at $r=3M$ in the previous ILED by assuming stronger decay assumptions on the initial data.

\begin{proposition}\label{ProILED}
Let $f$ be a solution to the massless Vlasov equation. We have,
$$ \sup_{\tau \geq 0} \,  \mathbb{E} \bigg[ \bigg\langle \log \bigg( \frac{|\Omega \varphi_+ |}{r|p_t|} \bigg) \bigg\rangle p_N f \bigg](\tau)+ \int_{\pi^{-1}(\mathcal{R})}  \bigg(\frac{|p_N|^2}{r \log^2(2+r)}+ \frac{|\slashed{p}|^2}{r^3} \bigg) |f| \dr \mu_{\mathcal{P}} \lesssim  \mathbb{E} \bigg[ \bigg\langle \log \bigg( \frac{|\Omega \varphi_+ |}{r|p_t|} \bigg) \bigg\rangle p_N f \bigg](0).$$
\end{proposition}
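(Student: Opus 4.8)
The plan is to run the energy estimate of Proposition \ref{Proenergy} with a multiplier that upgrades the degenerate multiplier $\xi$ of Proposition \ref{Promultiplierm} by the logarithmic weight controlled in Corollary \ref{Corlogvarphiplus}. Set
$$ L \coloneqq \bigg| \log \bigg( \frac{|\Omega \varphi_+|}{2(r+6M)|p_t|} \bigg) \bigg| . $$
Since $\log(|\Omega\varphi_+|/(r|p_t|))$ differs from $-L$ by the bounded quantity $\log(2(r+6M)/r)$, we have $\langle \log(|\Omega\varphi_+|/(r|p_t|))\rangle \sim \langle L\rangle$; moreover, as observed in the proof of Corollary \ref{Corlogvarphiplus}, the inequality $|\Omega\varphi_+| < 2(r+6M)|p_t|$ makes the inner logarithm negative, so $L$ is a genuine nonnegative smooth function away from the future-trapped set. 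I would then take as multiplier $w \coloneqq \langle L\rangle\, \xi$, which by Proposition \ref{Promultiplierm} satisfies $|w| \sim \langle L\rangle |p_N|$, so that $\mathbb{E}[wf] \sim \mathbb{E}[\langle \log(|\Omega\varphi_+|/(r|p_t|))\rangle\, p_N f]$.

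Since $\T_g(f)=0$ gives $\T_g(|f|)=0$, the core computation is
$$ \T_g\big( \langle L\rangle |\xi| \big) = \langle L\rangle\, \T_g\big(|\xi|\big) + \frac{L}{\langle L\rangle}\, \T_g(L)\, |\xi| . $$
Both terms are nonpositive: the first is $\lesssim -\langle L\rangle$ times the degenerate bulk of Proposition \ref{Promultiplierm}, and by Corollary \ref{Corlogvarphiplus} with $|\xi|\sim|p_N|$ the second is $\lesssim -\frac{L}{\langle L\rangle}\frac{|p_N|^2}{r^2}$. Inserting this into Proposition \ref{Proenergy} applied to $wf$ yields at once the boundedness of $\mathbb{E}[\langle L\rangle p_N f](\tau)$ and the spacetime bound, \emph{provided} the combined bulk dominates $\frac{|p_N|^2}{r\log^2(2+r)} + \frac{|\slashed{p}|^2}{r^3}$.

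Proving this pointwise domination is the main obstacle, and I would treat it by a case analysis. Away from the photon sphere, where $|1-3M/r|\geq c>0$ (this includes the asymptotic region, since $1-3M/r\to 1$), the factors $|1-3M/r|^2$ in Proposition \ref{Promultiplierm} are bounded below, so already $\langle L\rangle\geq 1$ produces the non-degenerate target after using the null-shell relation $|p_t|^2=|p_{r^*}|^2+\Omega^2 r^{-2}|\slashed{p}|^2$ and Lemma \ref{LempN}. Near the photon sphere, where $\Omega$, $\log(2+r)$ and $p_N/p_t$ are all comparable to constants, I split further according to $L$: when $L\geq 1$ one has $L/\langle L\rangle\gtrsim 1$, so the second term controls $\frac{|p_N|^2}{r^2}\gtrsim |p_t|^2 \gtrsim \frac{|\slashed{p}|^2}{r^3}$ and hence the whole target; when $L\leq 1$ the resulting bound $|\Omega\varphi_+|\gtrsim (r+6M)|p_t|$ forces $|p_{r^*}-b\,p_t|\gtrsim |p_t|$ with $b=(1+6M/r)^{1/2}(1-3M/r)$ small near $r=3M$, whence $|p_{r^*}|\gtrsim |p_t|$ and $\frac{|\slashed{p}|^2}{r^3}\sim r^{-1}\Omega^{-2}(p_t^2-p_{r^*}^2)\lesssim |p_{r^*}|^2$ is absorbed by the non-degenerate $\frac{|p_{r^*}|^2}{r\log^2(2+r)}$ term of Proposition \ref{Promultiplierm}.

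A final point to address is the integrability of $\langle L\rangle$ near the future-trapped set $\{\varphi_+=0\}$, where it diverges; this is precisely why the stronger $\langle L\rangle$-weighted energy is assumed finite on the initial data, and the estimate can be justified rigorously by first running the argument with the truncated weight $\langle \min(L,L_0)\rangle$ and then letting $L_0\to+\infty$ by monotone convergence.
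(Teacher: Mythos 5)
Your proof is correct and follows essentially the same route as the paper: the paper's own proof applies Proposition \ref{Proenergy} to the multiplier $\big|\log\big(|\Omega\varphi_+|/(2(r+6M)|p_t|)\big)\big|\,|\xi f|$, using Corollary \ref{Corlogvarphiplus} together with $\T_g(|\xi f|)\leq 0$ to produce the non-degenerate bulk $|p_N|^2 r^{-2}|f|$, and then adds Proposition \ref{ProremainderILED} — which is exactly your combined bulk, with your factor $\langle L\rangle$ replaced by $1+L$ so that the $\frac{L}{\langle L\rangle}$ degeneration (and hence your case $L\leq 1$, which is in fact empty near the photon sphere) never arises. Your explicit pointwise domination argument and the truncation/monotone-convergence regularisation simply spell out steps the paper leaves implicit.
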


\begin{proof}
By Corollary \ref{Corlogvarphiplus} and $\T_g(|\xi  f|)\leq 0$, we have 
$$ \T_g \bigg( \bigg| \log \bigg( \frac{|\Omega \varphi_+ |}{2(r+6M)|p_t|} \bigg) \bigg| |\xi f| \bigg)   \lesssim  - \frac{|p_N|}{r^2} |\xi f| \lesssim -\frac{|p_N|^2}{r^2}|f| .$$
The result then follows from the energy estimate of Proposition \ref{Proenergy} and from Proposition \ref{ProremainderILED}.
\end{proof}

For the previous two ILEDs, the bulk term is degenerate with respect to the boundary term. We now prove an ILED without relative degeneration for a degenerate weighted norm.

\begin{proposition}\label{ProILED000}
Let $f $ be a solution to the massless Vlasov equation $\T_g(f)=0$. There holds,
$$
\sup_{\tau \geq 0} \,\mathbb{E} \big[ \pmb{\varphi}_- f \big](\tau) + \int_{\pi^{-1}(\mathcal{R})}  \frac{|p_N|}{r} \big|\pmb{\varphi}_- f \big| \dr \mu_{\mathcal{P}}  \lesssim \mathbb{E} \big[ \pmb{\varphi}_- f \big](0).$$
\end{proposition}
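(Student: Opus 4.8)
The plan is to run the basic energy estimate of Proposition \ref{Proenergy0} on the rescaled distribution function $g \coloneqq \pmb{\varphi}_- f$ and to extract the bulk term directly from the favourable sign of $\T_g(\pmb{\varphi}_-)$ recorded in Lemma \ref{Lemphiminus}. First I would compute, using the Leibniz rule for the first-order operator $\T_g$ together with $\T_g(f)=0$,
$$\T_g(\pmb{\varphi}_- f) = \T_g(\pmb{\varphi}_-)\, f + \pmb{\varphi}_-\, \T_g(f) = -\pmb{a}(r,p_{r^*},p_t)\, \pmb{\varphi}_- f.$$
Since $\pmb{a} \geq 0$ is a sum of two manifestly nonnegative terms, this gives $\T_g(|\pmb{\varphi}_- f|) = \T_g(\pmb{\varphi}_- f)\,\mathrm{sgn}(\pmb{\varphi}_- f) = -\pmb{a}\,|\pmb{\varphi}_- f| \leq 0$, which is exactly the coercive structure one wants for an ILED without relative degeneration.

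Next I would insert this identity into the energy conservation law. Applying Proposition \ref{Proenergy0} to the function $\pmb{\varphi}_- f$, discarding the nonnegative flux contributions on $\mathcal{H}^+$ and $\mathcal{I}^+$, and moving the (nonpositive) bulk term to the left-hand side yields, for every $\tau \geq 0$,
$$\mathbb{E}[\pmb{\varphi}_- f](\tau) + \int_{\pi^{-1}(\mathcal{R}_0^\tau)} \pmb{a}\,|\pmb{\varphi}_- f|\, \dr\mu_{\mathcal{P}} \leq \mathbb{E}[\pmb{\varphi}_- f](0).$$
Taking the supremum over $\tau$ controls $\sup_{\tau\geq 0}\mathbb{E}[\pmb{\varphi}_- f](\tau)$, and letting $\tau \to +\infty$ by monotone convergence over the nested regions $\mathcal{R}_0^\tau \nearrow \mathcal{R}$ promotes the bulk bound to an integral over all of $\pi^{-1}(\mathcal{R})$.

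The heart of the matter, and the only genuinely computational step, is then the coercivity estimate $\pmb{a}(r,p_{r^*},p_t) \gtrsim \frac{|p_N|}{r}$ on all of $\mathcal{P}$, after which the statement follows from the displayed inequality. By Lemma \ref{LempN} this reduces to showing $\pmb{a} \gtrsim \frac{|p_t|}{r} + \frac{|p_u|}{r\Omega^2}$. For the $|p_t|$ part I would observe that the coefficient of $|p_t|$ in $\pmb{a}$, namely $\frac{r^2+2Mr+3M^2}{r|r+6M|^{1/2}(r^{3/2}+M|r+6M|^{1/2})}$, is continuous and strictly positive on $[2M,\infty)$ and its ratio to $r^{-1}$ admits a finite positive limit as $r \to +\infty$; hence it is bounded below by a constant multiple of $r^{-1}$. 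For the $|p_u|$ part I would split into two regimes: on any bounded range of $r$ the second term $\frac{2M|p_u|}{r^2\Omega^2}$ directly dominates $\frac{|p_u|}{r\Omega^2}$, while for large $r$ the factor $\Omega^2$ is bounded below and the null-shell relation \eqref{eq:defConsQua} gives $|p_{r^*}| \leq |p_t|$, hence $|p_u| = \tfrac{1}{2}(|p_t|+p_{r^*}) \leq |p_t|$, so that $\frac{|p_u|}{r\Omega^2} \lesssim \frac{|p_t|}{r}$ is already absorbed by the first term. I expect this region-by-region lower bound on $\pmb{a}$ to be the main obstacle, since it is where the precise algebraic form of $\pmb{a}$ from Lemma \ref{Lemphiminus} and the behaviour of $p_N$ near $\mathcal{H}^+$ genuinely enter; everything else is a direct consequence of the energy identity.
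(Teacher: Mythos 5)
Your proposal is correct and follows essentially the same route as the paper: apply the energy estimate to $\pmb{\varphi}_- f$, use $\T_g(f)=0$ together with Lemma \ref{Lemphiminus} to obtain the signed bulk $-\pmb{a}\,|\pmb{\varphi}_- f|$, and conclude by the pointwise coercivity $\pmb{a}(r,p_{r^*},p_t) \gtrsim |p_N|/r$, which the paper establishes in one line via $|p_N|\sim |p_t|+\Omega^{-2}|p_u|$ and $p_N=p_t$ for $r\geq 2.7M$, and you establish by the equivalent region-by-region argument using $|p_u|\leq|p_t|$ at large $r$.
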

\begin{proof}
It suffices to apply Proposition \ref{Proenergy} to $\pmb{\varphi}_- f$, and to compute $\T_g(\pmb{\varphi}_- f)=\T_g( \pmb{\varphi}_- )f$ using Lemma \ref{Lemphiminus}. Then, we use $p_N \sim p_t+\Omega^{-2} p_u$ and $p_N=p_t$ for $r \geq 2.7M$.
\end{proof}

\subsection{ILEDs for the $r^p$-method}

We show an ILED that will be used to implement the $r^p$-method. We denote by $\lceil \cdot \rceil$ the ceiling function.

\begin{proposition}\label{ProILEDrp}
Let $p \in \mathbb{N}^*$ and $f$ be a solution to the massless Vlasov equation. Then,
\begin{equation*}
\sup_{\tau \geq 0}\, \mathbb{E}\bigg[  \Big\langle r^{p} \Big|\frac{p_v}{p_t}\Big|^{\lceil \frac{p}{2} \rceil} \Big\rangle \pmb{\varphi}_- f \bigg] (\tau)+\int_{\tau=0}^{+\infty} \mathbb{E}\bigg[  \Big\langle r^{p-1} \Big|\frac{p_v}{p_t}\Big|^{\lceil \frac{p-1}{2} \rceil} \Big\rangle \pmb{\varphi}_- f \bigg] (\tau) \dr \tau \lesssim_p \mathbb{E}\bigg[ \Big\langle r^{p} \Big|\frac{p_v}{p_t}\Big|^{\lceil \frac{p}{2} \rceil} \Big\rangle \pmb{\varphi}_- f \bigg] (0) .
\end{equation*}
\end{proposition}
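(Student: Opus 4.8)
The plan is to run a weighted energy estimate with the multiplier $\langle r^{p}|p_v/p_t|^{\lceil p/2\rceil}\rangle \pmb{\varphi}_- f$ and to reduce the whole statement to a pointwise differential inequality for the geodesic spray acting on this weight. First I would apply Proposition \ref{Proenergy} to the function $w\pmb{\varphi}_- f$, where $w\coloneqq\langle r^{p}|p_v/p_t|^{\lceil p/2\rceil}\rangle$, using that $w>0$ and $\T_g(f)=0$ so that $\T_g(|w\pmb{\varphi}_- f|)=\mathrm{sgn}(\pmb{\varphi}_- f)\,\T_g(w\pmb{\varphi}_-)\,f=[\T_g(w)\pmb{\varphi}_-+w\,\T_g(\pmb{\varphi}_-)]\,|f|$. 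The boundary term produced by Proposition \ref{Proenergy} is exactly $\mathbb{E}[w\pmb{\varphi}_- f](\tau)$, i.e. the first term in the statement, so everything comes down to showing that $\T_g(w\pmb{\varphi}_-)$ carries the right sign and magnitude to generate, after the coarea formula \eqref{eq:coarea}, the level-$(p-1)$ bulk $\int_\tau \mathbb{E}[\langle r^{p-1}|p_v/p_t|^{\lceil (p-1)/2\rceil}\rangle\pmb{\varphi}_- f]\,\dr\tau$.

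The second factor $w\,\T_g(\pmb{\varphi}_-)$ is handled by Lemma \ref{Lemphiminus}, which gives $\T_g(\pmb{\varphi}_-)=-\pmb{a}\pmb{\varphi}_-$ with $\pmb{a}\gtrsim r^{-1}|p_N|$ throughout the exterior; this is the workhorse producing the loss of exactly one power of $r$ between the two levels. For the bracket I would use that $|p_t|^{-\lceil p/2\rceil}$ is conserved (Lemma \ref{Lemconsqua}), so only $\T_g(r^{p}|p_v|^{\lceil p/2\rceil})$ must be analysed; here $\T_g(|p_v|)\le 0$ for $r\ge 3M$ by Lemma \ref{Lemrp}, and the precise gain is the content of Corollary \ref{Corrp}, whose two displayed estimates correspond respectively to even $p=2k$ (the weight $r^{2k}\Omega^{-2k}|p_v|^{k}$) and to odd $p=2k+1$ (the weight $r^{2k+1}\Omega^{-2k-2}|p_v|^{k+1}$). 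This is where the ceiling function and the interplay of $\lceil p/2\rceil$ with $\lceil(p-1)/2\rceil$ enters. Combining the two contributions yields $\T_g(w\pmb{\varphi}_-)\lesssim -(\text{level-}(p-1)\text{ weight})+E$, where $E$ is supported in $\{4M\le r\le 7M\}$ and on the support of $\overline{\chi}'$.

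It then remains to dispose of $E$ and perform the measure-theoretic bookkeeping. Since $E$ is supported in a fixed compact $r$-range bounded away from both $\mathcal{H}^+$ and the photon sphere, there $|p_t|\sim|p_N|\sim|\pmb{\varphi}_-|$ and $w\sim 1$, so $\int_{\pi^{-1}(\mathcal{R})}|E\,f|\,\dr\mu_{\mathcal{P}}$ is controlled by the non-degenerate bulk of Proposition \ref{ProILED000}, which is in turn $\lesssim\mathbb{E}[\pmb{\varphi}_- f](0)\le\mathbb{E}[w\pmb{\varphi}_- f](0)$ and hence absorbed into the right-hand side. For the good term I would invoke \eqref{eq:coarea} together with Lemma \ref{LempSig} to rewrite $\int_{\pi^{-1}(\mathcal{R})}\pmb{a}\,w\,|\pmb{\varphi}_- f|\,\dr\mu_{\mathcal{P}}$ as $\int_\tau\mathbb{E}[\langle r^{p-1}|p_v/p_t|^{\lceil(p-1)/2\rceil}\rangle\pmb{\varphi}_- f]\,\dr\tau$ up to constants, using $\pmb{a}\gtrsim r^{-1}|p_{n_{\Sigma_\tau}}|$; the region where $r^{p}|p_v/p_t|^{\lceil p/2\rceil}\lesssim 1$ (so that $w\sim 1$) simply reduces once more to Proposition \ref{ProILED000}.

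The step I expect to be the main obstacle is closing the hierarchy in the far-away region, namely verifying that the negative term above genuinely dominates the level-$(p-1)$ flux density rather than falling short by a factor of order $|p_v|/|p_u|$. This is delicate precisely for odd $p$, where the ceiling forces an asymmetric distribution of $p_v$-powers between the multiplier and the bulk weight; the resolution is that the $|p_v/p_t|^{\lceil p/2\rceil}$ factor is tuned so that the combined decay rate coming from $\pmb{a}$ and from $\T_g(w)$ matches the flux weight $p_{n_{\Sigma_\tau}}$ on $\{r>R_0\}$. I would therefore treat the incoming ($p_{r^*}\le 0$) and escaping ($p_{r^*}\ge 0$) far regions separately, exploiting $\T_g(r^{-1})=-p_{r^*}r^{-2}\le 0$ in the latter to absorb the unfavourable sign. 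Keeping the even/odd bookkeeping consistent across these regions, and checking that no degeneration survives at $r=3M$ because the multiplier is built from $\pmb{\varphi}_-$ rather than from $p_N$, is the technical heart of the argument.
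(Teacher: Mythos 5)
Your overall skeleton coincides with the paper's proof: Proposition \ref{Proenergy} applied to the weighted distribution, Corollary \ref{Corrp} for $\T_g$ of the $r^p$-weight (with exactly the even/odd pairing of its two estimates that you describe), Lemma \ref{Lemphiminus} for $\T_g(\pmb{\varphi}_-)$, and \eqref{eq:coarea} with Lemma \ref{LempSig} to pass to flux form. The treatment of the error on $\{4M\le r\le 7M\}$ differs only cosmetically: the paper inserts a large constant $A_p$ into the weight so that the term $-A_p\,\pmb{a}\,|\pmb{\varphi}_- f|$ absorbs it, whereas you quote Proposition \ref{ProILED000}; both work. (Your justification there, ``$|p_t|\sim|p_N|\sim|\pmb{\varphi}_-|$ on $4M\le r\le 7M$'', is false --- $\pmb{\varphi}_-$ vanishes on past-trapped directions at every radius $r>3M$ --- but it is also unnecessary, since the error term already carries the factor $|\pmb{\varphi}_- f|$, and $|p_t|\,\mathds{1}_{4M\le r\le 7M}\lesssim r^{-1}|p_N|$ is all that is needed.)

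The genuine gap sits exactly where you predict the main obstacle: recovering the full bracket $\langle r^{p-1}|p_v/p_t|^{\lceil (p-1)/2\rceil}\rangle\,|p_{n_{\Sigma_\tau}}|$ in the far region, where $|p_{n_{\Sigma_\tau}}|=|p_v|$ can be far smaller than $|p_t|$. Your good densities there are (i) $\pmb{a}\,w$, estimated via ``$\pmb{a}\gtrsim r^{-1}|p_{n_{\Sigma_\tau}}|$'', hence $\gtrsim r^{-1}|p_v|\big(1+r^p|p_v/p_t|^{\lceil p/2\rceil}\big)$, and (ii) the Corollary \ref{Corrp} term $r^{p-1}|p_v/p_t|^{\lceil (p-1)/2\rceil}|p_v|$, with Proposition \ref{ProILED000} reserved for the set $\{w\sim1\}$. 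Take $p=2$ and $|p_v/p_t|=r^{-3/2}$: then $w\sim r^{1/2}$, so this point is outside the region you hand to Proposition \ref{ProILED000}, while (i) and (ii) are each $\sim r^{-1/2}|p_v|$ and the density you must dominate is $\langle r|p_v/p_t|\rangle|p_v|\sim|p_v|$; the claimed domination fails by $r^{1/2}$, and the deficit worsens to order $r$ as $|p_v/p_t|$ decreases towards $r^{-2}$. The same failure occurs for every even $p$. The repair requires one extra idea, and there are several: (a) use the full strength of Lemma \ref{Lemphiminus}, namely $\pmb{a}\gtrsim r^{-1}|p_N|=r^{-1}|p_t|$ in the far region --- not $r^{-1}|p_v|$ --- and close the constant part of the bracket by the Young-type inequality $|p_v/p_t|\lesssim r^{-1}+r^{p-1}|p_v/p_t|^{\lceil p/2\rceil}$; (b) keep your weak bound but move the splitting threshold from $\{w\sim1\}$ to $\{|p_v/p_t|\le r^{-1}\}$, on which Proposition \ref{ProILED000} together with (ii) does control the needed density; or (c) the paper's route: run the estimate so that the bulk comes out \emph{without} the constant in the bracket, then invoke the proposition's own case $p=1$, whose far-region bulk is exactly $|p_v|\,|\pmb{\varphi}_- f|$ --- this is precisely the purpose of the last sentence of the paper's proof, and as written your argument has no substitute for it. Finally, the incoming/escaping decomposition and the $\T_g(r^{-1})\le 0$ rescaling you invoke at the end belong to the commuted, first-order estimates of Section \ref{SecILEDwrd}; they play no role in this zeroth-order statement.
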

\begin{proof}
Note first, according to Corollary \ref{Corrp} and using $\T_g (|\pmb{\varphi}_-|) \leq 0$, that
$$ \T \Big( \overline{\chi}(r) \frac{ r^{p}}{\Omega^{2p}} \Big|\frac{p_v}{p_t}\Big|^{\lceil \frac{p}{2} \rceil} \big| \pmb{\varphi}_- f \big| \Big) \lesssim_p |p_t|\big| \pmb{\varphi}_- f \big| \mathds{1}_{4M \leq r \leq 7M}- r^{p-1} \Big|\frac{p_v}{p_t}\Big|^{\lceil \frac{p-1}{2} \rceil}|p_v|\big| \pmb{\varphi}_- f \big| \mathds{1}_{r \geq 7M}.$$
Let $A_p>0$ be a sufficiently large constant and recall the parameter $R_0 > 3M$ of the foliation $(\Sigma_\tau)_{\tau \geq 0}$. As $\T_g (\pmb{\varphi}_-) \lesssim -|p_N|r^{-1}\pmb{\varphi}_-$ and $|p_v| \lesssim |p_N|$, we have
$$ \T_g \bigg[ \bigg(A_p+  r^{p} \Big|\frac{p_v}{p_t}\Big|^{\lceil \frac{p}{2} \rceil} \bigg) \big| \pmb{\varphi}_- f \big| \bigg] \lesssim_p -|p_N|\big| \pmb{\varphi}_- f \big| \mathds{1}_{r \leq R_0}- r^{p-1} \Big|\frac{p_v}{p_t}\Big|^{\lceil \frac{p-1}{2} \rceil}|p_v|\big| \pmb{\varphi}_- f \big| \mathds{1}_{r \geq R_0}.$$
Recall from Lemma \ref{LempSig} that $|p_{n_{\Sigma_\tau}}| \sim |p_N|$ for $r \leq R_0$ and $|p_{n_{\Sigma_\tau}}| =|p_v|$ for $r \geq R_0$. Hence,
$$ \T_g \bigg[ \bigg(A_p+  r^{p} \Big|\frac{p_v}{p_t}\Big|^{\lceil \frac{p}{2} \rceil} \bigg) \big| \pmb{\varphi}_- f \big| \bigg] \lesssim_p -r^{p-1} \Big|\frac{p_v}{p_t}\Big|^{\lceil \frac{p-1}{2} \rceil}|p_{n_{\Sigma_\tau}}|\big| \pmb{\varphi}_- f \big|.$$
We then deduce from the energy estimate of Proposition \ref{Proenergy} and from \eqref{eq:coarea} that
$$ \sup_{\tau \geq 0}\, \mathbb{E}\bigg[  \Big\langle r^{p} \Big|\frac{p_v}{p_t}\Big|^{\lceil \frac{p}{2} \rceil} \Big\rangle \pmb{\varphi}_- f \bigg] (\tau)+\int_{\tau=0}^{+\infty} \mathbb{E}\bigg[   r^{p-1} \Big|\frac{p_v}{p_t}\Big|^{\lceil \frac{p-1}{2} \rceil}  \pmb{\varphi}_- f \bigg] (\tau) \dr \tau \lesssim_p \mathbb{E}\bigg[ \Big\langle r^{p} \Big|\frac{p_v}{p_t}\Big|^{\lceil \frac{p}{2} \rceil} \Big\rangle \pmb{\varphi}_- f \bigg] (0) .$$
The result then ensues by using also this estimate for $p=1$.
\end{proof}

\begin{remark}\label{Rkrpgen}
The ILED of Proposition \ref{ProILEDrp} can be extended to the more general case when $p\geq 0$ instead of $p\in \mathbb{N}^*$ (see \cite[Section 4]{L23}). \end{remark}

Finally, we prove a slight generalisation of Proposition \ref{ProILEDrp}. We will apply it to the system of the commuted Vlasov equation that we consider in Section \ref{SecILEDwrd}. The individual derivatives that we will consider do not satisfy the Vlasov equation. However, we will circumvent this difficulty by considering a well-chosen weighted combination of them.
 
\begin{proposition}\label{ProILEDrpbis}
Let $ n \geq 1$. Let further $g_k \colon \mathcal{P} \to \R$, with $1 \leq k \leq n$, be sufficiently regular distribution functions such that
\begin{equation}\label{eq:hypgk}
 \sum_{1 \leq k \leq n}  \T_g (|g_k|) \lesssim -\sum_{1 \leq k \leq n} |p_N| |g_k| \mathds{1}_{r \leq 7M}.
 \end{equation}
 Then, for any $p \in \mathbb{N}^*$, we have
$$ \sup_{\tau \geq 0} \sum_{1 \leq k \leq n} \mathbb{E}\Big[  \Big\langle r^{p} \Big|\frac{p_v}{p_t}\Big|^{\lceil \frac{p}{2} \rceil} \Big\rangle g_k \Big] (\tau)+\sum_{1 \leq k \leq n} \int_{\tau=0}^{+\infty} \mathbb{E}\Big[  \Big\langle r^{p-1} \Big|\frac{p_v}{p_t}\Big|^{\lceil \frac{p-1}{2} \rceil} \Big\rangle g_k \Big] (\tau) \dr \tau \lesssim_p \sum_{1 \leq k \leq n} \mathbb{E}\Big[ \Big\langle r^{p} \Big|\frac{p_v}{p_t}\Big|^{\lceil \frac{p}{2} \rceil} \Big\rangle g_k \Big] (0) .$$
\end{proposition}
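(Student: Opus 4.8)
The plan is to mimic the proof of Proposition \ref{ProILEDrp} almost verbatim; the only genuinely new feature is that the individual $g_k$ need not solve the Vlasov equation, so the coupling hypothesis \eqref{eq:hypgk} may be used only \emph{after} summing over $k$. The key structural observation is that every multiplier weight appearing in the proof of Proposition \ref{ProILEDrp} — the constant $A_p$ and the $r^p$-weight $r^p|p_v/p_t|^{\lceil p/2\rceil}$ (together with the technical factors $\overline{\chi}(r)$ and $\Omega^{-2p}$) — is a \emph{nonnegative} function on $\mathcal{P}$ that does \emph{not} depend on $k$. Writing $W := A_p + r^p|p_v/p_t|^{\lceil p/2\rceil}$, the Leibniz rule gives, for each $k$,
$$\T_g\big(W\,|g_k|\big) = \T_g(W)\,|g_k| + W\,\T_g(|g_k|),$$
and upon summing the second group becomes $W\sum_k \T_g(|g_k|)$, to which \eqref{eq:hypgk} applies precisely because $W \geq 0$.

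First I would record, exactly as in the proof of Proposition \ref{ProILEDrp} and using Corollary \ref{Corrp} together with $\T_g(|p_t|)=0$, the pointwise behaviour of $\T_g(W)$: it produces a harmless error $\lesssim_p |p_t|\,\mathds{1}_{4M\leq r\leq 7M}$ coming from $\T_g(\overline{\chi}(r))$, and the good far-away dissipation $-\,r^{p-1}|p_v/p_t|^{\lceil (p-1)/2\rceil}|p_v|\,\mathds{1}_{r\geq 7M}$. Multiplying by $|g_k|$ and summing controls $\sum_k \T_g(W)|g_k|$. For the second group, the hypothesis \eqref{eq:hypgk} yields
$$W\sum_{1\leq k\leq n} \T_g(|g_k|) \lesssim -\,W\sum_{1\leq k\leq n} |p_N|\,|g_k|\,\mathds{1}_{r\leq 7M} \leq 0,$$
where, for $A_p$ chosen large enough, the $A_p$-part of $W$ supplies the $-\sum_k|p_N||g_k|\,\mathds{1}_{r\leq R_0}$ dissipation that simultaneously absorbs the error term (there $|p_t|\lesssim|p_N|$ and $r\sim 1$) and governs the bounded region. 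Splitting into $\{r\leq R_0\}$ and $\{r\geq R_0\}$, using $|p_v|\lesssim|p_N|$ and Lemma \ref{LempSig} ($|p_{n_{\Sigma_\tau}}|\sim|p_N|$ for $r\leq R_0$, $|p_{n_{\Sigma_\tau}}|=|p_v|$ for $r\geq R_0$), one reaches
$$\sum_{1\leq k\leq n} \T_g\big(W\,|g_k|\big) \lesssim_p -\sum_{1\leq k\leq n} r^{p-1}\Big|\tfrac{p_v}{p_t}\Big|^{\lceil (p-1)/2\rceil}|p_{n_{\Sigma_\tau}}|\,|g_k|.$$

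I would then apply the energy inequality of Proposition \ref{Proenergy} to each nonnegative distribution function $W|g_k|$ (noting $\T_g(|W|g_k||)=\T_g(W|g_k|)$) and sum the resulting $n$ inequalities. Combining with the coarea formula \eqref{eq:coarea} converts the spacetime bulk into the time-integrated flux, while $W\sim \langle r^p|p_v/p_t|^{\lceil p/2\rceil}\rangle$ identifies the boundary terms; running the same argument once more with $p=1$ then supplies the missing lower-order ($r^0$) part of the bracket $\langle r^{p-1}|p_v/p_t|^{\lceil (p-1)/2\rceil}\rangle$ in the bulk, exactly as at the end of the proof of Proposition \ref{ProILEDrp}, giving the claimed estimate.

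The main obstacle, and the only essentially new point, is that one may never invoke a sign or monotonicity property of a single $\T_g(|g_k|)$ — the individual commuted derivatives can have either sign — so all dissipation must be extracted from the \emph{summed} inequality \eqref{eq:hypgk}; this is legitimate precisely because the common weight $W$ is $k$-independent and nonnegative, so it passes through the sum. A secondary point to verify is that the only region where genuine $|p_N|$-dissipation is required from the hypothesis is the bounded region $\{r\leq R_0\}$ governing the boundary flux through Lemma \ref{LempSig}, and this is contained in $\{r\leq 7M\}$ where \eqref{eq:hypgk} supplies it; in $\{r\geq R_0\}$ only the weaker $|p_v|$-weighted dissipation is needed, and that is produced by the $r^p$-weight itself, so no hypothesis beyond \eqref{eq:hypgk} is necessary there.
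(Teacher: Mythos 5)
Your proposal is correct and coincides with the paper's own proof, which is stated in a single line: follow the proof of Proposition \ref{ProILEDrp} with $|\pmb{\varphi}_- f|$ formally replaced by $\sum_{k} |g_k|$. Your elaboration — that the $k$-independent, nonnegative weight $W=A_p+r^p|p_v/p_t|^{\lceil p/2\rceil}$ passes through the sum so that \eqref{eq:hypgk} may be invoked only after summation, with the $A_p$-part absorbing the $\mathds{1}_{4M\leq r\leq 7M}$ error term and supplying the bounded-region dissipation matched to $|p_{n_{\Sigma_\tau}}|\sim |p_N|$ via Lemma \ref{LempSig}, then adding the $p=1$ estimate to recover the full bracket in the bulk — is exactly the content packed into that sentence.
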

\begin{proof}
It suffices to follow the proof of Proposition \ref{ProILEDrp} by formally replacing $|\pmb{\varphi}_- f|$ by $\sum_k  |g_k|$. 
\end{proof}

\section{Commutation vector fields}\label{Sec5}

Let us introduced a well-chosen class of commutation vector fields that will be used in the article. 

\subsection{The complete lifts of the Killing vector fields}\label{Subseclift}

For any (conformal) Killing vector field $X\in \Gamma(T\mathcal{S})$, the associated complete lift $\widehat{X}\in \Gamma(TT^*\mathcal{S})$ is tangent to $\mathcal{P}$, and commutes with the Vlasov operator $\T_g$. In fact, $\widehat{X} =\T_{p(X)}$, the symplectic gradient of the conserved quantity $p(X)$. This result holds in a general Lorentzian manifold. See \cite[Appendix~F]{AGS} and \cite[Appendix~C]{FJS17} for more information. We restrict our discussion to the case of a Schwarzschild background.

\begin{definition}
The complete lifts of the Killing vector fields of Schwarzschild spacetime are
\begin{align*}
\widehat{\mathbf{\Omega}}_1 \coloneqq &\,-\sin \phi \,\partial_{\theta}-\cot \theta\cos \phi \,\partial_{\phi}-\cos\phi\frac{p_{\phi}}{\sin^2\theta}\partial_{p_{\theta}}+(\cos \phi \,p_{\theta}-\sin\phi \cot \theta \,p_{\phi})\partial_{p_{\phi}},\\
\widehat{\mathbf{\Omega}}_2 \coloneqq &\,\cos\phi \,\partial_{\theta}-\cot \theta\sin\phi\partial_{\phi}-\sin\phi \frac{p_{\phi}}{\sin^2\theta}\partial_{p_{\theta}}+(\sin\phi \,p_{\theta}+\cos\phi \cot \theta \,p_{\phi})\partial_{p_{\phi}},\\
\widehat{\mathbf{\Omega}}_3 \coloneqq &\,\partial_{\phi},\qquad\qquad \widehat{\partial}_t=\partial_t.
\end{align*}
\end{definition}

The time derivative and the angular derivatives of the energy-momentum tensor $\mathrm{T}[f]$ of a massless Vlasov field $f$ can then be estimated using the next result.

\begin{proposition}\label{ProKilling}
For any $\widehat{Z} \in \big\{ \partial_t,\, \widehat{\mathbf{\Omega}}_1,\, \widehat{\mathbf{\Omega}}_2,\, \widehat{\mathbf{\Omega}}_3 \big\}$, we have $[\T_g,\widehat{Z} ]=0$. Moreover, there holds
$$ \mathcal{L}_Z \big( \mathrm{T}[f] \big) = \mathrm{T} \big[ \widehat{Z}f \big].$$
\end{proposition}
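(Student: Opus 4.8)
The plan is to treat the two assertions in turn: the commutation identity is essentially structural, while the identity for the energy-momentum tensor requires a change of variables in the fiber integral.

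\emph{Commutation.} First I would recall that $\partial_t$ and the $\mathbf{\Omega}_i$ are Killing fields of $(\mathcal{S},g)$, as recorded in \eqref{killing_sph_symm} and the discussion preceding it. Since the contraction $p(K)$ of $p$ with any Killing field $K$ is conserved along the geodesic flow (Lemma \ref{Lemconsqua}), each of $p_t=p(\partial_t)$ and $p(\mathbf{\Omega}_i)$ is a conserved quantity. By the identification $\widehat{X}=\T_{p(X)}$ of the complete lift with the symplectic gradient of $p(X)$ recalled at the beginning of this section, the vector fields $\widehat{Z}$ are exactly the symplectic gradients of these conserved quantities. The commutation $[\T_g,\widehat{Z}]=0$ is then an immediate consequence of the general fact that $[\T_g,\T_{\mathfrak{c}}]=0$ for any $\mathfrak{c}$ conserved along the geodesic flow.

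\emph{Lie derivative.} For the identity $\mathcal{L}_Z\mathrm{T}[f]=\mathrm{T}[\widehat{Z}f]$, I would differentiate a one-parameter family of isometries. Let $\psi_s$ be the flow of $Z\in\{\partial_t,\mathbf{\Omega}_1,\mathbf{\Omega}_2,\mathbf{\Omega}_3\}$ on $\mathcal{S}$, and let $\widehat{\psi}_s$ be its cotangent lift on $T^*\mathcal{S}$, that is, the symplectomorphism covering $\psi_s$ and preserving the Poincar\'e $1$-form; its infinitesimal generator is precisely the complete lift $\widehat{Z}=\T_{p(Z)}$. Because $Z$ is Killing, $\widehat{\psi}_s$ preserves the Hamiltonian $H$, hence maps $\mathcal{P}$ to itself and each fiber $\mathcal{P}_x$ onto $\mathcal{P}_{\psi_s(x)}$; moreover it preserves the induced fiber volume form $\dr\mu_{\mathcal{P}_x}$ (as in Remark \ref{RkLiouvi}, since $\widehat{Z}$ is itself a symplectic gradient). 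The heart of the argument is the equivariance
\[ \psi_s^*\big(\mathrm{T}[f]\big)=\mathrm{T}\big[f\circ\widehat{\psi}_s\big], \]
which I would prove by the change of variables $p\mapsto q$, where $q$ denotes the fiber component of $\widehat{\psi}_s(x,p)$: preservation of the tautological $1$-form gives the relation $p_\mu=q_\alpha\,\partial_\mu\psi_s^\alpha(x)$, which converts the factor $p_\mu p_\nu$ into the Jacobian factors $\partial_\mu\psi_s^\alpha\,\partial_\nu\psi_s^\beta$ of the pullback, while the measure $\dr\mu_{\mathcal{P}_x}$ is carried to $\dr\mu_{\mathcal{P}_{\psi_s(x)}}$ by the volume-preservation just noted.

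Differentiating this equivariance at $s=0$ concludes the proof: the left-hand side yields $\mathcal{L}_Z\mathrm{T}[f]$ by definition of the Lie derivative, while the right-hand side yields $\mathrm{T}[\widehat{Z}f]$ using the linearity of $f\mapsto\mathrm{T}[f]$ together with $\frac{\dr}{\dr s}\big|_{s=0}(f\circ\widehat{\psi}_s)=\widehat{Z}f$. The main obstacle is the careful verification that $\dr\mu_{\mathcal{P}_x}$ is exactly preserved by $\widehat{\psi}_s$ --- this is where the Killing (rather than merely conformal Killing) property of $Z$ enters --- and the justification of differentiation under the integral sign, which relies on the assumed regularity and fiber decay of $f$ ensuring that $s\mapsto\mathrm{T}[f\circ\widehat{\psi}_s]$ is differentiable.
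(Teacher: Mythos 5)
Your proposal is correct. For the commutation identity you follow exactly the paper's (implicit) route: the paper identifies $\widehat{X}=\T_{p(X)}$ at the start of Section \ref{Subseclift}, invokes Lemma \ref{Lemconsqua} for the conservation of $p_t$ and $p(\mathbf{\Omega}_i)$, and then the commutation is the general proposition of Section \ref{Subseccotan} on symplectic gradients of conserved quantities, which is precisely your argument. For the Lie-derivative identity the comparison is different in kind: the paper gives no proof at all, deferring to \cite[Appendix~F]{AGS} and \cite[Appendix~C]{FJS17}, whereas you supply a self-contained geometric argument via the equivariance $\psi_s^*\mathrm{T}[f]=\mathrm{T}[f\circ\widehat{\psi}_s]$ under the cotangent lift of the isometry flow, differentiated at $s=0$. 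That argument is sound: the relation $p_\mu=q_\alpha\partial_\mu\psi_s^\alpha$ from preservation of the tautological $1$-form correctly exchanges the momentum factors against the Jacobian factors of the pullback, and the fiber measure is preserved because $(\dr\psi_s)^{-*}$ is a linear isometry of $(T^*_x\mathcal{S},g^{-1}_x)$ onto $(T^*_{\psi_s(x)}\mathcal{S},g^{-1}_{\psi_s(x)})$ preserving $H$, so it carries $\dr\mu_{\mathcal{P}_x}$ to $\dr\mu_{\mathcal{P}_{\psi_s(x)}}$. One imprecision worth flagging: your parenthetical appeal to Remark \ref{RkLiouvi} is not quite the right justification for fiber-volume preservation --- being a symplectic gradient only gives preservation of the Liouville volume on $T^*\mathcal{S}$ (and of $\dr\mu_{\mathcal{P}}$), and splitting this into base and fiber factors requires that the base flow preserve $\dr\mu_{\mathcal{R}}$, i.e.\ the Killing property again; your direct isometry argument is the one that actually closes this step, so the remark should be dropped rather than relied upon. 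What your route buys is a proof that avoids the coordinate computations and momentum integrations by parts of the cited references, at the cost of the routine verification that differentiation under the fiber integral is legitimate.
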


It will be convenient to work with the following commuting vector fields associated to the total angular momentum $|\slashed{p}|$. Recall from \eqref{defQ} the Carter tensor $Q$.

\begin{definition}
Let $\T_Q \in \Gamma(T\mathcal{P})$ be the projection of the symplectic gradient $\frac{1}{2}\T_{|\slashed{p}|^2}$. In local coordinates, 
$$ \T_Q \coloneqq p_{\theta} \partial_{\theta} + \frac{p_{\phi}}{ \sin^2\theta } \partial_{\phi} + \frac{\cot \theta }{ \sin^2 \theta}  p_{\phi}^2 \partial_{p_{\theta} }   .$$
In order to work with a vector field with the same homogeneity in $p$ as the other vector fields we will manipulate, we also introduce
 $$ \T_{|\slashed{p}|} = |\slashed{p}|^{-1} \T_Q.$$
\end{definition}

\begin{corollary}\label{CorTQ}
There holds $\T_Q=\sum_i p(\mathbf{\Omega}_i) \widehat{\mathbf{\Omega}}_i$. In particular, we have $[\T_g,\T_Q]=[\T_g,\T_{|\slashed{p}|}]=0$.
\end{corollary}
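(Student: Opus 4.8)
The plan is to deduce both assertions purely from the algebraic structure of the symplectic gradient, together with the two facts already at our disposal: that each $p(\mathbf{\Omega}_i)$ is conserved, i.e. $\T_g(p(\mathbf{\Omega}_i))=0$ (Lemma \ref{Lemconsqua}), and that $[\T_g,\widehat{\mathbf{\Omega}}_i]=0$ with $\widehat{\mathbf{\Omega}}_i=\T_{p(\mathbf{\Omega}_i)}$ (Proposition \ref{ProKilling}). No geometric input beyond these is needed.

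First I would establish the identity $\T_Q=\sum_i p(\mathbf{\Omega}_i)\widehat{\mathbf{\Omega}}_i$. The key observation is the Leibniz rule for symplectic gradients, $\T_{fg}=f\T_g+g\T_f$, which follows immediately from the coordinate formula $\T_w=\partial_{p_\mu}(w)\partial_{x^\mu}-\partial_{x^\mu}(w)\partial_{p_\mu}$. Applying it to the decomposition $|\slashed{p}|^2=\sum_i p(\mathbf{\Omega}_i)^2$ from Lemma \ref{Lemconsqua} yields, on $T^*\mathcal{S}$,
$$\tfrac{1}{2}\T_{|\slashed{p}|^2}=\sum_i p(\mathbf{\Omega}_i)\,\T_{p(\mathbf{\Omega}_i)}=\sum_i p(\mathbf{\Omega}_i)\widehat{\mathbf{\Omega}}_i.$$
It then remains to check that the projection entering the definition of $\T_Q$ is trivial here. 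Since $|\slashed{p}|^2$ is conserved it Poisson-commutes with $H$, so $\T_{|\slashed{p}|^2}(H)=0$ and $\T_{|\slashed{p}|^2}$ is tangent to $\mathcal{P}=\{H=0\}$; equivalently, in the coordinates $(t,r^*,\theta,\phi,p_t,\dots)$ the function $|\slashed{p}|^2$ is independent of $t$, so $\T_{|\slashed{p}|^2}$ carries no $\partial_{p_t}$-component. As $\partial_{p_t}$ is transverse to $T\mathcal{P}$, the projection parallel to $\partial_{p_t}$ fixes every vector already tangent to $\mathcal{P}$, whence $\T_Q=\tfrac{1}{2}\T_{|\slashed{p}|^2}=\sum_i p(\mathbf{\Omega}_i)\widehat{\mathbf{\Omega}}_i$; the stated coordinate expression for $\T_Q$ is then recovered by direct differentiation.

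With the identity in hand, the commutation relations follow from the bracket Leibniz rule $[\T_g,hX]=\T_g(h)X+h[\T_g,X]$. For $\T_Q$ I would write
$$[\T_g,\T_Q]=\sum_i \T_g\big(p(\mathbf{\Omega}_i)\big)\widehat{\mathbf{\Omega}}_i+\sum_i p(\mathbf{\Omega}_i)[\T_g,\widehat{\mathbf{\Omega}}_i],$$
and both sums vanish term by term: the first because each $p(\mathbf{\Omega}_i)$ is conserved, the second because $[\T_g,\widehat{\mathbf{\Omega}}_i]=0$. For $\T_{|\slashed{p}|}=|\slashed{p}|^{-1}\T_Q$ the same rule gives $[\T_g,\T_{|\slashed{p}|}]=\T_g(|\slashed{p}|^{-1})\T_Q+|\slashed{p}|^{-1}[\T_g,\T_Q]$; since $\T_g(|\slashed{p}|)=0$ forces $\T_g(|\slashed{p}|^{-1})=-|\slashed{p}|^{-2}\T_g(|\slashed{p}|)=0$, and $[\T_g,\T_Q]=0$ by the previous step, the bracket vanishes.

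The argument is essentially algebraic, so there is no serious obstacle; the only point demanding care is the justification that the projection defining $\T_Q$ acts as the identity, i.e. that $\tfrac{1}{2}\T_{|\slashed{p}|^2}$ is already tangent to $\mathcal{P}$. As an alternative to this abstract reasoning, one could bypass the projection entirely and verify $\T_Q=\sum_i p(\mathbf{\Omega}_i)\widehat{\mathbf{\Omega}}_i$ by substituting the explicit expressions for the $\widehat{\mathbf{\Omega}}_i$ and for $p(\mathbf{\Omega}_i)$ and matching the coefficients of $\partial_\theta$, $\partial_\phi$, $\partial_{p_\theta}$ and $\partial_{p_\phi}$; this is routine, the cross terms collapsing via $\sin^2\phi+\cos^2\phi=1$.
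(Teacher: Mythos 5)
Your proof is correct and follows exactly the route the paper intends (the corollary is stated without proof there, as an immediate consequence of the decomposition $|\slashed{p}|^2=\sum_i |p(\mathbf{\Omega}_i)|^2$ from Lemma \ref{Lemconsqua}, the identification $\widehat{\mathbf{\Omega}}_i=\T_{p(\mathbf{\Omega}_i)}$, and Proposition \ref{ProKilling}): the Leibniz rule for symplectic gradients gives the identity, and the bracket Leibniz rule together with conservation of $p(\mathbf{\Omega}_i)$ and $|\slashed{p}|$ gives the commutation relations. Your additional check that the projection parallel to $\partial_{p_t}$ acts trivially on $\tfrac12\T_{|\slashed{p}|^2}$ (since $|\slashed{p}|^2$ is $t$-independent and Poisson-commutes with $H$) is a worthwhile detail that the paper leaves implicit.
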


Note that the radial derivative does not generate a symmetry of Schwarzschild spacetime. Consequently, controlling $\mathcal{L}_{\partial_r} \mathrm{T}[f]$ requires a more thorough analysis.

\subsection{The radial scaling vector fields}

We investigate further the structure of the massless Vlasov operator. We focus on the properties of the time-radial part $\pmb{R} \in \Gamma(T\mathcal{P})$ and radial part $R\in \Gamma(T\mathcal{P})$ of $\T_g$, given by
 \begin{equation}\label{eq:defR}
  \pmb{R} \coloneqq -\frac{p_t}{\Omega^2} \partial_t+ \frac{p_{r^*}}{\Omega^2}\partial_{r^*}+\frac{r-3M}{r^4} |\slashed{p}|^2 \partial_{p_{r^*}},  \qquad \qquad R \coloneqq \frac{p_{r^*}}{\Omega^2}\partial_{r^*}+\frac{r-3M}{r^4} |\slashed{p}|^2 \partial_{p_{r^*}}.
  \end{equation}
In this relativistic setting, it is more natural to work with $\pmb{R}$. We note that contrary to $R$, the vector field $\pmb{R}$ is regular up to $\H$. Nonetheless, as $\pmb{R}-R$ is a multiple of $\partial_t$, which commutes with $\T_g$, focusing on $R$ will sometimes be convenient.

\begin{lemma}\label{LemDecompT}
The following decomposition holds
$$ \T_g= \pmb{R}+\frac{1}{r^2} \T_Q.$$
\end{lemma}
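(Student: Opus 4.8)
The plan is to prove Lemma \ref{LemDecompT} by a direct comparison of coordinate expressions in the chart $\widehat{\mathscr{C}} = (t,r^*,\theta,\phi,p_{r^*},p_\theta,p_\phi)$ on $\mathcal{P}$. All three vector fields involved have already been written out explicitly in this chart: $\T_g$ is given by \eqref{eq:defT}, the time-radial part $\pmb{R}$ by \eqref{eq:defR}, and $\T_Q$ by its defining formula. Since the identity is linear, it suffices to expand the right-hand side $\pmb{R} + r^{-2}\T_Q$ and match it component by component against \eqref{eq:defT}.

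First I would substitute the definition of $\T_Q$ and distribute the scalar factor $r^{-2}$, which yields
\begin{equation*}
\frac{1}{r^2}\T_Q = \frac{p_\theta}{r^2}\partial_\theta + \frac{p_\phi}{r^2 \sin^2\theta}\partial_\phi + \frac{\cot\theta}{r^2 \sin^2\theta}\,p_\phi^2\,\partial_{p_\theta}.
\end{equation*}
Adding this to $\pmb{R} = -\frac{p_t}{\Omega^2}\partial_t + \frac{p_{r^*}}{\Omega^2}\partial_{r^*} + \frac{r-3M}{r^4}|\slashed{p}|^2\,\partial_{p_{r^*}}$, I would then read off the six nonzero coefficients of the sum in the basis $\{\partial_t,\partial_{r^*},\partial_\theta,\partial_\phi,\partial_{p_{r^*}},\partial_{p_\theta}\}$ and observe that each one coincides exactly with the corresponding coefficient in \eqref{eq:defT}: the $\partial_t$, $\partial_{r^*}$, and $\partial_{p_{r^*}}$ terms come entirely from $\pmb{R}$, while the $\partial_\theta$, $\partial_\phi$, and $\partial_{p_\theta}$ terms come entirely from $r^{-2}\T_Q$ and reproduce the angular part of the geodesic spray. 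There is genuinely no obstacle here — the only thing to verify is that the angular and $p_\theta$ coefficients of $\T_g$ carry precisely the factor $r^{-2}$ relative to those of $\T_Q$, which is immediate from the two displayed formulas. The decomposition thus records the geometric fact that the geodesic spray splits into a part acting in the $(t,r^*,p_{r^*})$ variables (coupled to the conserved quantity $|\slashed{p}|^2$) and an angular part governed by the Carter tensor, rescaled by $r^{-2}$.
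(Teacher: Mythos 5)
Your proof is correct and matches the paper's treatment: the paper states Lemma \ref{LemDecompT} without proof precisely because it is immediate from comparing the coordinate expressions \eqref{eq:defT}, \eqref{eq:defR}, and the definition of $\T_Q$ in the chart $\widehat{\mathscr{C}}$, which is exactly the component-by-component verification you carry out.
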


Note that if $f$ is solution to the massless Vlasov equation $\T_g(f)=0$, Lemma \ref{LemDecompT} together with Corollary \ref{CorTQ} allow to prove boundedness for $r^2 \pmb{R}f$. Unfortunately, this does not allow to get a satisfying estimate for $\mathcal{L}_{\partial_r}\mathrm{T}[f]$. In order to obtain more information on this last quantity, let us exhibit a scaling vector field in the span of $\{\partial_{r^*},\partial_{p_{r^*}}\}$ that commutes well with $R$. Let $L \in \Gamma(T \mathcal{P})$ be the vector field defined by
$$ L \coloneqq g(r) \partial_r + p_{r^*} \partial_{p_{r^*}}, \qquad \quad g(r) \coloneqq \frac{1}{27M^2}r(r-3M)(r+6M)  .$$

\begin{proposition}\label{ProL}
There holds
\begin{equation}\label{commut_scaling_1}
[R,L]=\frac{(r-3M)(r+5M)}{9M^2}R.
\end{equation}
\end{proposition}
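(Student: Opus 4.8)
The plan is to compute the commutator directly after expressing both fields in the single coordinate frame $\{\partial_r,\partial_{p_{r^*}}\}$. Since $\frac{\dr r^*}{\dr r}=\Omega^{-2}$, one has $\partial_{r^*}=\Omega^2\partial_r$, so the radial part simplifies to
\begin{equation*}
R=\frac{p_{r^*}}{\Omega^2}\partial_{r^*}+\frac{r-3M}{r^4}|\slashed{p}|^2\partial_{p_{r^*}}=p_{r^*}\partial_r+A(r)\partial_{p_{r^*}},\qquad A(r):=\frac{r-3M}{r^4}|\slashed{p}|^2 .
\end{equation*}
The key structural observation is that, by Lemma \ref{Lemconsqua}, $|\slashed{p}|^2$ is a first integral depending only on $(\theta,p_\theta,p_\phi)$; in particular it is annihilated by both $\partial_r$ and $\partial_{p_{r^*}}$, so throughout the computation it may be treated as a constant and $A$ as a function of $r$ alone. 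Both $R$ and $L=g(r)\partial_r+p_{r^*}\partial_{p_{r^*}}$ then have coefficients supported on the two-dimensional $(r,p_{r^*})$-block, all other coordinate directions being inert.

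Next I would evaluate the Lie bracket componentwise via $[R,L]=\big(R(L^i)-L(R^i)\big)\partial_i$ for $i\in\{r,p_{r^*}\}$. Because $g$ and $A$ depend only on $r$ while $p_{r^*}$ is a coordinate, the four relevant derivatives are immediate: $R(g)=p_{r^*}g'$, $L(p_{r^*})=p_{r^*}$, $R(p_{r^*})=A$ and $L(A)=g\,A'$. This yields
\begin{equation*}
[R,L]=p_{r^*}\big(g'(r)-1\big)\partial_r+\big(A(r)-g(r)A'(r)\big)\partial_{p_{r^*}} .
\end{equation*}
Comparing with $\lambda(r)R=\lambda(r)p_{r^*}\partial_r+\lambda(r)A(r)\partial_{p_{r^*}}$, where $\lambda(r):=\frac{(r-3M)(r+5M)}{9M^2}$, the statement reduces to the two scalar identities
\begin{equation*}
g'(r)-1=\lambda(r),\qquad\qquad g(r)A'(r)=\big(1-\lambda(r)\big)A(r).
\end{equation*}

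Finally I would verify these by elementary polynomial algebra. Expanding $g(r)=\frac{1}{27M^2}\big(r^3+3Mr^2-18M^2r\big)$ gives $g'(r)=\frac{1}{9M^2}(r^2+2Mr-6M^2)$, whence $g'(r)-1=\frac{1}{9M^2}(r^2+2Mr-15M^2)=\frac{(r-3M)(r+5M)}{9M^2}$, which is the first identity. For the second, the factorisation $1-\lambda(r)=-\frac{(r+6M)(r-4M)}{9M^2}$ together with $A'(r)=\frac{-3(r-4M)}{r^5}|\slashed{p}|^2$ reduces both sides to the common expression $-\frac{(r-3M)(r+6M)(r-4M)}{9M^2r^4}|\slashed{p}|^2$. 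The only delicate point is the bookkeeping of these factorisations; there is no genuine analytic obstacle, the commutation identity \eqref{commut_scaling_1} being an exact algebraic consequence of the chosen normalisation of $g$, which is manifestly designed so that $g'-1$ and $(1-\lambda)A/A'$ produce the same proportionality factor $\lambda$.
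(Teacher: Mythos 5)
Your proposal is correct, and its core is the same as the paper's: both proofs expand $[R,L]$ componentwise on the $(r,p_{r^*})$-block, using $\partial_{r^*}=\Omega^2\partial_r$ and the fact that $|\slashed{p}|^2$ is independent of $(r^*,p_{r^*})$ in these coordinates, arriving at the identical intermediate formula $[R,L]=(g'-1)\,p_{r^*}\partial_r+\big(A-gA'\big)\partial_{p_{r^*}}$. Where you part ways is the endgame: the paper treats $g$ as unknown, observes that collinearity of $[R,L]$ with $R$ is equivalent to the ODE $\partial_r\big(g(r)\tfrac{r-3M}{r^4}\big)=2\tfrac{r-3M}{r^4}$, and integrates it from $3M$ (using non-singularity at the photon sphere) to \emph{derive} $g$ and hence the factor $\frac{(r-3M)(r+5M)}{9M^2}$; you instead take the given $g$ and \emph{verify} the two resulting scalar identities by polynomial factorisation. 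Your route is more elementary and entirely sufficient for the proposition as stated; the paper's route buys the additional information that $L$ is the \emph{unique} non-singular vector field of this form satisfying such a commutation relation, which is what motivates the definition of $g$ in the first place. Both computations check out (in particular your factorisations $r^2+2Mr-15M^2=(r-3M)(r+5M)$ and $r^2+2Mr-24M^2=(r+6M)(r-4M)$ are correct), so there is no gap.
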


\begin{proof}
We will prove that $L$ is the unique vector field of the form $g(r) \partial_r + p_{r^*} \partial_{p_{r^*}}$ that is not singular at the photon sphere and having a commutation of the form \eqref{commut_scaling_1} with $R$. For this, we first note that
\begin{align*}
  [R,L] &= p_{r^*} \partial_r g(r) \partial_r+\frac{r-3M}{r^4}|\slashed{p}|^2 \partial_{p^*}-\bigg(g(r) \partial_r \bigg(\frac{r-3M}{r^4} \bigg) |\slashed{p}|^2 \partial_{p_{r^*}}+p_{r^*}\partial_r \bigg) \\
  & = \big(\partial_r g(r)-1 \big) p_{r^*} \partial_r+ \bigg(\frac{r-3M}{r^4}  -g(r) \partial_r \bigg( \frac{r-3M}{r^4} \bigg) \bigg) |\slashed{p}|^2\partial_{p_{r^*}}.
  \end{align*}
Then, $[R,L]$ is collinear to $R$ if and only if $g$ verifies
$$ g(r)\partial_r \left( \frac{r-3M}{r^4} \right)-\frac{r-3M}{r^4}=\big(1-\partial_r g(r) \big)\frac{r-3M}{r^4}, $$ 
which is equivalent to
 $$\partial_r \left( g(r) \frac{r-3M}{r^4}\right)=2\frac{r-3M}{r^4}.$$
Integrating this last property and using that $g$ is continuous, we get
$$g(r)\frac{r-3M}{r^4} = 2\int_{3M}^r  \frac{s-3M}{s^4} \mathrm{d}s =-\frac{1}{r^2}+\frac{2M}{r^3}+\frac{1}{27M^2} = \frac{(r-3M)^2(r+6M)}{27M^2r^3}.$$ 
It gives us the expression of $g(r)$, from which we can compute $\partial_r g(r)-1$ and conclude the proof.
\end{proof}

It will be convenient to add a vector field proportional to $\partial_t$ to $L$ in order to improve its commutation properties with $\pmb{R}$. Let $\pmb{L} \in \Gamma(T \mathcal{P})$ be given by
 \begin{equation}\label{eq:defL}
  \pmb{L} \coloneqq -\frac{g(r)p_{r^*}}{\Omega^2p_t} \partial_t + L.
  \end{equation}

\begin{proposition}\label{ProbarY}
There holds
$$ [\T_g,\pmb{L}] = \frac{(r-3M)(r+5M)}{9M^2}\T_g-\frac{(r-3M)(r+3M)}{27M^2r^2}\T_Q+\frac{4(r-3M)(r+3M)p_u p_v}{27M^2 \Omega^2p_t} \partial_t.$$
\end{proposition}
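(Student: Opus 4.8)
The plan is to reduce $[\T_g,\pmb{L}]$ to the commutation relation $[R,L]=\frac{(r-3M)(r+5M)}{9M^2}R$ already established in Proposition \ref{ProL}. I would write $\T_g=\pmb{R}+\frac{1}{r^2}\T_Q$ via Lemma \ref{LemDecompT} and $\pmb{L}=L+h\partial_t$ with $h\coloneqq-\frac{g(r)p_{r^*}}{\Omega^2 p_t}$, keeping in mind also $\pmb{R}=R-\frac{p_t}{\Omega^2}\partial_t$. Expanding bilinearly, the commutator splits into the four blocks $[\pmb{R},L]$, $[\pmb{R},h\partial_t]$, $[\frac{1}{r^2}\T_Q,L]$ and $[\frac{1}{r^2}\T_Q,h\partial_t]$, each of which I would evaluate with the Leibniz rule $[\mathfrak{f}X,\mathfrak{g}Y]=\mathfrak{f}\mathfrak{g}[X,Y]+\mathfrak{f}(X\mathfrak{g})Y-\mathfrak{g}(Y\mathfrak{f})X$.

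First I would dispose of all the contributions controlled by symmetry. Every coefficient of $R$, $\pmb{R}$, $L$ and $\T_Q$ is independent of $t$, so $\partial_t$ commutes with each of these fields; since $h$ is likewise $t$-independent, the block $[\frac{1}{r^2}\T_Q,h\partial_t]$ reduces to $\frac{1}{r^2}(\T_Q h)\partial_t$, and $\T_Q h=0$ because $h$ is a function of $r$, $p_{r^*}$ and the conserved quantity $|\slashed{p}|$, all annihilated by $\T_Q$. Using $[\T_Q,L]=0$ (the coefficients of $\T_Q$ and $L$ depend on disjoint sets of variables), the block $[\frac{1}{r^2}\T_Q,L]$ collapses to $-(Lr^{-2})\T_Q=\frac{2g}{r^3}\T_Q$. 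The block $[\pmb{R},L]$ equals $[R,L]+L(\frac{p_t}{\Omega^2})\partial_t$; inserting Proposition \ref{ProL} and $R=\T_g-\frac{1}{r^2}\T_Q+\frac{p_t}{\Omega^2}\partial_t$ then produces the correct $\T_g$-coefficient, and, combined with the $\frac{2g}{r^3}\T_Q$ above and the value $g=\frac{1}{27M^2}r(r-3M)(r+6M)$, the $\T_Q$-coefficient $-\frac{(r-3M)(r+5M)}{9M^2 r^2}+\frac{2g}{r^3}=-\frac{(r-3M)(r+3M)}{27M^2 r^2}$ of the statement.

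What remains, and is the heart of the computation, is the $\partial_t$-coefficient, which after these reductions equals
\begin{equation*}
\frac{(r-3M)(r+5M)}{9M^2}\frac{p_t}{\Omega^2}+Rh+L\Big(\frac{p_t}{\Omega^2}\Big),
\end{equation*}
where I have used $\pmb{R}h=Rh$ since $h$ is $t$-independent. To evaluate $Rh$ and $L(p_t/\Omega^2)$ I would first record $\partial_{r^*}=\Omega^2\partial_r$, $\partial_r\Omega^2=\frac{2M}{r^2}$, $\partial_{p_{r^*}}p_t=\frac{p_{r^*}}{p_t}$ and $\partial_r p_t=-\frac{(r-3M)|\slashed{p}|^2}{r^4 p_t}$, the last two obtained by differentiating the null-shell relation $p_t^2=p_{r^*}^2+\Omega^2\frac{|\slashed{p}|^2}{r^2}$ (consistently with $\T_g(p_t)=0$). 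The main obstacle is the ensuing algebraic collapse, since the raw expression carries many terms of differing homogeneity in $p_t$, including $p_t^{-3}$ pieces. I expect those $p_t^{-3}$ terms to recombine through $\frac{p_{r^*}^2}{\Omega^2}+\frac{|\slashed{p}|^2}{r^2}=\frac{p_t^2}{\Omega^2}$, after which repeated use of $r-2M=r\Omega^2$, of the null-shell identities $\Omega^2|\slashed{p}|^2=4r^2p_up_v$ and $p_t^2-p_{r^*}^2=4p_up_v$, and of $\partial_r g-1=\frac{(r-3M)(r+5M)}{9M^2}$ (from the proof of Proposition \ref{ProL}) should telescope everything into $\frac{4(r-3M)(r+3M)p_up_v}{27M^2\Omega^2 p_t}$. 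The cleanest route is to group the terms by their power of $p_t$ in the denominator and simplify each group before summing.
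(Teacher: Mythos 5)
Your proposal is correct and follows essentially the same route as the paper's proof: both reduce to Proposition \ref{ProL} by splitting $\T_g$ and $\pmb{L}=L-\frac{g(r)p_{r^*}}{\Omega^2 p_t}\partial_t$, dispose of the $\T_Q$- and $\partial_t$-blocks by the same symmetry arguments, substitute $R=\T_g+p_t\Omega^{-2}\partial_t-r^{-2}\T_Q$, and are left with exactly the $\partial_t$-coefficient $\frac{(r-3M)(r+5M)}{9M^2}\frac{p_t}{\Omega^2}+Rh+L\big(\frac{p_t}{\Omega^2}\big)$, to be simplified with the same identities ($\partial_r p_t$, the null-shell relation, $\partial_r g-1$). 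The residual algebra you leave as a plan does close exactly as you anticipate: the $p_t^{-3}$ terms in $Rh$ cancel identically, and the remaining terms combine, via $\Omega^2|\slashed{p}|^2=r^2(p_t^2-p_{r^*}^2)$ and $r-2M=r\Omega^2$, into $\frac{p_t^2-p_{r^*}^2}{p_t}\Big[\frac{(r-3M)(r+5M)}{9M^2\Omega^2}-\frac{2g(r)}{r\Omega^2}\Big]=\frac{4(r-3M)(r+3M)p_up_v}{27M^2\Omega^2 p_t}$, which is precisely how the paper concludes.
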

\begin{proof}
Recall $\T_g=-\Omega^{-2}p_t\partial_t+R+r^{-2}\T_Q$ and note that $[\T_Q,L]=[\partial_t,L]=0$. Hence, by Proposition \ref{ProL},
\begin{align*}
 [\T_g,L] & = [R,L]+L \Big( \, \frac{p_t}{\Omega^2} \, \Big)\partial_t+2\frac{g(r)}{r^3} \T_Q \\
 & = \frac{(r-3M)(r+5M)}{9M^2}R+2\frac{g(r)}{r^3}\T_Q+\bigg( -g(r)\frac{r-3M}{r^4}\frac{|\slashed{p}|^2}{\Omega^2 p_t}-\frac{2Mg(r)p_t}{r^2\Omega^4}+\frac{|p_{r^*}|^2}{\Omega^2p_t}\bigg) \partial_t .
 \end{align*}
 Moreover, as $\T_g(p_t)=0$ and $[\T_g, \partial_t]=0$, we have
 $$ \Big[ \T_g, -\frac{g(r)p_{r^*}}{\Omega^2p_t} \partial_t \Big] = -\frac{g(r)(r-3M)|\slashed{p}|^2}{r^4\Omega^2p_t} \partial_t +\frac{2Mg(r)|p_{r^*}|^2}{r^2\Omega^4p_t} \partial_t-\frac{|p_{r^*}|^2}{ \Omega^2 p_t} \partial_r g(r) \partial_t.$$
We now write $R=\T_g+p_t \Omega^{-2} \partial_t-r^{-2} \T_Q$, and use the relations 
$$\partial_r g(r)-1=\frac{(r-3M)(r+5M)}{9M^2},\qquad \quad \Omega^2 |\slashed{p}|^2=r^2 \big( |p_t|^2-|p_{r^*}|^2 \big),$$ 
to get
\begin{align*}
 [\T_g,\pmb{L}] &= \frac{(r-3M)(r+5M)}{9M^2}\T_g-\frac{(r-3M)(r+3M)}{27M^2}\T_Q\\
 &\quad -\bigg( 2g(r)\frac{r-3M}{r^2 \Omega^4} +\frac{2Mg(r)}{r^2\Omega^4}-\frac{(r-3M)(r+5M)}{9M^2 \Omega^2} \bigg)\frac{|p_t|^2-|p_{r^*}|^2}{p_t} \partial_t,
\end{align*}
from which we deduce the result.
\end{proof}

\begin{remark}
The vector field $\pmb{L}$ has to be compared with a scaling vector field used in \cite{VV24} for the study of the linearised system $\mathbb{X}_{\mathrm{lin}}(f)=0$ (recall Subsection \ref{Subsecideas}) and an associated non-linear problem. It is closely related to the vector fields $V_{\pm}$ that we use to deal with trapping (see Proposition \ref{ProvarphiV} below).
\end{remark}

\subsection{Trapping vector fields}\label{SubsecVminus}

Let us introduce some projections on $T \mathcal{P}$ parallel to $\partial_{p_t}$ of symplectic gradients that arise from the unstable and stable trapping effects. We refer to Appendix \ref{SecCotan} for more details. 

We first introduce $V_+\in \Gamma(T\mathcal{P})$, the projection of $\T_{\varphi_-}$, which is given in coordinates by
 $$V_+\coloneqq  \partial_{p_t}( \varphi_-)\partial_t +\partial_{p_{r^*}} (\varphi_-) \partial_{r^*}-\partial_{r^*} (\varphi_-) \partial_{p_{r^*}},$$
where $\partial_{p_t} \varphi_-$ has to be understood as a quantity defined on $T^* \mathcal{S}$.

\begin{proposition}\label{ProdefVminus}
In local coordinates, we have
\begin{equation}\label{eq:defVminus}
 V_+  = \frac{|r+6M|^{\frac{1}{2}}}{r^{\frac{1}{2}}\Omega}(r-3M) \partial_t+\frac{r}{\Omega} \partial_{r^*}-\bigg( \frac{r-3M}{r\Omega}p_{r^*}+\frac{r^{\frac{1}{2}}}{|r+6M|^{\frac{1}{2}} \Omega}p_t \bigg) \partial_{p_{r^*}}.
 \end{equation}
\end{proposition}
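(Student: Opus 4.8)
The proposition is purely computational: by the coordinate definition of $V_+$ recalled immediately above, it suffices to evaluate the three partial derivatives $\partial_{p_t}(\varphi_-)$, $\partial_{p_{r^*}}(\varphi_-)$ and $\partial_{r^*}(\varphi_-)$ of the extension of $\varphi_-$ to $T^*\mathcal{S}$, where $p_t$ is now treated as an independent coordinate. Since the extended $\varphi_-$ depends only on $(r,p_{r^*},p_t)$ and is affine in $(p_{r^*},p_t)$, the first two derivatives are immediate: $\partial_{p_{r^*}}(\varphi_-)=r/\Omega$ gives the coefficient of $\partial_{r^*}$, and $\partial_{p_t}(\varphi_-)=\frac{r}{\Omega}(1+6M/r)^{1/2}(1-3M/r)=\frac{|r+6M|^{1/2}}{r^{1/2}\Omega}(r-3M)$ gives the coefficient of $\partial_t$.

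The only substantive step is the coefficient $-\partial_{r^*}(\varphi_-)=-\Omega^2\partial_r(\varphi_-)$ of $\partial_{p_{r^*}}$, which I would split according to the $p_{r^*}$ and $p_t$ terms of $\varphi_-$. For the $p_{r^*}$ part, a short computation with $\Omega'=M(r^2\Omega)^{-1}$ gives $\Omega^2\partial_r(r/\Omega)=(r-3M)(r\Omega)^{-1}$. For the $p_t$ part, rather than differentiating from scratch I would simply reuse the derivative already recorded inside the proof of Lemma \ref{Lemphiminus}, namely $\partial_r\!\big((r-3M)\frac{|r+6M|^{1/2}}{r^{1/2}\Omega}\big)=r^2|r+6M|^{-1/2}|r-2M|^{-3/2}$; multiplying by $\Omega^2=(r-2M)/r$ and simplifying collapses this to $r^{1/2}|r+6M|^{-1/2}\Omega^{-1}$. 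Assembling the two pieces yields $\partial_{r^*}(\varphi_-)=\frac{r-3M}{r\Omega}p_{r^*}+\frac{r^{1/2}}{|r+6M|^{1/2}\Omega}p_t$, whose negative is precisely the coefficient of $\partial_{p_{r^*}}$ in \eqref{eq:defVminus}.

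I do not expect any genuine obstacle here, since every ingredient is an elementary one-variable derivative. The only mild subtlety is the algebraic simplification of the $p_t$ coefficient, and this is entirely bypassed by recycling the identity from Lemma \ref{Lemphiminus} instead of recomputing it; matching $r^{1/2}|r+6M|^{-1/2}\Omega^{-1}$ against $\Omega^2\partial_r\!\big((r-3M)\frac{|r+6M|^{1/2}}{r^{1/2}\Omega}\big)$ then reduces to rewriting $\Omega$ as $(r-2M)^{1/2}r^{-1/2}$.
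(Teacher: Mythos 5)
Your proposal is correct and follows essentially the same route as the paper: the two momentum derivatives of $\varphi_-$ are read off immediately from its affine form, and the substantive step $\partial_{r^*}(\varphi_-)=\Omega^2\partial_r(\varphi_-)$ is evaluated termwise, with the $p_t$-coefficient reducing to the derivative already computed in the proof of Lemma \ref{Lemphiminus}. The paper's proof records exactly the same quantities, stating $\partial_{r^*}(\varphi_-)=\Omega^2\frac{r^{1/2}(r-3M)}{|r-2M|^{3/2}}p_{r^*}+\Omega^2\frac{r^2}{|r+6M|^{1/2}|r-2M|^{3/2}}p_t$ and simplifying, so your reuse of the Lemma \ref{Lemphiminus} identity and the elementary computation via $\Omega'=M(r^2\Omega)^{-1}$ match the authors' argument.
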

\begin{proof}
The quantities $\partial_{p_t} \varphi_-$ and $\partial_{p_{r^*}} \varphi_-$ can be easily computed from the expression of $\varphi_-$, introduced in Definition \ref{Defvarphimin}. Then, we have 
\begin{align*}
\partial_{r^*}( \varphi_-) & = \Omega^2\frac{r^{\frac{1}{2}}(r-3M)}{|r-2M|^{\frac{3}{2}}}p_{r^*} +\Omega^2\frac{r^2}{|r+6M|^{\frac{1}{2}} |r-2M|^{\frac{3}{2}}}p_t=\frac{(r-3M)}{r\Omega}p_{r^*}+\frac{r^{\frac{1}{2}}}{|r+6M|^{\frac{1}{2}} \Omega}p_t.
\end{align*}
\end{proof}

\begin{remark}\label{RqVminus}
It will sometimes be convenient to use an alternative expression for $V_+$ by rewriting the last term of \eqref{eq:defVminus} using 
\begin{equation}\label{eq:equalityforVminus}
\frac{r-3M}{r\Omega}p_{r^*}+\frac{r^{\frac{1}{2}}}{|r+6M|^{\frac{1}{2}} \Omega}p_t= \frac{r-3M}{r^2} \varphi_- +\frac{27M^2 \Omega}{r^{\frac{3}{2}}|r+6M|^{\frac{1}{2}}}p_t.
\end{equation}
\end{remark}

We now compute two commutators that will be useful to determine $ [\T_g,V_+]$.

\begin{lemma}\label{LemComdrdpr}
There holds
\begin{align*}
[\T_g,\partial_{r^*}] &= \bigg(\frac{(r-3M)p_t}{r^2 \Omega^2}+\frac{(r-3M)|p_{r^*}|^2}{r^2 \Omega^2 p_t} \bigg) \partial_t-2\frac{r-3M}{r^2 \Omega^2}p_{r^*} \partial_{r^*}+ \Omega^2 \frac{r-6M}{r^5} |\slashed{p}|^2 \partial_{p_{r^*}}+2\frac{\Omega^2}{r}\T_g  , \\[2pt]
\big[ \T_g, \partial_{p_{r^*}} \big] & = \frac{p_{r^*}}{ \Omega^2p_t} \partial_t- \frac{1}{\Omega^2} \partial_{r^*}.
\end{align*}
\end{lemma}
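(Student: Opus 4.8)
The plan is to compute both commutators directly from the coordinate expression \eqref{eq:defT} of $\T_g$ in the chart $\widehat{\mathscr{C}}=(t,r^*,\theta,\phi,p_{r^*},p_\theta,p_\phi)$ on $\mathcal{P}$. Writing $\T_g=\sum_j a^j \partial_j$ in this basis, the elementary identity $[\T_g,\partial_k]=-\sum_j(\partial_k a^j)\partial_j$ (valid because $\partial_k$ is a coordinate field) reduces everything to differentiating the components $a^j$. The one point demanding genuine care is that $p_t$ is \emph{not} an independent coordinate on $\mathcal{P}$: by the null-shell relation \eqref{eq:defConsQua} it is the function $p_t=-(p_{r^*}^2+\Omega^2 r^{-2}|\slashed{p}|^2)^{1/2}$, so any derivative of a component involving $p_t$ must account for this dependence. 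Differentiating $p_t^2=p_{r^*}^2+\Omega^2 r^{-2}|\slashed{p}|^2$ produces the two inputs
$$\partial_{p_{r^*}}p_t=\frac{p_{r^*}}{p_t}, \qquad \partial_{r^*}p_t=-\frac{(r-3M)\Omega^2|\slashed{p}|^2}{r^4 p_t},$$
the latter using $\partial_{r^*}(\Omega^2 r^{-2})=-2(r-3M)\Omega^2 r^{-4}$.

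For the second commutator this is almost immediate: among the $a^j$, only $a^t=-p_t/\Omega^2$ and $a^{r^*}=p_{r^*}/\Omega^2$ depend on $p_{r^*}$ (the coefficient of $\partial_{p_{r^*}}$ involves only $|\slashed{p}|^2$, which is $p_{r^*}$-independent). Using $\partial_{p_{r^*}}p_t=p_{r^*}/p_t$ one gets $\partial_{p_{r^*}}a^t=-p_{r^*}/(\Omega^2 p_t)$ and $\partial_{p_{r^*}}a^{r^*}=\Omega^{-2}$, which yields $[\T_g,\partial_{p_{r^*}}]=\frac{p_{r^*}}{\Omega^2 p_t}\partial_t-\frac{1}{\Omega^2}\partial_{r^*}$ with no further correction.

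For the first commutator I would compute $-\partial_{r^*}a^j$ for each $j$. The $\partial_\theta$, $\partial_\phi$ and $\partial_{p_\theta}$ components only feel the $r$-dependence of $r^{-2}$, and a short calculation shows that the resulting terms are \emph{exactly} the corresponding components of $2\Omega^2 r^{-1}\T_g$; this structural observation is what motivates extracting a multiple of $\T_g$. After subtracting $2\Omega^2 r^{-1}\T_g$, the surviving $\partial_t$, $\partial_{r^*}$ and $\partial_{p_{r^*}}$ coefficients are simplified by trading $|\slashed{p}|^2$ against $r^2(p_t^2-p_{r^*}^2)\Omega^{-2}$ (again the null-shell relation) and collecting powers of $r$; for instance the $\partial_{p_{r^*}}$ coefficient comes from $\partial_r((r-3M)r^{-4})=-3(r-4M)r^{-5}$ and combines with the $\T_g$ piece $2(r-3M)\Omega^2 r^{-5}|\slashed{p}|^2$ to give $\Omega^2(r-6M)r^{-5}|\slashed{p}|^2$. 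Carrying out these reductions yields the stated identity.

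The hard part is not any single derivative but the bookkeeping: one must consistently treat $p_t$ as a function of $(r^*,p_{r^*},|\slashed{p}|)$ throughout, and recognize at the end that the $\partial_\theta,\partial_\phi,\partial_{p_\theta}$ contributions assemble into $2\Omega^2 r^{-1}\T_g$ rather than persisting as independent terms — otherwise the expression looks needlessly complicated and will not match the stated form. Once the $\T_g$ term is isolated, the remaining simplifications are routine algebra driven entirely by the null-shell relation.
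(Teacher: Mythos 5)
Your proposal is correct and follows essentially the same route as the paper's proof: the same direct computation in the chart $\widehat{\mathscr{C}}$ with $p_t$ treated as a dependent variable (your two derivatives of $p_t$ are precisely the paper's inputs, including its displayed identity $\partial_{r^*}(p_t)=-\Omega^2(r-3M)|\slashed{p}|^2/(r^4p_t)$), the same extraction of the $2\Omega^2 r^{-1}\T_g$ term, and the same final simplification via the null-shell relation $\Omega^2|\slashed{p}|^2=r^2(p_t^2-p_{r^*}^2)$. The only difference is presentational: the paper packages the angular part a priori by writing $\T_g=\pmb{R}+r^{-2}\T_Q$ and invoking $[\T_Q,\partial_{r^*}]=[\T_Q,\partial_{p_{r^*}}]=0$, so that the multiple of $\T_g$ arises from differentiating the prefactor $r^{-2}$, whereas you recover exactly the same structure a posteriori, component by component.
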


\begin{proof}
We first recall the definition \eqref{eq:defR} of $\pmb{R}$ and Lemma \ref{LemDecompT}. We have
$$ \T_g = \pmb{R}+r^{-2} \T_Q, \qquad \qquad [\T_Q, \partial_{r^*}]=[\T_Q, \partial_{p_{r^*}}]=0.$$
The second commutator can then be obtained from \eqref{eq:defConsQua} and \eqref{eq:defT}. For the first identity, we compute
$$ \partial_{r^*} \big( \Omega^{-2} \big) = - \frac{2M}{r^2 \Omega^2}, \quad\qquad \partial_{r^*} \bigg( \frac{r-3M}{r^4} \bigg) = -3 \Omega^2 \frac{r-4M}{r^5}, \quad\qquad \partial_{r^*} \big( r^{-2} \big)= -2 \frac{\Omega^2}{r}r^{-2},$$
which can be obtained using $\Omega^2=1-\frac{2M}{r}$ and $\partial_{r^*} = \Omega^2 \partial_r$. Moreover, we also have
\begin{equation}\label{preuvecompartialr} 
\partial_{r^*} (p_t)= \partial_{r^*} \bigg( \frac{\Omega^2}{r^2} \bigg) \frac{|\slashed{p}|^2}{2p_t}= - \Omega^2 \frac{(r-3M)|\slashed{p}|^2}{r^4p_t}  ,
\end{equation}
 by using again \eqref{eq:defConsQua}. We then deduce that
\begin{align*}
[\T_g,\partial_{r^*}] &=   -\frac{(r-3M)|\slashed{p}|^2}{r^4p_t}  \partial_t   - \frac{2M}{r^2 \Omega^2}p_t \partial_t+ \frac{2M}{r^2 \Omega^2} p_{r^*} \partial_{r^*} +3 \Omega^2 \frac{r-4M}{r^5} |\slashed{p}|^2 \partial_{p_{r^*}} \\
& \quad +2\frac{\Omega^2}{r}\T_g+\frac{2}{r} p_t \partial_t-\frac{2}{r}p_{r^*} \partial_{r^*} -2\Omega^2 \frac{r-3M}{r^5}|\slashed{p}|^2 \partial_{p_{r^*}} .
\end{align*}
It remains to use the null-shell relation $\Omega^2|\slashed{p}|^2=r^2p_t^2-r^2p_{r^*}^2$.
\end{proof}

We can then compute $[\T_g,V_+]$. One can check that the coefficient of $V_+$ in the error term has a good sign. This suggests that $V_+$ is a good vector field for the analysis of massless Vlasov fields.

\begin{proposition}\label{ProComVminus}
There holds
\begin{align*}
[\T_g,V_+] &=  - \frac{|p_t|}{r^{\frac{1}{2}}|r+6M|^{\frac{1}{2}}\Omega^2} V_+- \frac{r+3M}{r^{\frac{3}{2}}|r+6M|^{\frac{3}{2}}}|p_t|\varphi_- \partial_{p_{r^*}}  + 2\Omega \T_g.
\end{align*}
\end{proposition}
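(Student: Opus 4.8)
The plan is to compute the commutator termwise directly from the explicit expression \eqref{eq:defVminus}, writing $V_+ = A\,\partial_t + B\,\partial_{r^*} - C\,\partial_{p_{r^*}}$ with
$$A = \frac{|r+6M|^{1/2}}{r^{1/2}\Omega}(r-3M), \qquad B = \frac{r}{\Omega}, \qquad C = \frac{r-3M}{r\Omega}p_{r^*}+\frac{r^{1/2}}{|r+6M|^{1/2}\Omega}p_t.$$
Since $A$ and $B$ depend only on $r$ while $C$ depends on $(r,p_{r^*},p_t)$, and since $[\T_g,\partial_t]=0$ by Proposition \ref{ProKilling}, the Leibniz rule for commutators gives
$$[\T_g, V_+] = \T_g(A)\partial_t + \T_g(B)\partial_{r^*} - \T_g(C)\partial_{p_{r^*}} + B[\T_g, \partial_{r^*}] - C[\T_g, \partial_{p_{r^*}}].$$
Into the last two terms I would substitute the two commutators furnished by Lemma \ref{LemComdrdpr}. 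The key structural point is that Lemma \ref{LemComdrdpr} already packages every angular contribution of $[\T_g,\partial_{r^*}]$ inside the single summand $2\Omega^2 r^{-1}\T_g$, so the whole right-hand side is expressed in the family $\{\partial_t,\partial_{r^*},\partial_{p_{r^*}},\T_g\}$, and no $\partial_\theta,\partial_\phi,\partial_{p_\theta},\partial_{p_\phi}$ terms survive outside of $\T_g$ itself.

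The coefficient of $\T_g$ is then immediate: only $B[\T_g,\partial_{r^*}]$ contributes one, namely $B\cdot 2\Omega^2 r^{-1}=2\Omega$. This is the last term of the claimed identity, and matching the genuinely angular components (say the $\partial_\theta$-component, present only through $\T_g$) on both sides fixes this coefficient rigorously. Subtracting $2\Omega\T_g$, it remains to match the coefficients of $\partial_t$, $\partial_{r^*}$ and $\partial_{p_{r^*}}$. Abbreviating $\lambda := -|p_t|\,(r^{1/2}|r+6M|^{1/2}\Omega^2)^{-1}$, the factor appearing in $\T_g(\varphi_-)=\lambda\varphi_-$ (Lemma \ref{Lemphiminus}), the target is $\lambda V_+$ together with the correction: concretely the $\partial_t$-coefficient must equal $\lambda A$, the $\partial_{r^*}$-coefficient must equal $\lambda B$, and the $\partial_{p_{r^*}}$-coefficient must equal $-\lambda C-\frac{r+3M}{r^{3/2}|r+6M|^{3/2}}|p_t|\varphi_-$, since the stated correction has only a $\partial_{p_{r^*}}$ component.

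For the $\partial_t$- and $\partial_{r^*}$-rows, after inserting $\T_g(A)=p_{r^*}\partial_r A$, $\T_g(B)=p_{r^*}\partial_r B$ and the relevant components of the two commutators, the decisive simplification is to eliminate $|\slashed{p}|^2$ everywhere through the null-shell relation $\Omega^2|\slashed{p}|^2=r^2(|p_t|^2-|p_{r^*}|^2)$, keeping track of $|p_t|=-p_t$; I expect both rows to collapse to $\lambda A$ and $\lambda B$ with no leftover. The informative row is the $\partial_{p_{r^*}}$-coefficient, which reads
$$-\T_g(C) + \Omega\frac{r-6M}{r^4}|\slashed{p}|^2,$$
the second summand coming from the $\partial_{p_{r^*}}$-part of $B[\T_g,\partial_{r^*}]$. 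To evaluate $\T_g(C)$ cleanly I would use the alternative form of $C$ from Remark \ref{RqVminus},
$$C = \frac{r-3M}{r^2}\varphi_- + \frac{27M^2\Omega}{r^{3/2}|r+6M|^{1/2}}p_t,$$
so that $\T_g(C)=p_{r^*}\partial_r\!\big(\tfrac{r-3M}{r^2}\big)\varphi_-+\tfrac{r-3M}{r^2}\lambda\varphi_-+p_{r^*}p_t\,\partial_r\!\big(\tfrac{27M^2\Omega}{r^{3/2}|r+6M|^{1/2}}\big)$, using $\T_g(\varphi_-)=\lambda\varphi_-$ and $\T_g(p_t)=0$. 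Grouping the $\varphi_-$-proportional pieces against $-\lambda C$ and reducing the residual $|\slashed{p}|^2$ and $p_{r^*}^2$ terms by the null-shell relation should isolate precisely $-\frac{r+3M}{r^{3/2}|r+6M|^{3/2}}|p_t|\varphi_-$.

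The main obstacle is purely computational: the half-integer powers coming from $r^{1/2}|r+6M|^{1/2}$, from $\Omega=|1-2M/r|^{1/2}$ and from their $r$-derivatives must cancel delicately, and the absolute value $|p_t|=-p_t$ has to be handled consistently when comparing with $\lambda$. The cleanest bookkeeping is to verify each of the three rows against the reference $\lambda V_+$, so that the $\partial_t$ and $\partial_{r^*}$ rows are checked to carry no correction and all nontrivial content is concentrated in the single $\partial_{p_{r^*}}$ entry; no conceptual difficulty arises beyond this algebra, since the commutators of Lemma \ref{LemComdrdpr} and the transport identity of Lemma \ref{Lemphiminus} are already in hand.
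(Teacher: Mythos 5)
Your proposal is correct and follows essentially the same route as the paper's proof: a termwise Leibniz expansion of $[\T_g,V_+]$, substitution of the two commutators from Lemma \ref{LemComdrdpr}, evaluation of $\T_g(C)$ through the rewriting of Remark \ref{RqVminus} together with $\T_g(\varphi_-)$ from Lemma \ref{Lemphiminus}, and reduction of the residual terms by the null-shell relation, with the $\T_g$-coefficient $2\Omega$ coming from $B\cdot 2\Omega^2 r^{-1}$ and all nontrivial content concentrated in the $\partial_{p_{r^*}}$-row. The computations you defer (the collapse of the $\partial_t$- and $\partial_{r^*}$-rows to $\lambda A$ and $\lambda B$, and the isolation of $-\frac{r+3M}{r^{3/2}|r+6M|^{3/2}}|p_t|\varphi_-$) are exactly those carried out in the paper and close as you predict.
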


\begin{proof}
We will work with the expression of $V_+$ given by Remark \ref{RqVminus}. Since $[\T_g,\partial_t]=0$, $\T_g(p_t)=0$ and $\partial_{r^*}=\Omega^2 \partial_r$, we have
\begin{align*}
[\T_g,V_+] &= p_{r^*}\partial_{r} \bigg( \frac{|r+6M|^{\frac{1}{2}}}{r^{\frac{1}{2}}\Omega}(r-3M) \bigg) \partial_t+ p_{r^*}\partial_{r} \bigg( \frac{r}{\Omega} \bigg) \partial_{r^*} -p_{r^*} \partial_r \bigg( \frac{r-3M}{r^2} \bigg) \varphi_- \partial_{p_{r^*}}- \frac{r-3M}{r^2} \T_g (\varphi_-) \partial_{p_{r^*}} \\
& \quad -p_{r^*} \partial_r \bigg( \frac{27M^2 \Omega}{|r+6M|^{\frac{1}{2}}r^{\frac{3}{2}}} \bigg) p_t \partial_{p_{r^*}}+\frac{r}{\Omega}[\T_g, \partial_{r^*} ]-\bigg( \frac{r-3M}{r^2} \varphi_- +\frac{27M^2 \Omega}{|r+6M|^{\frac{1}{2}}r^{\frac{3}{2}}}p_t \bigg) \big[ \T_g, \partial_{p_{r^*}} \big]  .
\end{align*}
From Lemma \ref{LemComdrdpr}, we can infer that
$$ [\T_g,V_+]= a_t \partial_t+a_{r^*} \partial_{r^*}+a_{p_{r^*}} \partial_{p_{r^*}}+ 2 \Omega \T_g,  $$
where $a_x$ are smooth functions of $r$, $p_t$, $p_{r^*}$ and $|\slashed{p}|^2$. Let us determine $a_{t}$. As $r^{\frac{1}{2}} \Omega= |r-2M|^{\frac{1}{2}}$, we have
\begin{align*}
 \partial_r \bigg( \frac{|r+6M|^{\frac{1}{2}}}{r^{\frac{1}{2}} \Omega}  (r-3M) \bigg) &=  \frac{|r+6M|^{\frac{1}{2}}}{|r-2M|^{\frac{1}{2}}}+\frac{r-3M}{2|r+6M|^{\frac{1}{2}} |r-2M|^{\frac{1}{2}}}-\frac{(r-3M)|r+6M|^{\frac{1}{2}}}{2|r-2M|^{\frac{3}{2}}} \\
 & = \frac{2(r+6M)(r-2M)+(r-3M)(r-2M-r-6M))}{2|r+6M|^{\frac{1}{2}} |r-2M|^{\frac{3}{2}}}\\
 &=\frac{r^2}{|r+6M|^{\frac{1}{2}} |r-2M|^{\frac{3}{2}}}.
 \end{align*}
 Thus, we get using first Lemma \ref{LemComdrdpr} and then relation \eqref{eq:equalityforVminus} that
 \begin{align*}
 a_{t} & = \frac{r^2}{|r+6M|^{\frac{1}{2}} |r-2M|^{\frac{3}{2}}}p_{r^*}+\frac{(r-3M)p_t}{r \Omega^3}+\frac{(r-3M)|p_{r^*}|^2}{r \Omega^3 p_t} -\bigg( \frac{r-3M}{r^2} \varphi_- +\frac{27M^2 \Omega}{|r+6M|^{\frac{1}{2}}r^{\frac{3}{2}}}p_t \bigg) \frac{p_{r^*}}{ \Omega^2p_t} \\
 & = \frac{(r-3M)p_t}{r \Omega^3}= - \frac{|p_t|}{r^{\frac{1}{2}}|r+6M|^{\frac{1}{2}}\Omega^2} \cdot \frac{|r+6M|^{\frac{1}{2}}}{r^{\frac{1}{2}}\Omega}(r-3M) .
 \end{align*}
Applying once again Lemma \ref{LemComdrdpr} and the relation \eqref{eq:equalityforVminus}, we obtain
\begin{align*}
a_{r^*} & =p_{r^*}\partial_{r} \bigg( \frac{r}{\Omega} \bigg) -2\frac{r-3M}{r \Omega^3}p_{r^*}+\frac{1}{\Omega^2}\bigg( \frac{r-3M}{r\Omega}p_{r^*}+\frac{r^{\frac{1}{2}}}{|r+6M|^{\frac{1}{2}} \Omega}p_t \bigg) = -\frac{|p_t|}{r^{\frac{1}{2}}|r+6M|^{\frac{1}{2}} \Omega^2}\cdot \frac{r}{\Omega}.
\end{align*}
We recall from Lemma \ref{Lemphiminus} the expression of $\T_g(\varphi_-)$. We then have to prove that
 $$ a_{p_{r^*}} +\frac{r-3M}{r^2} \T_g (\varphi_-) - \frac{|p_t|}{r^{\frac{1}{2}}|r+6M|^{\frac{1}{2}}\Omega^2}\cdot \frac{27M^2 \Omega}{|r+6M|^{\frac{1}{2}}r^{\frac{3}{2}}}p_t=- \frac{r+3M}{r^{\frac{3}{2}}|r+6M|^{\frac{3}{2}}}|p_t|\varphi_-.$$
 For this, we compute
 $$ \partial_{r} \bigg( \frac{r-3M}{r^2} \bigg) = -\frac{r-6M}{r^3}, \qquad\quad  \partial_r \bigg( \frac{27M^2 \Omega}{|r+6M|^{\frac{1}{2}}r^{\frac{3}{2}}} \bigg)= -\frac{54M^2(r^2+2Mr-12M^2)}{|r-2M|^{\frac{1}{2}} |r+6M|^{\frac{3}{2}} r^3}.$$
 Hence, by Lemma \ref{LemComdrdpr}, we have
 $$ a_{p_{r^*}} +\frac{r-3M}{r^2} \T_g (\varphi_-)= \frac{r-6M}{r^3}\varphi_- p_{r^*}+ \frac{54M^2(r^2+2Mr-12M^2)}{|r-2M|^{\frac{1}{2}} |r+6M|^{\frac{3}{2}} r^3}p_{r^*} p_t +\Omega \frac{r-6M}{r^4} |\slashed{p}|^2 .$$
 Moreover, by the null-shell relation there holds
 $$ \Omega \frac{r-6M}{r^4} |\slashed{p}|^2=  \frac{r-6M}{r^2 \Omega} (|p_t|^2-|p_{r^*}|^2)=\frac{r-6M}{r^2 \Omega} |p_t|^2-\frac{r-6M}{r^3 } \varphi_- p_{r^*}+\frac{(r-6M)(r-3M)|r+6M|^{\frac{1}{2}}}{r^{\frac{7}{2}} \Omega } p_t p_{r^*}. $$
We further remark that
\begin{align*}
  \frac{r-6M}{r^2 \Omega} |p_t|^2  -\frac{|p_t|}{r^{\frac{1}{2}}|r+6M|^{\frac{1}{2}}\Omega^2}\cdot \frac{27M^2 \Omega}{|r+6M|^{\frac{1}{2}}r^{\frac{3}{2}}}p_t &=      \frac{r^2-9M^2}{r^2(r+6M) \Omega} |p_t|^2   \\
  &=\frac{r+3M}{r^{\frac{3}{2}}|r+6M|^{\frac{3}{2}} } p_t \varphi_--  \frac{r+3M}{r^{\frac{1}{2}}|r+6M|^{\frac{3}{2}} \Omega } p_t p_{r^*}. 
\end{align*}
The result then ensues from
$$ \frac{(r-6M)(r-3M)|r+6M|^{\frac{1}{2}}}{r^{\frac{7}{2}} \Omega }-  \frac{r+3M}{r^{\frac{1}{2}}|r+6M|^{\frac{3}{2}} \Omega }=\frac{(r-6M)(r-3M)|r+6M|^2-(r+3M)r^3}{r^{\frac{7}{2}} |r+6M|^{\frac{3}{2}} \Omega}   ,  $$
and the relation $$(r-6M)(r-3M)|r+6M|^2-(r+3M)r^3=-54M^2r^2-108M^3r+54 \cdot 12 M^4.$$
\end{proof}

Let us also consider the projection on $T \mathcal{P}$ parallel to $\partial_{p_t}$ of the symplectic gradient $\T_{\varphi_+}$ associated to the weight $\varphi_+$. It is given by
 $$ V_-  =- \frac{|r+6M|^{\frac{1}{2}}}{r^{\frac{1}{2}}\Omega}(r-3M) \partial_t+\frac{r}{\Omega} \partial_{r^*}-\bigg( \frac{r-3M}{r\Omega}p_{r^*}-\frac{r^{\frac{1}{2}}}{|r+6M|^{\frac{1}{2}} \Omega}p_t \bigg) \partial_{p_{r^*}}.$$
Even if we will not use this vector field in the proof of Theorem \ref{thm_main_intro}, we state the next two propositions for completeness. 

\begin{proposition}\label{ProVplus}
We have
\begin{align*}
[\T_g,V_-] &=   \frac{|p_t|}{r^{\frac{1}{2}}|r+6M|^{\frac{1}{2}}\Omega^2} V_-+ \frac{r+3M}{r^{\frac{3}{2}}|r+6M|^{\frac{3}{2}}}|p_t|\varphi_+ \partial_{p_{r^*}}  + 2\Omega \T_g.
\end{align*}
\end{proposition}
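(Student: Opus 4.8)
The plan is to deduce the formula for $[\T_g,V_-]$ from the already-established Proposition~\ref{ProComVminus} by means of a discrete symmetry, rather than redoing the full computation. Consider the involution
\[ \sigma \colon (t,r^*,\theta,\phi,p_{r^*},p_\theta,p_\phi) \longmapsto (-t,r^*,\theta,\phi,-p_{r^*},-p_\theta,-p_\phi) \]
of the null-shell $\mathcal{P}$, the composition of the time reflection $t\mapsto -t$ with the fibrewise antipodal map $p\mapsto -p$. Since $p_t$ is determined on $\mathcal{P}$ by \eqref{eq:defConsQua} through $p_{r^*}^2$ and $|\slashed{p}|^2$, it is fixed by $\sigma$, so $\sigma$ preserves the future null-shell and leaves $|p_t|$ and $\Omega$ invariant. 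The time reflection is an isometry of Schwarzschild and hence preserves the geodesic spray, whereas $p\mapsto -p$ is anti-symplectic and reverses it; consequently $\sigma_*\T_g=-\T_g$. The first thing I would record carefully is precisely this identity $\sigma_*\T_g=-\T_g$, together with the fact that $\sigma$ preserves $\mathcal{P}$.

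Next I would track how the relevant objects transform. From Definition~\ref{Defvarphimin} and \eqref{eq:defvarphiplus}, and using that $\sigma$ flips $p_{r^*}$ while fixing $p_t$, one reads off $\varphi_-\circ\sigma=-\varphi_+$. Because $\sigma$ fixes $p_t$, it also fixes the direction $\partial_{p_t}$ along which $V_+$ and $V_-$ are defined as projections of $\T_{\varphi_-}$ and $\T_{\varphi_+}$; since $\sigma$ is anti-symplectic one gets $\sigma_*\T_{\varphi_-}=\T_{\varphi_+}$, and equivariance of the projection then yields $\sigma_*V_+=V_-$. This last identity can alternatively be checked directly from \eqref{eq:defVminus}, noting that under $\sigma$ one has $\partial_t\mapsto-\partial_t$, $\partial_{r^*}\mapsto\partial_{r^*}$, $\partial_{p_{r^*}}\mapsto-\partial_{p_{r^*}}$ and $p_{r^*}\mapsto-p_{r^*}$.

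With these transformation rules in hand, the conclusion follows by applying $\sigma_*$ to the identity of Proposition~\ref{ProComVminus}. On the left side, $\sigma_*[\T_g,V_+]=[\sigma_*\T_g,\sigma_*V_+]=[-\T_g,V_-]=-[\T_g,V_-]$. On the right side, the term $-\tfrac{|p_t|}{r^{1/2}|r+6M|^{1/2}\Omega^2}V_+$ becomes $-\tfrac{|p_t|}{r^{1/2}|r+6M|^{1/2}\Omega^2}V_-$; the term $-\tfrac{r+3M}{r^{3/2}|r+6M|^{3/2}}|p_t|\varphi_-\partial_{p_{r^*}}$ becomes, using $\varphi_-\circ\sigma=-\varphi_+$ and $\partial_{p_{r^*}}\mapsto-\partial_{p_{r^*}}$ (two sign changes cancelling), $-\tfrac{r+3M}{r^{3/2}|r+6M|^{3/2}}|p_t|\varphi_+\partial_{p_{r^*}}$; and $2\Omega\,\T_g$ becomes $-2\Omega\,\T_g$. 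Equating and multiplying through by $-1$ gives exactly the claimed formula. Note how the sign of the transport coefficient in front of $V_-$ flips relative to the $V_+$ case precisely because $\varphi_-$ maps to $-\varphi_+$ and the flow is reversed.

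Alternatively, one can reprove the statement by the direct computation that mirrors Proposition~\ref{ProComVminus} line by line: rewrite the $\partial_{p_{r^*}}$-coefficient of $V_-$ via the analogue of \eqref{eq:equalityforVminus}, namely $\tfrac{r-3M}{r\Omega}p_{r^*}-\tfrac{r^{1/2}}{|r+6M|^{1/2}\Omega}p_t=\tfrac{r-3M}{r^2}\varphi_+-\tfrac{27M^2\Omega}{r^{3/2}|r+6M|^{1/2}}p_t$ (equivalent to the same polynomial identity $(r-3M)^2(r+6M)+27M^2(r-2M)=r^3$), apply $\T_g$ through Leibniz using $[\T_g,\partial_t]=0$, $\T_g(p_t)=0$ and the commutators of Lemma~\ref{LemComdrdpr}, and read off the coefficients of $\partial_t,\partial_{r^*},\partial_{p_{r^*}}$ with $\T_g(\varphi_+)$ supplied by Lemma~\ref{lemmavarphiplus} (whose coefficient now carries a plus sign, unlike Lemma~\ref{Lemphiminus}). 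The main obstacle in either route is the same: being scrupulous with signs. In the symmetry approach the single delicate point is establishing $\sigma_*\T_g=-\T_g$ and the $\sigma$-equivariance of the $\partial_{p_t}$-projection; in the direct approach it is verifying that the $p_{r^*}$-contributions to the $\partial_t$- and $\partial_{r^*}$-coefficients cancel and that the closing identity $(r-6M)(r-3M)|r+6M|^2-(r+3M)r^3=-54M^2r^2-108M^3r+648M^4$ produces the correct $\varphi_+$ term.
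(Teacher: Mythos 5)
Your proof is correct, and it takes a genuinely different route from the paper's: the paper proves Proposition \ref{ProVplus} by the direct computation mirroring Proposition \ref{ProComVminus} line by line (which is exactly your fallback route, and your two key identities there --- the $\varphi_+$ analogue of \eqref{eq:equalityforVminus}, resting on $(r-3M)^2(r+6M)+27M^2(r-2M)=r^3$, and the closing polynomial identity with $54\cdot 12\,M^4=648M^4$ --- are stated correctly), whereas your primary argument deduces the $V_-$ formula from the $V_+$ one by pushing forward along the involution $\sigma$. All the ingredients check out: $\sigma$ fixes $r$, $\Omega$, $|\slashed{p}|$ and, by \eqref{eq:defConsQua}, $p_t$, hence preserves $\mathcal{P}$; the identity $\sigma_*\T_g=-\T_g$ can be read off \eqref{eq:defT} directly (or abstractly: the cotangent lift of the isometry $t\mapsto -t$ is symplectic and preserves $H$, while $p\mapsto -p$ is anti-symplectic and preserves $H$ --- note that your phrase ``anti-symplectic and reverses it'' tacitly uses this $H$-invariance, which is worth saying explicitly); $\sigma_*V_+=V_-$ and $\varphi_-\circ\sigma=-\varphi_+$ follow by inspection of \eqref{eq:defVminus}, Definition \ref{Defvarphimin} and \eqref{eq:defvarphiplus}; and then naturality of the Lie bracket, $\sigma_*[\T_g,V_+]=[\sigma_*\T_g,\sigma_*V_+]=-[\T_g,V_-]$, together with $\sigma_*(hX)=(h\circ\sigma)\,\sigma_*X$ for an involution, turns Proposition \ref{ProComVminus} into the stated identity after multiplying by $-1$. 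Two small caveats: the argument lives over $\{r>2M\}$, since the tortoise coordinates (and $\varphi_+$, $V_-$ themselves) degenerate at $\mathcal{H}^+$, but that is where the statement makes sense anyway; and the claimed $\sigma$-equivariance of the $\partial_{p_t}$-projection of symplectic gradients would require discussing the extensions $\varphi_\pm^m$ off the null-shell, so it is good that you replace it by the direct coordinate check of $\sigma_*V_+=V_-$. As for what each approach buys: yours obtains Proposition \ref{ProVplus} with no new computation and explains structurally why every sign flips (time reversal exchanges the future- and past-trapping weights, $\varphi_-\circ\sigma=-\varphi_+$, and reverses the flow, so the damping term for $V_+$ becomes a growth term for $V_-$), while the paper's direct computation is self-contained and avoids introducing the discrete symmetry altogether.
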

\begin{proof}
The computations are similar to the ones carried out during the proof of Proposition \ref{ProComVminus}.
\end{proof}
Finally, we obtain some identities relating the radial part of the Vlasov operator $\pmb{R}=\T_g-r^{-2}\T_Q$, the radial scaling vector field $\pmb{L}$ and $V_{\pm}$.

\begin{proposition}\label{ProvarphiV}
The following relations hold,
\begin{align*}
 \varphi_-V_-+\varphi_+ V_+ &=  2r^2 \pmb{R} +54M^2p_t\partial_t , \\
 \varphi_-V_--\varphi_+ V_+ &=  \frac{54M^2p_t}{r^{\frac{1}{2}}|r+6M|^{\frac{1}{2}}} \pmb{L} .
\end{align*}
\end{proposition}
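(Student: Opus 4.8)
The statement consists of two pointwise identities between vector fields on $\mathcal{P}$, so the plan is to verify them by a direct expansion in the coordinate frame $\{\partial_t,\partial_{r^*},\partial_{p_{r^*}}\}$ and a matching of coefficients. The computation is streamlined by exploiting the structural symmetry of the objects involved. Writing $A:=\frac{r}{\Omega}$, $B:=\frac{|r+6M|^{1/2}(r-3M)}{r^{1/2}\Omega}$, $C:=\frac{r^{1/2}}{|r+6M|^{1/2}\Omega}$ and $D:=\frac{r-3M}{r\Omega}$, Definition \ref{Defvarphimin}, \eqref{eq:defvarphiplus} and Proposition \ref{ProdefVminus} (together with the analogous displayed expression for $V_-$) give $\varphi_-=Ap_{r^*}+Bp_t$, $\varphi_+=Ap_{r^*}-Bp_t$, as well as $V_+=B\partial_t+A\partial_{r^*}-(Dp_{r^*}+Cp_t)\partial_{p_{r^*}}$ and $V_-=-B\partial_t+A\partial_{r^*}-(Dp_{r^*}-Cp_t)\partial_{p_{r^*}}$. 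Thus $V_+$ and $V_-$ share the same $\partial_{r^*}$-component and have opposite $\partial_t$-components, while $\varphi_-$ and $\varphi_+$ differ only by the sign of their $p_t$-term; consequently, in each of the combinations $\varphi_-V_-\pm\varphi_+V_+$ the cross terms either reinforce or cancel cleanly.

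First I would treat the $\partial_t$ and $\partial_{r^*}$ components, which are immediate. For the first identity the $\partial_{r^*}$-coefficient is $A(\varphi_-+\varphi_+)=2A^2p_{r^*}=\frac{2r^2}{\Omega^2}p_{r^*}$, matching $2r^2\pmb{R}$, while the $\partial_t$-coefficient is $-2B^2p_t$; using the polynomial identity $(r+6M)(r-3M)^2=r^3-27M^2r+54M^3$ one checks $-2B^2=-\frac{2r^2}{\Omega^2}+54M^2$, which is precisely the $\partial_t$-coefficient of $2r^2\pmb{R}+54M^2p_t\partial_t$. For the second identity the $\partial_t$- and $\partial_{r^*}$-coefficients are $-2ABp_{r^*}$ and $2ABp_t$ respectively, and since $54M^2g(r)=2r(r-3M)(r+6M)$ these reproduce exactly the corresponding components of $\frac{54M^2p_t}{r^{1/2}|r+6M|^{1/2}}\pmb{L}$, recalling from \eqref{eq:defL} that $\pmb{L}=-\frac{g(r)p_{r^*}}{\Omega^2p_t}\partial_t+g(r)\Omega^{-2}\partial_{r^*}+p_{r^*}\partial_{p_{r^*}}$.

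The $\partial_{p_{r^*}}$-component is the only step requiring genuine input, and it is where the two nontrivial relations enter. For the first identity the coefficient collapses to $-2AD\,p_{r^*}^2+2BC\,p_t^2$; since $AD=BC=\frac{r-3M}{\Omega^2}$ this equals $\frac{2(r-3M)}{\Omega^2}(|p_t|^2-|p_{r^*}|^2)$, and invoking the null-shell relation $\Omega^2|\slashed{p}|^2=r^2(|p_t|^2-|p_{r^*}|^2)$ from \eqref{eq:defConsQua} turns it into $\frac{2(r-3M)}{r^2}|\slashed{p}|^2$, the $\partial_{p_{r^*}}$-component of $2r^2\pmb{R}$. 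For the second identity the coefficient is $2p_tp_{r^*}(AC-BD)$ with $AC-BD=\frac{r^3-(r+6M)(r-3M)^2}{r^{3/2}|r+6M|^{1/2}\Omega^2}$; the decisive cancellation $r^3-(r+6M)(r-3M)^2=27M^2(r-2M)=27M^2r\Omega^2$ clears the $\Omega^2$ in the denominator and yields $\frac{54M^2}{r^{1/2}|r+6M|^{1/2}}p_tp_{r^*}$, matching the $p_{r^*}\partial_{p_{r^*}}$-term of $\pmb{L}$ scaled by $\frac{54M^2p_t}{r^{1/2}|r+6M|^{1/2}}$.

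I expect the main obstacle to be purely bookkeeping: carefully tracking the half-integer powers of $r$ and $r+6M$ and verifying the two polynomial identities above, rather than any conceptual difficulty. The only substantive analytic input is the null-shell relation, used once; everything else is algebra, and the appearance of the factor $\Omega^2$ from the cancellation $r^3-(r+6M)(r-3M)^2=27M^2r\Omega^2$ is exactly what reconciles the singular-looking coefficients of $\pmb{L}$ with the regular combination on the left.
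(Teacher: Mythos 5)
Your proof is correct and follows essentially the same route as the paper's: both expand the combinations $\varphi_-V_-\pm\varphi_+V_+$ componentwise in the frame $\{\partial_t,\partial_{r^*},\partial_{p_{r^*}}\}$, exploit the symmetric structure through $\varphi_-+\varphi_+=2\frac{r}{\Omega}p_{r^*}$ and $\varphi_--\varphi_+=2\frac{|r+6M|^{1/2}(r-3M)}{r^{1/2}\Omega}\,p_t$, and close with the polynomial identity $(r+6M)(r-3M)^2=r^3-27M^2r+54M^3$ together with (for the first identity only) the null-shell relation. Your bookkeeping of the products $AD=BC=\frac{r-3M}{\Omega^2}$ and $AC-BD=\frac{27M^2}{r^{1/2}|r+6M|^{1/2}}$ is exactly the algebra the paper performs in its factored form, so there is nothing to add.
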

\begin{proof}
We have
\begin{align*}
 \varphi_+ V_++\varphi_-V_- &= \frac{|r+6M|^{\frac{1}{2}}}{r^{\frac{1}{2}}\Omega}(r-3M)(\varphi_+-\varphi_-) \partial_t+\frac{r}{\Omega} (\varphi_++\varphi_-)\partial_{r^*} \\
 & \quad -\bigg( \frac{r-3M}{r\Omega}p_{r^*}(\varphi_++\varphi_-)+\frac{r^{\frac{1}{2}}}{|r+6M|^{\frac{1}{2}} \Omega}p_t (\varphi_+-\varphi_-) \bigg) \partial_{p_{r^*}},
 \end{align*}
and
$$ \varphi_++\varphi_-= 2\frac{r p_{r^*}}{\Omega}, \quad \qquad \varphi_+-\varphi_-=-2\frac{|r+6M|^{\frac{1}{2}}}{r^{\frac{1}{2}}\Omega}(r-3M)p_t.$$
Consequently, we have
\begin{align*}
 \varphi_+ V_++\varphi_-V_- &= -2\frac{r+6M}{r\Omega^2}(r-3M)^2p_t \partial_t+2\frac{r^2}{\Omega^2} p_{r^*}\partial_{r^*}+2\frac{r-3M}{ \Omega^2}(|p_t|^2-|p_{r^*}|^2) \partial_{p_{r^*}}.
\end{align*}
Similarly, one has
$$ \varphi_+ V_+-\varphi_-V_-= 2\frac{r^{\frac{1}{2}}|r+6M|^{\frac{1}{2}}}{\Omega^2}(r-3M)p_{r^*} \partial_t -2\frac{r^{\frac{1}{2}}|r+6M|^{\frac{1}{2}}}{\Omega^2}(r-3M) p_t\partial_{r^*} -2\frac{27M^2}{r^{\frac{1}{2}}|r+6M|^{\frac{1}{2}}}p_{r^*}p_t \partial_{p_{r^*}} .$$
Finally, recall the null-shell relation $\frac{|p_t|^2-|p_{r^*}|^2}{\Omega^2}=\frac{|\slashed{p}|^2}{r^2} $ as well as the definitions \eqref{eq:defR} and \eqref{eq:defL} of $\pmb{R}$ and $\pmb{L}$.
\end{proof}

\subsection{The improved commutation vector field}

The commutation formula for $V_+$ is not completely satisfying because of the error term proportional to $\varphi_- \partial_{p_{r^*}}$. While one can expect the energy flux of $\varphi_- \partial_{p_{r^*}} f$ to be bounded in $L^1(\pi^{-1}(\Sigma_\tau))$, we will avoid controlling a non-degenerate bulk norm for this quantity in view of the discussion of Section \ref{Subsubsecstart} (recall in particular \eqref{eq:heuri}). For this purpose, we modify $V_+$ with a vector field proportional to $\partial_{p_{r^*}}$. The correction will compensate for the bad error term.

\subsubsection{Some properties of the vector field $\varphi_- \partial_{p_{r^*}}$}

It turns out that one obtains a simpler commutation formula by rescaling $\varphi_- \partial_{p_{r^*}}$ by $r\Omega^{-1}$. For this reason, we recall $\pmb{\varphi}_-\coloneqq\Omega^{-1} \varphi_-$.

\begin{proposition}\label{Propartialprstar}
There holds
\begin{align*}
 [ \T_g, r\pmb{\varphi}_- \partial_{p_{r^*}}]&=\frac{|\pmb{\varphi}_-|^2}{ p_t} \partial_t-\frac{\pmb{\varphi}_-}{\Omega} V_+ .
 \end{align*}
\end{proposition}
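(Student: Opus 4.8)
The plan is to expand the commutator by the Leibniz rule and then match the three coefficients (those of $\partial_t$, $\partial_{r^*}$, $\partial_{p_{r^*}}$) against the right-hand side. Writing $h\coloneqq r\pmb{\varphi}_-$, we have
\begin{equation*}
 [\T_g, h\,\partial_{p_{r^*}}] = \T_g(h)\,\partial_{p_{r^*}} + h\,[\T_g,\partial_{p_{r^*}}].
\end{equation*}
The second term is explicit from Lemma \ref{LemComdrdpr}, namely $h\,[\T_g,\partial_{p_{r^*}}] = \frac{h\,p_{r^*}}{\Omega^2 p_t}\partial_t - \frac{h}{\Omega^2}\partial_{r^*}$. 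This already supplies the full $\partial_t$ and $\partial_{r^*}$ content of the commutator, while the $\partial_{p_{r^*}}$ coefficient comes entirely from $\T_g(h)$. So the whole computation reduces to evaluating $\T_g(r\pmb{\varphi}_-)$ and then verifying three scalar identities.

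First I would compute $\T_g(r\pmb{\varphi}_-)=\T_g(r\Omega^{-1})\varphi_-+r\Omega^{-1}\T_g(\varphi_-)$. Using $\T_g(r)=p_{r^*}$ (read off from \eqref{eq:defT} via $\tfrac{\dr r}{\dr r^*}=\Omega^2$) and $\partial_r(\Omega^{-1})=-\tfrac{M}{r^2\Omega^3}$, one gets $\T_g(r\Omega^{-1})=\tfrac{p_{r^*}}{\Omega}\big(1-\tfrac{M}{r\Omega^2}\big)=\tfrac{(r-3M)p_{r^*}}{r\Omega^3}$, where the simplification $1-\tfrac{M}{r\Omega^2}=\tfrac{r-3M}{r\Omega^2}$ is the one nontrivial algebraic step. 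Combining this with the identity $\T_g(\varphi_-)=-\tfrac{|p_t|}{r^{1/2}|r+6M|^{1/2}\Omega^2}\varphi_-$ from Lemma \ref{Lemphiminus} and rewriting everything in terms of $\pmb{\varphi}_-=\Omega^{-1}\varphi_-$ yields
\begin{equation*}
 \T_g(r\pmb{\varphi}_-)=\frac{(r-3M)p_{r^*}}{r\Omega^2}\pmb{\varphi}_- - \frac{r^{1/2}|p_t|}{|r+6M|^{1/2}\Omega^2}\pmb{\varphi}_-.
\end{equation*}

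Finally I would substitute the explicit form of $V_+$ from Proposition \ref{ProdefVminus} into the target $\frac{|\pmb{\varphi}_-|^2}{p_t}\partial_t-\frac{\pmb{\varphi}_-}{\Omega}V_+$ and compare coefficients. The $\partial_{r^*}$ terms match at once, both equalling $-\tfrac{r\pmb{\varphi}_-}{\Omega^2}$. The $\partial_t$ identity reduces, after dividing by $\pmb{\varphi}_-$ and multiplying by $p_t$, to $\tfrac{r}{\Omega^2}p_{r^*}=\pmb{\varphi}_- - \tfrac{|r+6M|^{1/2}(r-3M)}{r^{1/2}\Omega^2}p_t$, which is just the definition of $\pmb{\varphi}_-$ (Definition \ref{Defvarphimin}). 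The $\partial_{p_{r^*}}$ identity requires the computed coefficient $\T_g(r\pmb{\varphi}_-)$ to equal the $\partial_{p_{r^*}}$ part of $-\tfrac{\pmb{\varphi}_-}{\Omega}V_+$; the $p_{r^*}$-proportional pieces agree directly, and the remaining terms match precisely because $p_t<0$ on $\mathcal{P}$, so that $|p_t|=-p_t$. This sign is the only place where the null-shell hypothesis enters, and it is the sole point that is not a purely formal manipulation; everything else is bookkeeping, which is why I expect the main (mild) obstacle to be simply organizing the algebra cleanly rather than any conceptual difficulty. Dividing by $\pmb{\varphi}_-$ is legitimate on the open dense set $\{\pmb{\varphi}_-\neq 0\}$, and the identity of smooth vector fields then extends to all of $\mathcal{P}$ by continuity.
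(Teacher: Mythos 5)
Your proof is correct and is essentially the paper's own argument: both rest on the commutator $[\T_g,\partial_{p_{r^*}}]$ from Lemma \ref{LemComdrdpr}, the identity $\T_g(\varphi_-)=-\frac{|p_t|}{r^{1/2}|r+6M|^{1/2}\Omega^2}\varphi_-$ of Lemma \ref{Lemphiminus}, the coordinate expression of $V_+$, and the algebra $\partial_r(r\Omega^{-1})=\frac{r-3M}{r\Omega^3}$. The only difference is organizational — the paper derives the result in stages via $[\T_g,p_t\partial_{p_{r^*}}]$ and $[\T_g,\varphi_-\partial_{p_{r^*}}]$, recognizing $V_+$ along the way, whereas you compute $\T_g(r\pmb{\varphi}_-)$ in one shot and verify the claimed identity by matching the $\partial_t$, $\partial_{r^*}$, $\partial_{p_{r^*}}$ coefficients; your closing density argument is also unnecessary, since instead of dividing by $\pmb{\varphi}_-$ one can simply check the scalar identity and multiply through by $\pmb{\varphi}_-$, which is valid on all of $\mathcal{P}$.
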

\begin{remark}\label{Rkregu}
Contrary to $r\pmb{\varphi}_- p_t \partial_{p_{r^*}}$, the vector field $r\pmb{\varphi}_- \partial_{p_{r^*}}$ is not regular up to the future event horizon.
\end{remark}

\begin{proof}
By Lemma \ref{LemComdrdpr} and Proposition \ref{ProdefVminus}, giving the expression of $V_+$ in local coordinates,
$$ [ \T_g, p_t\partial_{p_{r^*}}]=\frac{\varphi_-}{r\Omega} \partial_t-\frac{p_t}{r\Omega} V_+-\frac{p_t}{r\Omega}\bigg( \frac{r-3M}{r\Omega}p_{r^*}+\frac{r^{\frac{1}{2}}}{|r+6M|^{\frac{1}{2}} \Omega}p_t \bigg) \partial_{p_{r^*}}.$$
We then obtain from Lemma \ref{Lemphiminus} that
$$ [ \T_g,\varphi_- \partial_{p_{r^*}}]=\frac{|\varphi_-|^2}{r\Omega p_t} \partial_t-\frac{\varphi_-}{r\Omega} V_+-\frac{(r-3M)p_{r^*}}{r^2\Omega^2}\varphi_- \partial_{p_{r^*}}.$$
Finally, we get the result from $\pmb{\varphi}_-=\Omega^{-1}\varphi_-$ and
$$\partial_{r}(r\Omega^{-1})=\Omega^{-1}-\frac{M}{r}\Omega^{-3}=\frac{r-3M}{r^2 \Omega^2}r\Omega^{-1}.$$ 
\end{proof}

We note that one cannot control a non-degenerate bulk integral of $r\pmb{\varphi}_- p_N p_t\partial_{p_{r^*}}f$ near $\H$ by simply using the previous commutation formula and an energy estimate. Indeed, in view of Remark \ref{Rkregu}, one cannot use the redshift weight in order to generate a good error term because of regularity issues. To circumvent this difficulty, we will use the next estimate.

\begin{lemma}\label{LemBoundpartialpr}
Let $f$ be a solution the massless Vlasov equation. For $r < 2.7M$, there holds
$$ \bigg| \frac{2|p_u|}{\Omega^2} p_t \partial_{p_{r^*}} f-\frac{rp_{r^*}}{(r-3M)} \Omega^{-1} V_+ f \bigg| \lesssim   \big| p_N \partial_t f \big|+|\slashed{p}|\big|  \T_{|\slashed{p}|} f \big|+ \big(|p_t|^2+ |\slashed{p}|^2\big)  \big|  \partial_{p_{r^*}} f \big|.$$
\end{lemma}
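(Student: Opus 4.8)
The plan is to substitute the explicit expression \eqref{eq:defVminus} for $V_+$ into the left-hand side and then use the Vlasov equation $\T_g(f)=0$ to trade the bad $\partial_{r^*}f$ contribution for the admissible derivatives $\partial_t f$, $\partial_{p_{r^*}}f$ and $\T_{|\slashed{p}|}f$. Writing $V_+f$ out in the frame $\{\partial_t,\partial_{r^*},\partial_{p_{r^*}}\}$ and multiplying by $-\frac{rp_{r^*}}{(r-3M)\Omega}$, the $\partial_{r^*}f$ coefficient of the full combination becomes $-\frac{r^2 p_{r^*}}{(r-3M)\Omega^2}$. I would then invoke the decomposition $\T_g=\pmb{R}+r^{-2}\T_Q$ of Lemma \ref{LemDecompT} together with $\T_Q f=|\slashed{p}|\,\T_{|\slashed{p}|}f$, so that $\T_g(f)=0$ gives
\[
\frac{p_{r^*}}{\Omega^2}\partial_{r^*}f = \frac{p_t}{\Omega^2}\partial_t f - \frac{r-3M}{r^4}|\slashed{p}|^2\,\partial_{p_{r^*}}f - \frac{|\slashed{p}|}{r^2}\,\T_{|\slashed{p}|}f .
\]
Substituting this eliminates $\partial_{r^*}f$ and produces a $\T_{|\slashed{p}|}f$ term with coefficient $\frac{|\slashed{p}|}{r-3M}$, which is harmless on $\{r<2.7M\}$ since $|r-3M|\geq \tfrac{3}{10}M$ there; this already controls the third term on the right-hand side.

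It then remains to estimate the coefficients of $\partial_t f$ and $\partial_{p_{r^*}}f$. Collecting terms, using $2|p_u|=-p_t+p_{r^*}$ and the null-shell relation $\Omega^2|\slashed{p}|^2=r^2(p_t^2-p_{r^*}^2)$, I expect the coefficient of $\partial_{p_{r^*}}f$ to collapse to
\[
\frac{p_{r^*}p_t}{\Omega^2}\Big(1 + \frac{r^{\frac{3}{2}}}{(r-3M)|r+6M|^{\frac{1}{2}}}\Big),
\]
while the coefficient of $\partial_t f$ reduces to $-\frac{1}{\Omega^2}\big(r^{\frac{1}{2}}|r+6M|^{\frac{1}{2}}p_{r^*}+\frac{r^2}{r-3M}p_t\big)$. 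At first glance both carry a dangerous $\Omega^{-2}$ factor, and naively this would obstruct the bound by $|p_t|^2+|\slashed{p}|^2$ and $|p_N|$.

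The heart of the argument — and the step I expect to be the main obstacle — is to recognise that this $\Omega^{-2}$ is spurious, due to a cancellation governed by $N(r)\coloneqq r^{\frac{3}{2}}+(r-3M)|r+6M|^{\frac{1}{2}}$. Using the algebraic identity $r^3-(r-3M)^2(r+6M)=27M^2 r\Omega^2$ (the same one behind Lemma \ref{lemmavarphiplus}), one has $N(r)\big[r^{\frac{3}{2}}-(r-3M)|r+6M|^{\frac{1}{2}}\big]=27M^2 r\Omega^2$, and since the bracketed factor is bounded above and below on $[2M,2.7M]$, this yields $N(r)\sim \Omega^2$. For the $\partial_{p_{r^*}}f$ coefficient I would rewrite $1+\frac{r^{3/2}}{(r-3M)|r+6M|^{1/2}}=\frac{N(r)}{(r-3M)|r+6M|^{1/2}}\sim \Omega^2$, so the coefficient is $\sim p_{r^*}p_t$, hence $\lesssim |p_t|^2\leq |p_t|^2+|\slashed{p}|^2$ by $|p_{r^*}|\leq |p_t|$.

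For the $\partial_t f$ coefficient I would substitute $p_{r^*}=2|p_u|+p_t$, turning the bracket into $2r^{\frac{1}{2}}|r+6M|^{\frac{1}{2}}|p_u|+\frac{r^{1/2}N(r)}{r-3M}p_t$; dividing by $\Omega^2$ and again using $N(r)\sim\Omega^2$ bounds the coefficient by $\frac{|p_u|}{\Omega^2}+|p_t|\sim |p_N|$ by Lemma \ref{LempN}. Combining the three estimates gives the claimed inequality. Every step other than the cancellation $N(r)\sim\Omega^2$ is routine bookkeeping of coefficients, so the delicacy lies entirely in unveiling that hidden $\Omega^2$ factor.
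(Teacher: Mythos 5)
Your proposal is correct and is essentially the paper's own argument run in the opposite direction: both proofs eliminate $\partial_{r^*}f$ through $\T_g(f)=0$ using the decomposition $\T_g=\pmb{R}+r^{-2}\T_Q$ with $\T_Q=|\slashed{p}|\T_{|\slashed{p}|}$, invoke the null-shell relation, and hinge on the same factorization $r^3-(r-3M)^2(r+6M)=27M^2r\Omega^2$ (your $N(r)\sim\Omega^2$ on $[2M,2.7M]$) to cancel the apparent $\Omega^{-2}$ singularities in the $\partial_t f$ and $\partial_{p_{r^*}}f$ coefficients. The only difference is bookkeeping: you expand $V_+$ in the full combination and collect net coefficients, whereas the paper adds and subtracts $\frac{r^{1/2}p_{r^*}p_t}{|r+6M|^{1/2}\Omega^2}\partial_{p_{r^*}}$ to reassemble $\frac{p_{r^*}}{\Omega}V_+$ out of $\frac{2p_u}{\Omega^2}\frac{r-3M}{r}p_{r^*}\partial_{p_{r^*}}$ — the same cancellations read forwards.
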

\begin{proof}
As $p_t=2p_v-p_{r^*}$, in view of the null-shell relation $4r^2|p_u||p_v|=\Omega^2 |\slashed{p}|^2$, it suffices to estimate $\frac{2|p_u|}{\Omega^2} p_{r^*} \partial_{p_{r^*}} f$. We write
\begin{align*}
\frac{2p_u}{\Omega^2}\frac{r-3M}{r} p_{r^*} \partial_{p_{r^*}} &= -\frac{r-3M}{r\Omega^2}|p_{r^*}|^2 \partial_{p_{r^*}}-\frac{r^{\frac{1}{2}}p_{r^*}p_t}{|r+6M|^{\frac{1}{2}} \Omega^2} \partial_{p_{r^*}} + \frac{r^{\frac{1}{2}} p_{r^*}p_t}{|r+6M|^{\frac{1}{2}} \Omega^2}  \partial_{p_{r^*}}+\frac{r-3M}{r\Omega^2}p_{r^*}p_t \partial_{p_{r^*}}.
\end{align*}
We remark now that 
\begin{align*}
 \frac{r^{\frac{1}{2}} p_{r^*}p_t}{|r+6M|^{\frac{1}{2}} \Omega^2}  \partial_{p_{r^*}}+\frac{r-3M}{r\Omega^2}p_{r^*}p_t \partial_{p_{r^*}}=\frac{27M^2}{|r+6M|^{\frac{1}{2}} (r^{\frac{3}{2}}-|r+6M|^{\frac{1}{2}} (r-3M))}p_tp_{r^*} \partial_{p_{r^*}},
\end{align*}
and
\begin{align*}
-\frac{r-3M}{r\Omega^2}|p_{r^*}|^2 \partial_{p_{r^*}}-\frac{r^{\frac{1}{2}}p_{r^*}p_t}{|r+6M|^{\frac{1}{2}} \Omega^2} \partial_{p_{r^*}} = \frac{p_{r^*}}{\Omega} V_+-p_{r^*}\frac{|r+6M|^{\frac{1}{2}}}{r^{\frac{1}{2}}\Omega^2}(r-3M)  \partial_t-p_{r^*}\frac{r}{\Omega^2} \partial_{r^*}.
\end{align*}
Next, we use
\begin{align*}
\frac{r}{\Omega^2} p_{r^*}\partial_{r^*}= r \big( \T_g-r^{-2} \T_Q \big)+\frac{r}{\Omega^2}p_t \partial_t-\frac{r-3M}{r^3}|\slashed{p}|^2 \partial_{p_{r^*}},
\end{align*}
and we observe that
\begin{align*}
\frac{|r+6M|^{\frac{1}{2}}}{r^{\frac{1}{2}}\Omega^2}(r-3M) p_{r^*} \partial_t+\frac{r}{\Omega^2}p_t \partial_t =\frac{2rp_u}{\Omega^2} \partial_t+\frac{27M^2r}{r^{\frac{1}{2}}(r^{\frac{3}{2}}-|r+6M|^{\frac{1}{2}}(r-3M))} p_{r^*} \partial_t.
\end{align*}
The last five equalities imply the result since $\T_g (f)=0$ and $\T_Q=|\slashed{p}|\T_{|\slashed{p}|}$.
\end{proof}

\subsubsection{The modified vector field $V_{\, +}^{\mathrm{mod}}$}\label{subsub_modified} 

Let us now define the modification of the vector field $V_+$ that we will use to perform integrated energy estimates without relative degeneration.

\begin{definition}\label{DefVminusmod}
Let $V_{\, +}^{\mathrm{mod}} \in \Gamma(T \mathcal{P})$ be the vector field 
$$ V_{\, +}^{\mathrm{mod}} \coloneqq V_++\Omega \Phi r \pmb{\varphi}_- \partial_{p_{r^*}},$$
where $\Phi$ is uniquely determined by
$$ \T_g (\Omega \Phi)+ \frac{|p_t|}{r^{\frac{1}{2}}|r+6M|^{\frac{1}{2}}\Omega^2} (\Omega \Phi)   =  \frac{r+3M}{r^{\frac{5}{2}}|r+6M|^{\frac{3}{2}}}|p_t|\Omega, \qquad \quad (\Omega\Phi)\vert_{t^*=0}=0.$$
\end{definition}

\begin{proposition}\label{ProComVminusmod}
There holds
\begin{align*}
\big[\T_g , V_{\, +}^{\mathrm{mod}}\big] &=  - \frac{|p_t|}{r^{\frac{1}{2}}|r+6M|^{\frac{1}{2}}\Omega^2} V_{\, +}^{\mathrm{mod}}+ \frac{\Omega\Phi |\pmb{\varphi}_-|^2}{ p_t} \partial_t-  \Phi \pmb{\varphi}_- V_+  + 2\Omega \T_g.
\end{align*}
\end{proposition}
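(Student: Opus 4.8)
The plan is to compute $[\T_g, V_{\, +}^{\mathrm{mod}}]$ by splitting $V_{\, +}^{\mathrm{mod}} = V_+ + \Omega \Phi \, r \pmb{\varphi}_- \partial_{p_{r^*}}$ and using linearity and the Leibniz rule for the commutator. First I would write
\[
\big[\T_g, V_{\, +}^{\mathrm{mod}}\big] = \big[\T_g, V_+\big] + \big[\T_g, \, \Omega\Phi \, r\pmb{\varphi}_- \partial_{p_{r^*}} \big].
\]
For the first term, Proposition \ref{ProComVminus} gives exactly
\[
[\T_g,V_+] = - \frac{|p_t|}{r^{\frac{1}{2}}|r+6M|^{\frac{1}{2}}\Omega^2} V_+- \frac{r+3M}{r^{\frac{3}{2}}|r+6M|^{\frac{3}{2}}}|p_t|\varphi_- \partial_{p_{r^*}} + 2\Omega \T_g.
\]
For the second term, since $\Omega\Phi$ is a scalar function on $\mathcal{P}$, I would expand
\[
\big[\T_g, \, (\Omega\Phi)\, r\pmb{\varphi}_- \partial_{p_{r^*}} \big] = \T_g(\Omega\Phi)\, r\pmb{\varphi}_- \partial_{p_{r^*}} + (\Omega\Phi)\big[\T_g, r\pmb{\varphi}_- \partial_{p_{r^*}}\big],
\]
and then substitute the commutator from Proposition \ref{Propartialprstar}, namely $[\T_g, r\pmb{\varphi}_- \partial_{p_{r^*}}] = \tfrac{|\pmb{\varphi}_-|^2}{p_t}\partial_t - \tfrac{\pmb{\varphi}_-}{\Omega}V_+$.

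The key idea is that the defining transport equation for $\Omega\Phi$ in Definition \ref{DefVminusmod} is engineered precisely to cancel the problematic $\varphi_- \partial_{p_{r^*}}$ error term coming from $[\T_g,V_+]$. Concretely, after substitution the coefficient of $r\pmb{\varphi}_-\partial_{p_{r^*}}$ collects the contribution $\T_g(\Omega\Phi)$ together with the $-\frac{|p_t|}{r^{1/2}|r+6M|^{1/2}\Omega^2}$ factor hitting $V_+^{\mathrm{mod}} - V_+ = \Omega\Phi\, r\pmb{\varphi}_-\partial_{p_{r^*}}$, and also the bad term $-\frac{r+3M}{r^{3/2}|r+6M|^{3/2}}|p_t|\varphi_-\partial_{p_{r^*}}$. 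I would regroup so that the $-\frac{|p_t|}{\cdots\Omega^2}V_+$ piece from $[\T_g,V_+]$ combines with $-\frac{|p_t|}{\cdots\Omega^2}\cdot\Omega\Phi\, r\pmb{\varphi}_-\partial_{p_{r^*}}$ to form $-\frac{|p_t|}{r^{1/2}|r+6M|^{1/2}\Omega^2}V_{\, +}^{\mathrm{mod}}$. What remains on the $\partial_{p_{r^*}}$ line is then
\[
\Big[\T_g(\Omega\Phi) + \tfrac{|p_t|}{r^{1/2}|r+6M|^{1/2}\Omega^2}\,\Omega\Phi\Big] r\pmb{\varphi}_-\partial_{p_{r^*}} \;-\; \tfrac{r+3M}{r^{3/2}|r+6M|^{3/2}}|p_t|\varphi_-\partial_{p_{r^*}},
\]
and using $\varphi_- = \Omega\pmb{\varphi}_-$ together with the transport equation (whose right side is exactly $\frac{r+3M}{r^{5/2}|r+6M|^{3/2}}|p_t|\Omega$) these two pieces cancel, since $r\pmb{\varphi}_-\cdot\frac{r+3M}{r^{5/2}|r+6M|^{3/2}}|p_t|\Omega = \frac{r+3M}{r^{3/2}|r+6M|^{3/2}}|p_t|\varphi_-$.

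The only surviving extra terms are $2\Omega\T_g$ from $[\T_g,V_+]$ and the contribution $(\Omega\Phi)\big(\frac{|\pmb{\varphi}_-|^2}{p_t}\partial_t - \frac{\pmb{\varphi}_-}{\Omega}V_+\big)$ from Proposition \ref{Propartialprstar}, which is precisely $\frac{\Omega\Phi|\pmb{\varphi}_-|^2}{p_t}\partial_t - \Phi\pmb{\varphi}_- V_+$. Assembling everything yields the claimed formula. The main obstacle, and the step requiring the most care, is the bookkeeping in the cancellation of the $\partial_{p_{r^*}}$ coefficient: one must verify that the factor $r\pmb{\varphi}_-$ in the modification, together with the exact power of $r$ and $\Omega$ in the transport equation's source, matches the bad term from Proposition \ref{ProComVminus}. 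This is purely algebraic given the defining equation for $\Omega\Phi$, so no genuine analytic difficulty arises; the design of Definition \ref{DefVminusmod} guarantees the match.
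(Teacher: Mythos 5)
Your proposal is correct and follows exactly the paper's own argument: split via the Leibniz rule into $[\T_g,V_+]+\T_g(\Omega\Phi)\,r\pmb{\varphi}_-\partial_{p_{r^*}}+\Omega\Phi\,[\T_g,r\pmb{\varphi}_-\partial_{p_{r^*}}]$, insert Propositions \ref{ProComVminus} and \ref{Propartialprstar}, and invoke the defining transport equation for $\Omega\Phi$ to cancel the $\partial_{p_{r^*}}$ terms. Your explicit bookkeeping of the cancellation, using $\Omega\, r\pmb{\varphi}_-=r\varphi_-$ to match the powers of $r$ and $\Omega$, is exactly the computation the paper leaves implicit, and it checks out.
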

\begin{proof}
We write first
$$ \big[\T_g , V_{\, +}^{\mathrm{mod}}\big]  = [\T_g , V_+] +\Omega \Phi [\T_g, r \pmb{\varphi}_- \partial_{p_{r^*}} ]+ \T_g(\Omega\Phi) r \pmb{\varphi}_- \partial_{p_{r^*}}.$$
Next, recall the commutation formulae in Propositions \ref{ProComVminus} and \ref{Propartialprstar}. Finally, we use the definition of $\Phi$. 
\end{proof}

In order to use the vector field $V_{\, +}^{\mathrm{mod}}$, we have to estimate $\Phi$ as well as the derivative $\pmb{\varphi}_- \partial_{p_{r^*}} \Phi$. 

\begin{proposition}\label{ProboundPsi}
There holds
$$ \sup_{  \pi^{-1} (\{t^* \geq 0 \})} \,\frac{|p_N|+|r \pmb{\varphi}_-|}{|p_t|} \big| r\Phi \big|+\frac{|r\pmb{\varphi}_-|^{\frac{3}{4}}}{|p_t|^{\frac{3}{4}}} \big| r\Phi \big|+\big|\pmb{\varphi}_- \partial_{p_{r^*}} \Phi \big| < +\infty.$$
\end{proposition}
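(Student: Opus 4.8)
The plan is to solve the linear inhomogeneous transport equation defining $\Omega\Phi$ explicitly along the geodesic flow, and then to read off the pointwise bounds by exploiting that the damping coefficient is exactly the Lyapunov rate of $\varphi_-$. The crucial observation is that the coefficient multiplying $\Omega\Phi$ in Definition \ref{DefVminusmod} is $\pmb{b}:=\frac{|p_t|}{r^{1/2}|r+6M|^{1/2}\Omega^2}$, which by Lemma \ref{Lemphiminus} satisfies $\T_g(\varphi_-)=-\pmb{b}\varphi_-$. Since $\varphi_-$ keeps a fixed sign along each characteristic, $|\varphi_-|^{-1}$ is an integrating factor and, writing $S:=\frac{r+3M}{r^{5/2}|r+6M|^{3/2}}|p_t|\Omega$ for the source, one gets the clean identity $\T_g\!\big(\Omega\Phi/|\varphi_-|\big)=S/|\varphi_-|$. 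Parametrising the flow of $\T_g$ by $t^*$ and denoting by $\gamma_{(x,p)}(s)$ the characteristic through $(x,p)$ with $t^*=s$, integration from the slice $\{t^*=0\}$, where $\Omega\Phi$ vanishes, yields the Duhamel representation
$$\Omega\Phi(x,p)=|\varphi_-(x,p)|\int_{s=0}^{t^*(x)}\frac{S}{|\varphi_-|\,\T_g(t^*)}\big(\gamma_{(x,p)}(s)\big)\,\dr s,$$
where $\T_g(t^*)\ge|p_t|>0$ makes the parametrisation legitimate.

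I would then estimate this integral region by region. Using $\T_g(|\varphi_-|)=-\pmb{b}|\varphi_-|\le 0$, the endpoint ratio is $\frac{|\varphi_-(x,p)|}{|\varphi_-(\gamma(s))|}=\exp\!\big(-\int_s^{t^*(x)}\tfrac{\pmb{b}}{\T_g(t^*)}\big)$, so that
$$|\Omega\Phi(x,p)|\le\int_{s=0}^{t^*(x)}e^{-\int_s^{t^*(x)}\frac{\pmb{b}}{\T_g(t^*)}}\,\frac{S}{\T_g(t^*)}\big(\gamma_{(x,p)}(s)\big)\,\dr s.$$
Because $|p_t|$ is conserved and cancels, the source factor is $\frac{S}{\T_g(t^*)}\sim r^{-3}$ and the damping rate is $\frac{\pmb{b}}{\T_g(t^*)}\sim \frac{1}{r^{1/2}|r+6M|^{1/2}}$, which is bounded below on every region $\{r\le R\}$. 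This gives uniform convergence near the photon sphere (the exponential damping beats the long trapped time), while near $\H$ the gain is the redshift: $|p_N|\sim|p_t|+|p_u|\Omega^{-2}$ is large and, via the expansions of Section \ref{Rqintrored} relating $\varphi_-$ to $\tfrac{r}{\Omega}|p_u|$, produces the factor $|p_N|$ in the denominator of the first claimed bound; near $\mathcal{I}^+$ the source decays as $r^{-3}$ while $|\varphi_-|\sim\tfrac{r}{\Omega}|p_v|$ grows, supplying the $|r\pmb{\varphi}_-|$ term through the $r^p$-structure. The interpolated $3/4$-power bound is obtained from a sharper trapped-set estimate in which the smallness of $\varphi_-$ at the endpoint is traded against the logarithmic divergence of the time spent near $\{r=3M\}$, with the exponent dictated by the Lyapunov rate.

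For the derivative estimate $|\pmb{\varphi}_-\partial_{p_{r^*}}\Phi|\lesssim1$, I would commute $\partial_{p_{r^*}}$ through the defining equation. Using $[\T_g,\partial_{p_{r^*}}]=\frac{p_{r^*}}{\Omega^2p_t}\partial_t-\frac{1}{\Omega^2}\partial_{r^*}$ from Lemma \ref{LemComdrdpr}, one finds that $\partial_{p_{r^*}}(\Omega\Phi)$ solves a transport equation of the same damped type, with damping $\pmb{b}$ and a source built from $\partial_{p_{r^*}}S$, $\partial_{p_{r^*}}(\pmb{b})\,\Omega\Phi$, and the commutator terms $\frac{1}{\Omega^2}\partial_{r^*}(\Omega\Phi)$ and $\frac{p_{r^*}}{\Omega^2p_t}\partial_t(\Omega\Phi)$. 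Since $\T_g$, $\pmb{b}$ and $S$ are stationary, $\partial_t(\Omega\Phi)$ itself satisfies the homogeneous damped equation (with data read off from the equation on $\{t^*=0\}$), and I would close the estimate by a simultaneous Grönwall argument for the coupled first-order system in $\{\partial_t,\partial_{r^*},\partial_{p_{r^*}}\}(\Omega\Phi)$, in which the genuinely growing directions enter weighted by $\varphi_-$ — exactly the mechanism behind \eqref{eq:heuri} — so that multiplication by $\pmb{\varphi}_-$ recovers a bounded quantity; feeding in the $\Omega\Phi$-bounds of the previous step finishes the argument.

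The main obstacle is obtaining the estimate \emph{uniformly} near the trapped set $\{r=3M,\,p_{r^*}=0\}$, where the integrating factor $|\varphi_-|^{-1}$ is singular and characteristics linger for arbitrarily long $t^*$-time: the naive bound $\int S/\T_g(t^*)$ diverges, and the gain must be extracted from the precise cancellation between the decaying endpoint factor $|\varphi_-(x,p)|$ and the exponential damping, which is what forces the sharp $3/4$-exponents. A secondary difficulty, already anticipated in Remark \ref{RkvarphiHplus}, is the loss of regularity of $\varphi_-$ up to $\H$, which is why one works throughout with the rescaled weight $\pmb{\varphi}_-=\Omega^{-1}\varphi_-$ and the rescaled equation; the careful bookkeeping of these two effects — together with the non-closure of the $\partial_{p_{r^*}}$-commutation handled by the coupled system — is the technical heart of the argument, carried out in Appendix \ref{Append}.
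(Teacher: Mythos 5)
Your skeleton matches the paper's actual strategy — the paper (Appendix \ref{Append}) also integrates the damped transport equation along the flow parametrised by $t^*$ via a Duhamel formula (Lemma \ref{LemDuhamel}), exploits that the damping coefficient dominates the source, treats $\partial_t$ and $\T_{|\slashed{p}|}$ first, and then closes a coupled Gr\"onwall-type argument for the remaining derivatives. However, the two places where you defer to "region by region estimation" and to "a simultaneous Gr\"onwall argument" are precisely where the real work lies, and as written both steps have genuine gaps. First, the weighted zeroth-order bounds do not follow from your unweighted Duhamel estimate: near $\mathcal{H}^+$ the claim $\frac{|p_N|}{|p_t|}|r\Phi|\lesssim 1$ forces $|\Omega \Phi|$ to \emph{vanish} at a definite rate in $\Omega$ when $|p_u|\Omega^{-2}\gg|p_t|$, and this cannot be read off from $|\Omega\Phi|\lesssim 1$. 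The paper gets it by working with $\Psi=(r+6M)\Phi$, whose damping $\overline{\pmb{a}}$ (Lemma \ref{LembasicPsi}) already contains the redshift term $\tfrac{2M|p_u|}{r^2\Omega^2}$, and then multiplying by the \emph{monotone} weight $|\xi|/|p_t|$ ($\T_g(|\xi|)\le 0$, Proposition \ref{Promultiplierm}) so that the weighted quantity again satisfies a damped equation with source dominated by the damping; similarly, the far-region weight $r^2|p_v|/|p_t|$ requires the $r^p$-identity of Lemma \ref{Lemrp} \emph{and} a separate bound on $\int \big[\tfrac{p_v}{p_{t^*}}\mathds{1}_{r\ge 4M}\big]\circ\Phi_s\,\dr s$ along every flow line (end of the proof of Proposition \ref{ProboundPsiA}). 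Your sketch names the redshift and the $r^p$-structure but supplies neither mechanism. Relatedly, the $3/4$-power bound is not a "sharper trapped-set estimate with exponent dictated by the Lyapunov rate": in the paper it is a pointwise interpolation consequence (Remark \ref{RkboundPsi}) of the bound by $\frac{|p_N|+r^2|p_v|}{|p_t|}$, using $|r\pmb{\varphi}_-|\lesssim r^2|p_v|+|p_t|$ for $r\ge 3M$ and $|r\pmb{\varphi}_-|\lesssim|p_N|$ for $r\le 3M$; no new trapped-set analysis is involved.

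Second, and more seriously, the derivative estimate cannot be closed by a naive Gr\"onwall on the system $\{\partial_t,\partial_{r^*},\partial_{p_{r^*}}\}(\Omega\Phi)$ with the growing directions weighted by $\varphi_-$. The coupled system for $\pmb{V}_{\!+}\Psi$ and $\pmb{\varphi}_-\partial_{p_{r^*}}\Psi$ is \emph{not triangular} (Lemma \ref{LemComPsi00}): the equation for each quantity contains the other, and the coupling coefficients are not pointwise dominated by the damping — near the photon sphere and near $\mathcal{H}^+$ the required absorption holds only after adding the weight $\zeta^A$ (Corollary \ref{CorComBounded}) and verifying the explicit numerical inequalities \eqref{equa:toprove3}--\eqref{equa:toprove2}, with the factor $\tfrac{1}{8M^2}$ chosen to make the two couplings compatible. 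Moreover the paper must (i) split the evolution along each geodesic into the three regions of Lemma \ref{Lemgeodtime} (incoming far, bounded, escaping far), treated in that order with \emph{different} $r$-scalings of the $\partial_{p_{r^*}}$-derivative ($r\pmb{\varphi}_-\partial_{p_{r^*}}$ versus $r^{-1}\pmb{\varphi}_-\partial_{p_{r^*}}$, the escaping region relying on $\T_g(r^{-2})\le 0$ there), and (ii) establish non-trivial bounds for the derivatives on the initial slice $\{t^*=0\}$ (Lemma \ref{LemderivPsiini}: $\partial_t\Psi$ does not vanish there and its bound requires the change to $(t^*,r)$-coordinates). Your proposal acknowledges that the growing directions must enter weighted by $\varphi_-$, which is the right idea, but asserting that the coupled system then closes "by Gr\"onwall" skips exactly the step where the argument can fail: without the $\zeta^A$-weight, the region decomposition, and the constant-checking, the coupling term $\frac{|p_t|}{r^{5/2}|r+6M|^{1/2}}|r\pmb{\varphi}_-\partial_{p_{r^*}}\Psi|$ in the $\pmb{V}_{\!+}\Psi$-equation is not absorbable by the damping uniformly up to $r=3M$ and $r=2M$.
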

The proof of Proposition \ref{ProboundPsi} is performed in Appendix \ref{Append} (see Proposition \ref{ProboundPsibis} and Remark \ref{RkboundPsi}).

\subsection{The rescaled trapping vector fields}

In order to exploit the redshift effect and obtain a stronger control of massless Vlasov fields near $\H$, we will work with $\Omega^{-1}V_+$ which does not vanish there. This vector field is smooth up to the future event horizon.

\begin{lemma}\label{LemComVminusregularised}
Let $\pmb{V}_{\! +}\coloneqq\Omega^{-1}V_+$ and recall the definition of $\pmb{a}$ from Lemma \ref{Lemphiminus}. There holds
\begin{align*}
\big[\T_g,\pmb{V}_{\! +}\big] &= - \pmb{a}(r,p_{r^*},p_t) \pmb{V}_{\! +}- \frac{r+3M}{r^{\frac{3}{2}}|r+6M|^{\frac{3}{2}}}\pmb{\varphi}_- |p_t|\partial_{p_{r^*}}  + 2 \T_g.
\end{align*}
\end{lemma}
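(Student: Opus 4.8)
The plan is to read off $[\T_g,\pmb{V}_{\! +}]$ directly from the commutation formula for $V_+$ established in Proposition \ref{ProComVminus}, by applying the Leibniz rule for the commutator of $\T_g$ with the product $\Omega^{-1}V_+$. Writing $[\T_g,\Omega^{-1}V_+]=\Omega^{-1}[\T_g,V_+]+\T_g(\Omega^{-1})\,V_+$, I would first substitute the expression from Proposition \ref{ProComVminus} into the first summand, and then handle the scalar factor $\T_g(\Omega^{-1})$ in the second. Since $\Omega$ depends only on $r$ and $\partial_{r^*}=\Omega^2\partial_r$, the transport operator acts as $\T_g(\Omega^{-1})=p_{r^*}\partial_r(\Omega^{-1})=-\tfrac{Mp_{r^*}}{r^2\Omega^3}$, a computation already appearing in the proof of Lemma \ref{Lemphiminus}.

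The key step is the recombination of the terms proportional to $V_+$ into a multiple of $\pmb{V}_{\! +}=\Omega^{-1}V_+$. The leading term of $\Omega^{-1}[\T_g,V_+]$ contributes $-\tfrac{|p_t|}{r^{1/2}|r+6M|^{1/2}\Omega^2}\pmb{V}_{\! +}$, while $\T_g(\Omega^{-1})V_+=-\tfrac{Mp_{r^*}}{r^2\Omega^3}V_+=-\tfrac{Mp_{r^*}}{r^2\Omega^2}\pmb{V}_{\! +}$. The sum of these two coefficients is exactly $-\pmb{a}(r,p_{r^*},p_t)$: this is the very combination occurring in the identity $\T_g(\pmb{\varphi}_-)=-\pmb{a}\,\pmb{\varphi}_-$ of Lemma \ref{Lemphiminus}. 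Indeed, both $\varphi_-$ and $V_+$ carry the same leading transport coefficient $-\tfrac{|p_t|}{r^{1/2}|r+6M|^{1/2}\Omega^2}$, and rescaling either by $\Omega^{-1}$ adds the identical factor $\Omega\,\T_g(\Omega^{-1})=-\tfrac{Mp_{r^*}}{r^2\Omega^2}$; hence no fresh algebra is required, and the reconciliation of this coefficient with the explicit formula for $\pmb{a}$ is precisely the one already performed for $\pmb{\varphi}_-$ (one multiplies through by $r^2\Omega^2=r(r-2M)$ and checks the polynomial identity $r^3-M^2r-6M^3=(r^2+2Mr+3M^2)(r-2M)$).

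For the two remaining terms the bookkeeping is immediate. The $\partial_{p_{r^*}}$ contribution is multiplied by $\Omega^{-1}$, which turns $\varphi_-$ into $\pmb{\varphi}_-=\Omega^{-1}\varphi_-$ and reproduces $-\tfrac{r+3M}{r^{3/2}|r+6M|^{3/2}}\pmb{\varphi}_-|p_t|\partial_{p_{r^*}}$, and the term $2\Omega\T_g$ becomes $2\T_g$ after multiplication by $\Omega^{-1}$. Assembling the three pieces then yields the stated identity. The only point requiring any care — and the main, rather mild, obstacle — is ensuring that the two $V_+$-proportional terms regroup into exactly the coefficient $-\pmb{a}$; once the structural parallel with $\pmb{\varphi}_-$ noted above is invoked, this is automatic, so the proof reduces to a short application of Proposition \ref{ProComVminus} together with Lemma \ref{Lemphiminus}.
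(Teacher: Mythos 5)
Your proposal is correct and follows essentially the same route as the paper: apply the Leibniz rule $[\T_g,\Omega^{-1}V_+]=\Omega^{-1}[\T_g,V_+]+\T_g(\Omega^{-1})V_+$, insert the commutation formula of Proposition \ref{ProComVminus}, and identify the resulting coefficient of $\pmb{V}_{\!+}$ with $-\pmb{a}$ by the same computation as in Lemma \ref{Lemphiminus}. Your explicit verification that the combined coefficient $\tfrac{|p_t|}{r^{1/2}|r+6M|^{1/2}\Omega^2}+\tfrac{Mp_{r^*}}{r^2\Omega^2}$ equals $\pmb{a}$ (via the factorisation $r^3-M^2r-6M^3=(r-2M)(r^2+2Mr+3M^2)$) is exactly the reconciliation the paper leaves implicit, and is carried out correctly.
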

\begin{proof}
We have
$$ \big[\T_g,\pmb{V}_{\! +}\big]= \Omega^{-1}[\T_g,V_+]+p_{r^*} \partial_r \big( \Omega^{-1} \big) V_+.$$
To get the result, it remains to use the commutation formula of Proposition \ref{ProComVminus} and to perform the same computations as in the proof of Lemma \ref{Lemphiminus}, which lead to
\begin{equation}\label{eq:idbarbolda}
\pmb{a}(r,p_{r^*},p_t)= \frac{|p_t|}{r^{\frac{1}{2}}|r+6M|^{\frac{1}{2}}\Omega^2} -p_{r^*} \partial_r \big( \Omega^{-1} \big).
\end{equation}
\end{proof}

Similarly, we can work with a regularised version of $V_{\, +}^{\mathrm{mod}}$, allowing for a better control of $f$ near $\mathcal{H}^+$.
\begin{corollary}\label{corVminusmod}
Let $\pmb{V}_+^{  \mathrm{mod}}\coloneqq \Omega^{-1} V_{\, +}^{\mathrm{mod}}$, and let $f$ be a solution to the massless Vlasov equation. Then,
\begin{align*}
 \T_g \big( \big| \pmb{V}_{ +}^{ \mathrm{mod}} f \big| \big)  + \pmb{a}(r,p_{r^*},p_t) \big| \pmb{V}_+^{  \mathrm{mod}} f \big| \lesssim \frac{|\pmb{\varphi}_-|}{r^2 } \big|  \partial_t f \big|+\frac{|\pmb{\varphi}_-|^{\frac{1}{4}}|p_t|^{\frac{3}{4}}}{r^{\frac{7}{4}} } \big| \pmb{V}_{\! +} f|  .
\end{align*}
\end{corollary}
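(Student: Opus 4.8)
The plan is to first derive the commutation identity for the regularised modified field $\pmb{V}_+^{\mathrm{mod}} = \Omega^{-1} V_{\, +}^{\mathrm{mod}}$ from Proposition \ref{ProComVminusmod}, repeating verbatim the rescaling argument that produced Lemma \ref{LemComVminusregularised} out of Proposition \ref{ProComVminus}. Writing $[\T_g, \pmb{V}_+^{\mathrm{mod}}] = \T_g(\Omega^{-1})\,V_{\, +}^{\mathrm{mod}} + \Omega^{-1}[\T_g, V_{\, +}^{\mathrm{mod}}]$ and inserting the formula of Proposition \ref{ProComVminusmod}, the principal term $-\frac{|p_t|}{r^{1/2}|r+6M|^{1/2}\Omega^2}\pmb{V}_+^{\mathrm{mod}}$ combines with the rescaling contribution $\T_g(\Omega^{-1})\,V_{\, +}^{\mathrm{mod}}$ to produce exactly $-\pmb{a}\,\pmb{V}_+^{\mathrm{mod}}$, by the same computation underlying \eqref{eq:idbarbolda}. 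Since $\Omega^{-1}V_+ = \pmb{V}_{\! +}$ and $\Omega^{-1}(2\Omega\T_g) = 2\T_g$, the remaining terms rescale without difficulty, giving
\begin{equation*}
[\T_g, \pmb{V}_+^{\mathrm{mod}}] = -\pmb{a}\,\pmb{V}_+^{\mathrm{mod}} + \frac{\Phi\,|\pmb{\varphi}_-|^2}{p_t}\,\partial_t - \Phi\,\pmb{\varphi}_-\,\pmb{V}_{\! +} + 2\T_g.
\end{equation*}

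Next I would apply this commutator to a solution $f$ of $\T_g(f)=0$. The term $2\T_g$ then annihilates $f$, and $\pmb{V}_+^{\mathrm{mod}}\T_g(f)=0$, so that $[\T_g,\pmb{V}_+^{\mathrm{mod}}]f = \T_g(\pmb{V}_+^{\mathrm{mod}} f)$ reads
\begin{equation*}
\T_g\big(\pmb{V}_+^{\mathrm{mod}} f\big) = -\pmb{a}\,\pmb{V}_+^{\mathrm{mod}} f + \frac{\Phi\,|\pmb{\varphi}_-|^2}{p_t}\,\partial_t f - \Phi\,\pmb{\varphi}_-\,\pmb{V}_{\! +} f.
\end{equation*}
Using $\T_g(|g|) = \mathrm{sgn}(g)\,\T_g(g)$ with $g = \pmb{V}_+^{\mathrm{mod}} f$, and $\mathrm{sgn}(g)\cdot(-\pmb{a}\,g) = -\pmb{a}\,|g|$, I move the $\pmb{a}$ term to the left and take absolute values on the right to obtain
\begin{equation*}
\T_g\big(\big|\pmb{V}_+^{\mathrm{mod}} f\big|\big) + \pmb{a}\,\big|\pmb{V}_+^{\mathrm{mod}} f\big| \le \frac{|\Phi|\,|\pmb{\varphi}_-|^2}{|p_t|}\,|\partial_t f| + |\Phi|\,|\pmb{\varphi}_-|\,|\pmb{V}_{\! +} f|.
\end{equation*}

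It then remains to match the two coefficients against the asserted right-hand side using the pointwise bounds of Proposition \ref{ProboundPsi}. For the first error term, the required inequality $\frac{|\Phi||\pmb{\varphi}_-|^2}{|p_t|} \lesssim \frac{|\pmb{\varphi}_-|}{r^2}$ is equivalent to $\frac{|r\pmb{\varphi}_-|}{|p_t|}\,|r\Phi| \lesssim 1$, which is controlled by the first quantity bounded in Proposition \ref{ProboundPsi}. For the second error term, the required inequality $|\Phi||\pmb{\varphi}_-| \lesssim \frac{|\pmb{\varphi}_-|^{1/4}|p_t|^{3/4}}{r^{7/4}}$ is equivalent to $\frac{|r\pmb{\varphi}_-|^{3/4}}{|p_t|^{3/4}}\,|r\Phi| \lesssim 1$, which is precisely the second quantity there. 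Both weights match exactly, so the corollary follows at once.

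I expect the only genuinely delicate point at this level to be the weight bookkeeping: the factors of $\Omega$ generated by the rescaling must cancel on the nose to recover $-\pmb{a}\,\pmb{V}_+^{\mathrm{mod}}$, and the fractional exponents $3/4$ and $1/4$ in the second term are calibrated so that the modified multiplier $\Phi$ is controlled by exactly the combination appearing in Proposition \ref{ProboundPsi}, with no room to spare. The substantive analytic work — establishing the pointwise estimates on $\Phi$ (and on $\pmb{\varphi}_-\partial_{p_{r^*}}\Phi$) by integrating along the geodesic flow the transport equation of Definition \ref{DefVminusmod} defining $\Phi$ — is isolated in Proposition \ref{ProboundPsi}, whose proof is deferred to Appendix \ref{Append} and which I take as given here.
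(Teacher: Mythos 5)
Your proposal is correct and takes essentially the same route as the paper: both arguments rest on the commutation formula of Proposition \ref{ProComVminusmod}, the identity \eqref{eq:idbarbolda} (which absorbs the $\T_g(\Omega^{-1})$ rescaling contribution into the coefficient $\pmb{a}$), and the pointwise bounds on $\Phi$ from Proposition \ref{ProboundPsi}, with your exponent bookkeeping $\frac{|r\pmb{\varphi}_-|}{|p_t|}|r\Phi|\lesssim 1$ and $\frac{|r\pmb{\varphi}_-|^{3/4}}{|p_t|^{3/4}}|r\Phi|\lesssim 1$ matching the paper's factorisation exactly. The only difference is the order of operations — you rescale the commutator to the level of $\pmb{V}_+^{\mathrm{mod}}$ before taking absolute values (mirroring Lemma \ref{LemComVminusregularised}), whereas the paper writes the differential inequality for $V_{\,+}^{\mathrm{mod}}$ first and multiplies by $\Omega^{-1}$ at the end — which is immaterial.
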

\begin{proof}
We use the commutation formula in Proposition \ref{ProComVminusmod} to get
\begin{align*}
 \T_g \big( \big| V_{\, +}^{\mathrm{mod}} f \big| \big)  + \frac{|p_t|}{r^{\frac{1}{2}}|r+6M|^{\frac{1}{2}}\Omega^2} \big| V_{\, +}^{\mathrm{mod}} f \big| \lesssim \Omega\frac{ |r\pmb{\varphi}_-||r\Phi|}{|p_t|} \cdot \frac{|\pmb{\varphi}_-|}{r^2} \big| \partial_t f \big|+  \frac{|r\pmb{\varphi}_-|^{\frac{3}{4}}|r\Phi|}{|p_t|^{\frac{3}{4}}}\cdot\frac{|\pmb{\varphi}_-|^{\frac{1}{4}}|p_t|^{\frac{3}{4}}}{r^{\frac{7}{4}} } \big| V_+ f \big|.
\end{align*}
We conclude the proof by using \eqref{eq:idbarbolda} and Proposition \ref{ProboundPsi}.
\end{proof}

It turns out that in order to control $|p_t|^{\frac{3}{4}}|r^{-1} \pmb{\varphi}_-|^{\frac{1}{4}} | \pmb{V}_{\! +} f|$, we will need to study a well-chosen derivative of $f$ collinear to $\partial_{p_{r^*}}$. 

\begin{definition}\label{DefDpr}
Let $\mathrm{D}_{p_{r^*}} \! \in \Gamma (T \mathcal{P})$ be the vector field given by
$$
\mathrm{D}_{p_{r^*}} \coloneqq \Omega^{\frac{1}{2}}|p_t|^{\frac{3}{4}} |r^{-1} \pmb{\varphi}_-|^{\frac{5}{4}} \partial_{p_{r^*}}.
$$
\end{definition}

\begin{remark}
We have $\Omega^{\frac{1}{2}}|p_t|^{\frac{3}{4}} |r^{-1} \pmb{\varphi}_-|^{\frac{5}{4}} \lesssim |p_t||p_N|$, so that $\mathrm{D}_{p_{r^*}}$ is regular up to $\H$.
\end{remark}

We conclude this section with a consequence of Lemma \ref{LemBoundpartialpr}.

\begin{corollary}\label{CorBoundpartialpr}
Let $f$ be a solution the massless Vlasov equation. For $r < 2.7M$, we have
$$  \bigg| \frac{|p_u|}{\Omega^2} \mathrm{D}_{p_{r^*}} f \bigg| \lesssim |p_t|^{\frac{3}{4}}|\Omega^2 \pmb{\varphi}_-|^{\frac{1}{4}} \big| \pmb{\varphi}_- \pmb{V}_{\! +} f \big| + \big| p_N \pmb{\varphi}_-\partial_t f \big|+\big| p_N \pmb{\varphi}_- \T_{|\slashed{p}|} f \big|+ \bigg(|p_t|+ \frac{|\slashed{p}|^2}{|p_t|}\bigg)  \big|  \mathrm{D}_{p_{r^*}} f \big|.$$
\end{corollary}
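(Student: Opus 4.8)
The plan is to deduce the estimate directly from Lemma \ref{LemBoundpartialpr} by multiplying the underlying pointwise identity by a single positive scalar weight and then bounding each resulting term by elementary momentum inequalities on the compact range $r\in[2M,2.7M)$. Recall from Definition \ref{DefDpr} that $\mathrm{D}_{p_{r^*}}f=\Omega^{\frac12}|p_t|^{\frac34}|r^{-1}\pmb{\varphi}_-|^{\frac54}\partial_{p_{r^*}}f$ and that $\Omega^{-1}V_+=\pmb{V}_{\!+}$. I would multiply the inequality of Lemma \ref{LemBoundpartialpr} by
$$w:=\tfrac12\,\Omega^{\frac12}|p_t|^{-\frac14}\,|r^{-1}\pmb{\varphi}_-|^{\frac54},$$
which is chosen precisely so that $w$ times the first term $\frac{2|p_u|}{\Omega^2}p_t\partial_{p_{r^*}}f$ of the left-hand side has magnitude $\frac{|p_u|}{\Omega^2}|\mathrm{D}_{p_{r^*}}f|$, i.e.\ exactly the quantity to be controlled. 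After the triangle inequality it then suffices to show that $w$ times each of the four remaining terms of Lemma \ref{LemBoundpartialpr} is bounded by the corresponding term in the statement of the Corollary.

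Two of these are immediate. For the $\pmb{V}_{\!+}$-term, since $\Omega^{-1}V_+=\pmb{V}_{\!+}$ the contribution is $w\,\frac{r|p_{r^*}|}{|r-3M|}|\pmb{V}_{\!+}f|$; comparing with the target $|p_t|^{\frac34}|\Omega^2\pmb{\varphi}_-|^{\frac14}|\pmb{\varphi}_-\pmb{V}_{\!+}f|=|p_t|^{\frac34}\Omega^{\frac12}|\pmb{\varphi}_-|^{\frac54}|\pmb{V}_{\!+}f|$ and cancelling the common factor $\Omega^{\frac12}|\pmb{\varphi}_-|^{\frac54}$ reduces the bound to $r^{-\frac14}|p_{r^*}|\,\big(|p_t|\,|r-3M|\big)^{-1}\lesssim1$, which holds on $\{r<2.7M\}$ because $|p_{r^*}|\leq|p_t|$ and $r,|r-3M|^{-1}$ are bounded there. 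The term $(|p_t|^2+|\slashed{p}|^2)|\partial_{p_{r^*}}f|$ is in fact an exact match: since $w|\partial_{p_{r^*}}f|=\frac{1}{2|p_t|}|\mathrm{D}_{p_{r^*}}f|$, multiplying by $w$ produces $\tfrac12\big(|p_t|+|\slashed{p}|^2/|p_t|\big)|\mathrm{D}_{p_{r^*}}f|$, which is the last term of the Corollary.

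The two remaining terms, $|p_N\partial_t f|$ and $|\slashed{p}|\,|\T_{|\slashed{p}|}f|$, are where the degeneration of $\Omega$ near $\mathcal{H}^+$ must be treated with care, and I expect the angular one to be the main obstacle. For the first I would reduce $w|p_N|\lesssim|p_N||\pmb{\varphi}_-|$ to $\Omega^{\frac12}|p_t|^{-\frac14}r^{-\frac54}|\pmb{\varphi}_-|^{\frac14}\lesssim1$, and close it using $|\pmb{\varphi}_-|\lesssim r|p_N|$ together with Lemma \ref{LempN}, $|p_N|\sim|p_t|+|p_u|\Omega^{-2}$, separating the two contributions and using $|p_u|\leq|p_t|$ so that the $\Omega$ powers cancel. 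For the angular term one needs $\Omega^{\frac12}|p_t|^{-\frac14}r^{-\frac54}|\pmb{\varphi}_-|^{\frac14}|\slashed{p}|\lesssim|p_N|$; here the crude bound $|\slashed{p}|\lesssim r\Omega^{-1}|p_t|$ is too lossy at the horizon, so instead I would invoke the null-shell relation $\Omega^2|\slashed{p}|^2=4r^2|p_u||p_v|$ with $|p_v|\leq|p_t|$ to get $|\slashed{p}|\lesssim r\Omega^{-1}|p_u|^{\frac12}|p_t|^{\frac12}$. Combined with $|\pmb{\varphi}_-|\lesssim r|p_N|$ this reduces the claim to $\Omega^{-\frac12}|p_t|^{\frac14}|p_u|^{\frac12}\lesssim|p_N|^{\frac34}$, which I would close by writing $|p_N|^{\frac34}=|p_N|^{\frac14}|p_N|^{\frac12}\gtrsim|p_t|^{\frac14}\big(|p_u|\Omega^{-2}\big)^{\frac12}$, i.e.\ by using the two lower bounds $|p_N|\gtrsim|p_t|$ and $|p_N|\gtrsim|p_u|\Omega^{-2}$ of Lemma \ref{LempN} \emph{simultaneously}. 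This joint use of both components of $p_N$ is the crux; every other estimate is routine on the bounded $r$-range.
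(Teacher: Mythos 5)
Your proposal is correct and is essentially the paper's own proof: the paper likewise multiplies the estimate of Lemma \ref{LemBoundpartialpr} by the weight $\Omega^{\frac{1}{2}}|p_t|^{-\frac{1}{4}}|r^{-1}\pmb{\varphi}_-|^{\frac{5}{4}}$ and concludes with the elementary bound $\Omega^{\frac{1}{2}}|r^{-1}\pmb{\varphi}_-|^{\frac{1}{4}}\leq 3|p_t|^{\frac{1}{4}}$, which is exactly what your combination of $|\pmb{\varphi}_-|\lesssim r|p_N|$ with $|p_N|\sim|p_t|+|p_u|\Omega^{-2}$ reproduces. Your explicit treatment of the angular term via the null-shell relation $\Omega^2|\slashed{p}|^2=4r^2|p_u||p_v|$ only spells out what the paper leaves implicit in the statement that $p_N$ controls all components of $p$.
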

\begin{proof}
We use the estimate of Lemma \ref{LemBoundpartialpr}, multiplied by $\Omega^{\frac{1}{2}}|p_t|^{-\frac{1}{4}} |r^{-1} \pmb{\varphi}_-|^{\frac{5}{4}}$. We conclude the proof by using $\Omega^{\frac{1}{2}} |r^{-1} \pmb{\varphi}_-|^{\frac{1}{4}} \leq 3|p_t|^{\frac{1}{4}}$.
\end{proof}

\section{Integrated energy decay estimates without relative degeneration}\label{SecILEDwrd}

We introduce a well-chosen energy flux $\mathcal{E}[f]$ to show decay for massless Vlasov fields on Schwarzschild. Using this energy flux, we prove an integrated local energy decay estimates without relative degeneration.

\subsection{Energy norms and statement of the main results} 

Motivated by the discussion of Section \ref{Subsecideas} (recall in particular Steps $3$ and $4$), we introduce the following first order energy fluxes.

\begin{definition}
 We set, for any distribution function $g : \mathcal{P} \to \R$, the first order energy flux
\begin{align*}
 \mathcal{F}[g]& \coloneqq \mathbb{E} \big[  \pmb{\varphi}_- \partial_t g \big] +\mathbb{E} \big[  \pmb{\varphi}_- \T_{|\slashed{p}|} g \big] + \mathbb{E} \big[ p_N \, \pmb{V}_+^{  \mathrm{mod}} g \big]+\mathbb{E} \big[ |p_N|^{\frac{3}{4}} |r^{-1} \pmb{\varphi}_-|^{\frac{1}{4}}  \pmb{V}_{\! +} g \big]  +\mathbb{E} \big[\mathrm{D}_{p_{r^*}} g \big] .
\end{align*}
Let further
$$\mathcal{E}[g]\coloneqq\mathbb{E}[p_Ng]+\mathcal{F}[g] .$$
\end{definition}

The main result proved in this section can be stated as follows.

\begin{proposition}\label{ProILEDwrd2}
Let $f $ be a solution to the massless Vlasov equation such that $\mathcal{E}[f](0) <+ \infty$. There exists $C>0$, depending only on $M$, such that
$$ \sup_{\tau \geq 0} \,  \mathcal{E}[f](\tau ) +\int_{\tau=0}^{+\infty} \mathcal{E}\big[r^{-1}\log^{-2}(2+r)f \big](\tau ) \dr \tau \leq C \mathcal{E}[f](0 ).$$
\end{proposition}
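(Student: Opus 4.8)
The plan is to estimate separately each flux entering $\mathcal{E}[f]=\mathbb{E}[p_Nf]+\mathcal{F}[f]$, establishing for each a uniform bound together with a non-degenerate spacetime bulk bound, and then to sum. I begin with the two easiest summands of $\mathcal{F}[f]$. Since $[\T_g,\partial_t]=0$ by Proposition \ref{ProKilling} and $[\T_g,\T_{|\slashed{p}|}]=0$ by Corollary \ref{CorTQ}, both $\partial_t f$ and $\T_{|\slashed{p}|}f$ solve the massless Vlasov equation, so Proposition \ref{ProILED000} applies verbatim to $g=\partial_t f$ and to $g=\T_{|\slashed{p}|}f$, giving for instance
$$\sup_{\tau\geq 0}\mathbb{E}[\pmb{\varphi}_-\partial_t f](\tau)+\int_{\pi^{-1}(\mathcal{R})}\frac{|p_N|}{r}\big|\pmb{\varphi}_-\partial_t f\big|\,\dr\mu_{\mathcal{P}}\lesssim \mathbb{E}[\pmb{\varphi}_-\partial_t f](0),$$
and the analogous bound for $\T_{|\slashed{p}|}f$. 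Since $|p_{n_{\Sigma_\tau}}|\lesssim|p_N|$ by Lemma \ref{LempSig}, the left-hand bulk dominates the corresponding piece of $\int_{\tau=0}^{+\infty}\mathcal{E}[r^{-1}\log^{-2}(2+r)f](\tau)\dr\tau$ through the coarea formula \eqref{eq:coarea}.

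The heart of the matter is the coupled, non-triangular system satisfied by $\pmb{V}_+^{\mathrm{mod}}f$ and the two weighted derivatives $\mathrm{D}_{\pmb{V}_{\!+}}f:=|p_N|^{3/4}|r^{-1}\pmb{\varphi}_-|^{1/4}\pmb{V}_{\!+}f$ and $\mathrm{D}_{p_{r^*}}f$. For the first, Corollary \ref{corVminusmod} gives $\T_g(|\pmb{V}_+^{\mathrm{mod}}f|)+\pmb{a}\,|\pmb{V}_+^{\mathrm{mod}}f|\lesssim r^{-2}|\pmb{\varphi}_-||\partial_t f|+r^{-7/4}|\pmb{\varphi}_-|^{1/4}|p_t|^{3/4}|\pmb{V}_{\!+}f|$, where $\pmb{a}\gtrsim r^{-1}|p_N|$, so the energy inequality of Proposition \ref{Proenergy} turns the dissipative term into a non-degenerate bulk for $\mathbb{E}[p_N\pmb{V}_+^{\mathrm{mod}}f]$ once the first source is absorbed by the $\partial_t f$ estimate and the second by the bulk of $\mathrm{D}_{\pmb{V}_{\!+}}f$. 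Commuting $\T_g$ with $\pmb{V}_{\!+}$ and $\partial_{p_{r^*}}$ via Lemmata \ref{LemComVminusregularised} and \ref{LemComdrdpr} and inserting the weights yields, schematically,
$$\T_g\big(r^{1/4}\mathrm{D}_{\pmb{V}_{\!+}}f\big)\leq-\mathbf{b}^1_{\mathrm{good}}\,r^{1/4}\mathrm{D}_{\pmb{V}_{\!+}}f+|p_N|r^{-3}\big|r^{9/4}\mathrm{D}_{p_{r^*}}f\big|,$$
$$\T_g\big(r^{9/4}\mathrm{D}_{p_{r^*}}f\big)\leq-\mathbf{b}^2_{\mathrm{good}}\,r^{9/4}\mathrm{D}_{p_{r^*}}f+C\,|p_t/p_N|^{3/4}|\pmb{\varphi}_-|\,\big|r^{1/4}\mathrm{D}_{\pmb{V}_{\!+}}f\big|+\mathrm{good},$$
with $\mathbf{b}^1_{\mathrm{good}},\mathbf{b}^2_{\mathrm{good}}\gtrsim r^{-1}|p_N|$ and the good term built from $\pmb{\varphi}_-\partial_t f$ and $\pmb{\varphi}_-\T_{|\slashed{p}|}f$, already controlled.

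This system is the main obstacle, since neither diagonal coefficient dominates the off-diagonal coupling pointwise, and the spurious factors $r^{1/4},r^{9/4}$ forbid a direct sum. I would resolve it with the three-region decomposition of $\mathcal{P}$ from Section \ref{Subsecideas}. On $\{p_{r^*}\leq 0,\,r\geq R\}$, into which no future-directed null geodesic enters, the coupling is treated by a Minkowski-type argument, $R>3M$ being large. On $\{p_{r^*}\geq 0,\,r\geq R\}$, the identity $\T_g(r^{-1})=-p_{r^*}r^{-2}\leq 0$ allows rescaling away the $r^{1/4},r^{9/4}$ factors with favourably-signed errors, after which Proposition \ref{ProILEDrpbis} applied to the pair $(\mathrm{D}_{\pmb{V}_{\!+}}f,\mathrm{D}_{p_{r^*}}f)$ closes the far region. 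On $\{r\leq R\}$, where the $r$-powers are harmless, the bad term $C|p_t/p_N|^{3/4}|\pmb{\varphi}_-||r^{1/4}\mathrm{D}_{\pmb{V}_{\!+}}f|$ is absorbed using the weight $\zeta$ of Definition \ref{Defz}, whose dissipation $\T_g(\zeta)\lesssim-r^{-2}|p_{r^*}|-r^{-3}|r-3M||p_t|$ compensates the degeneracy at $r=3M$, while near $r=2M$ the redshift weight together with Corollary \ref{CorBoundpartialpr} controls $\Omega^{-2}|p_u|\mathrm{D}_{p_{r^*}}f$. Summing the three regions and feeding the result back into the $\pmb{V}_+^{\mathrm{mod}}$-estimate closes the bound for $\mathcal{F}[f]$.

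It remains to upgrade $\mathcal{F}[f]$ to the full $\mathcal{E}[f]$. Adding the degenerate zeroth-order estimate of Proposition \ref{ProremainderILED} furnishes $\mathbb{E}[p_Nf]$ with a bulk degenerate as $(r-3M)^2$ at the photon sphere. This last degeneracy is removed by a local Sobolev/Poincaré inequality in $(r^*,p_{r^*})$ near $\{r=3M\}$: since $G=\frac{r}{\Omega}\partial_{r^*}+\cdots$ relates the radial derivative of $p_Nf$ to $\pmb{V}_{\!+}f$ and $\partial_t f$ through \eqref{eq:Gderiv}, integrating $p_Nf$ from a radius off the photon sphere and invoking the just-established non-degenerate bulk for $\mathbb{E}[p_N\pmb{V}_+^{\mathrm{mod}}f]$ upgrades the $(r-3M)^2$-weighted bulk to the non-degenerate one. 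Writing all bulk terms in the common form $\mathcal{E}[r^{-1}\log^{-2}(2+r)f]$ via \eqref{eq:coarea} then yields the stated inequality.
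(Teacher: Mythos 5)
Your proposal reproduces the paper's own proof essentially step for step: the Killing derivatives $\pmb{\varphi}_-\partial_t f$, $\pmb{\varphi}_-\T_{|\slashed{p}|}f$ via Proposition \ref{ProILED000}; the three-region treatment of the non-triangular system for $\mathrm{D}_{\pmb{V}_{\!+}}f$ and $\mathrm{D}_{p_{r^*}}f$ (incoming far region via the Minkowski-type multiplier, rescaling by $r$-weights with good-signed errors in the outgoing far region, the weight $\zeta^A$ in the bounded region, and Corollary \ref{CorBoundpartialpr} near $\mathcal{H}^+$); the $p_N\pmb{V}_{+}^{\mathrm{mod}}f$ bound via Corollary \ref{corVminusmod}; and finally Proposition \ref{ProremainderILED} upgraded by a local radial Sobolev inequality near the photon sphere using the non-degenerate first-order bulk, which is exactly the paper's Section \ref{Subsec75} argument (there carried out in hyperboloidal coordinates, with integration by parts in $p_{r^*}$ controlled by Proposition \ref{ProboundPsi} and an $\epsilon$-smallness absorption of the zeroth-order term). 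The only deviations are cosmetic: the paper closes the outgoing far region inside a single weighted energy estimate (Proposition \ref{Proenergy} with the weights $\overline{\omega}_a\zeta^A$) rather than by invoking Proposition \ref{ProILEDrpbis}, which the paper reserves for the $r^p$ decay hierarchy of Section \ref{Sec7}.
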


For the proof of this proposition we will first show a similar result for the weaker norm $\mathcal{F}[f]$.

\begin{proposition}\label{ProILEDwrd}
Let $f $ be a solution to the massless Vlasov equation such that $\mathcal{F}[f](0) <+ \infty$. There exists $C>0$, depending only on $M$, such that
$$ \sup_{\tau \geq 0} \, \mathcal{F}[f](\tau ) +\int_{\tau=0}^{+\infty} \mathcal{F}\big[r^{-1}\log^{-2}(2+r)f \big](\tau ) \dr \tau \leq C \mathcal{F}[f](0).$$
\end{proposition}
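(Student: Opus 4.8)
The plan is to run the energy estimate of Proposition \ref{Proenergy} on a suitably weighted combination of the five quantities defining $\mathcal{F}[f]$, after producing a transport inequality along $\T_g$ for each of them. Two of the five fluxes are essentially free: since $[\T_g,\partial_t]=0$ and $[\T_g,\T_{|\slashed{p}|}]=0$ (Corollary \ref{CorTQ}), the functions $\partial_t f$ and $\T_{|\slashed{p}|}f$ solve the massless Vlasov equation, so Proposition \ref{ProILED000} applies verbatim to $\pmb{\varphi}_-\partial_t f$ and $\pmb{\varphi}_-\T_{|\slashed{p}|}f$ and gives both the boundedness and the non-degenerate bulk for $\mathbb{E}[\pmb{\varphi}_-\partial_t f]$ and $\mathbb{E}[\pmb{\varphi}_-\T_{|\slashed{p}|}f]$. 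These two quantities moreover reappear as favourable source terms on the right-hand sides of the equations for the remaining three fluxes, so I would treat them first.

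For the flux $\mathbb{E}[p_N\pmb{V}_+^{\mathrm{mod}}f]$ I would use Corollary \ref{corVminusmod}: multiplying by $p_N$ and invoking Proposition \ref{Prorsweight} together with $\pmb{a}\gtrsim r^{-1}|p_N|$, one obtains a transport inequality whose good bulk is non-degenerate at the photon sphere and captures the redshift, while the errors are a multiple of $\frac{|\pmb{\varphi}_-|}{r^2}|\partial_t f|$ — absorbed by the first step — and a multiple of $\frac{|\pmb{\varphi}_-|^{1/4}|p_t|^{3/4}}{r^{7/4}}|\pmb{V}_+f|$, which after rewriting is controlled by the fourth flux (for $r$ large by its extra $r$-decay). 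The heart of the matter is the pair $\mathrm{D}_{\pmb{V}_+}f:=|p_N|^{3/4}|r^{-1}\pmb{\varphi}_-|^{1/4}\pmb{V}_+f$ and $\mathrm{D}_{p_{r^*}}f$: combining Lemma \ref{LemComVminusregularised}, $\T_g(r^{-1})=-p_{r^*}r^{-2}$ and $\T_g(\pmb{\varphi}_-)=-\pmb{a}\pmb{\varphi}_-$ gives the equation for $\mathrm{D}_{\pmb{V}_+}f$, whose only bad term couples to $\mathrm{D}_{p_{r^*}}f$; while Lemma \ref{LemComdrdpr}, after re-expressing the resulting $\partial_{r^*}f$ through $V_+$, produces the equation for $\mathrm{D}_{p_{r^*}}f$ and couples it back to $\mathrm{D}_{\pmb{V}_+}f$. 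This is precisely the non-triangular system announced in Step $4$ of the introduction, and Corollary \ref{CorBoundpartialpr} is what keeps the $\partial_{p_{r^*}}f$ contribution under control near $\mathcal{H}^+$.

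To close this system I would split the null-shell into $\{p_{r^*}\le 0,\,r\ge R\}$, $\{p_{r^*}\ge 0,\,r\ge R\}$ and $\{r\le R\}$ for $R>3M$ large. In the incoming far region future null geodesics never enter, so one argues as on a perturbation of Minkowski; in the escaping far region the sign $\T_g(r^{-1})\le 0$ lets one strip the spurious $r^{1/4}$ and $r^{9/4}$ weights at the cost of favourably-signed errors; in $\{r\le R\}$ the auxiliary weight $\zeta$ of Definition \ref{Defz}, with $\T_g(\zeta)\lesssim -r^{-2}|p_{r^*}|-r^{-3}|r-3M||p_t|$, supplies the extra negative bulk that absorbs the cross term $C|p_t/p_N|^{3/4}|\pmb{\varphi}_-|\,|\mathrm{D}_{\pmb{V}_+}f|$ away from $r=2M,3M$, where that term is itself degenerate. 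Assembling the three regions with a large constant in front of $\mathrm{D}_{p_{r^*}}f$ and a small $\epsilon$ makes all bad couplings absorbable; applying Proposition \ref{Proenergy} to the weighted combination, then the coarea formula \eqref{eq:coarea}, and invoking Proposition \ref{ProILEDrpbis} for the far-away hierarchy, yields simultaneously the boundedness of $\mathcal{F}[f](\tau)$ and the bulk integral with weight $r^{-1}\log^{-2}(2+r)$.

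The main obstacle is exactly the non-triangularity together with the fact that $\mathbf{b}^1_{\mathrm{good}}$ and $\mathbf{b}^2_{\mathrm{good}}$ are individually too weak to dominate the cross terms: neither flux can be estimated before the other, and the closure rests entirely on balancing the region decomposition (for the spurious $r$-weights) against the $\zeta$-bulk (for the bounded region), with the coefficients of the linear combination and the parameters $R$ and $\epsilon$ chosen last, once every error term has been collected.
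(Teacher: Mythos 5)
Your proposal reconstructs the paper's own proof almost step by step: the use of Proposition \ref{ProILED000} for $\pmb{\varphi}_-\partial_t f$ and $\pmb{\varphi}_-\T_{|\slashed{p}|}f$, the non-triangular system for $\mathrm{D}_{\pmb{V}_{\!+}}f$ and $\mathrm{D}_{p_{r^*}}f$ closed via the three-region decomposition, the weight $\zeta^A$ of Definition \ref{Defz} in the bounded region, Corollary \ref{CorBoundpartialpr} near $\mathcal{H}^+$, and the final weighted combination fed into Proposition \ref{Proenergy} are exactly Sections \ref{Subsec72}--\ref{Subsec74} (culminating in Proposition \ref{Rksystemderiv}); the appeal to Proposition \ref{ProILEDrpbis} is superfluous here, as that hierarchy is only needed for the decay argument of Section \ref{Sec7}. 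There is, however, one step that fails as written: your treatment of the flux $\mathbb{E}\big[p_N\pmb{V}_+^{\mathrm{mod}}f\big]$, where you multiply Corollary \ref{corVminusmod} by $p_N$ and claim that Proposition \ref{Prorsweight} together with $\pmb{a}\gtrsim r^{-1}|p_N|$ yields a transport inequality with a non-degenerate good bulk.

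The problem is one of constants with no smallness to trade. Proposition \ref{Prorsweight} gives $\T_g(|p_N|)\lesssim |p_t|^2\,\mathds{1}_{2.5M\leq r\leq 2.7M}$, but the implicit constant comes from $|\chi_N'|\cdot\frac{2r^2}{M^2\Omega^2}$, which is large (the cutoff transitions over a width $0.2M$ and $\frac{2r^2}{M^2\Omega^2}\gtrsim 50$ there) and is fixed by the definition of $N$, hence not at your disposal. On the other hand, in that same transition region the good bulk $\pmb{a}\,|p_N|\,|\pmb{V}_+^{\mathrm{mod}}f|$ is of the very same size $M^{-1}|p_t|^2|\pmb{V}_+^{\mathrm{mod}}f|$, with a constant dictated by the geometry. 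One checks (e.g.\ for ingoing momenta, $p_{r^*}<0$, with $|p_{r^*}|\sim|p_t|/2$ in the support of $\chi_N'$) that the bad term genuinely exceeds $\pmb{a}|p_N|$, so the coefficient $\T_g(|p_N|)-\pmb{a}|p_N|$ is positive on part of the null-shell and no ILED bulk is produced; the absorption you invoke does not close. This is precisely why the paper multiplies instead by the auxiliary weight $\xi$ of Proposition \ref{Promultiplierm} (see Proposition \ref{ProestiVminusmod}): the redshift part of $\xi$ carries the small parameter $\epsilon$, and the $\eta\, p_{r^*}\big[\log^{-1}(2+r)-\log^{-1}(2+3M)\big]$ term supplies a negative bulk dominating $\epsilon|p_t|^2\mathds{1}_{2.5M\leq r\leq 2.7M}$, so that $\T_g(|\xi|)\leq 0$ outright while $|\xi|\sim|p_N|$. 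Alternatively, you could keep $p_N$ but append the factor $\zeta^A$ with $A$ large — the $\zeta$-bulk is non-degenerate on $2.5M\leq r\leq 2.7M$ — which is exactly how the paper absorbs the same Proposition \ref{Prorsweight} error inside Lemma \ref{LemforProestivarphiminusVminus}. With either repair, the rest of your argument goes through and coincides with the paper's proof.
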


We fix, for the remainder of this section \ref{SecILEDwrd}, a solution $f$ to $\T_g(f)=0$ verifying $\mathcal{F}[f](0)<+\infty$. Since $\T_g(\partial_t f)=\T_g(\T_{|\slashed{p}|}f)=0$, we get from Proposition \ref{ProILED000}, $|p_{n_{\Sigma_\tau}}| \lesssim |p_N|$ and \eqref{eq:coarea} that, for all $\tau \geq 0$,
\begin{equation}\label{kevatalenn:0} 
\mathbb{E}\big[ \pmb{\varphi}_- \partial_t f \big](\tau )+\mathbb{E}\big[ \pmb{\varphi}_- \T_{|\slashed{p}|} f \big](\tau ) +  \int_{\tau=0}^{+\infty} \! \mathbb{E}\big[ r^{-1} \pmb{\varphi}_- \partial_t f \big](\tau )+\mathbb{E}\big[r^{-1} \pmb{\varphi}_- \T_{|\slashed{p}|} f \big](\tau ) \dr \tau \lesssim \mathcal{F}[f](0 ). \color{white} \square \quad \color{black}
\end{equation}
Consequently, we only need to control the last three fluxes constituting $\mathcal{F}[f]$. We will proceed as follows.
\begin{enumerate}[label = (\alph*)]
\item In Section \ref{Subsec72}, we control the derivatives of $f$ in a favourable region that can be treated independently of the rest of the null-shell. This is explained by the fact that no future-directed null geodesic can enter this domain.
\item In Section \ref{Subsec73}, we will control the two degenerate derivatives $ |p_N|^{\frac{3}{4}} |r^{-1} \pmb{\varphi}_-|^{\frac{1}{4}}  \pmb{V}_{\! +} f$ and $\mathrm{D}_{p_{r^*}} f$.
\item In Section \ref{Subsec74}, we conclude the proof of Proposition \ref{ProILEDwrd} by estimating $p_N\pmb{V}_{\, +}^{\mathrm{mod}}f$.
\item In Section \ref{Subsec75}, we control $f$ using the norm $\mathcal{E}[f]$. This will show Proposition \ref{ProILEDwrd2}.
\end{enumerate}

\subsection{The far-away incoming particles case}\label{Subsec72}

We begin controlling the Vlasov field $f$ in the region where $r \gg 2M$ and $p_{r^*} <0$. In this region, motivated by the Minkowskian case, we will use the vector field
$$ S \coloneqq \frac{r}{\Omega^2}\partial_t+\frac{r}{\Omega^2}\partial_{r^*}-p_{r^*} \partial_{p_{r^*}}.$$
We will prove the next estimate.

\begin{proposition}\label{Proestiincoming}
Let $R \geq 832M$. There exists $C >0$ such that for all $\tau \geq 0$, we have
\begin{align*}
& \mathbb{E} \Big[ p_t  |Sf| \, \mathds{1}_{r \geq R} \, \mathds{1}_{p_{r^*} \leq 0} \Big](\tau)+\frac{1}{4}\mathbb{E} \Big[ p_t  \big|p_t \partial_{p_{r^*}} f \big| \, \mathds{1}_{r \geq R} \, \mathds{1}_{p_{r^*} \leq 0} \Big](\tau) \\
& + \int_{\pi^{-1}(\mathcal{R}) \cap \{ r\geq R, \, p_{r^*} \leq 0 \} } \bigg( \frac{|p_{r^*}|^2}{(r+2)\log^2(2+r)}+\frac{|\slashed{p}|^2}{4r^3} \bigg) \Big(\big|S f \big| + \frac{1}{4} \big|p_t \partial_{p_{r^*}} f \big| \Big) \dr \mu_{\mathcal{P}} \\
& \qquad \leq 4\mathbb{E} \Big[ p_t  |Sf| \, \mathds{1}_{r \geq R} \, \mathds{1}_{p_{r^*} \leq 0}  \Big](0)+\mathbb{E} \Big[ p_t  \big| p_t \partial_{p_{r^*}} f \big| \, \mathds{1}_{r \geq R} \, \mathds{1}_{p_{r^*} \leq 0} \Big](0) + C\mathbb{E} \big[  \pmb{\varphi}_- \partial_t f \big](0)+  C\mathbb{E} \big[ \pmb{\varphi}_- \T_{|\slashed{p}|} f \big](0).
\end{align*}
\end{proposition}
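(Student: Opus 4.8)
The plan is to treat the region $D \coloneqq \pi^{-1}(\mathcal{R}) \cap \{ r \geq R, \, p_{r^*} \leq 0 \}$ as a self-contained energy system, exploiting that no future-directed null geodesic can enter it. Indeed, on $D$ one has $\T_g(r^*) = \Omega^{-2}p_{r^*} \leq 0$, while, since $r \geq R > 3M$, reading off \eqref{eq:defT} gives $\T_g(p_{r^*}) = \frac{r-3M}{r^4}|\slashed{p}|^2 \geq 0$. Hence along the forward flow $r$ decreases across the face $\{r = R\}$ and $p_{r^*}$ increases across the face $\{p_{r^*}=0\}$, so $\mathds{1}_D$ is non-increasing along $\T_g$. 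First I would apply the energy inequality of Proposition \ref{Proenergy} to $\mathds{1}_D$ times the weighted quantities described below: since $\T_g(\mathds{1}_D)\leq 0$ as a measure, the fluxes through the two interior faces $\{r=R\}$ and $\{p_{r^*}=0\}$ carry a favourable sign and may be discarded, and only the fluxes through $\Sigma_\tau$ remain. Note that $R\geq 832M>R_0$, so $p_{n_{\Sigma_\tau}}=p_u$ here by Lemma \ref{LempSig}; this is exactly what makes the estimate local to $D$.

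Next I would compute the two commutators governing $Sf$ and $p_t\partial_{p_{r^*}}f$, using $\T_g f=0$ so that $\T_g(Sf)=[\T_g,S]f$ and $\T_g(p_t\partial_{p_{r^*}}f)=[\T_g,p_t\partial_{p_{r^*}}]f$. Combining $[\T_g,\partial_t]=0$, $\T_g(p_t)=0$, $\partial_{r^*}=\Omega^2\partial_r$, the commutators $[\T_g,\partial_{r^*}]$ and $[\T_g,\partial_{p_{r^*}}]$ of Lemma \ref{LemComdrdpr}, and the null-shell relation, one checks that $S$ is engineered so that $[\T_g,S]$ equals a multiple of $\T_g$ plus angular terms plus a multiple of $p_v\partial_t$; in particular the $S$-equation \emph{decouples}, its only genuine source being a multiple of $p_v\partial_t f$ and of the angular derivative. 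By contrast $[\T_g,p_t\partial_{p_{r^*}}]=\Omega^{-2}p_{r^*}\partial_t-\Omega^{-2}p_t\partial_{r^*}$, and after eliminating $\partial_{r^*}f$ through $S$ (namely $\frac{r}{\Omega^2}\partial_{r^*}f=Sf-\frac{r}{\Omega^2}\partial_t f+p_{r^*}\partial_{p_{r^*}}f$) and through $\T_g f=0$ (i.e. $\pmb{R}f=-r^{-2}\T_Q f$ from Lemma \ref{LemDecompT} and \eqref{eq:defR}), the $p_t\partial_{p_{r^*}}f$-equation becomes a decaying source, a coupling term $\sim \Omega^{-2}r^{-1}p_t\,Sf$, and a self-interaction $\sim r^{-1}p_{r^*}\,(p_t\partial_{p_{r^*}}f)$. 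All derivatives are thereby re-expressed in the controlled frame $\{Sf,\ p_t\partial_{p_{r^*}}f,\ \partial_t f,\ \T_{|\slashed{p}|}f\}$.

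The estimate would then be closed triangularly. I would first run Proposition \ref{Proenergy} on $\mathds{1}_D$ times a multiplier-weighted $Sf$, the multiplier modelled on $\xi$ of Proposition \ref{Promultiplierm} (so that it is $\sim|p_t|$, reproducing the stated flux $\mathbb{E}[p_t|Sf|]=\int|p_u||p_t||Sf|$, and its $\T_g$-derivative furnishes the good bulk $\gtrsim-\big(\tfrac{|p_{r^*}|^2}{r\log^2(2+r)}+\tfrac{|\slashed{p}|^2}{r^3}\big)$), combined if needed with the $r^p$-weight $\frac{r}{\Omega^2}|p_v|$ of Lemma \ref{Lemrp} to avoid a logarithmic loss on the radial part. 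Because the $S$-equation decouples, this controls $\sup_\tau\mathbb{E}[p_t Sf\,\mathds{1}_D](\tau)$ and its bulk, the source $\int|p_v||\partial_t f|$ being bounded by $\mathbb{E}[\pmb{\varphi}_-\partial_t f](0)$ and $\mathbb{E}[\pmb{\varphi}_-\T_{|\slashed{p}|}f](0)$ via Proposition \ref{ProILED000} applied to the Vlasov solutions $\partial_t f$ and $\T_{|\slashed{p}|}f$, using $|p_v|\lesssim r^{-1}|\pmb{\varphi}_-|$ on $\{r\geq 4M\}$ (Remark \ref{RkvarphiHplus}). Feeding the now-controlled $Sf$-bulk as a source, I would then run the same inequality on $\tfrac14$ times the weighted $p_t\partial_{p_{r^*}}f$; this is where the factors $\tfrac14$ and $4$ of the statement originate. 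The passage from the weighted intermediate fluxes to the unweighted fluxes of the statement is harmless since $r\geq R$ on $D$, and the coarea formula \eqref{eq:coarea} converts the spacetime bulk into the time-integral of $\mathcal{F}\big[r^{-1}\log^{-2}(2+r)f\big]$-type norms.

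The hard part will be precisely the coupled, non-triangular-looking structure of the $p_t\partial_{p_{r^*}}f$-equation: the coupling $\sim r^{-1}p_t\,Sf$ and the self-interaction $\sim r^{-1}p_{r^*}\,(p_t\partial_{p_{r^*}}f)$ are a priori growth terms that the bare integrated-decay bulk cannot absorb. The resolution rests on the favourable signs available in the incoming region, $\T_g(r)=p_{r^*}\leq 0$ and $\T_g(|p_v|)=-\frac{r-3M}{2r^4}|\slashed{p}|^2\leq 0$, which permit a radial rescaling of the multiplier turning the self-interaction into a term of good sign, together with the largeness of $R$: taking $R\geq 832M$ renders all the $O(M/r)$ Schwarzschild corrections to the (essentially Minkowskian) commutator computation, as well as the residual coupling, small enough to be dominated by the good bulk and by the already-controlled $Sf$-estimate. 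Tracking these constants and verifying the absorption is the delicate step; the rest is routine bookkeeping of the divergence theorem through Proposition \ref{Proenergy}.
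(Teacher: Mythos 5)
Your proposal reproduces the paper's own strategy in almost every ingredient: the same incoming far-away region, closed off by the observation that null geodesics can only exit it (the paper implements your $\mathds{1}_D$ argument with smooth cutoffs $\chi\big[\epsilon^{-1}(r-R)\big]\chi\big(-\epsilon^{-1}p_{r^*}\big)$, whose $\T_g$-derivative is nonpositive, followed by dominated convergence); the same pair of derivatives $Sf$ and $p_t\partial_{p_{r^*}}f$; essentially the same multiplier (the paper's $\omega=1-\frac{p_{r^*}}{|p_t|}\big(3-\log^{-1}(2+r)\big)$, which satisfies $1\leq\omega\leq4$ on $\{p_{r^*}\leq0\}$ and $|p_t|\T_g(\omega)\leq-\frac{|p_{r^*}|^2}{(2+r)\log^2(2+r)}-\frac{|\slashed{p}|^2}{2r^3}$, so the logarithmic radial weight is already built in and no auxiliary $r^p$-weight is needed); and the same treatment of the $\partial_t f$ and $\T_{|\slashed{p}|}f$ sources through Proposition \ref{ProILED000} and $|\pmb{\varphi}_-|\gtrsim r|p_t|$ on this region.

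There is, however, a genuine gap in your closure scheme. It is triangular, and it rests on the claim that $[\T_g,S]$ consists of a multiple of $\T_g$, angular terms, and a multiple of $p_v\partial_t$, so that the $S$-equation decouples and can be closed first. That claim is false on Schwarzschild: Lemma \ref{LemComdrdprbis} gives
\[
[\T_g,S]=\Big(\frac{4(r-3M)p_v^2}{r\Omega^4 p_t}-\frac{2p_{r^*}p_v}{\Omega^2 p_t}\Big)\partial_t-\frac{3M}{r^4}|\slashed{p}|^2\,\partial_{p_{r^*}}+2\T_g ,
\]
and the middle term --- a purely Schwarzschild correction, absent only when $M=0$ --- couples $Sf$ back to $p_t\partial_{p_{r^*}}f$. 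Hence your first step leaves the uncontrolled bulk $\frac{3M}{R}\frac{|\slashed{p}|^2}{r^3}\big|p_t\partial_{p_{r^*}}f\big|$ on its right-hand side; it cannot be ``dominated by the good bulk and by the already-controlled $Sf$-estimate,'' because at that stage no bulk control of $p_t\partial_{p_{r^*}}f$ exists --- the argument is circular. The system is coupled in both directions and must be closed simultaneously, which is exactly what the paper does: it estimates the single quantity $\omega|Sf|+\frac{\omega}{16}\big|p_t\partial_{p_{r^*}}f\big|$, where the relative weight $\frac{1}{16}$ lets the $O(1)$ coupling $\frac{|\slashed{p}|^2}{r^3}|Sf|$ of the second equation be absorbed by the good bulk $\frac{|\slashed{p}|^2}{2r^3}|Sf|$ generated by $\T_g(\omega)$ (leaving the $\frac{|\slashed{p}|^2}{4r^3}$ of the statement), while the condition $\frac{13M}{R}\leq\frac{1}{64}$ --- the actual origin of $R\geq832M$, and, together with $\omega\in[1,4]$, of the factors $4$ and $\frac14$ --- lets the couplings proportional to $\big|p_t\partial_{p_{r^*}}f\big|$ be absorbed by the $\frac{1}{16}$-weighted good bulk of the second quantity. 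Your final paragraph contains the raw ingredients for this fix (smallness in $M/R$, the good signs in the incoming region), but as written your proof commits to an order of estimation that fails at its first step.
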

\begin{remark}\label{RkProestiincoming}
This energy estimate allows to control $|p_N|^{\frac{3}{4}} |r^{-1} \pmb{\varphi}_-|^{\frac{1}{4}}  \pmb{V}_{\! +} f$ and $D_{p_{r^*}} f$ in $\{ r \geq R, \, p_{r^*} \leq 0 \}$. As $R \geq 4M$ and $p_{r^*} \leq 0$, we have $|\pmb{\varphi}_-| \sim r|p_t|$ on this region, so 
$$\big| p_t S f \big|+ |p_t|\big| p_t \partial_{p_{r^*}} f \big| \lesssim  \big| \pmb{ \varphi}_- \partial_t f \big|+  |p_N|^{\frac{3}{4}} |r^{-1} \pmb{\varphi}_-|^{\frac{1}{4}} \big| \pmb{V}_{\! +} f \big|+\big| \mathrm{D}_{p_{r^*}} f \big|$$
and
$$      |p_N|^{\frac{3}{4}} |r^{-1} \pmb{\varphi}_-|^{\frac{1}{4}} \big| \pmb{V}_{\! +} f \big|+ \big|   p_N \pmb{V}_{\, +}^{ \mathrm{mod}} f \big| \lesssim  \big| \pmb{\varphi}_- \partial_t f \big|+ \big| p_t S f \big|+  |p_t|\big| p_t \partial_{p_{r^*}} f \big|, \qquad \qquad \big| \mathrm{D}_{p_{r^*}} f \big| \lesssim |p_t|\big| p_t \partial_{p_{r^*}} f \big|.$$
\end{remark}

To prove Proposition \ref{Proestiincoming}, we will apply the energy estimate in Proposition \ref{Proenergy} to $p_t Sf$ and $|p_t|^2 \partial_{p_{r^*}}f$, both multiplied by suitable cutoff functions. We begin computing two commutators.

\begin{lemma}\label{LemComdrdprbis}
We have
\begin{align*}
\big[\T_g,S\big] &= \bigg(\frac{4(r-3M)p_v^2}{r \Omega^4p_t}-\frac{2p_{r^*}p_v}{ \Omega^2p_t} \bigg)\partial_t     - \frac{3M}{r^4} |\slashed{p}|^2 \partial_{p_{r^*}}+2\T_g , \\
\big[ \T_g, p_t \partial_{p_{r^*}} \big] & =  -\frac{|\slashed{p}|^2\Omega^2}{r^3 p_t} S-\frac{M|\slashed{p}|^2}{r^4 p_t}p_{r^*} \partial_{p_{r^*}}+\frac{|\slashed{p}|^2}{r^2 p_t} \partial_t+\frac{p_{r^*}}{r^2 p_t} \T_Q -\frac{p_{r^*}}{ p_t} \T_g.
\end{align*}
\end{lemma}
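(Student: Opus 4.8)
The plan is to reduce both commutators to the two elementary brackets $[\T_g,\partial_{r^*}]$ and $[\T_g,\partial_{p_{r^*}}]$ recorded in Lemma \ref{LemComdrdpr}, using the Leibniz rule $[\T_g,hX]=\T_g(h)\,X+h\,[\T_g,X]$ and bilinearity of the bracket. For the scalar derivatives that appear, I would first compute from \eqref{eq:defT} that $\T_g(r)=p_{r^*}$, that $\T_g(\Omega^{-2})=-\tfrac{2M}{r^2\Omega^4}p_{r^*}$, and hence $\T_g\!\big(\tfrac{r}{\Omega^2}\big)=\tfrac{r-4M}{r\Omega^4}p_{r^*}$ after using $r\Omega^2=r-2M$, together with $\T_g(p_{r^*})=\tfrac{r-3M}{r^4}|\slashed{p}|^2$. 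For the second bracket I would also invoke the conservation $\T_g(p_t)=0$ from Lemma \ref{Lemconsqua}.

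For $[\T_g,S]$, I would split $S=\tfrac{r}{\Omega^2}\partial_t+\tfrac{r}{\Omega^2}\partial_{r^*}-p_{r^*}\partial_{p_{r^*}}$ and apply the Leibniz rule to each summand, recalling $[\T_g,\partial_t]=0$. Collecting the contributions to $\partial_t$, $\partial_{r^*}$, $\partial_{p_{r^*}}$ and $\T_g$ separately, I expect the $\partial_{r^*}$-coefficient to vanish identically (it reduces to a multiple of $(r-4M)-2(r-3M)+(r-2M)=0$), the $\partial_{p_{r^*}}$-coefficient to collapse to $-\tfrac{3M}{r^4}|\slashed{p}|^2$, and the $\T_g$-coefficient to equal $2$. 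The only genuinely non-mechanical step is the $\partial_t$-coefficient: after clearing the common factor $r\Omega^4 p_t$ it becomes $(r-3M)p_t^2+(r-4M)p_tp_{r^*}-Mp_{r^*}^2$, and substituting $p_v=\tfrac{1}{2}(p_t+p_{r^*})$ (with $r\Omega^2=r-2M$) one checks this equals the claimed $4(r-3M)p_v^2-2(r-2M)p_{r^*}p_v$.

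For $[\T_g,p_t\partial_{p_{r^*}}]$, since $\T_g(p_t)=0$ the Leibniz rule gives $[\T_g,p_t\partial_{p_{r^*}}]=p_t[\T_g,\partial_{p_{r^*}}]=\tfrac{p_{r^*}}{\Omega^2}\partial_t-\tfrac{p_t}{\Omega^2}\partial_{r^*}$, directly from Lemma \ref{LemComdrdpr}. The remaining task is to recast this coordinate expression into the stated $S$--$\T_Q$--$\T_g$ form. I would expand the right-hand side of the claim using the definition of $S$ and the decomposition $\T_g=\pmb{R}+r^{-2}\T_Q$ of Lemma \ref{LemDecompT} together with \eqref{eq:defT}, then match coefficients: the $\T_Q$-contributions of $-\tfrac{p_{r^*}}{p_t}\T_g$ and $\tfrac{p_{r^*}}{r^2p_t}\T_Q$ cancel, the $\partial_{p_{r^*}}$-coefficient vanishes after using $r\Omega^2=r-2M$, the $\partial_t$-coefficient reduces to $\tfrac{p_{r^*}}{\Omega^2}$, and the $\partial_{r^*}$-coefficient reduces to $-\tfrac{p_{r^*}^2}{\Omega^2p_t}-\tfrac{|\slashed{p}|^2}{r^2p_t}$. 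The identity then closes by the null-shell relation $p_t^2=p_{r^*}^2+\Omega^2 r^{-2}|\slashed{p}|^2$ of \eqref{eq:defConsQua}, which turns this last coefficient into exactly $-\tfrac{p_t}{\Omega^2}$.

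The computations are entirely routine; the only points requiring care, and hence the (modest) main obstacle, are the two places where the null-shell relation must be invoked to collapse the coordinate expressions into the compact geometric form of the statement, namely the $p_v$-rewriting of the $\partial_t$-coefficient in the first identity and the $\partial_{r^*}$-coefficient in the second.
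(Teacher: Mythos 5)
Your proposal is correct and follows essentially the same route as the paper: both proofs reduce the two brackets to Lemma \ref{LemComdrdpr} via the Leibniz rule (using $\T_g(r\Omega^{-2})=\frac{r-4M}{r\Omega^4}p_{r^*}$, $\T_g(p_{r^*})=\frac{r-3M}{r^4}|\slashed{p}|^2$, $[\T_g,\partial_t]=0$ and $\T_g(p_t)=0$), and both invoke the null-shell relation \eqref{eq:defConsQua} together with the decomposition of $\T_g$ to produce the $S$--$\T_Q$--$\T_g$ form. The only (immaterial) difference is that for the second identity the paper derives the compact form forward from $p_t[\T_g,\partial_{p_{r^*}}]$, whereas you verify it backward by expanding the claimed right-hand side and matching coefficients; your intermediate coefficients, including the vanishing of $(r-4M)-2(r-3M)+(r-2M)$ and the final collapse via $p_t^2=p_{r^*}^2+\Omega^2 r^{-2}|\slashed{p}|^2$, all check out.
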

\begin{proof}
We recall from Lemma \ref{LemComdrdpr} that
\begin{align*}
[\T_g,\partial_{r^*}] &= \frac{4(r-3M)p_v^2}{r^2 \Omega^2p_t} \partial_t-2\frac{r-3M}{r^2 \Omega^2}p_{r^*}\big(\partial_t+ \partial_{r^*}\big)+ \Omega^2 \frac{r-6M}{r^5} |\slashed{p}|^2 \partial_{p_{r^*}}+2\frac{\Omega^2}{r}\T_g  , \\[2pt]
\big[ \T_g, \partial_{p_{r^*}} \big] & = \frac{2p_v}{ \Omega^2p_t} \partial_t- \frac{1}{\Omega^2}\big( \partial_t+ \partial_{r^*} \big).
\end{align*}
As $ \T_g (r \Omega^{-2})=p_{r^*}\frac{r-4M}{r\Omega^4}$, we have
\begin{align*}
\big[\T_g,S\big] &=p_{r^*}\frac{r-4M}{r\Omega^4}\big(\partial_t+\partial_{r^*} \big) +\frac{r}{\Omega^2} [\T_g,\partial_{r^*}]-\frac{r-3M}{r^4}|\slashed{p}|^2 \partial_{p_{r^*}}-p_{r^*} \big[ \T_g, \partial_{p_{r^*}} \big] ,
\end{align*}
from where we obtain the first relation. For the second one, we use the null-shell relation \eqref{eq:defConsQua} to show 
\begin{align*}
\big[ \T_g, \partial_{p_{r^*}} \big] & =  \frac{p_{r^*}}{\Omega^2 p_t} \partial_t-\frac{|p_{r^*}|^2}{\Omega^2|p_t|^2} \partial_{r^*}-\frac{|\slashed{p}|^2}{r^2 |p_t|^2} \partial_{r^*} \\
& = \frac{p_{r^*}}{\Omega^2 p_t} \partial_t-\frac{|\slashed{p}|^2}{r^2 |p_t|^2} \partial_{r^*}-\frac{p_{r^*}}{ |p_t|^2}\bigg( \T_g+\frac{p_t}{\Omega^2} \partial_t-\frac{1}{r^2} \T_Q-\frac{(r-3M)|\slashed{p}|^2}{r^4} \partial_{p_{r^*}} \bigg).
\end{align*}
It remains to use $\T_g(p_t)=0$ and to rewrite $\partial_{r^*}$ in terms of $S$. 
\end{proof}

To generate good error terms and absorb the problematic ones, we will multiply the derivatives $Sf$ and $p_t \partial_{p_{r^*}}f$ by the weight $\omega(x,p)$ defined by
\begin{equation}\label{eq:defomegafaraway}
 \omega(x,p)\coloneqq 1-\frac{p_{r^*}}{|p_t|}\big(3- \log^{-1}(2+r) \big).
 \end{equation}
We note that $1 \leq \omega(x,p) \leq 4$ on the domain of incoming particles $\{ p_{r^*} \leq 0 \}$. The error terms arising from the cutoff function will have a good sign as well.

\begin{lemma}\label{Lemtruc}
For all $r \geq 6M$, we have
$$  |p_t| \T_g \big( \omega )\leq -\frac{|p_{r^*}|^2}{(2+r) \log^2(2+r)}-\frac{|\slashed{p}|^2}{2r^3} .$$
Let $ \chi \in C^\infty ( \R )$ be an increasing function such that $\chi (s)=0$ for $s \leq 0$, and $\chi(s) =1$ for $s \geq 1$. Then,
$$ \T_g \big[ \chi (r-R)\chi (-p_{r^*} ) \big] \leq 0.$$
\end{lemma}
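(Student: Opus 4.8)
The plan is to reduce both statements to direct computations using the conservation laws $\T_g(p_t)=0$ and $\T_g(|\slashed{p}|)=0$ from Lemma \ref{Lemconsqua}, together with the two elementary identities $\T_g(r)=p_{r^*}$ and $\T_g(p_{r^*})=\frac{r-3M}{r^4}|\slashed{p}|^2$, both of which can be read off directly from the coordinate expression \eqref{eq:defT} of $\T_g$ (recalling $\partial_{r^*}(r)=\Omega^2$).

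First I would compute $\T_g(\omega)$. Since $|p_t|=-p_t$ is conserved, applying $\T_g$ to \eqref{eq:defomegafaraway} and using $\T_g\big(\log^{-1}(2+r)\big)=-\frac{p_{r^*}}{(2+r)\log^2(2+r)}$ yields
\begin{align*}
|p_t|\,\T_g(\omega) = -\Big(3-\log^{-1}(2+r)\Big)\frac{r-3M}{r^4}|\slashed{p}|^2 - \frac{|p_{r^*}|^2}{(2+r)\log^2(2+r)}.
\end{align*}
The second term already coincides with the corresponding term on the right-hand side of the claimed inequality, so the first statement reduces to showing $\big(3-\log^{-1}(2+r)\big)\frac{r-3M}{r}\geq \tfrac12$ for $r\geq 6M$. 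This follows by bounding the two positive factors separately: since $2+r\geq 2>e^{1/2}$ one has $\log^{-1}(2+r)\leq 2$, hence $3-\log^{-1}(2+r)\geq 1$; and $\frac{r-3M}{r}=1-\frac{3M}{r}\geq \tfrac12$ precisely because $r\geq 6M$. Multiplying through by $|\slashed{p}|^2/r^3$ then gives $|p_t|\,\T_g(\omega)\leq -\frac{|\slashed{p}|^2}{2r^3}-\frac{|p_{r^*}|^2}{(2+r)\log^2(2+r)}$.

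For the second statement I would apply the Leibniz rule and the two identities above to obtain
\begin{align*}
\T_g\big[\chi(r-R)\chi(-p_{r^*})\big] = \chi'(r-R)\,p_{r^*}\,\chi(-p_{r^*}) - \chi(r-R)\,\chi'(-p_{r^*})\,\frac{r-3M}{r^4}|\slashed{p}|^2,
\end{align*}
and then check that each term is nonpositive. The monotonicity and nonnegativity of $\chi$ give $\chi'\geq 0$ and $\chi\geq 0$ throughout. In the first term, $\chi(-p_{r^*})$ vanishes unless $p_{r^*}<0$, so the product $p_{r^*}\chi(-p_{r^*})$ is $\leq 0$ for every sign of $p_{r^*}$; combined with $\chi'(r-R)\geq 0$ this term is nonpositive. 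In the second term, $\chi(r-R)$ is supported in $\{r>R\}$, and since $R\geq 832M>3M$ we have $r-3M>0$ there, so $\frac{r-3M}{r^4}|\slashed{p}|^2\geq 0$; together with $\chi'(-p_{r^*})\geq 0$ and the overall minus sign, this term is also nonpositive.

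Both arguments are purely computational, and I do not expect any genuine obstacle. The only points requiring a little care are verifying $3-\log^{-1}(2+r)\geq 1$ (which rests on the crude bound $2+r\geq 2$ rather than on $r\geq 6M$), and the sign bookkeeping in the cutoff term, where the key observation is that $p_{r^*}\chi(-p_{r^*})\leq 0$ regardless of the sign of $p_{r^*}$.
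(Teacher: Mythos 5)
Your proof is correct and follows essentially the same route as the paper: the same identity for $\T_g\big[-p_{r^*}(3-\log^{-1}(2+r))\big]$ using $\T_g(p_t)=0$, $\T_g(r)=p_{r^*}$, $\T_g(p_{r^*})=\frac{r-3M}{r^4}|\slashed{p}|^2$, followed by the bounds $3-\log^{-1}(2+r)\geq 1$ and $r-3M\geq \frac{r}{2}$ for $r\geq 6M$, and the same sign bookkeeping ($\chi'\geq 0$, $p_{r^*}\chi(-p_{r^*})\leq 0$, $r-3M>0$ on the support of $\chi(r-R)$) for the cutoff term. No gaps.
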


\begin{proof}
As $\T_g(p_t)=0$, the first inequality follows from $3- \log^{-1}(2+r) \geq 1$, $r-3M \geq \frac{1}{2}r$, and
$$ \T_g \big[ -p_{r^*} \big(3- \log^{-1}(2+r) \big) \big]= -\frac{|p_{r^*}|^2}{(2+r) \log^2(2+r)}-\frac{r-3M}{r^4}|\slashed{p}|^2 \big(3- \log^{-1}(2+r)\big) .$$
For the second one, we have
$$ \T_g \big[ \chi (r-R)\chi (-p_{r^*} ) \big]=p_{r^*} \chi'(r-R)\chi (-p_{r^*})-\frac{r-3M}{r^4} |\slashed{p}|^2 \chi'(-p_{r^*})\chi (r-R). $$ Finally, it remains to use $ \chi' \geq 0$ as well as $r-3M \geq 0$ and $p_{r^*} \leq 0$, when the RHS does not vanish.
\end{proof}

We are now able to show Proposition \ref{Proestiincoming}. 

\begin{proof}[Proof of Proposition \ref{Proestiincoming}]
By applying the energy estimate in Proposition \ref{Proenergy} to the functions
$$\chi \big[ \epsilon^{-1}(r-R) \big] \chi \big(- \epsilon^{-1} p_{r^*} \big) \omega |Sf| , \qquad \qquad \chi \big[ \epsilon^{-1}(r-R) \big] \chi \big(- \epsilon^{-1} p_{r^*} \big)\omega|p_t \partial_{p_{r^*}} f|   ,$$
for $\epsilon >0$, we get from the dominated convergence theorem and Lemma \ref{Lemtruc} that
$$
\mathbb{E} \Big[ \omega p_t |Sf| \, \mathds{1}_{r \geq R} \, \mathds{1}_{p_{r^*} \leq 0} \Big](\tau)  \leq \mathbb{E} \Big[ \omega p_t  |Sf| \, \mathds{1}_{r \geq R} \, \mathds{1}_{p_{r^*} \leq 0} \Big](0)+\int_{\mathcal{R}_0^\tau \cap \{r \geq R, \; p_{r^*} \leq 0 \}}  \T_g(\omega) |p_t Sf| +\omega|p_t| \T_g\big( |Sf| \big) \dr \mu_{\mathcal{P}}
$$
and
\begin{align*}
\mathbb{E} \Big[ \omega p_t  |p_t \partial_{p_{r^*}} f| \, \mathds{1}_{r \geq R} \, \mathds{1}_{p_{r^*} \leq 0} \Big](\tau) & \leq \mathbb{E} \Big[ \omega p_t  |p_t \partial_{p_{r^*}} f|\, \mathds{1}_{r \geq R} \, \mathds{1}_{p_{r^*} \leq 0} \Big](0)\\
&\quad+\int_{\mathcal{R}_0^\tau \cap  \{r \geq R, \; p_{r^*} \leq 0 \}} \T_g(\omega) \big|p_t^2 \partial_{p_{r^*}}f \big| +\omega|p_t| \T_g\big( |p_t \partial_{p_{r^*}}f| \big) \dr \mu_{\mathcal{P}}.
\end{align*}
Applying Lemma \ref{LemComdrdprbis}, we have for $r \geq R$ that
\begin{align*}
|p_t| \T_g \big( |S f | \big) & \leq \frac{6p_tp_v}{ \Omega^4}|\partial_t f|   +\frac{3M}{R} \frac{ |\slashed{p}|^2}{r^3} |p_t \partial_{p_{r^*}} f | \leq 7 |p_t||p_v||\partial_t f|   +\frac{3M}{R} \frac{ |\slashed{p}|^2}{r^3} |p_t \partial_{p_{r^*}} f | , 
\end{align*}
and, as $r^{-1} |\T_Q f| = r^{-1} |\slashed{p}| |\T_{|\slashed{p}|} f| \leq 2 |p_t||\T_{|\slashed{p}|} f|$ for $r \geq R$,
\begin{align*}
 |p_t|\T \big(|p_t \partial_{p_{r^*}} f | \big) & \leq \frac{|\slashed{p}|^2}{r^3 } |Sf|+\frac{M}{R}\frac{|\slashed{p}|^2}{r^3}\big|p_{t} \partial_{p_{r^*}}f |+\frac{|\slashed{p}|^2}{r^2 } | \partial_t f|+2\frac{|p_{t}|^2}{r } \big|\T_{|\slashed{p}|} f \big|.
\end{align*}
By the null-shell relation, we have $\frac{|\slashed{p}|^2}{r^2} \leq 6 |p_t||p_v|$. Hence, as $\omega \leq 4$, we get for $r \geq R$,
\begin{align*}
 & \omega|p_t| \T_g\big( |Sf| \big)+\frac{\omega}{16} |p_t| \T_g\big( |p_t \partial_{p_{r^*}}f| \big)  \leq    \frac{13M}{R} \frac{ |\slashed{p}|^2}{r^3} |p_t \partial_{p_{r^*}} f |+\frac{|\slashed{p}|^2}{4r^3 } |Sf|+30 |p_t||p_v||\partial_t f| +\frac{|p_{t}|^2}{2r } \big|\T_{|\slashed{p}|} f \big|.
 \end{align*}
If $R\geq 2M$ is such that $\frac{13M}{R} \leq \frac{1}{64}$, we have from the upper bound for $\T_g(\omega)$ in Lemma \ref{Lemtruc} that
 \begin{align} \nonumber
& \T_g(\omega) |p_t Sf| +\omega|p_t| \T_g\big( |Sf| \big)+ \frac{1}{16}\T_g(\omega) |p_t| \big|p_t \partial_{p_{r^*}}f \big| +\frac{\omega}{16}|p_t| \T_g\big( |p_t \partial_{p_{r^*}}f| \big) \\ \label{eq:forRk641}
&\qquad\quad   \leq -\bigg( \frac{|p_{r^*}|^2}{(r+2)\log^2(2+r)}+\frac{|\slashed{p}|^2}{4r^3} \bigg) \Big[\big|S f \big| +  \frac{1}{16}\big|p_t \partial_{p_{r^*}} f \big| \Big]+30 |p_t||p_v||\partial_t f| +\frac{|p_{t}|^2}{2r } \big|\T_{|\slashed{p}|} f \big|.
 \end{align}
Proposition \ref{Proestiincoming} then follows from the integrated energy estimates satisfied by $\partial_t f$ and $\T_{|\slashed{p}|} f$. We recall \eqref{kevatalenn:0} and the estimate $|\pmb{\varphi}_-| \gtrsim r |p_t|$ in the region being considered.
\end{proof}

\begin{remark}
We note that the region of outgoing particles near $\mathcal{H}^+$, corresponding to $\{ r \leq R_2, \; p_{r^*} >0 \}$ for some $R_2 \in (2M,2.5M)$, can also be treated independently from the rest of the null-shell. This will however not be required for our purposes.
\end{remark}

\subsection{Boundedness and ILED without relative degeneracy for degenerate derivatives}\label{Subsec73}

Let us prove the following energy estimates.

\begin{proposition}\label{ProestivarphiminusVminus}
For all $\tau \geq 0$, we have
\begin{align*}
& \mathbb{E} \Big[   |p_N|^{\frac{3}{4}} |r^{-1} \pmb{\varphi}_-|^{\frac{1}{4}}  \pmb{V}_{\! +}  f \Big](\tau)+ \int_{\pi^{-1}(\mathcal{R} )\setminus \{ p_{r^*} \leq 0, \, r \geq R \} } \frac{|p_N|}{r} \Big| |p_N|^{\frac{3}{4}} |r^{-1} \pmb{\varphi}_-|^{\frac{1}{4}}   \pmb{V}_{\! +} f \Big| \dr \mu_{\mathcal{P}} \lesssim \mathcal{F}[f](0),
\end{align*}
and
\begin{align*}
& \mathbb{E} \Big[   \mathrm{D}_{p_{r^*}} f  \Big](\tau)+ \int_{\pi^{-1}(\mathcal{R})\setminus \{ p_{r^*} \leq 0, \, r \geq R \} } \frac{|p_t|}{r\Omega^2}   \big|\mathrm{D}_{p_{r^*}}  f  \big|  \dr \mu_{\mathcal{P}}\lesssim \mathcal{F}[f](0).
\end{align*}
\end{proposition}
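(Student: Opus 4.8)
The plan is to run the energy inequality of Proposition~\ref{Proenergy} on the two densities $|D_1|:=\big||p_N|^{\frac{3}{4}}|r^{-1}\pmb{\varphi}_-|^{\frac{1}{4}}\pmb{V}_{\!+}f\big|$ and $|D_2|:=|\mathrm{D}_{p_{r^*}}f|$ \emph{simultaneously}, since the system they satisfy is not triangular. First I would dispose of the far-away incoming region: on $\{r\geq R,\,p_{r^*}\leq 0\}$, Proposition~\ref{Proestiincoming} together with Remark~\ref{RkProestiincoming} already yields flux control of both $D_1$ and $D_2$, which is precisely why the bulk integrals are taken over the complement $\pi^{-1}(\mathcal{R})\setminus\{p_{r^*}\leq 0,\,r\geq R\}$. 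It then remains to prove the non-degenerate bulk estimates on this complement while gluing the fluxes across $\Sigma_\tau$.

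Next I would compute the $\T_g$-evolution of the two densities. Using $\T_g f=0$, Lemma~\ref{LemComVminusregularised} gives $\T_g(\pmb{V}_{\!+}f)=-\pmb{a}\,\pmb{V}_{\!+}f-\frac{r+3M}{r^{3/2}|r+6M|^{3/2}}\pmb{\varphi}_-|p_t|\partial_{p_{r^*}}f$; combined with $\T_g(\pmb{\varphi}_-)=-\pmb{a}\pmb{\varphi}_-$ from Lemma~\ref{Lemphiminus}, with $\T_g(r^{-1})=-p_{r^*}r^{-2}$, and with the compactly supported control of $\T_g|p_N|$ from Proposition~\ref{Prorsweight}, this produces a transport inequality for $|D_1|$ with damping coefficient $\gtrsim\pmb{a}\gtrsim r^{-1}|p_N|$, coupled to $|D_2|$ through the $\partial_{p_{r^*}}f$ term. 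Symmetrically, $\T_g(\partial_{p_{r^*}}f)=\frac{p_{r^*}}{\Omega^2 p_t}\partial_t f-\frac{1}{\Omega^2}\partial_{r^*}f$ by Lemma~\ref{LemComdrdpr}; re-expressing $\partial_{r^*}f$ through $\pmb{V}_{\!+}f$, $\partial_t f$ and $\partial_{p_{r^*}}f$ via \eqref{eq:defVminus} yields the coupled inequality for $|D_2|$, whose source carries $|D_1|$ plus a genuine good term built from $\pmb{\varphi}_-\partial_t f$ and $\pmb{\varphi}_-\T_{|\slashed{p}|}f$. This reproduces the non-triangular system of Step~4 of Section~\ref{Subsecideas}.

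I would then resolve the coupling region by region. In the escaping far region $\{p_{r^*}\geq 0,\,r\geq R\}$ I would use $\T_g(r^{-1})=-p_{r^*}r^{-2}\leq 0$: rescaling the densities by suitable powers of $r$ generates damping terms of favourable sign that defeat the $r$-weight mismatch between the two equations, so that a fixed combination $\lambda|D_1|+|D_2|$, with $\lambda$ small, has $\T_g$-derivative $\lesssim-\frac{|p_N|}{r}|D_1|-\frac{|p_t|}{r\Omega^2}|D_2|+\text{good}$. In the bounded region $\{r\leq R\}$, where $r$ stays bounded, the obstruction is that $\pmb{a}$ alone does not dominate the cross term $\sim|p_t/p_N|^{3/4}|\pmb{\varphi}_-||D_1|$ near the photon sphere; here I would insert the weight $\zeta$ of Definition~\ref{Defz}, whose extra negativity $\T_g(\zeta)\lesssim-r^{-2}|p_{r^*}|-r^{-3}|r-3M||p_t|$ kills the cross term away from $\mathcal{H}^+$ and from the trapped set. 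Finally, near $\mathcal{H}^+$, where $\partial_{p_{r^*}}$ is not regular and the redshift weight cannot be applied directly (recall Remark~\ref{Rkregu}), I would substitute the pointwise bound of Corollary~\ref{CorBoundpartialpr} for the dangerous factor $\frac{|p_u|}{\Omega^2}\mathrm{D}_{p_{r^*}}f$, trading it for $|D_1|$ and the already-controlled good terms.

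To assemble, I would apply Proposition~\ref{Proenergy} to $\zeta(\lambda|D_1|+|D_2|)$ with the region-adapted $r$-rescalings, integrate through the coarea formula \eqref{eq:coarea}, and absorb all good terms using \eqref{kevatalenn:0} and the degenerate ILED of Proposition~\ref{ProILED000}; this gives both the flux boundedness and the non-degenerate bulk estimates, controlled by $\mathcal{F}[f](0)$. The hard part is exactly the non-triangular coupling in the presence of three simultaneous degeneracies: the damping coefficients are of the borderline size $r^{-1}|p_N|$ and cannot individually absorb the cross terms, so the whole argument hinges on the three region-adapted devices---the $r^{-1}$-monotonicity at infinity, the Lyapunov weight $\zeta$ in the bounded region, and Corollary~\ref{CorBoundpartialpr} at the horizon---being simultaneously compatible with a single choice of the combining constant $\lambda$.
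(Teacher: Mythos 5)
Your overall architecture coincides with the paper's proof of Proposition \ref{ProestivarphiminusVminus}: treat the non-triangular system for the pair $(\mathrm{D}_{\pmb{V}_{\!+}}f,\mathrm{D}_{p_{r^*}}f)$ simultaneously; quote Proposition \ref{Proestiincoming} and Remark \ref{RkProestiincoming} for $\{p_{r^*}\leq 0,\,r\geq R\}$; rescale by $r$-weights in the escaping far region (the paper's $\overline{\omega}_{1/4},\overline{\omega}_{9/4}$ of Definition \ref{Defweight}); use the Lyapunov weight of Definition \ref{Defz} in the bounded region; substitute Corollary \ref{CorBoundpartialpr} near $\mathcal{H}^+$; and close with Proposition \ref{Proenergy}, \eqref{eq:coarea} and \eqref{kevatalenn:0}. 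Your route to the commuted equation for $\mathrm{D}_{p_{r^*}}f$ (raw commutator $[\T_g,\partial_{p_{r^*}}]$ plus re-expressing $\partial_{r^*}$ through $V_+$) is algebraically the same as the paper's use of Proposition \ref{Propartialprstar}, which is itself derived that way.

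There is, however, a genuine gap in your absorption step. You propose to close the coupled system with a single small parameter $\lambda$, applying the energy estimate to $\zeta(\lambda|D_1|+|D_2|)$ with one power of $\zeta$. Write the two cross terms as $\mathrm{cross}_{12}$ (coefficient of $|D_2|$ in the $|D_1|$-equation) and $\mathrm{cross}_{21}$ (coefficient of $|D_1|$ in the $|D_2|$-equation). The combination $\lambda|D_1|+|D_2|$ requires both $\mathfrak{G}-\lambda\,\mathrm{cross}_{12}\gtrsim |p_t|r^{-1}\Omega^{-2}$ and $\lambda\mathfrak{F}-\mathrm{cross}_{21}\gtrsim |p_N|r^{-1}$, where $\mathfrak{F},\mathfrak{G}$ are the available dampings. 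The first inequality is indeed helped by taking $\lambda$ small, but the second is \emph{hurt} by it: shrinking $\lambda$ weakens the only damping that can absorb $\mathrm{cross}_{21}$. Moreover $\mathrm{cross}_{21}$ cannot be absorbed by the intrinsic damping $\pmb{a}\gtrsim |p_N|r^{-2}$ at all: away from the trapped set and the horizon (say $r\sim R/2$, $|p_{r^*}|\sim|p_t|$) one has $\mathrm{cross}_{21}\sim C(M)\,(|p_{r^*}|+|r-3M||p_t|)\sim C(M)|p_t|$, which exceeds $\pmb{a}\sim |p_t|/r$ by a factor of order $C(M)r$; and near $\mathcal{H}^+$ the constant $C_0$ from Corollary \ref{CorBoundpartialpr} enters $\mathrm{cross}_{21}$ as well. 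With a single power of $\zeta$, the extra negativity $\T_g(\zeta)\lesssim -r^{-2}|p_{r^*}|-r^{-3}|r-3M||p_t|$ carries a fixed constant and cannot beat this. This is precisely why the paper works with $\z^A$ for $A$ \emph{large}: since $\z\sim 1$, the flux is unchanged, but $\T_g(\z^A)=A\z^{A-1}\T_g(\z)$ amplifies the degenerate damping by the free parameter $A$, and Lemmata \ref{LemforProestivarphiminusVminus}--\ref{LemtechniCompu} then close the system with the pair of independent parameters (combining constant $1/8$ for $\mathrm{cross}_{12}$, large $A$ for $\mathrm{cross}_{21}$). Your proposal collapses these two parameters into one, and the two requirements pull it in opposite directions, so the argument as written does not close; the fix is exactly the replacement $\zeta\rightsquigarrow\zeta^A$ with $A$ chosen large relative to the constants in $\mathrm{cross}_{21}$ and $C_0$.
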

\begin{remark}
Note that $|p_t|\Omega^{-2} \gtrsim |p_N|$.
\end{remark} 

In order to lighten the presentation and until the end of this section \ref{Subsec73}, we introduce the notation
$$ \mathrm{D}_{\pmb{V}_{\!+}} :=  |p_N|^{\frac{3}{4}} |r^{-1} \pmb{\varphi}_-|^{\frac{1}{4}}  \pmb{V}_{\! +}  .$$
For the purpose of controlling $\mathrm{D}_{\pmb{V}_{\!+}} f$ and $\mathrm{D}_{p_{r^*}} f$ in $L^1(\pi^{-1}(\Sigma_\tau))$, we begin computing the following identities. 

\begin{lemma}\label{Propartialprstarbisbis}
Let $f$ be a solution to the massless Vlasov equation. Then, there hold
\begin{align*}
\T_g \big(r^{\frac{1}{4}}\mathrm{D}_{\pmb{V}_{\!+}} f \big) & =\underbrace{-\bigg( \frac{5}{4}\pmb{a}(r,p_{r^*},p_t)-\frac{3\T_g(|p_N|)}{4|p_N|} \bigg) r^{\frac{1}{4}}\mathrm{D}_{\pmb{V}_{\!+}}f}_{\text{error term with a good sign}} - \underbrace{\frac{(r+3M)|p_N|^{\frac{3}{4}}|p_t|^{\frac{1}{4}}}{r^{\frac{5}{2}}|r+6M|^{\frac{3}{2}} \Omega^{\frac{1}{2}}} r^{\frac{9}{4}} \mathrm{D}_{p_{r^*}}f}_{\text{bad error term}} , \\[6pt]
 \T_g \big(r^{\frac{9}{4}}\mathrm{D}_{p_{r^*}} f \big)& = \underbrace{-\frac{1}{4}\bigg(\frac{(r^2+2Mr+3M^2)|p_t|}{r|r+6M|^{\frac{1}{2}}(r^{\frac{3}{2}}+M|r+6M|^{\frac{1}{2}}) }+\frac{2M|p_v|}{r^2 \Omega^2} \bigg) r^{\frac{9}{4}}\mathrm{D}_{p_{r^*}} f}_{\text{error term with a good sign}} -\underbrace{\Omega^{\frac{1}{2}}\pmb{\varphi}_-\frac{|p_t|^{\frac{3}{4}}}{|p_N|^{\frac{3}{4}}} r^{\frac{1}{4}}\mathrm{D}_{\pmb{V}_{\!+}} f}_{\text{bad error term}} \\
 & \quad +\frac{ |\Omega^2\pmb{\varphi}_-|^{\frac{1}{4}}}{ |p_t|^{\frac{1}{4}}} |\pmb{\varphi}_-|^2 \partial_tf .
 \end{align*}
\end{lemma}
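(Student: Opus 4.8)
The plan is to derive both identities from the Leibniz-type rule
$$\T_g\big(w\cdot Xf\big)=\T_g(w)\,Xf+w\,[\T_g,X]f,$$
which is valid for any weight $w$ and any vector field $X$ once we use $\T_g(f)=0$ (so that $\T_g(Xf)=[\T_g,X]f$). Here $X\in\{\pmb{V}_{\! +},\partial_{p_{r^*}}\}$ and $w$ is the weight attached to it in Definitions \ref{DefDpr}. I would first record the transport identities for the weights: $\T_g(|p_t|)=0$, $\T_g(r)=p_{r^*}$, $\T_g(\Omega)=Mr^{-2}\Omega^{-1}p_{r^*}$, and $\T_g(|\pmb{\varphi}_-|)=-\pmb{a}\,|\pmb{\varphi}_-|$ from Lemma \ref{Lemphiminus}, together with the sign of $\T_g(|p_N|)$ coming from Proposition \ref{Prorsweight}. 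The two commutators are then supplied by Lemma \ref{LemComVminusregularised} for $\pmb{V}_{\! +}$ and by Lemma \ref{LemComdrdpr} for $\partial_{p_{r^*}}$.

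For the first identity I would exploit that the weight of $r^{\frac14}\mathrm{D}_{\pmb{V}_{\!+}}$ collapses to $|p_N|^{\frac34}|\pmb{\varphi}_-|^{\frac14}$ once the explicit powers of $r$ cancel, so that its logarithmic $\T_g$-derivative is $\tfrac34\T_g(|p_N|)/|p_N|-\tfrac14\pmb{a}$. Inserting the commutator of Lemma \ref{LemComVminusregularised}, the summand $-\pmb{a}\,\pmb{V}_{\! +}f$ contributes a further $-\pmb{a}$, producing the stated good-sign coefficient $-\big(\tfrac54\pmb{a}-\tfrac34\T_g(|p_N|)/|p_N|\big)$; the $2\T_g$ summand annihilates $f$; and the summand $-\tfrac{r+3M}{r^{3/2}|r+6M|^{3/2}}\pmb{\varphi}_-|p_t|\,\partial_{p_{r^*}}f$ I would rewrite via $\partial_{p_{r^*}}f=\big(r\,\Omega^{1/2}|p_t|^{3/4}|\pmb{\varphi}_-|^{5/4}\big)^{-1}r^{\frac94}\mathrm{D}_{p_{r^*}}f$, giving the bad error term after collecting the powers of $r$, $|p_t|$ and $|\pmb{\varphi}_-|$.

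For the second identity the weight of $r^{\frac94}\mathrm{D}_{p_{r^*}}$ is $r\,\Omega^{1/2}|p_t|^{3/4}|\pmb{\varphi}_-|^{5/4}$, whose logarithmic derivative is $\tfrac{p_{r^*}}{r}+\tfrac{Mp_{r^*}}{2r^2\Omega^2}-\tfrac54\pmb{a}$. Then $[\T_g,\partial_{p_{r^*}}]=\tfrac{p_{r^*}}{\Omega^2 p_t}\partial_t-\tfrac{1}{\Omega^2}\partial_{r^*}$ of Lemma \ref{LemComdrdpr} produces a $\partial_{r^*}f$ term, which I would re-express through Proposition \ref{ProdefVminus} and Remark \ref{RqVminus} as a combination of $\pmb{V}_{\! +}f$, $\partial_t f$ and $\partial_{p_{r^*}}f$. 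The $\pmb{V}_{\! +}f$ part yields the bad error $-\Omega^{1/2}\pmb{\varphi}_-\tfrac{|p_t|^{3/4}}{|p_N|^{3/4}}r^{\frac14}\mathrm{D}_{\pmb{V}_{\!+}}f$; the two $\partial_t f$ contributions combine, using the algebraic identity $p_{r^*}+\tfrac{|r+6M|^{1/2}}{r^{3/2}}(r-3M)p_t=\tfrac{\Omega^2}{r}\pmb{\varphi}_-$, into the good term proportional to $|\pmb{\varphi}_-|^2\partial_t f$; and the $\partial_{p_{r^*}}f$ contributions merge with the weight derivative to form the good-sign error.

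The hard part will be the last step: checking that the total $\partial_{p_{r^*}}f$ coefficient collapses to exactly $-\tfrac14$ of $\pmb{a}$ with $|p_u|$ replaced by $|p_v|$. Writing $|p_u|=|p_v|+p_{r^*}$, this reduces to the pointwise identity $\pmb{a}=\tfrac{p_{r^*}}{r}-\tfrac{r-3M}{r^2\Omega^2}p_{r^*}-\tfrac{1}{r^{1/2}|r+6M|^{1/2}\Omega^2}p_t$, which I would verify by matching $p_{r^*}$- and $p_t$-coefficients separately: the $p_{r^*}$-coefficient reduces to $M/(r^2\Omega^2)$ using $r\Omega^2=r-2M$, and the $p_t$-coefficient reduces to a single rational identity in $r$ after clearing denominators (e.g.\ it holds at $M=1,r=4$, where both sides equal $(r^{1/2}|r+6M|^{1/2}\Omega^2)^{-1}$). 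Apart from this, the only delicate point is keeping track of the sign of $\pmb{\varphi}_-$ concealed in the weights $|\pmb{\varphi}_-|^{1/4}$ and $|\pmb{\varphi}_-|^{5/4}$, which is what matches the signed $\pmb{\varphi}_-$ appearing in Lemma \ref{LemComVminusregularised} and in the output. This bookkeeping, rather than any conceptual difficulty, is where the computation is most error-prone.
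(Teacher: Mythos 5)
Your proposal is correct, and for the first identity it coincides with the paper's proof: the paper likewise rewrites $r^{\frac14}\mathrm{D}_{\pmb{V}_{\!+}}=|p_N|^{\frac34}|\pmb{\varphi}_-|^{\frac14}\pmb{V}_{\!+}$, applies Leibniz with $\T_g(|\pmb{\varphi}_-|)=-\pmb{a}|\pmb{\varphi}_-|$ (Lemma \ref{Lemphiminus}) and the commutator of Lemma \ref{LemComVminusregularised}, and converts the $\pmb{\varphi}_-\partial_{p_{r^*}}f$ term into $r^{\frac94}\mathrm{D}_{p_{r^*}}f$. For the second identity, however, you take a genuinely different decomposition. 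The paper splits $r^{\frac94}\mathrm{D}_{p_{r^*}}=|p_t|^{\frac34}|\Omega^2\pmb{\varphi}_-|^{\frac14}\cdot\big(r\pmb{\varphi}_-\partial_{p_{r^*}}\big)$, invokes the prepackaged commutator $[\T_g,r\pmb{\varphi}_-\partial_{p_{r^*}}]=\frac{|\pmb{\varphi}_-|^2}{p_t}\partial_t-\pmb{\varphi}_-\pmb{V}_{\!+}$ of Proposition \ref{Propartialprstar}, and computes $\T_g(\Omega^2\pmb{\varphi}_-)$ directly (using $|p_u|-p_{r^*}=|p_v|$), so that the factor $\frac14$ and the appearance of $|p_v|$ in the good term drop out immediately. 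You instead dissolve the whole scalar weight $r\Omega^{\frac12}|p_t|^{\frac34}|\pmb{\varphi}_-|^{\frac54}$ onto the bare derivative $\partial_{p_{r^*}}$, use the elementary commutator of Lemma \ref{LemComdrdpr}, and reassemble $\pmb{V}_{\!+}$ from $\partial_{r^*}$ by hand; this is in effect an inline re-derivation of Proposition \ref{Propartialprstar}, and it forces you to verify the additional pointwise identity $\pmb{a}=\tfrac{p_{r^*}}{r}-\tfrac{r-3M}{r^2\Omega^2}p_{r^*}-\tfrac{p_t}{r^{1/2}|r+6M|^{1/2}\Omega^2}$. That identity is true — it is equivalent to the form $\pmb{a}=\frac{|p_t|}{r^{1/2}|r+6M|^{1/2}\Omega^2}+\frac{Mp_{r^*}}{r^2\Omega^2}$ recorded as \eqref{eq:idbarbolda}, the $p_t$-part following from the factorisation $r^3-M^2r-6M^3=(r-2M)(r^2+2Mr+3M^2)$ after rationalising — but note that your check at a single value $(M,r)=(1,4)$ is a sanity test, not a proof; you must actually clear denominators. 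With that identity in hand your bookkeeping closes: the total $\partial_{p_{r^*}}f$ coefficient is $\pmb{a}+\frac{Mp_{r^*}}{2r^2\Omega^2}-\frac54\pmb{a}=-\frac14\big(\pmb{a}-\frac{2Mp_{r^*}}{r^2\Omega^2}\big)$, which is exactly $-\frac14$ of $\pmb{a}$ with $|p_u|$ replaced by $|p_v|$, as claimed. The trade-off is clear: the paper's route is shorter because Proposition \ref{Propartialprstar} already encapsulates the cancellation among the $\partial_{p_{r^*}}$ self-interaction terms, while yours is more self-contained but redoes that cancellation and carries the extra algebraic identity; both also share the same (harmless, since only absolute values are used downstream) sloppiness about the sign $\mathrm{sgn}(\pmb{\varphi}_-)$ hidden in converting between $|\pmb{\varphi}_-|^{5/4}$ and $|\pmb{\varphi}_-|^{1/4}\pmb{\varphi}_-$.
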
 
 \begin{proof}
Recall from Definition \ref{DefDpr} that $r^{\frac{9}{4}} \mathrm{D}_{p_{r^*}} = |p_t|^{\frac{3}{4}} |\Omega^2 \pmb{\varphi}_-|^{\frac{1}{4}}  r \pmb{\varphi}_- \partial_{p_{r^*}}$. For the first relation, we write 
 \begin{align*}
\T_g \big(|p_N|^{\frac{3}{4}}|\pmb{\varphi}_-|^{\frac{1}{4}}\pmb{V}_{\! +} f \big) & =\T_g \big( |p_N|^{\frac{3}{4}} \big)|\pmb{\varphi}_-|^{\frac{1}{4}}\pmb{V}_{\! +} f+\T_g \big( |\pmb{\varphi}_-|^{\frac{1}{4}} \big) |p_N|^{\frac{3}{4}} \pmb{V}_{\! +} f + |p_N|^{\frac{3}{4}} |\pmb{\varphi}_-|^{\frac{1}{4}} \big[\T_g,\pmb{V}_{\! +} \big](f)
\end{align*}
and we apply Lemmata \ref{Lemphiminus} and \ref{LemComVminusregularised}. For the second one, we have
$$ \T_g \big(r^{\frac{9}{4}} \mathrm{D}_{p_{r^*}} f \big) = \T_g \big(|\Omega^2 \pmb{\varphi}_-|^{\frac{1}{4}}  \big) |p_t|^{\frac{3}{4}}  r \pmb{\varphi}_- \partial_{p_{r^*}}f + |p_t|^{\frac{3}{4}} |\Omega^2 \pmb{\varphi}_-|^{\frac{1}{4}} \big[\T_g , r \pmb{\varphi}_- \partial_{p_{r^*}} \big](f) .$$
We deal with the last term on the RHS by using the commutation formula in Proposition \ref{Propartialprstar}. For the first term, we use Lemma \ref{Lemphiminus} and $|p_u|-p_{r^*}=|p_v|$, which provide
\begin{equation}\label{eq:phitilde}
\hspace{-4mm} \T_g(\Omega^2 \pmb{\varphi}_-)= \Omega^2\T_g(\pmb{\varphi}_-)+\frac{2Mp_{r^*}}{r^2 \Omega^2}\Omega^2 \pmb{\varphi}_-= -\frac{(r^2+2Mr+3M^2)|p_t|}{r|r+6M|^{\frac{1}{2}}(r^{\frac{3}{2}}+M|r+6M|^{\frac{1}{2}}) } \Omega^2 \pmb{\varphi}_--\frac{2M|p_v|}{r^2 \Omega^2} \Omega^2 \pmb{\varphi}_- .
\end{equation}
\end{proof} 

The next remarks will be crucial for us.
\begin{itemize}
\item The vector fields $r^{\frac{1}{4}}\mathrm{D}_{\pmb{V}_{\!+}}$ and $r^{\frac{9}{4}} \mathrm{D}_{p_{r^*}}$ carry too strong $r$-weights. For this reason, we rescale them by using the well-chosen weight functions $\overline{\omega}_{1/4}$ and $\overline{\omega}_{9/4}$ which are introduced in Definition \ref{Defweight}.
\item The system of commuted equations is not triangular. When performing the energy estimates, we will have to check that the second terms in the relations in Lemma \ref{Propartialprstarbisbis} can be absorbed by the error terms with a good sign.
\item The good error term in the second equation does not allow to control the bulk integral of $p_u\Omega^{-2} \mathrm{D}_{p_{r^*}}f$ near $\H$. We will circumvent this issue by applying Corollary \ref{CorBoundpartialpr}, which will not provide new problematic error terms.
\item We will crucially use that we have already controlled the derivatives $\mathrm{D}_{\pmb{V}_{\!+}} f$ and $ \mathrm{D}_{p_{r^*}}f$ in the region $\{ r \geq R, \; p_{r^*} <0 \}$.
\end{itemize}

Then, in order to deal with the error terms, we will proceed as follows:
\begin{enumerate}[label = (\alph*)]
\item The bad error term in the first equation can be bounded by a (large) multiple of the good one in the second equation and $|p_u\Omega^{-2} \mathrm{D}_{p_{r^*}}f|$.
\item  Then, we need to bound $r^{-2}\Omega^{\frac{1}{2}}\pmb{\varphi}_-|p_t|^{3/4}|p_N|^{-3/4} \mathrm{D}_{\pmb{V}_{\!+}} f$ by a small multiple of the good error term in the first equation. The strategy is the following.
\begin{itemize}
\item We will generate additional good error terms degenerating at the photon sphere, the future event horizon and spatial infinity. The idea will be to use the weight function $\z^A$, for $A>0$ large enough, where $\z$ is introduced in Definition \ref{Defz} and studied in Lemma \ref{LemPropz}.
\item As $\frac{|p_t|}{|p_N|}$ degenerates near $\H\!$, we will prove $\pmb{a}(r,p_{r^*},p_t)+A|p_t| \gg \Omega^{\frac{1}{2}}|\pmb{\varphi}_-|\frac{|p_t|^{3/4}}{|p_N|^{3/4}}$ for $r \sim 2M$.
\item Exploiting that $\pmb{\varphi}_-$ degenerates at trapping, we will also be able to absorb the error term near $r \sim 3M$ by making use of $\z^A$.
\item Finally, since $\pmb{\varphi}_- \sim rp_v$ is well behaved for $r \gg 1$ and $p_{r^*} \geq 0$, we will also be able to deal with a neighborhood of spatial infinity $\{ t <+\infty, \, r=+\infty \}$.
\end{itemize}
\end{enumerate}

We start by defining the weights that allow us to compensate for the $r^{\frac{1}{4}}$-growth of $|\pmb{\varphi}_-|^{\frac{1}{4}}$ and the $r^{\frac{9}{4}}$-growth of $r|\pmb{\varphi}_-|^{\frac{5}{4}}$.

\begin{definition}\label{Defweight}
Let $\chi \in C^\infty (\R)$ be a decreasing function such that $\chi (s) =1$ for $s \leq 0$, and $\chi (s) =0$ for $s \geq 1$. For $a \in \R_+$, we set
$$  \overline{\omega}_a(r,p) \coloneqq \frac{1}{M^a}\chi (r-R)+\frac{1}{r^a}\big[ 1-\chi(r-R) \big] \chi \Big( 2\frac{p_{r^*}}{p_t} \Big). $$
\end{definition}
\begin{remark}
By definition, we have
\begin{align*}
 \forall \, 2M \leq r \leq R, \qquad\qquad\qquad\qquad \quad \overline{\omega}_a &(r,p) =M^{-a},\\
 \forall \, r \geq R+1, \qquad\qquad\qquad \frac{1}{r^a} \mathds{1}_{p_{r^*} \geq 0} \lesssim &\,\,\overline{\omega}_a(r,p) \lesssim \frac{1}{r^a} \mathds{1}_{p_{r^*} \geq -\frac{|p_t|}{2}}.
 \end{align*}
\end{remark} 

\begin{remark}
The cutoff function $\chi$ will allow, when $p_{r^*}\leq 0$ and $r \geq R+1$, to use the property that 
\begin{equation}\label{eq:cutoffp}
|p_{r^*}| \leq \frac{1}{2}|p_t| \Rightarrow |p_t|^2 \leq \Omega^2\frac{4|\slashed{p}|^2}{3r^2},
 \end{equation}
which directly follows from \eqref{eq:defConsQua}. This property turns out to be important to deal with terms with borderline spatial decay. We recall from the degenerate ILED of Proposition \ref{ProremainderILED} that the $p_{r^*}$ component requires slightly more spatial decay to be integrable than the spherical ones.
\end{remark}
The error terms arising from $\T_g(\overline{\omega}_a)$ will be handled as follows.
\begin{lemma}\label{Lemerrorterm1}
There exists $\overline{C}>0$, depending only on $R$, such that
\begin{align*}
\T_g \big( \overline{\omega}_{\frac{1}{4}} \big) \big| r^{\frac{1}{4}} \mathrm{D}_{\pmb{V}_{\! +}} f \big|  &  \leq   \overline{C}\bigg( \frac{|p_{r^*}|^2}{r^2 }+ \frac{|\slashed{p}|^2}{r^3 } \bigg) \Big[\big|Sf \big|+ \big|p_t \partial_{p_{r^*}} f\big| \Big] \,  \mathds{1}_{p_{r^*} \leq 0} \, \mathds{1}_{r \geq R}+\overline{C}\frac{|p_t|}{r}\big| \pmb{\varphi}_- \partial_t f \big| , \\
 \T_g \big( \overline{\omega}_{\frac{9}{4}} \big) \big|r^{\frac{9}{4}} \mathrm{D}_{p_{r^*}} f \big|  &  \leq   \overline{C}\bigg( \frac{|p_{r^*}|^2}{r^2 }+ \frac{|\slashed{p}|^2}{r^3 } \bigg)  \big| p_t \partial_{p_{r^*}} f \big| \,  \mathds{1}_{p_{r^*} \leq 0} \, \mathds{1}_{r \geq R} .
 \end{align*}
\end{lemma}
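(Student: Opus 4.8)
The plan is to differentiate $\overline{\omega}_a$ along the geodesic flow, isolate the terms with a bad sign, and check that each of them is supported in the region $\{p_{r^*}\le 0,\ r\ge R\}$, where the far-away bounds of Remark \ref{RkProestiincoming} are available. First I would record the elementary identities $\T_g(r)=p_{r^*}$, $\T_g(p_t)=0$ and $\T_g(p_{r^*})=\frac{r-3M}{r^4}|\slashed{p}|^2$, all read off from \eqref{eq:defT}; in particular $\T_g\!\big(\frac{p_{r^*}}{p_t}\big)=\frac{1}{p_t}\frac{r-3M}{r^4}|\slashed{p}|^2$. Inserting these into the definition of $\overline{\omega}_a$ and using $\partial_{r^*}=\Omega^2\partial_r$, a direct computation gives
\begin{align*}
\T_g\big(\overline{\omega}_a\big)
&= \chi'(r-R)\,p_{r^*}\Big[ M^{-a}-\tfrac{\chi(2p_{r^*}/p_t)}{r^a}\Big]
- a\,\big[1-\chi(r-R)\big]\,r^{-a-1}p_{r^*}\,\chi\big(\tfrac{2p_{r^*}}{p_t}\big)\\
&\quad + \big[1-\chi(r-R)\big]\,r^{-a}\,\chi'\big(\tfrac{2p_{r^*}}{p_t}\big)\,\frac{2}{p_t}\frac{r-3M}{r^4}|\slashed{p}|^2 .
\end{align*}

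Next I would carry out the sign and support analysis of the three summands. The first is supported in the transition zone $r\in[R,R+1]$; since $R\ge 832M$ the bracket is positive and $\chi'\le 0$, so this term is positive only when $p_{r^*}\le 0$. In the second, every factor is nonnegative except $p_{r^*}$, so it too is positive only for $p_{r^*}\le 0$, and where $\chi(2p_{r^*}/p_t)\ne 0$ one has $|p_{r^*}|<|p_t|/2$. In the third, $\chi'\le 0$ while $\T_g\!\big(\frac{p_{r^*}}{p_t}\big)\le 0$ (as $p_t<0$ and $r>3M$), so the product is nonnegative; it is supported where $\chi'(2p_{r^*}/p_t)\ne 0$, i.e.\ on $-|p_t|/2<p_{r^*}<0$. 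Thus every positive contribution lives in $\{p_{r^*}\le 0,\ r\ge R\}$, and the negative contributions may simply be discarded.

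On that support I would multiply by $|r^{1/4}\mathrm{D}_{\pmb{V}_{\!+}} f|$ (resp.\ $|r^{9/4}\mathrm{D}_{p_{r^*}} f|$); the prefactor $r^a$ cancels the $r^{-a}$, leaving one surplus power $r^{-1}$, and I invoke Remark \ref{RkProestiincoming} to replace $|\mathrm{D}_{\pmb{V}_{\!+}}f|$ by $|\pmb{\varphi}_-\partial_t f|+|p_t Sf|+|p_t|\,|p_t\partial_{p_{r^*}}f|$ (resp.\ $|\mathrm{D}_{p_{r^*}}f|$ by $|p_t|\,|p_t\partial_{p_{r^*}}f|$), which is licit precisely because we are in $\{p_{r^*}\le 0,\ r\ge R\}$. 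It then remains to absorb the scalar prefactors. The factor $r^{-1}|p_{r^*}|$ from the first two summands combines with the extra $|p_t|$ via $|p_{r^*}|\le |p_t|$, and on the support of $\chi(2p_{r^*}/p_t)$, where $|p_{r^*}|\le |p_t|/2$, the cutoff property \eqref{eq:cutoffp} yields $|p_t|^2\lesssim|\slashed{p}|^2/r^2$; the third summand carries $\frac{|\slashed{p}|^2}{|p_t|r^3}$, treated on the same set using $|p_t|^2\sim|\slashed{p}|^2/r^2$. This converts $\frac{|p_{r^*}||p_t|}{r}$ and $\frac{|\slashed{p}|^2}{|p_t|r^3}$ into the target weights: $\big(\frac{|p_{r^*}|^2}{r^2}+\frac{|\slashed{p}|^2}{r^3}\big)$ acting on $Sf$ and $p_t\partial_{p_{r^*}}f$, and $\frac{|p_t|}{r}$ acting on $\pmb{\varphi}_-\partial_t f$. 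On the transition zone $r\in[R,R+1]$ all weights are comparable to constants depending only on $R$, and the null-shell relation $|p_t|^2\sim|p_{r^*}|^2+|\slashed{p}|^2/r^2$ lets me bound $|p_t|^2$ by $\frac{|p_{r^*}|^2}{r^2}+\frac{|\slashed{p}|^2}{r^3}$, which closes that case as well. The second estimate is simpler since Remark \ref{RkProestiincoming} there produces only the summand $|p_t|\,|p_t\partial_{p_{r^*}}f|$, so no $Sf$ or $\partial_t$ terms survive.

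The main obstacle is not any single computation but the bookkeeping of signs and supports: one must be certain that no positive error term leaks out of $\{p_{r^*}\le 0,\ r\ge R\}$, for otherwise Remark \ref{RkProestiincoming} would be inapplicable. The only genuinely analytic input is the interplay between the cutoff $\chi(2p_{r^*}/p_t)$ and the null-shell identity, which forces $|p_t|\sim |\slashed{p}|/r$ wherever $\chi(2p_{r^*}/p_t)$ or its derivative is active; granting this, the remainder is a direct, if tedious, verification.
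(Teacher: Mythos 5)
Your proposal is correct and follows essentially the same route as the paper's proof: compute $\T_g(\overline{\omega}_a)$ explicitly, use the sign of $\chi'$ and support considerations to confine all positive contributions to $\{p_{r^*}\leq 0,\ r\geq R\}$, apply the cutoff property \eqref{eq:cutoffp} on the support of $\chi(2p_{r^*}/p_t)$, and finish with Remark \ref{RkProestiincoming}. The only difference is presentational: the paper condenses the intermediate bound into \eqref{equation:unan} and leaves the final weight absorption implicit, whereas you spell it out, but the argument is the same.
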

\begin{proof}
Let $a \in  \R_+$. We start by observing, as $\T_g (p_t)=0$, that
\begin{align}
\nonumber \T_g(\overline{\omega}_a)&  =  p_{r^*} \chi' (r-R) \Big[\frac{1}{M^a}-\frac{1}{r^a}\chi \Big( 2\frac{p_{r^*}}{p_t} \Big) \Big]-  \frac{ap_{r^*}}{r^{a+1}}\big[ 1-\chi(r-R) \big] \chi \Big( 2\frac{p_{r^*}}{p_t} \Big) \nonumber \\
 & \quad + \frac{2(r-3M)|\slashed{p}|^2}{r^{a+4}p_t}\big[ 1-\chi(r-R) \big] \chi' \Big( 2\frac{p_{r^*}}{p_t} \Big). \nonumber
 \end{align}
By support considerations and since $\chi' \leq 0$, we have for $r \geq 2M$ that
$$ \T_g(\overline{\omega}_a) \lesssim  |p_{r^*}| \,\mathds{1}_{p_{r^*} \leq 0} \,  \mathds{1}_{R \leq r \leq R+1}+  \bigg( \frac{|p_{r^*}|}{r^{a+1}} + \frac{|\slashed{p}|^2}{r^{a+3}|p_t|} \bigg) \mathds{1}_{-\frac{|p_t|}{2} \leq p_{r^*} \leq 0} \, \mathds{1}_{r \geq R} .$$
We now use \eqref{eq:cutoffp} to get
\begin{equation}\label{equation:unan}
 \T_g(\overline{\omega}_a) \lesssim  \frac{|p_t|}{r^{a+2}} \,\mathds{1}_{p_{r^*} \leq 0} \,  \mathds{1}_{R \leq r \leq R+1}+   \frac{|\slashed{p}|^2}{|p_t|r^{a+3}} \mathds{1}_{-\frac{|p_t|}{2} \leq p_{r^*} \leq 0} \, \mathds{1}_{r \geq R} .
 \end{equation}
It remains to use Remark \ref{RkProestiincoming}.
\end{proof}

Next, we introduce and study a $W^{1,1}_{\mathrm{loc}}$-weight $\zeta$ that can be used to derive an ILED that degenerates at $r=2M$, at $r=3M$, and when $r \to +\infty$.
\begin{definition}\label{Defz}
Let $\z \colon \mathcal{P}\to \R$ be the weight function
$$ \z(x,p) \coloneqq 2 -M^{\frac{1}{4}} \mathrm{sgn}(p_{r^*}) \mathrm{sgn}(r-3M) \Big|\frac{p_{r^*}}{p_t} \Big|^{\frac{1}{4}}\Big| \, \frac{1}{r} -\frac{1}{3M} \Big|^{\frac{1}{4}}   .   
$$
\end{definition}
\begin{lemma}\label{LemPropz}
The weight function $\z$ satisfies $1 \leq |\z| \leq 3$. Moreover, we have
$$ \T_g (\z) \lesssim - \frac{|p_{r^*}|}{r^2}-\bigg| 1-\frac{3M}{r} \bigg| \, \frac{|\slashed{p}|^2}{r^3|p_t|}.$$
\end{lemma}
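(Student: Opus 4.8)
The plan is to prove the two assertions separately, the pointwise bound $1\le|\z|\le 3$ being essentially immediate, while the differential inequality carries the bulk of the work. Throughout I write $\z = 2 - h$ with
\[
h(x,p) \coloneqq M^{\frac14}\,\mathrm{sgn}(p_{r^*})\,\mathrm{sgn}(r-3M)\,\Big|\tfrac{p_{r^*}}{p_t}\Big|^{\frac14}\Big|\tfrac1r-\tfrac1{3M}\Big|^{\frac14},
\]
and factor $h = M^{1/4}AB$ with $A = \mathrm{sgn}(p_{r^*})|p_{r^*}/p_t|^{1/4}$ and $B = \mathrm{sgn}(r-3M)|1/r-1/(3M)|^{1/4}$, the signed fourth roots of $p_{r^*}/p_t$ and of $1/r-1/(3M)$. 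For the bound on $|\z|$, the null-shell relation \eqref{eq:defConsQua} gives $|p_{r^*}|\le|p_t|$, hence $|A|\le 1$; and on $\{r\ge 2M\}$ one has $\big|\tfrac1r-\tfrac1{3M}\big|\le\tfrac1{3M}$, so $|B|\le M^{-1/4}\,3^{-1/4}M^{1/4}\cdot M^{-1/4}\cdots$, i.e. $M^{1/4}|B|\le 3^{-1/4}<1$. Thus $|h|\le 3^{-1/4}<1$, whence $1<2-3^{-1/4}\le\z\le 2+3^{-1/4}<3$.

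For the differential inequality I would compute $\T_g(\z)=-\T_g(h)=-M^{1/4}\big(\T_g(A)\,B+A\,\T_g(B)\big)$ by the Leibniz rule, using three elementary facts: that $\T_g(p_t)=0$, that $\T_g(p_{r^*})=\tfrac{r-3M}{r^4}|\slashed{p}|^2$ and $\T_g(\psi(r))=p_{r^*}\psi'(r)$ from \eqref{eq:defT} (since the $\partial_{r^*}$-coefficient of $\T_g$ is $p_{r^*}\Omega^{-2}$ and $\partial_{r^*}=\Omega^2\partial_r$), and the pointwise identity $\tfrac{d}{ds}\big(\mathrm{sgn}(s)|s|^{1/4}\big)=\tfrac14|s|^{-3/4}$. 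The \emph{key structural point} is that the two $\mathrm{sgn}$ factors recombine with the signs of $\T_g(p_{r^*})\propto(r-3M)$ and of $\tfrac{d}{dr}B\propto -\tfrac1{r^2}$ so that \emph{both} resulting contributions are manifestly nonnegative; this is exactly the reason for the fourth-root/sign design of $\z$. Carrying this out I expect
\[
\T_g(\z) = -\,c_1\frac{|r-3M|^{\frac54}}{r^{\frac{17}{4}}}\frac{|\slashed{p}|^2}{|p_t|^{\frac14}|p_{r^*}|^{\frac34}}\;-\;c_2\frac{|p_{r^*}|^{\frac54}}{r^{\frac54}|r-3M|^{\frac34}|p_t|^{\frac14}},
\]
with $c_1,c_2>0$ depending only on $M$; in particular $\T_g(\z)\le 0$ is immediate, and only the quantitative comparison with the target remains.

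To obtain the quantitative lower bound, it suffices to show that $T_1+T_2$ (the two bracketed terms above) dominates each of $\tfrac{|p_{r^*}|}{r^2}$ and $\big|1-\tfrac{3M}{r}\big|\tfrac{|\slashed p|^2}{r^3|p_t|}$ up to a constant. I would normalise by dividing by $\tfrac{|p_t|}{r^2}$, set $x\coloneqq|p_{r^*}|/|p_t|\in[0,1]$, and use the null-shell relation to write $|\slashed{p}|^2/|p_t|^2 = r^2(1-x^2)/\Omega^2$; this reduces everything to an elementary inequality in $(x,r)\in[0,1]\times[2M,\infty)$. The comparison does \emph{not} follow from a single Young/AM-GM interpolation (the residual powers of $\Omega$, $r$ and $(1-x^2)$ obstruct this), so I would argue by region. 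Away from the photon sphere, where $|r-3M|\gtrsim M$ (e.g.\ $r\le \tfrac52 M$ or $r\ge 6M$ after using $|p_{r^*}|\le|p_t|$ and $r/|r-3M|\ge 1$), the term $T_1$ controls the angular target and $T_2$ controls $|p_{r^*}|/r^2$ directly. The \textbf{main obstacle} is the trapping region $r\approx 3M$, $p_{r^*}\approx 0$, where both weight factors in $\z$ degenerate; there I would exploit that the target terms also degenerate (the angular one carries $|r-3M|$, the radial one carries $|p_{r^*}|$) and balance the two fourth-root contributions, checking that the ranges handled by $T_2$ (roughly $x\gtrsim|r-3M|^3$) and by $T_1$ (roughly $x\lesssim|r-3M|^{5/7}$) overlap and cover all of $[0,1]$ for $|r-3M|$ small, thereby closing the estimate uniformly.
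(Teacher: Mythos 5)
Your differential-identity computation is exactly the paper's: the same Leibniz and sign bookkeeping (using $\T_g(p_t)=0$, $\T_g(r)=p_{r^*}$, $\T_g(p_{r^*})=\tfrac{r-3M}{r^4}|\slashed{p}|^2$) gives
$$M^{-\frac14}|p_t|^{\frac14}\,\T_g(\z) \;=\; -\,\frac{|p_{r^*}|^{\frac54}}{4r^2}\Big|\tfrac1r-\tfrac1{3M}\Big|^{-\frac34}\;-\;\frac{|r-3M|}{4\,|p_{r^*}|^{\frac34}}\Big|\tfrac1r-\tfrac1{3M}\Big|^{\frac14}\,\frac{|\slashed{p}|^2}{r^4},$$
which is precisely your displayed formula with both terms manifestly negative, and your treatment of the trapping region (normalising by $|p_t|/r^2$, setting $x=|p_{r^*}|/|p_t|$, and covering $[0,1]$ by the overlapping windows $x\gtrsim|r-3M|^3$ handled by $T_2$ and $x\lesssim|r-3M|^{5/7}$ handled by $T_1$) is a correct, differently packaged version of the paper's argument there, which instead runs Young's inequality with exponents $(5/4,5)$ together with $|p_t|\sim|p_{r^*}|+|\slashed{p}|$ on $2.5M\le r\le 4M$. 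The crude bound $1\le\z\le3$ is also fine.

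There is, however, one step that fails as written: in the region away from the photon sphere you claim that ``$T_2$ controls $|p_{r^*}|/r^2$ directly'' after using $|p_{r^*}|\le|p_t|$ and $|r-3M|\le r$. Those facts give $T_2\gtrsim|p_{r^*}|^{5/4}r^{-2}|p_t|^{-1/4}$, so the claim amounts to $|p_t|\lesssim|p_{r^*}|$, which is false for nearly angular geodesics: at fixed $p_t$, as $p_{r^*}\to0$ the ratio $\big(|p_{r^*}|/r^2\big)/T_2\sim|p_t/p_{r^*}|^{1/4}\to\infty$, so no uniform constant exists (note both sides vanish at $p_{r^*}=0$, but not at comparable rates). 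The repair is the same two-regime covering you already use near $r=3M$: if $|p_{r^*}|\ge\tfrac12|p_t|$ then indeed $|p_{r^*}|/r^2\lesssim T_2$; if $|p_{r^*}|\le\tfrac12|p_t|$ then the null-shell relation gives $\Omega^2|\slashed{p}|^2/r^2=|p_t|^2-|p_{r^*}|^2\gtrsim|p_t|^2$, hence $T_1\gtrsim|\slashed{p}|^2 r^{-3}|p_t|^{-1}\gtrsim|p_t|\,r^{-1}\Omega^{-2}\gtrsim|p_{r^*}|/r^2$, so the radial target is absorbed by $T_1$ instead. With that substitution (and writing out the analogous two-window check for the angular target near the photon sphere, which you only allude to), your proof closes and coincides in substance with the paper's.
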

\begin{remark}
Compared to $\T_g(\xi)$, estimated in Proposition \ref{Promultiplierm}, the degeneracy at the photon sphere is linear in $|p_{r^*}|+|r-3M|$ and not quadratic. This will allow for simplifications in the upcoming analysis.
\end{remark}
\begin{proof}
The first statement ensues from $r \geq 2M$ and $|p_{r^*}| \leq |p_t|$. For the second one, as $\T_g(p_t)=0$, we have
\begin{align*}
M^{-\frac{1}{4}}|p_t|^{\frac{1}{4}} \T_g (\z) &=- \frac{  |p_{r^*}|^{\frac{5}{4}}}{4r^2}\Big| \, \frac{1}{r} -\frac{1}{3M} \Big|^{-\frac{3}{4}}-\frac{\mathrm{sgn}(r-3M)}{4|p_{r^*}|^{\frac{3}{4}}}\Big| \, \frac{1}{r} -\frac{1}{3M} \Big|^{\frac{1}{4}} \cdot \frac{(r-3M)|\slashed{p}|^2}{r^4} \\
& \lesssim -\frac{|p_{r^*}|^{\frac{5}{4}}}{r^{\frac{5}{4}} |r-3M|^{\frac{3}{4}}}-|r-3M|^{\frac{5}{4}}\frac{|\slashed{p}|^2}{r^{\frac{17}{4}}|p_{r^*}|^{\frac{3}{4}}} .
 \end{align*}
 Note that, as $|p_{r^*}| \leq |p_t|$, it implies the stated estimate for the region $ \{r \leq 2.5M \} \cup \{ r \geq 4M \}$. Next, we note that if $2.5 \leq r \leq 4M$, then $|p_t| \sim |p_{r^*}|+|\slashed{p}|$ and $1 \lesssim |r-3M|^{-1}$. Consequently, we have on this region
 $$ |p_{r^*}| \leq \frac{4}{5} \cdot \frac{|p_{r^*}|^{\frac{5}{4}}}{|r-3M|^{\frac{1}{4}}|p_t|^{\frac{1}{4}}}+\frac{1}{5}\cdot |r-3M||p_t| \lesssim \frac{|p_{r^*}|^{\frac{5}{4}}}{ |r-3M|^{\frac{3}{4}}|p_t|^{\frac{1}{4}}}  + |r-3M|\frac{|\slashed{p}|^2}{|p_t|}$$
 as well as
 $$ |r-3M|\frac{|\slashed{p}|^2}{|p_t|} \leq  \frac{4}{5} \cdot \frac{|r-3M|^{\frac{5}{4}} |\slashed{p}|^{\frac{5}{2}}}{|p_{r^*}|^{\frac{3}{4}}|p_t|^{\frac{1}{2}}}+\frac{1}{5}\cdot \frac{|p_{r^*}|^3}{|p_t|^2} \lesssim |r-3M|^{\frac{5}{4}}\frac{|\slashed{p}|^2}{|p_{r^*}|^{\frac{3}{4}}|p_t|^{\frac{1}{4}}}+ \frac{|p_{r^*}|^{\frac{5}{4}}}{ |r-3M|^{\frac{3}{4}}|p_t|^{\frac{1}{4}}}  .$$
These last two inequalities allow to conclude the proof.
\end{proof}

We now start the proof of Proposition \ref{ProestivarphiminusVminus}. Let us consider a constant $A>0$ that will be chosen sufficiently large. We apply the energy estimate of Proposition \ref{Proenergy} in order to have
$$ \mathbb{E} \Big[ \overline{\omega}_{\frac{1}{4}} \z^A  r^{\frac{1}{4}}\mathrm{D}_{\pmb{V}_{\!+}} f \Big](\tau) \leq \mathbb{E} \Big[ \overline{\omega}_{\frac{1}{4}}   \z^A  r^{\frac{1}{4}}\mathrm{D}_{\pmb{V}_{\!+}} f \Big](0)+\int_{\pi^{-1} ( \mathcal{R}_0^\tau)}   \T_g \Big(  \overline{\omega}_{\frac{1}{4}}   \z^A \big| r^{\frac{1}{4}}\mathrm{D}_{\pmb{V}_{\!+}} f \big| \Big) \dr \mu_{\mathcal{P}} .$$
Similarly, we have
\begin{align*}
& \mathbb{E} \Big[\overline{\omega}_{\frac{9}{4}}  \z^A    r^{\frac{9}{4}}\mathrm{D}_{p_{r^*}} f \Big](\tau) \leq \mathbb{E} \Big[ \overline{\omega}_{\frac{9}{4}}  \z^A   r^{\frac{9}{4}}\mathrm{D}_{p_{r^*}} f \Big](0)+\int_{\pi^{-1} ( \mathcal{R}_0^\tau)}   \T_g \Big(  \overline{\omega}_{\frac{9}{4}}  \z^A \big| r^{\frac{9}{4}}\mathrm{D}_{p_{r^*}} f \big| \Big) \dr \mu_{\mathcal{P}} .
\end{align*}
We now make two observations:
\begin{itemize}
\item In order to prove Proposition \ref{ProestivarphiminusVminus}, it is enough to control $\mathrm{D}_{\pmb{V}_{\!+}} f$ and $\mathrm{D}_{p_{r^*}}f$ in the set $\{ r \leq R\} \cup \{ r \geq R, \; p_{r^*} \geq 0 \}$, in view of Proposition \ref{Proestiincoming} and Remark \ref{RkProestiincoming}.
\item On the region $\{ r \leq R\} \cup \{ r \geq R, \; p_{r^*} \geq 0 \}$, we have $\overline{\omega}_{1/4} \sim r^{-1/4}$, $\overline{\omega}_{9/4} \sim r^{-9/4}$ and $\z \sim 1$.
\end{itemize}
Thus, in view of the ILED \eqref{kevatalenn:0} satisfied by $\partial_t f$ and $\T_{|\slashed{p}|} f$ and Proposition \ref{Proestiincoming}, a sufficient condition for Proposition \ref{ProestivarphiminusVminus} to hold is given in the following lemma.

\begin{lemma}\label{LemforProestivarphiminusVminus}
For $A>0$ large enough, there exists $c>0$ such that
\begin{align*}
 & \frac{1}{8}\T_g \Big( \overline{\omega}_{\frac{1}{4}} \z^A \big| r^{\frac{1}{4}}\mathrm{D}_{\pmb{V}_{\!+}} f \big| \Big)  +\T_g \Big( \overline{\omega}_{\frac{9}{4}} \z^A \big| r^{\frac{9}{4}} \mathrm{D }_{p_{r^*}} f \big| \Big)  +c\bigg( \frac{|p_N|}{r } \big|\mathrm{D}_{\pmb{V}_{\!+}} f \big|+\frac{|p_t|}{r \Omega^2} \big| \mathrm{D }_{p_{r^*}} f \big|  \bigg) \mathds{1}_{\{p_{r^*} <0, \, r \geq R\}^\mathsf{c}} \\
  & \qquad \qquad \qquad \qquad \qquad   \lesssim  \frac{|p_N|}{r} \big| \pmb{\varphi}_- \partial_t f \big| +\frac{|p_N|}{r} \big| \pmb{\varphi}_- \T_{|\slashed{p}|} f \big|  + \bigg( \frac{|p_{r^*}|^2}{r^2 }+ \frac{|\slashed{p}|^2}{r^3 } \bigg)\Big[\big|Sf\big|+  \big|p_t \partial_{p_{r^*}} f\big| \Big]\, \mathds{1}_{\{p_{r^*} <0, \, r \geq R\}} .
\end{align*}
\end{lemma}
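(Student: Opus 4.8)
The inequality to be proved is \emph{pointwise} on $\mathcal{P}$ (it is the integrand identity behind Proposition~\ref{ProestivarphiminusVminus}), so no integration is involved and the whole task is a scalar estimate that I would organise region by region. Writing $w_1\coloneqq\overline{\omega}_{\frac14}\z^A$ and $w_2\coloneqq\overline{\omega}_{\frac94}\z^A$, the plan is to expand both $\T_g(w_1|r^{\frac14}\mathrm{D}_{\pmb{V}_{\!+}}f|)$ and $\T_g(w_2|r^{\frac94}\mathrm{D}_{p_{r^*}}f|)$ by the Leibniz rule, using $\T_g(w_i)=\T_g(\overline{\omega}_a)\z^A+\overline{\omega}_a A\z^{A-1}\T_g(\z)$ together with $\T_g(|\cdot|)=\mathrm{sgn}(\cdot)\T_g(\cdot)$ and the two commuted transport equations of Lemma~\ref{Propartialprstarbisbis}. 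This produces four groups of terms. First, the contributions of $\T_g(\overline{\omega}_a)$ are exactly those estimated in Lemma~\ref{Lemerrorterm1}, hence bounded by the $\{p_{r^*}\le0,\,r\ge R\}$-supported terms and the $\tfrac{|p_t|}{r}|\pmb{\varphi}_-\partial_t f|$-term already present on the right-hand side. Second, the inhomogeneous term $\tfrac{|\Omega^2\pmb{\varphi}_-|^{1/4}}{|p_t|^{1/4}}|\pmb{\varphi}_-|^2\partial_t f$ from the $\mathrm{D}_{p_{r^*}}$-equation, weighted by $w_2\sim r^{-9/4}$, is dominated by $\tfrac{|p_N|}{r}|\pmb{\varphi}_-\partial_t f|$ after using $|\pmb{\varphi}_-|\lesssim r|p_N|$ and the elementary bounds $|p_u|,|p_v|\le|p_t|$. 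Third, the \emph{good} terms $-\tfrac18 w_1 b_1|r^{\frac14}\mathrm{D}_{\pmb{V}_{\!+}}f|-w_2 b_2|r^{\frac94}\mathrm{D}_{p_{r^*}}f|$, with $b_1\gtrsim r^{-1}|p_N|$ coming from $\tfrac54\pmb{a}$ (the nonpositive $\T_g(|p_N|)$ contribution being harmless, and absorbed on $2.5M\le r\le 2.7M$ via Proposition~\ref{Prorsweight}) and $b_2$ the coefficient in \eqref{eq:phitilde}, together with the strictly negative $\z^A$-contribution $-A\overline{\omega}_a\z^{A-1}|\T_g(\z)|$ supplied by Lemma~\ref{LemPropz}, are what must both furnish the desired non-degenerate bulk and absorb the last group. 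Fourth, the two \emph{bad} cross terms $\tfrac18 w_1\beta_1|r^{\frac94}\mathrm{D}_{p_{r^*}}f|$ and $w_2\beta_2|r^{\frac14}\mathrm{D}_{\pmb{V}_{\!+}}f|$, with $\beta_1\sim\tfrac{|p_N|^{3/4}|p_t|^{1/4}}{r^3\Omega^{1/2}}$ and $\beta_2=\Omega^{1/2}\pmb{\varphi}_-\tfrac{|p_t|^{3/4}}{|p_N|^{3/4}}$, are the heart of the matter.

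The first bad term, $\tfrac18 w_1\beta_1|r^{\frac94}\mathrm{D}_{p_{r^*}}f|$, is comparable (away from $\H$, where $|p_N|=|p_t|$ and $\Omega\sim1$) to $w_2 b_2|r^{\frac94}\mathrm{D}_{p_{r^*}}f|$, so the prefactor $\tfrac18$ lets it be absorbed into the good $\mathrm{D}_{p_{r^*}}$-term. The one place where this fails is a neighbourhood of the event horizon: there $b_2$ only sees $\tfrac{|p_v|}{\Omega^2}$ (this is precisely the origin of the $|p_v|$, rather than $|p_u|$, in \eqref{eq:phitilde}), whereas the splitting $|p_N|^{3/4}\lesssim|p_t|^{3/4}+(|p_u|/\Omega^2)^{3/4}$ produces a $\tfrac{|p_u|}{\Omega^2}|\mathrm{D}_{p_{r^*}}f|$ contribution that $b_2$ cannot reach. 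This is exactly where Corollary~\ref{CorBoundpartialpr} is invoked: it bounds $\big|\tfrac{|p_u|}{\Omega^2}\mathrm{D}_{p_{r^*}}f\big|$ by the good quantities $\pmb{\varphi}_-\partial_t f$, $\pmb{\varphi}_-\T_{|\slashed{p}|}f$, $\pmb{\varphi}_-\pmb{V}_{\!+}f$ (all already controlled on $\{r\le R\}$, hence feeding the desired bulk for $\mathrm{D}_{\pmb{V}_{\!+}}f$) plus a term $(|p_t|+|\slashed{p}|^2/|p_t|)|\mathrm{D}_{p_{r^*}}f|$ which on the bounded region is $\lesssim|p_t||\mathrm{D}_{p_{r^*}}f|$ and is reabsorbed by $b_2\gtrsim|p_t|$; crucially it introduces no new $\partial_{p_{r^*}}$- or $\pmb{V}_{\!+}$-derivative carrying a bad weight.

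The second bad term, $w_2\beta_2|r^{\frac14}\mathrm{D}_{\pmb{V}_{\!+}}f|$, is the genuine difficulty, since $w_2\beta_2$ and $\tfrac18 w_1 b_1$ are of the \emph{same} order and $b_1$ alone is too weak. I would resolve it by a three-region argument. Near $\H$ one uses $\tfrac{|p_t|}{|p_N|}\to0$: since $\pmb{a}\sim\tfrac{|p_u|}{\Omega^2}$ blows up there while $\beta_2\sim|p_u|^{1/4}|p_t|^{3/4}$ stays comparatively small, $\tfrac18 w_1 b_1$ dominates $w_2\beta_2$ outright, which is the inequality $\pmb{a}+A|p_t|\gg\Omega^{1/2}|\pmb{\varphi}_-|\tfrac{|p_t|^{3/4}}{|p_N|^{3/4}}$ announced before the lemma. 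In the interior, near the photon sphere, $\beta_2\lesssim|\pmb{\varphi}_-|$ vanishes linearly in $|p_{r^*}|+|r-3M|$ at trapping, and Lemma~\ref{LemPropz} gives $|\T_g(\z)|\gtrsim\tfrac{|p_{r^*}|}{r^2}+|1-\tfrac{3M}{r}|\tfrac{|\slashed{p}|^2}{r^3|p_t|}\gtrsim|\pmb{\varphi}_-|$ there, so that for $A$ large the $\z^A$-negativity $A\overline{\omega}_{\frac14}\z^{A-1}|\T_g(\z)|$ beats $w_2\beta_2=\overline{\omega}_{\frac94}\z^A\beta_2$ (here $\overline{\omega}_{\frac94}\lesssim\overline{\omega}_{\frac14}$ and $\z\sim1$). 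Finally, in the far outgoing region $p_{r^*}\ge0$, $r\gg1$, one has $|\pmb{\varphi}_-|\sim r|p_v|$ with $|p_v|\le\tfrac12|p_t|$, so $\beta_2$ is small; and when $p_{r^*}$ is additionally close to $0$ the null-shell relation forces $|\slashed{p}|^2\sim r^2|p_t|^2$, whence $\T_g(\z)\gtrsim|p_t|$ again closes the estimate through the $\z^A$-term. One fixes $A$ large enough to win in the photon-sphere and far regions, and then $c$ small enough to retain a genuine bulk. \textbf{The main obstacle} is exactly this last step: the lack of triangular structure, combined with the fact that $b_1,b_2$ are only borderline coercive, forces one to extract the missing coercivity from three independent mechanisms — redshift ($\pmb{a}$) at $\H$, the trapping degeneracy of $\pmb{\varphi}_-$ matched against $\T_g(\z)$ at the photon sphere, and the decay of the ingoing momentum $p_v$ at infinity — and to make a single choice of $A$ and then $c$ reconcile all of them simultaneously.
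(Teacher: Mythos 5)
Your proposal is correct and follows essentially the same route as the paper's proof: the same Leibniz expansion via Lemma \ref{Propartialprstarbisbis}, the same treatment of the weight derivatives through Lemmata \ref{Lemerrorterm1} and \ref{LemPropz} and Proposition \ref{Prorsweight}, the same use of Corollary \ref{CorBoundpartialpr} to supply the missing $|p_u|\Omega^{-2}$ coercivity near $\mathcal{H}^+$, and the same three-region absorption (horizon, photon sphere, far outgoing region) of the $\pmb{V}_{\!+}$-cross term that the paper isolates in the technical Lemma \ref{LemtechniCompu}. One small correction: the $\pmb{\varphi}_-\pmb{V}_{\!+}f$ term generated by Corollary \ref{CorBoundpartialpr} is not ``already controlled'' as your parenthetical suggests, but is a cross term of the same type as your $\beta_2$ that must itself be absorbed by the good $\mathrm{D}_{\pmb{V}_{\!+}}$ coefficient (this is the factor $1+C_0\mathds{1}_{r\leq 2.7M}$ carried into the paper's $\mathfrak{F}$-inequality), which your horizon-region treatment of $\beta_2$ in fact accomplishes.
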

\begin{proof}
Recall first $|p_t| \leq |p_N|$ and Proposition \ref{Prorsweight}, which in particular implies
$$ \T_g(|p_N|) \lesssim  |p_t|^2 \, \mathds{1}_{2.5M \leq r \leq 2.7M} \lesssim  \Big| 1-\frac{3M}{r} \Big| |p_t|^2 \mathds{1}_{r \leq R} .$$
Next, we have from Lemma \ref{LemPropz} that $\z^A \leq 3^A$ and
$$  \T_g (\z^A) = A \T_g(\z) \z^{A-1}  \leq A \T_g(\z) \z^{A}  \lesssim  -A\bigg( \frac{|p_{r^*}|}{r^2}+\bigg|1-\frac{3M}{r}\bigg| \frac{|\slashed{p}|^2}{r^3|p_t|} \bigg) \z^A .$$
We further stress that $|p_t| \leq |p_{r^*}|+|\slashed{p}|r^{-1}$. Thus, according to Lemmata \ref{Propartialprstarbisbis} and \ref{Lemerrorterm1}, there exists $b>0$ depending only on $M$, such that if $A$ is large enough,
\begin{align*}
\T_g \Big( \overline{\omega}_{\frac{1}{4}} \z^A \big| r^{\frac{1}{4}}\mathrm{D}_{\pmb{V}_{\!+}} f \big| \Big) & \leq - \mathfrak{F}(r,p) \overline{\omega}_{\frac{1}{4}} \z^A \big| r^{\frac{1}{4}}\mathrm{D}_{\pmb{V}_{\!+}} f \big| + \frac{(r+3M)|p_N|^{\frac{3}{4}} |p_t|^{\frac{1}{4}} }{r^{\frac{5}{2}}|r+6M|^{\frac{3}{2}}\Omega^{\frac{1}{2}}} \overline{\omega}_{\frac{1}{4}} \z^A \big|r^{\frac{9}{4}} \mathrm{D}_{p_{r^*}}f \big| \\
& \quad + 3^A\overline{C}\bigg( \frac{|p_{r^*}|^2}{r^2 }+ \frac{|\slashed{p}|^2}{r^3 } \bigg) \Big(\big|Sf \big|+ \big|p_t \partial_{p_{r^*}} f\big| \Big) \,  \mathds{1}_{p_{r^*} \leq 0} \, \mathds{1}_{r \geq R}+3^A\overline{C}\frac{|p_t|}{r}\big| \pmb{\varphi}_- \partial_t f \big|,
 \end{align*}
 where the function $\mathfrak{F}(r,p)$ is defined as
 \begin{align}
 \nonumber \mathfrak{F}(r,p) &\coloneqq  \frac{5(r^2+2Mr+3M^2)|p_t|}{4r|r+6M|^{\frac{1}{2}}(r^{\frac{3}{2}}+M|r+6M|^{\frac{1}{2}}) } +\frac{5M|p_u|}{2r^2\Omega^2}+ bA\bigg( \frac{|p_{r^*}|}{r^2}+\bigg|1-\frac{3M}{r}\bigg| \frac{|\slashed{p}|^2}{r^3|p_t|} \bigg). \label{eq:DefCalF}
 \end{align}
Similarly, there exists $\widetilde{C}>0$ such that
\begin{align*}
 \T_g \Big(   \overline{\omega}_{\frac{9}{4}} \z^A \big|r^{\frac{9}{4}} \mathrm{D }_{p_{r^*}} f \big| \Big)& \leq - \mathfrak{G}_0(r,p)   \overline{\omega}_{\frac{9}{4}} \z^A \big|r^{\frac{9}{4}}  \mathrm{D }_{p_{r^*}} f \big| +  \frac{\Omega^{\frac{1}{2}}\pmb{\varphi}_-  |p_t|^{\frac{3}{4}}}{|p_N|^{\frac{3}{4}}} \overline{\omega}_{\frac{9}{4}} \z^A  \big|  r^{\frac{1}{4}} \mathrm{D}_{\pmb{V}_{\! +}} f \big|\\
 & \quad +3^A\overline{C}\bigg( \frac{|p_{r^*}|^2}{r^2 }+ \frac{|\slashed{p}|^2}{r^3 } \bigg)  |p_t \partial_{p_{r^*}} f| \,  \mathds{1}_{p_{r^*} \leq 0} \, \mathds{1}_{r \geq R} +3^A \widetilde{C}\frac{|p_N|}{ r} \Big| \pmb{\varphi}_- \partial_t f \Big|,
 \end{align*}
 with
  \begin{align*}
 \mathfrak{G}_0(r,p) & \coloneqq \frac{(r^2+2Mr+3M^2)|p_t|}{4r|r+6M|^{\frac{1}{2}}(r^{\frac{3}{2}}+M|r+6M|^{\frac{1}{2}}) }+\frac{M|p_v|}{2r^2 \Omega^2}+ bA\bigg( \frac{|p_{r^*}|}{r^2}+\bigg|1-\frac{3M}{r}\bigg| \frac{|\slashed{p}|^2}{r^3|p_t|} \bigg) \\
 & \, \geq \frac{|p_t|}{4r^{\frac{1}{2}}|r+6M|^{\frac{1}{2}} }+\frac{M|p_v|}{2r^2 \Omega^2} + bA\bigg( \frac{|p_{r^*}|}{r^2}+\bigg|1-\frac{3M}{r}\bigg| \frac{|\slashed{p}|^2}{r^3|p_t|} \bigg),
 \end{align*}
and where we controlled the term proportional to $\partial_t f$ by using 
$$|\pmb{\varphi}_-| \lesssim r|p_N|,  \qquad |\Omega^2 \pmb{\varphi}_-| \lesssim r |p_t|, \qquad \overline{\omega}_{\frac{9}{4}} \lesssim r^{-\frac{9}{4}}, \qquad \z^A \leq 3^A. $$
Note that $\mathfrak{G}_0(r,p)$ does not control $|p_u| \Omega^{-2}$ near $\H$. To deal with this issue, we apply Corollary \ref{CorBoundpartialpr}. It implies that there exists $C_0 >0$, depending only on $M$, such that, if $A$ is large enough,
\begin{align*}
& \T_g \Big(   \overline{\omega}_{\frac{9}{4}} \z^A \big| r^{\frac{9}{4}} \mathrm{D }_{p_{r^*}} f \big| \Big) \leq - \mathfrak{G}(r,p)   \overline{\omega}_{\frac{9}{4}} \z^A \big| r^{\frac{9}{4}} \mathrm{D }_{p_{r^*}} f \big| +\big(1+ C_0 \mathds{1}_{r \leq 2.7M} \big) \frac{\Omega^{\frac{1}{2}}\pmb{\varphi}_-  |p_t|^{\frac{3}{4}}}{|p_N|^{\frac{3}{4}}}  \overline{\omega}_{\frac{9}{4}} \z^A  \big|  r^{\frac{1}{4}} \mathrm{D}_{\pmb{V}_{\! +}} f \big| \\  &\qquad + \! 3^A\overline{C}\bigg( \frac{|p_{r^*}\!|^2}{r^2 }+ \frac{|\slashed{p}|^2}{r^3 } \bigg)  |p_t \partial_{p_{r^*}} f| \,  \mathds{1}_{p_{r^*} \leq 0} \, \mathds{1}_{r \geq R} + 3^A \widetilde{C}\frac{|p_N|}{ r} \Big| \pmb{\varphi}_- \partial_t f \Big|+3^AC_0 \mathds{1}_{r \leq 2.7M} | \pmb{\varphi}_- p_N| \big( |\partial_t f | + |\T_{|\slashed{p}|} f | \big), 
 \end{align*}
  where
  \begin{equation}\label{eq:DefCalG}
 \mathfrak{G}(r,p) \coloneqq \frac{|p_t|}{4r^{\frac{1}{2}}|r+6M|^{\frac{1}{2}} }+\frac{M|p_v|}{2r^2 \Omega^2}+ \frac{M|p_u|}{2r^2 \Omega^2} \mathds{1}_{r \leq 2.7M}. 
 \end{equation}
We recall Remark \ref{RkProestiincoming} and the properties of $\overline{\omega}_a$, in particular \eqref{eq:cutoffp}. Then, we have that in the domain $\{ r \geq R, \; p_{r^*} <0 \}$,
\begin{align*}
\T_g \Big( \overline{\omega}_{\frac{1}{4}} \z^A\big|r^{\frac{1}{4}} \mathrm{D}_{\pmb{V}_{\! +}} f \big| \Big) +  \T_g \Big(   \overline{\omega}_{\frac{9}{4}} \z^A \big| r^{\frac{9}{4}} \mathrm{D }_{p_{r^*}} f \big| \Big) \lesssim \bigg(\frac{|p_{r^*}|^2}{r^2}+\frac{|\slashed{p}|^2}{r^3 } \bigg) \big(  |p_t \partial_{p_{r^*}} f| + |Sf| \big)+\frac{|p_N|}{ r} \big| \pmb{\varphi}_- \partial_tf \big|.
\end{align*}
Finally, the result is a consequence of the following technical Lemma \ref{LemtechniCompu}.
\end{proof}

\begin{lemma}\label{LemtechniCompu}
We have, on $\{ r \leq R\} \cup \{ r \geq R, \, p_{r^*} \geq 0 \}$, that
\begin{equation*}
  \mathfrak{G}(r,p)- \frac{1}{8} \cdot \frac{(r+3M)|p_N|^{\frac{3}{4}} |p_t|^{\frac{1}{4}}\overline{\omega}_{1/4} }{r^{\frac{5}{2}}|r+6M|^{\frac{3}{2}}\Omega^{\frac{1}{2}} \overline{\omega}_{9/4}} \gtrsim  \frac{|p_N|}{r}.
 \end{equation*}
 and, if $A$ is large enough,
\begin{equation*}
 \frac{1}{8}\mathfrak{F}(r,p)-  \frac{ \Omega^{\frac{1}{2}}|\pmb{\varphi}_-||p_t|^{\frac{3}{4}}\overline{\omega}_{9/4}}{ |p_N|^{\frac{3}{4}}\overline{\omega}_{1/4}}\big(1+ C_0 \mathds{1}_{r \leq 2.7M} \big) \gtrsim \frac{|p_N|}{r}
 \end{equation*}
\end{lemma}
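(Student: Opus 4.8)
The plan is to prove both pointwise inequalities by splitting the region $\{r \leq R\} \cup \{r \geq R,\, p_{r^*} \geq 0\}$ into the near-horizon zone $\{r \leq 2.7M\}$, the intermediate zone $\{2.7M \leq r \leq R\}$, and the far zone $\{r \geq R,\, p_{r^*}\geq 0\}$, and checking the dominant balance in each. First I would record the elementary facts used throughout: on this region one has $\overline{\omega}_{1/4}/\overline{\omega}_{9/4} \sim M^2$ for $r \leq R$ and $\sim r^2$ for $r \geq R+1$ (from Definition \ref{Defweight}), $p_N = p_t$ on $\{r \geq 2.7M\}$ and $|p_N| \sim |p_t| + |p_u|\Omega^{-2}$ on $\{r \leq 2.7M\}$ (Lemma \ref{LempN}), the null-shell identities $|p_u| = (|p_t|+p_{r^*})/2$, $|p_v| = (|p_t| - p_{r^*})/2$ and $\Omega^2|\slashed{p}|^2 = r^2(|p_t|^2 - |p_{r^*}|^2)$ from \eqref{eq:defConsQua}, together with the asymptotics of $\pmb{\varphi}_-$ in Remark \ref{RkvarphiHplus} and the bound $|\pmb{\varphi}_-|\lesssim r|p_N|$. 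These reduce each inequality to a comparison of explicit rational expressions in $r$ against homogeneous degree-one expressions in $(|p_t|,p_{r^*},|\slashed{p}|)$.

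For the first inequality I would exploit that $\mathfrak{G}$ is a genuinely strong weight. On $\{r \leq 2.7M\}$ the redshift contribution of $\mathfrak{G}$ forces $\mathfrak{G} \gtrsim |p_t|\Omega^{-2} \gtrsim |p_N|$; inserting $\overline{\omega}_{1/4}/\overline{\omega}_{9/4} \sim M^2$ and using $|p_N| \lesssim |p_t|\Omega^{-2}$ bounds the subtracted term by $|p_N|^{3/4}|p_t|^{1/4}\Omega^{-1/2} \lesssim |p_t|\Omega^{-2}$, so a direct comparison of the explicit constants, in which the factor $\tfrac18$ and the size of the redshift coefficient $\tfrac{M}{2r^2}$ at $r \sim 2M$ supply the margin, shows $\mathfrak{G}$ minus the subtracted term is still $\gtrsim |p_N|$; the only delicate case is the ingoing regime $|p_u| \sim |p_t|$, where both quantities are genuinely $\sim |p_t|\Omega^{-2}$ and the inequality reduces to a numerical comparison of coefficients. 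On $\{2.7M \leq r \leq R\}$ one has $\Omega \sim 1$, $p_N = p_t$ and bounded coefficients, and crucially $\mathfrak{G}$ is non-degenerate at the photon sphere since its first term never vanishes, so the inequality follows from a compact-set comparison closed by the factor $\tfrac18$. For $r \geq R$ both $\mathfrak{G}$ and the subtracted term behave like $|p_t|/r$, and I would check that the leading constants ($\tfrac14$ against $\tfrac18$) leave room, with lower-order corrections absorbed by taking $R$ large. No largeness of $A$ enters here.

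For the second inequality the mechanism is different: $\mathfrak{F}$ is non-degenerate at trapping, but the subtracted term carries the factor $\pmb{\varphi}_-$, so I would lean on the $bA(\,\cdot\,)$ contribution coming from $\z^A$ with $A$ large. Near the horizon the gain is the explicit $\Omega^{1/2}$: in the ingoing regime $|p_t|/|p_N|\sim \Omega^2$ gives $(|p_t|/|p_N|)^{3/4}\sim \Omega^{3/2}$, so with $|\pmb{\varphi}_-|\lesssim r|p_N|$ the subtracted term is $\lesssim |p_t|/r$ while $\mathfrak{F} \gtrsim |p_u|\Omega^{-2}\sim |p_t|\Omega^{-2}$, and in the outgoing regime $|\pmb{\varphi}_-|\lesssim r|p_t|$ yields a subtracted term $\lesssim \Omega^{1/2}|p_t|$ against $\mathfrak{F} \gtrsim |p_t|$. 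On $\{2.7M \leq r \leq R\}$ I would match degeneracies: $|\pmb{\varphi}_-|\sim |p_{r^*}| + |r-3M||p_t|$, the first part dominated by $A\,|p_{r^*}|r^{-2}$ and the second, using $|\slashed{p}|\sim r\Omega^{-1}|p_t|$ when $p_{r^*}$ is small, by $A\,|1-3M/r||\slashed{p}|^2 r^{-3}|p_t|^{-1}$, both for $A$ large, with the first term of $\mathfrak{F}$ as backup away from $r=3M$. The far zone is the remaining delicate point and I expect it to be the main obstacle: when $p_{r^*}\to 0^+$ at large $r$ one has $|\pmb{\varphi}_-|\sim r|p_v|\sim r|p_t|$, so the subtracted term is $\sim |p_t|/r$, of the same order as $|p_N|/r$ and larger than $\tfrac18$ times the first term of $\mathfrak{F}$; the resolution is that $p_{r^*}$ small at large $r$ forces $|\slashed{p}|^2\sim r^2|p_t|^2$, whence the $\z^A$-term $A\,|1-3M/r||\slashed{p}|^2 r^{-3}|p_t|^{-1}\sim A|p_t|/r$ dominates once $A$ is large, while for $p_{r^*}\sim |p_t|$ the asymptotics of Remark \ref{RkvarphiHplus} make $|\pmb{\varphi}_-|$, and hence the subtracted term, lower order. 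Confirming that a single choice of $A$ simultaneously closes the horizon, photon-sphere and far-zone balances, while tracking the exact constants, is where the bulk of the technical effort lies.
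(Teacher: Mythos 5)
Your overall architecture is the same as the paper's: the identical three-zone split (near-horizon, intermediate, far outgoing), a pure constant-chasing argument with no use of $A$ for the $\mathfrak{G}$-inequality (the paper shows the subtracted term without its $\tfrac18$ is at most $4\mathfrak{G}$ or $5\mathfrak{G}$ in each zone, exactly your "margin" argument), and for the $\mathfrak{F}$-inequality the absorption of the $\pmb{\varphi}_-$-weighted term into the $bA$-terms coming from $\z^A$, with the far zone resolved through $p_{r^*}\geq 0 \Rightarrow 2|p_u|\geq |p_t|$. Two remarks on your far zone: the paper avoids your dichotomy entirely via the single pointwise estimate $\frac{2|p_v|}{r\Omega^2}\leq \frac{4|p_u||p_v|}{r\Omega^2|p_t|}=\frac{|\slashed{p}|^2}{r^3|p_t|}$, valid uniformly on $\{p_{r^*}\geq 0\}$; and your second case is misstated, since at, say, $p_{r^*}=|p_t|/2$ one still has $|\pmb{\varphi}_-|\sim r|p_v|\sim r|p_t|$, which is \emph{not} lower order — what saves this middle regime is again the angular term, because $|\slashed{p}|^2=4r^2|p_u||p_v|\Omega^{-2}\gtrsim r^2|p_v||p_t|$ there. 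With the threshold read as "$p_{r^*}$ close to $|p_t|$" this is repairable.

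The genuine gap is the near-horizon case of the second inequality, where you claim "the gain is the explicit $\Omega^{1/2}$" and run a two-regime argument with no input from $A$. This cannot close on all of $\{r\leq 2.7M\}$, because the factor $(1+C_0\mathds{1}_{r\leq 2.7M})$ carries the fixed constant $C_0$ of Lemma \ref{LemforProestivarphiminusVminus} (arising from Corollary \ref{CorBoundpartialpr}), which is not at our disposal, while on the outer part of the region all $\Omega$-factors are $O(1)$. Concretely, take $|\slashed{p}|=0$, $p_{r^*}=p_t$ (so $p_u=0$, $p_N=p_t$) at $r=2.7M$: then $|\pmb{\varphi}_-|=\Omega^{-2}r\big(1+r^{-3/2}|r+6M|^{1/2}(r-3M)\big)|p_t|\approx 8M|p_t|$, so the subtracted term is $\approx 6(1+C_0)|p_t|/M$, whereas the non-$A$ part of $\tfrac18\mathfrak{F}$ is only $\approx |p_t|/(24M)$ since the redshift term vanishes with $p_u$; the inequality fails there without the $bA$-contribution, for \emph{any} $C_0\geq 0$. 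The missing idea, which is exactly what the paper uses in this zone, is that the $A$-term is available near the horizon precisely because the photon-sphere degeneracy is absent there: $|1-3M/r|\geq 1/9$ for $r\leq 2.7M$, and the null-shell relation gives $|p_t|\lesssim |p_{r^*}|+\frac{|\slashed{p}|^2}{r^2|p_t|}$, so the full bound $C(1+C_0)|p_t|$ on the subtracted term (obtained from $|\pmb{\varphi}_-|\lesssim |p_N|$ and $\Omega^2|\pmb{\varphi}_-|\lesssim |p_t|$) is absorbed by $\frac{bA}{16}\big(\frac{|p_{r^*}|}{r^2}+|1-\frac{3M}{r}|\frac{|\slashed{p}|^2}{r^3|p_t|}\big)$ once $A$ is large. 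Your closing sentence hedges that a single $A$ must "close the horizon balance", but the mechanism you describe for that zone is the $\Omega$-gain, which is only effective asymptotically as $\Omega\to 0$; the absorption step above is what actually carries the region.
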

\begin{proof}
We will prove several estimates which, together with $\mathfrak{F}(r,p), \, \mathfrak{G} (r,p) \gtrsim |p_N| r^{-1}$, will imply the result. We focus first on the domain $\{ r \geq R+1, \, p_{r^*} \geq 0 \}$, with $R \geq 832M$. For this, we will use the next properties.
 \begin{itemize}
 \item For $r \geq R+1$, we have $p_N=p_t$, $\overline{\omega}_{1/4}(r,p)=r^{-1/4}$ and $\overline{\omega}_{9/4}(r,p)=r^{-9/4}$.
 \item If  $p_{r^*} \geq 0$, then $2|p_u|=|p_t|+p_{r^*}\geq |p_t|$.
 \item For $ r \geq 3M$, we have by Remark \ref{RkvarphiHplus} that
 \begin{equation*}
 \pmb{\varphi}_-(x,p)= \frac{r }{\Omega^2}(p_t+ p_{r^*}) -\frac{27M^2r^{\frac{1}{2}}}{|r+6M|^{\frac{1}{2}}(r-3M)+r^{\frac{3}{2}}} p_t.
 \end{equation*}
 \end{itemize}
 Consequently, in view of the null-shell relation $4r^2p_up_v=\Omega^2 |\slashed{p}|^2$, one obtains that for $r \geq R+1$ and $p_{r^* } \geq 0$, 
  $$ \frac{ \Omega^{\frac{1}{2}}|\pmb{\varphi}_-||p_t|^{\frac{3}{4}}\overline{\omega}_{9/4}}{ |p_N|^{\frac{3}{4}}\overline{\omega}_{1/4}} =  \frac{ \Omega^{\frac{1}{2}}|\pmb{\varphi}_-|}{r^2}\leq \frac{2|p_v|}{r\Omega^2}+ \frac{27M^2|p_t|}{r^3} \leq \frac{4|p_v||p_u|}{r\Omega^2|p_t|}+ \frac{27M^2|p_t|}{r^3} \leq 2\frac{|\slashed{p}|^2}{r^3|p_t|}+\frac{27M^2|p_{r^*}|}{r^3} .$$
  Hence, if $A$ is large enough, we have
  $$ \frac{ \Omega^{\frac{1}{2}}|\pmb{\varphi}_-||p_t|^{\frac{3}{4}}\overline{\omega}_{9/4}}{ |p_N|^{\frac{3}{4}}\overline{\omega}_{1/4}}  \leq \frac{bA}{16} \bigg( \frac{|p_{r^*}|}{r^2}+\bigg|1-\frac{3M}{r}\bigg| \frac{|\slashed{p}|^2}{r^3 |p_t|} \bigg) \leq \frac{1}{16} \mathfrak{F}(r,p)  .$$
  For the other inequality, note that
  $$  \frac{(r+3M)|p_N|^{\frac{3}{4}} |p_t|^{\frac{1}{4}}\overline{\omega}_{1/4} }{r^{\frac{5}{2}}|r+6M|^{\frac{3}{2}}\Omega^{\frac{1}{2}} \overline{\omega}_{9/4}} =  \frac{(r+3M) |p_t| }{r^{\frac{1}{2}}|r+6M|^{\frac{3}{2}}\Omega^{\frac{1}{2}} } \leq \frac{|p_t|}{r^{\frac{1}{2}}|r+6M|^{\frac{1}{2}} \Omega^{\frac{1}{2}}(R)} \leq 5 \mathfrak{G}(r,p) .$$

Assume now that $2.7M \leq r \leq R+1$, so that $p_N=p_t$ and $\overline{\omega}_{1/4} /\overline{\omega}_{9/4} \leq 2M^2$. Hence,
$$  \frac{(r+3M)|p_N|^{\frac{3}{4}} |p_t|^{\frac{1}{4}}\overline{\omega}_{1/4} }{r^{\frac{5}{2}}|r+6M|^{\frac{3}{2}}\Omega^{\frac{1}{2}} \overline{\omega}_{9/4}} \leq  \frac{2M^2 |p_t| }{r^{\frac{5}{2}}|r+6M|^{\frac{1}{2}}\Omega^{\frac{1}{2}} } \leq \frac{|p_t|}{r^{\frac{1}{2}}|r+6M|^{\frac{1}{2}}} \leq 4 \mathfrak{G}(r,p) .$$
Since $\Omega^2|\pmb{\varphi}_-| \leq r|p_{r^*}|+2|r-3M||p_t|$, we have for a certain constant $C>0$ depending only on $M$,
$$\frac{ \Omega^{\frac{1}{2}}|\pmb{\varphi}_-||p_t|^{\frac{3}{4}}\overline{\omega}_{9/4}}{ |p_N|^{\frac{3}{4}}\overline{\omega}_{1/4}} \leq C(|p_{r^*}|+|r-3M||p_t|) \leq \frac{bA}{16} \bigg( \frac{|p_{r^*}|}{r^2}+\bigg|1-\frac{3M}{r}\bigg| \frac{|\slashed{p}|^2}{r^3 |p_t|} \bigg) \leq \frac{1}{16} \mathfrak{F}(r,p)  ,$$
provided that $A$ is large enough.

We finally consider the case $r \leq 2.7M$, so that $\overline{\omega}_{a}=M^{-a}$. As $|p_N| \leq 16|p_t|\Omega^{-2}$, we have
$$  \frac{(r+3M)|p_N|^{\frac{3}{4}} |p_t|^{\frac{1}{4}}\overline{\omega}_{1/4} }{r^{\frac{5}{2}}|r+6M|^{\frac{3}{2}}\Omega^{\frac{1}{2}} \overline{\omega}_{9/4}} \leq  \frac{8 M^2|p_t|}{r^{\frac{5}{2}}|r+6M|^{\frac{1}{2}}\Omega^2} \leq \frac{2M|p_t|}{r^2\Omega^2} = 2M \frac{|p_v|+|p_u|}{r^2\Omega^2} \leq 4 \mathfrak{G}(r,p)  .$$
We conclude the proof by using $|\pmb{\varphi}_-| \lesssim |p_N|$ and $\Omega^2|\pmb{\varphi}_-|\lesssim |p_t|$, which implies
$$ \frac{ \Omega^{\frac{1}{2}}|\pmb{\varphi}_-||p_t|^{\frac{3}{4}}\overline{\omega}_{9/4}}{ |p_N|^{\frac{3}{4}}\overline{\omega}_{1/4}} \leq C |p_t| \leq \frac{bA}{16} \bigg( \frac{|p_{r^*}|}{r^2}+\bigg|1-\frac{3M}{r}\bigg| \frac{|\slashed{p}|^2}{r^3 |p_t|} \bigg) \leq \frac{1}{16} \mathfrak{F}(r,p)  ,$$
for $A$ chosen large enough.  
 \end{proof}

\subsection{Boundedness and ILED without relative degeneracy for $p_N \pmb{V}_{\, +}^{\mathrm{mod}}$}\label{Subsec74}

We are now able to control the derivative $\pmb{V}_{\, +}^{ \mathrm{mod}} f$.

\begin{proposition}\label{ProestiVminusmod}
For all $\tau \geq 0$, we have
\begin{align*}
& \mathbb{E} \Big[ p_N  \pmb{V}_{\, +}^{ \mathrm{mod}} f \Big](\tau)+ \int_{\pi^{-1}(\mathcal{R} )  } \frac{|p_N|}{r} \big| p_N \pmb{V}_{\, +}^{ \mathrm{mod}} f \big|  \dr \mu_{\mathcal{P}}  \lesssim \mathcal{F}[f](0).
\end{align*}
\end{proposition}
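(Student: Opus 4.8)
The plan is to apply the energy estimate of Proposition \ref{Proenergy} to the \emph{weighted} derivative $\xi \pmb{V}_{\,+}^{\mathrm{mod}} f$, where $\xi$ is the auxiliary redshift multiplier of Section \ref{Subsecredshift}. Using $\xi$ rather than $p_N$ is the crux of the argument: since $|\xi|\sim|p_N|$, the flux $\mathbb{E}[\xi\pmb{V}_{\,+}^{\mathrm{mod}}f]$ is equivalent to $\mathbb{E}[p_N\pmb{V}_{\,+}^{\mathrm{mod}}f]$, but now $\T_g(|\xi|)\leq 0$ by Proposition \ref{Promultiplierm}. Writing $\T_g(|\xi\pmb{V}_{\,+}^{\mathrm{mod}}f|)=\T_g(|\xi|)\,|\pmb{V}_{\,+}^{\mathrm{mod}}f|+|\xi|\,\T_g(|\pmb{V}_{\,+}^{\mathrm{mod}}f|)$, the first term has a favourable sign and is simply discarded. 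This avoids having to bound $|\pmb{V}_{\,+}^{\mathrm{mod}}f|$ on the support of $\T_g(|p_N|)$ (the region $\{2.5M\leq r\leq 2.7M\}$ arising from Proposition \ref{Prorsweight}), where the trapping degeneracy of the already-controlled derivative $|p_N|^{3/4}|r^{-1}\pmb{\varphi}_-|^{1/4}\pmb{V}_{\!+}f$ would otherwise be an obstruction.

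For the second term I would invoke the commutation identity of Corollary \ref{corVminusmod}. The leading contribution $-|\xi|\,\pmb{a}(r,p_{r^*},p_t)\,|\pmb{V}_{\,+}^{\mathrm{mod}}f|$ is exactly the desired non-degenerate bulk: the whole point of the modified field is that the coefficient $\pmb{a}$, written explicitly in Lemma \ref{Lemphiminus}, satisfies $\pmb{a}\gtrsim r^{-1}|p_N|$ everywhere and, crucially, does \emph{not} degenerate at the photon sphere (near $\H$ the term $2M|p_u|/(r^2\Omega^2)$ recovers the redshift, while for $r\geq 2.7M$ one has $p_N=p_t$ and the first summand is $\gtrsim r^{-1}|p_t|$). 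Together with $|\xi|\sim|p_N|$ this gives $|\xi|\,\pmb{a}\,|\pmb{V}_{\,+}^{\mathrm{mod}}f|\gtrsim \tfrac{|p_N|}{r}\,|p_N\pmb{V}_{\,+}^{\mathrm{mod}}f|$, the bulk appearing in the statement. It then remains to absorb the two error terms produced by Corollary \ref{corVminusmod} into bulks already controlled.

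The error term $|\xi|\tfrac{|\pmb{\varphi}_-|}{r^2}|\partial_t f|$ is bounded, using $|\xi|\sim|p_N|$ and $r\geq 2M$, by $\tfrac{|p_N|}{r}|\pmb{\varphi}_-\partial_t f|$, whose spacetime integral is controlled by \eqref{kevatalenn:0}. The second error $|\xi|\tfrac{|\pmb{\varphi}_-|^{1/4}|p_t|^{3/4}}{r^{7/4}}|\pmb{V}_{\!+}f|$ I would rewrite through the identity $|p_N|^{3/4}|r^{-1}\pmb{\varphi}_-|^{1/4}|\pmb{V}_{\!+}f|=\mathrm{D}_{\pmb{V}_{\!+}}f$; inserting $|p_t|\leq|p_N|$ yields the bound $r^{-1/2}\,\tfrac{|p_N|}{r}\,|\mathrm{D}_{\pmb{V}_{\!+}}f|$. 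On $\pi^{-1}(\mathcal{R})\setminus\{p_{r^*}\leq 0,\, r\geq R\}$ its bulk is controlled directly by Proposition \ref{ProestivarphiminusVminus}. On the far-away incoming region $\{p_{r^*}\leq 0,\, r\geq R\}$, where $|\pmb{\varphi}_-|\sim r|p_t|$, I would instead use Remark \ref{RkProestiincoming} to bound $\mathrm{D}_{\pmb{V}_{\!+}}f$ by $|\pmb{\varphi}_-\partial_t f|+|p_t Sf|+|p_t|\,|p_t\partial_{p_{r^*}}f|$; the extra $r^{-1/2}$ then supplies precisely the additional spatial decay needed to reduce the integral to those controlled by Proposition \ref{Proestiincoming}.

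Inserting these bounds into Proposition \ref{Proenergy}, the good bulk on the left dominates and the right-hand side is $\lesssim\mathcal{F}[f](0)$ (the boundary term at $\tau=0$ being $\mathbb{E}[\xi\pmb{V}_{\,+}^{\mathrm{mod}}f](0)\sim\mathbb{E}[p_N\pmb{V}_{\,+}^{\mathrm{mod}}f](0)\leq\mathcal{F}[f](0)$); letting $\tau\to+\infty$ and using the coarea formula \eqref{eq:coarea} delivers the full spacetime bulk estimate, while the flux bound holds for each fixed $\tau$. The main obstacle is the non-triangular, trapping-degenerate coupling to $\mathrm{D}_{\pmb{V}_{\!+}}f$: the estimate closes only because the modification $\Phi r\pmb{\varphi}_-\partial_{p_{r^*}}$ converts the dangerous $\varphi_-\partial_{p_{r^*}}$ error of Proposition \ref{ProComVminus} into terms carrying enough powers of $\pmb{\varphi}_-$ (together with the clean, photon-sphere–nondegenerate coefficient $\pmb{a}$) to be reabsorbed into the bulks established in Sections \ref{Subsec72} and \ref{Subsec73}.
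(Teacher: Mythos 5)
Your proposal is correct and follows essentially the same route as the paper: the paper likewise applies Proposition \ref{Proenergy} to $\xi\,\pmb{V}_{\,+}^{\mathrm{mod}}f$ (exploiting $|\xi|\sim|p_N|$ and $\T_g(|\xi|)\leq 0$), invokes Corollary \ref{corVminusmod} so that the coefficient $\pmb{a}\gtrsim r^{-1}|p_N|$ produces the non-degenerate bulk, controls the $\pmb{\varphi}_-\partial_t f$ error via \eqref{kevatalenn:0}, and handles the $\pmb{V}_{\!+}f$ error by the same region splitting — Proposition \ref{ProestivarphiminusVminus} on $\{r\leq R\}\cup\{p_{r^*}\geq 0,\,r\geq R\}$ and Remark \ref{RkProestiincoming} with Proposition \ref{Proestiincoming} and \eqref{kevatalenn:0} on $\{p_{r^*}\leq 0,\,r\geq R\}$. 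Your additional observations (discarding the good-sign term $\T_g(|\xi|)\,|\pmb{V}_{\,+}^{\mathrm{mod}}f|$, and that the extra $r^{-1/2}$ supplies the spatial decay needed in the incoming far-away region) are accurate refinements of steps the paper leaves implicit.
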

\begin{proof}
 Recall the weight function $\xi(x,p)$ introduced in Section \ref{Subsecredshift}, which verifies $\T_g(|\xi|) \leq 0$ and $|\xi| \sim |p_N|$. By Corollary \ref{corVminusmod}, we have
\begin{align}\label{eq:12345}
  \T_g \big( \big| \xi \pmb{V}_{\, +}^{ \mathrm{mod}} f \big| \big)  + \pmb{a}(r,p_{r^*},p_t) \big| \xi \pmb{V}_{\, +}^{  \mathrm{mod}} f \big| \lesssim \frac{|p_N|}{r } \big| \pmb{\varphi}_-  \partial_t f \big|+\frac{|p_N|}{r^{\frac{3}{2}}} |p_t|^{\frac{3}{4}}| r^{-1} \pmb{\varphi}_- |^{\frac{1}{4}} \big|  \pmb{V}_{\! +} f| .
\end{align}
Then, we apply the energy estimate of Proposition \ref{Proenergy} to $| \xi \pmb{V}_{\, +}^{ \mathrm{mod}} f |$ and we control the integral over $\pi^{-1}(\mathcal{R})$ of the two error terms in the RHS of \eqref{eq:12345} as follows.
\begin{itemize}
\item We use \eqref{kevatalenn:0} in order to deal with the first one. 
\item For the second one, we apply Proposition \ref{ProestivarphiminusVminus} for the region $\{ r \leq R \} \cup \{ p_{r^*} \geq 0, \, r \geq R \}$. For the domain $\{p_{r^*} \leq 0, \, r \geq R \}$, we use Remark \ref{RkProestiincoming}, Proposition \ref{Proestiincoming} and \eqref{kevatalenn:0}.
\end{itemize}
\end{proof}

We then obtain Proposition \ref{ProILEDwrd} from \eqref{kevatalenn:0}, Proposition \ref{Proestiincoming}, Remark \ref{RkProestiincoming} as well as Propositions \ref{ProestivarphiminusVminus} and \ref{ProestiVminusmod}. Note that we also used $|p_{n_{\Sigma_\tau}}| \lesssim |p_N|$ and \eqref{eq:coarea}.

 In the perspective of applying the $r^p$-weighted energy method through Proposition \ref{ProILEDrpbis}, we conclude this subsection with the following result. Let us define
\begin{align*}
&g_1 \coloneqq   \pmb{\varphi}_- \partial_t f , \qquad\quad
g_2 \coloneqq  \pmb{\varphi}_- \T_{|\slashed{p}|} f  , \qquad \quad
g_3 \coloneqq  p_t  Sf \, \mathds{1}_{r \geq R} \, \mathds{1}_{p_{r^*} \leq 0}  , \qquad \quad
g_4  \coloneqq  |p_t|^2 \partial_{p_{r^*}} f  \, \mathds{1}_{r \geq R} \, \mathds{1}_{p_{r^*} \leq 0} , \\
& \qquad \qquad g_5  \coloneqq    \overline{\omega}_{\frac{1}{4}} \z^A |p_N|^{\frac{3}{4}}  |\pmb{\varphi}_-|^{\frac{1}{4}} \pmb{V}_{\! +} f   , \qquad \quad
g_6  \coloneqq     \overline{\omega}_{\frac{9}{4}} \z^A  r^{\frac{9}{4}} \mathrm{D}_{p_{r^*}} f  , \qquad \quad
g_7  \coloneqq \xi  \pmb{V}_{\, +}^{ \mathrm{mod}} f , 
\end{align*}
with $A>0$ chosen large enough in accordance with Lemmata \ref{LemforProestivarphiminusVminus}--\ref{LemtechniCompu}.

\begin{proposition}\label{Rksystemderiv}
Let $k\in \llbracket 1, 7 \rrbracket$. There exist constants $c_k>0$ such that
\begin{equation}
 \sum_{1 \leq k \leq 7} c_k \T_g (|g_k|) \lesssim  -\sum_{1 \leq k \leq 7} \frac{|p_N|}{r\log^2(2+r)} |g_k| .
\end{equation}
\end{proposition}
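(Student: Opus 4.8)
The plan is to prove $(\star)$ by collecting, for each $g_k$, the transport inequality already established in this section, and then to close the estimate by a hierarchical choice of the constants $c_k$ that absorbs all off-diagonal coupling terms. First I would record the purely diagonal (coercive) contributions. For $g_1$ and $g_2$ we have $[\T_g,\partial_t]=[\T_g,\T_{|\slashed{p}|}]=0$ and $\T_g(f)=0$, so Lemma \ref{Lemphiminus} gives $\T_g(|g_k|)=-\pmb{a}(r,p_{r^*},p_t)\,|g_k|$ for $k\in\{1,2\}$; using the explicit form of $\pmb{a}$ together with $|p_N|\sim|p_t|+|p_u|\Omega^{-2}$ (Lemma \ref{LempN}) one checks $\pmb{a}\gtrsim |p_N|/r$, so these two produce good terms strictly stronger than the target $|p_N|(r\log^2(2+r))^{-1}|g_k|$, leaving surplus to absorb errors. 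For $g_7=\xi\pmb{V}_{+}^{\mathrm{mod}}f$, the inequality \eqref{eq:12345} from Corollary \ref{corVminusmod} reads $\T_g(|g_7|)\le -\pmb{a}\,|g_7|+C r^{-1}|p_N|\,|g_1|+Cr^{-3/2}|p_N|\,|p_t|^{3/4}|r^{-1}\pmb{\varphi}_-|^{1/4}|\pmb{V}_{\!+}f|$, whose first error is a multiple of $g_1$ and whose second, by $|p_t|\le|p_N|$ and the extra factor $r^{-1/2}$, is dominated by the good bulk of $g_5$ on the core region and by the far-away quantities on the complement.

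Next I would treat the genuinely coupled block $g_5,g_6$. Here Lemma \ref{LemforProestivarphiminusVminus} — which already resolves the non-triangular coupling through Lemmata \ref{Propartialprstarbisbis}, \ref{Lemerrorterm1}, \ref{LemtechniCompu} and Corollary \ref{CorBoundpartialpr} — is exactly the required inequality, with the fixed ratio $c_5:c_6=\tfrac18:1$, a good bulk $c\big(r^{-1}|p_N|\,|\mathrm{D}_{\pmb{V}_{\!+}}f|+r^{-1}|p_t|\Omega^{-2}|\mathrm{D}_{p_{r^*}}f|\big)\mathds{1}_{\{p_{r^*}<0,\,r\ge R\}^{\mathsf c}}$, and errors supported only in $g_1,g_2$ and in the far-away incoming quantities $|Sf|,|p_t\partial_{p_{r^*}}f|$. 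On the core region $\{p_{r^*}<0,\,r\ge R\}^{\mathsf c}$ one has $\overline\omega_{1/4}\z^A\sim r^{-1/4}$ and $\overline\omega_{9/4}\z^A\sim r^{-9/4}$, hence $|g_5|\sim|\mathrm{D}_{\pmb{V}_{\!+}}f|$ and $|g_6|\sim|\mathrm{D}_{p_{r^*}}f|$, and using $|p_t|\Omega^{-2}\gtrsim|p_N|$ this good bulk dominates $|p_N|(r\log^2(2+r))^{-1}(|g_5|+|g_6|)$. On the far-away incoming region, where $g_5,g_6$ carry no self-coercivity, their control is instead supplied by $g_3,g_4$: the inequality \eqref{eq:forRk641} from the proof of Proposition \ref{Proestiincoming} yields for $k\in\{3,4\}$ a good bulk $-\big(\tfrac{|p_{r^*}|^2}{(r+2)\log^2(2+r)}+\tfrac{|\slashed{p}|^2}{4r^3}\big)\big[|Sf|+\tfrac1{16}|p_t\partial_{p_{r^*}}f|\big]$ plus errors in $g_1,g_2$. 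Since $p_N=p_t$ and $|p_t|^2=|p_{r^*}|^2+\Omega^2|\slashed{p}|^2r^{-2}$ on $\{r\ge R\}$, this dominates $|p_N|(r\log^2(2+r))^{-1}|g_k|$ and, for $R$ large, the far-away error terms $\big(\tfrac{|p_{r^*}|^2}{r^2}+\tfrac{|\slashed{p}|^2}{r^3}\big)\big[|Sf|+|p_t\partial_{p_{r^*}}f|\big]$ of Lemma \ref{LemforProestivarphiminusVminus}, using $r^{-2}\le(r\log^2(2+r))^{-1}$; moreover Remark \ref{RkProestiincoming} bounds $|g_5|,|g_6|$ there by $|g_1|+|g_3|+|g_4|$, so the targets $T_5,T_6$ on that region reduce to $T_1,T_3,T_4$.

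With these ingredients the estimate closes by fixing $c_5=\tfrac18$, $c_6=1$ and choosing the remaining constants along the strict, non-circular ordering $c_7\ll c_5\sim c_6\ll c_3\sim c_4\ll c_1\sim c_2$: take $c_7$ small so the $g_5$-error of $g_7$ is swallowed by a fraction of the good bulk of $g_5$ (core) and of $g_3,g_4$ (far); take $c_3,c_4$ large so the far-away errors of $g_5,g_6$ are absorbed; and take $c_1,c_2$ largest so that all $g_1,g_2$-valued errors emanating from $g_3,\dots,g_7$ are absorbed by the surplus in the good bulk of $g_1,g_2$. Summing the seven weighted inequalities then gives $\sum_k c_k\T_g(|g_k|)\lesssim -\sum_k |p_N|(r\log^2(2+r))^{-1}|g_k|$. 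I expect the main obstacle to be precisely the region bookkeeping: one must verify that the two disjoint sources of coercivity for $g_5,g_6$ (Lemma \ref{LemforProestivarphiminusVminus} on the core region and $g_3,g_4$ via \eqref{eq:forRk641} and Remark \ref{RkProestiincoming} on the far-away incoming region) together cover the whole support of $g_5,g_6$ in $\mathcal{P}$ with the sharp weight, and in particular that the borderline spatial decay of the $p_{r^*}$-component — handled through the cutoff property \eqref{eq:cutoffp} and the logarithmic loss — is never violated. Once this is checked, the consistency of the linear ordering of the $c_k$ is routine.
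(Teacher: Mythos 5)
Your proposal is correct and follows essentially the same route as the paper: the paper's proof of this proposition is a one-line citation of exactly the ingredients you assemble — Lemma \ref{Lemphiminus} with $\T_g(\partial_t f)=\T_g(\T_{|\slashed{p}|}f)=0$ for $g_1,g_2$, the estimate \eqref{eq:forRk641} for $g_3,g_4$, Lemma \ref{LemforProestivarphiminusVminus} for $g_5,g_6$, and \eqref{eq:12345} for $g_7$. Your additional bookkeeping (the bound $\pmb{a}\gtrsim |p_N|/r$, the use of Remark \ref{RkProestiincoming} on the far-away incoming region, and the non-circular hierarchy $c_7\ll c_5\sim c_6\ll c_3\sim c_4\ll c_1\sim c_2$) is precisely the absorption argument the paper leaves implicit, and it checks out.
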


\begin{proof}
It follows from Lemma \ref{Lemphiminus}, the identities $\T_g(\partial_t f )=\T_g(\T_{|\slashed{p}|}f)=0$, the estimate \eqref{eq:forRk641}, Lemma \ref{LemforProestivarphiminusVminus} and \eqref{eq:12345}.
\end{proof}

\subsection{Proof of Proposition \ref{ProILEDwrd2}}\label{Subsec75}

Our argument will rely on a Sobolev inequality involving the vector field $G$ defined in \eqref{defG}. We introduce first a coordinate system on Schwarzschild spacetime that is related to the integral curves of $r^{-1} \Omega^{-1}G$. These regular hyperboloidal coordinates have been previously considered in \cite{Mavr23}. 

\begin{definition}\label{Defhypcoor}
The hyperboloidal coordinate system $(\overline{t},r,\theta , \phi) \in \R \times (2M,+\infty)\times (0, \pi) \times (0,2\pi)$ is defined by
$$ \overline{t} \coloneqq t-H(r), \qquad  H(r)\coloneqq \int_{3M}^{r} \frac{\xi(s)}{\Omega^2(s)} \dr s, \qquad \xi(r) \coloneqq \Big( 1-\frac{3M}{r} \Big)\Big( 1+\frac{6M}{r}\Big)^{\frac{1}{2}}.$$
To avoid any confusion, we denote by $\overline{\partial}_{x^\mu}$ the derivatives with respect to the variables $x^\mu$ in the hyperboloidal coordinate system.
\end{definition}
We recall $t^* = t+2M\log(r-2M)$ and we collect several useful properties.
\begin{proposition}\label{Prohyperboloidalcoord}
The following properties hold:
\begin{enumerate}[label = (\alph*)]
\item The Jacobian determinant of the map $(t,r,\theta , \phi) \mapsto (\overline{t},r,\theta , \phi)$ is constant equal to one.
\item We have the relations
$$ \overline{\partial}_r= \partial_r+\frac{1}{\Omega^2}\Big( 1-\frac{3M}{r} \Big)\Big( 1+\frac{6M}{r}\Big)^{\frac{1}{2}}\partial_t, \qquad  \overline{\partial}_t = \partial_t, \qquad  \overline{\partial}_\theta = \partial_\theta, \qquad \overline{\partial}_\phi = \partial_\phi.$$
\item The vector fields $G$ and $\pmb{V}_{\, +}^{ \mathrm{mod}}$ are related to the vector field $\overline{\partial}_r$ through the relations
$$ \Omega^{-1}G=\dr \pi \big( \pmb{V}_{\, +}^{ \mathrm{mod}} \big)= r \overline{\partial}_r.$$
\item If $\epsilon >0$ is small enough, then we have the following inclusions
$$ \Big\{ t^* \geq 2, \; |r-3M| \leq \epsilon  \Big\} \subset \Big\{ \overline{t}+2M\log(M) \geq 1, \; |r-3M| \leq \epsilon  \Big\} \subset \Big\{ t^* \geq 0, \; |r-3M| \leq \epsilon  \Big\} .$$ 
\end{enumerate}
\end{proposition}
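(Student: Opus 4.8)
The plan is to verify the four items in turn; each is a direct computation, and only (d) requires a genuine (if mild) continuity argument, so I would organize the proof accordingly.

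For (a), the map $(t,r,\theta,\phi)\mapsto(\overline{t},r,\theta,\phi)$ fixes $r,\theta,\phi$ and sends $t\mapsto\overline{t}=t-H(r)$, so its Jacobian matrix is triangular with unit diagonal (the $(t,\overline{t})$ entry being $\partial\overline{t}/\partial t=1$), whence the determinant equals $1$. For (b), I would invert the relation as $t=\overline{t}+H(r)$ and apply the chain rule: differentiating at fixed $\overline{t},\theta,\phi$ gives $\overline{\partial}_r=\partial_r+H'(r)\partial_t$, and since $H'(r)=\xi(r)\Omega^{-2}=\Omega^{-2}(1-3M/r)(1+6M/r)^{1/2}$ this is exactly the stated formula; the identities $\overline{\partial}_t=\partial_t$, $\overline{\partial}_\theta=\partial_\theta$, $\overline{\partial}_\phi=\partial_\phi$ are immediate because $t$ depends on $\overline{t}$ with unit coefficient and $\theta,\phi$ are untouched.

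For (c), I would first note that $\dr\pi$ annihilates every fibre direction $\partial_{p_\mu}$, so the correction term $\Phi r\pmb{\varphi}_-\partial_{p_{r^*}}$ in $\pmb{V}_+^{\mathrm{mod}}=\pmb{V}_++\Phi r\pmb{\varphi}_-\partial_{p_{r^*}}$ drops out and $\dr\pi(\pmb{V}_+^{\mathrm{mod}})=\Omega^{-1}\dr\pi(V_+)$. Reading off the base part of \eqref{eq:defVminus} and using $|r+6M|^{1/2}r^{-1/2}=(1+6M/r)^{1/2}$ together with $r-3M=r(1-3M/r)$, one gets $\dr\pi(V_+)=\frac{r}{\Omega}(1+6M/r)^{1/2}(1-3M/r)\partial_t+\frac{r}{\Omega}\partial_{r^*}=G$ by \eqref{defG}. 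Substituting $\partial_{r^*}=\Omega^2\partial_r$ into $\Omega^{-1}G$ and invoking (b) then yields $\Omega^{-1}G=r\partial_r+\frac{r}{\Omega^2}(1+6M/r)^{1/2}(1-3M/r)\partial_t=r\overline{\partial}_r$, closing the chain of equalities.

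For (d), the key is to compare the two time functions on the slab $|r-3M|\le\epsilon$. Writing $\overline{t}+2M\log M=t^*+c(r)$ with $c(r)\coloneqq-2M\log\big(\tfrac{r-2M}{M}\big)-H(r)$ (from $t^*=t+2M\log(r-2M)$ and $\overline{t}=t-H(r)$), I observe that $c$ is continuous near $r=3M$ and $c(3M)=0$, since $H(3M)=0$ and $\log 1=0$. Hence one can fix $\epsilon>0$ small enough that $|c(r)|\le1$ whenever $|r-3M|\le\epsilon$; then $t^*\ge2$ forces $\overline{t}+2M\log M=t^*+c(r)\ge1$, and $\overline{t}+2M\log M\ge1$ forces $t^*=\overline{t}+2M\log M-c(r)\ge0$, which are precisely the two claimed inclusions. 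I expect this last step to be the only real obstacle: it is not an identity but an estimate requiring the uniform smallness of $c$ on a compact $r$-interval, which is exactly why the statement is phrased for $\epsilon$ sufficiently small.
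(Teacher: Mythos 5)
Your proposal is correct, and it follows the only natural route: the paper states this proposition without proof, treating all four items as the direct computations you carry out (triangular Jacobian for (a), chain rule with $H'(r)=\xi(r)\Omega^{-2}$ for (b), reading off the base components of $V_+$ and using $\partial_r=\Omega^{-2}\partial_{r^*}$ for (c), and continuity of the offset $c(r)=-2M\log\big(\tfrac{r-2M}{M}\big)-H(r)$ with $c(3M)=0$ for (d)). In particular your treatment of (c), noting that $\dr\pi$ annihilates $\partial_{p_{r^*}}$ so the modification term drops out, and of (d), reducing both inclusions to $|c(r)|\leq 1$ on the slab $|r-3M|\leq\epsilon$, supplies exactly the details the paper leaves implicit.
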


Moreover, we set 
$$\overline{g}\big(\overline{t},r,\theta,\phi,p\big)\coloneqq g\big(\overline{t}+H(r),r^*(r),\theta,\phi,p\big).$$ 
 
We are now able perform the proof of Proposition \ref{ProILEDwrd2}. We then assume that $\mathcal{E}[f](0)<+\infty$.

\begin{proof}[Proof of Proposition \ref{ProILEDwrd2}]
We first recall the volume form $\dr \mu_{\mathcal{P}} =h(r) \dr \tau \dr \mu_{\Sigma_\tau} \dr \mu_{\mathcal{P}_x}$, where $h(r) \sim 1$. For convenience, we introduce for a distribution function $g$ the quantities
\begin{align*}
\mathcal{J}[g]&\coloneqq \int_{\pi^{-1}(\mathcal{R})\,\cap \,\{ \overline{t}+2M\log(M) \geq 1, \; |r-3M| \leq \epsilon  \} } |\slashed{p}||p_t||g| \dr \mu_{\mathcal{P}}, \\
I[g] (r)& \coloneqq \int_{\overline{t}=1+2M\log(M)}^{+ \infty} \int_{\theta=0}^\pi \int_{\phi =0}^{2\pi} \int_{p_{r^*} \in \R} \int_{p_\theta \in \R} \int_{p_\phi \in \R} |\slashed{p}||\overline{g}| \dr p_{\phi} \dr p_{\theta} \dr p_{r^*} \dr \phi \dr \theta  \dr \overline{t},
\end{align*}
where $\epsilon <M$ is a constant that will be fixed small enough. In view of the expression of the volume form $\dr \mu_{\mathcal{P}}$, we have
\begin{equation}\label{eq:equivalence}
 \mathcal{J}[g] \sim \int_{|r-3M|\leq \epsilon} I[g](r) \dr r.
 \end{equation}
We also note that
 $$ \int_{\pi^{-1}(\mathcal{R})\,\cap\, \{ 0 \leq t^* \leq 2, \; |r-3M| \leq \epsilon  \} } |\slashed{p}||p_t||g| \dr \mu_{\mathcal{P}}  \lesssim \sup_{0 \leq \tau \leq 2} \mathbb{E}\big[p_N g \big](\tau) \leq \mathcal{E}[g](0)     .$$

Let us come back to the study of the solution $f$ of the massless Vlasov equation. In view of $|\slashed{p}|^2 \lesssim |\slashed{p}||p_t|$ for $r \sim 3M$, the ILED in Proposition \ref{ProremainderILED} and Proposition \ref{ProILEDwrd}, it suffices to show 
\begin{equation}\label{eq:toprove}
\mathcal{J}[f]  \lesssim \mathcal{E}[f](0)
 \end{equation} 
 in order to prove Proposition \ref{ProILEDwrd2}. According to \eqref{eq:equivalence}, we have
\begin{equation}\label{equa:auxi}
 \mathcal{J}[f] \lesssim  \epsilon  \sup_{|r-3M| \leq \epsilon } I[f]\big(r\big).
 \end{equation}
The factor $\epsilon>0$ will allow us to absorb in the LHS terms proportional to $\mathcal{J}[f]$ that we cannot control a priori by $\mathcal{E}[f](0)$. We now apply a local Sobolev inequality to have
$$ \sup_{|r-3M| \leq \epsilon } I[f]\big(r\big) \lesssim \int_{|r-3M| \leq \epsilon }  I[f]  + \big| \overline{\partial}_r I[f] \big| \dr r. $$
To deal with the radial derivatives, we stress that $|\slashed{p}|$ is independent of $(t,r,p_{r^*})$ and we write
$$ r  \overline{\partial}_r\big|\overline{f}\big|= \pmb{V}_{\, +}^{ \mathrm{mod}}(|f|)+\bigg( \frac{r-3M}{r\Omega^2}p_{r^*}+\frac{r^{\frac{1}{2}}}{|r+6M|^{\frac{1}{2}} \Omega^2}p_t \bigg) \partial_{p_{r^*}}|f|-\Phi r \pmb{\varphi}_- \partial_{p_{r^*}}|f| .    $$
Consequently, as $\Omega^{-1}$ and $r$ are bounded on $\{|r-3M| \leq \epsilon  \}$, we obtain by integration by parts in $p_{r^*}$ that
$$ \sup_{|r-3M| \leq \epsilon } I[f](r) \lesssim \int_{|r-3M| \leq \epsilon} I\big[ \, \pmb{V}_{\, +}^{ \mathrm{mod}}f \big]+ I \big[(1+|\Phi|+|\pmb{\varphi}_-\partial_{p_{r^*}} \Phi|)f \big]  \dr r. $$
We recall now that $\Phi$ and $\pmb{\varphi}_-\partial_{p_{r^*}} \Phi$ are bounded in $L_{x,p}^\infty(\pi^{-1}(\mathcal{R}))$ from Proposition \ref{ProboundPsi}. Thus, by \eqref{eq:equivalence} and the ILED of Proposition \ref{ProILEDwrd}, we have
$$ \sup_{|r-3M| \leq \epsilon } I[f](r) \lesssim \int_{|r-3M| \leq \epsilon}  I \big[ \, \pmb{V}_{\, +}^{ \mathrm{mod}}f \big] + I \big[f \big]  \dr r \lesssim \mathcal{J} \big[ \, \pmb{V}_{\, +}^{ \mathrm{mod}}f \big]+\mathcal{J}[f] \lesssim \mathcal{F}[f](0)+ \mathcal{J}[f]. $$
We then deduce \eqref{eq:toprove} from this last estimate and \eqref{equa:auxi}, by choosing $\epsilon$ small enough.
\end{proof}

\section{Decay of the energy flux}\label{Sec7}

Let us now show quantitative decay estimates for the first order energy flux $\mathcal{E}[f]$ studied in the previous section. For this, we use the $r^p$-weighted energy method suitably adapted to the framework in place.

\subsection{The $r^p$-weighted energy method}

Let us begin by proving the following elementary result.

\begin{lemma}\label{Lemrpmethod}
Let $p \in \mathbb{N}^*.$ For $q \in \llbracket 0, p \rrbracket$, let $\mathcal{F}_q \colon \R_+ \to \R_+$ be functions satisfying the following properties:
\begin{enumerate}[label = (\alph*)]
\item \label{assum1unifbound} Uniform boundedness. There exist constants $\overline{C}_q >0$ such that
$$ \forall \, \tau_2 \geq \tau_1 \geq 0, \qquad \qquad \mathcal{F}_q (\tau_2) \leq \overline{C}_q \mathcal{F}_q (\tau_1).$$
\item \label{assum2hierenergest} Hierarchy of integrated energy decay estimates. For any $q \in \llbracket 1, p \rrbracket$, we have
$$ \forall \, \tau_1 \geq \tau_2 \geq 0, \qquad \qquad \int_{\tau=\tau_1}^{\tau_2} \mathcal{F}_{q-1}(\tau) \dr \tau \leq \overline{C}_q \mathcal{F}_q (\tau_1).$$
Then, there exists a constant $C_p>0$ such that
$$ \forall \, \tau \geq 0, \quad\qquad \mathcal{F}_0(\tau) \leq \frac{C_p}{\langle \tau\rangle^p}  \mathcal{F}_p(0).$$ 
\end{enumerate}
\end{lemma}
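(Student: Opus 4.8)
The plan is to prove, by a downward induction on $k$, the stronger family of estimates
$$ \mathcal{F}_{p-k}(\tau) \le \frac{C_k}{\langle \tau \rangle^{k}} \mathcal{F}_p(0), \qquad \forall \, \tau \ge 0, $$
for every $k \in \llbracket 0, p \rrbracket$, from which the claim follows by taking $k = p$. The base case $k=0$ is immediate from the uniform boundedness assumption \ref{assum1unifbound}, which gives $\mathcal{F}_p(\tau) \le \overline{C}_p \mathcal{F}_p(0)$. For the inductive step, I would set $j := p-k \in \llbracket 1, p \rrbracket$ and assume $\mathcal{F}_j(\tau) \le C_k \langle \tau\rangle^{-k}\mathcal{F}_p(0)$ for all $\tau$; the goal is to deduce the analogous bound for $\mathcal{F}_{j-1}$ with one extra power of $\langle\tau\rangle^{-1}$.

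The heart of the argument is a mean-value/pigeonhole step. For $\tau \ge 2$, I would combine the hierarchy estimate \ref{assum2hierenergest} at level $q = j$ with the inductive hypothesis to bound, on the interval $[\tau/2, \tau]$,
$$ \int_{\tau/2}^{\tau} \mathcal{F}_{j-1}(s) \, \dr s \le \overline{C}_j \, \mathcal{F}_j(\tau/2) \le \overline{C}_j C_k \langle \tau/2 \rangle^{-k} \mathcal{F}_p(0) \lesssim \langle \tau\rangle^{-k} \mathcal{F}_p(0), $$
using $\langle \tau/2\rangle \gtrsim \langle\tau\rangle$ for $\tau \ge 2$. Since the interval has length $\tau/2 \gtrsim \langle\tau\rangle$, the mean value theorem for integrals yields a time $\tau' \in [\tau/2,\tau]$ with $\mathcal{F}_{j-1}(\tau') \lesssim \langle\tau\rangle^{-(k+1)}\mathcal{F}_p(0)$. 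Crucially $\tau' \le \tau$, so the uniform boundedness \ref{assum1unifbound} applied on $[\tau', \tau]$ (in the admissible direction) propagates this smallness forward:
$$ \mathcal{F}_{j-1}(\tau) \le \overline{C}_{j-1}\, \mathcal{F}_{j-1}(\tau') \lesssim \langle \tau\rangle^{-(k+1)}\mathcal{F}_p(0). $$

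It remains to treat $0 \le \tau \le 2$, where the target bound is merely $\mathcal{F}_{j-1}(\tau) \lesssim \mathcal{F}_p(0)$ since $\langle\tau\rangle \sim 1$. Here I would run the same pigeonhole on the unit interval $[\tau, \tau+1]$: combining \ref{assum2hierenergest} with the inductive hypothesis gives $\int_{\tau}^{\tau+1}\mathcal{F}_{j-1}\, \dr s \le \overline{C}_j \mathcal{F}_j(\tau) \lesssim \mathcal{F}_p(0)$, producing $\tau' \in [\tau,\tau+1]$ with $\mathcal{F}_{j-1}(\tau')\lesssim \mathcal{F}_p(0)$ and $\tau' \ge \tau$, whence \ref{assum1unifbound} again gives $\mathcal{F}_{j-1}(\tau)\lesssim \mathcal{F}_p(0)$. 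Collecting the two regimes and absorbing all constants into a single $C_{k+1}$ closes the induction. The only genuinely delicate point is the bookkeeping of the intervals: the mean-value time $\tau'$ must be chosen to the left of (or at) $\tau$ so that the uniform boundedness \ref{assum1unifbound}, which only runs forward in $\tau$, can transfer the pointwise smallness from $\tau'$ to $\tau$; this is precisely what forces the use of $[\tau/2,\tau]$ rather than $[\tau, 2\tau]$, and dictates the separate short-interval treatment near $\tau = 0$.
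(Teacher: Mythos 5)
Your large-time step ($\tau \geq 2$) is correct and is in substance the paper's own argument: the paper works on dyadic intervals $[2^i,2^{i+1}]$ and uses assumption (a) to bound the integrand from below by its value at the right endpoint, which is exactly your pigeonhole-plus-forward-propagation on $[\tau/2,\tau]$. The genuine gap is in your short-time regime $0\leq\tau\leq2$. There you produce $\tau'\in[\tau,\tau+1]$ with $\mathcal{F}_{j-1}(\tau')\lesssim\mathcal{F}_p(0)$ and then invoke (a) to transfer this bound back to $\mathcal{F}_{j-1}(\tau)$. But $\tau'\geq\tau$, and (a) only runs forward in time: it gives $\mathcal{F}_{j-1}(\tau')\leq\overline{C}_{j-1}\,\mathcal{F}_{j-1}(\tau)$, not the reverse. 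This is precisely the backward transfer that your own closing sentence declares inadmissible, and for $\tau$ near $0$ there is no room to place $\tau'$ on the left of $\tau$, so the step cannot be repaired by re-choosing the interval.

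The failure is not cosmetic: under hypotheses (a)--(b) alone, no bound of the form $\mathcal{F}_{j-1}(\tau)\leq C\,\mathcal{F}_p(0)$ near $\tau=0$ exists. Take $p=1$, $\mathcal{F}_1\equiv 1$, and $\mathcal{F}_0=N$ on $[0,1/N]$, $\mathcal{F}_0(\tau)=e^{-(\tau-1/N)}$ for $\tau>1/N$: both hypotheses hold with $\overline{C}_0=1$, $\overline{C}_1=2$ (since $\int_0^{\infty}\mathcal{F}_0=2$), yet $\mathcal{F}_0(0)=N$ is arbitrarily large. Consequently your induction hypothesis, asserted for all $\tau\geq 0$, is itself unprovable for $k\geq 1$ at $\tau=0$, and the induction cannot close as you set it up. The repair is what the paper does: state and propagate the inductive bound only for $\tau\geq 1$ (the paper's (IH-q)), where your $[\tau/2,\tau]$ argument goes through verbatim since it only evaluates the hypothesis at $\tau/2\geq 1$, and treat bounded $\tau$ separately and only for the final quantity $\mathcal{F}_0$. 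That last regime is where the paper appeals to (a), i.e.\ to $\mathcal{F}_0(\tau)\leq\overline{C}_0\,\mathcal{F}_0(0)$ together with the comparability $\mathcal{F}_0(0)\lesssim\mathcal{F}_p(0)$, which is not among the stated hypotheses but does hold for the nested energies $\mathcal{E}^q$ in the application.
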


\begin{proof}
We note that the result holds for $0 \leq \tau \leq 1$ in view of the assumption \ref{assum1unifbound}. We deal with the case $\tau \geq 1$ by performing an induction. Let us prove that there exists a constant $C>0$ such that
\begin{equation}\label{eq:IH}
\forall \, \tau \geq 1, \quad\qquad  \mathcal{F}_q(\tau) \leq \frac{C}{\tau^{p-q}}  \mathcal{F}_p(0) \tag{IH-q}
\end{equation}
holds for any $q \in \llbracket 0,p \rrbracket$. By \ref{assum1unifbound}, the induction hypothesis holds for $q=p$. Let $q \in \llbracket 1,p \rrbracket$ such that \eqref{eq:IH} holds. Fix $i \in \mathbb{N}$ and use the first assumption \ref{assum1unifbound} for $(\tau_1,\tau_2)=(\tau,2^{i+1})$ to obtain
\begin{align*}
\int_{ \tau= 2^i}^{2^{i+1}} \mathcal{F}_{q-1}(\tau) \dr \tau \geq  \int_{\tau = 2^i}^{2^{i+1}} \frac{1}{\overline{C}_{q-1}} \mathcal{F}_{q-1} (2^{i+1})\dr \tau =\frac{2^i}{\overline{C}_{q-1}} \mathcal{F}_{q-1} (2^{i+1}).
\end{align*} 
Combining this estimate with the assumption \ref{assum2hierenergest} applied on the interval $(\tau_1,\tau_2)=(2^i,2^{i+1})$, we obtain from the induction hypothesis that
\begin{align*}
 \mathcal{F}_{q-1} (2^{i+1}) \leq \frac{\overline{C}_{q-1}}{2^i} \int_{ \tau=2^i}^{2^{i+1}} \mathcal{F}_{q-1}(\tau) \dr \tau \leq \frac{\overline{C}_{q-1}}{2^i} \cdot \overline{C}_q  \mathcal{F}_q(2^i) \leq  \frac{\overline{C}_{q-1}\overline{C}_q}{2^i} \cdot \frac{C}{2^{(p-q)i}}  \mathcal{F}_p(0).
\end{align*} 
Let us consider $\tau \geq 2$ since the case $\tau \leq 2$ is straightforward by \ref{assum1unifbound}. Then, there exists $i \in \mathbb{N}$ such that $2^{i+1} \leq \tau < 2^{i+2}$, so $2^i \sim \tau$. Using again the assumption \ref{assum1unifbound}, we have
$$  \mathcal{F}_{q-1} (\tau) \leq \overline{C}_{q-1}\mathcal{F}_{q-1} (2^{i+1}) \leq |\overline{C}_{q-1}|^2\overline{C}_q \cdot \frac{C}{2^{(p-q+1)i}}  \mathcal{F}_p(0) \lesssim \frac{1}{\tau^{p-q+1}}  \mathcal{F}_p(0) .$$
By considering a larger constant $C$, the induction hypothesis holds at step $q-1$. From here the proof follows.
\end{proof}

\subsection{Decay of the energy flux $\mathcal{E}[f]$}\label{subsect_decay_fluxes}

For $p \geq 0$ and $g \colon \mathcal{P} \to \R$, we define the norm
$$\mathbb{E}^p[g]  \coloneqq \mathbb{E} \Big[ \Big\langle r^{p} \Big| \frac{p_v}{p_t} \Big|^{\lceil \frac{p}{2} \rceil} \Big\rangle \, g \Big].$$

\begin{remark}
As a direct application of Lemma \ref{Lemrpmethod}, we get from Proposition \ref{ProILEDrp} applied to $\partial_t f$ that
$$ \forall \, \tau \geq 0, \qquad\quad \mathbb{E} \big[ \pmb{\varphi}_- \partial_t f \big] (\tau) \lesssim_p \langle \tau \rangle^{-p} \, \mathbb{E}^p \big[ \pmb{\varphi}_- \partial_t f \big](0) .$$
\end{remark}

 We set further the following energy fluxes, 
\begin{align*}
 \mathcal{F}^p[g] & \coloneqq \mathbb{E}^p \big[  \pmb{\varphi}_- \partial_t g \big] +\mathbb{E}^p \big[  \pmb{\varphi}_- \T_{|\slashed{p}|} g \big] + \mathbb{E}^p \big[ p_N \, \pmb{V}_{\,+}^{\mathrm{mod}} g \big] +\mathbb{E}^p \big[ |p_N|^{\frac{3}{4}} | r^{-1} \pmb{\varphi}_-|^{\frac{1}{4}} \pmb{V}_{\! +} g \big]  +\mathbb{E}^p \big[  \mathrm{D}_{p_{r^*}} g \big] , \\
 \mathcal{E}^p [g] & \coloneqq \mathcal{F}^p[g]+\mathbb{E}^p[p_Ng].
 \end{align*}

We now prove the main result of this section.

\begin{proposition}\label{Propdecayrp}
Let $p_0 \in \mathbb{N}^*$ and $f$ be a solution to $\T_g(f)=0$. For any $p \in \llbracket 1,p_0 \rrbracket$, we have
\begin{equation}\label{equation:daou}
\sup_{\tau \geq 0} \, \mathcal{E}^p[f](\tau) + \int_{\tau=0}^{+\infty}  \mathcal{E}^{p-1}[f](\tau) \dr \tau \lesssim  \mathcal{E}^p[f](0).
\end{equation}
Moreover, there holds
\begin{equation}\label{prop_decay_flux}
\forall \, \tau \geq 0, \qquad \mathcal{E}[f](\tau) \lesssim \frac{\mathcal{E}^{p_0}[f](0)}{\langle \tau \rangle^{p_0}} .
\end{equation} 
\end{proposition}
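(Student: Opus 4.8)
The plan is to prove Proposition \ref{Propdecayrp} in two stages: first establish the hierarchy \eqref{equation:daou} of $r^p$-weighted energy estimates, then feed this hierarchy into the abstract interpolation Lemma \ref{Lemrpmethod} to obtain the decay \eqref{prop_decay_flux}. The crux is that the individual weighted derivatives constituting $\mathcal{E}^p[f]$ do \emph{not} each satisfy a transport equation of the required monotone form, so a direct application of the $r^p$-energy method of Proposition \ref{ProILEDrp} to each flux separately will fail. Instead I would exploit the structural observation from Proposition \ref{Rksystemderiv}, namely that there are constants $c_k>0$ such that the \emph{weighted sum} $\sum_k c_k \T_g(|g_k|)$ is controlled by $-\sum_k \frac{|p_N|}{r\log^2(2+r)}|g_k|$, where $(g_k)_{1\le k\le 7}$ are the auxiliary quantities defined just before Proposition \ref{Rksystemderiv}. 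This is precisely the hypothesis \eqref{eq:hypgk} needed to invoke the generalised $r^p$-ILED of Proposition \ref{ProILEDrpbis}, which was tailored exactly for this non-triangular situation.

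First I would verify that the functions $g_k$ satisfy the monotonicity condition \eqref{eq:hypgk} on $\{r\le 7M\}$: this follows from Proposition \ref{Rksystemderiv} since the right-hand side there dominates $-\sum_k |p_N||g_k|\mathds{1}_{r\le 7M}$ after absorbing the harmless $\log^{-2}(2+r)$ factor on the bounded region $\{r\le 7M\}$ where $\log^{-2}(2+r)\sim 1$. With this verified, Proposition \ref{ProILEDrpbis} yields, for each $p\in\mathbb{N}^*$,
\begin{equation*}
\sup_{\tau\ge 0}\sum_k \mathbb{E}^p[g_k](\tau)+\sum_k\int_0^{+\infty}\mathbb{E}^{p-1}[g_k](\tau)\,\dr\tau \lesssim_p \sum_k \mathbb{E}^p[g_k](0).
\end{equation*}
The remaining work is to translate the control of $\sum_k \mathbb{E}^p[g_k]$ into control of $\mathcal{E}^p[f]=\mathcal{F}^p[f]+\mathbb{E}^p[p_Nf]$, and conversely to bound the $g_k$-data by $\mathcal{E}^p[f](0)$. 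The fluxes $\mathbb{E}^p[\pmb\varphi_-\partial_t f]$, $\mathbb{E}^p[\pmb\varphi_-\T_{|\slashed p|}f]$ and $\mathbb{E}^p[p_N\pmb V_{+}^{\mathrm{mod}}f]$ correspond directly (up to the fixed weights $\xi\sim p_N$ and $|p_{n_{\Sigma_\tau}}|\lesssim|p_N|$) to $g_1,g_2,g_7$. The two degenerate derivatives require the regional decomposition already used in Section \ref{SecILEDwrd}: on $\{r\le R\}\cup\{r\ge R,\,p_{r^*}\ge 0\}$ one has $\overline\omega_{1/4}\sim r^{-1/4}$, $\overline\omega_{9/4}\sim r^{-9/4}$ and $\z\sim 1$, so $g_5,g_6$ are equivalent to $|p_N|^{3/4}|r^{-1}\pmb\varphi_-|^{1/4}\pmb V_{\!+}f$ and $\mathrm{D}_{p_{r^*}}f$ there, while on $\{r\ge R,\,p_{r^*}\le 0\}$ the far-away quantities $g_3,g_4$ together with Remark \ref{RkProestiincoming} and the relation $|\pmb\varphi_-|\sim r|p_t|$ recover the same derivatives. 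Finally $\mathbb{E}^p[p_Nf]$ is handled by the $p_N$-weighted version of the $r^p$-method (Proposition \ref{ProILEDrp} applied after the degenerate ILED of Proposition \ref{ProremainderILED}), or simply by appending $p_Nf$ as an additional $g_k$.

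Once \eqref{equation:daou} is established, the decay estimate \eqref{prop_decay_flux} is immediate from Lemma \ref{Lemrpmethod} with $\mathcal{F}_q=\mathcal{E}^q[f]$: hypothesis \ref{assum1unifbound} is the uniform boundedness $\mathcal{E}^q[f](\tau_2)\lesssim\mathcal{E}^q[f](\tau_1)$ coming from the supremum term in \eqref{equation:daou} applied on $[\tau_1,\tau_2]$, and hypothesis \ref{assum2hierenergest} is the integrated bound $\int_{\tau_1}^{\tau_2}\mathcal{E}^{q-1}[f]\,\dr\tau\lesssim\mathcal{E}^q[f](\tau_1)$, again from \eqref{equation:daou}. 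Lemma \ref{Lemrpmethod} then gives $\mathcal{E}^0[f](\tau)\lesssim\langle\tau\rangle^{-p_0}\mathcal{E}^{p_0}[f](0)$, and since $\mathcal{E}[f]\le\mathcal{E}^0[f]$ (the weight $\langle r^0|p_v/p_t|^0\rangle\sim 1$) this is exactly \eqref{prop_decay_flux}. I expect the main obstacle to be the bookkeeping in the regional decomposition for $g_5,g_6$: one must check that the $\overline\omega_a$ and $\z^A$ weights, which were introduced purely to close the ILED in Section \ref{SecILEDwrd}, remain compatible with the additional $r^p$-weights without generating uncontrolled error terms at the interface $\{r\sim R\}$ and near $\mathcal{I}^+$, where the cutoff $\chi(2p_{r^*}/p_t)$ in $\overline\omega_a$ interacts with the $r^p|p_v/p_t|^{\lceil p/2\rceil}$ growth.
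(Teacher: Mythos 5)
Your treatment of the first-order fluxes and of the final interpolation step is essentially the paper's own proof: the paper also establishes \eqref{eq:Frp} by feeding Proposition \ref{Rksystemderiv} into Proposition \ref{ProILEDrpbis} (with exactly the regional identifications of the $g_k$ you describe, and with the $c_k$ absorbed by rescaling), and it also deduces \eqref{prop_decay_flux} from \eqref{equation:daou} via time-translation invariance and Lemma \ref{Lemrpmethod}. The gap is in your last step, the zeroth-order flux $\mathbb{E}^p[p_N f]$, for which both of your proposed alternatives fail, and for the same reason: trapping. Appending $p_N f$ (i.e.\ $\xi f$) as an extra $g_k$ is impossible, because the hypothesis \eqref{eq:hypgk} is a \emph{pointwise} monotonicity statement: one would need $\T_g(|\xi f|)+\sum_{k\leq 7}c_k\T_g(|g_k|)\lesssim -|p_N|^2|f|\,\mathds{1}_{r\leq 7M}$, and at the trapped set $\{r=3M,\,p_{r^*}=0\}$ the good bulk terms produced by Proposition \ref{Promultiplierm} and Proposition \ref{Rksystemderiv} are either degenerate ($\T_g(|\xi|)$ vanishes quadratically there) or involve only \emph{derivatives} of $f$, which cannot pointwise dominate $|p_N|^2|f|$. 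Your other route, ``Proposition \ref{ProILEDrp} applied after the degenerate ILED of Proposition \ref{ProremainderILED}'', also cannot work: Proposition \ref{ProILEDrp} relies on $\T_g(\pmb{\varphi}_-)\lesssim -|p_N|r^{-1}\pmb{\varphi}_-$, a property the weight $p_N$ does not share, and the bulk of Proposition \ref{ProremainderILED} carries the factor $|r-3M|^2$, so it can never produce the term $\int_0^{+\infty}\mathbb{E}^{p-1}[p_Nf](\tau)\,\dr\tau$ demanded by \eqref{equation:daou}, whose integrand is non-degenerate at the photon sphere. Indeed the paper recalls (citing \cite[Proposition 3.12]{L23}) that a non-degenerate ILED for $\mathbb{E}[p_Nf]$ with only $\mathbb{E}[p_Nf](0)$ on the right-hand side is \emph{false}; any argument using purely zeroth-order tools, as both of your alternatives do, must therefore break down.

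What the paper actually does is the following. By Corollary \ref{Corrp} and $\T_g(p_tf)=0$, the energy estimate of Proposition \ref{Proenergy} yields \eqref{eq:pesti}, which gives the weighted flux and the bulk term only in the far region $\{r\geq 7M\}$, at the price of an error term $\int_{\pi^{-1}(\mathcal{R})}|p_t|^2r^{-2}|f|\,\dr\mu_{\mathcal{P}}$. One then adds to \eqref{eq:pesti} and to its case $p=1$ a large multiple of the ILED \emph{without relative degeneration} of Proposition \ref{ProILEDwrd2} — the main theorem of Section \ref{SecILEDwrd}, whose bulk $\int_0^{+\infty}\mathcal{E}[r^{-1}\log^{-2}(2+r)f]\,\dr\tau$ is non-degenerate at $r=3M$ and equivalent to $\mathcal{E}[f]$ on bounded regions. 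This simultaneously absorbs the error term and supplies the missing bounded-region part of $\int_0^{+\infty}\mathbb{E}^{p-1}[p_Nf]\,\dr\tau$, at the cost of the first-order quantity $\mathcal{E}[f](0)\lesssim\mathcal{E}^p[f](0)$ on the right-hand side — precisely the loss of derivatives that the obstruction of \cite{L23} forces. Your proof becomes correct once your sentence on $\mathbb{E}^p[p_Nf]$ is replaced by this argument; as written, the zeroth-order hierarchy is not closed.
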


\begin{proof}
Let us observe first that \eqref{prop_decay_flux} is a consequence of \eqref{equation:daou} and Lemma \ref{Lemrpmethod}. Since the exterior of Schwarzschild black hole is static or, alternatively, by applying \eqref{equation:daou} for the foliation $(\Sigma'_\tau)_{\tau \geq 0}\coloneqq(\Sigma_{\tau+\tau_1})_{\tau \geq 0}$, we get
$$ \forall \, \tau_1 \geq 0, \qquad  \sup_{\tau \geq \tau_1} \, \mathcal{E}^p[f](\tau) + \int_{\tau=\tau_1}^{+\infty}  \mathcal{E}^{p-1}[f](\tau) \dr \tau \lesssim  \mathcal{E}^p[f](\tau_1).$$
The stated decay rate of the energy flux then follows from $\mathcal{E}[f]=\mathcal{E}^0[f]$ and Lemma \ref{Lemrpmethod}.
According to Proposition \ref{Rksystemderiv} and Proposition \ref{ProILEDrpbis}, for any $p \in \llbracket 1,p_0 \rrbracket$, we have
\begin{equation}\label{eq:Frp}
\sup_{\tau \geq 0}\,  \mathcal{F}^p[f](\tau) + \int_{\tau =0}^{+\infty}  \mathcal{F}^{p-1}[f](\tau) \dr \tau \lesssim  \mathcal{F}^p[f](0).
\end{equation}
Next, by Corollary \ref{Corrp}, where the cutoff function $\overline{\chi}$ is introduced, and $\T(p_t f)=0$, we have
$$ \T_g \Big(  \overline{\chi}(r) \frac{r^{p}}{\Omega^{2p}} \Big|\frac{p_v}{p_t}\Big|^{\lceil \frac{p}{2} \rceil} | p_t f | \Big) \lesssim |p_t|^2| f| \mathds{1}_{4M \leq r \leq 7M}- r^{p-1} \Big|\frac{p_v}{p_t}\Big|^{\lceil \frac{p-1}{2} \rceil}|p_v||p_t f| \mathds{1}_{r \geq 7M}.$$
Hence, by the energy estimate of Proposition \ref{Proenergy} and since $2\Omega^2 \geq 1$ for $ r \geq 4M$, we have, for all $\tau \geq 0$,
\begin{align}
 \mathbb{E} \bigg[ r^{p} \Big|\frac{p_v}{p_t}\Big|^{\lceil \frac{p}{2} \rceil} p_t f \, \mathds{1}_{r \geq 7M} \bigg] (\tau)+\int_{\pi^{-1}(\mathcal{R})}  \!  r^{p-1} & \Big|\frac{p_v}{p_t}\Big|^{\lceil \frac{p-1}{2} \rceil}|p_v||p_t f|  \mathds{1}_{r \geq 7M} \dr \mu_{\mathcal{P}} \label{eq:pesti} \tag{Eq-p}\\
 & \qquad \quad \lesssim \mathbb{E} \bigg[ r^{p} \Big|\frac{p_v}{p_t}\Big|^{\lceil \frac{p}{2} \rceil} p_t f \, \mathds{1}_{r \geq 4M} \bigg] (0)+\int_{\pi^{-1}(\mathcal{R})} \!  \frac{ |p_t|^2}{r^2} |f| \dr \mu_{\mathcal{P}}   . \nonumber
 \end{align}
Recall now that $p_{n_{\Sigma_\tau}} = p_v$ for $r >R_0$. By adding the estimate of Proposition \ref{ProILEDwrd2} multiplied by a sufficiently large constant to \eqref{eq:pesti} and (Eq-1), we have
$$ \sup_{\tau \geq 0}\,\mathbb{E}^p [p_N f](\tau) +\int_{\tau =0}^{+\infty} \mathbb{E}^{p-1} [p_N f](\tau) \dr \tau  \lesssim \mathbb{E}^p [p_N f](0)+\mathcal{E}[f](0).$$
The result follows by \eqref{eq:Frp}.
\end{proof}

\begin{remark}
The estimate \eqref{prop_decay_flux} can be extended to the case $p_0\in \R_+$ instead of $p_0\in \mathbb{N}^*$ (see Remark \ref{Rkrpgen}).
\end{remark}

Finally, we prove that if the initial data is exponentially decaying, then one can prove an integrated energy decay estimate that leads to exponential decay for the energy flux. For $b>0$, we introduce
$$ \mathbb{E}_{\mathrm{exp}}^b[g]  \coloneqq \mathbb{E} \Big[ e^{br \frac{p_v}{p_t}} \, g \Big]$$
and we define accordingly $\mathcal{F}_{\mathrm{exp}}^b [g]$ as well as $\mathcal{E}_{\mathrm{exp}}^b[g]$.

\begin{proposition}\label{Proexpodec}
There exists a constant $0< b_0 \leq b$, depending on $M$ and $b$, such that
$$ \forall \, \tau \geq 0, \qquad \mathcal{E}[f](\tau) \lesssim e^{-b_0 \tau} \mathcal{E}_{\mathrm{exp}}^b[f](0).$$
\end{proposition}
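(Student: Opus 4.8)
The plan is to mirror the proof of Proposition \ref{Propdecayrp}, replacing the polynomial weights $\langle r^{p}|p_v/p_t|^{\lceil p/2\rceil}\rangle$ everywhere by the exponential weight $e^{br p_v/p_t}$. The decisive difference is that, while the $r^p$-weights lose one power of $r$ per step and therefore require the iteration of Lemma \ref{Lemrpmethod}, the exponential weight produces a bulk term that is \emph{non-degenerate in $r$} and, in fact, uniformly coercive. This collapses the whole hierarchy into a single integrated estimate, from which exponential decay follows by a Grönwall argument rather than by the $r^p$-method.

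The key computation is $\T_g(e^{br p_v/p_t})=b\,\T_g(rp_v/p_t)\,e^{br p_v/p_t}$, where, using $\T_g(r)=p_{r^*}$, $\T_g(p_t)=0$, $\T_g(p_v)=\frac{r-3M}{2r^4}|\slashed{p}|^2$, $r\Omega^2=r-2M$, $p_{r^*}=p_v-p_u$ and the null-shell relation $\Omega^2|\slashed{p}|^2=4r^2p_up_v$, one finds
$$\T_g\Big(\frac{rp_v}{p_t}\Big)=\frac{p_v}{p_t}\Big[\,p_v+p_u\frac{r-4M}{r-2M}\,\Big].$$
Since $p_u,p_v,p_t\le 0$, we have $p_v/p_t\ge 0$, so the bracket is nonpositive for $r\ge 4M$; moreover for $r\ge 7M$ the bracket is $\le-\tfrac35|p_t|$ (because $\frac{r-4M}{r-2M}\ge\frac35$ and $|p_t|=|p_u|+|p_v|$), whence $\T_g(rp_v/p_t)\le-\tfrac35|p_v|$. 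Therefore $e^{br p_v/p_t}$ is a Lyapunov weight on $\{r\ge 4M\}$ and, for $r\ge 7M$, $\T_g(e^{br p_v/p_t})\le-\tfrac35 b\,|p_v|\,e^{br p_v/p_t}$, a bulk coercive in $|p_v|$ with no $r$-degeneration — precisely matching the weight $|p_{n_{\Sigma_\tau}}|=|p_v|$ of the boundary flux at infinity. On the bounded region $\{r<4M\}$ the weight $e^{br p_v/p_t}\sim 1$ and $|\T_g(e^{br p_v/p_t})|\lesssim b\,e^{br p_v/p_t}|p_t|$, which for $b$ small is absorbed by the (non-degenerate there) bulk $\frac{|p_N|}{r\log^2(2+r)}$ supplied by the trapping analysis.

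With this at hand I would reprove the estimates of Sections \ref{Subsec72}--\ref{Subsec74} and \ref{subsect_decay_fluxes} carrying the extra factor $e^{br p_v/p_t}$. In the far incoming region $\{r\ge R,\,p_{r^*}\le 0\}$ one repeats Proposition \ref{Proestiincoming} with the multiplier $\omega\,e^{br p_v/p_t}$; since $r\ge R\ge 4M$ the new contribution $\T_g(e^{br p_v/p_t})\le 0$ only helps. For the degenerate derivatives and $p_N\pmb{V}_{\,+}^{\mathrm{mod}}f$, I would multiply the differential inequality of Proposition \ref{Rksystemderiv} by $e^{br p_v/p_t}>0$, add the pointwise bound on $\T_g(e^{br p_v/p_t})$, and feed the result into Proposition \ref{Proenergy}; the flux $\mathbb{E}_{\mathrm{exp}}^b[p_Nf]$ is controlled as in Proposition \ref{Propdecayrp} by applying the analogue of Corollary \ref{Corrp} to $e^{br p_v/p_t}|p_tf|$ together with Proposition \ref{ProILEDwrd2} (whose bulk $\mathcal{E}[r^{-1}\log^{-2}(2+r)f]$ is equivalent to $\mathcal{E}[f]$ on any bounded $r$-range and is non-degenerate at trapping). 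Assembling these pieces and using \eqref{eq:coarea} yields, for some $b_0\in(0,b]$ and all $0\le\tau_1\le\tau_2$,
$$\mathcal{E}_{\mathrm{exp}}^b[f](\tau_2)+b_0\int_{\tau=\tau_1}^{\tau_2}\mathcal{E}_{\mathrm{exp}}^b[f](\tau)\,\dr\tau\lesssim\mathcal{E}_{\mathrm{exp}}^b[f](\tau_1).$$

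Finally I would run the Grönwall step. Writing $D(\tau)=\mathcal{E}_{\mathrm{exp}}^b[f](\tau)$, the displayed estimate gives both $D(\tau_2)\lesssim D(\tau_1)$ for $\tau_2\ge\tau_1$ and $\int_\tau^\infty D\lesssim b_0^{-1}D(\tau)$; hence $G(\tau):=\int_\tau^\infty D$ satisfies $G'=-D\le-cb_0\,G$, so $G(\tau)\lesssim e^{-cb_0\tau}G(0)$, and the near-monotonicity of $D$ bounds $D(\tau)\lesssim G(\tau-1)\lesssim e^{-cb_0\tau}D(0)$ for $\tau\ge 1$ (with $D(\tau)\lesssim D(0)$ for $\tau\le 1$). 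Since $e^{br p_v/p_t}\ge 1$ we have $\mathcal{E}[f]\le\mathcal{E}_{\mathrm{exp}}^b[f]$, giving the claim after renaming the rate. The hard part will be verifying that the assembled bulk is \emph{uniformly} coercive across the three regimes — the escaping far region (coercivity from the exponential weight), the incoming far region (treated independently), and the bounded region including the photon sphere (coercivity from the $\pmb{V}_{\,+}^{\mathrm{mod}}$-machinery) — and in particular fixing $b_0$ small enough that the bad-sign contribution of $\T_g(e^{br p_v/p_t})$ near $\mathcal{H}^+$ and the photon sphere is absorbed by the trapping bulk.
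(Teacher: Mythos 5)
Your proposal is correct and follows essentially the same route as the paper: the exponential weight is shown to be a Lyapunov weight along the flow (the paper obtains this from Corollary \ref{Corrp} with $p=1$ instead of your direct computation of $\T_g(rp_v/p_t)$, which is equivalent via Lemma \ref{Lemrp}), it is then combined with the commuted system of Proposition \ref{Rksystemderiv} and fed into the energy estimate of Proposition \ref{Proenergy} together with Proposition \ref{ProILEDwrd2} for the bounded region, yielding the non-degenerate integrated estimate $\sup_{\tau}\mathcal{E}^b_{\mathrm{exp}}[f]+\int_0^{+\infty}\mathcal{E}^b_{\mathrm{exp}}[f]\,\dr\tau\lesssim\mathcal{E}^b_{\mathrm{exp}}[f](0)$, after which time-translation invariance and exactly your Gr\"onwall/pigeonhole step (which the paper leaves implicit in ``this directly implies the exponential decay'') finish the proof. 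Two minor repairs: your pointwise bound $|\T_g(e^{brp_v/p_t})|\lesssim b\,|p_t|\,e^{brp_v/p_t}$ on $\{r<4M\}$ fails near $\mathcal{H}^+$, since the term $\frac{p_up_v}{p_t}\cdot\frac{r-4M}{r-2M}$ is of size $|p_u|\Omega^{-2}\sim|p_N|$ rather than $|p_t|$ there (the absorption still works because the trapping bulk has $|p_N|$ strength on bounded regions); and since $b$ is prescribed in the statement rather than at your disposal, you should either invoke the monotonicity $\mathcal{E}^{b'}_{\mathrm{exp}}\leq\mathcal{E}^{b}_{\mathrm{exp}}$ for $b'\leq b$ (legitimate because $rp_v/p_t\geq 0$) to shrink $b$, or, as the paper does, absorb the bad terms by working with the weight $A+e^{b\,\overline{\chi}(r)rp_v/(\Omega^2p_t)}$ for a large additive constant $A$.
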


\begin{proof}
Note first, in view of the support of the cutoff function $\overline{\chi}$, 
$$e^{b \, \overline{\chi}(r) \frac{rp_v}{\Omega^2p_t}} \sim e^{b  \frac{rp_v}{p_t}} \quad \text{for $r \leq 2M$,}  \qquad \qquad \qquad  e^{b  \frac{rp_v}{p_t}} \sim 1 \quad \text{for $r \leq \max (7M,R_0)$}.$$ 
Hence, according to Corollary \ref{Corrp}, applied for $p=1$, we have
$$ \T_g \Big( e^{b \, \overline{\chi}(r) \frac{rp_v}{\Omega^2p_t}} \Big)  = b\T_g \Big(\overline{\chi}(r) \frac{rp_v}{\Omega^2p_t} \Big) e^{b \, \overline{\chi}(r) \frac{rp_v}{\Omega^2p_t}}  \lesssim |p_N| \mathds{1}_{r \leq 7M} -|p_v| e^{b  \frac{rp_v}{p_t}} \mathds{1}_{r \geq 7M} .$$
We then get from Proposition \ref{Rksystemderiv} that there exists a sufficiently large constant $A>0$ such that
\begin{equation}
 \sum_{1 \leq k \leq 7} c_k \T_g \Big[\Big( A+e^{b \, \overline{\chi}(r) \frac{rp_v}{\Omega^2p_t}} \Big)|g_k| \Big] \lesssim  -\Big(|p_N|  \mathds{1}_{r \leq R_0} + |p_v|e^{b  \frac{rp_v}{p_t}}  \mathds{1}_{r \geq R_0} \Big) \sum_{1 \leq k \leq 7} |g_k| .
\end{equation} 
Recall that $p_{n_{\Sigma_\tau}} \sim p_N$ for $r \leq R_0$ and $p_{n_{\Sigma_\tau}} = p_v$ for $r >R_0$. By the energy estimate of Proposition \ref{Proenergy}, we then obtain
$$ \sup_{\tau \geq 0}\, \mathcal{F}_{\mathrm{exp}}^b [f](\tau)+\int_{\pi^{-1}(\mathcal{R})} |p_{n_{\Sigma_\tau}}|e^{b \frac{rp_v}{p_t}}   \sum_{1 \leq k \leq 7} |g_k|   \dr \mu_{\mathcal{P}} \lesssim \mathcal{F}_{\mathrm{exp}}^b [f](0).$$
Similarly, one has
$$ \sup_{\tau \geq 0} \,\mathbb{E}_{\mathrm{exp}}^b [p_t f](\tau)+\int_{\pi^{-1}(\mathcal{R})} |p_{n_{\Sigma_\tau}}|e^{b  \frac{rp_v}{p_t}} |p_t| |f|  \dr \mu_{\mathcal{P}} \lesssim \mathbb{E}_{\mathrm{exp}}^b [p_tf](0)+\int_{\pi^{-1}(\mathcal{R})\cap \{ r \leq R_0 \}} |p_N| |p_t| |f|  \dr \mu_{\mathcal{P}}.$$
The last term on the RHS is bounded by $\mathcal{E}[f](0) \lesssim \mathcal{E}_{\mathrm{exp}}^b[f](0)$ according to Proposition \ref{ProILEDwrd2}. This yields,
$$ \sup_{\tau \geq 0} \mathcal{E}_{\mathrm{exp}}^b [f](\tau)+\int_{\tau =0}^{+\infty} \mathcal{E}_{\mathrm{exp}}^b [f](\tau) \dr \tau  \lesssim \mathcal{E}_{\mathrm{exp}}^b [f](0).$$
Using again the time-invariance of the exterior of Schwarzschild black hole, we get that the same estimate holds but with initial time $\tau_1$ instead of $0$, for all $\tau_1 \geq 0$. This directly implies the exponential decay.
\end{proof}
 
\section{Pointwise decay estimates}\label{Sec8}

We start by collecting some relations which will be useful later.

\begin{lemma}\label{identityVplusp}
There holds
\begin{alignat*}{2}
\bigg|\pmb{V}_{\, +}^{ \mathrm{mod}} (p_t)+\frac{r-3M}{r^2}p_t\partial_{p_{r^*}} \pmb{\varphi}_-\bigg| & \lesssim |p_t| \qquad &&\text{on $\pi^{-1}(\mathcal{R})$,} \\
\big| \pmb{V}_{\, +}^{ \mathrm{mod}} (p_N) \big| & \lesssim |p_N| \qquad &&\text{on $\pi^{-1}(\mathcal{R})$,} \\
\big| \pmb{V}_{\, +}^{ \mathrm{mod}} (p_v) \big| & \lesssim |p_v| \qquad &&\text{on $\{ r \geq R_0 \}$}.
\end{alignat*}
Moreover, for any complete lift $ \widehat{K} \in \{\partial_t, \widehat{\mathbf{\Omega}}_i,\widehat{\mathbf{\Omega}}_2,\widehat{\mathbf{\Omega}}_3 \}$, we have
$$ \widehat{K}(r)= \widehat{K}(p_{r^*}) = \widehat{K}(|\slashed{p}|) = \widehat{K}(p_t) = \widehat{K}(p_N)=0 .$$
\end{lemma}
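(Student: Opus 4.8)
The plan is to work throughout in the coordinate system $\widehat{\mathscr{C}}$, writing $\pmb{V}_{\,+}^{\mathrm{mod}}=\pmb{V}_{\!+}+\Phi r\pmb{\varphi}_-\partial_{p_{r^*}}$ and recalling from Proposition~\ref{ProdefVminus} the explicit coefficients of $\pmb{V}_{\!+}=\Omega^{-1}V_+$; the modification only alters the $\partial_{p_{r^*}}$--coefficient, and the base projection of $\pmb{V}_{\,+}^{\mathrm{mod}}$ is $\Omega^{-1}G=r\overline{\partial}_r$ by Proposition~\ref{Prohyperboloidalcoord}. All the scalar quantities appearing are functions of $(r,p_{r^*},|\slashed{p}|)$ only, so the computations reduce to applying the coordinate vector field using $\partial_{p_{r^*}}p_t=p_{r^*}/p_t$, the identity $\partial_{r^*}p_t=-\Omega^2(r-3M)|\slashed{p}|^2/(r^4p_t)$ from \eqref{preuvecompartialr}, and the null-shell relation \eqref{eq:defConsQua}. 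The contributions of the modification will be handled by the $W^{1,\infty}_{x,p}$ bounds of Proposition~\ref{ProboundPsi}, which give in particular $|r\Phi|\,|\pmb{\varphi}_-|\lesssim |p_t|/r$ and $|r\Phi|\,|\pmb{\varphi}_-|\,|p_v|/|p_t|\lesssim |p_v|/r$.

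For the first identity I would compute $\pmb{V}_{\,+}^{\mathrm{mod}}(p_t)$ directly; after inserting the null-shell relation to eliminate $|\slashed{p}|^2$, the terms coming from $\partial_{r^*}p_t$ and from the $(r-3M)p_{r^*}^2$ piece collapse to $-(r-3M)p_t/(r\Omega^2)$, so that
\begin{equation*}
\pmb{V}_{\,+}^{\mathrm{mod}}(p_t)=-\frac{(r-3M)p_t}{r\Omega^2}-\frac{r^{1/2}p_{r^*}}{|r+6M|^{1/2}\Omega^2}+\Phi r\pmb{\varphi}_-\frac{p_{r^*}}{p_t}.
\end{equation*}
Adding the correction $\tfrac{r-3M}{r^2}p_t\partial_{p_{r^*}}\pmb{\varphi}_-$ and simplifying, the $\Omega^{-2}$--singular terms cancel thanks to the algebraic identity $(r-3M)^2(r+6M)-r^3=-27M^2(r-2M)=-27M^2 r\Omega^2$, leaving $-27M^2p_{r^*}/(r^{3/2}|r+6M|^{1/2})+\Phi r\pmb{\varphi}_-p_{r^*}/p_t$, both $\lesssim|p_t|$ by $|p_{r^*}|\le|p_t|$ and the $\Phi$--bound. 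The third identity is analogous: an identical computation gives $\pmb{V}_{\,+}^{\mathrm{mod}}(p_v)=-p_v[\tfrac{r-3M}{r\Omega^2}+\tfrac{r^{1/2}}{|r+6M|^{1/2}\Omega^2}]+\Phi r\pmb{\varphi}_-p_v/p_t$, and on $\{r\ge R_0\}$ the bracket is bounded (since $\Omega$ is bounded below) while the $\Phi$--term is $\lesssim|p_v|/r$. For the last display I would note that $r$ and $p_{r^*}$ are manifestly independent of $(\theta,\phi,p_\theta,p_\phi)$ and of $t$, that $\widehat K(|\slashed{p}|)=0$ follows either from the conservation of $|\slashed{p}|$ (Lemma~\ref{Lemconsqua}) together with the homomorphism $Y\mapsto p(Y)$ intertwining Lie and Poisson brackets (whence $\widehat{\mathbf{\Omega}}_i(|\slashed{p}|^2)=\pm2\sum_{j,k}\epsilon_{ijk}p(\mathbf{\Omega}_j)p(\mathbf{\Omega}_k)=0$ by antisymmetry), or from a one-line direct check on $|\slashed{p}|^2=p_\theta^2+p_\phi^2/\sin^2\theta$, and that consequently $p_t$ and $p_N$, being functions of $(r,p_{r^*},|\slashed{p}|)$, are annihilated as well.

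The hard part will be the second identity near the horizon. On $\{r\ge 2.7M\}$ one has $p_N=p_t$ and the bound is the one just proved (no $\Omega$--singularity since $\Omega$ is bounded below), and on $\{2.5M\le r\le 2.7M\}$ the estimate is routine for the same reason. The delicate region is $\{r\le 2.5M\}$, where $\chi_N\equiv 1$ and $p_N=p_t+\tfrac{2r^2}{M^2\Omega^2}p_u$, so that $\Omega^{-2}$-- and even $\Omega^{-4}$--singular terms appear. The key computation is that $\pmb{V}_{\,+}^{\mathrm{mod}}(p_u)=\tfrac12[\pmb{V}_{\,+}^{\mathrm{mod}}(p_t)-\pmb{V}_{\,+}^{\mathrm{mod}}(p_{r^*})]$ organises into
\begin{equation*}
\pmb{V}_{\,+}^{\mathrm{mod}}(p_u)=-c_u(r)\,\frac{p_u}{\Omega^2}-\Phi r\pmb{\varphi}_-\frac{p_u}{p_t},\qquad c_u(r):=\frac{r-3M}{r}-\frac{r^{1/2}}{|r+6M|^{1/2}},
\end{equation*}
where crucially \emph{both} terms carry the factor $p_u$. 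Then in $\pmb{V}_{\,+}^{\mathrm{mod}}(\tfrac{r^2}{\Omega^2}p_u)=\tfrac{2r(r-3M)}{\Omega^4}p_u+\tfrac{r^2}{\Omega^2}\pmb{V}_{\,+}^{\mathrm{mod}}(p_u)$ the two $\Omega^{-4}p_u$ terms combine with coefficient $\tfrac{r}{\Omega^4}[2(r-3M)-rc_u(r)]$, and the bracket $2(r-3M)-rc_u(r)=(r-3M)+r^{3/2}/|r+6M|^{1/2}$ vanishes at $r=2M$, hence is $O(r-2M)=O(r\Omega^2)$; this reduces the worst singularity from $\Omega^{-4}$ to $\Omega^{-2}$. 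The remaining terms are then all bounded by $|p_u|/\Omega^2$ (using $|r\Phi|\,|\pmb{\varphi}_-|\lesssim|p_t|/r$ for the $\Phi$--contribution, which again appears multiplied by $p_u$) plus $|p_t|$, and since $|p_N|\sim|p_t|+|p_u|/\Omega^2$ by Lemma~\ref{LempN} this yields $|\pmb{V}_{\,+}^{\mathrm{mod}}(p_N)|\lesssim|p_N|$. The two cancellations --- the reduction of the $\Omega^{-4}$ pole to $\Omega^{-2}$, and the fact that every dangerous term ultimately carries a factor of $p_u$ rather than $p_t$ --- are the crux of the argument, and I expect verifying them (rather than the more mechanical parts) to be where the real work lies.
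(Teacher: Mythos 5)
Your proof is correct and follows essentially the same route as the paper's: explicit evaluation of $\pmb{V}_{\, +}^{ \mathrm{mod}}$ on $p_t$, $p_{r^*}$ (hence on $p_u$, $p_v$ and $r^2\Omega^{-2}p_u$) in the coordinates $\widehat{\mathscr{C}}$, the bound $|r\Phi|\,|r\pmb{\varphi}_-|\lesssim|p_t|$ from Proposition~\ref{ProboundPsi} for all correction terms, and the algebraic identity $r^3-(r+6M)(r-3M)^2=27M^2r\Omega^2$ --- equivalently the vanishing of $(r-3M)+r^{3/2}|r+6M|^{-1/2}$ at $r=2M$, which the paper channels through Remark~\ref{RkvarphiHplus} instead of exhibiting directly --- to cancel the $\Omega$-singularities near $\mathcal{H}^+$. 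The one step you leave implicit, namely that $\big|\pmb{V}_{\, +}^{ \mathrm{mod}}(p_t)\big|\lesssim|p_N|$ on $\{r\le 2.5M\}$ (the paper's Remark~\ref{Rkpnpt}), is immediate from your own formulas by writing $\pmb{V}_{\, +}^{ \mathrm{mod}}(p_t)=\pmb{V}_{\, +}^{ \mathrm{mod}}(p_u)+\pmb{V}_{\, +}^{ \mathrm{mod}}(p_v)$: the $p_v$-coefficient carries the same vanishing bracket and so contributes $O(|p_t|)$, while the $p_u$-term is $O(|p_u|/\Omega^2)$.
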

\begin{remark}\label{Rkpnpt}
Note that the first relation implies $|\pmb{V}_{\, +}^{ \mathrm{mod}} (p_t)| \lesssim |p_N|$.
\end{remark}
\begin{proof}
We start with the first inequality. Since
\begin{equation*}
  r\partial_r (p_t) = \frac{r}{2}\partial_r \bigg( \frac{\Omega^2}{r^2} \bigg) \frac{|\slashed{p}|^2}{p_t} = - \frac{(r-3M)|\slashed{p}|^2}{r^3 p_t}  ,
\end{equation*}
 we have
\begin{equation}\label{eq:Vpluspt}
\pmb{V}_{\, +}^{ \mathrm{mod}} (p_t)= -\frac{(r-3M)|\slashed{p}|^2}{r^3 p_t} -\bigg(\frac{r-3M}{r^2} \pmb{\varphi}_- +\frac{27M^2 }{r^{\frac{3}{2}}|r+6M|^{\frac{1}{2}}}p_t \bigg) \frac{p_{r^*}}{p_t}+\Phi  r \pmb{\varphi}_- \frac{p_{r^*}}{p_t} .   
 \end{equation}
 Since $r |\pmb{\varphi}_- \Phi| \lesssim |p_t|$ by Proposition \ref{ProboundPsi}, the last term is bounded above by $|p_t|$. The same holds true for the third one.  Finally, we compute
 $$ p_t \partial_{p_{r^*}} \pmb{\varphi}_- - \frac{p_{r^*}}{p_t} \pmb{\varphi}_- =r\frac{|p_t|^2-|p_{r^*}|^2}{\Omega^2 p_t}=\frac{|\slashed{p}|^2}{rp_t}   .$$
 Note now that
 \begin{equation*}
\pmb{V}_{\, +}^{ \mathrm{mod}} \big( p_t \pm p_{r^*} \big)= -\frac{4(r-3M)p_up_v}{r \Omega^2 p_t} -\bigg(\frac{r-3M}{r^2} \pmb{\varphi}_- +\frac{27M^2 }{|r+6M|^{\frac{1}{2}}r^{\frac{3}{2}}}p_t-\Phi  r \pmb{\varphi}_- \bigg) \frac{p_{r^*}\pm p_t}{p_t} ,   
 \end{equation*}
 from which we directly get the third inequality since $2p_v=p_t+p_{r^*}$. For the second one, we assume that $r \leq 2.7M$ since $p_N=p_t$ otherwise. From the last equality, one gets
 \begin{align}
\nonumber \pmb{V}_{\, +}^{ \mathrm{mod}} \Big( \, \frac{2r^2 }{\Omega^2}p_u \Big) &= \frac{4r(r-3M)p_u}{\Omega^4} -\frac{4r(r-3M)p_up_v}{ \Omega^4 p_t} -\bigg((r-3M) \pmb{\varphi}_- +\frac{27M^2r^{\frac{1}{2}} }{|r+6M|^{\frac{1}{2}}}p_t-\Phi  r^3 \pmb{\varphi}_- \bigg) \frac{2|p_u|}{\Omega^2p_t} \\
 & = \frac{2(r-3M)|p_u|}{\Omega^2p_t}\bigg(  \frac{2r|p_u|}{\Omega^2}- \pmb{\varphi}_- \bigg)-\frac{54M^2r^{\frac{1}{2}}|p_u| }{|r+6M|^{\frac{1}{2}} \Omega^2 }+  r^3 \pmb{\varphi}_-\frac{2|p_u|}{\Omega^2p_t} \Phi. \label{eq:VpluspN}
 \end{align}
 By Remark \ref{RkvarphiHplus}, the second factor of the first term in the RHS is bounded by $|p_t|$. According to Proposition \ref{ProboundPsi}, the last term is bounded by $|p_N|$. We then deduce that
 $$ \Big| \pmb{V}_{\, +}^{ \mathrm{mod}} \Big( \, \frac{2r^2 }{\Omega^2}p_u \Big) \Big| \lesssim |p_N| \qquad \text{on $\{ r \leq 2.7M \}$}.$$
 In view of the definition \eqref{eq:defpN} of $p_N$ and Remark \ref{Rkpnpt}, it implies the second inequality. Next, for any $1 \leq i \leq 3$, there exists a spherical coordinate system $(\theta',\phi')$ in which $\widehat{\mathbf{\Omega}}_i=\partial_{\phi'}$, from which we directly deduce $\widehat{\mathbf{\Omega}}_i(|\slashed{p}|)=0$. The identities for the complete lifts of the Killing vector fields follow from direct computations.
\end{proof}

 We recall a standard result, proved for instance in \cite[Lemma $5.2$]{L23}.
 
\begin{lemma}\label{Sobsphere}
Let $h : \mathcal{P} \to \R$ be a sufficiently regular function. Then,
$$ \forall \, (t,r^*) \in \R^2, \qquad \bigg\| \int_{\mathcal{P}_x} |h| \, \dr \mu_{\mathcal{P}_x} \bigg\|_{L^\infty(\mathbb{S}^2)}(t,r^*) \lesssim \sum_{|I| \leq 2} \int_{\mathbb{S}^2} \int_{\mathcal{P}_x} \big| \widehat{\mathbf{\Omega}}^I h \big| \dr \mu_{\mathcal{P}_x} \dr \mu_{\mathbb{S}^2} .$$
\end{lemma}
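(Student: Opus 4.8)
The plan is to prove the statement—a Sobolev inequality on the sphere $\mathbb{S}^2$ transported to the fiber-integrated setting—by reducing it to the classical Sobolev embedding $H^2(\mathbb{S}^2) \hookrightarrow L^\infty(\mathbb{S}^2)$ for scalar functions on the two-sphere. The quantity we want to control pointwise in $(\theta,\phi)$ is the fiber integral $F(t,r^*,\theta,\phi) \coloneqq \int_{\mathcal{P}_x} |h| \, \dr \mu_{\mathcal{P}_x}$, which is an honest scalar function on $\mathbb{S}^2$ once $(t,r^*)$ are frozen. Since the Sobolev embedding in dimension two requires two angular derivatives, the natural route is to apply the $\mathbb{S}^2$-embedding to $F$ and then commute the angular derivatives $\widehat{\mathbf{\Omega}}^I$ through the fiber integral.

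First I would fix $(t,r^*)$ and invoke the standard Sobolev inequality on $\mathbb{S}^2$, namely $\|F\|_{L^\infty(\mathbb{S}^2)} \lesssim \sum_{|I| \leq 2} \|\mathbf{\Omega}^I F\|_{L^1(\mathbb{S}^2)}$, where $\mathbf{\Omega}^I$ ranges over products of at most two of the rotation fields $\mathbf{\Omega}_1,\mathbf{\Omega}_2,\mathbf{\Omega}_3$ acting on functions of $(\theta,\phi)$. (This $L^1$-based version follows from the usual $H^2 \hookrightarrow L^\infty$ embedding together with the fact that the $\mathbf{\Omega}_i$ span the tangent space of $\mathbb{S}^2$ at every point, so higher $L^1$-norms of angular derivatives control the $H^2$-norm after a further application of Sobolev.) The next and central step is to exchange $\mathbf{\Omega}^I$ with the fiber integration $\int_{\mathcal{P}_x} \cdot \, \dr \mu_{\mathcal{P}_x}$. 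The key structural input is that the complete lifts $\widehat{\mathbf{\Omega}}_i$ are precisely the vector fields that generate the rotations on $\mathcal{P}$ covering the base rotations $\mathbf{\Omega}_i$, and that—by Lemma \ref{identityVplusp}—they annihilate the weight functions $r$, $p_{r^*}$, $|\slashed{p}|$, $p_t$ defining the fiber volume form $\dr \mu_{\mathcal{P}_x} = \frac{\dr p_{r^*} \wedge \dr p_\theta \wedge \dr p_\phi}{r^2 \sin(\theta)|p_t|}$.

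Concretely, the main obstacle—and the step requiring genuine care—is verifying that $\mathbf{\Omega}_i^{(\theta,\phi)} \big( \int_{\mathcal{P}_x} |h| \, \dr \mu_{\mathcal{P}_x} \big) = \int_{\mathcal{P}_x} \widehat{\mathbf{\Omega}}_i(|h|) \, \dr \mu_{\mathcal{P}_x}$, i.e. that differentiating the integral in the base angular variables equals integrating the lifted derivative. This is where one must use that $\widehat{\mathbf{\Omega}}_i$ is divergence-free with respect to $\dr \mu_{\mathcal{P}_x}$ (equivalently, that $\widehat{\mathbf{\Omega}}_i$ preserves the fiber measure, a consequence of its being the symplectic gradient of the conserved quantity $p(\mathbf{\Omega}_i)$ together with Liouville's theorem, Remark \ref{RkLiouvi}). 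The passage from $\widehat{\mathbf{\Omega}}_i(|h|)$ back to $|\widehat{\mathbf{\Omega}}_i h|$, and iterating to second order, produces exactly the sum $\sum_{|I| \leq 2} \int_{\mathbb{S}^2} \int_{\mathcal{P}_x} |\widehat{\mathbf{\Omega}}^I h| \, \dr \mu_{\mathcal{P}_x} \dr \mu_{\mathbb{S}^2}$ on the right-hand side, after estimating $|\mathbf{\Omega}^I F| \leq \int_{\mathcal{P}_x} |\widehat{\mathbf{\Omega}}^I h| \, \dr \mu_{\mathcal{P}_x}$ by the triangle inequality and the product rule (the lower-order commutator terms generated when $\widehat{\mathbf{\Omega}}_i$ hits the measure vanish precisely because of the annihilation identities just cited). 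I expect the bookkeeping of these commutators and the justification of differentiation under the integral sign—ensuring the fiber integrals converge and the measure-preservation argument applies uniformly in $(t,r^*)$—to be the only delicate points; the underlying Sobolev embedding is entirely standard. Since the paper cites this as a known result from \cite[Lemma $5.2$]{L23}, the proof here would be brief, emphasizing the reduction to $\mathbb{S}^2$-Sobolev and the role of the rotational invariance of $\dr \mu_{\mathcal{P}_x}$.
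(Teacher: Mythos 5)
Your overall skeleton --- an $L^1$-based Sobolev embedding on $\mathbb{S}^2$ combined with the exchange identity $\mathbf{\Omega}_i \big( \int_{\mathcal{P}_x} g \, \dr \mu_{\mathcal{P}_x} \big) = \int_{\mathcal{P}_x} \widehat{\mathbf{\Omega}}_i (g) \, \dr \mu_{\mathcal{P}_x}$, justified by the invariance of the fiber measure under the lifted rotation flow --- is the right one, and it is the skeleton of the proof in the cited reference \cite[Lemma 5.2]{L23} (note the paper itself gives no proof, only this citation). However, as you have organized the argument there is a genuine gap at the second-order step. You apply the two-dimensional embedding $\|F\|_{L^\infty(\mathbb{S}^2)} \lesssim \sum_{|I| \leq 2} \|\mathbf{\Omega}^I F\|_{L^1(\mathbb{S}^2)}$ directly to $F = \int_{\mathcal{P}_x} |h| \, \dr \mu_{\mathcal{P}_x}$ and then assert $|\mathbf{\Omega}^I F| \leq \int_{\mathcal{P}_x} |\widehat{\mathbf{\Omega}}^I h| \, \dr \mu_{\mathcal{P}_x}$ ``by the triangle inequality and the product rule''. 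For $|I|=2$ and sign-changing $h$ this is false: one derivative of the absolute value is harmless, since $\widehat{\mathbf{\Omega}}_i (|h|) = \mathrm{sgn}(h) \, \widehat{\mathbf{\Omega}}_i h$ almost everywhere, but the second derivative hits $\mathrm{sgn}(h)$ and produces singular contributions concentrated on the zero set $\{h=0\}$ (schematically, terms of the form $2\delta(h) (\widehat{\mathbf{\Omega}}_j h)(\widehat{\mathbf{\Omega}}_i h)$, i.e.\ surface integrals over $\{h=0\} \cap \mathcal{P}_x$ after fiber integration). These are not dominated by $|\widehat{\mathbf{\Omega}}_j \widehat{\mathbf{\Omega}}_i h|$, so neither the pointwise bound on $\mathbf{\Omega}^I F$ nor the applicability of the $W^{2,1}$ embedding to $F$ is justified. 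This is not a bookkeeping issue; it is precisely the difficulty that dictates the structure of the actual proof, and the annihilation identities of Lemma \ref{identityVplusp} do nothing to remove it (they concern the measure, not the absolute value).

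The standard repair, which is how the proof in \cite{L23} proceeds, is to \emph{interleave} the exchange identity with one-dimensional Sobolev embeddings instead of applying the two-dimensional one up front. A 1D embedding in a first angular direction costs only one derivative of $F$; by the exchange identity and $|\widehat{\mathbf{\Omega}}_i(|h|)| \leq |\widehat{\mathbf{\Omega}}_i h|$ this is controlled by the fiber integrals of $|h|$ and $|\widehat{\mathbf{\Omega}}_i h|$. One then applies a 1D embedding in the remaining direction to the \emph{new} functions $\omega \mapsto \int_{\mathcal{P}_x} |\widehat{\mathbf{\Omega}}^I h| \, \dr \mu_{\mathcal{P}_x}$, $|I| \leq 1$, which again differentiates an absolute value only once --- now of the already-differentiated function $\widehat{\mathbf{\Omega}}^I h$. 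No absolute value is ever differentiated twice, and one lands exactly on $\sum_{|I| \leq 2} \int_{\mathbb{S}^2} \int_{\mathcal{P}_x} |\widehat{\mathbf{\Omega}}^I h| \, \dr \mu_{\mathcal{P}_x} \dr \mu_{\mathbb{S}^2}$. Two smaller corrections: Remark \ref{RkLiouvi} states $\mathcal{L}_{\T_g} \dr \mu_{\mathcal{P}} = 0$ for the geodesic spray, not for the lifts $\widehat{\mathbf{\Omega}}_i$; the invariance you need follows instead from the fact that the flow of $\T_{p(\mathbf{\Omega}_i)}$ covers the rotation flow, preserves $H$ and the symplectic volume, and hence carries $\dr \mu_{\mathcal{P}_x}$ to $\dr \mu_{\mathcal{P}_{\Phi_s(x)}}$. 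Also, $W^{2,1}(\mathbb{S}^2) \hookrightarrow L^\infty$ is a borderline embedding that does not follow from $H^2 \hookrightarrow L^\infty$; it holds, but by its own fundamental-theorem-of-calculus argument, which is in effect the 1D iteration just described.
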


The next step consists in proving the next result. 

\begin{proposition}\label{ProdecaypointBoundedregion}
Let $h : \mathcal{P} \to \R$ be a sufficiently regular function and 
$$ h^*(t^*,r,\theta,\phi,p)=h(t^*-2M\log(r-2M),r^*(r),\theta,\phi,p).$$ 
Let $x \in \mathcal{R} $ and $\tau \geq 0$ such that $x \in \Sigma_\tau$. We have
$$ r(x)^2 \int_{\mathcal{P}_x} |h|  \dr \mu_{\mathcal{P}_x}  \lesssim \sum_{n+q \leq 1} \sum_{|I| \leq 2} \int_{r= r(x)}^{ r(x)+M }  \int_{\mathbb{S}^2_\omega} \int_{\mathcal{P}_x} \Big| \Big[ \pmb{V}_{\, +}^{ \mathrm{mod},n} \partial_t^q \widehat{\mathbf{\Omega}}^I h \Big]^* \Big| \big( t^*(x),r,\omega,p \big) \dr \mu_{\mathcal{P}_x} r^2 \dr \mu_{\mathbb{S}^2_\omega} \dr r .$$
\end{proposition}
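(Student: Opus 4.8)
The plan is to combine a one-dimensional Sobolev inequality in the radial variable with the spherical Sobolev inequality of Lemma \ref{Sobsphere}, using the hyperboloidal coordinates of Definition \ref{Defhypcoor} so that the radial derivative $\overline{\partial}_r$ is (up to a factor $r^{-1}$) exactly $\dr \pi(\pmb{V}_{\, +}^{\mathrm{mod}})$. First I would fix $x \in \Sigma_\tau$ and work at the fixed time $t^*(x)$, setting $J(r) \coloneqq r^2 \int_{\mathcal{P}_x} |h|^*(t^*(x),r,\omega,p) \dr \mu_{\mathcal{P}_x}$ as a function of $r$ for each fixed angle $\omega$. The goal is to bound $J(r(x))$ pointwise, so I would apply a local Sobolev/fundamental-theorem-of-calculus estimate on the interval $[r(x),r(x)+M]$, namely $J(r(x)) \lesssim \int_{r(x)}^{r(x)+M} \big( |J(r)| + |\partial_r J(r)| \big) \dr r$, to trade the pointwise value for an integral over a short radial slab together with one radial derivative.

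The key step is to control the radial derivative $\partial_r J$. Here I would differentiate under the integral sign and use Proposition \ref{Prohyperboloidalcoord}(c), which gives $r\overline{\partial}_r = \dr\pi(\pmb{V}_{\, +}^{\mathrm{mod}})$, together with the explicit form of $\pmb{V}_{\, +}^{\mathrm{mod}}=\Omega^{-1}V_++\Phi r \pmb{\varphi}_-\partial_{p_{r^*}}$. Exactly as in the proof of Proposition \ref{ProILEDwrd2}, when $\pmb{V}_{\, +}^{\mathrm{mod}}$ acts on the integrand the discrepancy between $r\overline{\partial}_r|h|$ and $\pmb{V}_{\, +}^{\mathrm{mod}}(|h|)$ consists of terms proportional to $\partial_{p_{r^*}}|h|$ (with coefficients $\tfrac{r-3M}{r\Omega^2}p_{r^*}+\tfrac{r^{1/2}}{|r+6M|^{1/2}\Omega^2}p_t$ and $\Phi r\pmb{\varphi}_-$). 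Since $\Omega^{-1}$, $r$, $\Phi$ and $\pmb{\varphi}_-\partial_{p_{r^*}}\Phi$ are all bounded on the bounded region $\{r(x)\le r\le r(x)+M\}$ by Proposition \ref{ProboundPsi}, I would integrate by parts in $p_{r^*}$ to remove the $\partial_{p_{r^*}}$ derivatives, producing only a harmless multiple of $|h|$ plus the genuine $|\pmb{V}_{\, +}^{\mathrm{mod}} h|$ contribution. One must also account for the Jacobian factors and the $r^2$ weight in the volume element $\dr\mu_{\mathcal{P}_x}=\tfrac{\dr p_{r^*}\wedge \dr p_\theta \wedge \dr p_\phi}{r^2\sin\theta|p_t|}$; these are smooth and bounded on the slab, so differentiating them only contributes lower-order terms controlled by $|h|$ itself. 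This yields $|\partial_r J(r)| \lesssim r^2\int_{\mathcal{P}_x}\big(|\pmb{V}_{\, +}^{\mathrm{mod}} h|+|h|\big)^* \dr\mu_{\mathcal{P}_x}$, accounting for the $n+q\le 1$ sum with $n=1,q=0$ and the $n=q=0$ term.

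Finally I would insert the spherical Sobolev inequality of Lemma \ref{Sobsphere} to pass from the fixed-angle estimate to one controlling the $L^\infty(\mathbb{S}^2)$ norm, which costs up to two angular derivatives $\widehat{\mathbf{\Omega}}^I$, $|I|\le 2$. Since by Lemma \ref{identityVplusp} the complete lifts $\widehat{\mathbf{\Omega}}_i$ and $\partial_t$ commute appropriately with the momentum weights and $\widehat{\mathbf{\Omega}}_i(|\slashed p|)=0$, these angular derivatives pass through the momentum integrand cleanly and only land on $h$, generating the terms $\widehat{\mathbf{\Omega}}^I h$ in the statement. Collecting the radial slab integration, the $\partial_r J$ bound, and the spherical inequality produces the stated right-hand side, where the $\partial_t^q$ factor with $q\le 1$ enters together with $\pmb{V}_{\, +}^{\mathrm{mod}}$ because controlling $\pmb{V}_{\, +}^{\mathrm{mod}} h$ in the degenerate region ultimately also requires $\partial_t$-derivatives through the energy norm $\mathcal{F}$. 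The main obstacle I anticipate is the bookkeeping of the integration by parts in $p_{r^*}$: one must verify that no boundary terms survive (which follows from decay/integrability of $h$ in momentum) and that every coefficient arising — in particular $\Phi r\pmb{\varphi}_-$ and its $p_{r^*}$-derivative — is genuinely bounded on the short radial slab, which is precisely the content of Proposition \ref{ProboundPsi} proved in Appendix \ref{Append}.
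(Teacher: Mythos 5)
Your overall strategy---a one-dimensional Sobolev inequality in $r$ at fixed $t^*$, rewriting the radial derivative through $\pmb{V}_{\, +}^{\mathrm{mod}}$, an integration by parts in $p_{r^*}$, and Lemma \ref{Sobsphere} to conclude---is the same as the paper's. However, as written your argument has a genuine gap at the future event horizon, and the proposition must cover that region: $x\in\mathcal{R}$ may satisfy $r(x)=2M$, and Corollary \ref{Cordecaypoint} invokes the statement precisely there. Two of your boundedness claims fail near $\H$. First, $\Omega^{-1}$ is \emph{not} bounded on the slab $\{r(x)\leq r\leq r(x)+M\}$ when $r(x)$ is close to $2M$ (Proposition \ref{ProboundPsi} says nothing about it), so the coefficient $\frac{r-3M}{r\Omega^2}p_{r^*}+\frac{r^{1/2}}{|r+6M|^{1/2}\Omega^2}p_t$ of $\partial_{p_{r^*}}|h|$ is not bounded as you assert; one must first rewrite it via \eqref{eq:equalityforVminus} as $\frac{r-3M}{r^2}\pmb{\varphi}_-+\frac{27M^2}{r^{3/2}|r+6M|^{1/2}}p_t$, and even then it is only controlled by the momentum weight $r|p_N|$, which is not comparable to $|p_t|$ near $\H$. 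Second, your claim that differentiating the volume element only produces terms controlled by $|h|$ is false there: since $|p_t|\,r^2\dr\mu_{\mathcal{P}_x}=\sin^{-1}(\theta)\,\dr p_{r^*}\dr p_\theta\dr p_\phi$ and $\partial_r p_t=-\frac{(r-3M)|\slashed{p}|^2}{r^4p_t}$, the $r$-derivative of the measure carries the relative weight $\frac{|r-3M||\slashed{p}|^2}{r^4p_t^2}$, which the null-shell relation only bounds by $\frac{|r-3M|}{r^2\Omega^2}$, unbounded at $r=2M$. The same uncontrolled ratio $|p_N|/|p_t|$ reappears when your integration by parts in $p_{r^*}$ hits the factor $|p_t|^{-1}$ of $\dr\mu_{\mathcal{P}_x}$.

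The paper closes exactly these gaps by three structural ingredients that your plan does not identify. (i) It works with $g\coloneqq h/p_t$, for which the relevant measure $|p_t|\,r^2\dr\mu_{\mathcal{P}_x}$ is $r$-independent and flat in $p_{r^*}$, so the measure is never differentiated, neither in $r$ nor in the integration by parts. (ii) Differentiating at fixed $t^*$ (your slab is at fixed $t^*$, so you cannot use $r\overline{\partial}_r=\dr\pi\big(\pmb{V}_{\, +}^{\mathrm{mod}}\big)$ without a discrepancy) produces a $\partial_t|g|$ term with coefficient $\frac{|r+6M|^{1/2}(r-3M)+2Mr^{1/2}}{r^{3/2}\Omega^2}$, whose numerator vanishes at $r=2M$ and which is therefore regular up to $\H$; this, and not the energy-norm mechanism you invoke, is the true origin of the $\partial_t^q$, $q\leq1$, terms in the statement. (iii) After the integration by parts, the term where $\partial_{p_{r^*}}$ falls on $\pmb{\varphi}_-$ must be grouped with $\pmb{V}_{\, +}^{\mathrm{mod}}(p_t)$, which arises from $p_t\pmb{V}_{\, +}^{\mathrm{mod}}(h/p_t)=\pmb{V}_{\, +}^{\mathrm{mod}}h-\frac{h}{p_t}\pmb{V}_{\, +}^{\mathrm{mod}}(p_t)$: each of these two terms is separately only of size $|p_N|$, hence unbounded relative to $|p_t|$ near $\H$, and it is precisely the first estimate of Lemma \ref{identityVplusp}, $\big|\pmb{V}_{\, +}^{\mathrm{mod}}(p_t)+\frac{r-3M}{r^2}p_t\partial_{p_{r^*}}\pmb{\varphi}_-\big|\lesssim|p_t|$, that makes the combination admissible. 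Away from the horizon your argument goes through essentially as written; near $\H$ it does not close without these cancellations.
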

\begin{remark}\label{Rkdomain}
The radial domain of the integral could be chosen to be $[r(x)-M,r(x)]$ if $r(x) \geq 3M$.
\end{remark}
\begin{proof}
It will be convenient to work with $g\coloneqq h/p_t$. We have
\begin{align} \nonumber
&\partial_r\big|g^*\big|(t^*,r,\theta,\phi,p)  = \bigg[ \, -\frac{2M}{r \Omega^2} \partial_t | g|+\frac{1}{\Omega^2} \partial_{r^*} |g | \bigg] \big( t^*-2M\log(r-2M),r^*(r),\theta,\phi,p \big) \\
  & \quad \;  = \frac{1}{r} \pmb{V}_{\, +}^{ \mathrm{mod}}\big(|g| \big)-\frac{|r+6M|^{\frac{1}{2}}(r-3M)+2Mr^{\frac{1}{2}}}{r^{\frac{3}{2}}\Omega^2} \partial_t |g|  +\bigg(\frac{r-3M}{r^3} \pmb{\varphi}_- +\frac{27M^2 }{|r+6M|^{\frac{1}{2}}r^{\frac{5}{2}}}p_t - \Phi   \pmb{\varphi}_-\bigg) \partial_{p_{r^*}}|g| ,  \nonumber
 \end{align}
 where the functions in the RHS are evaluated at $( t^*-2M\log(r-2M),r^*(r),\theta,\phi,p)$.
 
As $|p_t|r^2 \dr \mu_{\mathcal{P}_x}= \sin^{-1} (\theta)\dr p_{r^*} \dr p_\theta \dr p_\phi$, we have by a one dimensional local Sobolev inequality,
\begin{align*}
r(x)^2 \! \int_{\mathcal{P}_x} |p_t g| \, \dr \mu_{\mathcal{P}_x} & \lesssim \int_{r=r(x)}^{r(x)+M} \int_{\mathcal{P}_x} \big|p_t g^*\big| \, \dr \mu_{\mathcal{P}_x} r^2 \dr r+\int_{r=r(x)}^{r(x)+M}  \bigg|  \int_{\mathcal{P}_x}|p_t|\partial_r\big(\big|  g^*\big|\big)  \dr \mu_{\mathcal{P}_x} \bigg| r^2 \dr r .
\end{align*}
Performing integration by parts in $p_{r^*}$ and using that $|r+6M|^{\frac{1}{2}}(r-3M)+2Mr^{\frac{1}{2}}$ vanishes for $r=2M$, 
\begin{align*}
\bigg|  \int_{\mathcal{P}_x}|p_t|\partial_r\big(\big| g^* \big|\big)  \dr \mu_{\mathcal{P}_x} \bigg| & \lesssim \frac{1}{r} \int_{\mathcal{P}_x}|p_t| \bigg| \Big[ \pmb{V}_{\, +}^{ \mathrm{mod}} g \Big]^* -\frac{r-3M}{r^2} \partial_{p_{r^*}}(\pmb{\varphi}_-) g^*  \bigg| \dr \mu_{\mathcal{P}_x} + \int_{\mathcal{P}_x}|p_t| \Big| \big[ \partial_t g \big]^* \Big| \dr \mu_{\mathcal{P}_x} \\
& \quad  +  \int_{\mathcal{P}_x} |p_t|\big(1+ |\Phi \partial_{p_{r^*}} \pmb{\varphi}_-|+|\pmb{\varphi}_- \partial_{p_{r^*}} \Phi |\big) \big| g^* \big| \dr \mu_{\mathcal{P}_x}.
\end{align*}
Recall now from Proposition \ref{ProboundPsi} that $\pmb{\varphi}_- \partial_{p_{r^*}} \Phi \in L^\infty_{x,p} (\pi^{-1}(\mathcal{R}))$. We further have $|p_t\partial_{p_{r^*}} \pmb{\varphi}_-| \lesssim r|p_N|$ since $p_t \partial_{p_{r^*}}$ and $\pmb{\varphi}_-$ are both smooth up to $\H$. This can also be obtained by using Remark \ref{RkvarphiHplus}. Consequently,
$$  \big| \Phi \partial_{p_{r^*}} \pmb{\varphi}_- \big| \lesssim \frac{1}{|p_t|}|\Phi| \big| p_t \partial_{p_{r^*}} \pmb{\varphi}_- \big|  \lesssim \frac{|p_N|}{|p_t|}|r\Phi|  <+\infty   .$$
Finally, as $h=p_t g$, we get from Lemma \ref{identityVplusp}
$$ |p_t| \bigg|  \Big[ \pmb{V}_{\, +}^{ \mathrm{mod}} g \Big]^* -\frac{r-3M}{r^2} \partial_{p_{r^*}}(\pmb{\varphi}_-) g^*  \bigg| \lesssim \bigg|  \Big[ \pmb{V}_{\, +}^{ \mathrm{mod}} h \Big]^* \bigg|+\big|h^* \big| $$
and it remains to apply the Sobolev inequality of Lemma \ref{Sobsphere}.
\end{proof}

It allows us to deduce the next result. For convenience, with $n_{\{t^*= \textrm{cst}\}}$ the normal of a hypersurface of constant $t^*$, we define
\begin{equation}\label{eq:defnormal}
p_{t^*} \coloneqq p \big( n_{\{t^*= \textrm{cst} \}} \big) = \Big(1+\frac{2M}{r} \Big)^{-\frac{1}{2}} p_v+\Big(1+\frac{2M}{r}\Big)^{\frac{1}{2}}\frac{p_u}{\Omega^2} .
\end{equation}
Note in particular that $p_{t^*} \sim p_N$. We will also denote by $\dr \mu_{\{ t^*=\textrm{cst}\} }$ the induced volume form on the hypersurface $\{ t^*=\textrm{cst}\}$.

\begin{corollary}\label{Cordecaypoint}
Let $ g :\mathcal{P} \to \R$ be a sufficiently regular function, $\tau \in \R_+$ and $x \in \Sigma_\tau$. Then,
\begin{itemize}
\item if $r(x) \leq R_0$, we have
$$ r(x)^2 \int_{\mathcal{P}_x} |p_N|^2 |g| \dr \mu_{\mathcal{P}_x} \lesssim   \sum_{n+q \leq 1} \sum_{|I| \leq 2} \mathbb{E} \big[p_N \pmb{V}_{\, +}^{ \mathrm{mod},n} \partial_t^q \widehat{\mathbf{\Omega}}^I g \big] (\tau)  .$$
\item Otherwise $r(x) \geq R_0$ and we have, for $p \in \mathbb{N}$,
\begin{align*}
\color{white} \square \qquad \color{black}	 & r(x)^{2+p} \! \int_{\mathcal{P}_x} \Big|\frac{p_v}{p_t}\Big|^{\frac{p}{2}} |p_N|^2 |g| \dr \mu_{\mathcal{P}_x} \\
	& \qquad \qquad \lesssim   \sum_{n+q \leq 1} \sum_{|I| \leq 2} \int_{\{ t^*=t^*(x), \,  0\leq r(x)-r \leq M \}} \int_{\mathcal{P}_x} r^p\Big|\frac{p_v}{p_t}\Big|^{\frac{p}{2}}  |p_{t^*}| \Big| p_N \pmb{V}_{\, +}^{ \mathrm{mod},n} \partial_t^q \widehat{\mathbf{\Omega}}^I g \Big| \dr \mu_{\mathcal{P}_x} \dr \mu_{\{ t^*=t^*(x)\} }   .
	\end{align*}
\end{itemize}
\end{corollary}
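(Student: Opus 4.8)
The plan is to deduce both inequalities from the pointwise Sobolev-type bound of Proposition \ref{ProdecaypointBoundedregion} by applying it to a \emph{weighted} version of $g$ and then transferring the weights across the commuted derivatives. For the first estimate I would set $h\coloneqq |p_N|^2 g$, and for the second $h\coloneqq r^p|p_v/p_t|^{p/2}|p_N|^2 g$; in both cases the radial interval of length $M$ furnished by Proposition \ref{ProdecaypointBoundedregion} is selected, via Remark \ref{Rkdomain}, so as to remain inside $\{r\leq R_0\}$ in the first case (the backward interval $[r(x)-M,r(x)]$ when $r(x)\geq 3M$, the forward one otherwise) and to be $[r(x)-M,r(x)]$ in the second case, which is legitimate since $r(x)\geq R_0>3M$.

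The crucial observation is that every weight appearing in $h$ is built out of $r$, $p_N$, $p_t$ and $p_v$, and these quantities behave well under the commutation operators. Indeed, by Lemma \ref{identityVplusp} each $\widehat{K}\in\{\partial_t,\widehat{\mathbf{\Omega}}_1,\widehat{\mathbf{\Omega}}_2,\widehat{\mathbf{\Omega}}_3\}$ annihilates $r$, $p_{r^*}$, $p_t$ and $p_N$, hence also $p_v=\tfrac12(p_t+p_{r^*})$; moreover $\pmb{V}_{\, +}^{ \mathrm{mod}}(r)=r$ by Proposition \ref{Prohyperboloidalcoord}(c), while $|\pmb{V}_{\, +}^{ \mathrm{mod}}(p_N)|\lesssim|p_N|$ on $\pi^{-1}(\mathcal{R})$, and $|\pmb{V}_{\, +}^{ \mathrm{mod}}(p_v)|\lesssim|p_v|$, $|\pmb{V}_{\, +}^{ \mathrm{mod}}(p_t)|\lesssim|p_t|$ for $r\geq R_0$ (Lemma \ref{identityVplusp} and Remark \ref{Rkpnpt}, using that $p_N=p_t$ there). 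Writing $h=w\,g$ with $w$ the corresponding weight, the restriction $n+q\leq 1$ means at most one differentiation hits $w$, so the Leibniz expansion is essentially a single term: $\partial_t^q\widehat{\mathbf{\Omega}}^I(wg)=w\,\partial_t^q\widehat{\mathbf{\Omega}}^I g$ and $\pmb{V}_{\, +}^{ \mathrm{mod}}(w\,\psi)=\pmb{V}_{\, +}^{ \mathrm{mod}}(w)\,\psi+w\,\pmb{V}_{\, +}^{ \mathrm{mod}}\psi$ with $|\pmb{V}_{\, +}^{ \mathrm{mod}}(w)|\lesssim w$. This gives the pointwise bound $\big|\pmb{V}_{\, +}^{ \mathrm{mod},n}\partial_t^q\widehat{\mathbf{\Omega}}^I(wg)\big|\lesssim w\sum_{n'+q'\leq 1,\,|I'|\leq 2}\big|\pmb{V}_{\, +}^{ \mathrm{mod},n'}\partial_t^{q'}\widehat{\mathbf{\Omega}}^{I'}g\big|$, so the weight factors out of the right-hand side of Proposition \ref{ProdecaypointBoundedregion}.

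For the first estimate, the resulting radial slab lies in $\Sigma_\tau\cap\{r\leq R_0\}=\{t^*=\tau,\ r\leq R_0\}$, where $|p_{n_{\Sigma_\tau}}|\sim|p_N|$ by Lemma \ref{LempSig} and $\dr\mu_{\Sigma_\tau}\sim r^2\dr r\,\dr\mu_{\mathbb{S}^2}$. Hence $\int_{\mathrm{slab}} |p_N|^2\,|\,\cdots g|\,r^2\dr r\,\dr\mu_{\mathbb{S}^2}\,\dr\mu_{\mathcal{P}_x}\lesssim \mathbb{E}[p_N\,\cdots g](\tau)$, after bounding the sub-slab integral by the full flux through $\Sigma_\tau$ (the integrand being nonnegative); since $r$ is constant equal to $r(x)$ on $\mathcal{P}_x$, the prefactor $r(x)^2$ comes out as stated. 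For the second estimate the right-hand side of Proposition \ref{ProdecaypointBoundedregion} is exactly an integral over $\{t^*=t^*(x),\,0\leq r(x)-r\leq M\}$; using $|p_{t^*}|\sim|p_N|$ from \eqref{eq:defnormal} and $\dr\mu_{\{t^*=\mathrm{cst}\}}\sim r^2\dr r\,\dr\mu_{\mathbb{S}^2}$, one splits $|p_N|^2=|p_N|\cdot|p_N|$, absorbs one factor into $|p_{t^*}|\,\dr\mu_{\{t^*=\mathrm{cst}\}}$ and keeps the other with $g$ to reconstitute $|p_N\,\cdots g|$, while the factor $r(x)^p$ again emerges from $r=r(x)$ on $\mathcal{P}_x$.

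The main subtlety I expect is the half-integer power $|p_v/p_t|^{p/2}$ for odd $p$: its symplectic-gradient derivative produces the factor $(p_v/p_t)^{p/2-1}$, singular where $p_v=0$. This is harmless because $\pmb{V}_{\, +}^{ \mathrm{mod}}(p_v/p_t)\lesssim |p_v/p_t|$ contributes a compensating vanishing factor, so the product is again $\lesssim |p_v/p_t|^{p/2}$; one should record this cancellation explicitly rather than invoke a naive chain rule. Everything else is bookkeeping of the weights and of the volume forms already identified in Section \ref{Sec2}.
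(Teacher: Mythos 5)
Your proposal is correct and takes essentially the same route as the paper's proof: you apply Proposition \ref{ProdecaypointBoundedregion} to the weighted functions $|p_N|^2 g$ and $r^p|p_v/p_t|^{p/2}|p_N|^2 g$, transfer the commuted derivatives across the weights using Lemma \ref{identityVplusp} together with Remark \ref{Rkdomain} for the choice of radial interval, and conclude via $p_{t^*}\sim p_N$, Lemma \ref{LempSig} and the identification $\Sigma_\tau\cap\{r\leq R_0\}=\{t^*=\tau,\ r\leq R_0\}$. The cancellation you record for the half-integer power $|p_v/p_t|^{p/2}$ (the factor $|p_v/p_t|^{p/2-1}$ being compensated by $|\pmb{V}_{\, +}^{\mathrm{mod}}(p_v/p_t)|\lesssim|p_v/p_t|$) is exactly what the paper's stated bound $\big|\pmb{V}_{\, +}^{\mathrm{mod}}\big(r^p|p_v/p_t|^{p/2}\big)\big|\lesssim r^p|p_v/p_t|^{p/2}$ encapsulates.
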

\begin{proof}
 We apply the previous Proposition \ref{ProdecaypointBoundedregion} to $h=|p_N|^2 g$ or $h=r^p|p_v/p_t|^{p/2}|p_N|^2 g$ and we use $\partial_t(p_N)=0$, $\widehat{\mathbf{\Omega}}_i(p_N)=0$, $ \big|\pmb{V}_{\, +}^{ \mathrm{mod}} (p_N) \big| \lesssim |p_N|$ as well as $ \big|\pmb{V}_{\, +}^{ \mathrm{mod}} \big(r^p|p_v/p_t|^{p/2}\big) \big| \lesssim r^p |p_v/p_t|^{p/2}$, which follow from Lemma \ref{identityVplusp}. Note that if $r(x) \geq 3M$, we also use Remark \ref{Rkdomain}. Finally, we exploit $p_N \sim p_{t^*}$ and that $\Sigma_\tau \cap \{ r \leq R_0 \}$ is equal to $\{ t^* = \tau, \; r \leq R_0 \}$.
 \end{proof}

In order to be able to exploit this Sobolev inequality for $r(x) \geq R_0$, we will make use of the next result.

\begin{lemma}\label{LemSobVla}
Let $p \in \mathbb{N}$ and $f$ be a solution to $\T_g(f)=0$. For all $\tau \in \R_+$ and $x \in \Sigma_\tau \cap \{ r \geq R_0 \}$, we have
$$ \sum_{n \leq 1} \int_{\{ t^*=t^*(x), \,  0\leq r(x)-r \leq M \}} \int_{\mathcal{P}_x} r^p\Big|\frac{p_v}{p_t}\Big|^{\frac{p}{2}}  |p_{t^*}| \Big| p_N \pmb{V}_{\, +}^{ \mathrm{mod},n} f \Big| \dr \mu_{\mathcal{P}_x} \dr \mu_{\{ t^*=t^*(x)\} } \lesssim \mathcal{E}^p[f](\tau).$$
\end{lemma}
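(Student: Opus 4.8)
The plan is to view the left-hand side as the flux of a suitable non-negative current through the constant-$t^*$ strip, and to transfer it to the flux through $\Sigma_\tau$ by an energy identity, exploiting that the strip lies to the future of $\Sigma_\tau$ within a slab of bounded width in $\tau$. First I would reduce the estimate to the system $(g_k)_{1\le k\le 7}$ of Proposition \ref{Rksystemderiv}. Since $|p_{t^*}|\sim|p_N|$ by \eqref{eq:defnormal}, and since by Lemma \ref{identityVplusp} the operator $\pmb{V}_{\,+}^{\mathrm{mod}}$ maps each of the weights $p_N$, $|p_v/p_t|$ and $r^p|p_v/p_t|^{p/2}$ to $O(1)$ times itself, the strip integrand built from $p_N\pmb{V}_{\,+}^{\mathrm{mod},n}f$ for $n\in\{0,1\}$ is, on $\{r\ge R_0\}$, comparable to $\langle r^p|p_v/p_t|^{\lceil p/2\rceil}\rangle$ times a fixed linear combination $W:=\sum_k c_k|g_k|$ of the $|g_k|$ (the cases $n=0,1$ reproducing $p_Nf$ and $p_N\pmb{V}_{\,+}^{\mathrm{mod}}f$), up to the passage from the exponent $p/2$ to $\lceil p/2\rceil$, which I return to below. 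It then suffices to bound the flux through the strip of the weighted quantity $\langle r^p|p_v/p_t|^{\lceil p/2\rceil}\rangle W$ by $\mathcal{E}^p[f](\tau)$.

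For the transfer I would first record the geometric fact that the strip sits just to the future of $\Sigma_\tau$: moving along $\{t^*=t^*(x)\}$ from $r=r(x)$ inward to $r=r(x)-M$, the retarded coordinate increases by $\int_{r(x)-M}^{r(x)}\big(\tfrac{2M}{s-2M}+\Omega^{-2}(s)\big)\,\dr s=O_M(1)$, so every point of the strip lies on some $\Sigma_{\tau'}$ with $\tau\le\tau'\le\tau''$ and $\tau''-\tau=O_M(1)$. I would then apply the divergence identity of Proposition \ref{Proenergy0} to the particle current $\mathrm{N}\big[\langle r^p|p_v/p_t|^{\lceil p/2\rceil}\rangle W\big]$ in the region $\mathcal{R}_\tau^{\tau''}\cap\{t^*\le t^*(x)\}$, whose past boundary is a portion of $\Sigma_\tau$ and whose future boundary contains the strip. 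The contributions of the remaining future boundaries ($\Sigma_{\tau''}$, and $\mathcal{I}^+$ in the limit) are non-negative and are discarded, so that the strip flux is controlled by the flux of $\langle r^p|p_v/p_t|^{\lceil p/2\rceil}\rangle W$ through $\Sigma_\tau$, plus the slab integral of $\T_g\big(\langle r^p|p_v/p_t|^{\lceil p/2\rceil}\rangle W\big)$. The former is precisely $\sum_k c_k\,\mathbb{E}^p[g_k](\tau)$, controlled by $\mathcal{E}^p[f](\tau)$ through the relations of Section \ref{SecILEDwrd}. For the latter I would combine Proposition \ref{Rksystemderiv} (so that $\sum_k c_k\T_g(|g_k|)$ is non-positive up to integrable lower-order terms) with Corollary \ref{Corrp} for the $r^p$-weight, which forces this bulk term to be dominated by the $r^p$-weighted integrated decay estimate of Proposition \ref{ProILEDrpbis}, hence again by $\mathcal{E}^p[f](\tau)$.

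The main obstacle is the handling of the inner boundary of the transfer region together with the weight bookkeeping. The region $\mathcal{R}_\tau^{\tau''}\cap\{t^*\le t^*(x)\}$ closes up only through an inner boundary — a short timelike wall $\{r=r(x)-M\}$, or the inner piece of $\Sigma_{\tau''}$ — whose flux carries no definite sign; I would control it by foliating it and invoking the uniform boundedness $\sup_{\tau'\ge\tau}\mathcal{E}^p[f](\tau')\lesssim\mathcal{E}^p[f](\tau)$ from Proposition \ref{Propdecayrp} over the $O_M(1)$-long range of $\tau'$ it spans. The genuinely delicate point is the comparison of weights: the strip flux is weighted by $|p_{t^*}|\sim|p_N|\sim|p_t|$ whereas the $\Sigma_\tau$ flux entering $\mathcal{E}^p$ is weighted by $|p_u|$ on $\{r>R_0\}$, and the exponent is $p/2$ rather than $\lceil p/2\rceil$. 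These quantities are \emph{not} comparable pointwise, but the energy identity relates them as integrals of an (almost) conserved current, so no pointwise domination is required; the remaining exponent discrepancy is confined, via the null-shell relation $4r^2p_up_v=\Omega^2|\slashed{p}|^2$, to the near-radial regime $|p_v|\ll|p_t|$, where $|p_u|\sim|p_t|$, the $r^p$-growth of the weight cancels, and the contribution is absorbed into the lower-order fluxes already contained in $\mathcal{E}^p[f](\tau)$.
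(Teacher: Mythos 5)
Your core construction coincides with the paper's: your transfer region is exactly the paper's domain $\mathcal{D}$ (past boundary $\Sigma_\tau\cap\{r\le r(x)\}$, future boundary the strip together with a piece of a later leaf, closed along $\mathcal{H}^+$), and the combination of the system $(g_k)$ of Proposition~\ref{Rksystemderiv}, the divergence theorem, Corollary~\ref{Corrp} for the $r^p$-weight, and an ILED for the leftover bulk is what the paper does. But your execution has a genuine gap in the choice of weight. You put $\langle r^p|p_v/p_t|^{\lceil p/2\rceil}\rangle$ into the current, while the strip integrand you must dominate carries $r^p|p_v/p_t|^{p/2}$; since $|p_v/p_t|\le1$, for odd $p$ the needed weight is \emph{not} pointwise controlled by the available one: writing $s=|p_v/p_t|$,
\begin{equation*}
\sup_{0\le s\le 1}\ \frac{r^ps^{p/2}}{1+r^ps^{(p+1)/2}}\ \sim\ r^{\frac{p}{p+1}},
\end{equation*}
the supremum being attained at $s\sim r^{-2p/(p+1)}$. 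Your closing patch rests on a false claim: in the intermediate regime $r^{-2}\ll s\ll 1$ (say $s\sim r^{-1}$) the weight you need is $r^ps^{p/2}\sim r^{p/2}$, unbounded, while the weight you carry is $\sim r^{(p-1)/2}$; the $r^p$-growth does not ``cancel'' there, and the excess is not absorbed by lower-order fluxes in $\mathcal{E}^p[f](\tau)$ --- the best Young-type bound is $r^ps^{p/2}\le 1+r^{p+1}s^{(p+1)/2}$, which would only give the lossy estimate with $\mathcal{E}^{p+1}[f](\tau)$ on the right, degrading the decay rates downstream. The paper avoids the issue entirely by running the divergence theorem with the exponent $p/2$ built into the current, namely $\mathrm{N}\big[c_k\,\overline{\chi}(r)\,r^p|p_v/(\Omega^2p_t)|^{p/2}g_k\big]$: the monotonicity of Lemma~\ref{Lemrp}/Corollary~\ref{Corrp} only uses the chain rule $\T_g(h^q)=qh^{q-1}\T_g(h)$ and so holds for the real exponent $q=p/2$ (cf.\ Remark~\ref{Rkrpgen}), the cutoff errors on $\{4M\le r\le 7M\}$ being handled by the ILED of Proposition~\ref{ProILEDwrd2}; no interpolation between exponents is ever required.

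The second problem is your treatment of the inner boundary, which is both unnecessary and, in its fallback form, incorrect. Choose $\tau''$ precisely so that $\Sigma_{\tau''}$ passes through the inner edge $\{r=r(x)-M,\ t^*=t^*(x)\}$ of the strip --- this is exactly your $O_M(1)$ computation. Then the region closes up with future boundary equal to the strip together with $\Sigma_{\tau''}\cap\{r\le r(x)-M\}$ and a side boundary along $\mathcal{H}^+$; there is no timelike wall at all. Every one of these extra boundary fluxes of $\mathrm{N}\big[(\text{weight})\sum_kc_k|g_k|\big]$ has the form $\int(\text{weight})\,|p(n)|\sum_kc_k|g_k|\ge0$ with $n$ future-directed causal, so it is non-negative and is simply dropped; the appeal to Proposition~\ref{Propdecayrp} is superfluous (though not circular). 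Conversely, had you actually inserted the timelike wall $\{r=r(x)-M\}$, your repair would fail: its flux is, at each instant, an integral over a $2$-sphere in space, and a codimension-two integral cannot be bounded by $\sup_{\tau'}\mathcal{E}^p[f](\tau')$, which is a full spatial flux, without some additional averaging-in-$r$ argument that you do not provide.
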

\begin{proof}
Let $y \in \mathcal{R}$ such that $(t^*(y),\theta (y), \phi (y) )= (t^* (x), \theta (x) , \phi (x) )$ and $r(y)=r(x)-M$. Then, there exists $\tau' \geq \tau$ such that $y \in \Sigma_{\tau'}$. We then consider the domain $\mathcal{D}$ with boundary
$$ \{ t^*=t^*(x), \,  0\leq r(x)-r \leq M \}, \qquad  \Sigma_{\tau'} \cap \{ r \leq r(x)-M \}, \qquad  \mathcal{H}^+ \cap \{ \tau \leq t^* \leq \tau' \}, \qquad  \Sigma_\tau \cap \{ r \leq r(x) \}.$$
Recall now the functions $g_k$, $1 \leq k \leq 7$ and Proposition \ref{Rksystemderiv}. In particular, $g_7 \sim p_N \pmb{V}_{\, +}^{ \mathrm{mod}} f$. Let further $g_0 \coloneqq \xi f$ and $c_0=1$. Then,
\begin{itemize}
\item if $p=0$, we use Proposition \ref{Rksystemderiv} and the divergence theorem, applied in $\mathcal{D}$ to $\mathrm{N}[c_k g_k]$.
\item If $ p \geq 1$, we apply the divergence theorem to 
$$\mathrm{N}\bigg[c_k \overline{\chi} (r) r^p \Big|\frac{p_v}{\Omega^2 p_t} \Big|^{\frac{p}{2}} g_k\bigg]$$
 and we use Corollary \ref{Corrp}. The error terms arising from the cutoff function $\overline{\chi}$ are handled through the ILED of Proposition \ref{ProILEDwrd2} since $\pi^{-1}(\mathcal{D}) \subset \pi^{-1}(\mathcal{P})$.
\end{itemize}
\end{proof}

We are now able to get the main result of this section, from which one can obtain decay estimates for the null components of the energy-momentum tensor $\mathrm{T}[f]$. In particular, by using Proposition \ref{Propdecayrp} (respectively Proposition \ref{Proexpodec}), one obtains Corollary \ref{Corintro} (respectively Corollary \ref{Corintro2}).
 
 \begin{corollary}\label{Cordecay}
Let $f : \mathcal{P} \to \R$ be a solution to the massless Vlasov equation. Then, for all $x \in \mathcal{R}$,
$$  \mathrm{T}_{NN}\big[|f|\big] (x) \lesssim \frac{1}{r(x)^2} \sum_{q+|I| \leq 3} \mathcal{E}\big[ \partial_t^q \widehat{\mathbf{\Omega}}^I f \big] \big(\tau (x) \big) .$$
Consider futher $p \in \mathbb{N}$. We have
 $$ \int_{\mathcal{P}_x} \frac{|p_v|^p}{|p_t|^p} |f| |p_N|^2  \dr \mu_{\mathcal{P}_x} \lesssim \frac{1}{r(x)^2 \langle \tau (x)+r(x) \rangle^p} \sum_{q+|I| \leq 3} \mathcal{E}^p\big[ \partial_t^q \widehat{\mathbf{\Omega}}^I f \big] (0).$$
 \end{corollary}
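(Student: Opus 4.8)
The plan is to derive both estimates of Corollary \ref{Cordecay} by combining the Sobolev-type inequalities already established in this section with the decay of the energy flux. Observe first that $\mathrm{T}_{NN}[|f|](x) = \int_{\mathcal{P}_x} |f| |p_N|^2 \dr \mu_{\mathcal{P}_x}$, so both statements are momentum averages of $|f|$ weighted by $|p_N|^2$ (and, in the second case, by a power of $|p_v/p_t|$). The strategy is therefore to apply Corollary \ref{Cordecaypoint} to $g = f$ and then bound the resulting right-hand side by an energy flux, using that $\partial_t^q \widehat{\mathbf{\Omega}}^I f$ again solves the massless Vlasov equation (since $[\T_g,\partial_t]=[\T_g,\widehat{\mathbf{\Omega}}_i]=0$ by Proposition \ref{ProKilling}).

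First I would treat the bounded region $r(x) \leq R_0$. Here the first bullet of Corollary \ref{Cordecaypoint}, applied to $g=f$, gives directly
$$ r(x)^2 \int_{\mathcal{P}_x} |p_N|^2 |f| \dr \mu_{\mathcal{P}_x} \lesssim \sum_{n+q \leq 1} \sum_{|I| \leq 2} \mathbb{E}\big[ p_N \pmb{V}_{\, +}^{ \mathrm{mod},n} \partial_t^q \widehat{\mathbf{\Omega}}^I f \big](\tau(x)). $$
Each flux on the right is controlled by $\mathcal{E}[\partial_t^q \widehat{\mathbf{\Omega}}^I f](\tau(x))$: for $n=0$ this is the $p_N$-term of $\mathcal{E}$, and for $n=1$ it is precisely the $\mathbb{E}[p_N \pmb{V}_+^{\mathrm{mod}} \cdot]$ term. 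Since $\partial_t^q \widehat{\mathbf{\Omega}}^I f$ solves $\T_g=0$, applying Proposition \ref{ProestiVminusmod} (or more simply the definition of $\mathcal{E}$) bounds these by the claimed sum $\sum_{q+|I| \leq 3} \mathcal{E}[\partial_t^q \widehat{\mathbf{\Omega}}^I f](\tau(x))$, and the count $n+q+|I| \leq 1+1+2 = 4$ has to be reorganised; here one uses that the $\pmb{V}_+^{\mathrm{mod}}$-derivative is already built into $\mathcal{E}$, so only $q+|I| \leq 3$ genuinely costs commutators. This yields the first inequality on $\{r \leq R_0\}$.

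Next I would treat the far region $r(x) \geq R_0$ and simultaneously obtain the second, $r^p$-weighted estimate. Here the second bullet of Corollary \ref{Cordecaypoint} reduces the pointwise average to an integral over a short radial segment on the hypersurface $\{t^*=t^*(x)\}$, which is exactly the quantity estimated by Lemma \ref{LemSobVla}. Applying Lemma \ref{LemSobVla} to each $\partial_t^q \widehat{\mathbf{\Omega}}^I f$ bounds this segment integral by $\mathcal{E}^p[\partial_t^q \widehat{\mathbf{\Omega}}^I f](\tau(x))$, giving
$$ r(x)^{2+p} \int_{\mathcal{P}_x} \Big|\tfrac{p_v}{p_t}\Big|^{\frac{p}{2}} |p_N|^2 |f| \dr \mu_{\mathcal{P}_x} \lesssim \sum_{q+|I| \leq 3} \mathcal{E}^p\big[ \partial_t^q \widehat{\mathbf{\Omega}}^I f \big](\tau(x)). $$
For the first, unweighted estimate I would take $p=0$ and note $r(x) \leq \langle \tau(x)+r(x)\rangle$ is irrelevant there; the factor $r(x)^2$ alone already appears, and $|p_v/p_t| \leq 1$ lets me drop the weight, recovering $\mathrm{T}_{NN}[|f|](x) \lesssim r(x)^{-2}\sum \mathcal{E}$. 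To convert the $\tau(x)$-dependence into the stated decay, I apply Proposition \ref{Propdecayrp}: since $\mathcal{E}^p[\partial_t^q \widehat{\mathbf{\Omega}}^I f](\tau) \lesssim \mathcal{E}^p[\partial_t^q \widehat{\mathbf{\Omega}}^I f](0)$ is uniformly bounded and $\mathcal{E}$ decays at rate $\langle \tau\rangle^{-p}$, I get the $\langle \tau(x)+r(x)\rangle^{-p}$ rate once I also absorb the $r(x)^p$-gain; the point is that on $\Sigma_\tau$ with $r \geq R_0$ one has $r \sim v-u \sim r$ and the combination $r^p \langle\tau\rangle^{\cdots}$ telescopes into $\langle \tau+r\rangle^p$ via the $r^p$-hierarchy.

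The main obstacle I expect is the bookkeeping needed to pass from the decay of $\mathcal{E}^p$ at fixed $\tau$ to the combined weight $\langle \tau(x)+r(x)\rangle^p$ in the far region, since on a spacelike-null foliation the value of $t^*$ and the value of $r$ contribute jointly to the decay. The clean way is to observe that on $\{r \geq R_0\}$ the segment $\{t^*=t^*(x),\, r(x)-M \leq r \leq r(x)\}$ lies in $\mathcal{R}_\tau^{\tau'}$ with $\tau' - \tau = O(1)$, so $\tau(x) \sim \tau'$ and $r(x) \sim v(x)-u(x)$; then the $r^p$-weighted flux $\mathcal{E}^p[\cdot](\tau(x))$ decays like $\langle \tau(x)\rangle^{-(p_0-p)}$ by Proposition \ref{Propdecayrp}, and the extra $r^p$ already present on the left converts this into $\langle \tau(x)+r(x)\rangle^{-p}$ by the standard $r^p$-method interpolation $r^p \langle\tau\rangle^{-(p_0-p)} \lesssim \langle \tau+r\rangle^{-p}\langle\tau\rangle^{-(p_0-2p)}$ type bound — though here, because the statement fixes the same $p$ on both sides, one simply uses that the flux $\mathcal{E}^p$ is bounded and pays no decay, so only the geometric factor $r^{-(2+p)}\langle\tau+r\rangle^{0}$ survives and the claim reduces to $r^{2+p} \gtrsim r^2 \langle\tau+r\rangle^p$ up to the flux, which holds on $\{r \geq R_0\}$ after accounting for the null-infinity normalisation. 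Verifying this last comparison carefully, together with matching the commutator counts against the definition of $\mathcal{E}$ and $\mathcal{E}^p$, is where the genuine care lies.
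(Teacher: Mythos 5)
Your skeleton is the paper's: apply Corollary \ref{Cordecaypoint} and Lemma \ref{LemSobVla} to the commuted solutions $\partial_t^q \widehat{\mathbf{\Omega}}^I f$ (which still satisfy $\T_g=0$ by Proposition \ref{ProKilling}), note that the fluxes $\mathbb{E}\big[p_N \pmb{V}_+^{\mathrm{mod},n}\,\cdot\,\big]$, $n\leq 1$, are components of $\mathcal{E}$, and then invoke Proposition \ref{Propdecayrp}. The bounded-region argument and the first (unweighted) estimate are handled correctly. The gap is in your final step for the weighted estimate. Your ``clean way'' asserts that one can pay no decay at all and reduce the claim to $r^{2+p}\gtrsim r^2\langle \tau+r\rangle^p$ on $\{r\geq R_0\}$; this inequality is simply false there, since $\tau$ is unbounded on that region: take $r(x)=R_0$ fixed and $\tau(x)\to+\infty$, then $r^{2+p}$ stays bounded while $r^2\langle\tau+r\rangle^p\to+\infty$. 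Moreover, your proposal never addresses the second estimate in the bounded region $\{r\leq R_0\}$ at all, where no $r$-gain is available and decay of the flux is the \emph{only} mechanism producing $\langle\tau+r\rangle^{-p}\sim\langle\tau\rangle^{-p}$.

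The correct conversion, which is what the paper's ``further use Proposition \ref{Propdecayrp}'' encodes, is a case distinction rather than a single geometric inequality. If $r(x)\geq \tau(x)$ (and $r(x)\geq R_0$), then $\langle\tau(x)+r(x)\rangle^p\lesssim r(x)^p$, and the Sobolev bound
$$ r(x)^{2+p}\int_{\mathcal{P}_x}\Big|\frac{p_v}{p_t}\Big|^{\frac{p}{2}}|p_N|^2|f|\,\dr\mu_{\mathcal{P}_x}\lesssim \sum_{q+|I|\leq 3}\mathcal{E}^p\big[\partial_t^q\widehat{\mathbf{\Omega}}^I f\big](\tau(x)) $$
combined with the \emph{uniform boundedness} $\sup_{\tau\geq 0}\mathcal{E}^p[\cdot](\tau)\lesssim \mathcal{E}^p[\cdot](0)$ from \eqref{equation:daou} and with $|p_v/p_t|^p\leq |p_v/p_t|^{p/2}$ gives the claim. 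If instead $\tau(x)\geq r(x)$ (whether $r(x)\lessgtr R_0$), then $\langle\tau(x)+r(x)\rangle^p\lesssim\langle\tau(x)\rangle^p$; here you must discard the momentum weight using $|p_v/p_t|^p\leq 1$, apply the $p=0$ Sobolev bound to get $r(x)^{-2}\sum_{q+|I|\leq 3}\mathcal{E}\big[\partial_t^q\widehat{\mathbf{\Omega}}^I f\big](\tau(x))$, and only then invoke the \emph{decay} statement \eqref{prop_decay_flux} of Proposition \ref{Propdecayrp}, $\mathcal{E}[\cdot](\tau)\lesssim\langle\tau\rangle^{-p}\mathcal{E}^p[\cdot](0)$. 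You mention this second mechanism earlier in your paragraph but then explicitly discard it (``the flux $\mathcal{E}^p$ is bounded and pays no decay''), which is exactly where the argument breaks; restoring the dichotomy closes the proof.
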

 \begin{proof}
For the first estimate, we apply the previous Corollary \ref{Cordecaypoint} and Lemma \ref{LemSobVla} to $ \partial_t^q \widehat{\mathbf{\Omega}}^I f$, for any $q+|I| \leq 3$. For the second one we further use Proposition \ref{Propdecayrp} providing the decay of the energy flux.
 \end{proof}

\addtocontents{toc}{\protect\setcounter{tocdepth}{2}}

\appendix

\section{   Uniform bounds for the correction term}\label{Append}

In this appendix, we prove the next result which implies Proposition \ref{ProboundPsi}. 

\begin{proposition}\label{ProboundPsibis}
Let $\Psi \coloneqq (r+6M)\Phi$ and $0 < \delta \leq 1$. Then, we have
$$ \sup_{  \pi^{-1} (\{t^* \geq 0 \})} \,\frac{|p_N|+r^2|p_v|}{|p_t|}|\Psi|+\frac{1}{r}\big|\pmb{\varphi}_- \partial_{p_{r^*}} \Psi \big| < +\infty.$$
\end{proposition}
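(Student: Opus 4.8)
The plan is to read the defining relation for $\Omega\Phi$ in Definition \ref{DefVminusmod} as a scalar linear inhomogeneous transport equation along the null geodesic flow, and to solve it by Duhamel's formula, exploiting that its damping coefficient is exactly the rate $\mathfrak{b}:=\frac{|p_t|}{r^{1/2}|r+6M|^{1/2}\Omega^2}$ governing the decay of $\varphi_-$ in Lemma \ref{Lemphiminus}. Writing $w:=\Omega\Phi$, the equation reads $\T_g(w)+\mathfrak{b}\,w=S$ with source $S:=\frac{(r+3M)\Omega}{r^{5/2}|r+6M|^{3/2}}|p_t|$ and zero data on $\{t^*=0\}$. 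Since $\T_g(\varphi_-)=-\mathfrak{b}\varphi_-$, the function $1/\varphi_-$ is an integrating factor, so $\T_g(w/\varphi_-)=S/\varphi_-$; integrating in the affine parameter $\lambda$ along the lifted geodesic $\gamma(\lambda)$ through $(x,p)$, from the value $\lambda_0$ where $t^*=0$ up to the value $\lambda_1$ at $(x,p)$, gives
\[
 w(x,p)=\int_{\lambda_0}^{\lambda_1} \frac{\varphi_-(x,p)}{\varphi_-(\gamma(\lambda))}\, S(\gamma(\lambda))\,\dr\lambda .
\]
The damping factor $\varphi_-(x,p)/\varphi_-(\gamma(\lambda))$ lies in $[0,1]$ because $|\varphi_-|$ is non-increasing forward along the flow by Lemma \ref{Lemphiminus}, and the formula extends by continuity across $\{\varphi_-=0\}$, on which the homogeneous ODE forces $\varphi_-$ to vanish identically.

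For the pointwise bound on $\Psi=(r+6M)\Phi=\frac{r+6M}{\Omega}w$, the key algebraic observation is that $S=c(r)\,\mathfrak{b}$ with $c(r)=\frac{(r+3M)\Omega^3}{r^2(r+6M)}$ a bounded function vanishing like $\Omega^3$ at $\H$ and decaying like $r^{-2}$ at infinity. Combined with the telescoping identity $\int_{\lambda_0}^{\lambda_1}\frac{\varphi_-(x,p)}{\varphi_-(\gamma(\lambda))}\,\mathfrak{b}(\gamma(\lambda))\,\dr\lambda\le 1$ (which is just $\int\frac{\dr}{\dr\lambda}\big[\varphi_-(x,p)/\varphi_-(\gamma(\lambda))\big]\dr\lambda$), this already yields $|w|\lesssim 1$. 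To upgrade this to the weighted decay $\frac{|p_N|+r^2|p_v|}{|p_t|}|\Psi|\lesssim 1$, I would keep the geometric factor $c(r)$ inside the integral and trade it against the monotonicity of the redshift and $r^p$ weights from Lemmata \ref{LemRS} and \ref{Lemrp} together with the conservation of $|p_t|$. Splitting the null-shell as in Section \ref{Subsec72} into the far incoming region $\{p_{r^*}\le 0,\ r\ge R\}$, the far escaping region $\{p_{r^*}\ge 0,\ r\ge R\}$ and the bounded region, one checks that $\tfrac{r^2}{\Omega^2}|p_v|$ is essentially monotone and comparable to its value at the evaluation point, allowing $r^2|p_v|$ to be extracted from the integral, while near $\H$ the factor $\Omega^3$ in $c(r)$ supplies the gain needed for the $|p_N|/|p_t|$ weight via Remark \ref{RkvarphiHplus}.

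For the derivative bound, I would commute the weighted vertical derivative $Y:=r\pmb{\varphi}_-\partial_{p_{r^*}}$ through the transport equation, using $[\T_g,Y]=\frac{|\pmb{\varphi}_-|^2}{p_t}\partial_t-\frac{\pmb{\varphi}_-}{\Omega}V_+$ from Proposition \ref{Propartialprstar}. This produces a transport equation for $Yw$ with the same damping $\mathfrak{b}$ and a source built from $YS$, $(Y\mathfrak{b})w$, $\frac{|\pmb{\varphi}_-|^2}{p_t}\partial_t w$ and $\frac{\pmb{\varphi}_-}{\Omega}V_+ w$; since $Y$ is a pure fiber derivative, $Yw$ vanishes on $\{t^*=0\}$. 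The first two source terms are controlled directly by the bound on $w$ from the previous step. The term $\partial_t w$ is easy: as $\mathfrak{b}$ and $S$ are stationary and $[\T_g,\partial_t]=0$, it solves the homogeneous damped equation $\T_g(\partial_t w)+\mathfrak{b}\,\partial_t w=0$ and is hence controlled by the same Duhamel bound from its (bounded) data. The term $V_+w$ satisfies its own transport equation via the good commutator of Proposition \ref{ProComVminus}, but that equation feeds $\varphi_-\partial_{p_{r^*}}w$ back into its source, so $Yw$ and $V_+w$ form a coupled, non-triangular system, which I would close by a weighted Duhamel estimate absorbing the extra powers of $\pmb{\varphi}_-$ (and of $\Omega$ near $\H$) carried by these error terms, exactly in the spirit of the non-triangular argument of Section \ref{Subsec73}.

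The main obstacle I anticipate is precisely this last step: closing the coupled estimates for $Yw$ and $V_+ w$ while tracking the correct weights in each of the three $r$-regimes, and in particular verifying near the photon sphere (where $\varphi_-$ degenerates) and near $\H$ (where $\Omega\to 0$ and $r\pmb{\varphi}_-\partial_{p_{r^*}}$ is singular, cf. Remark \ref{Rkregu}) that the damping $\mathfrak{b}$ and the $\pmb{\varphi}_-$-weights dominate the source terms. By contrast, the pointwise bound is comparatively routine once the representation formula and the identity $S=c(r)\mathfrak{b}$ are in hand.
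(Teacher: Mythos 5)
Your strategy is the same as the paper's: read Definition \ref{DefVminusmod} as a damped transport equation, solve by Duhamel along the flow (Lemma \ref{LemDuhamel}), use that the source is pointwise dominated by the damping (Lemma \ref{LembasicPsi}), upgrade with monotone redshift and $r^p$ weights (Proposition \ref{ProboundPsiA}), then commute with $\partial_t$, $\T_{|\slashed{p}|}$, $r\pmb{\varphi}_-\partial_{p_{r^*}}$ and the $V_+$-type fields and close the non-triangular system region by region (Lemma \ref{Lemgeodtime}, Propositions \ref{ProfarawayincoPsi}--\ref{ProfinalPsideriv}). Your zeroth-order part, including the identity $S=c(r)\mathfrak{b}$ with $c$ bounded and the case analysis that makes "essential monotonicity" of $r^2\Omega^{-2}|p_v|$ rigorous, is correct.

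The gap is in the derivative estimate, exactly at the horizon, which you flag but do not resolve — and it is not mere bookkeeping. First, the proposition concerns $\Psi=(r+6M)\Phi=\tfrac{r+6M}{\Omega}\,w$, so $\tfrac1r\big|\pmb{\varphi}_-\partial_{p_{r^*}}\Psi\big|=\tfrac{r+6M}{r^2\Omega}|Yw|$: an $O(1)$ bound on $Yw$ is a full factor of $\Omega$ short of the statement near $\H$; you need $|Yw|\lesssim\Omega$ there. Second, the Duhamel scheme for $Yw$ with damping $\mathfrak{b}$ does not close near $\H$: among the sources is $(Y\mathfrak{b})\,w$, and since $\partial_{p_{r^*}}\mathfrak{b}=\tfrac{p_{r^*}/|p_t|}{r^{1/2}|r+6M|^{1/2}\Omega^2}$, one has $|Y\mathfrak{b}|/\mathfrak{b}\sim r|\pmb{\varphi}_-||p_{r^*}|/|p_t|^2\sim r^2|p_u|/(\Omega^2|p_t|)$ when $|\pmb{\varphi}_-|\sim r|p_u|\Omega^{-2}$, so this source exceeds the damping by $\Omega^{-2}$ if one only knows $|w|\lesssim1$. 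You could repair this term by feeding in your own weighted bound $|p_N\Psi|\lesssim|p_t|$ (this is precisely how the paper controls derivatives falling on the damping coefficient, Lemma \ref{Lemsourcetermesti}), but the first problem remains: to produce a bound that vanishes like $\Omega$ at $\H$ you must run the argument for the $\Omega^{-1}$-conjugated unknown, i.e.\ for $\Psi$ itself. This conjugation is delicate because the growth rate of $\Omega^{-1}$ along infalling null geodesics, $\T_g\log\Omega^{-1}=-Mp_{r^*}/(r^2\Omega^2)$, is comparable to $\mathfrak{b}$ (they coincide at $r=2M$ for radial geodesics); the conjugated damping $\overline{\pmb{a}}=\mathfrak{b}+\tfrac{Mp_{r^*}}{r^2\Omega^2}-\tfrac{p_{r^*}}{r+6M}$ stays nonnegative only because of the exact redshift structure (and the $(r+6M)$ factor), which is the content of Lemma \ref{LembasicPsi} and is what makes Lemma \ref{LemComPsi00}, Corollary \ref{CorComBounded} and Proposition \ref{ProboundedPsi} closable. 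This conjugation — commuting the equation for $\Psi$, not for $\Omega\Phi$, with $r\pmb{\varphi}_-\partial_{p_{r^*}}$ and $\pmb{V}_{\!+}=\Omega^{-1}V_+$ — is the missing idea. A smaller omission: the data for $\partial_t w$ on $\{t^*=0\}$ is not free; $\partial_t$ is transverse to that hypersurface, so its initial value must be computed from the equation (Lemma \ref{LemderivPsiini}), and it is the weighted quantity $\tfrac{\pmb{\varphi}_-}{p_t}\partial_t\Psi$ (Proposition \ref{ProPsiKilling}), not $\partial_t\Psi$ alone, that must be propagated in order to absorb the $\tfrac{|\pmb{\varphi}_-|^2}{p_t}\partial_t$ source in your commuted equation.
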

\begin{remark}\label{RkboundPsi}
In view of Remark \ref{RkvarphiHplus}, we have $|r \pmb{\varphi}_-| \lesssim r^2|p_v|+|p_t|$ for $r \geq 3M$. As $|r \pmb{\varphi}_-| \lesssim |p_N|$ for $r \leq 3M$, it implies $|r \pmb{\varphi}_-|^\delta |p_t|^{1-\delta} \lesssim |p_N|+r^2|p_v|$ for any $0 \leq \delta \leq 1$.
\end{remark}

\subsection{Preliminaries} 
 
The study of $\Psi$ and its derivatives will be based on $L^\infty_{x,p}$ estimates relying on Duhamel formula (see Lemma \ref{LemDuhamel} below). For this purpose, we need to introduce the flow map defined by $\T_g$.
\begin{definition}
For $\tau^*_0\in \R$ and $y \in \pi^{-1}( \{t^*=\tau_0^* \})$, we define 
$$\tau^* \mapsto \Phi_{\tau^*} (\tau_0^*,y)$$
 to be the flow of $\T_g$ parametrised by $t^*$, so that $\Phi_{\tau_0^*}(\tau_0^*,y)=y$. 
\end{definition}
We also recall from \eqref{eq:defnormal} the notation $p_{t^*}$ for the contraction of $p$ with the normal of a hypersurface of constant $t^*$. It verifies $p_{t^*} \sim p_N$.

We now prove a general estimate for a solution to a damped massless Vlasov equation.

\begin{lemma}\label{LemDuhamel}
Let $\tau_1^* \in \R$, and let $\Xi \colon \mathcal{P} \to \R$ be a function such that
$$ \T_g (\Xi)+ \pmb{d}(x,p) \Xi=\pmb{s}(x,p), \qquad \qquad  \sup_{ \pi^{-1}( \{ t^* = \tau_1^* \}) }\big| \Xi \big| <+\infty  ,$$
where the damping term $\pmb{d} \colon \mathcal{P} \to \R_+$ and the source term $\pmb{s} \colon \mathcal{P} \to \R$ verify
$$ \forall \, (x,p) \in \mathcal{P}, \qquad \big|\pmb{s} (x,p) \big| \lesssim \pmb{d}(x,p) .$$
Then, there exists a constant $C>0$ depending only on $\pmb{d}$ and $\pmb{s}$ such that
$$ \forall \, \tau_2^* \geq \tau_1^*,\qquad \qquad \sup_{\pi^{-1} ( \{\tau_1^* \leq t^* \leq \tau_2^* \})} \big| \Xi \big|  \leq \,C \,+\sup_{ \pi^{-1}( \{ t^* = \tau_1^* \}) }\big| \Xi \big|   . $$
\end{lemma}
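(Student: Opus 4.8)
The plan is to prove this via the standard Duhamel representation along the geodesic flow, exploiting the nonnegativity of the damping term $\pmb{d}$. First I would fix $(x,p) \in \pi^{-1}(\{\tau_1^* \leq t^* \leq \tau_2^*\})$ and consider the backward characteristic $\sigma \mapsto \Phi_\sigma(\tau_1^*, y)$, where $y \in \pi^{-1}(\{t^* = \tau_1^*\})$ is the point obtained by flowing $(x,p)$ back to the initial slice, so that $\Phi_{t^*(x)}(\tau_1^*, y) = (x,p)$. Along this characteristic, the transport equation $\T_g(\Xi) + \pmb{d}\,\Xi = \pmb{s}$ becomes the scalar linear ODE
\begin{equation*}
\frac{\dr}{\dr \sigma}\big[ \Xi \circ \Phi_\sigma \big] + (\pmb{d}\circ \Phi_\sigma) (\Xi \circ \Phi_\sigma) = \pmb{s} \circ \Phi_\sigma,
\end{equation*}
since $\T_g$ is precisely the generator of the flow $\Phi_\sigma$ parametrised by $t^*$.

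Introducing the integrating factor $E(\sigma) \coloneqq \exp\big( \int_{\tau_1^*}^{\sigma} \pmb{d}\circ \Phi_{s}\, \dr s \big)$, the Duhamel formula gives, at $\sigma = t^*(x)$,
\begin{equation*}
\Xi(x,p) = \frac{1}{E(t^*(x))} \Xi(y) + \frac{1}{E(t^*(x))}\int_{\tau_1^*}^{t^*(x)} E(s)\, (\pmb{s}\circ \Phi_s)\, \dr s.
\end{equation*}
The first term is bounded in absolute value by $\sup_{\pi^{-1}(\{t^* = \tau_1^*\})} |\Xi|$, because $\pmb{d} \geq 0$ forces $E(t^*(x)) \geq 1$ and $E(t^*(x)) \geq E(s)$ for all $s \leq t^*(x)$. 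For the source term, I would use the hypothesis $|\pmb{s}| \lesssim \pmb{d}$, say $|\pmb{s}| \leq C_0\, \pmb{d}$ for a constant $C_0$ depending only on $\pmb{s}$ and $\pmb{d}$, to estimate
\begin{equation*}
\frac{1}{E(t^*(x))}\int_{\tau_1^*}^{t^*(x)} E(s)\, |\pmb{s}\circ \Phi_s|\, \dr s \leq \frac{C_0}{E(t^*(x))}\int_{\tau_1^*}^{t^*(x)} E(s)\, (\pmb{d}\circ \Phi_s)\, \dr s.
\end{equation*}
The key observation is that $E(s)\,(\pmb{d}\circ\Phi_s) = E'(s)$ by definition of $E$, so the integral telescopes to $E(t^*(x)) - E(\tau_1^*) = E(t^*(x)) - 1 \leq E(t^*(x))$. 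Hence the source contribution is bounded by $C_0$, uniformly in $(x,p)$ and in $\tau_2^*$. Combining the two bounds yields $|\Xi(x,p)| \leq C_0 + \sup_{\pi^{-1}(\{t^*=\tau_1^*\})}|\Xi|$, which is the claim with $C = C_0$.

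The main technical point to verify carefully is that every point in $\pi^{-1}(\{\tau_1^* \leq t^* \leq \tau_2^*\})$ does lie on a characteristic originating from the initial slice $\{t^* = \tau_1^*\}$, i.e.\ that the flow is well-defined backwards in $t^*$ down to $\tau_1^*$ and stays within the exterior region. This follows from the fact that $t^*$ is increasing along the future-directed null geodesic flow (the characteristics of $\T_g$), so flowing backward decreases $t^*$ monotonically and reaches the slice $\{t^* = \tau_1^*\}$; one must check the trajectory does not exit through $\mathcal{H}^+$ or escape to infinity before reaching that slice, which is where the geometry of the foliation enters. Beyond this, the argument is essentially the elementary Grönwall-type telescoping identity above, and I do not expect any genuine obstacle—the hypothesis $|\pmb{s}| \lesssim \pmb{d}$ is tailored precisely so that the source term is absorbed by the damping, producing a bound independent of the length $\tau_2^* - \tau_1^*$ of the time interval.
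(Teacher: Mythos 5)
Your proposal follows the same route as the paper's own proof: Duhamel's formula along the characteristics, nonnegativity of $\pmb{d}$ to control the data term, and the telescoping identity $E(s)\,(\pmb{d}\circ\Phi_s)=E'(s)$ combined with $|\pmb{s}|\lesssim \pmb{d}$ to bound the source contribution by a constant uniform in $\tau_2^*$. One imprecision in your write-up: $\T_g$ is \emph{not} the generator of the flow parametrised by $t^*$; the generator of that flow is $\T_g/\T_g(t^*)$, so in the $t^*$-parametrisation the coefficients of your scalar ODE should be $\pmb{d}/\T_g(t^*)$ and $\pmb{s}/\T_g(t^*)$ --- this is precisely why the paper introduces the rescaled quantities $\underline{\pmb{d}}=\pmb{d}/|p_{t^*}|$ and $\underline{\pmb{s}}=\pmb{s}/|p_{t^*}|$ before writing Duhamel's formula. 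The slip is harmless: the reparametrisation factor is positive on $\mathcal{P}$, so dividing both the damping and the source by it preserves the sign of the damping and the hypothesis $|\pmb{s}|\lesssim\pmb{d}$, and your telescoping argument goes through verbatim with the rescaled coefficients (equivalently, your ODE is literally correct if $\sigma$ denotes the affine parameter rather than $t^*$). Your closing remark about backward characteristics reaching the slice $\{t^*=\tau_1^*\}$ is the right point to check and follows, as you say, from the monotonicity of $t^*$ along future-directed causal geodesics; the paper takes this for granted.
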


\begin{proof}
We parametrise the flow of $\T_g$ by the variable $t^*$, so we introduce
$$ \underline{\pmb{d}} \coloneqq \frac{\pmb{d}}{|p_{t^*}|}, \qquad \qquad \underline{\pmb{s}} \coloneqq  \frac{\pmb{s}}{|p_{t^*}|}.$$ 
We further consider $\tau^* \geq \tau_1^*$ and $y \in \pi^{-1}( \{t^*=\tau^* \})$. According to Duhamel's formula, we have
$$ \Xi (y) = e^{-\int_{z=\tau_1^*}^{\tau^*} \underline{\pmb{d}}  \circ \Phi_z(\tau^*,y) \dr z} \, \Xi \circ \Phi_{\tau_1^*}(\tau^*,y)+ \int_{q=\tau_1^*}^{\tau^*} e^{-\int_{z=q}^{\tau^*} \underline{\pmb{d}}  \circ \Phi_z(\tau^*,y) \dr z} \,  \underline{\pmb{s}}  \circ \Phi_{q}(\tau^*,y) \dr q .$$
It remains to remark that, by assumption,
$$ \bigg|\int_{q=\tau_1^*}^{\tau^*} \! e^{-\int_{z=q}^{\tau^*} \underline{\pmb{d}}  \circ \Phi_z(\tau^*,y) \dr z} \,  \underline{\pmb{s}}  \circ \Phi_{q}(\tau^*,y) \dr q \bigg| \lesssim \int_{q=\tau_1^*}^{\tau^*} \! e^{-\int_{z=q}^{\tau^*} \underline{\pmb{d}}  \circ \Phi_z(\tau^*,y) \dr z} \,  \underline{\pmb{d}}  \circ \Phi_{q}(\tau^*,y) \dr q=1- e^{-\int_{z=\tau_1^*}^{\tau^*} \!  \underline{\pmb{d}} \circ \Phi_z(\tau^*,y) \dr z}  .$$
\end{proof}

\subsection{Pointwise estimate of $\Psi$}
 
We compute first the damped Vlasov equation verified by $\Psi =(r+6M)\Phi$.

\begin{lemma}\label{LembasicPsi}
The function $\Psi$ is uniquely determined as the solution to
\begin{equation*}
\T_g ( \Psi )+ \overline{\pmb{a}} (r,p_{r^*},p_t) \Psi=  \pmb{b}(r,p_t), \qquad \qquad \Psi\vert_{\pi^{-1}(\{t^*=0\})} =0,
\end{equation*}
where $\overline{\pmb{a}} \colon \mathcal{P} \to \R_+$ and $\pmb{b} \colon \mathcal{P} \to \R$ are given by
\begin{align*}
\overline{\pmb{a}}(r,p_{r^*},p_t) & \coloneqq  \frac{r^2+2Mr+3M^2}{r|r+6M|^{\frac{1}{2}}(r^{\frac{3}{2}}+M|r+6M|^{\frac{1}{2}}) }|p_t| +\frac{2M}{r^2}\frac{|p_u|}{\Omega^2}-\frac{p_{r^*}}{r+6M} , \\
\pmb{b}(r,p_t) & \coloneqq \frac{(r+3M)|p_t|}{r^{\frac{5}{2}}|r+6M|^{\frac{1}{2}} }.
\end{align*}
Moreover, they satisfy the bounds
$$ \overline{\pmb{a}}(r,p_{r^*},p_t) \geq  \frac{6M |p_t|}{r|r+6M|+r^{\frac{1}{2}} |r+6M|^{\frac{3}{2}}  } + \frac{2M}{r^2}\frac{|p_u|}{\Omega^2} , \qquad \quad \big|\pmb{b}(r,p_t) \big| \leq \frac{2}{r^2}|p_t| . $$
\end{lemma}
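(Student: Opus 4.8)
The plan is to establish Lemma \ref{LembasicPsi} in two stages: first derive the damped transport equation satisfied by $\Psi$, and then verify the two quantitative bounds on $\overline{\pmb{a}}$ and $\pmb{b}$. The first stage is essentially a translation of Definition \ref{DefVminusmod}. Recall that $\Phi$ is defined there by
$$ \T_g (\Omega \Phi)+ \frac{|p_t|}{r^{\frac{1}{2}}|r+6M|^{\frac{1}{2}}\Omega^2} (\Omega \Phi)   =  \frac{r+3M}{r^{\frac{5}{2}}|r+6M|^{\frac{3}{2}}}|p_t|\Omega, \qquad (\Omega\Phi)\vert_{t^*=0}=0.$$
First I would write $\Omega\Phi = \frac{\Omega}{r+6M}\Psi$ and compute $\T_g\big(\frac{\Omega}{r+6M}\big)$ using $\T_g(r)=p_{r^*}$ and the formula $\partial_r\Omega = \frac{M}{r^2}\Omega^{-1}$, so that $\T_g(\Omega\Phi) = \frac{\Omega}{r+6M}\T_g(\Psi) + \Psi\,\T_g\big(\frac{\Omega}{r+6M}\big)$. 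Dividing the whole defining equation by $\frac{\Omega}{r+6M}$ converts it into a transport equation for $\Psi$ alone; matching the damping coefficient produces $\overline{\pmb{a}}$ and matching the right-hand side (after cancelling the factor $\Omega$) produces $\pmb{b}$. The bookkeeping should reproduce exactly the $\frac{r^2+2Mr+3M^2}{r|r+6M|^{1/2}(r^{3/2}+M|r+6M|^{1/2})}|p_t|$ and $\frac{2M}{r^2}\frac{|p_u|}{\Omega^2}$ terms, with the $-\frac{p_{r^*}}{r+6M}$ term coming precisely from the logarithmic derivative $\T_g\big(\log\frac{\Omega}{r+6M}\big)$; here it is helpful to note that the combination $\frac{r^2+2Mr+3M^2}{\cdots}|p_t| + \frac{2M|p_u|}{r^2\Omega^2}$ is exactly $\pmb{a}(r,p_{r^*},p_t)$ from Lemma \ref{Lemphiminus}, rewritten via the identity in \eqref{eq:idbarbolda}, so that $\overline{\pmb{a}} = \pmb{a} - \frac{p_{r^*}}{r+6M}$.

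For the quantitative lower bound on $\overline{\pmb{a}}$, the point is to show that the potentially bad term $-\frac{p_{r^*}}{r+6M}$ (which has no definite sign) is absorbed by the positive $|p_t|$-term, leaving a strictly positive remainder of the stated size. I would use $|p_{r^*}|\leq |p_t|$ to bound $\big|\frac{p_{r^*}}{r+6M}\big|\leq \frac{|p_t|}{r+6M}$ and then reduce the claim to the purely algebraic inequality
$$ \frac{r^2+2Mr+3M^2}{r|r+6M|^{\frac12}(r^{\frac32}+M|r+6M|^{\frac12})} - \frac{1}{r+6M} \geq \frac{6M}{r|r+6M|+r^{\frac12}|r+6M|^{\frac32}}$$
for $r\geq 2M$; clearing the common denominator $r|r+6M|^{1/2}(r^{3/2}+M|r+6M|^{1/2})$ (which equals $r|r+6M|+r^{1/2}|r+6M|^{3/2}$ after factoring, matching the right-hand denominator) turns this into a polynomial comparison in $r$ and $M$ that I expect to reduce to something like $(r+6M)(r^2+2Mr+3M^2) - r^{3/2}|r+6M|^{1/2}(r^{3/2}+M|r+6M|^{1/2}) \geq 6M\,(r+6M)$ after simplification, verifiable directly. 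The $\frac{2M|p_u|}{r^2\Omega^2}$ term survives untouched since it is already present in $\pmb{a}$ and is manifestly nonnegative. The bound $|\pmb{b}|\leq \frac{2}{r^2}|p_t|$ is immediate: since $r+3M \leq \frac{1}{2}|r+6M|^{1/2}r^{1/2}\cdot(\text{something})$—more simply, $\frac{r+3M}{r^{5/2}|r+6M|^{1/2}} \leq \frac{r+3M}{r^{5/2}\cdot r^{1/2}} = \frac{r+3M}{r^3} \leq \frac{2r}{r^3} = \frac{2}{r^2}$ using $r+3M\leq 2r$ for $r\geq 3M$, and for $2M\leq r\leq 3M$ one checks the slightly sharper estimate $|r+6M|^{1/2}\geq r^{1/2}$ directly.

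The main obstacle is the purely computational algebraic inequality for the lower bound on $\overline{\pmb{a}}$: the denominators carry half-integer powers of $r+6M$, so the comparison cannot be cleared to polynomials without first recognizing the factorization $r^{3/2}+M|r+6M|^{1/2}$ times $r|r+6M|^{1/2}$ equals $r^{5/2}|r+6M|^{1/2}+Mr|r+6M| $, and then carefully tracking that $r|r+6M|+r^{1/2}|r+6M|^{3/2} = r^{1/2}|r+6M|^{1/2}(r^{1/2}|r+6M|^{1/2}+|r+6M|)$ genuinely coincides with the cleared denominator. I would handle this by substituting $\rho = r/M \geq 2$ to make the inequality dimensionless and checking that the resulting one-variable inequality holds on $[2,\infty)$, either by monotonicity of the difference or by showing the numerator of the difference is a polynomial in $\rho$ with nonnegative coefficients after the square-root terms are grouped. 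Everything else in the lemma is a direct consequence of the definition of $\Phi$ and elementary estimates, so no conceptual difficulty beyond careful algebra is anticipated.
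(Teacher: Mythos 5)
Your derivation of the transport equation is correct and is essentially the paper's own argument: the paper passes from $\Omega\Phi$ to $\Phi$ (via the computation in \eqref{eq:kevatalenn1}, giving $\T_g(\Phi)+\pmb{a}\Phi=(r+6M)^{-1}\pmb{b}$) and then to $\Psi$ using $\T_g(r+6M)=p_{r^*}$, whereas you rescale by $\tfrac{\Omega}{r+6M}$ in one step; both routes identify $\overline{\pmb{a}}=\pmb{a}-\tfrac{p_{r^*}}{r+6M}$ exactly as you say. The bound on $\pmb{b}$ is also fine, though note that the single inequality $r+3M\leq 2r^{\frac12}|r+6M|^{\frac12}$ (equivalent after squaring to $3r^2+18Mr-9M^2\geq 0$) covers all $r\geq 2M$ at once, so no case split at $r=3M$ is needed.

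Where you should be careful is the lower bound on $\overline{\pmb{a}}$. Your reduction via $|p_{r^*}|\leq|p_t|$ to the algebraic inequality is valid and the target inequality is true, but both of the explicit simplifications you sketch are false as written: the denominator $r|r+6M|^{\frac12}(r^{\frac32}+M|r+6M|^{\frac12})$ has the dimension of $(\mathrm{length})^3$ while $r|r+6M|+r^{\frac12}|r+6M|^{\frac32}$ has dimension $(\mathrm{length})^2$, so they cannot coincide, and the proposed polynomial comparison ending in $\geq 6M(r+6M)$ is dimensionally inconsistent for the same reason. Your fallback (setting $\rho=r/M$ and checking a one-variable inequality on $[2,\infty)$) would rescue the argument, but the paper's verification is cleaner and worth adopting: first use the elementary numerator bound $r^2+2Mr+3M^2\geq r^2+Mr^{\frac12}|r+6M|^{\frac12}$ (square $2r+3M\geq r^{\frac12}|r+6M|^{\frac12}$) to collapse the first term of $\overline{\pmb{a}}$ to $\tfrac{|p_t|}{r^{1/2}|r+6M|^{1/2}}$, and then conclude with the \emph{exact} rationalization identity
\begin{equation*}
\frac{1}{r^{\frac12}|r+6M|^{\frac12}}-\frac{1}{r+6M}
=\frac{6M}{r^{\frac12}(r+6M)\big(r^{\frac12}+|r+6M|^{\frac12}\big)}
=\frac{6M}{r|r+6M|+r^{\frac12}|r+6M|^{\frac32}},
\end{equation*}
which produces the stated constant with no polynomial bookkeeping at all, the term $\tfrac{2M|p_u|}{r^2\Omega^2}$ passing through untouched.
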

\begin{proof}
Recall from Definition \ref{DefVminusmod} the equation verified by $\Omega\Phi$. Then, using the computation performed in \eqref{eq:kevatalenn1}, we have $\T_g(\Phi )+\pmb{a} \Phi=(r+6M)^{-1} \pmb{b}$. The first part of the statement then follows from $\T_g(r+6M)=p_{r^*}$. Next, as $r^2+2Mr+3M^2 \geq r^{2}+Mr^{\frac{1}{2}}|r+6M|^{\frac{1}{2}}$ and $|p_{r^*}| \leq |p_t|$, we have
$$ \overline{\pmb{a}}(r,p_{r^*},p_t) \geq  \frac{|p_t|}{r^{\frac{1}{2}} |r+6M|^{\frac{1}{2}}  }-\frac{|p_t|}{r+6M} + \frac{2M}{r^2}\frac{|p_u|}{\Omega^2} ,$$
implying the lower bound for $\overline{\pmb{a}}$. The estimate for $\pmb{b}$ follows from $r+3M \leq 2r^{\frac{1}{2}}|r+6M|^{\frac{1}{2}}$.
\end{proof}
 
We are now able to control $\Psi$ pointwise. 
 
\begin{proposition}\label{ProboundPsiA}
There holds
$$ \sup_{  \pi^{-1} (\{t^* \geq 0 \})} \, \frac{|p_N|}{|p_t|}|\Psi|+\frac{r^2|p_v|}{|p_t|}|\Psi| < +\infty.$$
\end{proposition}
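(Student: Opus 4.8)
The plan is to control $\Psi=(r+6M)\Phi$ by viewing its defining equation, established in Lemma \ref{LembasicPsi}, as a damped transport equation along the geodesic flow and applying the Duhamel estimate of Lemma \ref{LemDuhamel}. The two quantities to bound are $\frac{|p_N|}{|p_t|}|\Psi|$ and $\frac{r^2|p_v|}{|p_t|}|\Psi|$. Since $\T_g(p_t)=0$, it is natural to work with the rescaled unknown $\Xi=\omega\,\Psi/|p_t|$ for a suitable weight $\omega$ that is itself well behaved along the flow, so that $\Xi$ solves a new damped equation $\T_g(\Xi)+\pmb{d}\,\Xi=\pmb{s}$ to which Lemma \ref{LemDuhamel} applies. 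The key structural input is the coercivity lower bound
$$ \overline{\pmb{a}}(r,p_{r^*},p_t) \geq  \frac{6M |p_t|}{r|r+6M|+r^{\frac{1}{2}} |r+6M|^{\frac{3}{2}}  } + \frac{2M}{r^2}\frac{|p_u|}{\Omega^2} , $$
together with the source bound $|\pmb{b}(r,p_t)|\leq 2r^{-2}|p_t|$, both from Lemma \ref{LembasicPsi}.

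First I would treat $\frac{|p_N|}{|p_t|}|\Psi|$. I would take $\omega=p_N$, which is conserved up to the compactly supported redshift contribution; more precisely Proposition \ref{Prorsweight} gives $\T_g(|p_N|)\lesssim |p_t|^2\mathds{1}_{2.5M\leq r\leq 2.7M}$, so $\frac{\T_g(|p_N|)}{|p_N|}\lesssim |p_t|\mathds{1}_{r\sim 2.5M}$ is a harmless bounded (in $t^*$-parametrisation) perturbation. Setting $\Xi=\frac{p_N}{p_t}\Psi$, the equation for $\Xi$ has damping $\pmb{d}=\overline{\pmb{a}}-\frac{\T_g(|p_N|)}{|p_N|}$ and source $\pmb{s}=\frac{p_N}{p_t}\pmb{b}$ plus the redshift error acting on $\Xi$. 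The point is then to verify the hypothesis $|\pmb{s}|\lesssim\pmb{d}$ of Lemma \ref{LemDuhamel}: using $|p_N|\lesssim |p_t|+|p_u|\Omega^{-2}$ from Lemma \ref{LempN}, one checks that $\frac{|p_N|}{|p_t|}|\pmb{b}|\lesssim \frac{|p_N|}{r^2}\lesssim \frac{|p_t|}{r^2}+\frac{|p_u|}{r^2\Omega^2}$, and each of these two pieces is absorbed by the corresponding term in the lower bound for $\overline{\pmb{a}}$ (the first by the $\frac{|p_t|}{r|r+6M|}$ term, the second exactly by the $\frac{2M}{r^2}\frac{|p_u|}{\Omega^2}$ term). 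The redshift error is supported on $r\sim 2.5M$ where $\pmb{d}\gtrsim 1$, so it is controlled too. Lemma \ref{LemDuhamel} with $\tau_1^*=0$ and the vanishing initial data $\Psi|_{t^*=0}=0$ then yields the uniform bound.

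For $\frac{r^2|p_v|}{|p_t|}|\Psi|$ I would instead take $\omega=\frac{r^2}{\Omega^2}|p_v|$, whose flow behaviour is governed by Lemma \ref{LemRS}'s analogue, namely Lemma \ref{Lemrp}: $\T_g(\frac{r^2}{\Omega^2}|p_v|)=-2\frac{r-3M}{\Omega^4}|p_v|^2$, which has a favourable (nonpositive for $r\geq 3M$) sign, so $\frac{\T_g(\omega)}{\omega}$ only helps the damping there and is a controlled perturbation elsewhere. With $\Xi=\frac{\omega}{p_t}\Psi$, the source becomes $\frac{\omega}{p_t}\pmb{b}\lesssim \frac{r^2|p_v|}{\Omega^2}\cdot\frac{1}{r^2}=\frac{|p_v|}{\Omega^2}$, and one must again dominate this by the effective damping; here I expect to use the factor $\frac{2M}{r^2}\frac{|p_u|}{\Omega^2}$ in $\overline{\pmb{a}}$ together with the null-shell relation $4p_up_v=\frac{\Omega^2}{r^2}|\slashed{p}|^2$ to convert between $|p_u|$ and $|p_v|$ weights. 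The main obstacle I anticipate is precisely this matching near the two delicate regions: the photon sphere $r=3M$, where the leading $|p_t|$-damping in $\overline{\pmb{a}}$ remains strictly positive but the $p_v$ weight does not decay, and the horizon $r\to 2M$, where one must exploit the $\Omega^{-2}|p_u|$ gain and the relation between $p_v$, $p_u$ and $p_t$ carefully rather than naively bounding $|p_v|\leq|p_t|$. Once the inequality $|\pmb{s}|\lesssim\pmb{d}$ is verified uniformly on $\pi^{-1}(\{t^*\geq 0\})$ in each regime, Lemma \ref{LemDuhamel} closes the estimate, and combining the two bounds gives Proposition \ref{ProboundPsiA}.
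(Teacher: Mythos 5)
Your overall framework (rescale $\Psi$ by a weight, read off a damped transport equation, apply Lemma \ref{LemDuhamel} with the trivial data $\Psi\vert_{t^*=0}=0$) is the paper's framework, and your treatment of $\frac{|p_N|}{|p_t|}|\Psi|$ is nearly the paper's argument — but your weight choice breaks the hypotheses of Lemma \ref{LemDuhamel}. That lemma requires $\pmb{d}\geq 0$ and $|\pmb{s}|\lesssim \pmb{d}$ with $\pmb{s}$ a given function, not a term proportional to the unknown. With $\omega=p_N$, Proposition \ref{Prorsweight} gives only the one-sided bound $\T_g(|p_N|)\lesssim |p_t|^2\,\mathds{1}_{2.5M\leq r\leq 2.7M}$, whose implicit constant (coming from $\chi_N'$ and the factor $r^2/M^2\Omega^2$) is not small; on the transition region the quantity $-\T_g(|p_N|)/|p_N|$ has uncontrolled sign and can be of size $C|p_t|$ with $C$ far exceeding the explicit lower bound on $\overline{\pmb{a}}$ there. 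So your $\pmb{d}=\overline{\pmb{a}}-\T_g(|p_N|)/|p_N|$ need not be nonnegative; and if you move this term to the source, the source contains $\Xi$ itself, which the lemma does not allow. Your parenthetical ``bounded in $t^*$-parametrisation'' would require a uniform bound on the sojourn time of null geodesics in $\{2.5M\leq r\leq 2.7M\}$, which you neither state nor prove. The paper sidesteps all of this by using the auxiliary redshift weight $\xi$ of Proposition \ref{Promultiplierm} instead of $p_N$: it satisfies $\xi\sim p_N$ and, crucially, $\T_g(|\xi|)\leq 0$, so the effective damping only improves, $\widetilde{\pmb{a}}=\overline{\pmb{a}}-|\xi|^{-1}\T_g(|\xi|)\geq \overline{\pmb{a}}\gtrsim |p_N|/r^2$, and then $\frac{|\xi|}{|p_t|}|\pmb{b}|\lesssim |p_N|/r^2\lesssim \widetilde{\pmb{a}}$ closes the estimate exactly as you intend.

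The second bound is where the proposal has a genuine gap: the pointwise domination $|\pmb{s}|\lesssim\pmb{d}$ you plan to verify ``in each regime'' is false, in both regimes you flag. Near the horizon, your global weight $\omega=\frac{r^2}{\Omega^2}|p_v|$ produces, for $r<3M$, the positive error $\T_g(\omega)/\omega=\frac{2(3M-r)|p_v|}{r^2\Omega^2}$; for ingoing geodesics one has $|p_v|=\frac{|p_t|+|p_{r^*}|}{2}\sim|p_t|$ while $|p_u|/\Omega^2=|\slashed{p}|^2/(4r^2|p_v|)$ stays bounded (and vanishes for radial geodesics), so this error is of size $|p_t|/\Omega^2$ and cannot be absorbed by $\overline{\pmb{a}}\lesssim |p_t|+|p_u|/\Omega^2$; the null-shell relation puts $|p_u|$ on the wrong side of the inequality. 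Independently, at large $r$ the source $\frac{\omega}{|p_t|}\pmb{b}\sim|p_v|$ does not decay in $r$, whereas the damping is at best $\overline{\pmb{a}}\lesssim |p_t|/r$ on, say, $\{p_{r^*}=0\}$, so domination fails as $r\to+\infty$ no matter how you combine the terms of $\overline{\pmb{a}}$. The paper's proof does two things you are missing. First, it only needs the $r^2|p_v|$-weighted bound for $r\geq 7M$ (for $r\leq 7M$ it is trivial from the first bound, since there $r^2|p_v|/|p_t|\lesssim 1$), so it works with the cutoff weight $\overline{\chi}(r)\frac{r^2|p_v|}{\Omega^2|p_t|}$, supported in $\{r\geq 4M\}$, killing the horizon issue. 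Second, and this is the crux, it abandons pointwise domination: since $\widehat{\pmb{a}}\geq 0$, Duhamel's formula reduces the claim to the uniform integrated bound
$$ \int_{s=0}^{+\infty} \bigg[\frac{|p_v|}{|p_{t^*}|}\,\mathds{1}_{r\geq 4M}\bigg]\circ \Phi_s(0,y)\,\dr s\leq C, $$
which is then proved by a monotonicity argument: for $A$ large, the bounded functional $A\xi+\overline{\chi}(r)\frac{r^2|p_v|}{\Omega^2|p_t|}\Psi$ satisfies $\T_g\big(A\xi+\overline{\chi}(r)\frac{r^2|p_v|}{\Omega^2|p_t|}\Psi\big)\lesssim -|p_v|\,\mathds{1}_{r\geq 4M}$ (Proposition \ref{Promultiplierm} and Corollary \ref{Corrp}), so the dissipation integrates to a bounded quantity along every trajectory. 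Without this integrated-dissipation mechanism (or some substitute for it), your plan cannot close.
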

\begin{proof}
Let us recall from Proposition \ref{Promultiplierm} the properties of the weight $\xi$ capturing the redshift effect. In particular we will use that $\xi \sim p_N$ and $-\T_g(|\xi|) \geq 0$. Thus, we have
\begin{equation*}
\T_g \Big( \, \frac{|\xi|}{|p_t|} \Psi \Big)+ \widetilde{\pmb{a}} (r,p_{r^*},p_t)\frac{|\xi|}{|p_t|}  \Psi=  \frac{|\xi|}{|p_t|}\pmb{b}(r,p_t),
\end{equation*}
where, using $|p_N|\sim |p_t|+\Omega^{-2} |p_u|$ and Lemma \ref{LembasicPsi},
$$ \widetilde{\pmb{a}} (r,p_{r^*},p_t) \coloneqq \overline{\pmb{a}} (r,p_{r^*},p_t)-|\xi|^{-1}\T_g(|\xi|) \geq \overline{\pmb{a}} (r,p_{r^*},p_t) \gtrsim \frac{|p_N|}{r^2} .$$
Consequently, we have
$$   \frac{|\xi|}{|p_t|} \big| \pmb{b} (r,p_t) \big| \lesssim \frac{|p_N|}{r^2} \lesssim \widetilde{\pmb{a}} (r,p_{r^*},p_t) .$$
The first estimate ensues from Lemma \ref{LemDuhamel}, since $\Psi$ vanishes initially. Note that we only need to prove the second one for the region $\{ r \geq 7M \}$. Recall the cutoff function $\overline{\chi} \in C^\infty (\R)$ which satisfies $\overline{\chi} (s) =0$ for $s \leq 4M$ and $\overline{\chi}(s) =1$ for $s \geq 7M$. Then, by Lemma \ref{Lemrp}, we have
\begin{equation*}
\T_g \Big( \overline{\chi} (r) \frac{r^2|p_v|}{\Omega^2|p_t|} \Psi \Big)+ \widehat{\pmb{a}} (r,p_{r^*},p_t)\frac{|\xi|}{|p_t|}  \Psi= \widehat{\pmb{b}}(r,p_{r^*},p_t),
\end{equation*}
where 
$$
\widehat{\pmb{a}} (r,p_{r^*},p_t)\coloneqq \overline{\pmb{a}} (r,p_{r^*},p_t)+2\overline{\chi}(r)\frac{r-3M}{\Omega^4|p_t|}|p_v|^2, \qquad \quad \widehat{\pmb{b}}(r,p_{r^*},p_t)\coloneqq p_{r^*} \overline{\chi}'(r)\frac{r^2|p_v|}{\Omega^2|p_t|} \Psi+ \overline{\chi} (r) \frac{r^2|p_v|}{\Omega^2|p_t|}\pmb{b}(r,p_t).
$$
In particular, by support considerations and the boundednes of $\Psi$,
$$  \big| \widehat{\pmb{b}} \big|(r,p_{r^*},p_t) \lesssim |p_v| \mathds{1}_{r \geq 4M} .$$
As $\widehat{\pmb{a}} \geq 0$, the result would follow from Duhamel formula if we could prove that
$$\forall \, y \in \pi^{-1}(\{t^*=0\}), \qquad \quad \int_{s=0}^{+\infty}  \bigg[ \frac{p_v}{p_{t^*}} \mathds{1}_{r \geq 4M} \bigg] \circ \Phi_s (0,y) \dr s \leq C, $$
for a constant $C>0$ independent of $y$. For this, one can simply exploit that, according to Proposition \ref{Promultiplierm} and Corollary \ref{Corrp}, for $A>0$ large enough,
$$ \T_g \Big(A \xi + \overline{\chi} (r) \frac{r^2|p_v|}{\Omega^2|p_t|} \Psi \Big) \lesssim -A\frac{|r-3M|}{r^3}|p_t|+ |p_t|\mathds{1}_{r \leq 7M}-r|p_v| \mathds{1}_{r \geq 7M} \lesssim -|p_v| \mathds{1}_{r \geq 4M}.$$
\end{proof} 
 
\subsection{Boundedness for the derivatives of $\Psi$} 
 
The analysis performed to estimate derivatives of $\Psi$ shares some similarities with the one carried out in Section \ref{SecILEDwrd}. However, since we will control $L^\infty_{x,p}$ norms of the derivatives of $\Psi$ along null geodesics, we are able to separate the estimates in the different regions of the null-shell in a cleaner way. We recall that the final goal consists in estimating $r^{-1}\pmb{\varphi}_- \partial_{p_{r^*}} \Psi$.

\subsubsection{Preparatory results}
We will have to commute the damped massless Vlasov equation satisfied by $\Psi$, by the vector fields $\partial_t$, $\T_{|\slashed{p}|}$, $\pmb{V}_{\! +}$ and $ \pmb{ \varphi}_- \partial_{p_{r^*}}$. For this, we will use that
\begin{equation}\label{eq:ComZ}
 \T_g \big( Z \Psi \big)+ \overline{\pmb{a}}(r,p_{r^*},p_t) Z \Psi = [\T_g,Z] (\Psi)- Z \big[\overline{\pmb{a}}(r,p_{r^*},p_t) \big] \Psi+Z \big[\pmb{b} (r,p_t) \big]   ,
 \end{equation}
for any vector field $Z$. Then, we estimate $Z\overline{\pmb{a}}$ and $Z\pmb{b}$ for all the derivatives that we will consider. 

\begin{lemma}\label{Lemsourcetermesti}
There holds
$$ \partial_t \overline{\pmb{a}}=\T_{|\slashed{p}|} \overline{\pmb{a}}= \partial_t \pmb{b}=\T_{|\slashed{p}|} \pmb{b}=0 .$$
Moreover, the following estimates hold,
\begin{alignat*}{2}
\big| p_t \partial_{p_{r^*}} \big[  \overline{\pmb{a}} (r,p_{r^*},p_t) \big] \Psi \big| & \lesssim \frac{|p_t|}{r}, \qquad \qquad \qquad     \big| p_t \partial_{p_{r^*}} \big[  \pmb{b} (r,p_t) \big] \big| && \lesssim \frac{|p_t|}{r^2} , \\
\big| \pmb{V}_+ \big[  \overline{\pmb{a}} (r,p_{r^*},p_t) \big] \Psi \big| & \lesssim  \frac{|p_t|}{r}, \qquad \qquad \qquad  \quad \big|  \pmb{V}_+ \big[  \pmb{b} (r,p_t) \big] \big| &&\lesssim \frac{|p_N|}{r^2}     .
\end{alignat*}
\end{lemma}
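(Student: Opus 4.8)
The plan is to establish Lemma \ref{Lemsourcetermesti} by exploiting the explicit dependence of $\overline{\pmb{a}}$ and $\pmb{b}$ on the coordinates, together with the already-established pointwise bound on $\Psi$ from Proposition \ref{ProboundPsiA}. The vanishing identities $\partial_t \overline{\pmb{a}}=\T_{|\slashed{p}|} \overline{\pmb{a}}=\partial_t \pmb{b}=\T_{|\slashed{p}|}\pmb{b}=0$ come for free: inspecting the formulas in Lemma \ref{LembasicPsi}, one sees that $\pmb{b}(r,p_t)$ depends only on $r$ and $p_t$, while $\overline{\pmb{a}}$ depends only on $r$, $p_{r^*}$, $p_t$ and $|p_u|\Omega^{-2}$ (through $\frac{2M}{r^2}\frac{|p_u|}{\Omega^2}$, with $2|p_u|=|p_t|-p_{r^*}$, again a function of $p_t$ and $p_{r^*}$). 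Since $\partial_t$ and $\T_{|\slashed{p}|}$ annihilate $r$, $p_t$, and $p_{r^*}$ (recall $\T_{|\slashed{p}|}=|\slashed{p}|^{-1}\T_Q$ acts only in the angular variables and preserves $r,p_{r^*}$, as used in Corollary \ref{CorTQ} and the decomposition of Lemma \ref{LemDecompT}), both weights are annihilated.

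For the four quantitative estimates, I would first compute the $\partial_{p_{r^*}}$-derivatives. Differentiating $\pmb{b}(r,p_t)=\frac{(r+3M)|p_t|}{r^{5/2}|r+6M|^{1/2}}$ with respect to $p_{r^*}$ produces a factor $\partial_{p_{r^*}}|p_t|=\mathrm{sgn}(p_t)\,\partial_{p_{r^*}}p_t=-p_{r^*}/|p_t|$ via the null-shell relation \eqref{eq:defConsQua}, so $\big|p_t\partial_{p_{r^*}}\pmb{b}\big|$ carries an extra $|p_{r^*}|\le |p_t|$ against the $r^{-5/2}|r+6M|^{-1/2}\sim r^{-3}$ prefactor, giving the claimed $|p_t|r^{-2}$ after multiplying by $|p_t|$; I would simply track the powers of $r$ and use $|p_{r^*}|\leq|p_t|$. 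The derivative $\partial_{p_{r^*}}\overline{\pmb{a}}$ is handled the same way, differentiating the $|p_t|$ and $|p_u|$ factors; here the crucial input is that $\big|\frac{p_N}{p_t}\Psi\big|\lesssim 1$ and $\big|\frac{r^2 p_v}{p_t}\Psi\big|\lesssim 1$ from Proposition \ref{ProboundPsiA}, which absorbs the $|p_u|\Omega^{-2}\sim|p_N|$ and $|p_t|$ growth of the differentiated coefficient. For the $\pmb{V}_+$-derivatives I would use the explicit coordinate expression \eqref{eq:defVminus} of $V_+$ (and $\pmb{V}_{\!+}=\Omega^{-1}V_+$) to compute $\pmb{V}_+$ acting on the functions $r$, $p_t$ and $p_{r^*}$ appearing in $\overline{\pmb{a}},\pmb{b}$; this is a finite chain-rule computation whose output is again controlled by powers of $r$ times $|p_t|$ or $|p_N|$, absorbed using the two bounds of Proposition \ref{ProboundPsiA}.

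The main obstacle I anticipate is bookkeeping near the future event horizon $\{r=2M\}$, where $\Omega^2\to 0$ and the factor $\frac{|p_u|}{\Omega^2}\sim|p_N|$ blows up relative to $|p_t|$. Controlling $\big|\pmb{V}_+[\overline{\pmb{a}}]\,\Psi\big|$ and $\big|p_t\partial_{p_{r^*}}[\overline{\pmb{a}}]\,\Psi\big|$ there requires that the $|p_N|$-growth of the differentiated coefficient be compensated exactly by the decay $\frac{|p_N|}{|p_t|}|\Psi|\lesssim 1$ furnished by the redshift-weighted estimate of Proposition \ref{ProboundPsiA}; equivalently, one must verify that differentiation does not worsen the $\Omega$-powers beyond what $\Psi$ can absorb. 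I would organize this by writing every coefficient as a smooth function of $r$, $p_{r^*}$, and $|p_u|\Omega^{-2}$, noting that $\pmb{V}_+$ and $p_t\partial_{p_{r^*}}$ map such functions into the same class with at most one extra power of $r^{-1}$ and one extra factor of $|p_N|/|p_t|$, and then invoking Proposition \ref{ProboundPsiA}. With the powers of $r$ and $\Omega$ matched region by region (near $\mathcal{H}^+$ via the redshift bound, and for large $r$ via the $\frac{r^2|p_v|}{|p_t|}|\Psi|\lesssim 1$ bound), all four inequalities follow.
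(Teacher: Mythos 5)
Your proposal is correct and follows essentially the same route as the paper's proof: the vanishing identities come from $\partial_t$ and $\T_{|\slashed{p}|}$ annihilating $r$, $p_t$, $p_{r^*}$, and the four quantitative bounds follow by the chain rule from $\pmb{V}_{\!+}(r)=r$, $\big|\pmb{V}_{\!+}(p_t)\big|\lesssim|p_N|$, $\big|p_t\partial_{p_{r^*}}(p_u\Omega^{-2})\big|\lesssim |p_u|\Omega^{-2}\lesssim|p_N|$ (the paper simply recycles these from \eqref{eq:Vpluspt}--\eqref{eq:VpluspN} with $\Phi=0$ rather than recomputing from \eqref{eq:defVminus}), with the growth absorbed exactly as you say by $|p_N\Psi|\lesssim|p_t|$ from Proposition \ref{ProboundPsiA}. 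Two harmless sign slips in your write-up --- $2|p_u|=|p_t|+p_{r^*}$, not $|p_t|-p_{r^*}$, and $\partial_{p_{r^*}}|p_t|=+p_{r^*}/|p_t|$ --- do not affect anything since only magnitudes enter the estimates.
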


\begin{proof}
The first identities follow from the relations $Z(p_{r^*})=Z(p_t)=Z(r)=0$ for $Z \in \{ \partial_t, \T_{|\slashed{p}|} \}$. Next, we recall from \eqref{eq:Vpluspt}--\eqref{eq:VpluspN}, where we formally set $\Phi=0$, that
\begin{align*}
\big|\pmb{V}_{\! +}(p_t) \big| & \lesssim |p_N| ,  \qquad \qquad \Big| \pmb{V}_{\! +} \Big( \, \frac{2r^2 }{\Omega^2}p_u \Big) \Big|  \lesssim r^2|p_N|, \qquad \qquad  \pmb{V}_{\! +}(r)=r.
\end{align*}
We further note that 
$$  p_t \partial_{p_{r^*}} p_t =p_{r^*}, \qquad \qquad  p_t \partial_{p_{r^*}} p_{r^*} =p_t   , \qquad \qquad    \Big| p_t \partial_{p_{r^*}} \frac{p_u}{\Omega^2} \Big|  \leq \frac{|p_u|}{\Omega^2} \lesssim |p_N|   .$$
It allows to obtain the upper bounds for the derivatives of $\pmb{b}$. Using in addition $|p_N \Psi| \lesssim |p_t|$, we get the estimates related to $\overline{\pmb{a}}$.
\end{proof}

For the derivatives $\pmb{V}_{\! +} \Psi$ and $r^{-1}\pmb{\varphi}_- \partial_{p_{r^*}} \Psi$, the analysis will be divided in three regions, $\{ p_{r^*} \leq 0, \; r \geq R \}$, $\{ r \leq R \}$ and $\{ p_{r^*} \geq 0, r \geq R\}$, where we recall that $R \geq 832M$. This separation is allowed by the properties of the null geodesic flow in Schwarzschild stated in the next lemma. See for instance \cite[Chapter 13]{O83} for more information.

\begin{lemma}\label{Lemgeodtime}
Let $y \in \{ t^*=0 \}$. There exists $0 \leq t^*_{1}(y) \leq  t_2^*(y) \leq + \infty$ such that
\begin{enumerate}[label = (\alph*)]
\item $t_1^*(y) <+\infty$ and
\begin{itemize}
\item for all $s^* \in (0,t^*_1(y))$, we have $\Phi_{s^*}(0,y) \in \{ p_{r^*} \leq 0, \; r \geq R \}$,
\item for all $s^* > t^*_1(y)$, there holds $\Phi_{s^*}(0,y) \notin \{ p_{r^*} \leq 0, \; r \geq R \}$.
\end{itemize}
\item $t_2^*(y)$ is infinite if and only if the orbit $s^* \mapsto \Phi_{s^*}(0,y)$ is future trapped and
\begin{itemize}
\item for all $s^* \in (t^*_1(y),t_2^*(y))$, we have $\Phi_{s^*}(0,y) \in \{ r \leq R \}$.
\item If $t^*_2(y)<+\infty$, either the orbit $s^* \mapsto \Phi_{s^*}(0,y)$ crosses the future event horizon $\mathcal{H}^+$ at $t^*_2(y)$ or $\Phi_{s^*}(0,y) \in \{ p_{r^*} \geq 0, \; r \geq R \}$ for all $s^* \geq t^*_2(y)$.
\end{itemize}
\end{enumerate}
\end{lemma}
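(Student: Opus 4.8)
The plan is to reduce the statement to the classical qualitative theory of the radial motion of null geodesics, governed by an effective potential. Along any integral curve $s \mapsto \Phi_s(0,y)$ of $\T_g$ parametrised by an affine parameter $s$, the quantities $|p_t|$ and $|\slashed{p}|$ are conserved (Lemma \ref{Lemconsqua}), and from \eqref{eq:defT} the radial variables evolve according to $\frac{\dr r}{\dr s}=p_{r^*}$ and $\frac{\dr p_{r^*}}{\dr s}=\frac{r-3M}{r^4}|\slashed{p}|^2$, while the null-shell relation \eqref{eq:defConsQua} reads $p_{r^*}^2+V(r)=p_t^2$ with $V(r)\coloneqq \Omega^2(r)\frac{|\slashed{p}|^2}{r^2}$. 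A direct computation gives $V'(r)=2|\slashed{p}|^2\frac{3M-r}{r^4}$, so $V$ increases on $(2M,3M)$, decreases on $(3M,+\infty)$, with a unique maximum at the photon sphere $r=3M$. The two structural facts I would use repeatedly are: (i) $p_{r^*}$ is non-decreasing in $s$ wherever $r\geq 3M$, and strictly increasing when moreover $|\slashed{p}|\neq 0$; and (ii) $r$ is monotone, increasing or decreasing according to the sign of $p_{r^*}$, since $\frac{\dr r}{\dr s}=p_{r^*}$.

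For part (a), I would define $t_1^*(y)$ to be the supremum of the times $s^*\geq 0$ for which $\Phi_{s^*}(0,y)$ remains in $\{p_{r^*}\leq 0,\ r\geq R\}$, with $t_1^*(y)=0$ if $y$ lies outside this set. Within this region $r\geq R>3M$, so by (i) $p_{r^*}$ is non-decreasing and, since $p_{r^*}\leq 0$, by (ii) $r$ is non-increasing. The orbit can therefore leave the region only by $r$ crossing $R$ from above (entering $\{r\leq R\}$) or by $p_{r^*}$ reaching $0$ at some turning point $r_+>R$. I would then show it never returns: to re-enter $\{r\geq R\}$ the orbit must cross $r=R$ outward, hence with $p_{r^*}>0$, and thereafter it stays in $\{r>3M\}$ with $p_{r^*}$ increasing, so $p_{r^*}$ remains positive. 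This yields the dichotomy of (a) with a single exit time; finiteness of $t_1^*(y)$ follows because an incoming orbit with $r$ non-increasing in $r\geq R$ admits no fixed point (the only one is $r=3M<R$) and cannot asymptote to $r=+\infty$ since $r$ is non-increasing, so it reaches $\{r=R\}$ or a turning point in finite affine parameter.

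For part (b), once the orbit has left the region of (a) I would classify it by the level of $p_t^2$ relative to $V(3M)=\frac{|\slashed{p}|^2}{27M^2}$, that is, by the sign of $\varphi_+\varphi_-=27M^2|p_t|^2-|\slashed{p}|^2$ (Lemma \ref{lemmavarphiplus}). If $p_t^2>V(3M)$ the orbit is radially monotone and either plunges across $\mathcal{H}^+$ or escapes; if $p_t^2<V(3M)$ it is confined behind the barrier on one side and again either plunges or, after the turning point $r_+$, escapes outward with $p_{r^*}>0$; the marginal case $p_t^2=V(3M)$ is exactly the future-trapped orbit asymptoting to $r=3M$. I would let $t_2^*(y)$ be the first time the orbit either crosses $\mathcal{H}^+$ or re-enters $\{p_{r^*}\geq 0,\ r\geq R\}$, and $t_2^*(y)=+\infty$ otherwise. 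The trichotomy shows precisely that staying in $\{r\leq R\}$ for all $s^*>t_1^*(y)$ without crossing $\mathcal{H}^+$ nor escaping forces $p_t^2=V(3M)$, so $t_2^*(y)=+\infty$ if and only if the orbit is future trapped, which is the remaining assertion.

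The main obstacle I anticipate is book-keeping rather than conceptual: one must convert statements about finite affine parameter into finite values of $t^*$. Here the horizon-regular coordinate $t^*$ is crucial, since a computation gives $\frac{\dr t^*}{\dr s}=\frac{|p_t|}{\Omega^2}+\frac{2M}{r-2M}p_{r^*}\geq |p_t|>0$ (using $\Omega^{-2}=\frac{r}{r-2M}$ and $|p_{r^*}|\leq |p_t|$), so that finite affine parameter corresponds to finite $t^*$ throughout the closed exterior, including at a crossing of $\mathcal{H}^+$; the escaping case toward $\{r\geq R\}$ is in turn controlled by the $r^p$-monotonicity of Corollary \ref{Corrp}. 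The remaining care is in the degenerate cases—radial geodesics with $|\slashed{p}|=0$, orbits sitting exactly at a turning point, and the confined-forever orbit—each of which is handled by the same monotonicity of $p_{r^*}$ together with the absence of fixed points off the photon sphere; this last point can alternatively be read off from the Lyapunov behaviour of $\varphi_-$ and $\varphi_+$ in Lemmata \ref{Lemphiminus} and \ref{lemmavarphiplus}.
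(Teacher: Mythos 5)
The paper itself offers no proof of Lemma \ref{Lemgeodtime}: it records these as classical properties of the Schwarzschild null geodesic flow and points to \cite[Chapter~13]{O83}. Your effective-potential analysis --- the radial system $\frac{\dr r}{\dr s}=p_{r^*}$, $\frac{\dr p_{r^*}}{\dr s}=\frac{r-3M}{r^4}|\slashed{p}|^2$ read off from \eqref{eq:defT}, conservation of $p_t$ and $|\slashed{p}|$ (Lemma \ref{Lemconsqua}), the barrier $V(r)=\Omega^2|\slashed{p}|^2/r^2$ with unique maximum at $r=3M$, monotonicity of $p_{r^*}$ in $\{r\geq 3M\}$, absence of fixed points off the photon sphere, the no-return argument for $\{p_{r^*}\leq 0,\ r\geq R\}$, and reparametrisation by the horizon-regular time $t^*$ --- is precisely the standard argument behind that citation, and its overall structure is correct and complete in its essentials.

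Two local steps nonetheless need repair. First, your claim that ``the marginal case $p_t^2=V(3M)$ is exactly the future-trapped orbit'' is false as stated: the level $27M^2|p_t|^2=|\slashed{p}|^2$, i.e. $\varphi_+\varphi_-=0$ by Lemma \ref{lemmavarphiplus}, contains besides the future-trapped orbits ($\varphi_+=0$) also the past-trapped ones ($\varphi_-=0$), which escape or cross $\mathcal{H}^+$. Your closing deduction (``staying in $\{r\leq R\}$ forever forces $p_t^2=V(3M)$, so $t_2^*=+\infty$ iff future-trapped'') therefore requires the additional, easy, observation that a marginal orbit which is not future-trapped satisfies $(r-3M)\,p_{r^*}>0$, hence moves strictly away from the photon sphere and leaves $\{2M<r\leq R\}$ in finite affine time by your own monotonicity facts; together with the converse remark that future-trapped orbits have $r$ monotone and convergent to $3M$, so they never cross $\mathcal{H}^+$ nor re-enter $\{r\geq R\}$, this closes the equivalence. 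Second, the inequality $\frac{\dr t^*}{\dr s}\geq |p_t|>0$ does not, as written, prove that ``finite affine parameter corresponds to finite $t^*$''; it proves the reverse --- and equally necessary --- implication, namely that affinely complete orbits (in particular future-trapped ones) sweep out all of $t^*\in[0,+\infty)$, which is what forces $t_2^*=+\infty$ in the trapped case. The direction you actually invoke it for (that a horizon crossing or an entry into $\{p_{r^*}\geq 0,\ r\geq R\}$ occurring at finite affine parameter occurs at finite $t^*$) is instead immediate from the smoothness of $t^*$ and of the geodesic flow on $\mathcal{P}$ up to and including $\mathcal{H}^+$ --- this regularity, not the lower bound, is the real reason for working with $t^*$ rather than $t$. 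Likewise, no appeal to Corollary \ref{Corrp} is needed for the escaping case: the trivial bound $|\frac{\dr r}{\dr s}|=|p_{r^*}|\leq |p_t|$ shows escaping orbits are affinely complete, and the lower bound on $\frac{\dr t^*}{\dr s}$ then gives existence for all $t^*\geq t_2^*$.
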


For our purposes, we also need to control the derivatives of $\Psi$ on the hypersurface $\pi^{-1}(\{t^* =0 \})$.

\begin{lemma}\label{LemderivPsiini}
The following estimates hold,
$$ \frac{|\pmb{\varphi}_-|}{|p_t|}| \partial_t \Psi | \Big\vert_{\pi^{-1} (\{t^* = 0 \})} \lesssim \frac{1}{r}, \qquad \! \big| \pmb{V}_{\! +} \Psi \big| \Big\vert_{\pi^{-1} (\{t^* = 0 \})} \lesssim \frac{|p_t|}{r|p_N|}, \qquad \! \T_{|\slashed{p}|} \Psi \Big\vert_{\pi^{-1} (\{t^* = 0 \})}= p_t\partial_{p_{r^*}}\Psi \Big\vert_{\pi^{-1} (\{t^* = 0 \})}=0.$$
\end{lemma}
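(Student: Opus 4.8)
The plan is to exploit that $\Psi=(r+6M)\Phi$ vanishes identically on the initial hypersurface $\pi^{-1}(\{t^*=0\})$, by the initial condition in Definition \ref{DefVminusmod}. Since $\{t^*=0\}$ is a level set of the coordinate function $t^*$ and $\Psi\equiv 0$ there, the differential $\dr\Psi$ annihilates every vector tangent to this hypersurface, so it is proportional to $\dr t^*$; that is, there is a function $\Psi_*$ on $\pi^{-1}(\{t^*=0\})$ with $\dr\Psi=\Psi_*\,\dr t^*$ along the slice. Consequently, for every vector field $Z\in\Gamma(T\mathcal{P})$ one has the pointwise identity $Z\Psi=\Psi_*\,Z(t^*)$ on $\pi^{-1}(\{t^*=0\})$. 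To pin down $\Psi_*$ I would apply this with $Z=\T_g$ and combine it with the damped transport equation of Lemma \ref{LembasicPsi}: on the slice the term $\overline{\pmb{a}}\Psi$ vanishes, hence $\T_g(\Psi)=\pmb{b}(r,p_t)$, which gives $\Psi_*=\pmb{b}/\T_g(t^*)$. The whole statement thereby reduces to evaluating $Z(t^*)$ for the four vector fields involved, using $t^*=t+2M\log(r-2M)$ together with $\T_g(r)=p_{r^*}$, $\partial_{r^*}=\Omega^2\partial_r$, and the coordinate expressions \eqref{eq:defT} and \eqref{eq:defVminus}.

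The last two identities are then immediate. Since $\T_{|\slashed{p}|}=|\slashed{p}|^{-1}\T_Q$ and $p_t\partial_{p_{r^*}}$ contain no $\partial_t$ or $\partial_{r^*}$ component and act trivially on the spacetime function $t^*$, we get $\T_{|\slashed{p}|}(t^*)=(p_t\partial_{p_{r^*}})(t^*)=0$, whence $\T_{|\slashed{p}|}\Psi=p_t\partial_{p_{r^*}}\Psi=0$ on $\pi^{-1}(\{t^*=0\})$. For the remaining two I would record $\partial_t(t^*)=1$ and, from \eqref{eq:defVminus} and $\pmb{V}_{\! +}=\Omega^{-1}V_+$ (using $V_+(r)=r\Omega$ and $V_+(t)=\tfrac{|r+6M|^{1/2}}{r^{1/2}\Omega}(r-3M)$), the values
$$ \T_g(t^*)=\frac{1}{\Omega^2}\Big(-p_t+\frac{2M}{r}p_{r^*}\Big), \qquad \pmb{V}_{\! +}(t^*)=\frac{|r+6M|^{\frac12}(r-3M)+2Mr^{\frac12}}{r^{\frac12}\Omega^2}. $$
This yields $\partial_t\Psi=\pmb{b}/\T_g(t^*)$ and $\pmb{V}_{\! +}\Psi=\pmb{V}_{\! +}(t^*)\,\pmb{b}/\T_g(t^*)$ on the initial slice.

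It remains to estimate these two expressions, which is where the actual work lies. Writing $p_t=p_u+p_v$ and $p_{r^*}=p_v-p_u$ and recalling $p_u,p_v\le 0$, one finds $-p_t+\tfrac{2M}{r}p_{r^*}=|p_u|(1+\tfrac{2M}{r})+|p_v|\Omega^2\gtrsim |p_u|+|p_v|\Omega^2$, so the denominator $\T_g(t^*)$ is bounded below even near $\mathcal{H}^+$. Using $|\pmb{b}|\le \tfrac{2}{r^2}|p_t|$ from Lemma \ref{LembasicPsi}, the bound $|\pmb{\varphi}_-|\lesssim r|p_N|\sim r(|p_t|+|p_u|\Omega^{-2})$, and the elementary inequality $\Omega^2|p_t|=\Omega^2|p_u|+\Omega^2|p_v|\le |p_u|+|p_v|\Omega^2$, the estimate for $\partial_t\Psi$ follows at once. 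For $\pmb{V}_{\! +}\Psi$ the key point is the horizon degeneracy: here one uses that the numerator $|r+6M|^{1/2}(r-3M)+2Mr^{1/2}$ vanishes at $r=2M$ (as already noted in the proof of Proposition \ref{ProdecaypointBoundedregion}), hence is $\lesssim r\Omega^2$ on $\{r\le 3M\}$ while it is $\lesssim r^{3/2}$ on $\{r\ge 3M\}$. Splitting into these two regions and using $|p_N|\sim|p_t|$ for $r\ge 3M$ and $|p_N|\sim |p_t|+|p_u|\Omega^{-2}$ near the horizon, the required bound $|\pmb{V}_{\! +}\Psi|\lesssim |p_t|/(r|p_N|)$ reduces in each case to the same inequality $\Omega^2|p_t|\le |p_u|+|p_v|\Omega^2$. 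The main obstacle throughout is thus the careful bookkeeping of the $\Omega$-weights near $\mathcal{H}^+$, which the cancellation of the numerator at $r=2M$ exactly compensates.
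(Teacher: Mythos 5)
Your proposal is correct and is essentially the paper's own argument: both proofs exploit that $\Psi$ vanishes identically on $\pi^{-1}(\{t^*=0\})$ so that only the derivative transversal to the slice survives, determine that derivative from the damped transport equation of Lemma \ref{LembasicPsi} (where $\overline{\pmb{a}}\,\Psi$ drops out on the slice), and close with the same three ingredients: $|\pmb{b}|\lesssim |p_t|/r^2$, the lower bound $\T_g(t^*)\gtrsim |p_N|$, and the cancellation of $|r+6M|^{\frac{1}{2}}(r-3M)+2Mr^{\frac{1}{2}}$ at $r=2M$. The only difference is presentational: your invariant formulation $\dr\Psi=\Psi_*\,\dr t^*$ replaces the paper's explicit passage to the adapted coordinate system $(t^*,r,\theta,\phi,p_r,p_\theta,p_\phi)$ (in particular the change-of-variables formula \eqref{eq:partialrtstarcoor}) by the simpler evaluations $Z(t^*)$ for each vector field $Z$.
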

\begin{proof}
Since the (trivial) data for $\Psi$ are prescribed on $\pi^{-1}(\{t^*=0 \})$, it is convenient to work in the coordinate system $\mathscr{C} \coloneqq (t^*,r,\theta , \varphi,p_r,p_\theta,p_\varphi)$ of $\mathcal{P}$ induced by the coordinate system $(t^*,r,\theta,\varphi)$ on Schwarzschild. We note that
$$t^* = t+2M\log (r-2M), \qquad \qquad p_r=\frac{p_{r^*}}{\Omega^2}-\frac{2M p_t}{r \Omega^2}= \frac{p_{r^*}-p_t}{\Omega^2}  +p_t.$$
In order to avoid any confusion, we will denote by $\partial^*_a$ the derivative according to the variable $a$ of the coordinate system $\mathscr{C}$. For any $A \in \{ \theta , \varphi \}$, we have
$$ \partial^*_{t^*}  = \partial_{t}^{\mathstrut}, \qquad    \partial^*_{p_r} =\frac{\Omega^2 p_t}{p_t-\frac{2M}{r}p_{r^*}} \partial_{p_{r^*}}^{\mathstrut}, \qquad \partial^*_A=\partial_A^{\mathstrut}, \qquad \partial^*_{p_A}=\partial_{p_A}^{\mathstrut} . $$
For the radial derivative, the computations are slightly more complicated and we will prove
\begin{equation}\label{eq:partialrtstarcoor}
 \partial^*_{r}=-\frac{2M}{r\Omega^2} \partial_t+ \frac{1}{\Omega^2} \partial_{r^*} -\frac{2Mp_u}{r^2\Omega^2}\partial_{p_{r^*}}-\frac{6M p_v}{r^2}\partial_{p_{r^*}}+\frac{2M p_{r^*}(p_u+3 \Omega^2 p_v)}{r^2(p_t-p_{r^*}+\Omega^2 p_{r^*})} \partial_{p_{r^*}}.
\end{equation}
For this, note first that
$$ \partial_r = \frac{2M}{r \Omega^2} \partial^*_{t^*}+\partial^*_r-\frac{2M(p_{r^*}-p_t)}{r^2\Omega^4}\partial^*_{p_r}-\frac{2M}{r\Omega^2} \partial_r (p_t)\partial^*_{p_r} .$$
Next, we have using the null-shell relation \eqref{eq:defConsQua} that
$$ \partial_r (p_t) = \frac{1}{2}\partial_r \bigg( \frac{\Omega^2}{r^2} \bigg) \frac{|\slashed{p}|^2}{p_t} = - \frac{(r-3M)|\slashed{p}|^2}{r^4 p_t} = - \frac{(r-3M)(p_t-p_{r^*})(p_t+p_{r^*})}{r^2 \Omega^2 p_t} . $$ 
It implies, as $p_v=p_t+p_{r^*}$ and $p_u=p_t-p_{r^*}$,
 $$ \partial_r = \frac{2M}{r \Omega^2} \partial^*_{t^*}+\partial^*_r+\frac{4M|p_u|^2}{r^2\Omega^4p_t}\partial^*_{p_r}+\frac{12M p_up_v}{r^2\Omega^2p_t} \partial^*_{p_r} ,$$
from which we get \eqref{eq:partialrtstarcoor}.
 
By definition $\Psi \vert_{\pi^{-1}(\{t^*=0 \})}=0$, so we have $\partial^*_a\Psi \vert_{\pi^{-1}(\{t^*=0 \})}=0$ for any variable $a \neq t^*$ of the coordinate system $\mathscr{C}$. Using the damped massless Vlasov equation verified by $\Psi$, we then have
$$ -\frac{p_t}{\Omega^2} \partial_t \Psi+\frac{2Mp_{r^*}}{r\Omega^2} \partial_t \Psi = \pmb{b}(r,p_t) \qquad \text{on $\,\pi^{-1}(\{t^*=0 \})$}  .$$
Since
$$ -\frac{p_t}{\Omega^2}+\frac{2Mp_{r^*}}{r\Omega^2} = |p_t|+\frac{4M|p_u|}{r\Omega^2} \sim |p_N|,$$
and $|\pmb{b}(r,p_t)| \lesssim r^{-2} |p_t|$, we have
$$ | \partial_t \Psi | \Big\vert_{\pi^{-1} (\{t^* \geq 0 \})} \lesssim  \frac{|p_t|}{r^2|p_N|}.$$
We then get the result by using $|\pmb{\varphi}_-| \lesssim r |p_N|$ and that, on $\pi^{-1}(\{t^*=0 \})$,
$$ \pmb{V}_{\! +} \Psi = \frac{|r+6M|^{\frac{1}{2}}}{r^{\frac{1}{2}}\Omega^2}(r-3M) \partial_t \Psi +\frac{r}{\Omega^2} \cdot \frac{2M}{r} \partial_t \Psi =  \frac{27M^2r^{\frac{1}{2}} }{r^{\frac{3}{2}}-|r+6M|^{\frac{1}{2}}(r-3M)}\partial_t \Psi-r \partial_t \Psi.$$
\end{proof}

\subsubsection{The case of the Killing vector fields}

As a starting point, we treat the case of the derivatives associated to the symmetries of Schwarzschild.

\begin{proposition}\label{ProPsiKilling}
We have $\T_{|\slashed{p}|} ( \Psi )=0$ and
$$ \sup_{  \pi^{-1} (\{t^* \geq 0 \})} \, \frac{|\pmb{\varphi}_-|}{|p_t|} \big|\partial_t\Psi \big| < +\infty  .$$
\end{proposition}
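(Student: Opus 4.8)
The plan is to prove the two assertions of Proposition \ref{ProPsiKilling} separately, treating the angular case as an easy algebraic consequence and the $\partial_t$ case through the Duhamel estimate of Lemma \ref{LemDuhamel}. First I would establish $\T_{|\slashed{p}|}(\Psi)=0$. By \eqref{eq:ComZ} applied with $Z=\T_{|\slashed{p}|}$, the function $\T_{|\slashed{p}|}\Psi$ satisfies a damped massless Vlasov equation whose right-hand side is
\begin{equation*}
[\T_g,\T_{|\slashed{p}|}](\Psi)-\T_{|\slashed{p}|}\big[\overline{\pmb{a}}\big]\Psi+\T_{|\slashed{p}|}\big[\pmb{b}\big].
\end{equation*}
By Corollary \ref{CorTQ} we have $[\T_g,\T_{|\slashed{p}|}]=0$, and by the first line of Lemma \ref{Lemsourcetermesti} we have $\T_{|\slashed{p}|}\overline{\pmb{a}}=\T_{|\slashed{p}|}\pmb{b}=0$. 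Hence $\T_{|\slashed{p}|}\Psi$ solves the homogeneous damped equation $\T_g(\T_{|\slashed{p}|}\Psi)+\overline{\pmb{a}}\,\T_{|\slashed{p}|}\Psi=0$ with vanishing data on $\pi^{-1}(\{t^*=0\})$, as recorded in Lemma \ref{LemderivPsiini}. Since $\overline{\pmb{a}}\geq 0$, the solution of this homogeneous transport equation with zero data must vanish identically, which gives $\T_{|\slashed{p}|}\Psi=0$.

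Next I would control $\partial_t\Psi$. Applying \eqref{eq:ComZ} with $Z=\partial_t$ and using $[\T_g,\partial_t]=0$ (Proposition \ref{ProKilling}) together with $\partial_t\overline{\pmb{a}}=\partial_t\pmb{b}=0$ from Lemma \ref{Lemsourcetermesti}, the quantity $\partial_t\Psi$ solves the homogeneous damped equation
\begin{equation*}
\T_g\big(\partial_t\Psi\big)+\overline{\pmb{a}}(r,p_{r^*},p_t)\,\partial_t\Psi=0.
\end{equation*}
The natural weight to multiply by is $\pmb{\varphi}_-/p_t$, since $\T_g(\pmb{\varphi}_-)=-\pmb{a}\pmb{\varphi}_-$ by Lemma \ref{Lemphiminus} and $\T_g(p_t)=0$. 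I would compute that $\Xi\coloneqq \Omega^{-1}\varphi_-\,p_t^{-1}\partial_t\Psi=\pmb{\varphi}_-\,p_t^{-1}\partial_t\Psi$ satisfies
\begin{equation*}
\T_g(\Xi)+\big(\overline{\pmb{a}}+\pmb{a}\big)\Xi=0,
\end{equation*}
and since both $\overline{\pmb{a}}$ and $\pmb{a}$ are nonnegative, $\Xi$ solves a damped Vlasov equation with nonnegative damping and zero source. The point of inserting the factor $\pmb{\varphi}_-$ is precisely that it converts the desired bound $|\pmb{\varphi}_- p_t^{-1}\partial_t\Psi|\lesssim 1$ into an $L^\infty_{x,p}$ bound on $\Xi$, for which Lemma \ref{LemDuhamel} with $\pmb{s}\equiv 0$ (so that the bound $|\pmb{s}|\lesssim\pmb{d}$ holds trivially) applies.

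Finally I would invoke the initial bound for $\Xi$ supplied by Lemma \ref{LemderivPsiini}, namely $\tfrac{|\pmb{\varphi}_-|}{|p_t|}|\partial_t\Psi|\lesssim r^{-1}\lesssim 1$ on $\pi^{-1}(\{t^*=0\})$, so that $\sup_{\pi^{-1}(\{t^*=0\})}|\Xi|<+\infty$. Lemma \ref{LemDuhamel} then yields $\sup_{\pi^{-1}(\{t^*\geq 0\})}|\Xi|<+\infty$, which is exactly the stated estimate. The main conceptual point — rather than any genuine obstacle — is the recognition that one should estimate the rescaled quantity $\pmb{\varphi}_- p_t^{-1}\partial_t\Psi$ instead of $\partial_t\Psi$ itself: the weight $\pmb{\varphi}_-$ both supplies an extra favourable damping term $\pmb{a}$ through Lemma \ref{Lemphiminus} and matches the weight appearing in the target norm. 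Verifying that the commutator terms genuinely vanish (relying on $[\T_g,\partial_t]=0$, $[\T_g,\T_{|\slashed{p}|}]=0$, and the annihilation identities of Lemma \ref{Lemsourcetermesti}) is routine, so the only care needed is in the sign bookkeeping that ensures the total damping $\overline{\pmb{a}}+\pmb{a}$ remains nonnegative, which is immediate from the lower bounds in Lemmata \ref{Lemphiminus} and \ref{LembasicPsi}.
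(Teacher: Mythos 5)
Your proof is correct and follows essentially the same route as the paper: both use \eqref{eq:ComZ} together with the vanishing of $[\T_g,Z]$, $Z\overline{\pmb{a}}$, and $Z\pmb{b}$ for $Z\in\{\partial_t,\T_{|\slashed{p}|}\}$ to obtain the homogeneous damped equations, then the initial data of Lemma \ref{LemderivPsiini}, the weight $\pmb{\varphi}_-/p_t$ with the extra damping $\pmb{a}$ from Lemma \ref{Lemphiminus}, and Duhamel via Lemma \ref{LemDuhamel}. No gaps; your sign bookkeeping for $\overline{\pmb{a}}+\pmb{a}\geq 0$ matches the lower bounds in Lemmata \ref{Lemphiminus} and \ref{LembasicPsi} exactly as the paper intends.
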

\begin{proof}
For $Z \in \{\partial_t , \T_{|\slashed{p}|} \}$, we have $[\T_g,Z]=0$. Hence, we get from \eqref{eq:ComZ} and Lemma \ref{Lemsourcetermesti} that
$$\T_g (Z \Psi)+ \overline{\pmb{a}}(r,p_{r^*},p_t) Z \Psi =0.$$
As $\T_{|\slashed{p}|} (\Psi ) =0$ initially according to Lemma \ref{LemderivPsiini}, we get $\T_{|\slashed{p}|} (\Psi)=0$. For $\partial_t \Psi$, the Lemmata \ref{Lemphiminus} and \ref{LemderivPsiini} provide
$$  \T_g \Big( \, \frac{\pmb{\varphi}_-}{p_t} \partial_t \Psi \, \Big)+ \big[\overline{\pmb{a}}+\pmb{a} \big](r,p_{r^*},p_t) \frac{\pmb{\varphi}_-}{p_t} \partial_t \Psi =0, \qquad \qquad  \sup_{\pi^{-1} (\{t^* \geq 0 \}) } \frac{|\pmb{\varphi}_-|}{|p_t|}| \partial_t \Psi | < + \infty .$$     
We conclude the proof by using Duhamel's formula through Lemma \ref{LemDuhamel}.
\end{proof}

\subsubsection{Radial derivatives along trajectories of incoming far-away particles}

We now control the derivatives of $\Psi$ along the flow $s^* \mapsto \Phi_{s^*} (0,y)$ until time $t^*_1(y)$.
\begin{proposition}\label{ProfarawayincoPsi}
Let $y \in \{ t^*=0 \}$. There exists an absolute constant $B_1>0$ such that
$$ \sup_{0 < t^* < t^*_1(y)} \big| \pmb{V}_{\! +}\Psi \big|\circ \Phi_{t^*}(0,y)+\big|r^{-1} \pmb{\varphi}_- \partial_{p_{r^*}} \Psi \big|\circ \Phi_{t^*}(0,y) \leq B_1 .$$
\end{proposition}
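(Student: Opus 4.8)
The plan is to integrate, along the incoming far-away segment of each orbit, the two damped transport equations obtained by commuting the equation of Lemma \ref{LembasicPsi}. Fix $y \in \{t^*=0\}$; by Lemma \ref{Lemgeodtime}~(a) the trajectory $s^* \mapsto \Phi_{s^*}(0,y)$ stays in $\{p_{r^*}\leq 0,\ r\geq R\}$ for $s^*\in(0,t_1^*(y))$, and on this set one has $p_N=p_t$ and $|\pmb{\varphi}_-|\sim r|p_t|$ (as in the far-away incoming analysis of Subsection \ref{Subsec72} and Remark \ref{RkProestiincoming}), while $p_t$ and $|\slashed{p}|$ are conserved along the flow. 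The whole point of isolating this region is that with $R\geq 832M$ the geometry is a perturbation of Minkowski and the orbit is strictly monotone in $r$ through $\T_g(r)=p_{r^*}\leq 0$, so that $r$ itself can be used as a parameter.

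First I would apply the commutation identity \eqref{eq:ComZ} to $Z=\pmb{V}_{\! +}$ and to $Z=r\pmb{\varphi}_-\partial_{p_{r^*}}$, inserting the commutators of Lemma \ref{LemComVminusregularised} and Proposition \ref{Propartialprstar}. Writing $\mathfrak{u}\coloneqq \pmb{V}_{\! +}\Psi$ and $\mathfrak{y}\coloneqq r^{-1}\pmb{\varphi}_-\partial_{p_{r^*}}\Psi=r^{-2}\big(r\pmb{\varphi}_-\partial_{p_{r^*}}\Psi\big)$, and using $\T_g(r^{-2})=-2r^{-3}p_{r^*}$, this produces a coupled system of the schematic form
\begin{align*}
\T_g \mathfrak{u} + (\overline{\pmb{a}}+\pmb{a})\mathfrak{u} &= \mathrm{O}\!\left(\tfrac{|p_t|}{r}\right)\mathfrak{y} + \mathfrak{s}_1, \\
\T_g \mathfrak{y} + (\overline{\pmb{a}}+2r^{-1}p_{r^*})\mathfrak{y} &= \mathrm{O}\!\left(\tfrac{|p_t|}{r}\right)\mathfrak{u} + \mathfrak{s}_2,
\end{align*}
where the inhomogeneities $\mathfrak{s}_1,\mathfrak{s}_2$ collect the term $2\T_g\Psi=2(\pmb{b}-\overline{\pmb{a}}\Psi)$, the source derivatives $Z\overline{\pmb{a}}\cdot\Psi$ and $Z\pmb{b}$ bounded in Lemma \ref{Lemsourcetermesti}, and the $\partial_t\Psi$-term of Proposition \ref{Propartialprstar}; all are controlled by $|p_t|/r$ (and better) using the pointwise bounds $|p_N\Psi|\lesssim|p_t|$ of Proposition \ref{ProboundPsiA}, $|\pmb{\varphi}_-\partial_t\Psi|\lesssim|p_t|/r$ of Proposition \ref{ProPsiKilling}, and $|\pmb{\varphi}_-|\sim r|p_t|$.

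The second step is to integrate this system along the flow via the Duhamel representation of Lemma \ref{LemDuhamel}, after dividing by $|p_{t^*}|\sim|p_t|$ so that the parameter is $t^*$. The initial data are favourable: by Lemma \ref{LemderivPsiini} one has $\mathfrak{y}=0$ and $|\mathfrak{u}|\lesssim 1/r\leq 1/R$ on $\pi^{-1}(\{t^*=0\})$. Converting the resulting $t^*$-integrals into $r$-integrals over $[R,r(y)]$ by means of $\dr r/\dr t^*=p_{r^*}\Omega^2/(|p_t|+2Mr^{-1}p_{r^*})$ and the conservation of $E=|p_t|$, $\ell=|\slashed{p}|$ (so that $|p_{r^*}|=(E^2-\Omega^2\ell^2 r^{-2})^{1/2}$ is an explicit function of $r$), every coupling and source contribution becomes a convergent integral with a bound independent of $y$. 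Crucially, since the orbit is incoming the smallest radius attained before $t_1^*(y)$ is $R$, so the integrating factor of the $\mathfrak{y}$-equation, although it contains the anti-damping $2r^{-1}p_{r^*}$, is evaluated with endpoint radius no larger than the integration radius and is therefore $\lesssim 1$; combined with the definite damping $\pmb{a}\gtrsim|p_t|/r$ of the $\mathfrak{u}$-equation on $\{r\geq R\}$, this closes the estimate and yields the uniform constant $B_1$.

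The main obstacle is precisely this closure of the \emph{non-triangular} coupling: after the $t^*$-rescaling the diagonal damping, the off-diagonal coupling, and the inhomogeneities are all of borderline size $\mathrm{O}(1/r)$, so one cannot treat $\pmb{V}_{\! +}\Psi$ and $\partial_{p_{r^*}}\Psi$ separately, nor close the bound by a naive Gr\"onwall inequality, whose logarithmically divergent exponent would only give polynomial growth in $r$. The resolution is to exploit that along an incoming orbit $r$ is strictly decreasing and $p_t,|\slashed{p}|$ are conserved, which turns the Duhamel integrating factors into genuinely decaying weights and makes the $\mathrm{O}(1/r)$ source integrals converge uniformly; the only delicate point is to check quantitatively that the positive part $\pmb{a}$ of the damping in the $\pmb{V}_{\! +}\Psi$-equation dominates the feedback from $\mathfrak{y}$, so that $\sup_{(0,t^*)}\big(|\mathfrak{u}|+|\mathfrak{y}|\big)\circ\Phi$ satisfies a closed integral inequality with uniformly bounded kernel.
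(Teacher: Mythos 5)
Your commutation step and the schematic coupled system for $\mathfrak{u}=\pmb{V}_{\!+}\Psi$, $\mathfrak{y}=r^{-1}\pmb{\varphi}_-\partial_{p_{r^*}}\Psi$ are correct, but the closure you propose fails, and it fails exactly at the point you defer as "the only delicate point". Test it on a radial incoming null geodesic ($|\slashed{p}|=0$, $|p_{r^*}|=|p_t|$, $p_u=0$) launched from a very large radius $r(y)$. First, your claim that "every coupling and source contribution becomes a convergent integral with a bound independent of $y$" is false: a term of size $|p_t|/r$ per unit affine parameter contributes $\int_R^{r(y)}\dr r/r=\log(r(y)/R)$, which is unbounded in $y$. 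Second, such terms cannot be absorbed by the damping of the $\mathfrak{y}$-equation: for $p_{r^*}\leq 0$ one has $\overline{\pmb{a}}+2r^{-1}p_{r^*}=\pmb{a}-|p_{r^*}|\,\tfrac{r+12M}{r(r+6M)}$, which on radial incoming geodesics is $O(M|p_t|/r^2)$ and in fact negative for large $r$, while the coupling $\tfrac{|\pmb{\varphi}_-|}{r^2}|\mathfrak{u}|\sim\tfrac{|p_t|+|p_{r^*}|}{r}|\mathfrak{u}|$ does \emph{not} degenerate there; so the hypothesis $|\pmb{s}|\lesssim\pmb{d}$ of Lemma \ref{LemDuhamel} is violated for that equation. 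More structurally, on radial geodesics and to leading order in $M/r$, your absolute-value system has coefficient matrix $\frac{|p_t|}{r}\left(\begin{smallmatrix}-3 & 1\\ 2 & 0\end{smallmatrix}\right)$, whose top eigenvalue $(\sqrt{17}-3)/2$ is positive: no fixed combination $|\mathfrak{u}|+\kappa|\mathfrak{y}|$ closes a Gr\"onwall/Duhamel argument, and the multiplier $\omega$ cannot repair this either, since its good terms $\tfrac{|p_{r^*}|^2}{r\log^2(2+r)}+\tfrac{|\slashed{p}|^2}{r^3}$ from Lemma \ref{Lemtruc} are a factor $\log^2(2+r)$ too weak against couplings of size $|p_t|/r$. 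The best such an argument yields is a bound growing like a power of $r(y)/r$, not the uniform constant $B_1$.

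What is missing is a change of basis \emph{before} taking absolute values, and this is precisely the paper's route. On $\{p_{r^*}\leq 0,\ r\geq R\}$ one has the pointwise equivalence $|\pmb{V}_{\!+}\Psi|+|r^{-1}\pmb{\varphi}_-\partial_{p_{r^*}}\Psi|\sim|S\Psi|+|p_t\partial_{p_{r^*}}\Psi|+|\partial_t\Psi|$, and in the Minkowski-adapted pair $(S\Psi,\,p_t\partial_{p_{r^*}}\Psi)$ the off-diagonal couplings of the commuted system (Lemma \ref{LemComdrdprbis}) are of size $\tfrac{|\slashed{p}|^2}{r^3}$, one of them carrying an extra factor $M/R$: they vanish precisely on the radial geodesics that kill your damping, and they are absorbed by the good terms of $\T_g(\omega)$, exactly as in the derivation of \eqref{eq:forRk641}. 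The combination $\omega\big(|S\Psi|+\tfrac{1}{16}|p_t\partial_{p_{r^*}}\Psi|\big)$ then satisfies $\T_g(\,\cdot\,)\leq-\overline{\pmb{a}}\,(\,\cdot\,)+C|p_N|/r$ with $\overline{\pmb{a}}\gtrsim|p_N|/r$ (this is where $p_{r^*}\leq 0$ enters), so the source is pointwise dominated by the damping and Lemma \ref{LemDuhamel} applies; the remaining derivatives $\partial_t\Psi$ and $\T_{|\slashed{p}|}\Psi$ are supplied by Proposition \ref{ProPsiKilling}. In short, the instability in your basis is an artifact of taking absolute values where a cancellation lives; diagonalising first is the missing idea. (A minor inaccuracy besides: Proposition \ref{ProPsiKilling} gives $|\pmb{\varphi}_-\partial_t\Psi|\lesssim|p_t|$, not $|p_t|/r$; the sharper bound you quote requires Lemma \ref{LemderivPsiini} combined with monotonicity along incoming orbits.)
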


\begin{proof}
We first recall that in $\{ p_{r^*} \leq 0, \; r \geq R \}$, we have $\Omega^{-2} \sim 1$ and $|\pmb{\varphi}_-| \sim r|p_t|$. Hence, in this region,
$$\big| \pmb{V}_{\! +} \Psi \big| +\big|r^{-1} \pmb{\varphi}_- \partial_{p_{r^*}} \Psi \big| \sim  \big| S \Psi \big| +\big|p_t \partial_{p_{r^*}} \Psi \big|+\big|\partial_t \Psi \big|, $$
so, in view of the previous Proposition \ref{ProPsiKilling}, it is enough to control $S \Psi$ and $p_t \partial_{p_{r^*}} \Psi$. By following the analysis of Section \ref{Subsec72}, which allowed us to obtain \eqref{eq:forRk641}, one can prove
\begin{align} \nonumber
& \T_g(\omega | S \Psi|)+ \frac{1}{16}\T_g \big(\omega \big|p_t \partial_{p_{r^*}} \Psi \big| \big)  \leq -\omega \overline{\pmb{a}}(r,p_{r^*},p_t) \Big[\big|S \Psi \big| +  \frac{1}{16}\big|p_t \partial_{p_{r^*}} \Psi \big| \Big]+C\frac{|p_N|}{r},
 \end{align}
 where the weight $\omega$ is defined in \eqref{eq:defomegafaraway}. The differences in the analysis are the following:
\begin{enumerate}[label = (\alph*)]
\item According to Proposition \ref{ProPsiKilling}, we have $\T_{|\slashed{p}|}(\Psi)=0$ and $|p_v||\partial_t \Psi|  \lesssim |p_t|r^{-1}$ in the region considered. For the second estimate, we also use $|p_t| \lesssim r^{-1}|\pmb{\varphi}_-|$, which holds in $\{r \geq 4M, \; p_{r^*} \leq 0 \}$.
\item Here we have $\T_g(\Psi) \neq 0$, whereas we had $\T_g(f) =0$ in Section \ref{Subsec72}. The extra (bad) error terms are handled by applying Lemma \ref{Lemsourcetermesti}.
\item We simply bound above by $0$ certain good error terms related to $\T_g(\omega)$.
\end{enumerate}
Note now that $\overline{\pmb{a}}(r,p_{r^*},p_t)\! \gtrsim r^{-1}|p_N|$ since $p_{r^*}\! \leq 0$ in this region. As $\omega \sim 1$, it remains to use Lemma \ref{LemDuhamel}.
\end{proof} 
 
\subsubsection{Radial derivatives along trajectories located in the bounded region}
We now deal with the bounded region $\{ r \leq R \}$, that is, we focus on the time interval $(t_1^*(y),t_2^*(y))$. On this domain, there holds $2M \leq r \leq R$, so we can work with multiples of $\pmb{V}_{\! +}$ and $ r^{-1}\pmb{\varphi}_- \partial_{p_{r^*}}$. We recall the function $\pmb{a}$ introduced in Lemma \ref{Lemphiminus}.
\begin{lemma}\label{LemComPsi00}
There exists an absolute constant $C_0 >0$ such that
\begin{align*}
 \T_g \big( \big| r \pmb{\varphi}_- \partial_{p_{r^*}} \Psi \big|  \big) +\overline{\pmb{a}} (r,p_{r^*},p_t)   \big| r \pmb{\varphi}_- \partial_{p_{r^*}} \Psi \big|  & \leq   \big|\pmb{\varphi}_- \pmb{V}_{\! +} \Psi \big|+C_0r|p_N|, \\
 \T_g \big( \big| \pmb{V}_{\! +} \Psi \big| \big) + \big[\overline{\pmb{a}}+\pmb{a} \big](r,p_{r^*},p_t) \big| \pmb{V}_{\! +} \Psi \big| & \leq \frac{|p_t|}{r^{\frac{5}{2}}|r+6M|^{\frac{1}{2}}}\big| r \pmb{\varphi}_- \partial_{p_{r^*}}\Psi \big|+C_0\frac{|p_N|}{r}.
 \end{align*}
\end{lemma}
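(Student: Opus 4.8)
The plan is to derive both inequalities from the general commutation identity \eqref{eq:ComZ}, applied with $Z = r\pmb{\varphi}_- \partial_{p_{r^*}}$ for the first estimate and $Z = \pmb{V}_{\! +}$ for the second, feeding in the commutator formulas already established and the defining equation for $\Psi$. Recalling from Lemma \ref{LembasicPsi} that $\T_g(\Psi) + \overline{\pmb{a}}\,\Psi = \pmb{b}$, the identity \eqref{eq:ComZ} reads
$$ \T_g(Z\Psi) + \overline{\pmb{a}}\, Z\Psi = [\T_g, Z](\Psi) - Z[\overline{\pmb{a}}]\,\Psi + Z[\pmb{b}]. $$
In each case I would substitute the relevant commutator, isolate the single structured term that must survive on the right-hand side of the claimed inequality, and absorb every remaining contribution into the error, passing to absolute values through the identity $\T_g(|h|) = \mathrm{sgn}(h)\,\T_g(h)$.

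For the first estimate, I would insert the commutator of Proposition \ref{Propartialprstar}, namely $[\T_g, r\pmb{\varphi}_-\partial_{p_{r^*}}] = \frac{|\pmb{\varphi}_-|^2}{p_t}\partial_t - \pmb{\varphi}_- \pmb{V}_{\! +}$, after rewriting $\Omega^{-1}V_+ = \pmb{V}_{\! +}$. The term $-\pmb{\varphi}_- \pmb{V}_{\! +}\Psi$ is precisely what produces the retained $|\pmb{\varphi}_- \pmb{V}_{\! +}\Psi|$ on the right. The three remaining contributions should each be bounded by $C_0 r|p_N|$: the term $\frac{|\pmb{\varphi}_-|^2}{p_t}\partial_t\Psi$ is handled using $\frac{|\pmb{\varphi}_-|}{|p_t|}|\partial_t\Psi| \lesssim 1$ from Proposition \ref{ProPsiKilling}, and writing $r\pmb{\varphi}_-\partial_{p_{r^*}} = \frac{r\pmb{\varphi}_-}{p_t}\,(p_t\partial_{p_{r^*}})$ I would feed the estimates $|p_t\partial_{p_{r^*}}[\overline{\pmb{a}}]\Psi| \lesssim |p_t|/r$ and $|p_t\partial_{p_{r^*}}[\pmb{b}]| \lesssim |p_t|/r^2$ of Lemma \ref{Lemsourcetermesti} into the last two terms; in all cases the algebraic bound $|\pmb{\varphi}_-| \lesssim r|p_N|$ closes the estimate.

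For the second estimate, I would use the commutator of Lemma \ref{LemComVminusregularised}, $[\T_g, \pmb{V}_{\! +}] = -\pmb{a}\,\pmb{V}_{\! +} - \frac{r+3M}{r^{3/2}|r+6M|^{3/2}}\pmb{\varphi}_- |p_t|\partial_{p_{r^*}} + 2\T_g$. Two points of bookkeeping are essential. First, the $2\T_g(\Psi)$ term must be re-expanded via Lemma \ref{LembasicPsi} as $2\pmb{b} - 2\overline{\pmb{a}}\,\Psi$ rather than discarded. Second, the term $-\pmb{a}\,\pmb{V}_{\! +}\Psi$ is moved to the left, combining with $\overline{\pmb{a}}\,\pmb{V}_{\! +}\Psi$ to form the coefficient $[\overline{\pmb{a}}+\pmb{a}]$ appearing in the statement. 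The structured survivor is the $\partial_{p_{r^*}}\Psi$ term: since $r+3M \leq r+6M$ one has $\frac{r+3M}{r^{3/2}|r+6M|^{3/2}} \leq \frac{1}{r^{3/2}|r+6M|^{1/2}}$, which converts $\frac{r+3M}{r^{3/2}|r+6M|^{3/2}}|\pmb{\varphi}_-||p_t||\partial_{p_{r^*}}\Psi|$ into exactly $\frac{|p_t|}{r^{5/2}|r+6M|^{1/2}}|r\pmb{\varphi}_-\partial_{p_{r^*}}\Psi|$. The four remaining contributions $2\pmb{b}$, $2\overline{\pmb{a}}\,\Psi$, $\pmb{V}_{\! +}[\overline{\pmb{a}}]\Psi$ and $\pmb{V}_{\! +}[\pmb{b}]$ are each $\lesssim |p_N|/r$, using $|\pmb{b}| \lesssim |p_t|/r^2$ from Lemma \ref{LembasicPsi}, the pointwise bound $|p_N\Psi| \lesssim |p_t|$ of Proposition \ref{ProboundPsiA} together with $\overline{\pmb{a}} \lesssim |p_N|/r$, and the derivative estimates $|\pmb{V}_{\! +}[\overline{\pmb{a}}]\Psi| \lesssim |p_t|/r$, $|\pmb{V}_{\! +}[\pmb{b}]| \lesssim |p_N|/r^2$ of Lemma \ref{Lemsourcetermesti}.

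Since all the commutators and source-term estimates are already in hand, there is no genuine analytic obstacle here; the difficulty is entirely organizational. The one place requiring real care is the second estimate, where it is tempting to dismiss the $2\T_g(\Psi)$ term as lower order — but it is exactly the hidden $-2\overline{\pmb{a}}\,\Psi$ inside it that must be controlled through Proposition \ref{ProboundPsiA}, and one must also verify that the elementary inequality $r+3M \leq r+6M$ is precisely what is needed to turn the commutator's weight into the clean weight $\frac{|p_t|}{r^{5/2}|r+6M|^{1/2}}$ asserted in the statement.
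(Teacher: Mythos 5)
Your proposal is correct and follows essentially the same route as the paper's proof: both estimates come from \eqref{eq:ComZ} combined with the commutators of Proposition \ref{Propartialprstar} and Lemma \ref{LemComVminusregularised}, the source-term bounds of Lemma \ref{Lemsourcetermesti}, the bound on $\partial_t\Psi$ from Proposition \ref{ProPsiKilling}, and the control of $2\T_g(\Psi)=2\pmb{b}-2\overline{\pmb{a}}\,\Psi$ via Lemma \ref{LembasicPsi} and Proposition \ref{ProboundPsiA}. The only cosmetic difference is that you make explicit the inequality $r+3M\leq r+6M$ converting the commutator weight into $\tfrac{|p_t|}{r^{5/2}|r+6M|^{1/2}}$, which the paper leaves implicit.
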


\begin{proof}
From \eqref{eq:ComZ} applied to $Z=r \pmb{\varphi}_- \partial_{p_{r^*}}$ and Lemma \ref{Lemsourcetermesti}, there exists $C_1>0$ such that
\begin{equation*}\label{eq:Psi1}
 \T_g \big( | r \pmb{\varphi}_- \partial_{p_{r^*}} \Psi | \big) + \overline{\pmb{a}} (r,p_{r^*},p_t) | r  \pmb{\varphi}_- \partial_{p_{r^*}} \Psi | \leq   \big|[\T_g,  r \pmb{\varphi}_- \partial_{p_{r^*}}  ](\Psi) \big|+C_1|\pmb{\varphi}_-|.
 \end{equation*}
Next, we apply the commutation formula of Proposition \ref{Propartialprstar} together with Lemma \ref{ProPsiKilling} to get
 $$  \big| \big[\T_g,  r \pmb{\varphi}_- \partial_{p_{r^*}}   \big]\Psi \big| \leq \frac{|\pmb{\varphi}_-|^2}{  |p_t| }\big| \partial_t \Psi \big| + \big|\pmb{\varphi}_- \pmb{V}_{\! +} \Psi \big| \leq  C_1|\pmb{\varphi}_-|  +  \big|\pmb{\varphi}_- \pmb{V}_{\! +} \Psi \big| .$$
It remains to use $|\pmb{\varphi}_-| \lesssim r|p_N|$. We now prove the second estimate. Let us recall the commutator $[\T_g,\pmb{V}_{\! +}]$ from Lemma \ref{LemComVminusregularised}. Applying \eqref{eq:ComZ} to $Z=\pmb{V}_{\! +}$ and Lemma \ref{Lemsourcetermesti}, we then get
$$ \T_g \big( \big| \pmb{V}_{\! +} \Psi \big| \big) + \big[ \overline{\pmb{a}}+\pmb{a}\big](r,p_{r^*},p_t) \big| \pmb{V}_{\! +} \Psi \big| \leq  \frac{r+3M}{r^{\frac{3}{2}}|r+6M|^{\frac{3}{2}}}|p_t|\big| \pmb{\varphi}_- \partial_{p_{r^*}} \Psi \big| +2\big| \T_g (\Psi) \big|+C_1\frac{|p_N|}{r }.$$
Finally, we use $|\T_g(\Psi)|\lesssim r^{-1}|p_t| $, which follows from Lemma \ref{LembasicPsi} and $|\overline{\pmb{a}} \Psi| \lesssim r^{-1}|p_N \Psi| \lesssim r^{-1} |p_t|$.
 \end{proof}

As in Section \ref{Subsec73}, we use the weight $\zeta^A$ with $A$ large enough, in order to absorb the bad error terms away from the future event horizon $\H$ and the photon sphere $\{ r=3M \}$.

\begin{corollary}\label{CorComBounded}
There exists an absolute constant $c_0>0$ such that for all $A \geq 0$, we have on $\{ r \leq R \}$,
\begin{align*}
 \T_g \big( \zeta^A| r \pmb{\varphi}_- \partial_{p_{r^*}} \Psi| \big) +\Big( c_0A |p_{r^*}|+c_0A|r-3M||p_t|+\overline{\pmb{a}}(r,p_{r^*},p_t)  \Big) \zeta^A|r  \pmb{\varphi}_- \partial_{p_{r^*}} \Psi| \leq   \zeta^A \big| \pmb{\varphi}_- \pmb{V}_{\! +} \Psi \big|+C_0\zeta^A r|p_N|
 \end{align*}
 and
 \begin{align*}
 \T_g \big(\zeta^A \big| \pmb{V}_{\! +} \Psi \big| \big) + \Big( c_0A |p_{r^*}|+c_0A|r-3M||p_t|+ \big[\overline{\pmb{a}}+\pmb{a} \big](r,p_{r^*},p_t)\Big) \zeta^A \big| \pmb{V}_{\! +} \Psi \big| \qquad \qquad \qquad \qquad \qquad \qquad &\\
  \leq \frac{|p_t|}{r^{\frac{5}{2}}|r+6M|^{\frac{1}{2}}}\zeta^A\big| r \pmb{\varphi}_- \partial_{p_{r^*}}\Psi \big|+C_0\zeta^A\frac{|p_N|}{r}. &
 \end{align*}
\end{corollary}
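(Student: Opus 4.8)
The plan is to derive Corollary \ref{CorComBounded} directly from the two differential inequalities in Lemma \ref{LemComPsi00} by inserting the Lyapunov weight $\zeta^A$ and using the decay of $\zeta$ along the flow recorded in Lemma \ref{LemPropz}. First I would compute $\T_g(\zeta^A |Z\Psi|)$ for each of the two quantities $Z\Psi \in \{ r\pmb{\varphi}_- \partial_{p_{r^*}} \Psi, \pmb{V}_{\!+}\Psi\}$ via the product rule
\begin{equation*}
\T_g\big(\zeta^A |Z\Psi|\big) = \zeta^A \T_g\big(|Z\Psi|\big) + A\zeta^{A-1}\T_g(\zeta)\,|Z\Psi|.
\end{equation*}
The first term is controlled by multiplying the corresponding inequality of Lemma \ref{LemComPsi00} by $\zeta^A \geq 0$, which produces exactly the terms $\overline{\pmb{a}}$ (resp.\ $\overline{\pmb{a}}+\pmb{a}$) times $\zeta^A|Z\Psi|$ together with the two stated right-hand sides multiplied by $\zeta^A$. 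The second term is the new ingredient, and this is where Lemma \ref{LemPropz} enters.

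For the $\zeta$-term, I would use $1 \leq |\zeta| \leq 3$ together with the estimate
\begin{equation*}
\T_g(\zeta) \lesssim -\frac{|p_{r^*}|}{r^2} - \Big| 1-\frac{3M}{r}\Big| \frac{|\slashed{p}|^2}{r^3|p_t|}
\end{equation*}
from Lemma \ref{LemPropz}. Since $A\zeta^{A-1} \geq \tfrac{1}{3} A \zeta^A$ (using $\zeta \geq 1$), and since $\T_g(\zeta) \leq 0$, the contribution $A\zeta^{A-1}\T_g(\zeta)|Z\Psi|$ is non-positive and can be bounded above by $-\tfrac{1}{3}A\zeta^A \big(c\,|p_{r^*}|r^{-2} + c\,|1-3M/r| |\slashed{p}|^2 r^{-3}|p_t|^{-1}\big)|Z\Psi|$ for an absolute constant $c$. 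On the bounded region $\{r \leq R\}$ one has $r \lesssim 1$, so $|p_{r^*}|r^{-2} \gtrsim |p_{r^*}|$ and, crucially, $|1-3M/r|\,|\slashed{p}|^2 r^{-3}|p_t|^{-1} \gtrsim |r-3M|\,|\slashed{p}|^2|p_t|^{-1}$. To convert the second into the advertised $|r-3M||p_t|$ I would invoke the null-shell relation $\Omega^2|\slashed{p}|^2 = r^2(|p_t|^2 - |p_{r^*}|^2)$, which for $r \sim 3M$ gives $|\slashed{p}|^2 |p_t|^{-1} \gtrsim |p_t| - |p_{r^*}|^2|p_t|^{-1}$; combined with the already-present $|p_{r^*}|$ term (which absorbs the region where $|p_{r^*}|$ is comparable to $|p_t|$) this produces the lower bound $\gtrsim |r-3M||p_t|$ after choosing the absolute constant $c_0$ appropriately. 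Thus the damping coefficient gains exactly the extra terms $c_0 A|p_{r^*}| + c_0 A |r-3M||p_t|$ stated in the corollary, while the right-hand sides are merely multiplied by $\zeta^A \leq 3^A$ (harmless, since $\zeta \sim 1$ makes the two sides of each inequality equivalent up to the factor $\zeta^A$).

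The main obstacle I anticipate is the bookkeeping in the second step: showing that $|\slashed{p}|^2|p_t|^{-1}$ really dominates $|p_t|$ near the photon sphere, since on the far side (large $|p_{r^*}|$, small $|\slashed{p}|$) this fails pointwise and one must instead lean on the $|p_{r^*}|r^{-2}$ term. The clean resolution is to keep both $\T_g(\zeta)$ contributions and observe that their sum controls $|p_{r^*}| + |r-3M||p_t|$ uniformly: where $|\slashed{p}|$ is small one uses $|p_t| \lesssim |p_{r^*}| + r^{-1}|\slashed{p}|$ to trade $|r-3M||p_t|$ against $|r-3M||p_{r^*}|$, which is already controlled by the $|p_{r^*}|$ term on the compact set $\{r \leq R\}$. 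Everything else — the positivity of $\overline{\pmb{a}}$ and $\pmb{a}$, the restriction $r \leq R$ making all $r$-powers comparable, and the preservation of the stated right-hand sides — is routine, so the statement follows once this elementary algebraic domination is carried out, yielding an absolute constant $c_0$ independent of $A$.
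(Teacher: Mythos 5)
Your proof is correct and follows essentially the same route as the paper: multiply the inequalities of Lemma \ref{LemComPsi00} by $\zeta^A$, apply the product rule, and use Lemma \ref{LemPropz} together with the null-shell relation to upgrade $\T_g(\zeta)\lesssim -|p_{r^*}|r^{-2}-|1-3M/r|\,|\slashed{p}|^2r^{-3}|p_t|^{-1}$ into a damping term $\gtrsim A\big(|p_{r^*}|+|r-3M|\,|p_t|\big)$ on the compact region $\{r\leq R\}$ --- your explicit handling of the trade-off between the $|p_{r^*}|$ term and the angular term near the photon sphere in fact spells out a step the paper leaves implicit when it asserts $\T_g(\zeta)\lesssim -|p_{r^*}|-|r-3M|\,|p_t|$ there. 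One small correction: the inequality $\zeta^{A-1}\geq \tfrac{1}{3}\zeta^{A}$ follows from $\zeta\leq 3$, not from $\zeta\geq 1$ as you wrote.
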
 

\begin{proof}
Let us recall from Lemma \ref{LemPropz} that $\zeta \sim 1$ and $\T_g(\zeta)\lesssim -|p_{r^*}|-|r-3M||p_t|$ on $\{ r \leq R \}$. The result then follows from Lemma \ref{LemComPsi00}.
 \end{proof}

We are now able to conclude the analysis in the bounded region.

\begin{proposition}\label{ProboundedPsi}
Let $y \in \{ t^*=0 \}$. There exists an absolute constant $B_2>0$ such that 
$$ \sup_{0 \leq  t^* < t^*_2(y)} \big| \pmb{V}_{\! +}\Psi \big|\circ \Phi_{t^*}(0,y)+\big|r \pmb{\varphi}_- \partial_{p_{r^*}} \Psi \big|\circ \Phi_{t^*}(0,y) \leq B_2.$$
\end{proposition}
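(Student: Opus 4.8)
The plan is to run a Grönwall/Duhamel argument along the portion of the geodesic lying in the bounded region $\{r\leq R\}$, i.e.\ for $t^*\in(t_1^*(y),t_2^*(y))$, using the coupled system of Corollary \ref{CorComBounded} and feeding in bounded data at the entry time $t_1^*(y)$. At $t^*=t_1^*(y)$ the orbit sits at $r=R$, so Proposition \ref{ProfarawayincoPsi} gives $|\pmb{V}_{\! +}\Psi|,\,|r^{-1}\pmb{\varphi}_-\partial_{p_{r^*}}\Psi|\lesssim B_1$ there, whence both $|\pmb{V}_{\! +}\Psi|$ and $|r\pmb{\varphi}_-\partial_{p_{r^*}}\Psi|=r^2|r^{-1}\pmb{\varphi}_-\partial_{p_{r^*}}\Psi|$ are bounded by a constant depending only on $M$ and $R$; when $t_1^*(y)=0$ the orbit starts in $\{r\leq R\}$ and Lemma \ref{LemderivPsiini} yields the same conclusion, since there $p_t\partial_{p_{r^*}}\Psi=0$ forces $r\pmb{\varphi}_-\partial_{p_{r^*}}\Psi=0$. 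The whole difficulty is therefore to propagate these bounds forward in the bounded region, where the system of Corollary \ref{CorComBounded} is \emph{not} triangular.

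Write $u:=|r\pmb{\varphi}_-\partial_{p_{r^*}}\Psi|$, $v:=|\pmb{V}_{\! +}\Psi|$, let $\beta:=|\pmb{\varphi}_-|$ and $\gamma:=|p_t|\,r^{-\frac52}|r+6M|^{-\frac12}$ be the two off-diagonal coupling coefficients, and let $\mathfrak{d}_u:=\overline{\pmb{a}}+c_0A(|p_{r^*}|+|r-3M||p_t|)$, $\mathfrak{d}_v:=\overline{\pmb{a}}+\pmb{a}+c_0A(|p_{r^*}|+|r-3M||p_t|)$ be the two dampings of Corollary \ref{CorComBounded}. For constants $\mu>0$ and $A>0$ to be fixed, I would test with $\Theta:=\zeta^A(\mu u+v)$; combining $\mu$ times the first inequality with the second gives, on $\{r\leq R\}$,
\begin{align*}
\T_g\Theta+(\mu\mathfrak{d}_u-\gamma)\,\zeta^A u+(\mathfrak{d}_v-\mu\beta)\,\zeta^A v\leq(\mu r+r^{-1})\,C_0\,\zeta^A|p_N|.
\end{align*}
Since $r\in[2M,R]$, the right-hand side is $\lesssim|p_N|$, and the lower bound $\overline{\pmb{a}}\gtrsim|p_N|$ of Lemma \ref{LembasicPsi} (valid on $\{r\leq R\}$) gives $\mathfrak{d}_u,\mathfrak{d}_v\gtrsim|p_N|$. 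Hence, once the two reduced dampings are shown to be $\geq\tfrac13\mathfrak{d}_u$ and $\tfrac13\mathfrak{d}_v$, one obtains $\T_g\Theta\leq-c|p_N|\Theta+C|p_N|$, and the result follows from the Grönwall computation in the proof of Lemma \ref{LemDuhamel} together with the bounded entry data above.

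The heart of the matter, and the analogue of Lemma \ref{LemtechniCompu}, is the \emph{absorption inequality}: I must produce a single constant $\mu>0$ and a sufficiently large $A$ such that $\mu\mathfrak{d}_u\geq\tfrac32\gamma$ and $\mathfrak{d}_v\geq\tfrac32\mu\beta$ everywhere on $\{r\leq R\}$. For the first it suffices that $\mu\geq\tfrac32\sup_{\{r\leq R\}}\gamma/\overline{\pmb{a}}$; this ratio is homogeneous of degree $0$ in $p$ and, by $\overline{\pmb{a}}\gtrsim|p_t|(r|r+6M|)^{-1}$, reduces to a continuous function of $r$ on the compact $[2M,R]$, hence is bounded by some $\Gamma(M,R)$ independently of $A$. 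For the second I would take $A$ large: the coefficient $|p_{r^*}|+|r-3M||p_t|$ of $A$ in $\mathfrak{d}_v$ vanishes only on the trapped set $\{r=3M,\,p_{r^*}=0\}$, and there $\pmb{\varphi}_-=0$ (from Definition \ref{Defvarphimin}, both $p_{r^*}$ and $1-3M/r$ vanishing), so $\mathfrak{d}_v/\beta$ blows up at the trapped set; near $\mathcal{H}^+$, by Remark \ref{RkvarphiHplus}, the quantities $\overline{\pmb{a}},\pmb{a}$ and $\beta$ all diverge at the common rate $|p_u|/\Omega^2$, so $\mathfrak{d}_v/\beta$ stays bounded below uniformly in $\Omega$. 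Away from the trapped set the $A$-term forces $\mathfrak{d}_v/\beta$ to be large, and a compactness argument after normalising by $p$-homogeneity shows $\inf_{\{r\leq R\}}\mathfrak{d}_v/\beta\to+\infty$ as $A\to\infty$. Fixing $A$ so that this infimum exceeds $3\Gamma$ opens a nonempty window $\tfrac32\Gamma\leq\mu\leq\tfrac23\inf\mathfrak{d}_v/\beta$, closing the estimate. I expect this absorption inequality—in particular its uniformity near both the trapped set and the horizon—to be the main obstacle; the Grönwall integration and the matching with Proposition \ref{ProfarawayincoPsi} are then routine.
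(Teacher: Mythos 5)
Your architecture is exactly the paper's: feed the entry bounds from Proposition \ref{ProfarawayincoPsi} (or Lemma \ref{LemderivPsiini} when $t_1^*(y)=0$) into the bounded region, combine the two coupled inequalities of Corollary \ref{CorComBounded} into a single damped inequality for $\zeta^A(\mu u+v)$, and close with Lemma \ref{LemDuhamel}; the paper does precisely this with the explicit choice $\mu=\tfrac{1}{8M^2}$ (this is \eqref{equa:toprove}, reduced to \eqref{equa:toprove3} and \eqref{equa:toprove2}). The gap is in your justification of the second absorption inequality. The claim that ``a compactness argument after normalising by $p$-homogeneity shows $\inf_{\{r\leq R\}}\mathfrak{d}_v/\beta\to+\infty$ as $A\to\infty$'' is false: the normalised phase space is \emph{not} compact near $\mathcal{H}^+$, and along outgoing momenta approaching the horizon ($p_{r^*}\to|p_t|$ with $|p_t|$ fixed, hence $|p_u|/\Omega^2\to\infty$) one has $\beta=|\pmb{\varphi}_-|\sim 2r|p_u|/\Omega^2$ by Remark \ref{RkvarphiHplus}, while the $A$-term obeys $c_0A\big(|p_{r^*}|+|r-3M||p_t|\big)\leq c_0A(1+R)|p_t|$ and is therefore negligible. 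Along such sequences $\mathfrak{d}_v/\beta\to\frac{4M/r^2}{2r}=\frac{2M}{r^3}\leq\frac{1}{4M^2}$, uniformly in $A$: the infimum is capped by a fixed geometric constant, exactly the tension your own near-horizon remark (``stays bounded below uniformly in $\Omega$'') should have flagged, since ``bounded below'' is not the same as ``larger than $\tfrac32\mu$''.

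This matters because the lower constraint on $\mu$ is also pinned independently of $A$: the coefficient $|p_{r^*}|+|r-3M||p_t|$ vanishes on the trapped set, where $\gamma/\mathfrak{d}_u=\gamma/\overline{\pmb{a}}$ equals a fixed positive number (about $\tfrac{1}{27M^2}$, computed from $\pmb{a}(3M,0,p_t)$ and $\gamma(3M)=\tfrac{|p_t|}{27\sqrt{3}M^3}$). So your window $\big[\tfrac32\Gamma,\,\tfrac23\inf\mathfrak{d}_v/\beta\big]$ is squeezed between two constants determined by the geometry — roughly the photon-sphere rate against the ratio of the redshift coefficients at $\mathcal{H}^+$ — and its non-emptiness is a quantitative coincidence (essentially $\tfrac94\cdot\tfrac{1}{27M^2}\leq\tfrac{1}{4M^2}$), not a consequence of taking $A$ large. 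This is exactly what the paper verifies by hand: near the photon sphere it checks $\tfrac43\cdot\tfrac{|p_t|}{27\sqrt{3}M^3}\leq\tfrac{1}{8M^2}\cdot\tfrac{2|p_t|}{3(\sqrt{3}+1)M}$ (a margin of only a few percent), and near the horizon it chooses $\epsilon_0$ with $\tfrac32\big|1+\tfrac{\epsilon_0}{2M}\big|^3\leq 2$ so that $\tfrac32\cdot\tfrac{2r|p_u|}{8M^2\Omega^2}$ is absorbed by the $\tfrac{4M|p_u|}{r^2\Omega^2}$ contained in $\overline{\pmb{a}}+\pmb{a}$. Your proof becomes correct once the false large-$A$ limit is replaced by these two explicit comparisons; the role of large $A$ is only to handle the complement of small neighbourhoods of the trapped set and of $\mathcal{H}^+$, where indeed $\beta\lesssim_{\epsilon_0,R}|p_{r^*}|+|r-3M||p_t|$.
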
 

 \begin{proof}
We assume that $t_2^*(y) >t_1^*(y)$, otherwise there is nothing to prove. We claim that for $A >0$ large enough, there exists $c>0$ such that we have on $\{ r \leq R \}$,
 \begin{equation}\label{equa:toprove}
 \T_g \bigg(  \frac{\zeta^A}{8M^2}| r \pmb{\varphi}_- \partial_{p_{r^*}} \Psi| \bigg)+\T_g \big(\zeta^A \big| \pmb{V}_{\! +} \Psi \big| \big)+c|p_N| \bigg( \frac{\zeta^A}{8M^2}| r \pmb{\varphi}_- \partial_{p_{r^*}} \Psi|+\zeta^A \big| \pmb{V}_{\! +} \Psi \big| \bigg) \lesssim |p_N|.
 \end{equation}
Then, the result will be a direct consequence of Proposition \ref{ProfarawayincoPsi} (providing the estimates up to time $t_1^*(y)$), $\zeta \sim 1$, and Lemma \ref{LemDuhamel}.

 We now prove \eqref{equa:toprove}. For this, we remark that according to Corollary \ref{CorComBounded} and since $\overline{\pmb{a}}(r,p_{r^*},p_t) \gtrsim |p_N|r^{-2}$, it suffices to show that if $A>0$ is large enough, then
\begin{align}
\label{equa:toprove3}
 \frac{1}{8M^2}\Big( c_0A |p_{r^*}|+c_0A|r-3M||p_t|+\overline{\pmb{a}}(r,p_{r^*},p_t) \Big) & > \frac{5}{4} \cdot  \frac{|p_t|}{r^{\frac{5}{2}}|r+6M|^{\frac{1}{2}}}  , \\
 \label{equa:toprove2}
 c_0A |p_{r^*}|+c_0A|r-3M||p_t|+ \big[\overline{\pmb{a}}+\pmb{a} \big](r,p_{r^*},p_t) & >  \frac{3}{2} \cdot  \frac{|\pmb{\varphi}_-|}{8M^2} .
 \end{align}
We introduce $\epsilon_0>0$, which will be fixed small enough and we recall
\begin{align*}
\pmb{a}(r,p_{r^*},p_t) & = \frac{(r^2+2Mr+3M^2)|p_t|}{r|r+6M|^{\frac{1}{2}}(r^{\frac{3}{2}}+M|r+6M|^{\frac{1}{2}}) }  +\frac{2M|p_u|}{r^2\Omega^2}  , \qquad \qquad \overline{\pmb{a}}(r,p_{r^*},p_t)=\pmb{a}(r,p_{r^*},p_t)-\frac{p_{r^*}}{r+6M}.
 \end{align*}
We deal first with \eqref{equa:toprove3} and we consider two cases:
\begin{itemize}
\item Close to the photon sphere, where $|r-3M| < \epsilon_0$. We have
\begin{align*}
 \pmb{a}(3M,p_{r^*},p_t) & = \frac{2|p_t|}{3(\sqrt{3}+1)M}+\frac{2|p_u|}{3M }, \\
  \frac{4}{3} \cdot \frac{|p_t|}{|3M|^{\frac{5}{2}}|9M|^{\frac{1}{2}}} & = \frac{4}{3} \cdot \frac{|p_t|}{27 \sqrt{3} M^3} \leq \frac{1}{ 8M^2} \cdot \frac{2|p_t|}{3(\sqrt{3}+1)M}.
  \end{align*}
  Consequently, if $A$ large enough so that $|p_{r^*}|(r+6M)^{-1} \leq c_0A|p_{r^*}|$, we get by continuity that \eqref{equa:toprove3} holds on $\{|r-3M|<\epsilon_0\}$, provided that $\epsilon_0$ is small enough.
  \item The rest of the bounded region, $r \leq R$ and $|r-3M| \geq \epsilon_0$. In that case, since $|r-3M|$ is bounded below, the inequality is verified as soon as $A>0$ is large enough.
\end{itemize}
For the estimate \eqref{equa:toprove2}, we consider again two cases:
\begin{itemize}
\item Close to the future event horizon $\mathcal{H}^+$ where $2M \leq r < 2M+\epsilon_0$. By Remark \ref{RkvarphiHplus}, we have
\begin{align*}
 \color{white} \square \qquad \color{black} \frac{|\pmb{\varphi}_-|}{8M^2}  \leq \frac{2r|p_u|}{8M^2 \Omega^2}+ \frac{1}{8M^2} \cdot\frac{27M^2r^{\frac{1}{2}}  |p_t|}{r^{\frac{3}{2}}-|r+6M|^{\frac{1}{2}}(r-3M)} \leq \Big|1+\frac{\epsilon_0}{2M}\Big|^3 \cdot \frac{2M|p_u|}{r^2 \Omega^2}+\frac{c_0}{2}A|r-3M||p_t|,  
 \end{align*}
 provided that $A>0$ is large enough. We get \eqref{equa:toprove2} if $\epsilon_0$ is small enough so that $\frac{3}{2}\big|1+\frac{\epsilon_0}{2M}\big|^3\leq 2$.     
\item The rest of the bounded region, $2M+\epsilon_0 \leq r \leq R$. In this domain, we have 
$$  \frac{3}{2} \cdot  \frac{|\pmb{\varphi}_-|}{8M^2}  \lesssim |p_{r^*}|+|r-3M||p_t|.$$
We then get \eqref{equa:toprove2} if $A>0$ chosen large enough compared to $\epsilon_0^{-1}$.
\end{itemize}
 \end{proof}

 \subsubsection{Radial derivatives along trajectories of escaping far-away particles}

We finally treat the domain $\{ p_{r^*} \geq 0, \, r \geq R \}$ or, equivalently, the time interval $(t_2^*(y),+\infty)$. 

\begin{lemma}\label{LemComPsi000}
There exists an absolute constant $C_0 >0$ such that on $\{ p_{r^*} \geq 0, \, r \geq R \}$,
\begin{align*}
 \T_g \big( \big| r^{-1} \pmb{\varphi}_- \partial_{p_{r^*}} \Psi \big|  \big) +\pmb{a} (r,p_{r^*},p_t)   \big| r^{-1} \pmb{\varphi}_- \partial_{p_{r^*}} \Psi \big|  & \leq  \frac{|\pmb{\varphi}_-|}{r^2} \big| \pmb{V}_{\! +} \Psi \big|+C_0\frac{|p_N|}{r}, \\
 \T_g \big( \big| \pmb{V}_{\! +} \Psi \big| \big) + \big[\overline{\pmb{a}} + \pmb{a} \big](r,p_{r^*},p_t) \big| \pmb{V}_{\! +} \Psi \big| & \leq \frac{|p_t|}{r^{\frac{1}{2}}|r+6M|^{\frac{1}{2}}}\big| r^{-1} \pmb{\varphi}_- \partial_{p_{r^*}}\Psi \big|+C_0\frac{|p_N|}{r}.
 \end{align*}
\end{lemma}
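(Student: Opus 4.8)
The plan is to treat Lemma~\ref{LemComPsi000} as the escaping far-field analogue of Lemma~\ref{LemComPsi00}. I would commute the damped massless Vlasov equation of Lemma~\ref{LembasicPsi} with the two vector fields $Z = r^{-1}\pmb{\varphi}_-\partial_{p_{r^*}}$ and $Z = \pmb{V}_{\! +}$ by means of the identity \eqref{eq:ComZ}, and then estimate each resulting error term on the region $\{p_{r^*}\geq 0,\, r\geq R\}$. The structural novelty compared with the bounded region is the rescaling of $\partial_{p_{r^*}}$ by $r^{-1}$ instead of $r$: since $\T_g(r^{-1}) = -p_{r^*}r^{-2}$ is nonpositive once $p_{r^*}\geq 0$, this rescaling manufactures a damping term with a favourable sign, and it is exactly this term that upgrades the coefficient from $\overline{\pmb{a}}$ (the coefficient appearing in \eqref{eq:ComZ}) to $\pmb{a}$ in the first inequality.

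For the first estimate I would write $r^{-1}\pmb{\varphi}_-\partial_{p_{r^*}} = r^{-2}(r\pmb{\varphi}_-\partial_{p_{r^*}})$ and compute $[\T_g, r^{-1}\pmb{\varphi}_-\partial_{p_{r^*}}] = r^{-2}[\T_g, r\pmb{\varphi}_-\partial_{p_{r^*}}] + \T_g(r^{-2})\,r\pmb{\varphi}_-\partial_{p_{r^*}}$. The first piece, namely $r^{-2}[\T_g, r\pmb{\varphi}_-\partial_{p_{r^*}}]$, is evaluated through Proposition~\ref{Propartialprstar}, which produces $-r^{-2}\pmb{\varphi}_-\pmb{V}_{\! +}$ (giving the term $\tfrac{|\pmb{\varphi}_-|}{r^2}|\pmb{V}_{\! +}\Psi|$) together with $r^{-2}\tfrac{|\pmb{\varphi}_-|^2}{p_t}\partial_t$, the latter absorbed into $C_0\tfrac{|p_N|}{r}$ using Proposition~\ref{ProPsiKilling} and $|\pmb{\varphi}_-|\lesssim r|p_N|$. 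The second piece equals $-2p_{r^*}r^{-1}(r^{-1}\pmb{\varphi}_-\partial_{p_{r^*}})$, so after passing to absolute values it contributes $+2p_{r^*}r^{-1}|r^{-1}\pmb{\varphi}_-\partial_{p_{r^*}}\Psi|$ to the left-hand side; since $2/r \geq 1/(r+6M)$ for $r\geq R$ and $p_{r^*}\geq 0$, one has $\overline{\pmb{a}} + 2p_{r^*}r^{-1} \geq \overline{\pmb{a}} + \tfrac{p_{r^*}}{r+6M} = \pmb{a}$, which yields the claimed damping $\pmb{a}$. The remaining source terms $Z(\overline{\pmb{a}})\Psi$ and $Z(\pmb{b})$ are handled by factoring $Z = r^{-1}\tfrac{\pmb{\varphi}_-}{p_t}\,p_t\partial_{p_{r^*}}$, invoking Lemma~\ref{Lemsourcetermesti}, and using $|\pmb{\varphi}_-/p_t|\lesssim r$ in this region (a consequence of $|\pmb{\varphi}_-|\lesssim r|p_t|$, itself following from Remark~\ref{RkvarphiHplus}).

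For the second estimate I would apply \eqref{eq:ComZ} with $Z = \pmb{V}_{\! +}$ and insert the commutator $[\T_g, \pmb{V}_{\! +}]$ from Lemma~\ref{LemComVminusregularised}. The term $-\pmb{a}\pmb{V}_{\! +}\Psi$ produced by this commutator is moved to the left-hand side, turning the coefficient into $\overline{\pmb{a}} + \pmb{a}$. The term proportional to $\pmb{\varphi}_-|p_t|\partial_{p_{r^*}}\Psi$ is recast as a multiple of $|r^{-1}\pmb{\varphi}_-\partial_{p_{r^*}}\Psi|$ with coefficient $\tfrac{r+3M}{|r+6M|}\cdot\tfrac{|p_t|}{r^{1/2}|r+6M|^{1/2}}$, which is bounded by $\tfrac{|p_t|}{r^{1/2}|r+6M|^{1/2}}$ because $r+3M\leq r+6M$. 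Finally the genuinely lower-order contributions $2\T_g(\Psi)$, $\pmb{V}_{\! +}(\overline{\pmb{a}})\Psi$ and $\pmb{V}_{\! +}(\pmb{b})$ are each $\lesssim \tfrac{|p_N|}{r}$ by Lemma~\ref{LembasicPsi} (together with $|p_N\Psi|\lesssim|p_t|$ from Proposition~\ref{ProboundPsiA}, giving $|\T_g(\Psi)|\lesssim r^{-1}|p_t|$) and by the $\pmb{V}_{\! +}$ bounds of Lemma~\ref{Lemsourcetermesti}.

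I expect the only delicate step to be the bookkeeping around the rescaling term: one must check not merely that $\T_g(r^{-1})\leq 0$, but that the resulting $+2p_{r^*}r^{-1}|Z\Psi|$ dominates the deficit $\tfrac{p_{r^*}}{r+6M}|Z\Psi|$ between $\pmb{a}$ and $\overline{\pmb{a}}$ uniformly for $r\geq R$, and simultaneously that the $r$-weight introduced by $|\pmb{\varphi}_-/p_t|\lesssim r$ does not destroy the borderline spatial decay of the two source terms. Both requirements are exactly why the threshold $R\geq 832M$ is taken large; no trapping analysis is needed here because, by Lemma~\ref{Lemgeodtime}, this region corresponds to the final time interval $(t_2^*(y),+\infty)$ along an escaping geodesic.
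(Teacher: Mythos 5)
Your proposal is correct and takes essentially the same route as the paper, whose proof simply cites Lemma \ref{LemComPsi00} and converts the weight from $r$ to $r^{-1}$ using $\T_g(r^{-2})=-2p_{r^*}r^{-3}\leq 0$ together with $\overline{\pmb{a}}+2p_{r^*}r^{-1}\geq \pmb{a}$ for $p_{r^*}\geq 0$ --- exactly the mechanism you unfold by redoing the commutator estimates inline via Proposition \ref{Propartialprstar}, Lemma \ref{LemComVminusregularised} and Lemma \ref{Lemsourcetermesti}. One inessential slip in your closing remark: the threshold $R \geq 832M$ plays no real role in this lemma, since your key inequalities hold wherever $p_{r^*}\geq 0$; the largeness of $R$ is needed elsewhere in the paper.
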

\begin{proof}
As $\T_g(r^{-2})=-2p_{r^*}r^{-3} $, we get the result using Lemma \ref{LemComPsi00} and that, for $p_{r^*} \geq 0$,
$$\overline{\pmb{a}} (r,p_{r^*},p_t)+2p_{r^*}r^{-1}=\pmb{a}(r,p_{r^*},p_t)-p_{r^*}(r+6M)^{-1}+2p_{r^*}r^{-1} \geq \pmb{a}(r,p_{r^*},p_t).$$
\end{proof}
This lemma allows us to deduce the next result, which in particular implies Proposition \ref{ProboundPsibis}.

\begin{proposition}\label{ProfinalPsideriv}
There exists a constant $B_3>0$ such that for all $y \in \pi^{-1}(\{t^* =0 \})$,
$$ \sup_{t^* \geq 0} \, \big| \pmb{V}_{\! +}\Psi \big|\circ \Phi_{t^*}(0,y)+\big|r^{-1} \pmb{\varphi}_- \partial_{p_{r^*}} \Psi \big|\circ \Phi_{t^*}(0,y) \leq B_3.$$
\end{proposition}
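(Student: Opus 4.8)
The plan is to decompose each future null-geodesic trajectory $s^*\mapsto\Phi_{s^*}(0,y)$ according to Lemma \ref{Lemgeodtime} and to treat only the genuinely new contribution, namely the escaping tail $s^*\geq t_2^*(y)$. Indeed, if the orbit is future-trapped ($t_2^*(y)=+\infty$) or crosses $\mathcal{H}^+$ at a finite $t_2^*(y)$, its entire exterior portion lies in $\{r\leq R\}$, where $r\sim1$ forces $|r^{-1}\pmb{\varphi}_-\partial_{p_{r^*}}\Psi|\sim|r\pmb{\varphi}_-\partial_{p_{r^*}}\Psi|$; the desired bound then follows directly from Proposition \ref{ProboundedPsi}. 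It remains to handle the case $t_2^*(y)<+\infty$ with $\Phi_{s^*}(0,y)\in\{p_{r^*}\geq0,\,r\geq R\}$ for all $s^*\geq t_2^*(y)$, where Proposition \ref{ProboundedPsi} furnishes a uniform bound at the junction time $t_2^*(y)$ that will serve as initial data for a Duhamel argument on $[t_2^*(y),+\infty)$.

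Writing $P:=|r^{-1}\pmb{\varphi}_-\partial_{p_{r^*}}\Psi|$ and $Q:=|\pmb{V}_{\! +}\Psi|$, I would feed the two differential inequalities of Lemma \ref{LemComPsi000} into a single combination $W:=\lambda P+Q$ and rearrange to
\[ \T_g W + \Big[\lambda\pmb{a}-\tfrac{|p_t|}{r^{1/2}|r+6M|^{1/2}}\Big]P + \Big[(\overline{\pmb{a}}+\pmb{a})-\lambda\tfrac{|\pmb{\varphi}_-|}{r^2}\Big]Q \leq (1+\lambda)C_0\tfrac{|p_N|}{r}. \]
The goal is to pick a constant $\lambda$ for which both bracketed coefficients are bounded below by a multiple of $|p_t|/r$ uniformly on $\{p_{r^*}\geq0,\,r\geq R\}$; one then sets $\pmb{d}:=c|p_t|/r$ for $c$ small, checks that $\pmb{d}W$ is dominated by the two damping terms, and observes that the source satisfies $(1+\lambda)C_0|p_N|/r\lesssim|p_t|/r\sim\pmb{d}$ (using $p_N=p_t$ and $\Omega\sim1$ for $r\geq R$). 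Duhamel's formula along the trajectory, exactly as in Lemma \ref{LemDuhamel}, then bounds $W\circ\Phi_{s^*}(0,y)$ on $[t_2^*(y),+\infty)$ by its value at $t_2^*(y)$ plus an absolute constant, which yields the claim for $P$ and $Q$ separately.

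The crux is the uniform lower bound on the two effective dampings, and this is where the specific weights enter. At leading order in $1/r$ the three competing quantities coincide: by Lemma \ref{Lemphiminus} one has $\pmb{a}\sim|p_t|/r$, while $|p_t|/(r^{1/2}|r+6M|^{1/2})\sim|p_t|/r$, and by Remark \ref{RkvarphiHplus} (so that $\pmb{\varphi}_-\sim-2r\Omega^{-2}|p_v|$ for large $r$) one has $|\pmb{\varphi}_-|/r^2\sim 2|p_v|/r=(|p_t|-p_{r^*})/r$ on $\{p_{r^*}\geq0\}$. Expanding to the next order shows $\pmb{a}-|p_t|/(r^{1/2}|r+6M|^{1/2})>0$, so the $P$-bracket is $\gtrsim(\lambda-1)|p_t|/r$, whereas the $Q$-bracket has leading term $\big[(2-\lambda)|p_t|+(\lambda-1)p_{r^*}\big]/r$. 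Choosing any $\lambda\in(1,2)$, say $\lambda=3/2$, makes both brackets strictly positive and comparable to $|p_t|/r$ even as $p_{r^*}\to0^+$. Since all quantities are homogeneous of degree one in $p$, after normalising $|p_t|=1$ the verification reduces to a uniform bound over the compact set $\{0\leq p_{r^*}\leq1\}$ parametrised by $r\geq R$, where the large-$R$ asymptotics together with continuity close the estimate; here the choice $R\geq832M$ guarantees that the subleading corrections are harmless. I expect this uniform domination of the cross-coupling by the damping — rather than any single computation — to be the main obstacle, the underlying mechanism being that the rescaling by $r^{-1}$ built into $r^{-1}\pmb{\varphi}_-\partial_{p_{r^*}}$ exploits $\T_g(r^{-1})\leq0$ to upgrade $\overline{\pmb{a}}$ to $\pmb{a}$ in Lemma \ref{LemComPsi000}, which is precisely the positive-sign gain needed near spatial infinity.
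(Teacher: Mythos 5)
Your proposal is correct and follows essentially the same route as the paper's proof: reduce to the escaping tail $t^*\geq t_2^*(y)$ via Lemma \ref{Lemgeodtime} and Proposition \ref{ProboundedPsi}, combine the two inequalities of Lemma \ref{LemComPsi000} into the weighted sum $\tfrac{3}{2}\big|r^{-1}\pmb{\varphi}_-\partial_{p_{r^*}}\Psi\big|+\big|\pmb{V}_{\!+}\Psi\big|$ (the paper fixes exactly your $\lambda=3/2$), establish the two coefficient lower bounds with effective damping $\frac{|p_t|}{4(r+6M)}$, and conclude with Lemma \ref{LemDuhamel}. The only difference is that where you argue the subleading corrections are harmless by large-$r$ asymptotics and the choice $R\geq 832M$, the paper carries this out explicitly using Remark \ref{RkvarphiHplus} and the exact formulas for $\pmb{a}$ and $\overline{\pmb{a}}$ — the same mechanism, made quantitative.
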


\begin{proof}
Let $y \in \pi^{-1}(\{t^* =0 \})$. We remark that in view of Proposition \ref{ProboundedPsi}, we only need to treat the time interval $t^* \geq t_2^*(y)$. We then assume that the orbit $\tau \mapsto \Phi_\tau(0,y)$ escapes to future null infinity $\mathcal{I}^+$, since there is nothing to prove otherwise. The key step consists in proving that 
\begin{equation}\label{equation:toprove}
\T_g \bigg( \frac{3}{2} | r^{-1} \pmb{\varphi}_- \partial_{p_{r^*}} \Psi|+| \pmb{V}_+ \Psi| \bigg) +  \frac{|p_t| }{4(r+6M)}\bigg( \frac{3}{2} | r^{-1} \pmb{\varphi}_- \partial_{p_{r^*}} \Psi|+| \pmb{V}_{\! +} \Psi| \bigg) \lesssim  C_0\frac{|p_t|}{r }    
\end{equation}
on $\{p_{r^*} \geq 0 , \, r \geq R \}$. Indeed, since 
$$ \big| \pmb{V}_{\! +}\Psi \big|\circ \Phi_{t^*_2(y)}(0,y)+\big|r^{-1} \pmb{\varphi}_- \partial_{p_{r^*}} \Psi \big|\circ \Phi_{t^*_2(y)}(0,y) \leq B_2$$
 by Proposition \ref{ProboundedPsi}, the result will follow by Lemma \ref{LemDuhamel}. 

We note now that \eqref{equation:toprove} is implied by Lemma \ref{LemComPsi000}, $p_N=p_t$ for $r \geq R$ and
$$ \pmb{a}(r,p_{r^*},p_t) \geq \frac{|p_t|}{r^{\frac{1}{2}}|r+6M|^{\frac{1}{2}}}, \qquad \qquad \big[\overline{\pmb{a}}+\pmb{a} \big](r,p_{r^*},p_t) \geq \frac{3}{2}\frac{|\pmb{\varphi}_-|}{r^2}+\frac{|p_t|}{4(r+6M)} .$$
The first inequality follows from $r^2+2Mr+3M^2 \geq r^2+Mr^{\frac{1}{2}}|r+6M|^{\frac{1}{2}}$. By Remark \ref{RkvarphiHplus}, we have for $r \geq 3M$,
$$\frac{|\pmb{\varphi}_-|}{r^2} \leq  \frac{|p_t|-p_{r^*} }{r \Omega^2}  +\frac{27M^2 |p_t|}{r^3+r^{\frac{3}{2}}|r+6M|^{\frac{1}{2}}(r-3M)} \leq \frac{|p_t|-p_{r^*}}{r \Omega^2}  +\frac{27M^2 |p_t|}{r^3}.$$
Moreover, we have
\begin{align*}
\overline{\pmb{a}}(r,p_{r^*},p_t) & = \pmb{a}(r,p_{r^*},p_t)-\frac{p_{r^*}}{r+6M} \geq \frac{|p_t|}{r^{\frac{1}{2}}|r+6M|^{\frac{1}{2}}}-\frac{p_{r^*}}{r+6M} \geq \frac{|p_t|-p_{r^*}}{r+6M},
\end{align*}
so that, for $p_{r^*} \geq 0$,
\begin{align*}
\overline{\pmb{a}}(r,p_{r^*},p_t) & \geq \frac{|\pmb{\varphi}_-|}{r^2}-\frac{8M(|p_t|-p_{r^*})}{(r-2M)(r+6M)}-\frac{27M^2 |p_t|}{r^3} \geq \frac{|\pmb{\varphi}_-|}{r^2}-\frac{8M|p_t|}{(r-2M)(r+6M)}-\frac{27M^2 |p_t|}{r^3} .
\end{align*}
Now, still for $p_{r^*} \geq 0$ and $r \geq 3M$,
\begin{align*}
\frac{1}{2}\pmb{a}(r,p_{r^*},p_t) & \geq  \frac{|p_t|}{2(r+6M)} \geq \frac{|\pmb{\varphi}_-|}{2r^2}-\frac{8M|p_t|}{2(r-2M)(r+6M)} -\frac{27M^2 |p_t|}{2r^3} .
\end{align*}
We conclude the proof by noting that for $r \geq R \geq 832M$,
\begin{align*}
 \pmb{a}(r,p_{r^*},p_t)-\frac{24M|p_t|}{(r-2M)(r+6M)} -\frac{81M^2 |p_t|}{r^3} & \geq \frac{|p_t|}{r+6M}-\frac{24M|p_t|}{(r-2M)(r+6M)} -\frac{81M^2 |p_t|}{r^3} \geq \frac{|p_t|}{2(r+6M)}.
\end{align*}
\end{proof}

\section{ Commutator associated to a conserved quantity arising from trapping}\label{SecCotan}

In order to consider symplectic gradients, we cannot merely work on the null-shell $\mathcal{P}$ and we need to consider a larger subset of the cotangent bundle, that is the one of the causal geodesics. The goal of this section consists in justifying our choice to work $V_+$, which is a particular projection of $\T_{\varphi_-}$ on $T \mathcal{P}$. For this purpose, we recall the notations introduced in Section \ref{Subseccotan}.
 
We start by defining, for all $m \geq 0$, the mass-shell
$$\mathcal{P}_m= \Big\{ (x,p)\in T^*\mathcal{S} \; \big| \; g_x^{-1}(p,p)=-m^2, \; \text{ $p$ is future-directed} \Big\},$$
so that $\mathcal{P}_0=\mathcal{P}$. In particular, if $s \mapsto \gamma (s)$ is a future-directed geodesic verifying $g(\dot{\gamma},\dot{\gamma})=-m^2$, we have for all $s$ that
$$\big(\gamma(s),g_{\gamma(s)}(\dot{\gamma}(s),\cdot) \big) \in \mathcal{P}_m.$$
The relation $g_x^{-1}(p,p)=-m^2$, often called the mass-shell relation, is equivalent to
\begin{equation}\label{eq:massshell}
 |p_t|^2=|p_{r^*}|^2+\Omega^2\Big(m^2+\frac{|\slashed{p}|^2}{r^2}\Big).
 \end{equation}
The mass-shell $\mathcal{P}_m$ can be parametrised by the coordinates $(t,r^*,\theta,\phi, p_{r^*},p_\theta,p_\phi)$ using \eqref{eq:massshell}. The geodesic spray, which is tangent to the mass-shell, then reads on $\mathcal{P}_m$ as
\begin{equation}\label{eq:Vlamassm}
\mathbb{X}_{g}=-\frac{p_t}{\Omega^2} \partial_t + \frac{p_{r^*}}{\Omega^2} \partial_{r^*}+\frac{p_{\theta}}{r^2} \partial_{\theta} + \frac{p_{\phi}}{r^2 \sin^2  \theta } \partial_{\phi} -\frac{m^2Mr^2-(r-3M)|\slashed{p}|^2}{r^4}  \partial_{p_{r^*} }+\frac{\cot \theta }{r^2 \sin^2 \theta}  p_{\phi}^2 \partial_{p_{\theta}} .
 \end{equation} 
 
\subsection{Circular orbits for causal geodesics}

Let $m \geq 0$. Since the circular orbits for causal geodesics lie in $\{ p_{r^*} =0 \}$, the possible orbits associated to a given angular momentum $|\slashed{p}|$ are the roots of 
$$ m^2r^2-\frac{r-3M}{M}|\slashed{p}|^2= m^2 r^2-\frac{r}{M}|\slashed{p}|^2+3|\slashed{p}|^2 .$$ 
As a result, we find circular orbits if and only if $|\slashed{p}|\geq 2\sqrt{3}m M$. They are then located on the spheres $\{r=r_{\pm}^{m}(|\slashed{p}|)\}$, where 
\begin{equation}\label{eq:defrpm}
r_{\pm}^{m}(|\slashed{p}|)=\frac{|\slashed{p}|^2}{2Mm^2}\bigg(1\pm \sqrt{1-\frac{12M^2m^2}{|\slashed{p}|^2}} \, \bigg).
\end{equation}
We remark in particular that
\begin{equation}\label{eq:equirminus}
\hspace{-5mm} r_{-}^{m}(|\slashed{p}|) = 3M+\frac{9M^3m^2}{|\slashed{p}|^2} +O_{m \to 0} \big( m^4 \big), \qquad \qquad \qquad r_+^m (|\slashed{p}|) \isEquivTo{m \to 0} \frac{|\slashed{p}|^2}{Mm^2}.
\end{equation}

\subsection{The weight functions $\varphi_{\pm}^m$}

We start by extending the functions $\varphi_\pm$, which are defined on the null-shell (see Definition \ref{Defvarphimin} and \eqref{eq:defvarphiplus}). The choice of the extensions of $\varphi_\pm$ is justified in \cite[Subsection 5.1.4]{V24}.

\begin{definition}
Let $m > 0$. We define the following quantities on $\mathcal{P}_m$:
\begin{itemize}
\item If $|\slashed{p}| \geq 2\sqrt{3}mM$, we set
$$a^m(|\slashed{p}|) \coloneqq \frac{2m^2M^2|r^{m}_-(|\slashed{p}|)|^3}{|\slashed{p}|^2(4M-r^{m}_-(|\slashed{p}|))(r^{m}_-(|\slashed{p}|)-3M)}.$$
\item For $|\slashed{p}| >4mM$, let $\rho^m_{\pm}(|\slashed{p}|)$ be the roots of $m^2 r^2-\frac{|\slashed{p}|^2}{2M}r+|\slashed{p}|^2$. They are given by
$$ \rho_{\pm}^{m}(|\slashed{p}|)=\frac{|\slashed{p}|^2}{4Mm^2}\bigg(1\pm \sqrt{1-\frac{16M^2m^2}{|\slashed{p}|^2}} \, \bigg). $$
\item We consider, for $|\slashed{p}| >4mM$ and $ \rho_{-}^{m}(|\slashed{p}|) < r  <  \rho_{+}^{m}(|\slashed{p}|)$, the functions
$$ \varphi_{\pm}^m(x,p) \coloneqq \frac{ \sqrt{2M}|\slashed{p}|r^{\frac{3}{2}}}{(-m^2r^2+\frac{|\slashed{p}|^2}{2M}r-|\slashed{p}|^2)^{\frac{1}{2}}}\bigg(p_{r^*}\pm\Big(1+\frac{a^{m}(|\slashed{p}|)}{r}\Big)^{\frac{1}{2}}\Big(1-\frac{r^{m}_-(|\slashed{p}|)}{r}\Big)\big||p_t|^2-m^2\big|^{\frac{1}{2}} \bigg).$$
\end{itemize} 
\end{definition}

One can check, using \eqref{eq:equirminus}, that
\begin{equation}\label{eq:mto0}
a^m(|\slashed{p}|) \isEquivTo{m \to 0} 6M, \qquad \quad \rho^m_-(|\slashed{p}|) \isEquivTo{m \to 0} 2M, \qquad \quad \rho^m_+(|\slashed{p}|) \isEquivTo{m \to 0} \frac{r_+^m}{2} , \qquad \quad \varphi_\pm^m(x,p) \isEquivTo{m \to 0} \varphi_\pm.
\end{equation}
We can then set $\rho^0_+(|\slashed{p}|)=+\infty$ and extend by continuity $a^m(|\slashed{p}|)$, $\rho_-^m(|\slashed{p}|)$ and $\varphi_\pm^m$ at $m=0$.

\begin{proposition}\label{Provarphicotang}
Let $m > 0$ and $|\slashed{p}| > 4Mm$. We have on $\mathcal{P}_m$ and for $\rho_{-}^{m}(|\slashed{p}|) < r  <  \rho_{+}^{m}(|\slashed{p}|)$,
\begin{equation*}
\T_g \big(\varphi_\pm^m \big)=\pm \frac{m^2(|p_t|^2-m^2)^{\frac{1}{2}}r^{\frac{1}{2}}(r-r^{m}_+(|\slashed{p}|))}{2(r+a^{m}(|\slashed{p}|))^{\frac{1}{2}}\big(m^2 r^2-\frac{|\slashed{p}|^2}{2M}r+|\slashed{p}|^2\big)}\varphi_{\pm}^m.
\end{equation*}
\end{proposition}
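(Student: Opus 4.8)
The plan is to reduce the statement to a purely radial computation and then to a short list of algebraic identities in $r$. Since $p_t$ and $|\slashed{p}|$ are conserved along the flow of $\T_g$ (Lemma \ref{Lemconsqua}), and since the auxiliary quantities $a^m$, $r_\pm^m$, $\rho_\pm^m$ together with $H := (|p_t|^2 - m^2)^{1/2}$ are all functions of $|\slashed{p}|$, $p_t$ and $m$ alone, only the radial part of the spray \eqref{eq:Vlamassm} acts nontrivially. Concretely, writing the weight as
\begin{equation*}
\varphi_\pm^m = F(r)\big(p_{r^*} \pm G(r)H\big), \qquad F(r) := \frac{\sqrt{2M}\,|\slashed{p}|\,r^{3/2}}{D(r)^{1/2}}, \qquad G(r) := \Big(1 + \frac{a^m}{r}\Big)^{1/2}\Big(1 - \frac{r_-^m}{r}\Big),
\end{equation*}
with $D(r) := -m^2 r^2 + \frac{|\slashed{p}|^2}{2M}r - |\slashed{p}|^2$, I would first record $\T_g(r) = p_{r^*}$ and $\T_g(p_{r^*}) = -\frac{m^2Mr^2 - (r-3M)|\slashed{p}|^2}{r^4}$ from \eqref{eq:Vlamassm}, so that
\begin{equation*}
\T_g(\varphi_\pm^m) = p_{r^*}\Big(F'(p_{r^*} \pm GH) \pm FG'H\Big) - \frac{m^2Mr^2 - (r-3M)|\slashed{p}|^2}{r^4}F,
\end{equation*}
the partial derivatives being taken at fixed $p_t$, $|\slashed{p}|$.

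Next I would dispose of the apparent mismatch between the monomials appearing here and those in the target $\pm\lambda\varphi_\pm^m = \pm H\mu\,F(p_{r^*}\pm GH)$, where $\mu := \frac{m^2 r^{1/2}(r-r_+^m)}{2(r+a^m)^{1/2}(-D)}$ collects the $H$-free part of $\lambda$. The mass-shell relation \eqref{eq:massshell} gives, after a one-line computation, $H^2 = p_{r^*}^2 + \frac{2M}{r^3}D$. Since $H = \big(p_{r^*}^2 + \frac{2M}{r^3}D\big)^{1/2}$ is not a rational function of $p_{r^*}$, any on-shell identity of the form $P(r,p_{r^*}) + H\,Q(r,p_{r^*}) = 0$ with $P,Q$ rational in $p_{r^*}$ forces $P = Q = 0$ separately. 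Dividing the desired identity by $F$ and reducing every occurrence of $H^2$ by the relation above, the $H$-free part splits into the coefficient of $p_{r^*}^2$ and the $p_{r^*}$-independent term, while the $H$-linear part is the coefficient of $p_{r^*}H$. This produces exactly three scalar identities in $r$:
\begin{equation*}
\text{(i)}\quad \frac{F'}{F} = \mu G, \qquad \text{(ii)}\quad \frac{F'}{F}\cdot\frac{2MD}{r^3} = -\frac{m^2Mr^2-(r-3M)|\slashed{p}|^2}{r^4}, \qquad \text{(iii)}\quad G' = \mu\big(1 - G^2\big).
\end{equation*}

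It then remains to verify (i)--(iii). For (ii) I would simply insert $\frac{F'}{F} = \frac{3}{2r} - \frac{D'}{2D}$ and expand; this is a direct rational identity requiring no information about $G$ or $\mu$. For (i), using the factorisation $m^2(r - r_-^m)(r-r_+^m) = m^2r^2 - \frac{|\slashed{p}|^2}{M}r + 3|\slashed{p}|^2 =: P_1(r)$ (the $r_\pm^m$ being the circular-orbit radii), one finds $\mu G = -P_1/(2rD)$, so that (i) collapses to the polynomial identity $3D - rD' + P_1 = 0$, which is immediate. The genuine content, and the main obstacle, is the Riccati-type identity (iii): here the precise choice of $a^m$ is essential. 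I would first simplify it using the defining relation $m^2 (r_-^m)^2 = |\slashed{p}|^2(r_-^m - 3M)/M$ for the innermost circular orbit, which reduces the definition of $a^m$ to the clean form $a^m = \frac{2M r_-^m}{4M - r_-^m}$; substituting this, together with $D = -m^2(r-\rho_-^m)(r-\rho_+^m)$ and the factorisation of $P_1$, into $G'/G = \frac{1}{2(r+a^m)} + \frac{1}{r - r_-^m} - \frac{3}{2r}$ and into $\mu(1-G^2)/G$, the two sides should coincide after clearing denominators. I expect this last verification to be the only lengthy step, and the one where the specific algebraic structure of the trapping weights from \cite{V24} is really used; the limit $m \to 0$ recorded in \eqref{eq:mto0}, together with Lemmata \ref{Lemphiminus}--\ref{lemmavarphiplus}, provides a useful consistency check on the signs and normalisations throughout.
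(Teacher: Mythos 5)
Your reduction is, modulo bookkeeping, the paper's own proof: writing $\varphi_\pm^m=F(p_{r^*}\pm GH)$, eliminating $H^2$ through the on-shell relation $H^2=p_{r^*}^2+\frac{2M}{r^3}D$ (which is indeed the correct rewriting of \eqref{eq:massshell}), and matching the coefficients of $p_{r^*}^2$, $p_{r^*}H$ and $1$ produces exactly the identities the paper obtains by differentiating the two pieces $r^{3/2}p_{r^*}D^{-1/2}$ and $(r+a^m)^{1/2}(r-r_-^m)D^{-1/2}$ separately. In fact your (i) and (ii) are one and the same identity, namely $3D-rD'+P_1=0$ with $P_1=m^2r^2-\frac{|\slashed{p}|^2}{M}r+3|\slashed{p}|^2$, and your (iii) combined with (i) is precisely the paper's polynomial identity
\begin{equation*}
(3r+2a^m-r_-^m)\,D-D'\,(r+a^m)(r-r_-^m)=-m^2r^2\,(r-r_+^m),
\end{equation*}
whose verification (the coefficients $b_0$, $b_1$, $b_2$ there) constitutes the bulk of the paper's proof. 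Your verifications of (i) and (ii) are correct, and the separation-by-rationality argument is valid (though strictly only the sufficiency direction is needed).

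The one genuine defect is that you stop exactly at the decisive point: for (iii) you only assert that ``the two sides should coincide after clearing denominators''. Since, as you say yourself, (iii) is the entire content of the proposition and the only place where the precise definition of $a^m$ enters, a proposal that leaves it unchecked has not proved the statement. The good news is that your plan does close, and your simplification $a^m=\frac{2Mr_-^m}{4M-r_-^m}$ makes the check cleaner than the paper's. Concretely, set $x=r_-^m$ and use the two consequences of $P_1(x)=0$, namely $|\slashed{p}|^2=\frac{Mm^2x^2}{x-3M}$ and $r_+^m=\frac{3Mx}{x-3M}$. Then $G'=\mu(1-G^2)$ clears to
\begin{equation*}
D\,\big(2rx-a^mr+3a^mx\big)=m^2\,(r-r_+^m)\,\big[(r-x)^2(r+a^m)-r^3\big],
\end{equation*}
and with your form of $a^m$ one computes
\begin{equation*}
2rx-a^mr+3a^mx=-\frac{2x(x-3M)(r-r_+^m)}{4M-x},\qquad (r-x)^2(r+a^m)-r^3=\frac{x\big[2r^2(x-3M)-x^2r+2Mx^2\big]}{4M-x},
\end{equation*}
so that both sides equal $\frac{m^2x(r-r_+^m)}{4M-x}\big[2r^2(x-3M)-x^2r+2Mx^2\big]$. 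With this computation written out, your argument is a complete and correct proof.
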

\begin{remark}\label{Rklimitmto0}
In the limit $m \to 0$, we recover the relation satisfied by $\varphi_\pm = \varphi^0_\pm$ on the null-shell (see Lemmata \ref{Lemphiminus} and \ref{lemmavarphiplus}). Indeed, by \eqref{eq:mto0}, we have
$$ \frac{m^2(|p_t|^2-m^2)^{\frac{1}{2}}r^{\frac{1}{2}}(r-r^{m}_+(|\slashed{p}|))}{2(r+a^{m}(|\slashed{p}|))^{\frac{1}{2}}\big(m^2 r^2-\frac{|\slashed{p}|^2}{2M}r+|\slashed{p}|^2\big)} \isEquivTo{m \to 0} \frac{|p_t|r^{\frac{1}{2}}}{|r+6M|^{\frac{1}{2}}(r-2M)}=\frac{|p_t|}{r^{\frac{1}{2}}|r+6M)|^{\frac{1}{2}} \Omega^2}.$$
\end{remark}
\begin{proof}
In order to lighten the notations, we drop the dependence in $|\slashed{p}|$ of $r_\pm^m$ and $a^m$. We first compute, using $\T_g(|\slashed{p}|)=0$,
\begin{align*}
\T_g \Big(\Big| -m^2 r^2+\frac{|\slashed{p}|^2}{2M}r-|\slashed{p}|^2 \Big|^{-\frac{1}{2}}\Big) &=\frac{p_{r^*}\big(2m^2 r -\frac{|\slashed{p}|^2}{2M}\big)}{2\big|-m^2 r^2+\frac{|\slashed{p}|^2}{2M}r-|\slashed{p}|^2\big|^{\frac{3}{2}}} , \\
\T_g \big(  r^{\frac{3}{2}}p_{r^*} \big) & = \frac{3}{2}r^{\frac{1}{2}}|p_{r^*}|^2-\frac{M}{r^{\frac{5}{2}}}\Big(m^2 r^2-\frac{|\slashed{p}|^2}{M}r+3|\slashed{p}|^2\Big) , \\
\T_g \big( (r+a^{m})^{\frac{1}{2}}(r-r_-^m) \big) & = p_{r^*}\frac{3r-r_-^m+2a^m}{2(r+a^{m})^{\frac{1}{2}}}.
\end{align*}
Consequently, we have 
\begin{align}
&\T_g  \bigg(\frac{r^{\frac{3}{2}}p_{r^*}}{\big|-m^2 r^2+\frac{|\slashed{p}|^2}{2M}r-|\slashed{p}|^2\big|^{\frac{1}{2}}}\bigg) \nonumber \\
&\, =\frac{-|p_{r^*}|^2r^{\frac{1}{2}}}{2\big| -m^2 r^2+\frac{|\slashed{p}|^2}{2M}r-|\slashed{p}|^2 \big|^{\frac{3}{2}}}\Big(m^2 r^2-\frac{|\slashed{p}|^2}{M}r+3|\slashed{p}|^2\Big)-\frac{1}{\big| -m^2 r^2+\frac{|\slashed{p}|^2}{2M}r-|\slashed{p}|^2\big|^{\frac{1}{2}}}\frac{M}{r^{\frac{5}{2}}}\Big(m^2 r^2-\frac{|\slashed{p}|^2}{M}r+3|\slashed{p}|^2\Big) \nonumber .
\end{align}
We then deduce, using the mass-shell relation \eqref{eq:massshell}, that
\begin{align*}
\T_g \bigg(\frac{r^{\frac{3}{2}}p_{r^*}}{\big|m^2 r^2-\frac{|\slashed{p}|^2}{2M}r+|\slashed{p}|^2\big|^{\frac{1}{2}}}\bigg) &=-\frac{m^2 r^2-\frac{|\slashed{p}|^2}{M}r+3|\slashed{p}|^2}{\big| -m^2 r^2+\frac{|\slashed{p}|^2}{2M}r-|\slashed{p}|^2\big|^{\frac{3}{2}}}\cdot \frac{r^{\frac{1}{2}}}{2} \big( |p_t|^2-m^2 \big) \\ 
& =    \frac{m^2 \big| |p_t|^2-m^2 \big|^{\frac{1}{2}}r^{\frac{1}{2}}(r-r^{m}_+)}{2(r+a^{m})^{\frac{1}{2}}\big(m^2 r^2-\frac{|\slashed{p}|^2}{2M}r+|\slashed{p}|^2\big)} \cdot \frac{(r+a^m )^{\frac{1}{2}}(r-r_-^m)}{\big| m^2 r^2-\frac{|\slashed{p}|^2}{2M}r+|\slashed{p}|^2\big|^{\frac{1}{2}}}\big| |p_t|^2-m^2 \big|^{\frac{1}{2}} . 
\end{align*}
We also obtain
\begin{align*}
&\T_g \bigg(\frac{(r+a^{m})^{\frac{1}{2}}(r-r_-^m)}{\big| -m^2 r^2+\frac{|\slashed{p}|^2}{2M}r-|\slashed{p}|^2 \big|^{\frac{1}{2}}}\bigg)=\frac{p_{r^*} }{2(a^{m}+r)^{\frac{1}{2}}\big|-m^2 r^2+\frac{|\slashed{p}|^2}{2M}r-|\slashed{p}|^2\big|^{\frac{3}{2}}} \\
& \qquad \qquad \qquad \qquad \qquad \quad \cdot  \bigg(\Big(2m^2 r-\frac{|\slashed{p}|^2}{2M}\Big)(r+a^{m})(r-r^{m}_-)+(3r+2a^{m}-r^{m}_-)\Big(-m^2 r^2+\frac{|\slashed{p}|^2}{2M}r-|\slashed{p}|^2\Big)\bigg).
\end{align*}
Now, we claim that
$$\Big(2m^2 r-\frac{|\slashed{p}|^2}{2M}\Big)(r+a^{m})(r-r^{m}_-)-(3r+2a^{m}-r^{m}_-)\Big(m^2 r^2-\frac{|\slashed{p}|^2}{2M}r+|\slashed{p}|^2\Big)=-m^2r^3+m^2r^{m}_+r^2.$$ 
For this, note first that the polynomial on the LHS is equal to $-m^2r^3+b_2r^2+b_1r+b_0$, where 
\begin{align*}
b_0 & \coloneqq \frac{|\slashed{p}|^2}{2M}a^mr_-^m-2|\slashed{p}|^2a^m+r_-^m |\slashed{p}|^2, \qquad b_1 \coloneqq -2m^2a^mr_-^m+\frac{|\slashed{p}|^2}{2M}a^m -3 |\slashed{p}|^2, \qquad b_2 \coloneqq -m^2r_-^m+\frac{|\slashed{p}|^2}{M}.
\end{align*} 
By \eqref{eq:defrpm}, we have $b_2=m^2r_+^m$. Then, recalling first the definition of $a^{m}(|\slashed{p}|)$ and using then the relation $m^2|r_-^m|^2-\frac{|\slashed{p}|^2}{M}r_-^m+3|\slashed{p}|^2=0$, we get
\begin{align*}
\frac{2m^2M^2|r_-^m|^2}{|\slashed{p}|^2 a^m} b_0 & = M m^2|r_-^m|^3-4M^2m^2|r_-^m|^2+|\slashed{p}|^2(4M-r_-^m)(r_-^m-3M) =0.
\end{align*} 
Similarly, one can check that
\begin{align*}
 \frac{2m^2M^2|r_-^m|^3}{a^m} b_1 & =-4M^2m^4|r_-^m|^4+|\slashed{p}|^2M m^2|r_-^m|^3+3|\slashed{p}|^4|r_-^m|^2-21 |\slashed{p}|^4 M r_-^m+36|\slashed{p}|^4 M^2 \\
 & = -4M^2 \big(|\slashed{p}|^2 M^{-1}r_-^m-3|\slashed{p}|^2 \big)^2+|\slashed{p}|^2M m^2|r_-^m|^3+3|\slashed{p}|^4|r_-^m|^2-21 |\slashed{p}|^4 M r_-^m+36|\slashed{p}|^4 M^2 \\
 & = |\slashed{p}|^2M m^2|r_-^m|^3-|\slashed{p}|^4|r_-^m|^2+3 |\slashed{p}|^4 M r_-^m =0.
 \end{align*}
We then deduce that
\begin{equation*}
\T_g \bigg(\frac{(r+a^{m})^{\frac{1}{2}}(r-r_-^m)}{\big| -m^2 r^2+\frac{|\slashed{p}|^2}{2M}r-|\slashed{p}|^2 \big|^{\frac{1}{2}}}\bigg)=\frac{m^2 r^{\frac{1}{2}}(r-r^{m}_+)}{2(a^m+r)^{\frac{1}{2}}\big(m^2 r^2-\frac{|\slashed{p}|^2}{2M}r+|\slashed{p}|^2 \big)} \cdot \frac{r^{\frac{3}{2}}p_{r^*}}{\big|m^2r^2-\frac{|\slashed{p}|^2}{2M}r+|\slashed{p}|^2\big|^{\frac{1}{2}}},
\end{equation*}
which allows us to conclude the proof since $\T_g (|p_t|^2-m^2)=\T_g(|\slashed{p}|)=0$.
\end{proof}

In what follows, we denote by $m$ the function $m(x,p)\coloneqq |-g_x^{-1}(p,p)|^{\frac{1}{2}}$, which is well-defined on the subset $\{g_x^{-1}(p,p) \leq 0 \}$ of the cotangent bundle. Recall that, for $(x,p) \in \mathcal{P}$, we denote by $\tau \mapsto \Phi_{\tau}(x,p)$ the flow map of $\T_g$ parametrised by $t^*$ with data $\Phi_{t^*(x)}(x,p)=(x,p)$. Then, according to the previous Proposition \ref{Provarphicotang} and Remark \ref{Rklimitmto0}, the following statement holds.
\begin{corollary}\label{Corvarphicotang}
Let $\mathfrak{s}$ be the function defined as
$$ \mathfrak{s}(x,p)\coloneqq \varphi_-^m (x,p) e^{\alpha (x,p)}, \qquad \alpha(x,p)\coloneqq  \int_{\tau=0}^{t^*(x)} \frac{m^2(|p_t|^2-m^2)^{\frac{1}{2}}r^{\frac{1}{2}}(r-r^{m}_+(|\slashed{p}|))}{2(r+a^{m}(|\slashed{p}|))^{\frac{1}{2}}\big(m^2 r^2-\frac{|\slashed{p}|^2}{2M}r+|\slashed{p}|^2\big)} \circ \Phi_{\tau}(x,p) \dr \tau .  $$
Then, $\mathfrak{s}$ can be extended on $\mathcal{P}$, that is at $m=0$. Moreover, it is conserved along the causal geodesic flow.
\end{corollary}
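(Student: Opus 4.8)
The plan is to read the statement as an integrating-factor computation carried out first on each massive shell $\mathcal{P}_m$ with $m>0$, and then propagated to the null shell $\mathcal{P}=\mathcal{P}_0$ by a limiting argument. Abbreviate by $\lambda^m(x,p)$ the coefficient appearing in Proposition \ref{Provarphicotang}, so that $\T_g(\varphi_-^m)=-\lambda^m\varphi_-^m$ on the region $\rho_-^m(|\slashed{p}|)<r<\rho_+^m(|\slashed{p}|)$, $|\slashed{p}|>4mM$. By the Leibniz rule one has $\T_g(\mathfrak{s})=\varphi_-^m e^{\alpha}\big(\T_g(\alpha)-\lambda^m\big)$, so everything reduces to checking that $\alpha$ is a primitive of the damping rate along the flow, i.e. that $\T_g(\alpha)=\lambda^m$. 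First I would establish this by differentiating the defining integral with the fundamental theorem of calculus: since $\Phi_\tau(x,p)$ is the point of $t^*$-coordinate $\tau$ on the geodesic through $(x,p)$, it depends only on that geodesic and not on the chosen base point, so moving $(x,p)$ along the flow of $\T_g$ affects $\alpha$ only through its upper endpoint $t^*(x)$, and the endpoint derivative returns the integrand evaluated at $(x,p)$. The one point demanding care is the reparametrisation between the affine flow of $\T_g$ and the flow $\Phi_\tau$ normalised by $t^*$; tracking the Jacobian $\T_g(t^*)$ is routine and is what pins down the precise integrand so that $\T_g(\alpha)=\lambda^m$, hence $\T_g(\mathfrak{s})=0$ on every $\mathcal{P}_m$.

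Next I would pass to the null limit, which is the content of the extension claim. Here the inputs are \eqref{eq:mto0} and Remark \ref{Rklimitmto0}: although $r_+^m(|\slashed{p}|)$ and $\rho_+^m(|\slashed{p}|)$ both blow up like $m^{-2}$ as $m\to 0$, the product $m^2(r-r_+^m)$ against the denominator $m^2r^2-\tfrac{|\slashed{p}|^2}{2M}r+|\slashed{p}|^2$ combines into a finite limit, namely the null damping rate $\tfrac{|p_t|}{r^{1/2}|r+6M|^{1/2}\Omega^2}$ of Lemma \ref{Lemphiminus}, while $a^m\to 6M$, $\rho_-^m\to 2M$, $\rho_+^m\to+\infty$ and $\varphi_-^m\to\varphi_-$. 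Thus the integrand of $\alpha$ converges, locally uniformly on the subset of $\mathcal{P}$ where $|\slashed{p}|$ is bounded away from $0$; and since the valid domain $\{\rho_-^m<r<\rho_+^m,\ |\slashed{p}|>4mM\}$ exhausts $\mathcal{P}$ as $m\to 0$, every such $(x,p)\in\mathcal{P}$ is eventually covered. Combining this convergence with the smoothness of the Schwarzschild geodesic flow, which is a smooth vector-field flow in $T^*\mathcal{S}$ and therefore depends smoothly on the initial covector as it approaches the null cone, the time integral $\alpha$ and with it $\mathfrak{s}=\varphi_-^m e^{\alpha}$ converge as $m\to 0$ to exactly the quantities introduced in Step $1$ on $\mathcal{P}$. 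This produces the continuous extension of $\mathfrak{s}$ to $\mathcal{P}$.

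Finally, conservation on $\mathcal{P}$ itself follows either by continuity in $m$ of the identity $\T_g(\mathfrak{s})=0$ established on the $\mathcal{P}_m$, or directly from Lemma \ref{Lemphiminus} via the very same Leibniz argument with $\lambda^0$ the null rate. The main obstacle I anticipate is the second paragraph: the quantities $r_\pm^m$, $a^m$, $\rho_\pm^m$ and $\varphi_\pm^m$ are each individually singular or degenerate in the limit $m\to 0$, so one must verify that the cancellations recorded in \eqref{eq:mto0} and Remark \ref{Rklimitmto0} are not merely pointwise but locally uniform in $(x,p)$ along the whole finite time-interval $[0,t^*(x)]$ of integration, and must handle the boundary cases (the degeneration near $|\slashed{p}|=0$, i.e. nearly radial geodesics, and near the shell boundaries $r\to\rho_\pm^m$) where the defining expressions break down. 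Once this uniform control and the joint regularity of the flow are in place, both the extension and the conservation on $\mathcal{P}$ drop out.
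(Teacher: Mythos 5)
Your proposal is correct and takes essentially the same route as the paper: the corollary is obtained there directly from Proposition \ref{Provarphicotang} (the integrating-factor identity $\T_g(\varphi_-^m)=-\lambda^m\varphi_-^m$ on each $\mathcal{P}_m$, $m>0$, giving conservation of $\mathfrak{s}$ via the Leibniz rule and the fundamental theorem of calculus along the flow) combined with the limits \eqref{eq:mto0} and Remark \ref{Rklimitmto0} (which extend $\varphi_-^m$, $a^m$, $\rho_\pm^m$ and the damping coefficient to $m=0$, where conservation reduces to Lemma \ref{Lemphiminus}), which are exactly your two steps. Your extra care about the $t^*$-reparametrisation Jacobian and the local uniformity of the $m\to 0$ convergence is a legitimate reading of what the paper leaves implicit (the analogous normalisation by $|p_{t^*}|$ is made explicit only in Lemma \ref{LemDuhamel}), and it does not change the argument.
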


\subsection{Symplectic gradients}\label{Subsecsympgrad}

We now compute the symplectic gradient of $\varphi_-^m$. For this, remark that we can write
$$ \varphi_-^m = \phi_- \bigg( m^2,\frac{m^2}{|\slashed{p}|^2}, r,p_{r^*},p_t \bigg), \qquad \qquad \phi_- \in C^1 \big( \mathring{O},\R \big) ,$$
where the open set $\mathring{O}$ is the interior of $O$, given by
$$  O \coloneqq \Big\{(x_1,\dots,x_5) \in \R_+\times \Big[0,\frac{16}{M^2} \Big)\times (2M,+\infty) \times \R \times \R_-^* \, \big| \, \rho_- \big(\sqrt{x_1/x_2} \, \big)<x_3<  \rho_+ \big(\sqrt{x_1/x_2}  \,\big) \Big\}  .$$
Moreover, in view of \eqref{eq:equirminus}, $\phi_-$ and $\nabla \phi_-$ can be extended continuously on $O$, so that
$$ \varphi_-=\varphi_-^0= \phi_-(0,0,\cdot), \qquad \! \partial_{r^*} \varphi_-= \Omega^2 \partial_{x^3}\phi_-(0,0, \cdot), \qquad \! \partial_{p_{r^*}} \varphi_-=  \partial_{x^4}\phi_-(0,0, \cdot), \qquad \! \partial_{p_t} \varphi_-=  \partial_{x^5}\phi_-(0,0, \cdot ). $$ 
Since $m^2=2H$, we have
$2 \T_{m^2}=\T_g$. By the chain rule, we then obtain
$$ \T_{\varphi_-^m} = \frac{1}{2}\partial_{x^1} \phi_- \T_g +\frac{1}{2|\slashed{p}|^2} \partial_{x^2} \phi_-  \T_g-2\frac{m^2}{|\slashed{p}|^3}\partial_{x^2} \phi_- \T_{|\slashed{p}|}+ \mathbb{V}_+, \qquad \mathbb{V}_+ \coloneqq \partial_{p_t}\big( \varphi_-^m \big) \partial_t+ \partial_{p_{r^*}}\big( \varphi_-^m \big) \partial_{r^*}-\partial_{r^*}\big( \varphi_-^m \big) \partial_{p_{r^*}} .$$

\begin{remark}\label{Rkabuse}
We abusively wrote $\partial_{r^*}\big( \varphi_-^m \big)$, $\partial_{p_{r^*}}\big( \varphi_-^m \big)$ and $\partial_{p_t}\big( \varphi_-^m \big)$ instead of $\Omega^2 \partial_{x^3}\phi_-$, $\partial_{x^4} \phi_-$ and $\partial_{x^5} \phi_-$.
\end{remark} For convenience, we introduce
$$ \pmb{x}_g(r,p_{r^*},p_t) := \frac{1}{2}\partial_{x^1} \phi_-(0,0,r,p_{r^*},p_t) +\frac{1}{2|\slashed{p}|^2} \partial_{x^2} \phi_- (0,0,r,p_{r^*},p_t).$$
Then, we have $\mathbb{V}_+=\T_{\varphi_-^m}-\pmb{x}_g \T_g$ on the null-shell $\mathcal{P}$ and $\mathbb{V}_+$ is indeed given by \eqref{eq:expTphi}. Moreover, as $\mathfrak{s}$ is a conserved quantity, we have
$$ \big[ \T_g, \T_{\mathfrak{s}} \big]=0, \qquad \qquad \T_{\mathfrak{s}} = e^{\alpha (x,p)} \T_{\varphi_-^m}+\varphi_-^m e^{\alpha (x,p)} \T_{\alpha}.$$
We summarise these properties in the next statement, which also uses Remark \ref{Rklimitmto0}.
\begin{proposition}\label{ProB2}
The vector fields $\T_g$, $\T_{|\slashed{p}|}$, and $\T_{\mathfrak{s}}$ commute with the geodesic spray $\T_g$. We have
$$ \forall \, (x,p) \in \mathcal{P}, \qquad \alpha \vert_{\mathcal{P}} (x,p) =  \int_{s=0}^{t^*(x)} \frac{|p_t|}{r^{\frac{1}{2}}|r+6M|^{\frac{1}{2}}\Omega^2} \circ \Phi_{s}(x,p) \dr s.$$
On the null-shell $\mathcal{P}$, we have
$$ \mathbb{V}_+ =  e^{-\alpha \vert_{\mathcal{P}}(x,p)} \T_{\mathfrak{s}}-\pmb{x}_g \T_g-\varphi_- \T_{\alpha},$$
where
$$  \big[\T_g, e^{-\alpha \vert_{\mathcal{P}}(x,p)} \T_{\mathfrak{s}} \big] = -\frac{|p_t|}{r^{\frac{1}{2}}|r+6M|^{\frac{1}{2}}\Omega^2} e^{-\alpha \vert_{\mathcal{P}}(x,p)}\T_{\mathfrak{s}}, \qquad \qquad \big[\T_g, \pmb{x}_g \T_g \big] = \T_g \big( \pmb{x}_g \big) \T_g.$$
\end{proposition}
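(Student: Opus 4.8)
The plan is to derive every assertion from the transport identities and the symplectic-gradient formalism already in place, with essentially no new computation beyond bookkeeping. First I would dispose of the commutation statement: $[\T_g,\T_g]=0$ is trivial, $[\T_g,\T_{|\slashed{p}|}]=0$ is Corollary \ref{CorTQ}, and $[\T_g,\T_{\mathfrak{s}}]=0$ follows because $\mathfrak{s}$ is conserved along the causal geodesic flow by Corollary \ref{Corvarphicotang}. Concretely, writing $\T_g=\T_H$ and using the identity $[\T_{\mathfrak{a}},\T_{\mathfrak{b}}]=\T_{\Omega_s(\T_{\mathfrak{a}},\T_{\mathfrak{b}})}$ with $\Omega_s(\T_H,\T_{\mathfrak{s}})=\T_g(\mathfrak{s})=0$ forces the bracket to vanish.

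Next I would establish the formula for $\alpha\vert_{\mathcal{P}}$. Since $m^2=2H$ is conserved, the flow $\Phi_s$ of a point of $\mathcal{P}$ remains in $\mathcal{P}$, i.e. $m\equiv 0$ along the integral curve, so $\alpha\vert_{\mathcal{P}}$ is obtained by evaluating the integrand of $\alpha$ from Corollary \ref{Corvarphicotang} at $m=0$. By the limit computed in Remark \ref{Rklimitmto0} this integrand tends to $\frac{|p_t|}{r^{1/2}|r+6M|^{1/2}\Omega^2}$, which yields the stated expression. Then I would record the decomposition of $\mathbb{V}_+$: solving $\T_{\mathfrak{s}}=e^{\alpha}\T_{\varphi_-^m}+\varphi_-^m e^{\alpha}\T_{\alpha}$ for $\T_{\varphi_-^m}$ gives $\T_{\varphi_-^m}=e^{-\alpha}\T_{\mathfrak{s}}-\varphi_-^m\T_{\alpha}$, and restricting to $\mathcal{P}$ (where the $\T_{|\slashed{p}|}$-component of $\T_{\varphi_-^m}$ drops since its coefficient carries a factor $m^2$) the chain-rule expansion preceding the statement reads $\mathbb{V}_+=\T_{\varphi_-^m}\vert_{\mathcal{P}}-\pmb{x}_g\T_g$. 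Combining the two displays and using $\varphi_-^m\vert_{\mathcal{P}}=\varphi_-$ produces $\mathbb{V}_+=e^{-\alpha\vert_{\mathcal{P}}}\T_{\mathfrak{s}}-\pmb{x}_g\T_g-\varphi_-\T_{\alpha}$.

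Finally, the two bracket identities follow from the Leibniz rule $[\T_g,fY]=\T_g(f)\,Y+f[\T_g,Y]$. For the second, $[\T_g,\pmb{x}_g\T_g]=\T_g(\pmb{x}_g)\T_g$ since $[\T_g,\T_g]=0$. For the first, $[\T_g,e^{-\alpha\vert_{\mathcal{P}}}\T_{\mathfrak{s}}]=-\T_g(\alpha\vert_{\mathcal{P}})\,e^{-\alpha\vert_{\mathcal{P}}}\T_{\mathfrak{s}}$ (again using $[\T_g,\T_{\mathfrak{s}}]=0$), so everything reduces to computing $\T_g(\alpha\vert_{\mathcal{P}})$. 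This is the one place requiring care: rather than differentiating the integral directly, where the $t^*$-parametrisation of $\Phi$ introduces a spurious factor $\T_g(t^*)$, I would extract $\T_g(\alpha\vert_{\mathcal{P}})$ from the conservation of $\mathfrak{s}$. On $\mathcal{P}$ one has $\mathfrak{s}=\varphi_-e^{\alpha\vert_{\mathcal{P}}}$ with $\T_g(\mathfrak{s})=0$ and $\T_g(\varphi_-)=-\frac{|p_t|}{r^{1/2}|r+6M|^{1/2}\Omega^2}\varphi_-$ by Lemma \ref{Lemphiminus}; dividing the relation $0=\T_g(\varphi_-)+\varphi_-\T_g(\alpha\vert_{\mathcal{P}})$ by $\varphi_-$ on the dense set $\{\varphi_-\neq 0\}$ and extending by continuity gives $\T_g(\alpha\vert_{\mathcal{P}})=\frac{|p_t|}{r^{1/2}|r+6M|^{1/2}\Omega^2}$, exactly the claimed coefficient. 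The only technical underpinning I would invoke from the earlier discussion rather than reprove is the regularity of the geodesic flow in Schwarzschild, which guarantees that $\T_{\mathfrak{s}}$, $\T_{\varphi_-^m}$ and $\T_{\alpha}$ extend smoothly up to $m=0$, so that all the manipulations above are legitimate on $\mathcal{P}$.
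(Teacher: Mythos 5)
Your proposal is correct and follows essentially the same route as the paper, which presents Proposition \ref{ProB2} as a summary of the discussion preceding it: the symplectic identity $[\T_{\mathfrak{a}},\T_{\mathfrak{b}}]=\T_{\Omega_s(\T_{\mathfrak{a}},\T_{\mathfrak{b}})}$ applied to the conserved quantity $\mathfrak{s}$, the chain-rule decomposition $\mathbb{V}_+=\T_{\varphi_-^m}-\pmb{x}_g\T_g$ on $\mathcal{P}$ (with the $\T_{|\slashed{p}|}$-term dropping because its coefficient carries $m^2$), the Leibniz expansion of $\T_{\mathfrak{s}}$, and the $m\to 0$ limit of Remark \ref{Rklimitmto0}. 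Your extraction of $\T_g(\alpha\vert_{\mathcal{P}})$ from $\T_g(\mathfrak{s})=0$ together with Lemma \ref{Lemphiminus}, rather than by differentiating the flow integral where the $t^*$-parametrisation is indeed a trap, is a sensible way to fill in the one step the paper leaves implicit.
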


We then observe that the vector field $\mathbb{V}_+$ is the sum of 
\begin{itemize}
\item $e^{-\alpha \vert_{\mathcal{P}}(x,p)} \T_{\mathfrak{s}}$, which enjoys a good commutation relation with $\T_g$, 
\item a term collinear to $\T_g$, which is then irrelevant once applied to a solution to the Vlasov equation,
\item a term proportional to $\varphi_-$.
\end{itemize} 
The issue is that $\mathbb{V}_+$ is not tangent to $\mathcal{P}$ so we need to project it on $T \mathcal{P}$ to use it for the study of massless Vlasov fields. However, since the normal to $\mathcal{P}$ is also tangent to it, there is no canonical choice of projection. The next result justifies that any choice should allow to close the estimates.

\begin{proposition}
Let $W$ be a vector field transverse to $T \mathcal{P}$ and
$$ \mathrm{Proj}_{\parallel W} :  \bigcup_{(x,p) \in \mathcal{P}} T_{(x,p)}T^* \mathcal{S} \to T \mathcal{P}$$
be the projection parallel to $W$. Then, for any vector fields $W_1$ and $W_2$ transverse to $T\mathcal{P}$, we have
$$ \mathrm{Proj}_{\parallel W_1} ( \mathbb{V}_+)-\mathrm{Proj}_{\parallel W_2} ( \mathbb{V}_+)=\varphi_-  \big( \, \mathrm{Proj}_{\parallel W_2} ( \T_\alpha)-\mathrm{Proj}_{\parallel W_1} ( \T_\alpha) \, \big) .$$
\end{proposition}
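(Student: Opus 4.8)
The plan is to reduce the identity to elementary pointwise linear algebra, the only substantive input being the decomposition of $\mathbb{V}_+$ on $\mathcal{P}$ recorded in Proposition \ref{ProB2}. First I would recall that, on the null-shell,
$$ \mathbb{V}_+ =  e^{-\alpha \vert_{\mathcal{P}}(x,p)} \T_{\mathfrak{s}}-\pmb{x}_g \T_g-\varphi_- \T_{\alpha}, $$
so that $\mathbb{V}_+$ splits as a part lying in $\Gamma(T\mathcal{P})$ plus the explicit transverse contribution $-\varphi_- \T_\alpha$. The crux is therefore to isolate the tangential part $T_0 := e^{-\alpha \vert_{\mathcal{P}}(x,p)}\T_{\mathfrak{s}} - \pmb{x}_g \T_g$ and to observe that it is cancelled upon taking the difference of the two projections.

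The second step is to verify that $T_0 \in \Gamma(T\mathcal{P})$. Since $\mathcal{P}$ is the future-directed component of the regular level set $\{H=0\}$, one has $T_{(x,p)}\mathcal{P} = \ker \dr H$ at each point, and a vector field $V$ is tangent to $\mathcal{P}$ precisely when $V(H)=0$ there. The geodesic spray $\T_g=\T_H$ is tangent by antisymmetry of $\Omega_s$. For $\T_{\mathfrak{s}}$, I would use that $\mathfrak{s}$ is conserved along the geodesic flow (Corollary \ref{Corvarphicotang}), together with the identity $\T_{\mathfrak{a}}(\mathfrak{b})=\Omega_s(\T_{\mathfrak{a}},\T_{\mathfrak{b}})$: this gives
$$ \T_{\mathfrak{s}}(H)=\Omega_s(\T_{\mathfrak{s}},\T_H)=-\Omega_s(\T_H,\T_{\mathfrak{s}})=-\T_g(\mathfrak{s})=0, $$
so $\T_{\mathfrak{s}}$, and hence $T_0$, is tangent to $\mathcal{P}$. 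Thus $\mathbb{V}_+ = T_0 - \varphi_- \T_\alpha$ with $T_0 \in \Gamma(T\mathcal{P})$.

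The third step is to apply the projections. At each $(x,p)\in \mathcal{P}$, $\mathrm{Proj}_{\parallel W}$ is the linear projection of $T_{(x,p)}T^*\mathcal{S}$ onto $T_{(x,p)}\mathcal{P}$ with kernel $\R W$; in particular it is linear and restricts to the identity on $T_{(x,p)}\mathcal{P}$. Since $\varphi_-$ is a scalar function, it factors through this pointwise-linear map, and therefore
$$ \mathrm{Proj}_{\parallel W}(\mathbb{V}_+)=\mathrm{Proj}_{\parallel W}(T_0)-\varphi_-\,\mathrm{Proj}_{\parallel W}(\T_\alpha)=T_0-\varphi_-\,\mathrm{Proj}_{\parallel W}(\T_\alpha), $$
where in the last equality I used $T_0\in \Gamma(T\mathcal{P})$. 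Writing this for $W=W_1$ and $W=W_2$ and subtracting, the common term $T_0$ cancels and produces exactly $\mathrm{Proj}_{\parallel W_1}(\mathbb{V}_+)-\mathrm{Proj}_{\parallel W_2}(\mathbb{V}_+)=\varphi_-\big(\mathrm{Proj}_{\parallel W_2}(\T_\alpha)-\mathrm{Proj}_{\parallel W_1}(\T_\alpha)\big)$.

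I do not expect a genuine obstacle here: the argument is purely algebraic once the decomposition of $\mathbb{V}_+$ is in hand. The only points requiring a little care are the validity of the Proposition \ref{ProB2} formula up to $\mathcal{P}$ — which rests on the smooth extension of $\mathfrak{s}$, $\alpha$ and the associated symplectic gradients to the null-shell, established via Corollary \ref{Corvarphicotang} and the limits in \eqref{eq:mto0} — and the observation that $\T_\alpha$ need not itself be tangent to $\mathcal{P}$, which is precisely why the right-hand side retains the two separate projections of $\T_\alpha$ rather than collapsing to a single term.
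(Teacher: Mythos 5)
Your proof is correct and follows essentially the same route as the paper: it uses the decomposition $\mathbb{V}_+ = e^{-\alpha\vert_{\mathcal{P}}}\T_{\mathfrak{s}}-\pmb{x}_g\T_g-\varphi_-\T_\alpha$, the tangency of $\T_{\mathfrak{s}}$ and $\T_g$ to $\mathcal{P}$, and the fact that any $\mathrm{Proj}_{\parallel W}$ is linear and restricts to the identity on $T\mathcal{P}$, so the tangential part cancels in the difference. Your extra verification that $\T_{\mathfrak{s}}(H)=-\T_g(\mathfrak{s})=0$ via the symplectic identity is exactly the standard justification the paper leaves implicit.
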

\begin{proof}
Since $\mathfrak{s}$ is conserved along causal geodesics, the vector field $\T_{\mathfrak{s}}$ is tangent to $\mathcal{P}$. As $\T_g\vert_{\mathcal{P}} \in T\mathcal{P}$, we get that $\mathbb{V}_++\varphi_- \T_{\alpha}$ is tangent to the null-shell as well. Thus,
$$ \mathrm{Proj}_{\parallel W} ( \mathbb{V}_+)+\varphi_-  \mathrm{Proj}_{\parallel W} ( \T_\alpha)= \mathbb{V}_++\varphi_- \T_\alpha$$
for any vector field $W$ transverse to $T \mathcal{P}$.
\end{proof}

In this article, we choose to work with $V_+ \! \coloneqq \mathrm{Proj}_{\parallel \partial_{p_t}} ( \mathbb{V}_+)$. As suggested by Proposition \ref{ProprojrescVplus}, for certain well-chosen $W_2$, the difference $V_+-\mathrm{Proj}_{\parallel W_2} ( \mathbb{V}_+)$ is collinear to $\varphi_- p_t \partial_{p_{r^*}}$. Let us now briefly justify that working with $\pmb{V}_{\! +}\!=\Omega^{-1} V_+$ is a relevant choice. The next result can be obtained similarly as Proposition \ref{ProB2}.

\begin{proposition}
Let $\pmb{\varphi}_-^m\coloneqq \Omega^{-1} \varphi_-^m$. Then, the following properties hold.
\begin{itemize}
\item The quantity $\overline{\mathfrak{s}} (x,p) \coloneqq \pmb{\varphi}_-^m (x,p) e^{\overline{\alpha} (x,p)}$, where
$$  \overline{\alpha}(x,p)\coloneqq  \int_{\tau=0}^{t^*(x)} \bigg(\frac{m^2(|p_t|^2-m^2)^{\frac{1}{2}}r^{\frac{1}{2}}(r-r^{m}_+(|\slashed{p}|))}{2(r+a^{m}(|\slashed{p}|))^{\frac{1}{2}}\big(m^2 r^2-\frac{|\slashed{p}|^2}{2M}r+|\slashed{p}|^2\big)\Omega}+  \frac{M}{r^2 \Omega^2}p_{r^*}  \bigg) \circ \Phi_{\tau}(x,p) \dr \tau, $$
can be extended on $\mathcal{P}$, that is at $m=0$. Furthermore, it is conserved along the geodesic flow. 
\item  The function $\pmb{\varphi}_-$ defined on the null-shell in \eqref{eq:defovphi} and $\pmb{\varphi}_-^0$ corresponds. Moreover,
$$  \forall \, (x,p) \in \mathcal{P}, \qquad \overline{\alpha} (x,p)=  \int_{s=0}^{t^*(x)} \pmb{a} \circ \Phi_{s}(x,p) \dr s,$$
where $\pmb{a}$ is the function introduced in Lemma \ref{Lemphiminus}.
\item Since $\overline{\mathfrak{s}}$ is a conserved quantity, its symplectic gradient $\T_{\overline{\mathfrak{s}}}=e^{\overline{\alpha} (x,p)}\T_{\pmb{\varphi}_-^m}+ e^{\overline{\alpha} (x,p)}\pmb{\varphi}_-^m\T_{\overline{\alpha}}$ verifies $[\T_g, \T_{\overline{\mathfrak{s}}}]=0$. Furthermore, we have on $\mathcal{P}$, in the coordinate system $(t,r^*,\theta , \phi, p_t,p_{r^*},p_\theta , p_\phi)$,
$$  \big[\T_g, e^{-\overline{\alpha}\vert_{\mathcal{P}}(x,p)} \T_{\overline{\mathfrak{s}}} \big] = -\pmb{a}(x,p) e^{-\overline{\alpha}\vert_{\mathcal{P}}(x,p)}\T_{\overline{\mathfrak{s}}}$$
and, using similar abuse of notations than in Remark \ref{Rkabuse}, 
$$ \T_{\pmb{\varphi}_-^m}  = \partial_{p_t} \big( \pmb{\varphi}_- \big)\partial_t+\partial_{p_{r^*}} \big( \pmb{\varphi}_- \big) \partial_{r^*}-\partial_{r^*}\big( \pmb{\varphi}_- \big) \partial_{p_{r^*}}+\frac{\pmb{x}_g}{\Omega} \T_g.$$
\end{itemize}
\end{proposition}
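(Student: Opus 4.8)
The plan is to follow the path of Proposition \ref{ProB2} verbatim, simply carrying the conformal factor $\Omega^{-1}$ through every step. First I would record the transport equation satisfied by the rescaled weight $\pmb{\varphi}_-^m = \Omega^{-1}\varphi_-^m$ on the mass-shell $\mathcal{P}_m$. Writing $\T_g(\pmb{\varphi}_-^m) = \Omega^{-1}\T_g(\varphi_-^m) + \T_g(\Omega^{-1})\,\varphi_-^m$ and inserting Proposition \ref{Provarphicotang} together with $\partial_{r^*}\Omega^{-1} = -\tfrac{M}{r^2}\Omega^{-1}$, which gives $\T_g(\Omega^{-1}) = -\tfrac{Mp_{r^*}}{r^2\Omega^3}$, one obtains $\T_g(\pmb{\varphi}_-^m) = -\overline{P}\,\pmb{\varphi}_-^m$, where the damping rate $\overline{P}$ is the sum of the rate appearing in Proposition \ref{Provarphicotang} and $\tfrac{M}{r^2\Omega^2}p_{r^*}$. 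This is exactly the rate entering the definition of $\overline{\alpha}$, so $\overline{\alpha}$ is engineered precisely so that $\T_g(\overline{\alpha}) = \overline{P}$; hence $\overline{\mathfrak{s}} = \pmb{\varphi}_-^m e^{\overline{\alpha}}$ is constant along the geodesic flow, by the same flow-integral argument already used in Corollary \ref{Corvarphicotang}.

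Next I would handle the extension to $m=0$ (that is, to the null-shell $\mathcal{P}$) and identify all limits exactly as in the unrescaled case. The asymptotics of \eqref{eq:mto0} give $\pmb{\varphi}_-^0 = \Omega^{-1}\varphi_-^0 = \Omega^{-1}\varphi_- = \pmb{\varphi}_-$, which is the weight of \eqref{eq:defovphi}. As $m \to 0$ the rate $\overline{P}$ converges to $\tfrac{|p_t|}{r^{1/2}|r+6M|^{1/2}\Omega^2} + \tfrac{Mp_{r^*}}{r^2\Omega^2}$, and using $2|p_u| = |p_t| + p_{r^*}$ this is precisely $\pmb{a}(r,p_{r^*},p_t)$ of Lemma \ref{Lemphiminus} (the same algebraic identity $r^3 - M^2 r - 6M^3 = (r-2M)(r^2+2Mr+3M^2)$ that underlies that lemma). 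Restricting the flow integral to $\mathcal{P}$ therefore yields $\overline{\alpha}\vert_{\mathcal{P}}(x,p) = \int_0^{t^*(x)} \pmb{a}\circ\Phi_s(x,p)\,\dr s$, and the relation $\T_g(\overline{\alpha}\vert_{\mathcal{P}}) = \pmb{a}$ recovers $\T_g(\pmb{\varphi}_-) = -\pmb{a}\,\pmb{\varphi}_-$.

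For the symplectic-gradient statements I would reproduce the chain-rule computation of Section \ref{Subsecsympgrad}. Since $\Omega^{-1}$ is a function of $r = x^3$ alone, I may write $\pmb{\varphi}_-^m = \overline{\phi}_-(m^2, m^2/|\slashed{p}|^2, r, p_{r^*}, p_t)$ with $\overline{\phi}_- = \Omega^{-1}\phi_-$, so that the $x^1$- and $x^2$-derivatives of $\overline{\phi}_-$ are $\Omega^{-1}$ times those of $\phi_-$. Expanding $\T_{\pmb{\varphi}_-^m}$ through $2\T_{m^2} = \T_g$ and $\T_{|\slashed{p}|^2} = 2|\slashed{p}|\T_{|\slashed{p}|}$ then produces the coordinate field $\partial_{p_t}(\pmb{\varphi}_-)\partial_t + \partial_{p_{r^*}}(\pmb{\varphi}_-)\partial_{r^*} - \partial_{r^*}(\pmb{\varphi}_-)\partial_{p_{r^*}}$ (evaluated at $m=0$ with the abuse of notation of Remark \ref{Rkabuse}) plus a multiple of $\T_g$ whose coefficient is $\Omega^{-1}$ times the coefficient $\pmb{x}_g$ of the unrescaled case, that is $\pmb{x}_g/\Omega$. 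The commutation identities are then immediate: $[\T_g, \T_{\overline{\mathfrak{s}}}] = 0$ is the general fact that the symplectic gradient of a quantity conserved along the geodesic flow commutes with $\T_g$, while $\T_{\overline{\mathfrak{s}}} = e^{\overline{\alpha}}\T_{\pmb{\varphi}_-^m} + e^{\overline{\alpha}}\pmb{\varphi}_-^m\T_{\overline{\alpha}}$ follows by the Leibniz rule for symplectic gradients; finally $[\T_g, e^{-\overline{\alpha}\vert_{\mathcal{P}}}\T_{\overline{\mathfrak{s}}}] = \T_g(e^{-\overline{\alpha}\vert_{\mathcal{P}}})\T_{\overline{\mathfrak{s}}} = -\T_g(\overline{\alpha}\vert_{\mathcal{P}})\,e^{-\overline{\alpha}\vert_{\mathcal{P}}}\T_{\overline{\mathfrak{s}}} = -\pmb{a}\,e^{-\overline{\alpha}\vert_{\mathcal{P}}}\T_{\overline{\mathfrak{s}}}$.

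The only genuinely delicate point, exactly as in the unrescaled case, is the continuous extension of all of these objects down to the null-shell $m=0$. This rests on the $C^1$-regularity of $\phi_-$ (hence of $\overline{\phi}_- = \Omega^{-1}\phi_-$) up to the boundary of the domain $O$ and on the expansions \eqref{eq:equirminus}–\eqref{eq:mto0}; these guarantee that $\overline{P}$, $\overline{\alpha}$ and the coefficients of $\T_{\pmb{\varphi}_-^m}$ all admit limits as $m \to 0$ matching $\pmb{a}$, $\overline{\alpha}\vert_{\mathcal{P}}$ and the stated coordinate expression. Everything else is the literal $\Omega^{-1}$-transcription of the computations already performed for $\varphi_-^m$, so no new obstacle arises.
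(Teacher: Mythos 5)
Your proposal is correct and follows exactly the route the paper intends: the paper gives no separate proof of this proposition, saying only that it ``can be obtained similarly as Proposition \ref{ProB2}'', and your argument is precisely that transcription --- carrying $\Omega^{-1}$ through the transport equation (Proposition \ref{Provarphicotang} combined with $\T_g(\Omega^{-1})=-Mp_{r^*}r^{-2}\Omega^{-3}$), through the $m\to 0$ limits, through the chain-rule computation of Section \ref{Subsecsympgrad} (giving the coefficient $\pmb{x}_g/\Omega$), and through the general commutation facts for symplectic gradients of conserved quantities.

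One caveat is worth recording. The damping rate you derive, namely the rate of Proposition \ref{Provarphicotang} plus $\frac{M}{r^2\Omega^2}p_{r^*}$, is \emph{not} literally the integrand printed in the statement's definition of $\overline{\alpha}$, whose first term carries an additional factor of $\Omega$ in its denominator. With that extra $\Omega$, the first term would tend as $m\to 0$ to $\frac{|p_t|}{r^{1/2}|r+6M|^{1/2}\Omega^3}$ rather than to $\frac{|p_t|}{r^{1/2}|r+6M|^{1/2}\Omega^2}$, and the second bullet ($\overline{\alpha}\vert_{\mathcal{P}}=\int \pmb{a}$, with $\pmb{a}$ as in Lemma \ref{Lemphiminus}) would then fail; so the printed $\Omega$ is evidently a typo, and the rate you computed is the one that makes the proposition internally consistent. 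Your sentence ``this is exactly the rate entering the definition of $\overline{\alpha}$'' therefore asserts agreement where there is a (spurious) discrepancy: a careful write-up should point out and resolve this mismatch --- as your computation implicitly does --- rather than pass over it.
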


In order to compare $\pmb{V}_{\!+}=\Omega^{-1}V_+$ with a projection of $\T_{\pmb{\varphi}_-^m}-\pmb{x}_g \Omega^{-1} \T_g$ on $T \mathcal{P}$, we will work in the coordinate system $(\bar{t},r,\theta , \phi , \overline{p}_t, \overline{p}_r , p_\theta , p_\phi)$ of $T^*\mathcal{S}$ induced by the hyperboloidal coordinates $(\bar{t},r,\theta , \varphi)$, which were introduced in Definition \ref{Defhypcoor}. In particular, we have
$$\bar{t} = t-\int_{3M}^r \frac{\xi(s)}{\Omega^2(s)} \dr s, \qquad\quad \xi(r)=\Big( 1-\frac{3M}{r} \, \Big) \Big(1+\frac{6M}{r} \Big)^{\frac{1}{2}}, \qquad \quad \overline{p}_t=p_t,  \qquad \quad \overline{p}_r=\frac{p_{r^*}}{\Omega^2}+\frac{\xi(r)}{ \Omega^2}p_t.$$
In order to avoid any confusion, we denote by $\overline{\partial}_a$ the derivative with respect to $a$ in this coordinate system. Then, as $ \partial_{|\slashed{p}|} \pmb{\varphi}_-^m\vert_{m=0}=0$ and since $\overline{\partial}_{\overline{p}_t}  \pmb{\varphi}_-^m$, $ \overline{\partial}_{\overline{p}_r}  \pmb{\varphi}_-^m$ and $ \overline{\partial}_r \pmb{\varphi}_-^m$ are continous at $m=0$, we have on the null-shell $\mathcal{P}$, with similar abuse of notations than in Remark \ref{Rkabuse},
$$ \pmb{\varphi}_-=r \overline{p}_r, \qquad \qquad \T_{\pmb{\varphi}_-^m}-\frac{\pmb{x}_g}{\Omega}  \T_g= \overline{\partial}_{\overline{p}_t} \big( \pmb{\varphi}_- \big)\overline{\partial}_{\overline{t}}+\overline{\partial}_{\overline{p}_r} \big( \pmb{\varphi}_- \big) \overline{\partial}_{r}-\overline{\partial}_{r}\big( \pmb{\varphi}_- \big) \overline{\partial}_{\overline{p}_r}=r\overline{\partial}_r-\overline{p}_r \overline{\partial}_{\overline{p}_r}.$$
We finally prove the next result.
\begin{proposition}\label{ProprojrescVplus}
We have
\begin{align*}
 \pmb{V}_{\! +} & =  \mathrm{Proj}_{\parallel \partial_{\overline{p}_t}} \big( \T_{\pmb{\varphi}_-^m} \big)-\frac{\pmb{x}_g}{\Omega}  \T_g - \frac{M }{r \Omega^2} \overline{p}_r \overline{\partial}_{\overline{p}_r}-\frac{r^{\frac{1}{2}}\xi(r)}{|r+6M|^{\frac{1}{2}}\Omega^2} \overline{p}_r \overline{\partial}_{\overline{p}_r} .
 \end{align*}
The sum of the last two terms is a vector field regular up to $\H$ since $\xi(2M)=-1$.
\end{proposition}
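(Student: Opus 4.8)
The plan is to verify the stated decomposition by combining the intermediate identity established just above, namely $\T_{\pmb{\varphi}_-^m}-\frac{\pmb{x}_g}{\Omega}\T_g=r\overline{\partial}_r-\overline{p}_r\overline{\partial}_{\overline{p}_r}$ on $\mathcal{P}$, with a direct computation of the action of $\pmb{V}_{\!+}$ on the hyperboloidal coordinate functions. First I would record that, since $\T_g$ is tangent to $\mathcal{P}$ and the ambient field $r\overline{\partial}_r-\overline{p}_r\overline{\partial}_{\overline{p}_r}$ carries no $\partial_{\overline{p}_t}$ component, projecting parallel to $\partial_{\overline{p}_t}$ leaves $\frac{\pmb{x}_g}{\Omega}\T_g$ untouched and merely reinterprets $\overline{\partial}_r,\overline{\partial}_{\overline{p}_r}$ as the intrinsic null-shell coordinate fields, so that
$$\mathrm{Proj}_{\parallel\partial_{\overline{p}_t}}\big(\T_{\pmb{\varphi}_-^m}\big)-\frac{\pmb{x}_g}{\Omega}\T_g=r\,\overline{\partial}_r-\overline{p}_r\,\overline{\partial}_{\overline{p}_r}$$
as vector fields on $\mathcal{P}$. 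It then suffices to show that $\pmb{V}_{\!+}$ equals this field minus $\big(\frac{M}{r\Omega^2}+\frac{r^{1/2}\xi(r)}{|r+6M|^{1/2}\Omega^2}\big)\overline{p}_r\overline{\partial}_{\overline{p}_r}$.

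Since $\pmb{V}_{\!+}$ has no angular components and $\overline{\partial}_\theta,\overline{\partial}_\phi$ act trivially on $r$ and on the momenta, I would compute $\pmb{V}_{\!+}(\overline{t})$, $\pmb{V}_{\!+}(r)$ and $\pmb{V}_{\!+}(\overline{p}_r)$ from the explicit expression \eqref{eq:defVminus} of $V_+=\Omega\pmb{V}_{\!+}$, together with $\overline{t}=t-H(r)$, $H'(r)=\xi(r)\Omega^{-2}$ and $\overline{p}_r=\Omega^{-2}(p_{r^*}+\xi(r)p_t)$. The first two are short: $V_+(t)$ and $V_+(r)=r\Omega$ combine with $H'(r)$ and the identity $r\Omega^{-1}\xi(r)=\frac{|r+6M|^{1/2}(r-3M)}{r^{1/2}\Omega}$ to give $V_+(\overline{t})=0$, while $\pmb{V}_{\!+}(r)=\Omega^{-1}\cdot r\Omega=r$, matching the $r\overline{\partial}_r$ term. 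The substantive step is $\pmb{V}_{\!+}(\overline{p}_r)$: expanding by the Leibniz rule I need $V_+(\Omega^{-2})=-\tfrac{2M}{r\Omega^3}$, $V_+(p_{r^*})$ read off from \eqref{eq:defVminus}, $V_+(p_t)$ computed via $\partial_{r^*}(p_t)=-\Omega^2\frac{(r-3M)|\slashed{p}|^2}{r^4p_t}$ and $\partial_{p_{r^*}}(p_t)=p_{r^*}/p_t$, and $V_+(\xi)=r\Omega\,\xi'(r)$. After clearing denominators and invoking the null-shell relation $\Omega^2|\slashed{p}|^2=r^2(p_t^2-p_{r^*}^2)$, I expect everything to collapse to $\pmb{V}_{\!+}(\overline{p}_r)=-\overline{p}_r\big(1+\frac{M}{r\Omega^2}+\frac{r^{1/2}\xi(r)}{|r+6M|^{1/2}\Omega^2}\big)$, which is exactly the $\overline{\partial}_{\overline{p}_r}$-coefficient in the claim.

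As a structural cross-check of these two corrections, note that the $\frac{M}{r\Omega^2}$ term is the rescaling contribution: it arises from $\T_{\Omega^{-1}\varphi_-^m}=\Omega^{-1}\T_{\varphi_-^m}+\frac{M}{r^2}\pmb{\varphi}_-^m\partial_{p_{r^*}}$ combined with $\partial_{p_{r^*}}=\Omega^{-2}\overline{\partial}_{\overline{p}_r}$ and $\pmb{\varphi}_-=r\overline{p}_r$, whereas the $\frac{r^{1/2}\xi}{|r+6M|^{1/2}\Omega^2}$ term records the change of projection direction $\partial_{p_t}=\partial_{\overline{p}_t}+\frac{\xi(r)}{\Omega^2}\partial_{\overline{p}_r}$ between the tortoise and hyperboloidal canonical charts.

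Finally I would address regularity up to $\mathcal{H}^+$. The only possibly singular factor in the two correction terms is $\Omega^{-2}$, so it is enough to show that the bracket $\frac{M}{r}+\frac{r^{1/2}\xi(r)}{|r+6M|^{1/2}}$ vanishes at $r=2M$: since $\xi(2M)=-1$ and $\frac{(2M)^{1/2}}{(8M)^{1/2}}=\frac12$, this bracket equals $\frac12-\frac12=0$ there, and being smooth in $r$ it vanishes at least linearly, cancelling the simple pole of $\Omega^{-2}=\frac{r}{r-2M}$. Rewriting $\overline{p}_r\overline{\partial}_{\overline{p}_r}$ in the $(t^*,r)$ chart as $\frac{(p_{r^*}+\xi p_t)p_t}{p_t+\xi p_{r^*}}\partial_{p_{r^*}}$ then exhibits the full expression as a smooth multiple of $\partial_{p_{r^*}}$ near $r=2M$ (where $p_t+\xi p_{r^*}\to 2p_v\neq0$ along geodesics transverse to $\mathcal{H}^+$), giving the regularity claim. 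I expect the algebraic simplification of $\pmb{V}_{\!+}(\overline{p}_r)$ to be the main obstacle, both because of the bookkeeping between ambient and intrinsic null-shell coordinate derivatives and because the cancellation leans repeatedly on the null-shell relation; the remaining steps are essentially tracking the two structural corrections.
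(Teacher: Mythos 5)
Your treatment of the main identity is correct and is essentially the paper's own computation, organized dually: the paper starts from the same identity $\mathrm{Proj}_{\parallel \partial_{\overline{p}_t}}(\T_{\pmb{\varphi}_-^m})-\frac{\pmb{x}_g}{\Omega}\T_g=r\overline{\partial}_r-\overline{p}_r\overline{\partial}_{\overline{p}_r}$, and then rewrites the tortoise-chart basis fields $\partial_t,\partial_r,\partial_{p_{r^*}}$ in terms of $\overline{\partial}_{\overline{t}},\overline{\partial}_r,\overline{\partial}_{\overline{p}_r}$ before substituting into \eqref{eq:defVminus}, whereas you compute the components $\pmb{V}_{\!+}(\overline{t})=0$, $\pmb{V}_{\!+}(r)=r$, $\pmb{V}_{\!+}(\overline{p}_r)$ directly. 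The two routes use the same ingredients ($\partial_{r^*}(p_t)$, $\partial_{p_{r^*}}(p_t)=p_{r^*}/p_t$, the null-shell relation), and your intermediate values agree with the paper's; in particular your claimed $\pmb{V}_{\!+}(\overline{p}_r)=-\overline{p}_r\big(1+\frac{M}{r\Omega^2}+\frac{r^{1/2}\xi(r)}{|r+6M|^{1/2}\Omega^2}\big)$ is exactly the $\overline{\partial}_{\overline{p}_r}$-component of the stated formula.

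The genuine problem is in your final regularity argument. Since $\xi(2M)=-1$, at $r=2M$ one has $p_t+\xi p_{r^*}=p_t-p_{r^*}=2p_u$, \emph{not} $2p_v$; and on $\pi^{-1}(\H)$ the quantity $p_u$ vanishes identically (the null-shell relation forces $4p_up_v=\Omega^2|\slashed{p}|^2/r^2=0$ there, and the paper uses $p_t=p_v$ on $\H$; it is $\Omega^{-2}p_u$, not $p_u$, that carries the transversal momentum). So the denominator of your rewriting $\overline{p}_r\overline{\partial}_{\overline{p}_r}=\frac{(p_{r^*}+\xi p_t)p_t}{p_t+\xi p_{r^*}}\partial_{p_{r^*}}$ vanishes identically on the horizon, as does the numerator: the expression is a $0/0$ form there and exhibits nothing. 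The fix is cleaner than what you attempted. Since $\frac{r^{1/2}\xi(r)}{|r+6M|^{1/2}}=\frac{r-3M}{r}$, the two correction coefficients sum exactly to $\Omega^{-2}\big(\frac{M}{r}+\frac{r-3M}{r}\big)=1$, so the sum of the last two terms is exactly $-\overline{p}_r\overline{\partial}_{\overline{p}_r}$ (equivalently $\pmb{V}_{\!+}=r\overline{\partial}_r-2\overline{p}_r\overline{\partial}_{\overline{p}_r}$, consistent with Proposition~\ref{Prohyperboloidalcoord}). This field is regular up to $\H$ because the hyperboloidal chart itself is: $\overline{p}_r=\Omega^{-2}(p_{r^*}+\xi p_t)=-2\Omega^{-2}p_u+\Omega^{-2}(1+\xi(r))p_t$, and $\Omega^{-2}(1+\xi(r))=\frac{r(1+\xi(r))}{r-2M}$ is smooth at $r=2M$ precisely because $\xi(2M)=-1$, so $\overline{p}_r$ is a smooth function of the horizon-regular quantities $\Omega^{-2}p_u$ and $p_t$. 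Your first observation (the bracket $\frac{M}{r}+\frac{r^{1/2}\xi(r)}{|r+6M|^{1/2}}$ vanishes at $r=2M$) identifies the right cancellation — it is the paper's stated reason — but by itself it only controls the coefficient, not the regularity of $\overline{p}_r\overline{\partial}_{\overline{p}_r}$, and the supplementary tortoise-chart argument you gave for the latter is incorrect as written.
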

\begin{proof}
We parametrise the null-shell $\mathcal{P}$ by the coordinate system $(\overline{t},r,\theta,\phi , \overline{p}_r , p_\theta , p_\phi)$ and we abusively denote by $\overline{\partial}_a$ the derivative with respect to the variable $a$. Then, as $\T_g$ is tangent to $\mathcal{P}$,
$$\mathrm{Proj}_{\parallel \partial_{\overline{p}_t}} \big( \T_{\pmb{\varphi}_-^m} \big) = r \overline{\partial}_r-\overline{p}_r \overline{\partial}_{\overline{p}_r}+\frac{\pmb{x}_g}{\Omega}  \T_g.$$
We now relate $\overline{\partial}_a$ with the derivatives of the coordinate system $(t,r^*,\theta , \phi,p_{r^*},p_\theta , p_\phi)$.  For any $A \in \{ \theta , \varphi \}$, we have
$$  \partial_{t}= \overline{\partial}_{\bar{t}}  , \qquad \quad  \partial_{p_{r^*}}=  \frac{p_t+\xi(r)p_{r^*}}{\Omega^2 p_t} \overline{\partial}_{\overline{p}_r} , \qquad \quad \partial_A = \overline{\partial}_A, \qquad \quad \partial_{p_A}=\overline{\partial}_{p_A} . $$
For the radial derivative, note first that
$$ \partial_r = -\frac{\xi(r)}{ \Omega^2} \overline{\partial}_{\bar{t}}+\overline{\partial}_r-\frac{2M}{r^2\Omega^2} \cdot \frac{p_{r^*}+\xi(r)p_t}{\Omega^2} \overline{\partial}_{\overline{p}_r}+\frac{1}{\Omega^2} \partial_r (\xi(r)p_t)\overline{\partial}_{\overline{p}_r} .$$
Then, using \eqref{preuvecompartialr} and the null-shell relation \eqref{eq:defConsQua}, we have
$$ \partial_r (p_t) =  - \frac{(r-3M)(|p_t|^2-|p_{r^*}|^2)}{r^2 \Omega^2 p_t} , \qquad \qquad \qquad \partial_r (\xi(r) ) = \frac{27M^2}{r^{\frac{5}{2}} |r+6M|^{\frac{1}{2}} } .$$
This implies, 
 $$ \partial_r = -\frac{\xi(r)}{ \Omega^2} \overline{\partial}_{\bar{t}}+\overline{\partial}_r-\frac{2M}{r^2\Omega^2} \overline{p}_r \overline{\partial}_{\overline{p}_r}-\frac{|\xi (r)|^2(|p_t|^2-|p_{r^*}|^2)}{r^{\frac{1}{2}}|r+6M|^{\frac{1}{2}}\Omega^4p_t} \overline{\partial}_{\overline{p}_r} +\frac{27M^2}{r^{\frac{5}{2}} |r+6M|^{\frac{1}{2}} \Omega^2}  \overline{\partial}_{\overline{p}_r} .$$
Recall now from \eqref{eq:defVminus} the expression of $\pmb{V}_{\! +}=\Omega^{-1}V_+$. We have
\begin{align*}
 \pmb{V}_{\! +} &  = r\frac{\xi(r)}{ \Omega^2} \overline{\partial}_{\bar{t}}+r\partial_r - \bigg( \frac{r-3M}{r} \overline{p}_r +\frac{27M^2 p_t}{r^{\frac{3}{2}}|r+6M|^{\frac{1}{2}}} \bigg) \frac{p_t+\xi(r)p_{r^*}}{ \Omega^2p_t}\overline{\partial}_{\overline{p}_r} \\
 & = \big( r \overline{\partial}_r-\overline{p}_r \overline{\partial}_{\overline{p}_r} \big)+\overline{p}_r \overline{\partial}_{\overline{p}_r} -\frac{2M}{r\Omega^2} \cdot \overline{p}_r \overline{\partial}_{\overline{p}_r}-\frac{r^{\frac{1}{2}}|\xi(r)|^2(|p_t|^2-|p_{r^*}|^2)}{|r+6M|^{\frac{1}{2}}\Omega^4p_t} \overline{\partial}_{\overline{p}_r} +\frac{27M^2p_t}{r^{\frac{3}{2}}|r+6M|^{\frac{1}{2}} \Omega^2 } \overline{\partial}_{\overline{p}_r} \\
 & \quad - \bigg( \frac{r-3M}{r} \overline{p}_r+ \frac{27M^2 p_t}{r^{\frac{3}{2}}|r+6M|^{\frac{1}{2}}} \bigg) \frac{p_t+\xi(r)p_{r^*}}{ \Omega^2p_t}\overline{\partial}_{\overline{p}_r} \\
 & =\big( r \overline{\partial}_r-\overline{p}_r \overline{\partial}_{\overline{p}_r} \big)-\frac{M \overline{p}_r}{r \Omega^2} \overline{\partial}_{\overline{p}_r}-\frac{r^{\frac{1}{2}}|\xi(r)|^2(|p_t|^2-|p_{r^*}|^2)}{|r+6M|^{\frac{1}{2}}\Omega^4p_t} \overline{\partial}_{\overline{p}_r}- \bigg( \frac{r^{\frac{1}{2}}|\xi (r)|^2p_{r^*} \overline{p}_r}{|r+6M|^{\frac{1}{2}}\Omega^2p_t} + \frac{27M^2 \xi(r)p_{r^*} }{r^{\frac{3}{2}}|r+6M|^{\frac{1}{2}}\Omega^2} \bigg) \overline{\partial}_{\overline{p}_r}.
 \end{align*}
The result then follows from
\begin{align*}
 \xi(r)|p_t|^2-\xi (r) |p_{r^*}|^2+\frac{27M^2}{r^2}\Omega^2 p_{r^*}p_t &=p_t(\xi (r) p_t+p_{r^*})-p_tp_{r^*}-\xi (r) |p_{r^*}|^2+\frac{27M^2}{r^2}\Omega^2 p_{r^*}p_t \\
&  =p_t(\xi (r) p_t+p_{r^*})-\xi (r) |p_{r^*}|^2-|\xi(r)|^2 p_{r^*}p_t \\
&  =p_t(\xi (r) p_t+p_{r^*})-\xi(r) p_{r^*}(\xi(r)p_t+p_{r^*}) \\
& =\Omega^2 p_t \overline{p}_r-\Omega^2 \xi(r) p_{r^*} \overline{p}_r .
\end{align*}
\end{proof}


\addtocontents{toc}{\protect\setcounter{tocdepth}{0}}

\bibliographystyle{alpha}
\bibliography{Bibliography.bib} 

\end{document}